\definecolor{MyDarkBlue}{rgb}{0.15,0.25,0.45}
\definecolor{blue2}{rgb}{0.0,0.43,0.84}
\definecolor{red2}{rgb}{0.74,0.0,0.53}
\newif\ifpersonal
\newcommand{\personal}[1]{\textcolor[rgb]{0,0,1}{(Personal: #1)}}
\newcommand{\todo}[1]{\textcolor{red}{(Todo: #1)}}
\newcommand*{\personal}[1]{\ignorespaces}
\newcommand*{\todo}[1]{\ignorespaces}
\newcommand{\calA}{\mathcal{A}}
\newcommand{\calC}{\mathcal{C}}
\newcommand{\calE}{\mathcal{E}}
\newcommand{\calF}{\mathcal{F}}
\newcommand{\calG}{\mathcal{G}}
\newcommand{\calH}{\mathcal{H}}
\newcommand{\calI}{\mathcal{I}}
\newcommand{\calJ}{\mathcal{J}}
\newcommand{\calK}{\mathcal{K}}
\newcommand{\calL}{\mathcal{L}}
\newcommand{\calM}{\mathcal{M}}
\newcommand{\calN}{\mathcal{N}}
\newcommand{\calP}{\mathcal{P}}
\newcommand{\calQ}{\mathcal{Q}}
\newcommand{\calS}{\mathcal{S}}
\newcommand{\calT}{\mathcal{T}}
\newcommand{\calV}{\mathcal{V}}
\newcommand{\calX}{\mathcal{X}}
\newcommand{\calY}{\mathcal{Y}}
\newcommand{\calZ}{\mathcal{Z}}
\newcommand{\R}{\mathbb{R}}
\newcommand{\N}{\mathbb{N}}
\newcommand{\C}{\mathbb{C}}
\newcommand{\Z}{\mathbb{Z}}
\newcommand{\PP}{\mathbb{P}}
\newcommand{\A}{\mathbb{A}}
\newcommand{\LL}{\mathbb{L}}
\newcommand{\scrA}{\mathscr{A}}
\newcommand{\scrB}{\mathscr{B}}
\newcommand{\scrC}{\mathscr{C}}
\newcommand{\scrE}{\mathscr{E}}
\newcommand{\scrF}{\mathscr{F}}
\newcommand{\scrG}{\mathscr{G}}
\newcommand{\scrJ}{\mathscr{J}}
\newcommand{\scrO}{\mathscr{O}}
\newcommand{\scrT}{\mathscr{T}}
\newcommand{\scrQ}{\mathscr{Q}}
\newcommand{\sfP}{\mathsf{P}}
\newcommand{\bfE}{\mathbf{E}}
\newcommand{\bfQ}{\mathbf{Q}}
\newcommand{\ev}{\mathsf{ev}}
\newcommand{\ch}{\mathsf{ch}}
\newcommand{\id}{\mathsf{id}}
\newcommand{\Spec}{\mathsf{Spec}}
\newcommand{\Hom}{\mathsf{Hom}}
\newcommand{\calHom}{\mathcal{H}\mathsf{om}}
\newcommand{\Ext}{\mathsf{Ext}}
\newcommand{\calExt}{\mathcal{E}\mathsf{xt}}
\newcommand{\Aut}{\mathsf{Aut}}
\newcommand{\End}{\mathsf{End}}
\newcommand{\Tor}{\mathsf{Tor}}
\newcommand{\Pic}{\mathsf{Pic}}
\newcommand{\catCoh}{\mathsf{Coh}}
\newcommand{\catPerf}{\mathsf{Perf}}
\newcommand{\catQCoh}{\mathsf{QCoh}}
\newcommand{\catDb}{\mathsf{D}^\mathsf{b}}
\newcommand{\dSt}{\mathsf{dSt}}
\newcommand{\Sh}{\mathsf{Sh}}
\newcommand{\dAff}{\mathsf{dAff}}
\newcommand{\derivedPerf}{\mathbf{Perf}}
\newcommand{\derivedCoh}{\mathbf{Coh}}
\newcommand{\derivedextCoh}{\mathbf{Coh}^{\mathsf{ext}}}
\newcommand{\derivedfSP}{f\textrm{-}\mathbf{SP}}
\newcommand{\Hilb}{\mathsf{Hilb}}
\newcommand{\Quot}{\mathsf{Quot}}
\newcommand{\FHilb}{\mathsf{FHilb}}
\newcommand{\Coh}{\calC oh}
\newcommand{\SP}{\mathcal{SP}}
\newcommand{\fSP}{f\textrm{-}\mathcal{SP}}
\newcommand{\trunc}[1]{\tensor*[^{\mathsf{cl}}]{#1}{}}
\newcommand{\ostar}{\mathbin{\mathpalette\make@circled\star}}
\newcommand{\make@circled}[2]{%
\ooalign{$\m@th#1\smallbigcirc{#1}$\cr\hidewidth$\m@th#1#2$\hidewidth\cr}%
}
\newcommand{\smallbigcirc}[1]{%
\vcenter{\hbox{\scalebox{0.77778}{$\m@th#1\bigcirc$}}}%
}
\newcommand{\triend}{\parbox{2mm}{\hfill} \hfill\mbox{\hspace{0.2mm}}\hfill$\triangle$}
\newcommand{\ocend}{\parbox{2mm}{\hfill} \hfill\mbox{\hspace{0.2mm}}\hfill$\oslash$}
\newtheorem{theorem}{Theorem}
\newtheorem{theoremintroduction}{Theorem}
\newtheorem{proposition}[theorem]{Proposition}
\newtheorem{lemma}[theorem]{Lemma}
\newtheorem{corollary}[theorem]{Corollary}
\newtheorem{corollary*}{Corollary}
\newtheorem*{theorem*}{Theorem}
\newtheorem*{proposition*}{Proposition}
\newtheorem*{conjecture*}{Conjecture}
\numberwithin{equation}{section}
\numberwithin{theorem}{section}
\theoremstyle{remark}
\newtheorem{ex}[theorem]{Example}
\newenvironment{example}{\begin{ex}}{\triend\end{ex}}
\theoremstyle{remark}
\newtheorem{rem}[theorem]{Remark}
\newtheorem{notati}[theorem]{Notation}
\newtheorem{war}[theorem]{Warning}
\newenvironment{warning}{\begin{war}}{\triend\end{war}}
\newenvironment{remark}{\begin{rem}}{\triend\end{rem}}
\newtheorem*{rem*}{Remark}
\newenvironment{remark*}{\begin{rem*}}{\triend\end{rem*}}
\theoremstyle{definition}
\newtheorem{defin}[theorem]{Definition}
\newtheorem*{defin*}{Definition}
\newenvironment{definition}{\begin{defin}}{\ocend\end{defin}}
\newenvironment{definition*}{\begin{defin*}}{\ocend\end{defin*}}
\newtheorem{constru}[theorem]{Construction}
\newcounter{assumption}
\newtheorem{assum}[assumption]{Assumption}
\newenvironment{assumption}{\begin{assum}}{\ocend\end{assum}}
\title[Flops and Hilbert schemes of space curve singularities]{Flops and Hilbert schemes of space curve singularities}
\author[D.-E.~Diaconescu]{Duiliu-Emanuel Diaconescu}
\address[Duiliu-Emanuel Diaconescu]{New High Energy Theory Center - Serrin Building, Rutgers, The State University Of New Jersey, 126 Frelinghuysen Rd., Piscataway, NJ 08854-8019, USA}
\curraddr{}
\email{\href{mailto:duiliu@physics.rutgers.edu}{duiliu@physics.rutgers.edu}}
\author[M.~Porta]{Mauro Porta}
\address[Mauro Porta]{Institut de recherche mathématique avancée (IRMA), Université de Strasbourg, France}
\curraddr{}
\email{\href{mailto:porta@math.unistra.fr}{porta@math.unistra.fr}}
\author[F.~Sala]{Francesco Sala}
\address[Francesco Sala]{Università di Pisa, Dipartimento di Matematica, Largo Bruno Pontecorvo 5, 56127 Pisa (PI), Italy}
\address{Kavli IPMU (WPI), UTIAS, The University of Tokyo, Kashiwa, Chiba 277-8583, Japan}
\curraddr{}
\email{\href{mailto:francesco.sala@unipi.it}{francesco.sala@unipi.it}}
\author[A.~Vosoughinia]{Arian Vosoughinia}
\address[Arian Vosoughinia]{New High Energy Theory Center - Serrin Building, Rutgers, The State University Of New Jersey, 126 Frelinghuysen Rd., Piscataway, NJ 08854-8019, USA}
\curraddr{}
\email{\href{mailto:arianvwr@gmail.com}{arianvwr@gmail.com}}
\thanks{The work of F.~S. was partially supported by JSPS KAKENHI Grant Number JP21K03197, while the work of D.-E.~D. and A.~V. was partially supported by NSF grant DMS-1802410.}
\subjclass{Primary: 14N35; Secondary: 14D23, 14E99, 14F99}
\keywords{Hilbert schemes, Stable pairs, Flops, Motivic Hall algebras}
\begin{document}

\begin{abstract}
	Using pagoda flop transitions between smooth projective threefolds, a relation is derived between the Euler numbers of moduli spaces of stable pairs which are scheme-theoretically supported on a fixed singular space curve and Euler numbers of Flag Hilbert schemes associated to a plane curve singularity. When the space curve singularity is locally complete intersection, one obtains a relation between the latter and Euler numbers of Hilbert schemes of the space curve singularity. It is also shown that this relation yields explicit results for a class of torus-invariant locally complete intersection singularities. These results spark a series of open questions in low-dimensional topology, combinatorics, and representation theory.
\end{abstract}

\maketitle\thispagestyle{empty}

\tableofcontents


\section{Introduction}

The topology of punctual Hilbert schemes of plane curve singularities has been studied very intensively in the mathematical literature due to its connections with knot polynomials \cite{CDGZ-Alexander-polynomial,OS-HOMFLY,ORS-HOMFLY,Maulik-Stable-pairs} and representation theory \cite{GORS-Knots, Rennemo-Hilbert,GK-Hilbert}. In particular, the Oblomkov-Shende conjecture \cite{OS-HOMFLY}, proven by Maulik \cite{Maulik-Stable-pairs}, provides a remarkable formula relating the generating function of Euler numbers of punctual Hilbert schemes of a 
plane curve singularity to the HOMFLY polynomial of its link. In contrast, punctual Hilbert schemes of space curve singularities have been studied to a much lesser degree in the literature. The main result known to date is due to \cite{BRV-Motivic}, which proves that the motivic zeta function of the Hilbert of a reduced curve is rational. 

The present study is motivated by the absence of concrete results concerning the topology of punctual Hilbert schemes of space curve singularities, as well as possible connections to knot polynomials and representation theory. One problem that immediately manifests itself is the lack of a systematic classification of such singularities. Namely, it is a priori far from clear if a particular class of space curve singularities can be singled out as a good starting point for this purpose. The main idea of the present paper is that such a class consists of space curve singularities which are related to plane curve singularities by flop transitions\footnote{A similar approach was employed in \cite{Maulik-Stable-pairs} in the proof of the Oblomkov-Shende conjecture, as well as in \cite{Toda-S-duality} for punctual Hilbert schemes of  Kleinian surface singularities of type A.}. As a first step in this direction, it will be shown in this paper that flop transitions lead to explicit results for Euler numbers of punctual Hilbert schemes of certain space curve singularities. Moreover, several possible open questions in low-dimensional topology, combinatorics, and representation theory are briefly discussed in \S\ref{sec:openquest-top} and \S\ref{sec:openquest-rep}. 

\subsection{Main results}

Let 
\begin{align}\label{eq:flop-diagram-O}
	\begin{tikzcd}[ampersand replacement=\&]
		Y^+ \ar{dr}[swap]{f^+} \ar[dashed]{rr}{\phi}\& \& Y^- \ar{dl}{f^-}\\
		\& X \&
	\end{tikzcd}
\end{align}
be a \textit{flop transition} between smooth connected projective complex threefolds, where $f^\pm$ are \textit{contractions}\footnote{In the sense of Definition~\ref{def:contraction}.} and $X$ has a unique singular point $\nu\in X$ so that the scheme-theoretic exceptional loci $\Sigma^\pm \coloneqq (f^\pm)^{-1}(\nu)$ are smooth connected $(0,-2)$-curves on $Y^\pm$. Such transitions were first studied in \cite{Reid-Minimal-Models} for complex analytic spaces, and more recently, in \cite{Spherical_twists}, and, as part of a more general framework, in \cite{DW16}, for quasi-projective threefolds. In particular, it was shown in \textit{loc. cit.} that any 
$(0,-2)$ curve $\Sigma$ in a smooth threefold $Y$ determines uniquely a numerical invariant $n\in \Z$, $n \geq 2$, which admits several equivalent definitions. Moreover, given a diagram of the form \eqref{eq:flop-diagram-A}, the numerical invariants $n^\pm$ associated to the curves  $\Sigma^\pm$ coincide, and their common value will be denoted by $n$. 

For a \textit{contraction} $f\colon Y\to X$ of a $(0,-2)$-curve $\Sigma\subset Y$, in \cite{Reid-Minimal-Models}, the invariant $n$ was defined as the width of $\Sigma$, i.e., the maximal length of a trivial thickening $\Sigma \subset \Sigma \times\Spec\, \C[t]/(t^k)\subset Y$. More recently, the same invariant $n$ was identified in \cite{DW16} with the length of the \textit{contraction algebra} of $\Sigma$ defined in \cite{Spherical_twists, DW16}. For the purposes of the present paper, it suffices to note that the contraction algebra controls the non-commutative deformation theory 
of the structure sheaf $\scrO_\Sigma$ as an $\scrO_Y$-module. In particular, it was shown in \cite{DW16} that the component of the moduli space of simple coherent sheaves on $Y$ containing $[\scrO_\Sigma]$ is isomorphic to $\Spec\, \C[t]/(t^n)$ (see Theorem~\ref{thm:moduli-sigma} in Appendix~\ref{sec:theorem-filtration}.) We prove in Proposition \ref{prop:hilb-sigma}, that a similar result holds for the connected component of Hilbert schemes of curves on $Y$ containing $[\Sigma \subset Y]$. 

Furthermore, Theorem~\ref{thm:filtration} shows that on each side of the transition \eqref{eq:flop-diagram-O} there exists a unique closed stratification
\begin{align}
	\Sigma^\pm=\Sigma_1^\pm \subset \Sigma_2^\pm \subset \cdots \subset \Sigma_n^\pm
\end{align}
whose structure sheaves fit into exact sequences of the form 
\begin{align}
	0\longrightarrow \scrO_{\Sigma_i^\pm} \longrightarrow \scrO_{\Sigma_{i+1}^\pm} \longrightarrow \scrO_{\Sigma^\pm} \longrightarrow 0 \ ,
\end{align}
for $1\leq i \leq n-1$.

Now consider a smooth connected Weil divisor $W\subset X$ containing the singular point, and let 
\begin{align}\label{eq:surface-diagram-O} 
	\begin{tikzcd}[ampersand replacement=\&]
		S^+ \ar{dr}[swap]{f_{S^+}} \ar[dashed]{rr}{\phi_S}\& \& S^- \ar{dl}{f_{S^-}}\\
		\& W \&
	\end{tikzcd}
\end{align}
be its strict transforms under the two contractions. We shall assume that the canonical projection $f_{S^+}\colon S^+\to W$ is an isomorphism and $S^+$ intersects $\Sigma^+$ transversely at a single point $p$. Furthermore, we shall assume that $S^-$ is reduced and irreducible, with a zero-dimensional singular locus. 

The above geometric framework is  specified in the main text through Assumptions~\ref{assumption:f}, \ref{assumption:divisor}, and \ref{assumption:surface}. The first assumption encodes the basic properties of diagram \eqref{eq:flop-diagram-O}, while the second states existence of the pair $(S^+, W)$ in diagram \eqref{eq:surface-diagram-O}. The third encodes the basic properties of surface $S^-$ in the same diagram. The first two assumptions are systematically used in \S\ref{sec:geometric-setup} -- \S\ref{sec:flop-identity} and Appendix~\ref{sec:proof-C-framed-identity}, while the third is used only in \S\ref{subsec:transformation-classes} and \S\ref{subsec:flop-identity}. 
	
Consider a reduced irreducible plane curve $C\subset W$ so that $\nu \in C^{\mathsf{sing}}$ and let $C^\pm \subset S^\pm$ be its strict transforms. Under the current assumptions, the natural projection $C^+ \to C$ is an isomorphism and $p\in (C^+)^{\mathsf{sing}}$ is the only point of intersection of $C^+$ and $\Sigma^+$. On the other hand, $C^-\subset S^-$ is a reduced irreducible closed subscheme which intersects $\Sigma^-$ at finitely many points $y_1, \ldots, y_d$. Note that $(f_{S^-})^{-1}(C)$ is the scheme-theoretic union of two irreducible components:
\begin{align}
	(f_{S^-})^{-1}(C) = C^-\cup Z_{\Sigma^-}\ ,
\end{align}
where $(Z_{\Sigma^-})_{\mathsf{red}}=\Sigma^-$. Let $m_{\Sigma^-}\in \N$ be the \textit{multiplicity} of $Z_{\Sigma^-}$ along $\Sigma^-$, defined by $\ch_2(\scrO_{Z_{\Sigma^-}}) = m_{\Sigma^-} [\Sigma^-]$ in $N_1(Y^-)$.\footnote{Here and in the following, $\N\coloneqq\Z_{\geq 0}$.}

For $k\in \N$, let $\Hilb^k(C^+)$ denote the \textit{Hilbert scheme of $k$ points on $C^+$}: it parametrizes length $k$ zero-dimensional subschemes of $C^+$. Let $\FHilb_{T_n}^k(C^+; m)\subset \Hilb^k(C^+)\times \Hilb^{k+m}(C^+)$ be the \textit{Flag Hilbert scheme}\footnote{Following, the usual notation used in the literature for the Flag Hilbert scheme, we should have denoted it as $\FHilb_{T_n}^{[k, m]}(C^+)$: since in the following $k$ and $m$ will play different roles, we preferred to differentiate them in our notation.} parametrizing flags of ideal sheaves $\calI_1\subset \calI_2\subset \scrO_{C^+}$ so that the ideal $\calI_2$ has colength $k$ and $\calI_2/\calI_1$ is the pushforward of a length $m$ zero-dimensional sheaf on $T_n$, where $T_n$ is the scheme-theoretic intersection of $\Sigma_n$ and $S^+$ in $Y^+$. Let $\FHilb_{T_n, p}^k(C^+; m)$ be the \textit{punctual} Flag Hilbert scheme, i.e., the inverse image of the punctual Hilbert scheme $\Hilb_p^{k+m}(C^+)\subset \Hilb^{k+m}(C^+)$ via the forgetful morphism $\FHilb_{T_n}^{k}(C^+; m)\to \Hilb^{k+m}(C^+)$ sending $(\calI_1\subset \calI_2)$ to $\calI_1$. 

On the other hand, let $\SP_{C^-}(Y^-)$ be the moduli space of Pandharipande-Thomas stable pairs on $Y^-$ scheme-theoretic supported on $C^-$, as introduced in \cite{PT-Curve-counting}, i.e, the moduli space of pairs $(\calF,s)$ with $\calF$ a pure one-dimensional sheaf on $Y^-$, which is scheme-theoretically supported on $C^-$, and $s\colon \scrO_{Y^-}\to \calF$ a generically surjective section. Let $\SP_{C^-}^k(Y^-)$ denote its open and closed component defined by $\chi(\mathsf{Coker}(s))=k$. For each $1\leq i \leq d$, let $\SP_{C^-,\, y_i}^k(Y^-)\subset \SP_{C^-}^k(Y^-)$ be the reduced closed subscheme defined by the condition that $\mathsf{Coker}(s)$ is set-theoretically supported at $y_i$. Then, the first main result of this paper is:
\begin{theoremintroduction}\label{thm:main-A} 
	Let us consider a flop transition of the form \eqref{eq:flop-diagram-A} satisfying Assumptions~\ref{assumption:f}, \ref{assumption:divisor}, and \ref{assumption:surface}. Let $C\subset W$ be a reduced irreducible plane curve so that $\nu \in C^{\mathsf{sing}}$ and let $C^\pm \subset S^\pm$ be its strict transforms. Then, the following identity for generating functions of Euler numbers holds:
	\begin{align}\label{eq:main-identity-A} 
		q^{\chi(\scrO_{C^+})} \sum_{k\geq 0} \chi(\FHilb_{T_n, p}^k(C^+; m_{\Sigma^-}))q^k = q^{\chi(\scrO_{C^-})}\prod_{i=1}^d \sum_{k\geq 0} \chi(\SP_{C^-,\, y_i}^k(Y^-))q^k\ .
	\end{align}
\end{theoremintroduction}

\begin{remark*}
	By analogy to \cite[Proposition~12 and Theorem~13]{OS-HOMFLY}, note that 
	\cite[Theorem~3]{PT-BPS} implies that the right hand side of the main identity in Theorem~\ref{thm:main-A} is the expansion of a rational function of the form $P_-(q)/(1-q)$, where $P_-(q)$ is a polynomial with integer coefficients.
\end{remark*}

Furthermore, for $k\in \N$ let $\Hilb_{y_i}^k(C^-)$ be the punctual Hilbert scheme parametrizing length $k$ zero-dimensional subschemes of $C^-$, set-theoretically supported at $y_i$. Then, Theorem~\ref{thm:main-A} yields the following result via \cite[Proposition~B.5]{PT-BPS}. 
\begin{theoremintroduction}\label{thm:main-B}
	Under the same hypotheses of Theorem~\ref{thm:main-A}, assume in addition that $C^-$ has locally complete intersection singularities. Then, the following identity for generating functions of Euler numbers holds: 
	\begin{align}\label{eq:main-identity-B} 
		q^{\chi(\scrO_{C^+})} \sum_{k\geq 0} \chi(\FHilb_{T_n, p}^k(C^+; m_{\Sigma^-}))q^k = q^{\chi(\scrO_{C^-})}\prod_{i=1}^d \sum_{k\geq 0} \chi(\Hilb_{y_i}^k(C^-))q^k\ .
	\end{align}
\end{theoremintroduction}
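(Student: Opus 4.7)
The plan is to derive Theorem~\ref{thm:main-B} directly from Theorem~\ref{thm:main-A} by means of a local identification of Euler characteristics at each intersection point $q_i$ of $C^-$ and $\Sigma^-$. Indeed, Theorem~\ref{thm:main-A} rewrites the left-hand side of \eqref{eq:main-identity-B} as
\begin{align}
q^{\chi(\scrO_{C^-})}\prod_{i=1}^d \sum_{k\geq 0} \chi(\SP_{C^-,\, q_i}(Y^-;k))q^k,
\end{align}
so the content of Theorem~\ref{thm:main-B} reduces to establishing, for each $1\leq i \leq d$ and each $k\geq 0$, the equality
\begin{align}
\chi(\SP_{C^-,\, q_i}(Y^-;k)) = \chi(\Hilb_{q_i}^k(C^-)).
\end{align}

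The next step is to invoke \cite[Proposition~B.5]{PT-BPS}. Under the assumption that $C^-$ has locally complete intersection singularities, the dualizing sheaf $\omega_{C^-}$ is a line bundle, and Grothendieck duality on $C^-$ converts a Pandharipande--Thomas stable pair $(\calF, s)$ scheme-theoretically supported on $C^-$ into an ideal sheaf $\calI \subset \scrO_{C^-}$ cutting out a zero-dimensional subscheme of $C^-$ whose colength matches $\chi(\mathsf{Coker}(s))$. This is exactly the content of the cited proposition, which in this way yields an identification between $\SP_{C^-}(Y^-;k)$ and $\Hilb^k(C^-)$. Since the correspondence is constructed locally on $C^-$, it respects the stratification by set-theoretic support of the zero-dimensional data; restricting both sides to the locus supported at $q_i$ produces the desired equality of Euler characteristics. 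Substituting this local identification into \eqref{eq:main-identity-A} then yields \eqref{eq:main-identity-B}.

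The only point that requires attention is the matching of hypotheses: the locally complete intersection assumption made in Theorem~\ref{thm:main-B} must be the one under which \cite[Proposition~B.5]{PT-BPS} applies. Since an LCI curve is Gorenstein with invertible dualizing sheaf, this is automatic, and I do not anticipate any significant technical obstacle. A minor bookkeeping point is the compatibility of the identification in Proposition~B.5 with the decomposition of the moduli over the finite set $\{q_1, \ldots, q_d\}$, but this is essentially formal since both sides are controlled by the étale-local geometry of $C^-$ near each $q_i$.
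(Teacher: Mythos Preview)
Your proposal is correct and follows essentially the same approach as the paper: both derive Theorem~\ref{thm:main-B} from Theorem~\ref{thm:main-A} by observing that the LCI hypothesis makes $\omega_{C^-}$ a line bundle, then invoking \cite[Proposition~B.5]{PT-BPS} to identify $\SP_{C^-}(Y^-;k)$ with $\Hilb^k(C^-)$ and restricting to the punctual loci at each $q_i$. The paper in fact states the identification as an isomorphism of moduli spaces (so the punctual restriction is immediate), whereas you are slightly more cautious and only claim equality of Euler characteristics; either formulation suffices.
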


\begin{remark*}
	Although formally setting $n=1$ does not make sense in our setting, since it would contradict the hypothesis that $\Sigma$ is a $(0,-2)$-curve, it is worth noticing that the $n=1$ analog of $\FHilb_{T_n, p}^k(C^+; m_{\Sigma^-})$ is the punctual Flag Hilbert scheme considered in \cite{OS-HOMFLY}. Thus, the LHS of \eqref{eq:main-identity-A} and \eqref{eq:main-identity-B} for $n=1$ would correspond to the coefficient of $q$-degree $2m_{\Sigma^-}-\mu+1$ of the HOMFLY polynomial of the corresponding link (up to some constant factor), where $\mu$ is the Milnor number of the singularity. Following this analogy, the computation of the LHS of \eqref{eq:main-identity-A} and \eqref{eq:main-identity-B}  for arbitrary $n\geq 2$ can be seen as a first step into a generalization of \cite{OS-HOMFLY}.
\end{remark*}

\subsection{Strategy of the proof}

The proof of Theorem~\ref{thm:main-A} uses in an essential way the notion of $f$-stable pairs introduced by Bryan and Steinberg \cite{BS-Curve-counting-crepant}  for a smooth crepant resolution of threefold singularities $f\colon Y \to X$. This datum determines a torsion pair $(\scrT_f, \scrF_f)$ on the abelian category of coherent sheaves on $Y$, where the torsion part $\scrT_f$ is the full subcategory of sheaves $\calF$ so that $\R f_\ast\, \calF$ is isomorphic to a zero-dimensional sheaf in $\catDb(X)$. An $f$-stable pair is then a pair $(\calF,s)$, with $s\colon \scrO_Y\to \calF$ a section, so that $\calF$ is a one dimensional sheaf in $\scrF_f$ and $\mathsf{Coker}(s) \in \scrT_f$. The main result in \cite{BS-Curve-counting-crepant} provides an explicit relation between Donaldson-Thomas invariants and $f$-stable pair invariants, which is en essential step in the proof of the crepant resolution conjecture. The study of enumerative invariants in this setting has also been investigated  in \cite{Padurariu-stable-pairs}.\footnote{Note that in \cite{Padurariu-stable-pairs}, the author constructs a virtual fundamental class associated to the moduli stack of $f$-stable pairs. In \S\ref{subsec:f-stable-pairs}, we construct a derived enhancement of the moduli stack in a canonical way.}

In this paper, we will apply the construction of \cite{BS-Curve-counting-crepant} to the contraction $f^+\colon Y^+\to X$ in diagram \eqref{eq:flop-diagram-O} (satisfying Assumptions~\ref{assumption:f} and \ref{assumption:divisor}) in order to obtain a relation between the stable pair invariants of $Y^+$ and the Euler numbers of flag Hilbert schemes of $C^+$. Generalizing the analogous notion used in \cite{DHS-HOMFLY,Maulik-Stable-pairs}, we will employ a framing condition along $C^+$. A pair $(\calF,s\colon \scrO_Y\to \calF)$ will be said to be \textit{$C^+$-framed} if $\calF$ fits into a short exact sequence 
\begin{align}
	0\longrightarrow \calF_\Sigma \longrightarrow \calF \longrightarrow \calF_{C^+} \longrightarrow 0\ , 
\end{align}
where $\calF_\Sigma$ is set theoretically supported on $\Sigma^+$ and $\calF_{C^+}$ is the pushforward of a rank one torsion-free sheaf on $C^+$. This notion applies both to Pandharipande-Thomas stable pairs \cite{PT-Curve-counting}, as well as  $f^+$-stable pairs. As shown in  \S\ref{subsec:C-framed-moduli-stack}, for fixed topological invariants \eqref{eq:fixed-top}, 
\begin{align}\label{eq:fixed-top}
	\ch_2(\calF) = [C^+]+m [\Sigma^+]\quad\text{and} \quad \chi(\calF) = \ell\ ,
\end{align}
the corresponding moduli stacks $\SP_{C^+}(Y^+;m,\ell)$ and $f^+\textrm{-}\SP_{C^+}(Y^+;m,\ell)$, are algebraic spaces of finite type. Furthermore, an analogous construction yields a moduli space $\SP_{C^-}(Y^-; m,\ell)$ of $C^-$-framed Pandharipande-Thomas stable pairs on $Y^-$. 

The first step in the proof of Theorem~\ref{thm:main-A} is the construction of an equivalence
\begin{align}\label{eq:SP-isomorphism}
	\begin{tikzcd}[ampersand replacement=\&]
			f^+\textrm{-}\SP_{C^+}(Y^+; m, k+\chi(\scrO_{C^+}))\arrow{r}{\sim} \& \FHilb_{T_n}^k(C^+; m) 
	\end{tikzcd}
\end{align}
for any $m,k\in \N$. This is carried out in Theorem~\ref{thm:C-framed-Hilbert-A} and Proposition~\ref{prop:C-framed-Hilbert-C} by showing that both sides are equivalent to a relative Quot scheme. This result is very specific to contractions of rigid $(0,-2)$ curves, and relies on the detailed analysis of semistable sheaves with set theoretic support on the exceptional locus carried out in \S\ref{sec:semistable-exceptional-locus}. 

The next steps will use the 
generating functions 
\begin{align}
	f^+\textrm{-}\mathsf{PT}_{C^+}(Y^+; q, Q)&\coloneqq  \sum_{\ell\in \Z} \sum_{m\in \N} \chi(f^+\textrm{-}\mathcal{SP}_{C^+}(Y^+; m, \ell)) q^\ell Q^m \ , \\[4pt]
	\mathsf{PT}_{C^\pm}(Y^\pm; q, Q)&\coloneqq  \sum_{\ell\in \Z} \sum_{m\in \N} \chi(\SP_{C^\pm}(Y^\pm; m, \ell)) q^\ell Q^m \ , \\[4pt]
	\mathsf{PT}_{\mathsf{ex}}(Y^\pm; q, Q)&\coloneqq \sum_{\ell\in \Z} \sum_{m\in \N} \chi(\SP(Y^\pm; m[\Sigma^\pm], \ell)) q^\ell Q^m\ .
\end{align}
defined in \S\ref{sec:framed-wallcrossing-identity}. As explained there, these are in elements of the ring $\Lambda_q[[Q]]$, where $\Lambda_q$ is the ring of Laurent formal power series in $q$ with complex coefficients. Moreover, $\mathsf{PT}_{\mathsf{ex}}(Y^\pm)$ is an invertible element of $\C[[q,Q]]$. 

Then the second step is the following:
\begin{theoremintroduction}[{Theorem~\ref{thm:C-framed-identity}}]\label{thm:main-C}
	Let $f^+\colon Y^+\to X$ be a contraction satisfying Assumptions~\ref{assumption:f} and \ref{assumption:divisor}. Let $C\subset W$ be a reduced irreducible plane curve so that $\nu \in C^{\mathsf{sing}}$ and let $C^+ \subset S^+$ be its strict transforms. Then, the following identity holds in $\Lambda_q[[Q]]$:
	\begin{align}
		f^+\textrm{-}\mathsf{PT}_{C^+}(Y^+; q,Q) = \frac{\mathsf{PT}_{C^+}(Y^+; q,Q)}{\mathsf{PT}_{\mathsf{ex}}(Y^+; q,Q)}\ .
	\end{align}
\end{theoremintroduction}
The proof of above result follows the same lines as the proof of \cite[Theorem~6]{BS-Curve-counting-crepant}, although some intermediate arguments have to be modified in the $C^+$-framed setup. For completeness, the details are included in Appendix~\ref{sec:proof-C-framed-identity}.

The final step in the proof of Theorem~\ref{thm:main-A} is a flop identity relating $C^\pm$-framed topological stable pair invariants on both sides the transition. 
As shown in Proposition~\ref{prop:C-framed-flop-A}, one has the identity 
\begin{align}\label{eq:flop-identity-O} 
		Q^{m_{\Sigma^-}}\, \frac{\mathsf{PT}_{C^+}(Y^+; q, Q^{-1})}{\mathsf{PT}_{\mathsf{ex}}(Y^+; q, Q^{-1})}= \frac{\mathsf{PT}_{C^-}(Y^-; q, Q)}{\mathsf{PT}_{\mathsf{ex}}(Y^-; q, Q)}\ ,
\end{align}
in the complex vector space $\C\{q,Q\}$ of bi-infinite formal power series $\sum_{\ell,m\in \Z} c_{\ell, m} q^\ell Q^m$. In particular, this implies that both sides of the above equation are polynomial functions of $Q$ with coefficients in $\Lambda_q$. The proof of this identity is completely analogous to the proof of \cite[Proposition~2.4]{Maulik-Stable-pairs}. 

\subsection{Explicit examples}\label{subsec:examples}

Fix $r,s,n\in \N$, with $r>s>n\geq 2$ and $r, s$ coprime. Consider the planar curve singularity $C_{r,s}$ given by the equation $x^r=w^s$ and corresponding to the $(s, r)$ torus knot. Denote by $p$ the origin of $\C^2$. Let $T_n$ be the closed subscheme given by the equations $x=0$ and $w^n=0$. For any $t,m\geq 1$, let 
\begin{align}
	Z_{t,m}(q,x)  \coloneqq  \prod_{i=1}^t (1+xq^i + \cdots + (xq^i)^{m})
\end{align}
and let $Z_{t,m}^{(k)}(q)$ denote the coefficient of $x^k$ in $Z_{t,m}(q,x)$. 
\begin{remark*}
	Note that
	\begin{align} 
		Z^{(k)}_{t,m}(q) = \sum_{\genfrac{}{}{0pt}{}{i,j\geq 0}{i + (m+1)j =k}} (-1)^j\, h_i\big(q,\ldots, q^t\big) e_j\big(q^{m+1}, \ldots, q^{t(m+1)}\big)\ ,
	\end{align}
	where $h_i(z_1, \ldots, z_t)$ is the $i$-th complete homogeneous symmetric function and $e_j(z_1, \ldots, z_t)$ is the $j$-th elementary symmetric function, both in the variables $z_1, \ldots, z_t$. 
\end{remark*}
Set
\begin{align}
	Z_{r,s,n}(q)  \coloneqq  \frac{1}{1-q^s}\sum_{m=0}^{n-1} \sum_{k=s-n}^{s-m-1} q^{mr} Z^{(k)}_{r-1,n}(q)\ .
\end{align}

In \S\ref{subsec:torus-invariant-+} we prove the following:
\begin{theoremintroduction}\label{thm:main-D}
	Let $\FHilb_{T_n, p}^k(C_{r,s}; s)$ be the punctual Flag Hilbert scheme parametrizing flags of ideal sheaves $\calI_1\subset \calI_2\subset \scrO_{C_{r,s}}$ so that $\scrO_{C_{r,s}}/\calI_1$ is set-theoretic supported at $p$, the ideal $\calI_2$ has colength $k$, and $\calI_2/\calI_1$ is the pushforward of a length $s$ zero-dimensional sheaf on $T_n$. Then 
	\begin{align}\label{eq:main-D}
		\sum_{k\geq 0} \chi(\FHilb_{T_n, p}^k(C_{r,s}; s)) q^k = Z_{r,s,n}(q) \ .
	\end{align}
\end{theoremintroduction}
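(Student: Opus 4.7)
The proof proceeds by torus localization. The $\C^\ast$-action on $\C^2$ with weights $(s, r)$ on $(x, w)$ preserves $C_{r, s}$, $T_n$ and the origin $p$, and lifts to the Flag Hilbert scheme $\FHilb^k_{T_n, p}(C_{r, s}; s)$. First I would verify that this lifted action has only isolated fixed points, so that $\chi(\FHilb^k_{T_n, p}(C_{r, s}; s))$ equals the number of $\C^\ast$-fixed points.

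A $\C^\ast$-fixed point corresponds to a pair $\calI_1 \subset \calI_2$ of monomial ideals in $R := \C[x, w]/(x^r - w^s) \cong \C[t^s, t^r]$. Identifying monomial ideals with cofinite $\Gamma$-submodules of the numerical semigroup $\Gamma := \langle s, r\rangle$ and writing $A := \Lambda_2 \setminus \Lambda_1$ for the monomial support of $\calI_2/\calI_1$, the flag data translates into: $|A| = s$, $(A + s) \cap A = \emptyset$ and $(A + rn) \cap A = \emptyset$ (annihilation by $x$ and $w^n$), and $\Lambda_1 = \Lambda_2 \setminus A$ is itself a $\Gamma$-submodule (equivalently, $A$ is downward-closed in $\Lambda_2$ for the $\Gamma$-order). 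An initial lemma, using the arithmetic of $\Gamma$ together with the constraint $|A| = s$, shows that these conditions force $\Lambda_2 = A + \Gamma$, reducing the fixed-point count to an enumeration of subsets $A \subset \Gamma$ of size $s$ satisfying these constraints.

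Writing each $\gamma \in \Gamma$ in its normal form $sa + rb$ with $0 \leq a < r$, $b \geq 0$, I decompose $A$ into columns $A_a := \{b : sa + rb \in A\}$. The $x$-shift $(a, b) \mapsto (a + 1, b)$ (with wrap-around $(r - 1, b) \mapsto (0, b + s)$) acts freely on such $A$'s, and each orbit has a unique \emph{base} representative characterized by $A_0 \cap \{0, 1, \ldots, s - 1\} \neq \emptyset$. The key combinatorial lemma asserts a bijection between bases and parameters $(m, e_1, \ldots, e_{r-1})$ with $m \in \{0, \ldots, n - 1\}$ and $e_i \in \{0, \ldots, n\}$ subject to $|A_0| := s - \sum_i e_i \in [m + 1, n]$, under which $A_0$ is the integer interval of length $|A_0|$ ending at $s + m - 1$ and each $A_i$ (for $i \geq 1$) is the integer interval of length $e_i$ immediately below $A_{i - 1}$, i.e.\ ending at $\min A_{i - 1} - 1$. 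A direct computation then shows
\begin{align}
|\Gamma \setminus (A + \Gamma)| = mr + \sum_{i = 1}^{r - 1} i \, e_i.
\end{align}

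Summing $q^{|\Gamma \setminus (A + \Gamma)|}$ over bases and multiplying by $(1 - q^s)^{-1}$ to account for the $x^j$-shifts ($j \geq 0$), then regrouping the sum over bases by $k = \sum_i e_i$ so the inner sum becomes $Z^{(k)}_{r - 1, n}(q)$, recovers precisely $Z_{r, s, n}(q)$. The main obstacle I anticipate is proving the classification lemma: the downward-closure condition on $A$ must be exploited, together with the inequality $r > s > n$, to force each column $A_i$ to be a consecutive integer interval with the prescribed starting point, and to rule out configurations where some $A_i$ is non-contiguous or has the wrong endpoints.
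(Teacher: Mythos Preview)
Your approach is correct and is essentially a reparametrization of the paper's argument. Both proofs localize with respect to the same $\C^\times$-action and then count fixed points combinatorially; the difference is only in how the fixed points are encoded.

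The paper (Lemma~8.4 and Theorem~8.8) records a fixed point as a Young diagram $\mu\in\sfP(r,s,n)$ with $\ell(\mu)=s$, row multiplicities $\leq n$, and $\mu_1-\mu_s\leq r$ (with the boundary condition $m_1+m_s\leq n$ when equality holds); the smaller ideal $\calI_2$ corresponds to $\mu$ with the rightmost box of each row removed. The paper then splits $\mu=\lambda\cup\nu\cup\rho$ into a full-height rectangle $\lambda$ (of width $\mu_s$), a block $\nu$ of $m$ rows of length $r$, and a remainder $\rho\in\sfP(r-1,n)$, and sums.

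Your semigroup subset $A$ is exactly the set of degrees of those rightmost boxes. Under the dictionary $\gamma=sa+rb$ with $0\le a<r$, one has $A_a=\{\,j-1:\mu_j-1\equiv a\bmod r\,\}$; since $\mu$ is a partition, each $A_a$ is automatically a consecutive interval, and the intervals stack in decreasing order of $a$ --- this disposes of the ``main obstacle'' you anticipate. Your $x$-shift corresponds precisely to varying the width $\mu_s$ of the rectangle $\lambda$; your parameter $m$ is the paper's $m$ (the number of rows with $\mu_j-\mu_s=r$, which wrap around into $A_0$ with $b\ge s$); and your $e_i$ is the multiplicity of the part $i$ in $\rho$, so that $\sum_i ie_i=|\rho|$ and $|A_0|=s-\ell(\rho)$. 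With this dictionary your formula $|\Gamma\smallsetminus(A+\Gamma)|=mr+\sum_i ie_i$ and the paper's weight computation agree on the nose, and both collapse to the stated expression for $Z_{r,s,n}(q)$.

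In short: same localization, same decomposition into ``shift $\times$ base'', merely a transpose of coordinates (you bound $a<r$, the paper bounds the $w$-exponent $<s$). Neither approach is more general or more elementary than the other.
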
 

Now, consider the complete intersection curves of the form 
\begin{align}
	C_{r,t,n}\colon\begin{cases}
		xv - w^n =0 \\[2pt]
		x^{r-t}- v^t =0 
	\end{cases}
\end{align}
in $\C^3$, with $r,t,n\geq 2$ such that $r>t$ and the pair $(r,nt)$ is coprime. Thanks to Theorems~\ref{thm:main-B} and \ref{thm:main-D}, we can derive:
\begin{theoremintroduction}[{Theorem~\ref{thm:lci-curves}}]\label{thm:main-E}
	For any $k\in \N$, let $\Hilb_o^k(C_{r,t,n})$ be punctual Hilbert scheme parametrizing length $k$ zero-dimensional subschemes of $C_{r,t,n}$ with support at the origin $o$. Then 
	\begin{align}\label{eq:main-E}
		\sum_{k\in \N} \chi(\Hilb_o^k(C_{r,t,n})) q^k = q^{-nt(t-1)/2} Z_{r,tn,n}(q)\ .
	\end{align}
\end{theoremintroduction}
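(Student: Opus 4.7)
The plan is to apply Theorem~\ref{thm:main-B} to a flop diagram in which $C^+$ is a torus-knot plane curve and $C^- = C_{n,p,q}$, and then evaluate the left-hand side using the explicit formula of Theorem~\ref{thm:main-D}. The relevant local model is a length-$n$ pagoda flop: I would take $X$ to be a smooth projective threefold whose unique singular point has an analytic germ isomorphic to the pagoda singularity of length $n$, and $Y^\pm$ its two small resolutions, with exceptional $(0,-2)$-curves $\Sigma^\pm$ of width $n$. In this setting all hypotheses on the flop in the setup of Theorem~\ref{thm:main-A} are automatically satisfied.

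Next I would choose the smooth Weil divisor $W \subset X$ through the singular point and the reduced irreducible plane curve $C\subset W$ so that, in analytic local coordinates $(x,w)$ on $S^+\cong W$, the strict transform $C^+$ has equation $x^{p+q} - w^{qn} = 0$. The coprimality hypotheses $\gcd(p,q)=1$ and $\gcd(n,p+q)=1$ imply $\gcd(p+q,qn)=1$, so $C^+$ is reduced, irreducible, and meets $\Sigma^+$ transversely at its unique singular point $p$. A direct computation in the standard pagoda-flop coordinate charts should then show that the strict transform $C^-\subset S^-$ is analytically equivalent, at a single point $q_1$ of $\Sigma^-$, to the complete intersection $C_{n,p,q}$, and that the scheme-theoretic preimage $(f_{S^-})^{-1}(C) = C^-\cup Z_\Sigma$ has multiplicity $m_\Sigma = qn$ of $Z_\Sigma$ along $\Sigma$. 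Thus $d=1$ and $m_\Sigma = qn$ matches exactly the value $s$ appearing in Theorem~\ref{thm:main-D}.

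Substituting these data into Theorem~\ref{thm:main-B} and applying Theorem~\ref{thm:main-D} with $(r,s,n) = (p+q, qn, n)$ would yield
\begin{align*}
q^{\chi(\scrO_{C^+})}\, Z_{p+q,qn,n}(q) \;=\; q^{\chi(\scrO_{C^-})}\sum_{\ell\geq 0} \chi(\Hilb_o^\ell(C_{n,p,q}))\, q^\ell \ ,
\end{align*}
so that \eqref{eq:main-E} follows once the Euler-characteristic shift $\chi(\scrO_{C^+}) - \chi(\scrO_{C^-}) = -nq(q-1)/2$ is verified. Since $C^+$ and $C^-$ share the same normalization (the smooth torus knot) and the flop is an isomorphism away from its exceptional locus, this amounts to comparing the $\delta$-invariants of the two singularities, which is a routine local computation from the explicit monomial parametrization of their local rings.

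The main obstacle lies in the explicit local geometry of Step~2: one must trace through the pagoda-flop coordinate charts to confirm that $C^-$ is precisely of analytic type $C_{n,p,q}$, that $\Sigma^-\cap C^-$ consists of a single point, and that the generic multiplicity of $Z_\Sigma$ along $\Sigma$ equals $qn$. A related subtlety is that Theorem~\ref{thm:main-D} is stated under the hypothesis $r>s>n$, which here translates to $p>q(n-1)$; outside this range one would have to argue either via the analytic equivalence swapping the roles of $x$ and $w$ (with a compatible redefinition of $T_n$) or by directly extending the torus-invariant enumeration of \S\ref{subsec:torus-invariant-+} to the remaining parameters. By contrast, the $\delta$-invariant shift in Step~3 is expected to be a short computation.
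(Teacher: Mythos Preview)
Your strategy is exactly the one the paper follows: it realises $C^+$ as the plane curve $x^{p+q}=w^{qn}$ inside the explicit pagoda-flop threefolds of \S\ref{subsec:examples-section-A}, identifies $C^-$ locally with $C_{n,p,q}$ (Lemma~\ref{lem:minus-curve-B}), checks $d=1$ and $m_\Sigma=qn$ (Lemma~\ref{lem:minus-curve-C}), and then plugs into Theorem~\ref{thm:main-B} together with Theorem~\ref{thm:main-D}.

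The one place where your proposal diverges is the computation of the shift $\chi(\scrO_{C^+})-\chi(\scrO_{C^-})$. You propose to compare $\delta$-invariants via the monomial semigroups of the two branches; this would certainly work, but the paper sidesteps the calculation entirely. Since $\Hilb_o^0$ is a point, the right-hand side of the identity has constant term $1$; hence the exponent is forced once one knows the lowest power of $q$ appearing in $Z_{p+q,qn,n}(q)$. The paper computes that minimal degree combinatorially in Lemma~\ref{lem:minimal-size} (the smallest partition in $\sfP(r,nk,n)$ has size $nk(k+1)/2$), and reads off the shift from there. This is a bit slicker than a direct $\delta$-computation and has the advantage that it never leaves the partition combinatorics already set up for Theorem~\ref{thm:main-D}; your approach has the advantage of being more geometrically transparent. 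Either route closes the argument.

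Your caveat about the inequality $r>s$ in Theorem~\ref{thm:main-D} is well spotted; the paper's own statement of Theorem~\ref{thm:lci-curves} only assumes $r>t$ (i.e.\ $p+q>q$), and the body-text proof of Theorem~\ref{thm:plus-curve} and Lemma~\ref{lem:minimal-size} only use $r,s$ coprime and $k<r$, so the stronger inequality in the introduction appears to be an artefact of presentation rather than an actual constraint.
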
 

\begin{remark*}
	In principle one could try to compute the left-hand-side of identity \eqref{eq:main-E} in Theorem~\ref{thm:main-E} directly by localization with respect to the given torus action. This approach leads to a very difficult counting problem for constrained three dimensional partitions. The above result shows that this counting problem is equivalent in a nontrivial way to a tractable counting problem for constrained two dimensional partitions.
\end{remark*}

We conclude the introduction with a brief discussion of possible directions for further research. 

\subsection{Possible generalizations}\label{subsec:gen} 

We will first discuss potential generalizations of Theorems~\ref{thm:main-A} and \ref{thm:main-B}. For convenience, the main steps in the proofs of these theorems are summarized in the following diagram:
\begin{align}\label{eq:proofchartA} 
	\begin{tikzcd}[ampersand replacement=\&, column sep=7em, row sep=large]
		\SP_{C^+}(Y^+; m_{\Sigma^-}) \ar[r, leftrightarrow, "\text{Flop\ identity\ \eqref{eq:flop-identity-O}}"] \ar[d, leftrightarrow, swap, "\text{Theorem~\ref{thm:main-C}}"] 
		 \&  \SP_{C^-}(Y^-; 0) \ar[dd,leftrightarrow]\\
		f\textrm{-}\SP_{C^+}(Y^+; m_{\Sigma^-}) \ar[d, leftrightarrow, swap, "\text{Equivalence~\eqref{eq:SP-isomorphism}}"] \& 	\\
		\FHilb_{T_n}(C^+) \& \Hilb(C^-)
	\end{tikzcd}
\end{align}
where $\SP_{C^\pm}(Y^\pm; m)$ denotes the moduli stack of $C^\pm$-framed stable pairs $(\scrO_{Y^\pm}\to \calF)$ with $\ch_2(\calF) =[C^\pm]+ m[\Sigma^\pm]$.  For ease of exposition, we will refer to such objects as \textit{$C^\pm$-framed stable with multiplicity $m$ along $\Sigma^\pm$}.  Furthermore,
\begin{align}
	\FHilb_{T_n}(C^+)\coloneqq \bigsqcup_{k\in \N}\FHilb_{T_n}^k(C^+)\quad \text{and}\quad \Hilb(C^-)\coloneqq \bigsqcup_{k\in \N} \Hilb^k(C^-)\ .
\end{align}
Moreover, note that the right vertical arrow in diagram \eqref{eq:proofchartA} represents the correspondence proven in \cite[Appendix~B]{PT-BPS}, between stable pairs $\scrO_{Y^-} \to \calF$ on $Y^-$, with $\calF$ scheme-theoretically supported on $C^-$, and ideal sheaves of zero dimensional subschemes of $C^-$. This requires $C^-$ to have locally complete intersection singularities. In the above terminology, such objects are called \textit{$C^-$-framed stable pairs with multiplicity zero along $\Sigma^-$}. 

We also recall that the geometric framework of this paper is encoded in a diagram of 
the form 
\begin{align}
	\begin{tikzcd}[ampersand replacement=\&]
	C^+\ar[r]\ar[ddrr] \& 	S^+ \ar{dr}[swap]{f_{S^+}} \ar[dashed]{rr}{\phi_S}\& \& S^- \ar{dl}{f_{S^-}} \& C^- \ar[l] \ar[ddll]\\
		\& \& W \&\& \\
	\& \& C \ar[u] \& \&
	\end{tikzcd}
\end{align}
where $W\subset X$ is a smooth Weil divisor on $X$ passing through the singular point, and $C$ is a reduced irreducible singular curve on $W$ whose singular locus contains the singular locus of $X$. The surfaces $S^\pm$ and the curves $C^\pm$ are the strict transforms of $W$, and $C$ respectively under the contraction maps $f^\pm\colon Y^\pm \to X$. Moreover, its is further assumed that $S^+$ is smooth and intersects $\Sigma^+$ transversely at a single point $p$. Then a local computation shows that $S^-$ has an canonical $A_{n-1}$ singularity and it also contains $\Sigma^-$ as a closed subscheme.

The above framework is amenable to several possible natural generalizations, as follows. 
\begin{enumerate}\itemsep0.2cm 
	\item Can one prove similar results in a more general setting, allowing $S^-$ to develop other types of singularities? 
	
	\item Can one obtain similar results for non-reduced curves $C^+$ and $C^-$?
	
	\item Diagram~\eqref{eq:proofchartA} is left/right asymmetric since the upper-right-corner consists of $C^-$-framed stable pairs with multiplicity \textit{zero} along $\Sigma^-$, while the upper-left-hand corner consists of 
	$C^+$-framed stable pairs with \textit{nonzero} multiplicity along $\Sigma^+$. Is there a more general framework in which the left/right symmetry is restored, allowing arbitrary multiplicities along $\Sigma^+$ and $\Sigma^-$?
\end{enumerate}
 
Let us briefly address the above questions. First note that one can easily relax the smoothness assumption on the Weil divisor $W$, allowing it to develop any type of isolated surface singularity at the singular point of $X$. This will result in a large class of singularities of $S^\pm$, as well as $C^\pm$, which  can be determined in principle by local computations. At the same time, one can allow the curve $C\subset W$ to be non-reduced, in which case  $C^\pm$ will be non-reduced as well. We expect that such a general geometric framework will give rise to a symmetric diagram of the form 
\begin{align}\label{eq:proofchartB} 
	\begin{tikzcd}[ampersand replacement=\&, column sep=7em, row sep=large]
		\SP_{C^+}(Y^+) \ar[r, leftrightarrow, "\text{Wallcrossing}"] \ar[d, leftrightarrow, swap, "\text{Wallcrossing}"] 
		\&  \SP_{C^-}(Y^-) \ar[d,  "\text{Wallcrossing}"]\\
		\fSP_{C^+}(Y^+) \& \fSP_{C^-}(Y^-)
	\end{tikzcd}\ .
\end{align}
Here $\SP_{C^\pm}(Y^\pm)$ are moduli spaces of $C^\pm$-framed stable pairs with numerical invariants in $[C^\pm]+\Z[\Sigma^\pm]$, while $\fSP_{C^\pm}(Y^\pm)$ are their relative counterparts with respect to the contractions $f^\pm\colon Y^\pm \to X$. In more detail, we expect suitable generalizations of the wallcrossing identities in Theorem~\ref{thm:main-C} and Equation~\eqref{eq:flop-identity-O} to hold by analogy to \cite{Calabrese-DT-Flops}, \cite{Maulik-Stable-pairs}, and \cite{BS-Curve-counting-crepant}. This is based on the fact that the wallcrossing formulas proven in \textit{loc. cit.} are based on general identities in motivic Hall algebras rather than detailed scheme-theoretic properties.
 
  At the same time we expect the remaining steps, i.e., the completion to a diagram of the form 
\begin{align}\label{eq:proofchartC} 
	\begin{tikzcd}[ampersand replacement=\&, column sep=7em, row sep=large]
		\SP_{C^+}(Y^+) \ar[r, leftrightarrow, "\text{Wallcrossing}"] \ar[d, leftrightarrow, swap, "\text{Wallcrossing}"] 
		\&  \SP_{C^-}(Y^-) \ar[d,  "\text{Wallcrossing}"]\\
		\fSP_{C^+}(Y^+)\ar[d,leftrightarrow,dashed] \& \fSP_{C^-}(Y^-) \ar[d,leftrightarrow,dashed]\\
		\text{Moduli\ space\ on}\ C^+ \& \text{Moduli\ space\ on}\ C^-
	\end{tikzcd}
\end{align}
to be very difficult. In fact, in arbitrary situations, untractable. 

For stable pairs with scheme-theoretic support on $C^\pm$, the associated moduli space is isomorphic to the Hilbert scheme of points on $C^\pm$ by the correspondence proven in \cite{PT-BPS}, as long as the curve 
$C^\pm$ is Gorenstein. However, the top horizontal wallcrossing arrow in \eqref{eq:proofchartC} involves a shift of numerical invariants, i.e., it relates the stable pair invariants $\mathsf{PT}_{C^-}(Y^-; m^-)$ to stable pair invariants $\mathsf{PT}_{C^+}(Y^+; m^+)$, where $m^+$ and $m^-$ are related by a shift determined by the local geometry of our configurations. Diagram~\eqref{eq:proofchartA} is a particular case of this phenomenon, where $m^-=0$ and $m^+=m_{\Sigma^-}$. As a result, one has to generalize the results of \cite{PT-BPS} to stable pairs of the form 
\begin{align} 
	\begin{tikzcd}[ampersand replacement=\&]
		\& \& \scrO_{Y^\pm}\ar[d] \& \& \\
		0\ar[r] \& \calF_{\Sigma^\pm} \ar[r] \& \calF \ar[r] \& \calF_{0} \ar[r] \& 0
	\end{tikzcd}\ ,
\end{align} 
with $\calF_{0}$ scheme-theoretically supported on $C^\pm$ and $\calF_{\Sigma^\pm}$ set-theoretically supported on $\Sigma^\pm$. This requires in turn a suitable generalization of Theorem~\ref{thm:C-framed-Hilbert-A}, which poses significant challenges. The difficulty resides in the fact that certain essential steps in the proof rely heavily on the current assumptions. For example, an essential step in the proof of Theorem~\ref{thm:Pi} is Lemma~\ref{lem:Pi-C}--\eqref{item:Pi-C-1}, which shows that $\calF_0$ is semistable, fixing at the same time its $S$-equivalence class. In conjunction with Corollary~\ref{cor:stability-G}, this leads to a reformulation of the moduli problem for $f$-stable pairs in terms of nested ideal sheaves on $C^+$. For more general configurations $C^+\subset S^+\subset Y^+$, a similar analysis will produce a long list of complicated constraints on the Harder-Narasimhan filtration of $\calF_0$, typically leading to an untractable moduli problem. Therefore, the main challenge is to relax the current assumptions, while maintaining sufficient control of the local geometry to obtain tractable moduli problems on both sides of the transition. We leave the investigation of this problem  for future work.

\subsection{Open questions in low-dimensional topology}\label{sec:openquest-top} 

Given the existing results on plane curve singularities and knot polynomials \cite{OS-HOMFLY, ORS-HOMFLY}, a natural question is whether such a relation might extend at least to the class of space singularities considered in \S\ref{subsec:examples}. While there is no natural construction assigning a link in a three-manifold to a space curve singularity, some partial insight can be gained from the \textit{large $N$ duality} approach employed in \cite{Alg_knots_lagrangians}. In the present framework, this leads to large $N$ duality for extremal transitions associated to threefold singularities of analytic type $xy + z^2 - w^{2n} =0$, with $n\geq 2$. By analogy to \textit{loc. cit.}, the curve singularities in \S\ref{subsec:examples} are related through such transitions to certain configurations of links in singular reducible lagrangian cycles. At the moment there is no known construction of polynomial invariants associated to such configurations, either in algebraic topology or topological string theory. In fact the explicit results obtained in \S\ref{subsec:examples} serve as a motivating factor for this new direction of research.

\subsection{Open questions in combinatorics and representation theory}\label{sec:openquest-rep} 

Motivated by the results of \cite{GORS-Knots,GK-Hilbert}, a possible open question is whether there is any connection with the representation theory of Cherednik algebras, double affine Hecke algebras or Coulomb branch algebras. As a preliminary remark, note that the localized equivariant homology of the flag Hilbert scheme in Theorem~\ref{thm:main-D} is identified via pushforward to a subspace of the equivariant homology of the punctual Hilbert scheme of the plane curve singularity $C_{r,s}$. Furthermore, as shown in \cite{GK-Hilbert}, the latter carries a natural action of the rational Cherednik algebra. As the resulting representation is known to be irreducible, obviously, this action cannot preserve any subspace. Nevertheless, this observation constitutes a starting point for a more systematic investigation of a possibly more subtle representation theoretic connection.  

In a series of papers \cite{GM-I, GM-II}, Gorsky and Mazin investigated the topology of the compactified Jacobian of a plane curve singularity using \textit{$(q,t)$-Catalan numbers} and \textit{Dyck path} combinatorics. Given the well-known connection between the punctual Hilbert scheme of a curve singularity and its compactified Jacobian, it is natural to ask whether there exists an alternative formulation of Theorem~\ref{thm:lci-curves} in terms of Dyck path combinatorics. We note that our functions $Z_{r,s, n}(q)$ and $Z_{t,m}(q)$ can be expressed in terms of constrained two-dimensional partitions (see \S\ref{subsec:torus-invariant-+}). However, at present, we are unable to establish an explicit relationship--if any--between these functions and Dyck paths.

\subsection{Outline}

In \S\ref{sec:geometric-setup}, we study the general framework in which we develop the theory of ($C$-framed) $f$-stable pairs. In particular, we introduce the assumptions on a map $f\colon Y\to X$ between projective threefolds and characterize the geometric implications of them. In \S\ref{sec:semistable-exceptional-locus}, we study semistable sheaves supported on the exceptional locus and provide an explicit equivalence between their moduli stacks and the moduli stacks of certain zero-dimensional sheaves. In \S\ref{sec:stable-pairs} we develope the theory of $f$-stable pairs in our framework and the construction of their moduli stacks: since we follow an approach developed in \cite{DPS-torsion-pairs}, we introduce a suitable torsion pair on the abelian category of coherent sheaves on the smooth threefold and we prove its openness (in the sense of Lieblich -- see \cite[Appendix~A]{AB-Bridgeland-stable}). \S\ref{sec:framed-stable-pairs} is devoted to the introduction of $C$-framed $f$-stable pairs and their moduli stacks. In addition, we provide an relation to Hilbert schemes on the curve $C$. In \S\ref{sec:framed-wallcrossing-identity} and \S\ref{sec:flop-identity} we prove the main identities between generating functions of Euler numbers of moduli stacks of ($C$-framed) $f$-stable pairs, which will allow us to prove Theorems \ref{thm:main-A}, \ref{thm:main-B}, and \ref{thm:main-C}. \S\ref{sec:examples} is devoted to explicit examples in the toric framework and the proofs of Theorems~\ref{thm:main-D} and \ref{thm:main-E}. Finally the paper has three appendices. Appendix~\ref{sec:theorem-filtration} contains the proof of Theorem~\ref{thm:filtration}, while in Appendix~\ref{sec:proof-C-framed-identity} we provide the detailed proof of Theorem~\ref{thm:C-framed-identity}, which is a $C$-framed version of the main result of \cite{BS-Curve-counting-crepant}. Appendix~\ref{sec:openness-orthogonality} provides a criterion to prove the openness of the torsion-free part of a torsion pair. 

\subsection*{Acknowledgments}

The paper was finalized during a research visit of the third-named author at Kavli IPMU, the University of Tokyo. He thanks the institution for providing an exceptional research environment. We would like to thank Yan Soibelman for very helpful discussions and Michael Wemyss for very stimulating discussions about contraction algebras and Reid's width, and for bringing to our attention the paper \cite{Spherical_twists}: it was our starting point to prove Theorem~\ref{thm:filtration}. We also thank the anonymous referee for their insightful comments and feedback, which significantly improved the presentation of this paper.

The paper was finalized during a research visit of the third-named author at Kavli IPMU, the University of Tokyo. He thanks the institution for providing an exceptional research environment. Moreover, he would like to thank Andrei Neguţ for very helpful discussions, which took place under the MIT-UNIPI Project (XI call). Finally, he acknowledges the MIUR Excellence Department Project awarded to the Department of Mathematics, University of Pisa, CUP I57G22000700001. He is a member of GNSAGA of INDAM.

\section{Geometric setup}\label{sec:geometric-setup}

In this section, we shall introduce the geometric framework over which we shall study ($C$-framed) $f$-stable pairs. In particular, we state the Assumptions we shall impose and describe the geometric consequences of them. 

\subsection{$(0,-2)$ curves on threefolds}\label{subsec:rigid-curves} 

Let $X$ and $Y$ be irreducible projective complex threefolds and let $f\colon Y\to X$ be a morphism. 
\begin{definition}\label{def:contraction}
	We say that $f$ is a \textit{contraction} if
	\begin{itemize}\itemsep0.2cm
		\item $Y$ is smooth, $X$ is reduced, normal\footnote{In the usual definition of \textit{contractions}, it is customary to require that $X$ is Gorenstein. However, in our definition of a contraction, we do not impose this condition explicitly. Instead, we note that for a contraction satisfying Assumption~\ref{assumption:f} in our framework, $X$ is guaranteed to be Gorenstein by Lemma~\ref{lem:contraction-A}.}, and with a unique singular point $\nu\in X$, and
		\item the scheme-theoretic inverse image $\Sigma\coloneqq f^{-1}(\nu)$ is isomorphic to a rational curve on $Y$ and $f$ maps the complement $Y\smallsetminus \Sigma$ isomorphically onto $X\smallsetminus \{\nu\}$. 
	\end{itemize}
\end{definition}

We shall impose the following assumption.
\begin{assumption}\label{assumption:f} 
	The morphism $f$ is a contraction such that $\Sigma$ is a smooth connected $(0,-2)$-curve\footnote{i.e., $\Sigma$ is reduced, $\Sigma\simeq \PP^1$, and $\calN_{Y/\Sigma}\simeq \scrO_{\PP^1}\oplus \scrO_{\PP^1}(-2)$, where $\calN_{Y/\Sigma}$ is the normal bundle of the inclusion $\Sigma\subset Y$.} on $Y$.
\end{assumption}
Let us denote by $i_\Sigma\colon \Sigma \to Y$ the corresponding closed embedding and by $\calI_\Sigma\subset \scrO_Y$ the corresponding defining ideal. 

\begin{lemma}\label{lem:contraction-A} 
	Under the conditions stated in Assumption~\ref{assumption:f}, the threefold $X$ is Gorenstein with rational singularities. Therefore the dualizing sheaf 
	$\omega_X$ is locally free and 
	\begin{align}\label{eq:dirimg-A}
		\R^0f_\ast \scrO_Y = \scrO_X\quad\text{and}\quad \R^k f_\ast \scrO_{Y}=0 \ \text{ for } \ k\geq 1\ . 
	\end{align}
	Moreover, $f$ is crepant, i.e., $f^\ast\omega_X \simeq \omega_Y$.
\end{lemma}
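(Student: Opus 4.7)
The plan is to exploit the explicit structure of the formal neighborhood of the $(0,-2)$-curve $\Sigma\subset Y$ through induction on its infinitesimal thickenings $\Sigma^{(m)}$, the closed subschemes cut out by $\calI_\Sigma^{m+1}$. Since $\Sigma\subset Y$ is a regular embedding of a smooth subvariety, the conormal sheaf decomposes as $\calN_{\Sigma/Y}^\vee \simeq \scrO_{\PP^1} \oplus \scrO_{\PP^1}(2)$, whence
\[
\calI_\Sigma^m/\calI_\Sigma^{m+1} \simeq \mathrm{Sym}^m\bigl(\calN_{\Sigma/Y}^\vee\bigr) \simeq \bigoplus_{i=0}^{m} \scrO_{\PP^1}(2i),
\]
whose first cohomology vanishes. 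Since $f$ is an isomorphism on $Y\smallsetminus \Sigma$, Grothendieck's theorem on formal functions reduces the vanishing of $\R^k f_\ast \scrO_Y$ for $k\geq 1$ to the vanishing of $\varprojlim_m H^k(\Sigma^{(m)},\scrO_{\Sigma^{(m)}})$. A straightforward induction on $m$ using the short exact sequences $0\to \calI_\Sigma^m/\calI_\Sigma^{m+1} \to \scrO_{\Sigma^{(m+1)}} \to \scrO_{\Sigma^{(m)}} \to 0$ gives $H^1(\Sigma^{(m)},\scrO_{\Sigma^{(m)}})=0$, while $H^{\geq 2}$ vanishes for dimensional reasons. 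Since $f$ is proper and birational with connected fibres and $X$ is normal, Stein factorisation (or Zariski's main theorem) yields $f_\ast \scrO_Y = \scrO_X$, completing \eqref{eq:dirimg-A}.

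Together, these properties identify $Y\to X$ as a resolution of singularities with $\R^{>0} f_\ast \scrO_Y = 0$, i.e.\ $X$ has rational singularities; in characteristic zero, rational singularities are Cohen--Macaulay by a classical result of Kempf, so $X$ admits a genuine dualizing sheaf $\omega_X$ concentrated in a single cohomological degree. For Gorenstein-ness and crepancy, adjunction along $\Sigma$ gives
\[
\omega_Y|_\Sigma \simeq \omega_\Sigma \otimes \det\calN_{\Sigma/Y}^\vee \simeq \scrO_{\PP^1}(-2)\otimes \scrO_{\PP^1}(2) \simeq \scrO_\Sigma,
\]
so $\omega_Y|_\Sigma$ is trivial. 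A second induction on $m$ using the sequences $0\to \omega_Y|_\Sigma \otimes \calI_\Sigma^m/\calI_\Sigma^{m+1} \to \omega_Y|_{\Sigma^{(m+1)}} \to \omega_Y|_{\Sigma^{(m)}} \to 0$ and the same $H^1$-vanishing of the graded pieces produces compatible trivializing sections $\scrO_{\Sigma^{(m)}} \xrightarrow{\sim} \omega_Y|_{\Sigma^{(m)}}$, with Nakayama ensuring that each lift remains a generator. These assemble into a trivialization of $\omega_Y$ on the formal completion $\widehat{Y}$ of $Y$ along $\Sigma$.

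To conclude, Grauert--Riemenschneider gives $\R^{>0} f_\ast \omega_Y = 0$, and Grothendieck duality combined with $\R f_\ast \scrO_Y = \scrO_X$ yields $\omega_X \simeq f_\ast \omega_Y$. Completing at $\nu$ and invoking the formal functions isomorphism, the formal trivialization of $\omega_Y$ produces an $\scrO_{X,\nu}^\wedge$-linear isomorphism $\scrO_{X,\nu}^\wedge \simeq (\omega_X)^\wedge_\nu$; by faithfully flat descent, $\omega_X$ is locally free of rank one at $\nu$, hence $X$ is Gorenstein. Finally, since $f$ is an isomorphism on $Y\smallsetminus \Sigma$, $f^\ast \omega_X$ and $\omega_Y$ restrict to the same line bundle there, and since $\Sigma$ has codimension two in the smooth threefold $Y$, the restriction map $\Pic(Y)\to \Pic(Y\smallsetminus \Sigma)$ is injective, whence $f^\ast \omega_X \simeq \omega_Y$ globally. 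The only delicate point I anticipate is the second inductive argument: one must track the compatibility of the trivializations of $\omega_Y|_{\Sigma^{(m)}}$ with the Grothendieck duality isomorphism at the level of completions in order to deduce that the resulting formal generator descends to a Zariski generator of $\omega_X$ at $\nu$. The remaining ingredients -- formal functions, Grauert--Riemenschneider, Kempf's theorem, adjunction, and codimension-two purity for Picard on a smooth variety -- are classical black boxes.
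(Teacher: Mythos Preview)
Your argument is correct, and the ``delicate point'' you flag is not actually problematic: the formal trivialization of $\omega_Y$ gives compatible $\scrO_{\Sigma^{(m)}}$-linear isomorphisms, so the inverse limit is $\scrO_{X,\nu}^\wedge$-linear by functoriality of formal functions, and freeness descends along the faithfully flat completion by the standard Nakayama argument.

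The paper takes a completely different, citation-based route. Rather than computing anything on the formal neighborhood of $\Sigma$, it invokes Reid's observation that a small resolution with irreducible exceptional $\PP^1$ forces the analytic germ $(X,\nu)$ to be a compound Du~Val singularity; Koll\'ar--Mori then gives rationality of cDV points, and Reid's results on canonical $3$-folds give Gorenstein-ness and $\omega_X\simeq f_\ast\omega_Y$ in one stroke. Only the final crepancy step (codimension-two purity of $\Pic$ on the smooth $Y$) coincides with yours. Your approach is more elementary and self-contained, relying only on formal functions, Grauert--Riemenschneider, Grothendieck duality, and Kempf, and it would adapt immediately to other explicit normal-bundle types such as $(-1,-1)$-curves; the paper's approach is terser and situates the lemma within the MMP classification, at the cost of importing heavier black boxes.
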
 

\begin{proof}
	As shown in \cite[Remark~5.13]{Reid-Minimal-Models} (see also \cite{KM-Gorenstein}), the underlying complex analytic space $X^{\mathsf{an}}$ has a cDV singular point at $\nu$. Then \cite[Theorem~5.42]{KM-Birational} shows that the singular point $\nu\in X$ is rational. This further implies via \cite[Theorem~5.11 and Corollary~5.11]{KM-Birational} that $f\colon Y\to X$ is a rational resolution of singularities. Moreover, by \cite[Theorem~2.6.II]{Reid-Canonical}, $X$ is also Gorenstein and one has $\omega_X \simeq f_\ast\omega_Y$. Set $\calL \coloneqq \omega_Y \otimes f^\ast\omega_X$. Since $f_\ast\scrO_Y=\scrO_X$ this implies that $f_\ast \calL \simeq \scrO_X$. Therefore $\calL$ and $\scrO_Y$ are isomorphic on the complement of $\Sigma$. Since $\Sigma$ has codimension two, this implies that $\calL\simeq \scrO_Y$. 
\end{proof}

\subsection{Numerical classes}\label{subsec:numerical-classes} 

 Let $N_1(Y)$ be the group of \textit{numerical equivalence classes} of one-cycles of $Y$ (cf.\ \cite[Chapter~19]{Fulton-Intersection-Theory}). 

\begin{lemma}\label{lem:exclass} 
	Let $Z\subset Y$ be a purely\footnote{i.e., all irreducible components of $Z$ are one-dimensional.} one-dimensional closed subscheme so that $[Z]=n[\Sigma]$ in $N_1(Y)$ for some $n \in \Z_{>0}$. Then $Z_{\mathsf{red}}=\Sigma$.
\end{lemma}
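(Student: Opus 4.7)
My plan is to use a standard projection-formula argument against an ample class pulled back from $X$.

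Pick an ample divisor $H$ on the projective threefold $X$ (which exists since $X$ is projective). Set $D \coloneqq f^\ast H$ on $Y$. Since $f(\Sigma) = \{\nu\}$ is a point, the projection formula gives $D \cdot [\Sigma] = H \cdot f_\ast[\Sigma] = 0$. From the assumption $[Z] = n[\Sigma]$ in $N_1(Y)$ and bilinearity of the intersection pairing, I then obtain
\begin{align}
D \cdot [Z] \;=\; n\,(D \cdot [\Sigma]) \;=\; 0 \ .
\end{align}

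Next, I decompose the purely one-dimensional scheme $Z$ into its reduced irreducible components $Z_1, \ldots, Z_r$ with generic multiplicities $m_i \coloneqq \mathrm{length}_{\scrO_{Z,Z_i}}(\scrO_{Z,Z_i}) \in \Z_{>0}$, so that the associated cycle is $[Z] = \sum_i m_i[Z_i]$. I split the components into two types: those contained in $\Sigma$ (which, since $\Sigma$ is irreducible, forces $Z_i = \Sigma$), and those not contained in $\Sigma$. For any component $Z_i \not\subset \Sigma$, the restriction $f|_{Z_i}\colon Z_i \to f(Z_i)$ is finite (since $f$ is an isomorphism on the open dense subset $Y \smallsetminus \Sigma$ of $Y$, so $f|_{Z_i}$ has at most one fibre, over $\nu$, that is not a single reduced point), hence $f_\ast[Z_i] = \deg(f|_{Z_i})\,[f(Z_i)]$ is a nonzero effective $1$-cycle on $X$. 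For the components with $Z_i = \Sigma$ we have $f_\ast[Z_i] = 0$.

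Applying $f_\ast$ and the projection formula to $D \cdot [Z]$ then yields
\begin{align}
0 \;=\; D \cdot [Z] \;=\; H \cdot f_\ast[Z] \;=\; \sum_{Z_i \neq \Sigma} m_i\,\deg(f|_{Z_i})\,\bigl(H \cdot [f(Z_i)]\bigr) \ .
\end{align}
Each term on the right is non-negative because $H$ is ample and each $f(Z_i)$ is an effective curve in $X$; moreover each term is strictly positive since $m_i > 0$, $\deg(f|_{Z_i}) \geq 1$, and $H \cdot [f(Z_i)] > 0$ by ampleness of $H$. Therefore the sum is empty, meaning there are no irreducible components $Z_i \neq \Sigma$. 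Since $n > 0$ forces $Z$ to be nonempty, this gives $Z_{\mathsf{red}} = \Sigma$, as desired.

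The only subtle point I anticipate is ensuring that the pushforward cycle $f_\ast[Z_i]$ is genuinely nonzero for components not contracted by $f$; this relies on the fact that $f$ is an isomorphism off $\Sigma$, so a one-dimensional $Z_i$ meeting the open locus $Y \smallsetminus \Sigma$ maps birationally onto its image, which is necessarily one-dimensional in $X$. Everything else is just the projection formula together with positivity against an ample class.
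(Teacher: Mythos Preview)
Your proof is correct and follows essentially the same approach as the paper: both arguments pull back an ample class $H$ from $X$, use the projection formula to see that $f^\ast H \cdot [\Sigma] = 0$, and then exploit positivity of $H$ against effective curves in $X$ to rule out any irreducible component of $Z$ not equal to $\Sigma$. The only cosmetic difference is that the paper packages the ``extra'' components into a single subscheme $Z'' = \overline{Z \smallsetminus Z'}$ and argues by contradiction on its image $\Xi \subset X$, whereas you decompose $Z$ into irreducible components and sum termwise; the underlying mechanism is identical.
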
 

\begin{proof}
	Let $Z'\subseteq Z$ be the maximal closed subscheme of $Z$ such that $Z'_{\mathsf{red}}=\Sigma$. Let $Z''$ be the closure of the complement $Z\smallsetminus Z'$. Then, by assumption, one has $[Z''] = n''[\Sigma]$ for some $n''\in\Z_{\geq 0}$. Furthermore, $(Z''\cap \Sigma)_{\mathsf{red}}$ is zero dimensional. We shall show that $n''=0$. 
	
	Assume that $n''>0$. Then the scheme-theoretic image of $Z''$ through the morphism $f\colon Y \to X$ is a non-empty one-dimensional closed subscheme $\Xi\subset X$. Let $H\subset X$ be an ample Cartier divisor which does not contain the singular point $\nu\in X$. Then, $H\cap \Xi\neq 0$. On the other hand, the projection formula yields 
	\begin{align}
		H\cap \Xi = H \cap f_\ast([Z'']) = f_\ast(f^\ast(H)\cap [Z'']) =n''\, f_\ast(f^\ast(H) \cap \Sigma) =0 \ ,
	\end{align}
	which leads to a contradiction.
\end{proof}

Let $N_1(Y/X)\subset N_1(Y)$ be the subgroup spanned by closures of one-cycles in $Y \smallsetminus \Sigma$. 
\begin{corollary}\label{cor:split-support}
	There is a splitting 
	\begin{align}\label{eq:numsplit}
		N_1(Y) \simeq \Z[\Sigma] \oplus N_1(Y/X) \ .
	\end{align}
	Moreover, given a purely one-dimensional sheaf $\calF$ on $Y$, one has 
	\begin{enumerate}\itemsep0.2cm
		\item $\mathsf{supp}(\calF)_{\mathsf{red}}=\Sigma$ if and only if $\ch_2(\calF)\in \N[\Sigma]$. 
		
		\item $\Sigma \nsubseteq \mathsf{supp}(\calF)$ if and only if $\ch_2(\calF)\in N_1(Y/X)$.
	\end{enumerate}
\end{corollary}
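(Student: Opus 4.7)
The plan is to first establish the direct-sum decomposition of $N_1(Y)$ and then deduce parts (1) and (2) by decomposing the support 1-cycle of a purely one-dimensional sheaf.

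For the splitting $N_1(Y)\simeq\N[\Sigma]\oplus N_1(Y/X)$, the identity $N_1(Y)=\N[\Sigma]+N_1(Y/X)$ is tautological, since every numerical class is a $\Z$-combination of classes of irreducible curves and each such curve is either $\Sigma$ or lies in $N_1(Y/X)$. To prove directness, I would suppose $m[\Sigma]=\sum_i n_i[C_i]$ in $N_1(Y)$ with $m>0$ and $C_i\neq\Sigma$ irreducible, and separate the right-hand side as $\alpha_+-\alpha_-$ with $\alpha_\pm$ effective, so the hypothesis rewrites as $m[\Sigma]+\alpha_-=\alpha_+$. Realizing both sides as 1-cycles of closed subschemes $Z_L=\Sigma_m\cup Z_-$ (with $\Sigma_m$ an $m$-fold thickening of $\Sigma$) and $Z_+$, where $Z_+$ contains no component equal to $\Sigma$, I would exploit the projection formula: for an ample divisor $H\subset X$ missing $\nu$, the pullback $f^\ast H$ satisfies $f^\ast H\cdot[\Sigma]=0$ while $f^\ast H\cdot[C]=H\cdot[f(C)]>0$ for every irreducible $C\neq\Sigma$, since $f(C)$ is then a curve in $X$ meeting $H$. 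Combining this positivity with Lemma~\ref{lem:exclass}, applied after extracting a sub-1-cycle of class a positive multiple of $[\Sigma]$ from a suitable effective realization, should force $m=0$.

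With the splitting in place, parts (1) and (2) follow at once from the canonical decomposition of the support 1-cycle. Writing $[Z(\calF)]=m[\Sigma]+\beta$ with $m\in\N$ the multiplicity of $\Sigma$ in $\mathsf{supp}(\calF)$ and $\beta\in N_1(Y/X)$ the effective contribution of the remaining components, uniqueness of the decomposition gives: $\ch_2(\calF)\in\N[\Sigma]$ iff $\beta=0$ iff $\mathsf{supp}(\calF)_{\mathsf{red}}\subseteq\Sigma$; since $\Sigma$ is irreducible and $\calF\neq 0$, this forces $\mathsf{supp}(\calF)_{\mathsf{red}}=\Sigma$, yielding (1). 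Symmetrically, $\ch_2(\calF)\in N_1(Y/X)$ iff $m=0$ iff $\Sigma$ is not an irreducible component of the support iff $\Sigma\not\subseteq\mathsf{supp}(\calF)$, yielding (2).

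The main obstacle is the rigorous verification that $\Z[\Sigma]\cap N_1(Y/X)=0$: pairing with $f^\ast H$ separates $[\Sigma]$ from every effective combination of non-$\Sigma$ classes but does not automatically rule out mixed-sign cancellations. A clean completion likely requires either a sharper use of Lemma~\ref{lem:exclass} via careful effective realizations of the hypothetical equality $m[\Sigma]=\sum_i n_i[C_i]$, or the Mori-theoretic input that, for a small crepant contraction of a single smooth rational curve, $\ker(f_\ast\colon N_1(Y)_\Q\to N_1(X)_\Q)=\Q[\Sigma]$, from which directness of the sum follows since classes of irreducible curves $C\neq\Sigma$ have nonzero image in $N_1(X)_\Q$.
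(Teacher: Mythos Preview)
Your approach is essentially the same as the paper's, only more explicit. The paper's proof is a single sentence: it invokes Lemma~\ref{lem:exclass} together with the Chow localization sequence
\[
0 \longrightarrow A_1(\Sigma) \longrightarrow A_1(Y) \longrightarrow A_1(Y\smallsetminus\Sigma) \longrightarrow 0,
\]
and leaves the passage to numerical classes and the directness of the sum entirely to the reader. You have unpacked this correctly: the sum $N_1(Y)=\Z[\Sigma]+N_1(Y/X)$ is tautological, parts (1) and (2) follow from Lemma~\ref{lem:exclass} once directness is known, and the only substantive point is $\Z[\Sigma]\cap N_1(Y/X)=0$.

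You are right that this last point is not a formal consequence of Lemma~\ref{lem:exclass} alone, and your diagnosis of the obstruction (mixed-sign cancellations defeat the pairing with $f^\ast H$) is accurate. The paper does not spell out this step either. The cleanest way to close the gap is indeed the Mori-theoretic input you mention: since $f$ is a small crepant contraction with irreducible exceptional locus, the relative Picard number is one, so $\ker\big(f_\ast\colon N_1(Y)_\Q\to N_1(X)_\Q\big)=\Q[\Sigma]$. Combined with the surjectivity of $f_\ast$ restricted to $N_1(Y/X)$ onto $N_1(X)$ (every curve in $X$ lifts to a curve in $Y\smallsetminus\Sigma$) and a rank count, this forces the intersection to be trivial over $\Q$, hence over $\Z$. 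Your caution about needing this extra input is well placed; the paper simply assumes it implicitly.
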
 

\begin{proof}
	This statement from Lemma~\ref{lem:exclass} and the exact sequence 
	\begin{align}
		0\longrightarrow A_1(\Sigma) \longrightarrow A_1(Y) \longrightarrow A_1(Y \smallsetminus \Sigma)\longrightarrow 0\ .
	\end{align}
\end{proof}

\subsection{Serre duality}

We shall now prove a result which will help us to readapt part of the arguments in \cite{BS-Curve-counting-crepant, Maulik-Stable-pairs} to our setting.

\begin{lemma}\label{lem:trivial}
	For any thickening $\Sigma\subset \Sigma'\subset Y$, one has $\omega_Y\vert_{\Sigma'}\simeq \scrO_{\Sigma'}$.
\end{lemma}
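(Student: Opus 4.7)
The plan is to deduce the lemma directly from the crepancy of $f$ and the fact that $\omega_X$ is a line bundle on $X$ (both established in Lemma~\ref{lem:contraction-A}).

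First I would observe that since $\omega_X$ is locally free by Lemma~\ref{lem:contraction-A}, there exists an open affine neighborhood $U\subset X$ of the singular point $\nu$ on which $\omega_X\vert_U \simeq \scrO_U$. Setting $V\coloneqq f^{-1}(U)\subset Y$, the crepancy isomorphism $\omega_Y\simeq f^\ast\omega_X$ restricts to a trivialization $\omega_Y\vert_V \simeq \scrO_V$.

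The second step is to note that any scheme-theoretic thickening $\Sigma\subset \Sigma'\subset Y$ is set-theoretically supported on $\Sigma = f^{-1}(\nu)$, and hence is contained in the open subscheme $V\subset Y$ as a closed subscheme. Restricting the above trivialization yields the desired isomorphism
\begin{align}
    \omega_Y\vert_{\Sigma'} \simeq \scrO_V\vert_{\Sigma'} \simeq \scrO_{\Sigma'}\ .
\end{align}

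There is no substantial obstacle here; the argument is essentially a tautological consequence of Assumption~\ref{assumption:f} together with the Gorenstein/crepant conclusions of Lemma~\ref{lem:contraction-A}. The only thing to be careful about is making explicit that the thickening $\Sigma'$, although a scheme-theoretically larger object than $\Sigma$, still lies in the preimage of a small enough open neighborhood of $\nu$ on which $\omega_X$ trivializes.
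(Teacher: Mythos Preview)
Your proof is correct and follows essentially the same approach as the paper: both use Lemma~\ref{lem:contraction-A} to get $\omega_Y\simeq f^\ast\omega_X$ with $\omega_X$ locally free, and then exploit that $\Sigma'$ maps into an arbitrarily small neighborhood of $\nu$. The paper phrases this by factoring $f\vert_{\Sigma'}$ through the zero-dimensional scheme-theoretic image $Z=f(\Sigma')$, while you instead pick a trivializing affine open for $\omega_X$ and pull it back; these are equivalent formulations of the same argument.
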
	

\begin{proof}
	We observe that the scheme-theoretic image $Z=f(\Sigma')$ is a zero-dimensional subscheme with set-theoretic support $\nu$ since $f$ is proper. Moreover, by Lemma~\ref{lem:contraction-A}, $\omega_X$ is locally free and $\omega_Y = f^\ast\omega_X$, where $\omega_X$ is the canonical bundle of $X$. Since $f\vert_{\Sigma'}\colon \Sigma'\to X$ factors through the closed immersion $Z\to X$, we get $\omega_Y\vert_{\Sigma'}\simeq \scrO_{\Sigma'}$. 
\end{proof}

Lemma \ref{lem:trivial} implies the following.
\begin{corollary}\label{cor:pairing} 
	Let $\calF$ and $\calG$ be coherent sheaves on $Y$ so that $\calG$ is one-dimensional, with $\ch_2(\calG)\in \N[\Sigma]$. Then Serre duality yields isomorphisms 
	\begin{align}
		\Ext^i_Y(\calF, \calG) \simeq \Ext^{3-i}_Y(\calG, \calF) 
	\end{align}
	for all $i \in \Z$. 
\end{corollary} 

\begin{proof}
	By Corollary \ref{cor:split-support}, under the present assumptions $\calG$ is scheme theoretically supported on a scheme theoretic thickening of $\Sigma$. Then Corollary \ref{cor:pairing} follows by Lemma~\ref{lem:trivial} and Serre duality. 
\end{proof}

\subsection{Rigidity and filtrations}

Let $\calH$ be the connected component of the Hilbert scheme of curves on $Y$ which contains $\Sigma$.
\begin{definition}\label{def:rigid-curve} 
	We say that $\Sigma$ is \textit{rigid} if $\calH$ is zero-dimensional, i.e., it is isomorphic to $\Spec(R)$, with $R$ a local Artin ring over $\C$. 
\end{definition}

\begin{proposition}\label{prop:rigid-curve-B}
	Under Assumption~\ref{assumption:f}, the curve $\Sigma$ is rigid.
\end{proposition}

\begin{proof}
	Suppose $\Sigma$ is not rigid. Then $\dim\calH\geq 1$. Let $\calZ\subset Y \times \calH$ be the universal curve and let $o\subset \calH $ be the closed point corresponding to $\Sigma \subset Y$. Since $\calH$ is projective and at least one-dimensional, it contains at least one closed point $p\in \calH$, such that $p \neq o$. Then $\calZ_p\subset Y$ is a closed subscheme of $Y$ which does not coincide with $\Sigma\subset Y$. 
	
	Let $\calE\coloneqq \scrO_{\calZ_p}$ and let $\calT \subset \calE$ be the maximal zero-dimensional subsheaf. Since $\calZ$ is a flat family, note that 
	\begin{align}\label{eq:chern-classes}
		\ch_i(\calE) = \ch_i(\scrO_\Sigma)
	\end{align}
	for $2\leq i \leq 3$. Then, Lemma~\ref{lem:exclass} implies that $\calE/\calT$ is a purely one-dimensional sheaf on $Y$ with scheme-theoretic support on $\Sigma$. Hence it is isomorphic to the pushforward of a line bundle $\calL$ on $\Sigma$. Moreover, the composition 
	\begin{align}
		\scrO_Y \longrightarrow \calE \longrightarrow i_{\Sigma,\, \ast}\calL 
	\end{align}
	is surjective, where $\scrO_Y\to \calE$ is the canonical epimorphism. The relations \eqref{eq:chern-classes} imply that $\calL \simeq \scrO_\Sigma$ and $\chi(\calT)=0$, i.e., $\calT=0$. Then, $\calE \simeq \scrO_\Sigma$. Furthermore, since $\End_Y(\scrO_\Sigma)\simeq \C$, there exists an isomorphism $\calE \to \scrO_\Sigma$ so that the following diagram is commutative 
	\begin{align}
		\begin{tikzcd}[ampersand replacement=\&]
			\& \scrO_Y\ar[dr] \ar[dl] \& \\
			\calE \ar[rr] \& \& \scrO_\Sigma 
		\end{tikzcd}\ ,
	\end{align}
	where the vertical maps are surjective. This implies that $\calZ_p$ coincides with $\Sigma$ as subschemes of $Y$, hence it leads to a contradiction. 
\end{proof}

The following result will be proved in Appendix~\ref{sec:theorem-filtration}.
\begin{theorem}\label{thm:filtration}
	Under Assumption~\ref{assumption:f}, there exists a unique chain of subschemes 
	\begin{align}
		\Sigma = \Sigma_1 \subset \cdots \cdots \subset \Sigma_n \ ,
	\end{align}
	with $n\geq 2$, which determines a filtration of the form
	\begin{align}\label{eq:ideal-filtration-A}
		0=\calJ_{n+1}\subset \calJ_n\subset \cdots \subset \calJ_1 = \calI_\Sigma 
	\end{align}
	so that
	\begin{align}\label{eq:ideal-filtration-B}
		\begin{split}
			\calI_\Sigma \calJ_i \subset \calJ_{i+1}\subset \calJ_i\ , \quad \calJ_i/\calJ_{i+1} \simeq i_{\Sigma,\, \ast}\scrO_{\Sigma}\ , \\[4pt]
			\calJ_i/\calI_\Sigma \calJ_i\simeq  i_{\Sigma,\, \ast}\scrO_{\Sigma}\oplus i_{\Sigma,\, \ast}\scrO_{\Sigma}(2)\ , \quad \calJ_{i+1}/\calI_\Sigma \calJ_i\simeq  i_{\Sigma,\, \ast}\scrO_{\Sigma}(2)\ ,
		\end{split}
	\end{align}	
	for $1\leq i \leq n-1$, and 
	\begin{align}\label{eq:ideal-filtration-C}
		\calJ_n /\calI_\Sigma \calJ_n \simeq i_{\Sigma,\, \ast}\scrO_{\Sigma}(1)^{\oplus 2}\ .
	\end{align}
	
	Moreover, the structure sheaves of the subschemes $\Sigma_k \subset Y$, with $1\leq k \leq n$, fit into nontrivial extensions 
	\begin{align}\label{eq:sch-sequence-X}
		0\longrightarrow i_{\Sigma,\, \ast}\scrO_\Sigma \longrightarrow i_{\Sigma_k,\, \ast}\scrO_{\Sigma_k} \longrightarrow i_{\Sigma_{k-1},\, \ast}\scrO_{\Sigma_{k-1}} \longrightarrow 0 \ , \\[4pt] \label{eq:sch-sequence-Y}
		0\longrightarrow i_{\Sigma_{k-1},\, \ast}\scrO_{\Sigma_{k-1}} \longrightarrow i_{\Sigma_k,\, \ast}\scrO_{\Sigma_k} \longrightarrow i_{\Sigma,\, \ast}\scrO_{\Sigma} \longrightarrow 0\ 
	\end{align}
	where the morphisms $i_{\Sigma_k,\, \ast}\scrO_{\Sigma_k} \longrightarrow i_{\Sigma_{k-1},\, \ast}\scrO_{\Sigma_{k-1}}$ and $i_{\Sigma_k,\, \ast}\scrO_{\Sigma_k} \longrightarrow i_{\Sigma,\, \ast}\scrO_{\Sigma}$ are the canonical surjections. 
\end{theorem}

\begin{remark}
	In the above theorem, all possible values of $n\geq 2$ can appear, as we shall show in \S\ref{subsec:examples-section-A}, where we discuss some explicit examples. 
\end{remark}

\begin{remark}	
	Furthermore, the above theorem was inspired by \cite[Theorem~3.1]{Spherical_twists}. In the modern language of Donovan-Wemyss' contraction algebras \cite{DW16}, in \textit{loc.cit.} Toda observed that the contraction algebra is $\C[t]/(t^n)$, which is uniserial, and so every module has a unique filtration by simples. The ``maximal'' filtration possible is given by the module $\C[t]/(t^n)$ itself, and every other module has a smaller filtration. From this point of view, $n$ coincides with Donovan-Wemyss' noncommutative width and Reid's width \cite{Reid-Minimal-Models}. The theory of contraction algebras would be essential to get a result corresponding to Theorem~\ref{thm:filtration} for other flops.\footnote{This observation was explained to us by Michael Wemyss.}	
\end{remark}

Furthermore, one more condition will be assumed to hold: 
\begin{assumption}\label{assumption:divisor}
	There exists a smooth connected Weil divisor $W\subset X$ so that its strict transform $S\subset Y$ is a smooth connected surface intersecting $\Sigma$ transversely at a single closed point $p\in \Sigma$. Moreover, the restriction $f_S\coloneqq f\vert_S\colon S\to W$ is an isomorphism. 
\end{assumption}

From now on, we assume that Assumptions~\ref{assumption:f} and \ref{assumption:divisor} hold.
\begin{proposition}\label{prop:dirimg-A} 
	One has 
	\begin{align}\label{eq:dirimg-B}
		\R^0 f_\ast i_{\Sigma,\, \ast}\scrO_\Sigma\simeq \scrO_\nu\quad\text{and} 
		\quad \R^i f_\ast i_{\Sigma,\, \ast}\scrO_\Sigma =0
	\end{align}
	for $i\geq 1$,
	\begin{align}\label{eq:dirimg-BC} 
		\R^0 f_\ast(i_{\Sigma,\, \ast}\scrO_\Sigma\otimes \scrO_Y(-kS)) = 0
	\end{align}
         for $k \geq 1$, 
	\begin{align}\label{eq:dirimg-BB} 
		\R^1 f_\ast(i_{\Sigma,\, \ast}\scrO_\Sigma\otimes \scrO_Y(-S)) = 0
	\end{align}
	and  
	\begin{align}\label{eq:dirimg-C} 
		\R^i f_\ast \calI_\Sigma =0
	\end{align}
	for $i \geq 1$.
\end{proposition}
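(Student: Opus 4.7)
The plan is to prove the four vanishings largely by reducing everything to cohomology of line bundles on $\Sigma \simeq \PP^1$, using the factorization of $f \circ i_\Sigma$ through the point $\nu$, the projection formula along $i_\Sigma$, and the Gorenstein-rational vanishings already recorded in Lemma~\ref{lem:contraction-A}.

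For \eqref{eq:dirimg-B}, I would observe that $f \circ i_\Sigma$ factors as $\Sigma \xrightarrow{g} \Spec(\C) \xrightarrow{i_\nu} X$, where $g$ is the structure morphism of $\Sigma$. Since $i_\nu$ is a closed embedding, the Grothendieck spectral sequence for $\R(i_\nu \circ g)_\ast$ degenerates and yields $\R^i f_\ast\, i_{\Sigma,\,\ast}\scrO_\Sigma \simeq i_{\nu,\,\ast}\, H^i(\Sigma,\scrO_\Sigma)$. Because $\Sigma \simeq \PP^1$, these cohomology groups are $\C$ for $i=0$ and zero for $i \geq 1$, giving \eqref{eq:dirimg-B}.

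For \eqref{eq:dirimg-BC} and \eqref{eq:dirimg-BB}, the projection formula applied to the closed embedding $i_\Sigma$ and the line bundle $\scrO_Y(-kS)$ gives
\begin{align}
i_{\Sigma,\,\ast}\scrO_\Sigma \otimes \scrO_Y(-kS) \simeq i_{\Sigma,\,\ast}\bigl(\scrO_\Sigma \otimes i_\Sigma^\ast \scrO_Y(-kS)\bigr).
\end{align}
By Assumption~\ref{assumption:divisor}, $S$ meets $\Sigma$ transversely at the single point $p$, so the pulled-back ideal sheaf $i_\Sigma^\ast \scrO_Y(-S)$ is the ideal of the reduced point $\{p\}\subset \Sigma$, i.e.\ $\scrO_\Sigma(-p)$. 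Hence $i_\Sigma^\ast \scrO_Y(-kS)\simeq \scrO_\Sigma(-kp)$. Repeating the factorization argument from the previous paragraph, one gets
\begin{align}
\R^i f_\ast\bigl(i_{\Sigma,\,\ast}\scrO_\Sigma \otimes \scrO_Y(-kS)\bigr)\simeq i_{\nu,\,\ast}\, H^i(\PP^1, \scrO(-k)).
\end{align}
For $k\geq 1$ and $i=0$ this vanishes, proving \eqref{eq:dirimg-BC}; for $k=1$ and $i=1$ the group $H^1(\PP^1,\scrO(-1))$ also vanishes, proving \eqref{eq:dirimg-BB}.

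For \eqref{eq:dirimg-C}, I would apply $\R f_\ast$ to the defining short exact sequence
\begin{align}
0 \longrightarrow \calI_\Sigma \longrightarrow \scrO_Y \longrightarrow i_{\Sigma,\,\ast}\scrO_\Sigma \longrightarrow 0
\end{align}
and read off the long exact sequence. Lemma~\ref{lem:contraction-A} gives $\R^0 f_\ast\scrO_Y = \scrO_X$ and $\R^i f_\ast\scrO_Y = 0$ for $i\geq 1$, and \eqref{eq:dirimg-B} gives $\R^0 f_\ast i_{\Sigma,\,\ast}\scrO_\Sigma \simeq \scrO_\nu$ and higher direct images zero. For $i\geq 2$, the flanking terms $\R^{i-1} f_\ast i_{\Sigma,\,\ast}\scrO_\Sigma$ and $\R^i f_\ast\scrO_Y$ both vanish, forcing $\R^i f_\ast \calI_\Sigma = 0$. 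For $i=1$, the relevant segment is
\begin{align}
\scrO_X \longrightarrow \scrO_\nu \longrightarrow \R^1 f_\ast \calI_\Sigma \longrightarrow 0,
\end{align}
and surjectivity of $\scrO_X \twoheadrightarrow \scrO_\nu$ yields $\R^1 f_\ast \calI_\Sigma = 0$.

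All four assertions are essentially formal once the transverse intersection hypothesis is used to identify $i_\Sigma^\ast\scrO_Y(-S)$; the mildly subtle point is verifying that this identification really gives $\scrO_\Sigma(-p)$ (rather than a thickened version) and that no $\Tor$-terms appear, both of which follow from $\scrO_Y(-S)$ being a line bundle and from smoothness plus transversality at $p$. I do not anticipate any genuine obstacle beyond carefully invoking the projection formula and the previously established $\R^i f_\ast \scrO_Y$-vanishings.
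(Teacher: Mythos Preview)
Your proof is correct and follows essentially the same strategy as the paper: reduce the first three vanishings to the cohomology of line bundles on $\Sigma \simeq \PP^1$, and derive the fourth from the long exact sequence together with Lemma~\ref{lem:contraction-A}. The only cosmetic difference is that the paper passes to an affine open neighborhood $U$ of $\nu$ and invokes \cite[Proposition~III.8.5]{Har77} to identify $\R^i f_\ast$ with global cohomology on $f^{-1}(U)$, whereas you use the factorization $f\circ i_\Sigma = i_\nu \circ g$ through $\Spec(\C)$ and the degenerate Leray spectral sequence; both routes land on $H^i(\PP^1,\scrO(-k))$ and are equally valid.
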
 

\begin{proof}
	Since $f\vert_{Y\smallsetminus \Sigma}\colon Y\smallsetminus \Sigma \to X\smallsetminus \{\nu\}$ is an isomorphism, all direct images $\R^i f_\ast(i_{\Sigma,\, \ast}\scrO_{\Sigma}\otimes \scrO_Y(aS))$ are set-theoretically supported at $\nu$ for all $a\in\Z$. Let $U\subset X$ be an affine open neighborhood of $\nu$, let $Y_U \coloneqq f^{-1}(U)$ and let $f_U\coloneqq Y_U \to U$ be the restriction of $f$ to $Y_U$. Note that $\Sigma \subset Y_U$. Then it suffices to show that relations \eqref{eq:dirimg-B}, \eqref{eq:dirimg-BC}, and \eqref{eq:dirimg-BB} hold for $f_U$. This follows from \cite[Proposition~III.8.5]{Har77}. Thus, the relations \eqref{eq:dirimg-B}, \eqref{eq:dirimg-BC} and \eqref{eq:dirimg-BB} hold. 

	Let us prove the vanishing result \eqref{eq:dirimg-C}. We have a long exact sequence 
	\begin{align}
		0\longrightarrow f_\ast \calI_\Sigma \longrightarrow f_\ast\scrO_Y\longrightarrow f_\ast i_{\Sigma,\, \ast}\scrO_\Sigma \longrightarrow \R^1 f_\ast \calI_\Sigma \longrightarrow \cdots 
	\end{align}
	associated to the exact sequence 
	\begin{align}
		0\longrightarrow \calI_\Sigma \longrightarrow \scrO_Y \longrightarrow i_{\Sigma,\, \ast} \scrO_\Sigma \longrightarrow 0\ . 
	\end{align}
	Relations \eqref{eq:dirimg-A} and \eqref{eq:dirimg-B} imply 
	\begin{align}
		\R^i f_\ast \calI_\Sigma =0\ ,
	\end{align}
	for $i\geq 2$. Moreover, $f_\ast\scrO_Y=\scrO_X$ and the morphism $f_\ast \scrO_Y\to f_\ast i_{\Sigma,\, \ast}\scrO_\Sigma$ is the pushforward of the canonical surjection $\scrO_Y \to i_{\Sigma,\, \ast}\scrO_\Sigma$. Since $f_\ast i_{\Sigma,\, \ast}\scrO_\Sigma\simeq \scrO_\nu$, the pushforward is surjective as well. Therefore $\R^1 f_\ast \calI_\Sigma =0$. 
\end{proof}

\begin{corollary}\label{cor:vanishing-A} 
	Let $\Sigma_k$ be the subscheme introduced in Theorem~\ref{thm:filtration}. 
	For any $1\leq k \leq n$ one has 
	\begin{align}\label{eq:dirimg-D} 
		\R^i f_\ast i_{\Sigma_k,\, \ast}\scrO_{\Sigma_k} =0\ ,
	\end{align}
	for $i \geq 1$, and 
	\begin{align}\label{eq:tor-vanishing-A} 
		\mathsf{Tor}_i(i_{S,\, \ast}\scrO_S, i_{\Sigma_k,\, \ast}\scrO_{\Sigma_k})=0\ ,
	\end{align}
	for $ i \geq 1$.
\end{corollary} 

\begin{proof}
	By construction, one has the exact sequences \eqref{eq:sch-sequence-X}:
	\begin{align}\label{eq:sch-sequence-A'}
		0\longrightarrow i_{\Sigma,\, \ast}\scrO_\Sigma \longrightarrow i_{\Sigma_{k+1},\, \ast}\scrO_{\Sigma_{k+1}} \longrightarrow i_{\Sigma_k,\, \ast}\scrO_{\Sigma_{k}} \longrightarrow 0 \ , 
	\end{align}
	for $1\leq k \leq n-1$. We prove the vanishing results \eqref{eq:dirimg-D} and  \eqref{eq:tor-vanishing-A} inductively. For $k=1$, we have $\Sigma_1=\Sigma$. Hence, we observe that it holds for $k = 1$. Then, the inductive step follows from the exact sequence \eqref{eq:sch-sequence-A'}. Therefore, the first claim holds.
	
	Since $\scrO_\Sigma$ is purely one-dimensional and the set theoretic intersection $S\cap \Sigma=\{p\}$, the second vanishing result follows immediately from the exact sequence 
	\begin{align}
		0\longrightarrow \scrO_Y(-S) \longrightarrow \scrO_Y \longrightarrow i_{S,\, \ast}\scrO_S\longrightarrow 0\ .
	\end{align}
	The inductive step follows straightforwardly by using the short exact sequence \eqref{eq:sch-sequence-A'}.
\end{proof}

For each $1\leq k \leq n$, let $T_k$ be the scheme-theoretic intersection of $\Sigma_k$ and $S$ in $Y$ and let $i_{T_k}\colon T_k \to Y$ denote the canonical closed immersion. By Assumption~\ref{assumption:divisor}, this is a zero-dimensional subscheme of $S$ with set-theoretic support $p$. Let also $\calQ_k\coloneqq f_\ast i_{\Sigma_k,\, \ast}\scrO_{\Sigma_k}$ for $1\leq k \leq n$. Then the following holds:
\begin{corollary}\label{cor:dirimg-B} 
	 Each morphism 
	\begin{align}
		\phi_k\colon \calQ_k\longrightarrow f_\ast i_{T_k,\, \ast}\scrO_{T_k}\
	\end{align}
	determined by the canonical epimorphism $i_{\Sigma_k,\, \ast}\scrO_{\Sigma_k}\to i_{T_k,\, \ast}\scrO_{T_k}$ is an isomorphism. Moreover, one has exact sequences 
	\begin{align}\label{eq:Q-sequence-X}
		0\longrightarrow \calQ_1 \longrightarrow \calQ_k \longrightarrow\calQ_{k-1}\longrightarrow 0 \ , \\[4pt] \label{eq:Q-sequence-Y}
		0\longrightarrow \calQ_{k-1} \longrightarrow \calQ_k \longrightarrow \calQ_1 \longrightarrow 0\ ,
	\end{align}
	induced by the exact sequences \eqref{eq:sch-sequence-X} and \eqref{eq:sch-sequence-Y} 
	respectively.
\end{corollary}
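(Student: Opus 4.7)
For part (1), the plan is to realize $i_{T_k,\,\ast}\scrO_{T_k}$ as the quotient in a natural short exact sequence on $Y$ and then apply $f_\ast$. Concretely, I would start from the structure sequence
\begin{align}
0\longrightarrow \scrO_Y(-S)\longrightarrow \scrO_Y\longrightarrow i_{S,\,\ast}\scrO_S\longrightarrow 0
\end{align}
and tensor it with $i_{\Sigma_k,\,\ast}\scrO_{\Sigma_k}$. The Tor vanishing from Corollary~\ref{cor:vanishing-A} guarantees that the tensored sequence remains short exact, yielding
\begin{align}
0\longrightarrow i_{\Sigma_k,\,\ast}\scrO_{\Sigma_k}\otimes \scrO_Y(-S)\longrightarrow i_{\Sigma_k,\,\ast}\scrO_{\Sigma_k}\longrightarrow i_{T_k,\,\ast}\scrO_{T_k}\longrightarrow 0\ ,
\end{align}
where the right-hand map is the canonical epimorphism inducing $\phi_k$.

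The main technical input is then the vanishing
\begin{align}
\R^i f_\ast\bigl(i_{\Sigma_k,\,\ast}\scrO_{\Sigma_k}\otimes \scrO_Y(-S)\bigr)=0\quad \text{for } i=0,1\ ,
\end{align}
which I would establish by induction on $k$. For $k=1$ this is exactly \eqref{eq:dirimg-BC} (with $k=1$) and \eqref{eq:dirimg-BB} from Proposition~\ref{prop:dirimg-A}. For the inductive step, I would tensor the extension \eqref{eq:sch-sequence-Y} with the locally free sheaf $\scrO_Y(-S)$ to obtain a short exact sequence whose outer terms involve $i_{\Sigma_{k-1},\,\ast}\scrO_{\Sigma_{k-1}}(-S)$ and $i_{\Sigma,\,\ast}\scrO_\Sigma(-S)$, and then read off the desired vanishing from the associated long exact sequence of $\R f_\ast$ using the induction hypothesis together with Proposition~\ref{prop:dirimg-A}. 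Applying $f_\ast$ to the short exact sequence displayed above and inserting this vanishing immediately identifies $\phi_k$ as an isomorphism. The main obstacle I anticipate is keeping the induction bookkeeping clean, but no new geometric input beyond Proposition~\ref{prop:dirimg-A} and Corollary~\ref{cor:vanishing-A} should be needed.

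For part (2), the strategy is simply to apply $f_\ast$ to the two extensions \eqref{eq:sch-sequence-X} and \eqref{eq:sch-sequence-Y} provided by Theorem~\ref{thm:filtration}. Since Corollary~\ref{cor:vanishing-A} gives $\R^i f_\ast i_{\Sigma_j,\,\ast}\scrO_{\Sigma_j}=0$ for every $1\leq j\leq n$ and every $i\geq 1$, the associated long exact sequences of higher direct images truncate to the short exact sequences \eqref{eq:Q-sequence-X} and \eqref{eq:Q-sequence-Y}, proving the claim.
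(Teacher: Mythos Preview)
Your proposal is correct and follows essentially the same approach as the paper: both arguments tensor the structure sequence of $S$ with $i_{\Sigma_k,\,\ast}\scrO_{\Sigma_k}$, invoke the Tor vanishing from Corollary~\ref{cor:vanishing-A}, and then prove the required vanishing of $\R^i f_\ast\bigl(i_{\Sigma_k,\,\ast}\scrO_{\Sigma_k}\otimes\scrO_Y(-S)\bigr)$ by induction on $k$ using Proposition~\ref{prop:dirimg-A}; the only cosmetic difference is that the paper runs the induction via the extension \eqref{eq:sch-sequence-X} while you use \eqref{eq:sch-sequence-Y}, and you are slightly more explicit in separating the $\R^0$ and $\R^1$ inputs. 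Part~(2) is identical in both.
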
 

\begin{proof}
	Note the canonical isomorphism 
	\begin{align}
		i_{T_k,\, \ast}\scrO_{T_k}\simeq i_{\Sigma_k,\, \ast} \scrO_{\Sigma_k}\otimes i_{S,\, \ast}\scrO_S. 
	\end{align}
	As a consequence of Equation~\eqref{eq:tor-vanishing-A}, the canonical exact sequence 
	\begin{align}
		i_{\Sigma_k,\, \ast}\scrO_{\Sigma_k}\otimes \scrO_Y(-S) \longrightarrow i_{\Sigma_k,\, \ast}\scrO_{\Sigma_k}\longrightarrow i_{\Sigma_k,\, \ast} \scrO_{\Sigma_k}\otimes i_{S,\, \ast}\scrO_S \longrightarrow 0
	\end{align}
	is exact to the left. Therefore, it suffices to prove the vanishing result 
	\begin{align}
		\R^1 f_\ast (i_{\Sigma_k,\, \ast}\scrO_{\Sigma_k}\otimes \scrO_Y(-S)) =0 
	\end{align}
	for $i\geq 0$. Given the exact sequence \eqref{eq:sch-sequence-A'}, this follows by induction from Equation~\eqref{eq:dirimg-BB}. 
	
	The second claim follows by pushforward from the exact sequences \eqref{eq:sch-sequence-X} and \eqref{eq:sch-sequence-Y} using the vanishing result \eqref{eq:dirimg-D}. 
\end{proof}

For each $1\leq k \leq n$, let $Z_k \subset W$ be the closed subscheme identified with $T_k\subset S$ by the isomorphism $f\vert_S\colon S \to W$. Let $i_{Z_k}\colon Z_k \to X$  denote the canonical closed immersion. Moreover, since $f_\ast\scrO_Y=\scrO_X$, note that the canonical epimorphism $\scrO_Y \to \scrO_{\Sigma_k}$ induces a morphism $\scrO_X \to \calQ_k$ for all $1\leq k \leq n$. Then one further has:
\begin{corollary}\label{cor:dirimg-C}
	For each $1\leq k \leq n$ there is a commutative diagram 
	\begin{align}\label{eq:QZdiag-A}
		\begin{tikzcd}[ampersand replacement=\&]
			\& \scrO_X\ar[dr] \ar[dl] \& \\
			\calQ_k\ar[rr] \&\& i_{Z_k, \ast}\scrO_{Z_k} 
		\end{tikzcd}
	\end{align}
	where the arrow $\scrO_X \to i_{Z_k, \ast}\scrO_{Z_k}$ is the canonical epimorphism and the bottom arrow is an isomorphism. Moreover, the morphisms $\scrO_X \to \calQ_k$, for $2\leq k \leq n$, fit into commutative diagrams 
		\begin{align}
		\begin{tikzcd}[ampersand replacement=\&]
			\& \scrO_X\ar[dr] \ar[dl] \& \\
			\calQ_k\ar[rr] \&\& \calQ_{k-1} 
		\end{tikzcd}
		\quad \text{and}\quad
		\begin{tikzcd}[ampersand replacement=\&]
				\& \scrO_X \ar[dr] \ar[dl] \& \\
			\calQ_{k} \ar[rr] \&\& \calQ_1  
		\end{tikzcd}\ ,
	\end{align}
	where the bottom arrows are the right epimorphisms in the exact sequences \eqref{eq:Q-sequence-X} and \eqref{eq:Q-sequence-Y}. 
\end{corollary}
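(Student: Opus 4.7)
The plan is to build the isomorphism $\calQ_k\xrightarrow{\sim} i_{Z_k,\, \ast}\scrO_{Z_k}$ by composing two previously-established identifications, and then to deduce commutativity of all three diagrams by naturality of $f_\ast$.

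First, Corollary~\ref{cor:dirimg-B} provides the isomorphism $\phi_k\colon \calQ_k\xrightarrow{\sim} f_\ast i_{T_k,\, \ast}\scrO_{T_k}$ induced by the canonical surjection $i_{\Sigma_k,\, \ast}\scrO_{\Sigma_k}\to i_{T_k,\, \ast}\scrO_{T_k}$. Since $T_k\subset S$, Assumption~\ref{assumption:divisor} gives $f\circ i_S=i_W\circ f|_S$ with $f|_S$ an isomorphism identifying $T_k$ with $Z_k$. Factoring $i_{T_k}\colon T_k\to Y$ as $i_S\circ i_{T_k/S}$ and applying $f_\ast$ yields canonical isomorphisms
\begin{align}
f_\ast i_{T_k,\, \ast}\scrO_{T_k}\simeq i_{W,\, \ast}(f|_S)_\ast i_{T_k/S,\, \ast}\scrO_{T_k}\simeq i_{Z_k,\, \ast}\scrO_{Z_k}\ .
\end{align}
Composing with $\phi_k$ produces the bottom arrow of the triangle~\eqref{eq:QZdiag-A}.

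To verify its commutativity, I would start from the commutative square of canonical surjections in $\catCoh(Y)$
\begin{align}
\begin{tikzcd}[ampersand replacement=\&]
\scrO_Y \ar[r]\ar[d] \& i_{\Sigma_k,\, \ast}\scrO_{\Sigma_k}\ar[d] \\
i_{S,\, \ast}\scrO_S \ar[r] \& i_{T_k,\, \ast}\scrO_{T_k}
\end{tikzcd}
\end{align}
and apply $f_\ast$. Using $f_\ast\scrO_Y\simeq\scrO_X$, $f_\ast i_{S,\, \ast}\scrO_S\simeq i_{W,\, \ast}\scrO_W$, and the identifications from the previous step, the resulting square reads
\begin{align}
\begin{tikzcd}[ampersand replacement=\&]
\scrO_X \ar[r]\ar[d] \& \calQ_k \ar[d] \\
i_{W,\, \ast}\scrO_W \ar[r] \& i_{Z_k,\, \ast}\scrO_{Z_k}
\end{tikzcd}
\end{align}
with every edge a canonical surjection. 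Since the canonical map $\scrO_X\to i_{Z_k,\, \ast}\scrO_{Z_k}$ factors through $i_{W,\, \ast}\scrO_W$, this reproduces~\eqref{eq:QZdiag-A}.

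The remaining two triangles follow by the same naturality principle. The exact sequences~\eqref{eq:Q-sequence-X} and~\eqref{eq:Q-sequence-Y} arise by applying $f_\ast$ to~\eqref{eq:sch-sequence-X} and~\eqref{eq:sch-sequence-Y}, whose right-hand maps are the canonical surjections; postcomposing $\scrO_Y\to i_{\Sigma_k,\, \ast}\scrO_{\Sigma_k}$ with these recovers the canonical surjections $\scrO_Y\to i_{\Sigma_{k-1},\, \ast}\scrO_{\Sigma_{k-1}}$ and $\scrO_Y\to i_{\Sigma,\, \ast}\scrO_\Sigma$, respectively. Pushing forward by $f$ delivers the required commutative triangles. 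The main obstacle is essentially bookkeeping—tracking the compatibility of the various canonical adjunction morphisms under pushforward along $i_S$ and $f$—since no new vanishing beyond Corollary~\ref{cor:dirimg-B} is required.
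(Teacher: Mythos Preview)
Your proof is correct and follows essentially the same approach as the paper: both push forward the commutative square of canonical surjections relating $\scrO_Y$, $i_{S,\ast}\scrO_S$, $i_{\Sigma_k,\ast}\scrO_{\Sigma_k}$, and $i_{T_k,\ast}\scrO_{T_k}$, invoke the isomorphism $\phi_k$ from Corollary~\ref{cor:dirimg-B} together with the identification $f_\ast i_{T_k,\ast}\scrO_{T_k}\simeq i_{Z_k,\ast}\scrO_{Z_k}$ coming from $f\vert_S$, and then handle the second statement by applying $f_\ast$ to the triangles with $\scrO_Y$ at the apex. The only cosmetic difference is that you orient the square with $\scrO_Y\to i_{\Sigma_k,\ast}\scrO_{\Sigma_k}$ horizontally whereas the paper draws it vertically.
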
 

\begin{proof}
	For each $1\leq k \leq n$ one has a natural commutative diagram 
	\begin{align}
		\begin{tikzcd}[ampersand replacement=\&]
			\scrO_Y \ar[r]\ar[d] \&  i_{S,\, \ast}\scrO_S \ar[d]\\
			i_{\Sigma_k,\, \ast}\scrO_{\Sigma_k} \ar[r] \& i_{T_k,\, \ast}
			\scrO_{T_k}
		\end{tikzcd}\ ,
	\end{align}
	where all arrows are surjective. Since the restriction $f_S\colon S\to W$ is an isomorphism by assumption, this yields by pushforward a second commutative diagram 
	\begin{align}
		\begin{tikzcd}[ampersand replacement=\&]
			\scrO_X \ar[r]\ar[d] \& i_{W, \ast} \scrO_W \ar[d]\\
			\calQ_k \ar{r}{\phi_k}\&   f_\ast i_{T_k,\, \ast}\scrO_{T_k}
		\end{tikzcd}\ ,
	\end{align}
	where the top horizontal arrow is the natural epimorphism and bottom horizontal arrow is the isomorphism obtained in Corollary~\ref{cor:dirimg-B}. Moreover, by construction, there is a canonical isomorphism $f_\ast i_{T_k,\, \ast}\scrO_{T_k}\simeq i_{Z_k\, \ast}\scrO_{Z_k}$. This yields diagram \eqref{eq:QZdiag-A}.

	The second statement follows by applying $f_\ast$ to the natural commutative diagrams 
	\begin{align}
		\begin{tikzcd}[ampersand replacement=\&]
			\& \scrO_Y\ar[dr] \ar[dl] \& \\
			i_{\Sigma_k, \ast} \scrO_{\Sigma_k} \ar[rr] \&\& i_{\Sigma_{k-1}, \ast} \scrO_{\Sigma_{k-1}}
		\end{tikzcd}
		\quad\text{and}\quad
		\begin{tikzcd}[ampersand replacement=\&]
				\& \scrO_Y \ar[dr] \ar[dl] \& \\
			i_{\Sigma_k, \ast} \scrO_{\Sigma_k}  \ar[rr] \&\& i_{\Sigma_1, \ast} \scrO_{\Sigma_1} 
		\end{tikzcd} \ ,
	\end{align}
	respectively.
\end{proof}

\section{Semistable sheaves on the exceptional locus}\label{sec:semistable-exceptional-locus}

The goal of this section is to provide a complete classification for semistable pure one-dimen\-sio\-nal sheaves $\calF$ on $Y$ with $\ch_2(\calF)=r[\Sigma]$ and $\chi(\calF)=rd$ for $r,d\in \Z$ and $r\geq 1$, working under Assumptions~\ref{assumption:f} and \ref{assumption:divisor}. By Lemma~\ref{lem:exclass}, any such coherent sheaf will be set theoretically supported on $\Sigma$, hence, by \cite[Proposition~2.8]{Orlov-formal}, the results obtained in this section depend only on the formal completion of $Y$ along $\Sigma$. Since the set-theoretic support is fixed, the main difficulty lies in understanding the $\scrO_Y$-module structure of semistable sheaves\footnote{Note that this moduli problem admits an equivalent formulation in terms of Higgs bundles on $\PP^1$ in situations where $\Sigma \subset Y$ is a section of a projection $\pi\colon Y \to \PP^1$. This is the case for example for the threefolds $Y^\pm$ in Proposition~\ref{prop:example-A}. Then, the assignment $\calF \mapsto \pi_\ast \calF$ yields a one-to-one correspondence between semistable sheaves $\calF$ and  semistable Higgs bundles $(\calE, \Phi)$ on $\PP^1$, where $\Phi=(\Phi_1, \Phi_2)$ is a two component Higgs field $\Phi\colon \calE \to \calE\otimes \left(\scrO_{\PP^1}\oplus \scrO_{\PP^1}(-2)\right)$ so that $\Phi\wedge \Phi=0$ and $\Phi_1^{n+1}=0$. We will not prove this statement because it is not used anywhere the paper.}. In particular, Lemma~\ref{lem:stability-C} proves that all such semistable sheaves are scheme-theoretically supported on the unique thickening $\Sigma_n$ of $\Sigma$ constructed in Theorem~\ref{thm:filtration}. Then, the main result of this section proves an equivalence between the moduli stack of semistable sheaves $\calF$ on $Y$ as above and the moduli stack of $\scrO_{T_n}$-modules of length $r$ (cf.\ Proposition~\ref{prop:stability-F} and Corollary~\ref{cor:stability-G}).

\medskip
 
Recall that $T_n\subset Y$ is the scheme theoretic intersection of $\Sigma_n$ and $S$ in $Y$, and it is furthermore zero dimensional. 

\subsection{Semistable sheaves and filtrations}

Let $\calF$ be a nonzero purely one-dimensional sheaf on $Y$ with $\ch_2(\calF)= r [\Sigma]$, with $r>0$. First, note that $\calF$ is set-theoretically supported on $\Sigma$ by Corollary~\ref{cor:split-support}. We define the \textit{slope} of $\calF$ as
\begin{align}
	\mu_Y(\calF)\coloneqq \frac{\chi(\calF)}{r}\ .
\end{align}
Note that the slope does not depend on a specific choice of a polarization. We consider the corresponding slope-semistability, which does not depend on a specific polarization either. The sheaf $\calF$ will be simply called \textit{(semi)stable} if it is \textit{$\mu_Y$-slope (semi)stable}. 

For ease of exposition, let $\catCoh^{\mathsf{pure}}_\Sigma(Y)\subset \catCoh(Y)$ denote the full subcategory of purely one-dimensional sheaves on $Y$ set-theoretically supported on $\Sigma$.

The first goal of this section is to prove that any stable sheaf in $\catCoh^{\mathsf{pure}}_\Sigma(Y)$ is isomorphic to the pushforward of a line bundle on $\Sigma$. We start with some preliminary results. 
\begin{lemma}\label{lem:stability-A} 
	For any nonzero sheaf $\calF\in \catCoh^{\mathsf{pure}}_\Sigma(Y)$ one has 
	\begin{align}
			\mu_Y(\calF\otimes \scrO_Y(S)) = \mu_Y(\calF)+1\ .
	\end{align}
	Therefore $\calF$ is (semi)stable if and only if $\calF\otimes \scrO_Y(S)$ is (semi)stable.
\end{lemma}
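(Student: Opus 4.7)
The plan is to reduce the slope identity to a Hirzebruch--Riemann--Roch computation, and then deduce the (semi)stability claim from the formal observation that $-\otimes\scrO_Y(S)$ is an exact auto-equivalence of $\catCoh(Y)$ preserving $\catCoh^{\mathsf{pure}}_\Sigma(Y)$.

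First I would record that, since $\calF$ is purely one-dimensional and set-theoretically supported on $\Sigma$ by Corollary~\ref{cor:split-support}, its Chern character satisfies $\ch_0(\calF)=\ch_1(\calF)=0$ and $\ch_2(\calF)=r[\Sigma]$ for some $r\in\Z_{>0}$. Setting $L\coloneqq \scrO_Y(S)$ and expanding $\ch(\calF\otimes L)=\ch(\calF)\cdot \ch(L)$ in $A^\ast(Y)_\Q$, the degree-two component reduces to
\begin{equation*}
	\ch_2(\calF\otimes L)=\ch_0(\calF)\ch_2(L)+\ch_1(\calF)\ch_1(L)+\ch_2(\calF)\ch_0(L)=\ch_2(\calF),
\end{equation*}
so the denominator of the slope is unchanged.

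For the numerator, applying Hirzebruch--Riemann--Roch to both $\calF$ and $\calF\otimes L$ and subtracting gives, after discarding all contributions lying in $A^{\geq 4}(Y)=0$,
\begin{equation*}
	\chi(\calF\otimes L)-\chi(\calF)=\int_Y \ch(\calF)\bigl(\ch(L)-1\bigr)\mathsf{td}(Y)=\ch_2(\calF)\cdot [S].
\end{equation*}
By Assumption~\ref{assumption:divisor}, $S$ meets $\Sigma$ transversely at a single point, so $[\Sigma]\cdot [S]=1$ on $Y$ and the right-hand side equals $r$. Dividing by $r>0$ yields $\mu_Y(\calF\otimes\scrO_Y(S))=\mu_Y(\calF)+1$.

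For the (semi)stability statement, the functor $-\otimes L$ is exact with inverse $-\otimes L^{-1}$, and it preserves pure one-dimensionality as well as set-theoretic support on $\Sigma$. Thus subsheaves $\calG'\subset \calF\otimes L$ in $\catCoh^{\mathsf{pure}}_\Sigma(Y)$ are precisely the sheaves of the form $\calF'\otimes L$ with $\calF'\subset \calF$ in the same category, and the slope computation above applies verbatim to every such $\calF'$, producing a uniform shift $\mu_Y(\calF'\otimes L)=\mu_Y(\calF')+1$. Consequently the defining (strict or non-strict) inequalities of slope (semi)stability are preserved. The argument is essentially formal; the only substantive input is the intersection number $[\Sigma]\cdot [S]=1$, which I expect to be the only step requiring any care, and it is furnished directly by the transversality clause of Assumption~\ref{assumption:divisor}.
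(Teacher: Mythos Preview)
Your proof is correct and follows essentially the same approach as the paper: both compute $\chi(\calF\otimes\scrO_Y(S))-\chi(\calF)=S\cdot\ch_2(\calF)=r$ via Riemann--Roch and the transversality $S\cdot\Sigma=1$. The only cosmetic difference is that the paper routes the computation through the exact sequence $0\to\scrO_Y(-S)\to\scrO_Y\to i_{S,\ast}\scrO_S\to 0$ tensored with $\calF$ (using the $\Tor$-vanishing coming from $\calF$ being pure and $S\cap\Sigma$ zero-dimensional), whereas you expand $\ch(\calF)\cdot\ch(L)\cdot\mathsf{td}(Y)$ directly; these are two packagings of the same calculation.
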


\begin{proof}
	Since $\calF$ is purely one-dimensional and $S\cap \Sigma=\{p\}$, one has 
	\begin{align}	
		\Tor_i( i_{S,\, \ast} \scrO_S,\calF)=0
	\end{align}
	for $i\geq 1$. By tensoring by $\calF$ the exact sequence 
	\begin{align}
		0\longrightarrow \scrO_Y(-S) \longrightarrow \scrO_Y \longrightarrow i_{S,\, \ast}\scrO_S \longrightarrow 0 
	\end{align}
	and using the Grothendieck-Riemann-Roch theorem, one gets
	\begin{align}
		\chi(\calF\otimes\scrO_Y(S)) =\chi(\calF) + \chi(\calF\otimes \scrO_Y(S)\otimes i_{S,\, \ast} \scrO_S)= \chi(\calF) + S\cdot \ch_2(\calF)\ .
	\end{align}
\end{proof}

\begin{lemma}\label{lem:stable-mmA} 
	Let $\calL_1$ and $\calL_2$ be line bundles on $\Sigma$ so that $\chi(\calL_1) < \chi(\calL_2)$. Suppose that $\calE$ is an extension of the form 
	\begin{align}\label{eq:lb-ext-Y}
		0\longrightarrow i_{\Sigma,\, \ast}\calL_1\longrightarrow \calE \longrightarrow i_{\Sigma,\, \ast} \calL_2 \longrightarrow 0 
	\end{align}
	on $Y$. Then $\calE$ is isomorphic to the pushforward of a locally free sheaf on $\Sigma$. 
\end{lemma}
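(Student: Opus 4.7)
The plan is to reduce the statement to showing that the natural pushforward map
\[
	i_{\Sigma,\, \ast}\colon \Ext^1_\Sigma(\calL_2,\calL_1) \longrightarrow \Ext^1_Y(i_{\Sigma,\, \ast}\calL_2,\, i_{\Sigma,\, \ast}\calL_1)
\]
is an isomorphism. Once this is known, the class of the extension \eqref{eq:lb-ext-Y} descends to an extension on $\Sigma$, so that $\calE\simeq i_{\Sigma,\, \ast}\calE'$ for some rank-two coherent sheaf $\calE'$ on $\Sigma\simeq \PP^1$; being torsion-free on a smooth curve, $\calE'$ is automatically locally free, which is the desired conclusion.

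To establish the isomorphism I would exploit that $i_\Sigma$ is a regular closed immersion of codimension two (since $Y$ is a smooth threefold and $\Sigma$ is smooth). The Koszul calculation then yields
\[
	H^{-k}\bigl(Li_\Sigma^\ast i_{\Sigma,\, \ast}\calL_2\bigr) \simeq \calL_2\otimes \wedge^k \calN_{Y/\Sigma}^\vee \quad \text{for } k=0,1,2\ ,
\]
and vanishes otherwise. Combined with the $(Li_\Sigma^\ast, i_{\Sigma,\, \ast})$-adjunction, one obtains the standard hyper-$\Ext$ spectral sequence
\[
	E_2^{p,q} = \Ext^p_\Sigma\bigl(\calL_2\otimes \wedge^q \calN_{Y/\Sigma}^\vee,\ \calL_1\bigr) \Longrightarrow \Ext^{p+q}_Y\bigl(i_{\Sigma,\, \ast}\calL_2,\, i_{\Sigma,\, \ast}\calL_1\bigr)\ ,
\]
whose edge morphism $\Ext^1_\Sigma(\calL_2,\calL_1) = E_\infty^{1,0}\hookrightarrow \Ext^1_Y$ is, by construction, the pushforward $i_{\Sigma,\, \ast}$.

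The key vanishing is that of $E_2^{0,1}= \Hom_\Sigma(\calL_2\otimes \calN_{Y/\Sigma}^\vee,\calL_1)$. Identifying $\Sigma\simeq \PP^1$ and recalling that $\calN_{Y/\Sigma}\simeq \scrO_{\PP^1}\oplus \scrO_{\PP^1}(-2)$ by Assumption~\ref{assumption:f}, this group is
\[
	H^0\bigl(\PP^1,\calL_1\otimes \calL_2^{-1}\bigr)\oplus H^0\bigl(\PP^1,\calL_1\otimes \calL_2^{-1}\otimes \scrO_{\PP^1}(-2)\bigr)\ .
\]
The hypothesis $\chi(\calL_1) < \chi(\calL_2)$ forces $\deg(\calL_1\otimes \calL_2^{-1}) \leq -1$, so both summands vanish. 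Since no $d_r$-differential can enter or leave the other surviving term $E_2^{1,0} = \Ext^1_\Sigma(\calL_2,\calL_1)$ in this bidegree range, the isomorphism follows.

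The only technical point I anticipate as a mild obstacle is matching the edge morphism of the spectral sequence with the pushforward $i_{\Sigma,\, \ast}$, but this is formal: the map is induced by the counit $Li_\Sigma^\ast i_{\Sigma,\, \ast}\calL_2 \to \calL_2$, which on $H^0$ is the identity of $\calL_2$. Modulo this bookkeeping, the argument is reduced to an elementary degree computation on $\PP^1$.
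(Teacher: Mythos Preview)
Your argument is correct. The approach differs from the paper's only in the choice of spectral sequence: the paper uses the local-to-global spectral sequence and shows $H^0\bigl(Y,\calExt^1_Y(i_{\Sigma,\ast}\calL_2,i_{\Sigma,\ast}\calL_1)\bigr)=0$, concluding that the extension is trivial at every stalk and hence $\calE$ is scheme-theoretically supported on $\Sigma$; you use the Koszul/adjunction spectral sequence and show $E_2^{0,1}=\Hom_\Sigma(\calL_2\otimes\calN^\vee,\calL_1)=0$, concluding that the pushforward $\Ext^1_\Sigma\to\Ext^1_Y$ is an isomorphism. Since $\calExt^1_Y(i_{\Sigma,\ast}\calL_2,i_{\Sigma,\ast}\calL_1)\simeq i_{\Sigma,\ast}(\calL_1\otimes\calL_2^{-1}\otimes\calN_{\Sigma/Y})$ for this regular immersion, the two vanishing statements are literally the same group, and both reduce to the identical degree check on $\PP^1$ under the hypothesis $\chi(\calL_1)<\chi(\calL_2)$. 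Your packaging is marginally cleaner in that it directly produces an extension on $\Sigma$ whose pushforward is $\calE$, whereas the paper's route goes through the local splitting; conversely, the paper avoids the bookkeeping of identifying the edge map with $i_{\Sigma,\ast}$, which you correctly flag and handle via the counit.
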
 

\begin{proof}
	Let $e\in \Ext^1_Y(i_{\Sigma,\,\ast}\calL_2, i_{\Sigma,\, \ast}\calL_1)$ be the extension class associated to the exact sequence \eqref{eq:lb-ext-Y}. Let $p\in \Sigma$ be an arbitrary closed point. Recall that the standard local to global spectral sequence yields an exact sequence 
	\begin{align}
		\begin{tikzcd}[ampersand replacement=\&, row sep=tiny]
				0\ar{r}\& H^1(\calExt^0_Y(i_{\Sigma,\, \ast}\calL_2, i_{\Sigma,\, \ast}\calL_1))\ar{r}\& \Ext^1_Y(i_{\Sigma,\, \ast}\calL_2, i_{\Sigma,\, \ast}\calL_1)\\
				{} \ar{r}{f} \& H^0(\calExt^1_Y(i_{\Sigma,\, \ast}\calL_2, i_{\Sigma,\, \ast}\calL_1))\ar{r}\& 0
		\end{tikzcd}\ ,
	\end{align}
	where the germ of $f(e)$ at $p$ is the extension class associated to the extension of stalks
	\begin{align}\label{eq:stalk-ext}
		0\longrightarrow  (i_{\Sigma,\, \ast}\calL_1)_p\longrightarrow \calE_p \longrightarrow (i_{\Sigma,\, \ast} \calL_2)_p \longrightarrow 0\ . 
	\end{align}
	In particular if $f(e)=0$, it follows that the extension \eqref{eq:stalk-ext} is trivial at all $p\in \Sigma$, which implies that $\calE$ is scheme-theoretically supported on $\Sigma$. This implies the claim. 
	
	In conclusion, it suffices to prove that 
	\begin{align}\label{eq:zero-loc-ext}
		H^0(Y, \calExt^1_Y(i_{\Sigma,\, \ast}\calL_2, i_{\Sigma,\, \ast}\calL_1))=0\ .
	\end{align}
	Since $\Sigma\simeq \PP^1$, using Assumption~\ref{assumption:divisor}, one has $\calL_i \simeq \scrO_Y(d_i S)\otimes \scrO_\Sigma$ for some $d_i \in \Z$ so that $d_1< d_2$. Then one has 
	\begin{align}
		\calExt^1_Y(i_{\Sigma,\, \ast}\calL_2, i_{\Sigma, \, \ast}\calL_1) \simeq \calExt^1_Y(i_{\Sigma,\, \ast}\scrO_\Sigma, i_{\Sigma,\, \ast}\scrO_\Sigma) \otimes \scrO_Y((d_1-d_2)S)\ . 
	\end{align}
	Moreover, the exact sequence 
	\begin{align}
		0 \longrightarrow \calI_\Sigma \longrightarrow \scrO_Y \longrightarrow i_{\Sigma,\, \ast}\scrO_\Sigma \longrightarrow 0
	\end{align}
	yields isomorphisms 
	\begin{align}
		\calExt^1_Y(i_{\Sigma,\, \ast}\scrO_\Sigma, i_{\Sigma,\, \ast}\scrO_\Sigma) \simeq \calExt^0_Y(\calI_\Sigma, i_{\Sigma,\, \ast}\scrO_\Sigma) \simeq i_{\Sigma,\, \ast}\big(\scrO_\Sigma\oplus \scrO_\Sigma(-2))\big)\ ,
	\end{align}
	since $\Sigma$ is a $(0,-2)$ curve on $Y$. 
	Therefore 
	\begin{align}
		\calExt^1_Y(i_{\Sigma,\, \ast}\calL_2, i_{\Sigma,\, \ast}\calL_1) \simeq i_{\Sigma,\, \ast}\big( \scrO_\Sigma(d_1-d_2)\oplus \scrO_\Sigma(d_1-d_2-2)\big)\ .
	\end{align}
	Since $d_1<d_2$, this implies the vanishing result \eqref{eq:zero-loc-ext}.
\end{proof}

We have the following result.
\begin{proposition}\label{prop:stable-mmB}
	Suppose that $\calF$ is a nonzero stable purely one-dimensional sheaf on $Y$ with set-theoretic support on $\Sigma$. Then $\calF$ is isomorphic to the pushforward of a line bundle on $\Sigma$. 
\end{proposition}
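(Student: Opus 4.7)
The strategy is to proceed in two stages. \emph{Stage 1} establishes that $\calF$ is scheme-theoretically supported on $\Sigma$, i.e.\ $\calI_\Sigma \calF=0$; \emph{Stage 2} then concludes by a direct rank analysis on $\Sigma\simeq\mathbb{P}^1$. For Stage 2, once $\calI_\Sigma\calF=0$, the sheaf $\calF$ is a pure one-dimensional coherent sheaf on $\Sigma$, hence a vector bundle $\bigoplus_{i=1}^r \scrO_\Sigma(d_i)$ by Grothendieck's splitting theorem; the summand of maximal degree defines a rank-one subsheaf whose $\mu_Y$-slope is at least the average slope $\mu_Y(\calF)$, with equality iff all $d_i$ coincide. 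Strict $\mu_Y$-stability therefore forces $r=1$, whence $\calF\simeq i_{\Sigma,\ast}\scrO_\Sigma(d)$.

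For Stage 1, I would argue by induction on $r$. The base case $r=1$ follows purely from purity: the stalk $\calF_\eta$ at the generic point $\eta$ of $\Sigma$ is a length-one module over the two-dimensional regular local ring $\scrO_{Y,\eta}$, hence isomorphic to $\scrO_{Y,\eta}/\calI_{\Sigma,\eta}\simeq \scrO_{\Sigma,\eta}$. Consequently $\calI_\Sigma\calF$ has zero generic stalk; being a subsheaf of the pure one-dimensional sheaf $\calF$, it must vanish identically.

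For the inductive step $r\geq 2$, suppose for contradiction that $\calI_\Sigma \calF\neq 0$. The quotient $\bar\calF:=\calF/\calI_\Sigma\calF$ must have a non-zero locally free summand on $\Sigma$, since otherwise $\ch_2(\calI_\Sigma^k\calF)=r[\Sigma]$ for every $k\geq 0$, contradicting $\calI_\Sigma^N\calF=0$ for $N\gg 0$. This yields a line bundle quotient $\calF\twoheadrightarrow i_{\Sigma,\ast}\scrO_\Sigma(d)$ with $d$ minimal; stability gives $d+1>\mu_Y(\calF)$. Symmetrically, the socle $\calF[\calI_\Sigma]\subseteq \calF$ is a pure one-dimensional subsheaf, hence a vector bundle $\bigoplus_k\scrO_\Sigma(e_k)$ on $\Sigma$; stability applied to its maximal-degree summand gives $\max_k(e_k)+1<\mu_Y(\calF)$, so $e:=\max_k(e_k)<d$. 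Via the snake lemma I would form the pullback subsheaf
\[
\calF':=\ker\bigl(\calF\twoheadrightarrow \calK/i_{\Sigma,\ast}\scrO_\Sigma(e)\bigr),\qquad \calK:=\ker\bigl(\calF\twoheadrightarrow i_{\Sigma,\ast}\scrO_\Sigma(d)\bigr),
\]
which fits in a short exact sequence
\[
0\longrightarrow i_{\Sigma,\ast}\scrO_\Sigma(e)\longrightarrow \calF'\longrightarrow i_{\Sigma,\ast}\scrO_\Sigma(d)\longrightarrow 0.
\]
Since $\chi(\scrO_\Sigma(e))<\chi(\scrO_\Sigma(d))$, Lemma~\ref{lem:stable-mmA} guarantees that $\calF'$ is scheme-theoretically supported on $\Sigma$, so by Grothendieck's theorem $\calF'\simeq i_{\Sigma,\ast}(\scrO_\Sigma(a)\oplus\scrO_\Sigma(b))$ with $a+b=d+e$ and (say) $a\geq b$. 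The maximal-degree summand $i_{\Sigma,\ast}\scrO_\Sigma(a)\hookrightarrow\calF'\hookrightarrow\calF$ is a rank-one subsheaf of slope $a+1\geq (d+e+2)/2$; in particular $\calF'\subseteq\calF[\calI_\Sigma]$, so the socle has rank at least two. Iterating this construction (by varying the choice of line bundle quotient and socle summand) forces the rank of $\calF[\calI_\Sigma]$ to grow until it equals $r$, at which point $\calF/\calF[\calI_\Sigma]$ is zero-dimensional; purity of $\calF$ then forces $\calF=\calF[\calI_\Sigma]$, so $\calI_\Sigma\calF=0$, contradicting the hypothesis.

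The main obstacle is to make the iteration in the last step precise: one needs to verify that Lemma~\ref{lem:stable-mmA} can be applied repeatedly with strict growth of the socle rank, and to control how 0-dimensional quotients behave under purity — essentially using that a non-trivial extension of a zero-dimensional sheaf by a pushforward of a vector bundle on $\Sigma$ is itself a pushforward of a vector bundle on $\Sigma$ (a local calculation at each point of $\Sigma$, analogous to the one carried out in the proof of Lemma~\ref{lem:stable-mmA}). An alternative, perhaps cleaner, route exploits the rigidity of $\Sigma$ (Proposition~\ref{prop:rigid-curve-B}) to analyze $\calF$ directly over the formal neighborhood of $\Sigma$ in $Y$: this yields tight constraints on the $\calI_\Sigma/\calI_\Sigma^2$-action, which for stable objects must be compatible with simplicity $\End(\calF)=\mathbb{C}$, again forcing $r=1$.
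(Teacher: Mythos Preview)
Your Stage~2 and the base case of Stage~1 are correct. The inductive step of Stage~1, however, has a genuine gap, and you have identified part of it yourself.

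The construction of $\calF'$ is not well-defined. You write
\[
\calF' \coloneqq \ker\bigl(\calF \twoheadrightarrow \calK / i_{\Sigma,\ast}\scrO_\Sigma(e)\bigr),
\]
but $\calK / i_{\Sigma,\ast}\scrO_\Sigma(e)$ is a quotient of the \emph{subsheaf} $\calK\subset\calF$, not of $\calF$; there is no canonical surjection $\calF \to \calK/i_{\Sigma,\ast}\scrO_\Sigma(e)$. You also have not checked that the socle summand $i_{\Sigma,\ast}\scrO_\Sigma(e)$ lies in $\calK$: since $e<d$, the composite $i_{\Sigma,\ast}\scrO_\Sigma(e)\hookrightarrow\calF\twoheadrightarrow i_{\Sigma,\ast}\scrO_\Sigma(d)$ can perfectly well be nonzero. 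Even if one grants a rank-two subsheaf $\calF'\subset\calF$ scheme-theoretically on $\Sigma$, the iteration you propose is not specified: there is no mechanism that produces a \emph{new} line-bundle quotient or a strictly larger socle at the next step, and you never invoke the induction hypothesis on $r$. As stated, the argument is circular rather than inductive.

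The paper's proof proceeds differently and uses the induction hypothesis in an essential way. After twisting by $\scrO_Y(S)$ to normalise $0\le\chi(\calF)\le r-1$ (Lemma~\ref{lem:stability-A}), it takes the canonical quotient $\calG\coloneqq(\calF\otimes i_{\Sigma,\ast}\scrO_\Sigma)/\text{torsion}$, which is the pushforward of a nonzero vector bundle on $\Sigma$, and sets $\calE\coloneqq\ker(\calF\to\calG)$. Each Harder--Narasimhan factor of $\calE$ has rank strictly less than $r$, so \emph{by induction} it is $S$-equivalent to $\C^{s_i}\otimes i_{\Sigma,\ast}\scrO_\Sigma(d_i)$; stability of $\calF$ and the slope normalisation force $d_i\le -1$. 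Refining to a filtration $0=\calE_0\subset\cdots\subset\calE_s=\calE$ with line-bundle subquotients $i_{\Sigma,\ast}\calL_i$, one has $\chi(\calL_i)\le 0<\chi(\calM_j)$ for every summand $\calM_j$ of $\calG$. Lemma~\ref{lem:stable-mmA} is then applied in a precise \emph{descending} induction: each successive quotient $\calF/\calE_{s-i}$ sits in
\[
0\longrightarrow i_{\Sigma,\ast}\calL_{s-i}\longrightarrow \calF/\calE_{s-i-1}\longrightarrow \calF/\calE_{s-i}\longrightarrow 0,
\]
and since $\calF/\calE_{s-i}$ is already known to be a pushforward of a vector bundle on $\Sigma$ whose every summand has $\chi\ge 1$, Lemma~\ref{lem:stable-mmA} applies to give the same for $\calF/\calE_{s-i-1}$. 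At the final step $i=s$ one obtains $\calF$ itself. The two crucial ingredients you are missing are the use of the induction hypothesis on the HN pieces of $\calE$, and a controlled filtration through which Lemma~\ref{lem:stable-mmA} propagates.
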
 

\begin{proof}
	Since $\Sigma \simeq \PP^1$, it suffices to prove that $\calF$ is scheme-theoretically supported on $\Sigma$. Let $\ch_2(\calF)=r[\Sigma]$, with $r>0$. If $r=1$ there is nothing to prove. 

	By induction, suppose the claim holds for all stable sheaves $\calF'$ with $\ch_2(\calF') = r'[\Sigma]$, with $1\leq r'\leq r-1$. Using Lemma~\ref{lem:stability-A}, one can assume that 
	\begin{align}\label{eq:chi-ineq-A} 
		0\leq \chi(\calF) \leq r-1
	\end{align}
	without loss of generality. 

	Let $\calT\subset \calF\otimes i_{\Sigma,\, \ast}\scrO_\Sigma$ be the maximal zero-dimensional subsheaf and let $\calG \coloneqq \calF\otimes i_{\Sigma,\, \ast}\scrO_\Sigma/\calT$. Then there is an exact sequence 
	\begin{align}
		0\longrightarrow \calE \longrightarrow \calF \longrightarrow \calG \longrightarrow 0\ , 
	\end{align}
	with $\calE$ and $\calG$ purely one-dimensional sheaves on $Y$ supported on $\Sigma$. Moreover, by construction, $\calG$ is the pushforward of a locally free sheaf on $\Sigma$. 
Note also that $\calF\otimes i_{\Sigma,\, \ast}\scrO_\Sigma$ is a nonzero $\scrO_Y$-module
with $\ch_2(\calF\otimes i_{\Sigma,\, \ast}\scrO_\Sigma) \neq 0$ by Nakayama's lemma. Therefore 
 $\ch_2(\calG)\neq 0$. 
	
	If $\calE=0$, it follows that $\calF$ is scheme theoretically supported on $\Sigma$. Since 
	it is stable by assumption, the claim follows. 
	
	Suppose $\calE\neq 0$. Since $\ch_2(\calG)\neq 0$, it follows that $\ch_2(\calE)=s[\Sigma]$ for some $1\leq s \leq r-1$. Let 
	\begin{align}
		0=\calH_0\subset \calH_1 \subset \cdots \subset \calH_k =\calE
	\end{align}
	be the Harder-Narasimhan filtration of $\calE$. Using the inductive hypothesis, each successive quotient $\calH_{i+1}/\calH_i$ is $S$-equivalent to a sheaf of the form $i_{\Sigma,\, \ast}(\C^{s_i} \otimes \scrO_\Sigma(d_i))$ with $s_i\geq 1$. Furthermore, since $\calF$ is stable of slope
	\begin{align}
		0\leq \mu(\calF) \leq \frac{r-1}{r} \ ,
	\end{align}
	one has 
	\begin{align}\label{eq:chi-ineq-B}
		d_k < \cdots < d_1 \leq -1\ . 
	\end{align}
	Therefore the Harder-Narasimhan filtration admits a refinement 
	\begin{align}
		0=\calE_0\subset \calE_1 \subset \cdots \subset \calE_s =\calE 
	\end{align}
	so that each quotient $\calE_i/\calE_{i-1}\simeq i_{\Sigma,\, \ast}\calL_i$ for some line bundles $\calL_i$ on $\Sigma$ so that 
	\begin{align}
		\chi(\calL_s) \leq \chi(\calL_{s-1}) \leq \cdots \leq \chi(\calL_1) \leq 0\ .
	\end{align}
	Moreover, $\calG$ is the pushforward of a direct sum of line bundles 
	\begin{align}
		\bigoplus_{j=1}^\ell \calM_j 
	\end{align}
	so that $\chi(\calM_j) \geq 1$.
	
	Note the exact sequence 
	\begin{align}\label{eq:quot-seq-A}
		0\longrightarrow \calE/\calE_{s-1} \longrightarrow \calF/\calE_{s-1} \longrightarrow \calG \longrightarrow 0\ ,
	\end{align}
	where $\calE/\calE_{s-1} \simeq i_{\Sigma,\, \ast}\calL_s$ and $\chi(\calL_s) < \chi(\calM_j)$ for all $1\leq j \leq \ell$. Then Lemma~\ref{lem:stable-mmA} shows that $\calG_1\coloneqq \calF/\calE_{s-1}$ is the pushforward of a 
	locally free sheaf on $\Sigma$. 
	
	Proceeding by induction, suppose that $\calG_i \coloneqq \calF/\calE_{s-i}$ is the pushforward of a locally free sheaf on $\Sigma$ for some $1\leq i \leq s-1$. Then one has the exact sequence 
	\begin{align}\label{eq:quot-seq-B}
		0\longrightarrow \calE_{s-i}/\calE_{s-i-1} \longrightarrow \calF/\calE_{s-i-1} \longrightarrow \calG_i \longrightarrow 0\ ,
	\end{align}
	where 
	\begin{align}
		\calE_{s-i}/\calE_{s-i-1} \simeq  \calL_{s-i}
	\end{align}
	with $\chi(\calL_{s-i}) \leq 0$. At the same time, $\calG_i$ is again isomorphic to the pushforward of a direct sum of line bundles 
	\begin{align}
		\bigoplus_{j=1}^{\ell_i} \calM_{i,j} 
	\end{align}
	with $\chi(\calM_{i,j})\geq 1$. Then Lemma~\ref{lem:stable-mmA} shows that the central term in extension~\eqref{eq:quot-seq-B} is the pushforward of a locally free $\scrO_\Sigma$-module. This proves the inductive step. 
	
	In conclusion, $\calF$ is scheme-theoretically supported on $\Sigma$, as claimed. 
\end{proof}

\begin{corollary}\label{cor:stability-B} 
	A nonzero sheaf $\calE\in \catCoh^{\mathsf{pure}}_\Sigma(Y)$ is semistable if and only if it admits a filtration 
	\begin{align}
		0 = \calE_0\subset \calE_1 \subset \cdots \subset \calE_m=\calE 
	\end{align}
	so that $\calE_j/\calE_{j-1} \simeq i_{\Sigma,\, \ast}\calL$, with $1\leq j\leq m$, for some line bundle $\calL$ on $\Sigma$ so that $\chi(\calL) = \mu_Y(\calE)$. 
\end{corollary}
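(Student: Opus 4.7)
The plan is to prove both directions using Proposition~\ref{prop:stable-mmB} together with the elementary fact that, in any abelian category equipped with a slope function, the full subcategory of semistable objects of a fixed slope is closed under extensions.

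For the ``if'' direction, I would first verify that each $i_{\Sigma,\, \ast}\calL$ is itself stable of slope $\chi(\calL)$. Since $i_{\Sigma,\, \ast}\calL$ is annihilated by $\calI_\Sigma$, any nonzero proper subsheaf $\calE' \subset i_{\Sigma,\, \ast}\calL$ in $\catCoh^{\mathsf{pure}}_\Sigma(Y)$ is automatically annihilated by $\calI_\Sigma$, hence of the form $i_{\Sigma,\, \ast}\calL'$ with $\calL'\subsetneq \calL$ a nonzero coherent subsheaf on $\Sigma\simeq \PP^1$. On the smooth curve $\PP^1$, any such $\calL'$ must be of the form $\calL(-D)$ for a nonzero effective divisor $D$, so $\chi(\calL') < \chi(\calL)$, yielding the required strict slope inequality. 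With stability of the building blocks at hand, the existence of a filtration whose successive quotients are all isomorphic to $i_{\Sigma,\, \ast}\calL$ implies semistability of $\calE$ by iterating extensions in the Serre subcategory of semistable sheaves of slope $\mu_Y(\calE)$.

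For the ``only if'' direction, I would choose a Jordan--H\"older filtration $0 = \calE_0 \subset \calE_1 \subset \cdots \subset \calE_m = \calE$ inside this same Serre subcategory. Each successive quotient $\calE_j/\calE_{j-1}$ is stable of slope $\mu_Y(\calE)$ and lies in $\catCoh^{\mathsf{pure}}_\Sigma(Y)$, so by Proposition~\ref{prop:stable-mmB} it is isomorphic to $i_{\Sigma,\, \ast}\calL_j$ for some line bundle $\calL_j$ on $\Sigma$; the slope identity forces $\chi(\calL_j) = \mu_Y(\calE)$ for every $j$. Since line bundles on $\PP^1$ are classified up to isomorphism by their Euler characteristic, all of the $\calL_j$ are mutually isomorphic to a single line bundle $\calL$ with $\chi(\calL) = \mu_Y(\calE)$, producing the claimed filtration.

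I do not anticipate any serious obstacle here once Proposition~\ref{prop:stable-mmB} is in place; the sole delicate point is the stability of the sheaves $i_{\Sigma,\, \ast}\calL$, which reduces immediately to the classification of coherent subsheaves of a line bundle on $\PP^1$.
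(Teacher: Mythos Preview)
Your proof is correct and matches the paper's intended approach: the corollary is stated without proof, as it follows directly from Proposition~\ref{prop:stable-mmB} via the Jordan--H\"older filtration for the ``only if'' direction and extension-closure of semistables of fixed slope for the ``if'' direction. Your explicit verification that $i_{\Sigma,\,\ast}\calL$ is stable (by noting that any subsheaf is annihilated by $\calI_\Sigma$ and hence comes from a sub-line-bundle on $\PP^1$) is a helpful detail that the paper leaves implicit.
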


\begin{remark}	
	Given an arbitrary contraction $f\colon Y\to X$, with $X$ affine, Davison proved in \cite[Proposition~4.3]{Davison_Refined_Jacobi} a result similar to Corollary~\ref{cor:stability-B} for semistable coherent sheaves $\calF \in \catCoh^{\mathsf{pure}}_\Sigma(Y)$ with $\chi(\calF) = 0$. In contrast, our result does not require any condition on the Euler characteristic of the semistable sheaves on $Y$.
\end{remark}

\begin{corollary}\label{cor:refined-HN-filtration} 
	Let $\calF$ be a nonzero sheaf in $\catCoh^{\mathsf{pure}}_\Sigma(Y)$. Then $\calF$ has a filtration 
	\begin{align}\label{eq:refHN}
		0=\calF_0 \subset \calF_1 \subset \cdots \subset \calF_m = \calF 
	\end{align}
	so that 
	\begin{align}
		\calF_i/\calF_{i-1} \simeq i_{\Sigma,\, \ast}\calL_i 
	\end{align}
	for $1\leq i \leq m$, with $\calL_1, \ldots, \calL_m$ line bundles on $\Sigma$ satisfying
	\begin{align}
		\chi(\calL_1) \geq  \cdots \geq  \chi(\calL_m)\ .
	\end{align}
	Moreover, 
	\begin{align}
		\mu_{Y\textrm{-}\mathsf{max}}(\calF) = \chi(\calL_1)\quad\text{and} \quad \mu_{Y\textrm{-}\mathsf{min}}(\calF) = \chi(\calL_m)\ .
	\end{align}
\end{corollary}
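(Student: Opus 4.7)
The plan is to combine the Harder--Narasimhan filtration on the category of pure one-dimensional sheaves with the characterization of semistable objects given in Corollary \ref{cor:stability-B}.

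First, one applies the existence theorem for Harder--Narasimhan filtrations with respect to the slope $\mu_Y$ on pure one-dimensional sheaves: this yields a canonical filtration
\[
0 = \calH_0 \subset \calH_1 \subset \cdots \subset \calH_k = \calF
\]
whose successive quotients $\calG_j \coloneqq \calH_j/\calH_{j-1}$ are $\mu_Y$-semistable with strictly decreasing slopes $\mu_1 > \mu_2 > \cdots > \mu_k$. Since $\calF$ lies in $\catCoh^{\mathsf{pure}}_\Sigma(Y)$, and the HN formalism for Simpson slope-stability on pure one-dimensional sheaves produces pure semistable quotients (any $0$-dimensional torsion would give an infinite-slope subsheaf, contradicting semistability), each $\calG_j$ remains in $\catCoh^{\mathsf{pure}}_\Sigma(Y)$.

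Next, Corollary \ref{cor:stability-B} applies to each semistable factor $\calG_j$, producing a filtration of $\calG_j$ whose successive quotients are pushforwards $i_{\Sigma,\, \ast}\calL_{j,i}$ of line bundles on $\Sigma$ with $\chi(\calL_{j,i}) = \mu_j$. Splicing these refinements into the HN filtration of $\calF$ yields the desired filtration
\[
0 = \calF_0 \subset \calF_1 \subset \cdots \subset \calF_m = \calF
\]
with $\calF_\ell/\calF_{\ell-1} \simeq i_{\Sigma,\, \ast}\calL_\ell$, where the subscript $\ell$ enumerates the pairs $(j,i)$ lexicographically. By construction, $\chi(\calL_\ell)$ is constant within each HN piece and strictly decreases when passing from one piece to the next, so the inequality $\chi(\calL_1) \geq \chi(\calL_2) \geq \cdots \geq \chi(\calL_m)$ holds.

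Finally, the extremal slope identities $\mu_{Y\textrm{-}\mathsf{max}}(\calF) = \chi(\calL_1)$ and $\mu_{Y\textrm{-}\mathsf{min}}(\calF) = \chi(\calL_m)$ are immediate from the defining property of the HN filtration: the maximum (respectively minimum) slope of a subsheaf (respectively quotient) of $\calF$ equals the slope $\mu_1$ of the first (respectively $\mu_k$ of the last) HN piece, which coincide by construction with $\chi(\calL_1)$ and $\chi(\calL_m)$. The argument is almost entirely formal given the earlier results; the only conceptual point to verify is the preservation of purity along the HN filtration, which is standard for Simpson slope-stability on one-dimensional sheaves, so no new essential obstacle arises beyond Proposition \ref{prop:stable-mmB} and Corollary \ref{cor:stability-B}.
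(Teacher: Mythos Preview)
Your proof is correct and follows essentially the same approach as the paper: take the Harder--Narasimhan filtration of $\calF$ with respect to $\mu_Y$-slope stability and refine each semistable subquotient using Corollary~\ref{cor:stability-B}. The paper's own proof is considerably more terse, omitting the verification of purity and the extremal-slope identities that you spell out, but the argument is the same.
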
 

\begin{proof}
	Let 
	\begin{align}
		0\subset \calG_1 \subset \cdots \subset \calG_h =\calF
	\end{align}
	be the Harder-Narasimhan filtration of $\calF$ with respect to $\mu_Y$-slope stability. Then each subquotient $\calG_k/\calG_{k-1}$ has a filtration as in Corollary~\ref{cor:stability-B}. Then the Harder-Narasimhan filtration admits a non-unique refinement, so the claim follows.
\end{proof}

Recall that $\Sigma_n$ denotes the closed subscheme of $Y$ defined by the ideal sheaf $\calJ_n \subset \scrO_Y$ from Formula~\eqref{eq:ideal-filtration-A} in Theorem~\ref{thm:filtration}. The next result shows that the scheme-theoretic support of any semistable sheaf in $\catCoh^{\mathsf{pure}}_\Sigma(Y)$ is contained in $\Sigma_n$.

\begin{lemma}\label{lem:stability-C}
	Any nonzero semistable sheaf $\calF\in \catCoh^{\mathsf{pure}}_\Sigma(Y)$  is
	 scheme-theoretically supported on $\Sigma_n$. 
\end{lemma}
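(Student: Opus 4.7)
The plan is to induct on the integer $r$ defined by $\ch_2(\calF) = r[\Sigma]$. By Corollary~\ref{cor:stability-B}, any nonzero semistable $\calF \in \catCoh^{\mathsf{pure}}_\Sigma(Y)$ admits a filtration $0 = \calF_0 \subset \calF_1 \subset \cdots \subset \calF_r = \calF$ whose successive quotients are all isomorphic to $i_{\Sigma,\, \ast}\calL$ for a single fixed line bundle $\calL$ on $\Sigma$. The base case $r=1$ is immediate: $\calF \cong i_{\Sigma,\, \ast}\calL$ is annihilated by $\calI_\Sigma$, hence by $\calJ_n \subset \calI_\Sigma$, so $\calF$ is scheme-theoretically supported on $\Sigma \subset \Sigma_n$.

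For the inductive step, observe that $\calF_{r-1}$ is pure (as a subsheaf of the pure sheaf $\calF$) and is semistable of slope $\chi(\calL)$ (as an iterated extension of copies of $i_{\Sigma,\, \ast}\calL$). The inductive hypothesis gives $\calJ_n \calF_{r-1} = 0$. To show $\calJ_n \calF = 0$, I would analyze the multiplication map $\mu\colon \calJ_n \otimes_{\scrO_Y} \calF \to \calF$, whose image is $\calJ_n \calF$. Since $\calF/\calF_{r-1} \simeq i_{\Sigma,\, \ast}\calL$ is annihilated by $\calI_\Sigma \supset \calJ_n$, this image is contained in $\calF_{r-1}$; combined with the vanishing of $\mu$ on $\calJ_n \otimes \calF_{r-1}$ and right-exactness of $\otimes$, the map $\mu$ factors through
\[
	\phi\colon \calJ_n \otimes_{\scrO_Y} i_{\Sigma,\, \ast}\calL \longrightarrow \calF_{r-1},
\]
and the image of $\phi$ coincides with $\calJ_n \calF$.

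Next, I would use Theorem~\ref{thm:filtration}: the relation $\calI_\Sigma \calJ_n \subset \calJ_{n+1} = 0$ together with~\eqref{eq:ideal-filtration-C} shows that $\calJ_n$ is an $\scrO_\Sigma$-module isomorphic to the pushforward of $\scrO_\Sigma(1)^{\oplus 2}$. The projection formula then gives
\[
	\calJ_n \otimes_{\scrO_Y} i_{\Sigma,\, \ast}\calL \simeq i_{\Sigma,\, \ast}\bigl(\calL \otimes \scrO_\Sigma(1)^{\oplus 2}\bigr).
\]
The image of $\phi$ is a subsheaf of the pure sheaf $\calF_{r-1}$ annihilated by $\calI_\Sigma$, hence of the form $i_{\Sigma,\, \ast}\calM$ with $\calM$ locally free on $\Sigma \simeq \PP^1$; moreover $\calM$ is a quotient of $\calL \otimes \scrO_\Sigma(1)^{\oplus 2}$. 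Setting $d = \deg \calL$, a direct computation on $\PP^1$ shows that every nonzero locally free quotient of $\scrO_\Sigma(d+1)^{\oplus 2}$ has slope at least $d+2 = \chi(\calL)+1$, strictly greater than $\mu_Y(\calF_{r-1}) = \chi(\calL)$, contradicting semistability. Hence $\calM = 0$ and $\calJ_n \calF = 0$, completing the induction.

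The main subtlety is setting up the factorization of $\mu$ cleanly and identifying the source of $\phi$ via the projection formula. The crux of the argument, however, lies in the positive twist $\calJ_n / \calI_\Sigma \calJ_n \simeq \scrO_\Sigma(1)^{\oplus 2}$ from Theorem~\ref{thm:filtration}: this is precisely what turns the semistability bound on subsheaves of $\calF_{r-1}$ into an obstruction to the non-vanishing of $\calJ_n \calF$, and correspondingly explains why $n$ is the exact thickening for which the lemma holds.
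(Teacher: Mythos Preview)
Your overall strategy coincides with the paper's: induct on the length of the filtration from Corollary~\ref{cor:stability-B}, factor the multiplication map through a morphism $\phi\colon \calJ_n\otimes_{\scrO_Y} i_{\Sigma,\ast}\calL\to\calF_{r-1}$ (the paper obtains the same map via the snake lemma and calls it $\delta$), identify the source as $i_{\Sigma,\ast}\bigl(\calL\otimes\scrO_\Sigma(1)^{\oplus 2}\bigr)$, and kill $\phi$ by a slope comparison against the semistable $\calF_{r-1}$.

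There is, however, one genuine slip in your identification of the source. You write that ``the relation $\calI_\Sigma\calJ_n\subset\calJ_{n+1}=0$ \ldots\ shows that $\calJ_n$ is an $\scrO_\Sigma$-module isomorphic to the pushforward of $\scrO_\Sigma(1)^{\oplus 2}$.'' This misreads Theorem~\ref{thm:filtration}: the inclusions $\calI_\Sigma\calJ_i\subset\calJ_{i+1}$ in~\eqref{eq:ideal-filtration-B} are stated only for $1\le i\le n-1$, and the symbol $\calJ_{n+1}=0$ in the displayed filtration is a bookkeeping convention, not an assertion that $\calI_\Sigma\calJ_n=0$. Indeed $\calJ_n$ is the ideal sheaf of the one-dimensional subscheme $\Sigma_n$ in the smooth threefold $Y$, hence a rank-one torsion-free $\scrO_Y$-module; were it annihilated by $\calI_\Sigma$ it would be a torsion subsheaf of $\scrO_Y$, forcing $\calJ_n=0$.

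The fix is immediate and is precisely what the paper does: since $i_{\Sigma,\ast}\calL$ is supported on $\Sigma$, the projection formula you already invoke gives
\[
\calJ_n\otimes_{\scrO_Y} i_{\Sigma,\ast}\calL\;\simeq\; i_{\Sigma,\ast}\bigl(i_\Sigma^\ast\calJ_n\otimes_{\scrO_\Sigma}\calL\bigr),
\]
and it is $i_\Sigma^\ast\calJ_n=\calJ_n/\calI_\Sigma\calJ_n$, not $\calJ_n$ itself, that~\eqref{eq:ideal-filtration-C} identifies with $\scrO_\Sigma(1)^{\oplus 2}$. With this correction your argument goes through unchanged; the paper phrases the final step slightly differently, observing that $\calG\otimes\calJ_n$ is semistable of slope $\mu_Y(\calF')+1$ and invoking $\Hom_Y(\calG\otimes\calJ_n,\calF')=0$ directly, but this is equivalent to your analysis of quotients of $\scrO_\Sigma(d+1)^{\oplus 2}$.
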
 

\begin{proof}
	By Corollary~\ref{cor:stability-B}, the sheaf $\calF$ admits a Jordan-Hölder filtration 
	\begin{align}\label{eq:JH}
		0\subset \calF_1 \subset \cdots \subset \calF_j =\calF 
	\end{align}
	so that each successive quotient is isomorphic to
	\begin{align}
		\calF_i/\calF_{i-1} \simeq i_{\Sigma,\, \ast}\calL\ .
	\end{align}
for a  line bundle $\calL$ on 
	$\Sigma$, with 
	\begin{align}
		\chi(\calL) = \mu_Y(\calF)\ .
	\end{align}

	If $j=1$, i.e., if $\calF$ is stable, the claim follows from Proposition~\ref{prop:stable-mmB}. Assume that $j>1$. The proof will proceed by induction on $j$, keeping $\calL$ fixed. Suppose that the claim holds for all semistable sheaves $\calF'$ with slope 
	\begin{align}
		\mu_Y(\calF') = \chi(\calL)\ ,
	\end{align}
	which have $j'\leq j-1$ factors in their Jordan-Hölder filtrations. Let $\calF$ be a semistable sheaf with 
	\begin{align}
		\mu_Y(\calF) = \chi(\calL)\ ,
	\end{align}
	which has a Jordan-Hölder filtration of length $j$. Then $\calF$ fits into an exact sequence
	\begin{align}
		0\longrightarrow \calF'\longrightarrow \calF \longrightarrow \calG \longrightarrow 0\ , 
	\end{align}
	where $\calG\simeq i_{\Sigma,\, \ast}\calL$ and $\calF'$ is semistable, with a Jordan-Hölder filtration of length $j-1$. 
	
	Let $\sigma_n\colon \calJ_n \to \scrO_Y$ be the defining section of $\Sigma_n$. For any coherent sheaf $\calE$ on $Y$, the morphism corresponding to the multiplication by $\sigma_n$ will be denoted by $\mu_\calE \coloneqq \id_\calE\otimes \sigma_n \colon \calE \otimes \calJ_n \to \calE$. We have the commutative diagram 
	\begin{align}
		\begin{tikzcd}[ampersand replacement=\&]
			\& \calF'\otimes \calJ_n \ar[r] \ar{d}{\mu_{\calF'}} \& \calF\otimes \calJ_n \ar[r] \ar{d}{\mu_\calF} \& \calG \otimes \calJ_n \ar{d}{\mu_\calG} \ar[r]\& 0\\
			0\ar[r] \& \calF'\ar[r] \& \calF \ar[r] \& \calG  \ar[r] \& 0
			\end{tikzcd}\ ,
	\end{align}
	where the rows are exact. Since $\calG\simeq i_{\Sigma,\, \ast}\calL$, we get $\mu_\calG=0$. Moreover, $\mu_{\calF'}=0$ by the inductive hypothesis. Then the snake lemma yields the exact sequence
	\begin{align}
		\begin{tikzcd}[ampersand replacement=\&]
					\cdots \arrow{r} \& \calG \otimes \calJ_n \arrow{r}{\delta} \& \calF' \arrow{r} \& \mathsf{Coker}(\mu_\calF) \arrow{r} \& \calG \arrow{r} \& 0
		\end{tikzcd}\ .
	\end{align}
	Now, we claim that $\calG\otimes \calJ_n$ is semistable of slope $\mu_Y(\calF')+1$. Indeed, this follows from condition \eqref{eq:ideal-filtration-C} in Theorem~\ref{thm:filtration}, which implies that 
	\begin{align}
		i_\Sigma^\ast\calJ_n \simeq  \scrO_{\Sigma}(1)^{\oplus 2}\ .
	\end{align}
	Since $\calG\simeq i_{\Sigma,\, \ast}\calL$, one obtains 
	\begin{align}
		\calG\otimes \calJ_n \simeq i_{\Sigma,\, \ast} (\calL\otimes \scrO_\Sigma(1)^{\oplus 2})\ ,
	\end{align} 
	which implies the claim.

	Since $\calG\otimes \calJ_n$ and $\calF'$ are semistable, with $\mu_Y(\calG)= \mu_Y(\calF')+1$, one has again $\Hom_Y(\calG\otimes \calJ_n , \calF') =0$. Therefore $\delta = 0$, which implies that $\mathsf{Coker}(\mu_\calF)$ has the same topological invariants as $\calF$. This implies that $\mathsf{Im}(\mu_\calF)=0$, which proves the inductive step. 
\end{proof}

For future reference, note the following consequence of Lemma~\ref{lem:stability-C}. 
\begin{corollary}\label{cor:chi-bound} 
	Let $\calF$ be a coherent sheaf on $Y$ with $\ch_2(\calF) = m [\Sigma]$, $m \geq 1$. Assume that there exists a morphism $s\colon \scrO_Y \to \calF$ so that the cokernel $\mathsf{Coker}(s)$ is zero-dimensional. Then $\chi(\calF) \geq m$. 
\end{corollary} 

\begin{proof}
	Clearly, it suffices to prove the claim under the assumption that $s\colon \scrO_Y\to \calF$ is surjective. By Lemma~\ref{lem:exclass}, $\calF$ is set-theoretically supported on $S$. Let $\calF \twoheadrightarrow \calG$ be the last subquotient in the Harder-Narasimhan filtration of $\calF$ with respect to slope stability. Then the composition 
	\begin{align}
		\begin{tikzcd}[ampersand replacement=\&]
			\scrO_Y \ar{r}{s}\& \calF \ar{r}\& \calG 
		\end{tikzcd}
	\end{align}
	is surjective. Since $\calG$ is semistable, it is scheme-theoretically supported on $\Sigma_n$ by Lemma~\ref{lem:stability-C}. Therefore the above morphism factors through a surjective morphism
	\begin{align}
		i_{\Sigma_n,\ast} \scrO_{\Sigma_n} \longrightarrow \calG\ . 
	\end{align}
	Moreover, since $i_{\Sigma,\ast}\scrO_\Sigma$ is stable of slope $1$, the exact sequences \eqref{eq:sch-sequence-X} show that $i_{\Sigma_n,\ast} \scrO_{\Sigma_n}$ is semistable of slope $1$. Therefore $\mu(\calG)\geq 1$. Then the claim follows from the defining properties of the Harder-Narasimhan filtration.
\end{proof}

Finally, recall that the direct image $\calQ_k=f_\ast\scrO_{\Sigma_k}$ is isomorphic to the structure sheaf $\scrO_{Z_k}$ of a zero-dimensional subscheme $Z_k \subset W$ by Corollary~\ref{cor:dirimg-C}. Furthermore, the exact sequences \eqref{eq:Q-sequence-Y} yield exact sequences 
\begin{align}\label{eq:sch-sequence-Z}
	0\longrightarrow \scrO_{Z_{k-1}} \longrightarrow \scrO_{Z_k} \longrightarrow \scrO_\nu \longrightarrow 0\ ,
\end{align}
for all $2\leq k \leq n$, where the epimorphisms $\scrO_{Z_k} \to \scrO_\nu$ are canonical. Conversely, the following holds:

\begin{proposition}\label{prop:invimg-A} 
	The evaluation map $\ev_k\colon f^\ast\calQ_k \to \scrO_{\Sigma_k}$ is an isomorphism for all $1\leq k \leq n$. Moreover, one has a commutative diagram with exact rows 
	\begin{align}\label{eq:evdiag-A}
		\begin{tikzcd}[ampersand replacement=\&]
			0\ar[r] \& f^\ast\scrO_{Z_{k-1}} \ar[r] \ar{d}{\ev_{k-1}} \& 
			f^\ast\scrO_{Z_k} \ar[r]   \ar{d}{\ev_{k}}        \& f^\ast \scrO_\nu \ar[r]   \ar{d}{\ev_{1}}     \& 0\\
			 0\ar[r] \& \scrO_{\Sigma_{k-1}}\ar[r] \& \scrO_{\Sigma_k}\ar[r] \& \scrO_\Sigma \ar[r] \& 0
		\end{tikzcd}\ .
	\end{align}
\end{proposition}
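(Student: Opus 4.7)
The plan is to prove the isomorphism $\ev_k$ by induction on $k$, exploiting right-exactness of $f^\ast$ together with the short exact sequences \eqref{eq:sch-sequence-Y} and \eqref{eq:sch-sequence-Z}, and leaving the left-exactness of the top row of \eqref{eq:evdiag-A} as an a posteriori consequence. For the base case $k=1$, one has $\Sigma_1=\Sigma$, and by Corollary~\ref{cor:dirimg-C} the subscheme $Z_1\subset X$ is the reduced point $\nu$, so $\scrO_{Z_1}\simeq \scrO_\nu$. The second clause of Assumption~\ref{assumption:f} states that $\Sigma=f^{-1}(\nu)$ scheme-theoretically, equivalently $\calI_\nu\cdot\scrO_Y=\calI_\Sigma$. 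Applying $f^\ast$ to the defining sequence $0\to \calI_\nu\to \scrO_X\to \scrO_\nu\to 0$ therefore produces $f^\ast\scrO_\nu\simeq \scrO_Y/\calI_\Sigma=\scrO_\Sigma$, and by naturality of adjunction this isomorphism coincides with the counit $\ev_1$.

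For the inductive step, apply the right-exact functor $f^\ast$ to \eqref{eq:sch-sequence-Z}, obtaining the top row of \eqref{eq:evdiag-A} as an a priori only right-exact sequence. The bottom row is short exact by Theorem~\ref{thm:filtration}, and commutativity of \eqref{eq:evdiag-A} follows from naturality of the counit together with the identifications of Corollary~\ref{cor:dirimg-C} which intertwine \eqref{eq:Q-sequence-Y} with \eqref{eq:sch-sequence-Z}. Assuming that $\ev_{k-1}$ and $\ev_1$ are isomorphisms, a direct four-term diagram chase shows that $\ev_k$ is an isomorphism as well. For surjectivity, one lifts an arbitrary local section of $\scrO_{\Sigma_k}$ first to $f^\ast\scrO_{Z_k}$ by using $\ev_1$ and the surjective right map of the top row, then corrects the residual error against the left map of the bottom row using $\ev_{k-1}$. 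For injectivity, one pushes a kernel element of $\ev_k$ to $f^\ast\scrO_\nu$, where injectivity of $\ev_1$ forces it to vanish, invokes middle-term exactness of the top row to lift it back to $f^\ast\scrO_{Z_{k-1}}$, and concludes with injectivity of both $\ev_{k-1}$ and of the left map of the bottom row.

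Once every $\ev_k$ has been shown to be an isomorphism, the top row of \eqref{eq:evdiag-A} is automatically short exact, being isomorphic as a complex to the left-exact bottom row. I expect no serious obstacle in this argument: the only point that genuinely requires care is the compatibility of the pulled-back sequence with the exact sequences of Corollary~\ref{cor:dirimg-C}, which amounts to tracking naturality of the adjunction counit. The scheme-theoretic fiber condition in Assumption~\ref{assumption:f} enters in an essential way only in the base case; the rest of the argument is purely formal homological algebra.
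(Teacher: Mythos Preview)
Your proof is correct and takes a genuinely different route from the paper. The paper argues by analyzing the image $\calE_k$ of $\ev_k$: it equips $\calE_k$ with the refined Harder--Narasimhan filtration of Corollary~\ref{cor:refined-HN-filtration}, uses semistability of $\scrO_{\Sigma_k}$ to bound the slopes of the subquotients from above, then pushes the filtration forward along $f$ and compares Euler characteristics to force $\calE_k=\scrO_{\Sigma_k}$. By contrast, your argument is pure homological algebra: you bootstrap from the base case $f^\ast\scrO_\nu\simeq\scrO_\Sigma$ (which is exactly the scheme-theoretic fiber condition in Assumption~\ref{assumption:f}) via the right-exactness of $f^\ast$ applied to \eqref{eq:sch-sequence-Z} and a five-lemma--type chase. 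Your approach is more elementary in that it never invokes the slope analysis of \S\ref{sec:semistable-exceptional-locus}; the paper's approach, on the other hand, fits naturally into the surrounding narrative where that filtration machinery is being developed anyway. One small remark: surjectivity of each $\ev_k$ is in fact automatic from naturality of the counit applied to the surjection $\scrO_Y\to\scrO_{\Sigma_k}$ together with $f_\ast\scrO_Y=\scrO_X$, so the only substantive part of your inductive step is the injectivity chase, which does use middle-exactness of the pulled-back sequence in an essential way.
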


\begin{proof} 
	Let $\calE_k \subset \scrO_{\Sigma_k}$ denote the image of the evaluation map, which is a purely one-dimensional sheaf with set-theoretic support on $\Sigma$. The induced morphism $f_\ast \calE_k \to \calQ_k$ is a tautological isomorphism, while Corollary~\ref{cor:dirimg-B} implies that 
	\begin{align}
		\chi(\calQ_k) = r_k
	\end{align}
	where $\ch_2(\scrO_{\Sigma_k}) = r_k [\Sigma]$. In conclusion, 
	\begin{align}\label{eq:ev-chi-A}
		\chi(f_\ast\calE_k) = r_k \ .
	\end{align}
	
	Let 
	\begin{align}\label{eq:ev-filt-A}
		0\subset \calE_{k,1} \subset \cdots \subset \calE_{k, s_k} = \calE_k 
	\end{align}
	be the filtration constructed in Corollary~\ref{cor:refined-HN-filtration}, where 
	\begin{align}
		\ch_2(\calE_k) = s_k [\Sigma]\ .
	\end{align}
	Each successive quotient $\calE_{k, i}/\calE_{k, i-1}$ is isomorphic to the pushforward of a line bundle $\calL_{k,i}$ on $\Sigma$ so that 
	\begin{align}
		\chi(\calL_{k,s_k}) \leq \cdots\leq  \chi(\calL_{k,1})\ . 
	\end{align}
	Moreover, the exact sequences \eqref{eq:sch-sequence-X} and \eqref{eq:sch-sequence-Y} imply that $\scrO_{\Sigma_k}$ admits a filtration so that all subquotients are isomorphic to $i_{\Sigma,\, \ast}\scrO_\Sigma$. Therefore $\scrO_{\Sigma_k}$ is semistable of slope $1$ by Corollary~\ref{cor:stability-B}. This implies that 
	\begin{align}
		\chi(\calL_{k,s_k}) \leq \cdots \leq \chi(\calL_{k,1}) \leq 1\ .
	\end{align}
	
	 Filtration \ref{eq:ev-filt-A} yields a second filtration 
	\begin{align}\label{eq:ev-filt-B}
		0\subset f_\ast \calE_{k,1} \subset \cdots \subset f_\ast\calE_{k,s_k} = f_\ast \calE_k 
	\end{align} 
	so that for each $1\leq i \leq s_k$ there is an exact sequence 
	\begin{align}
		0\longrightarrow f_\ast \calE_{k,i-1}  \longrightarrow f_\ast \calE_{k,i} \longrightarrow f_\ast i_{\Sigma,\ast}\calL_{k,i} \to \cdots 
	\end{align}
	Moreover, Proposition~\ref{prop:dirimg-A} shows that 
	\begin{align}
		f_\ast i_{\Sigma,\ast}\calL_{k,i}\simeq  \begin{cases}
			\scrO_\nu & \text{if } \chi(\calL_{k,i}) =1\ , \\ 
			0 & \text{otherwise}\ . 
		\end{cases} 
	\end{align}
	In particular $f_\ast i_{\Sigma,\ast}\calL_{k,i}$ is zero dimensional, which implies 
	\begin{align}
		\chi(f_\ast \calE_{k,i}/f_\ast \calE_{k,i-1} ) \leq \chi(f_\ast i_{\Sigma,\ast}\calL_{k,i}) \leq 1
	\end{align}
	for all $1\leq i \leq s_k$. 	Then one obtains 
	\begin{align}
		\chi(f_\ast\calE_k) \leq \ell_k
	\end{align}
	where $0\leq \ell_k \leq s_k$ is the number of subquotients of the filtration \eqref{eq:ev-filt-A} of Euler characteristic 1. Since $\ell_k \leq s_k \leq r_k$ by construction, Equation \eqref{eq:ev-chi-A} implies $\ell_k = s_k=r_k$. Hence, $\calL_{k,i} \simeq \scrO_\Sigma$ for all $1\leq i \leq s_k$. Therefore $\calE_k$ has the same topological invariants as $\scrO_{\Sigma_k}$, which implies that $\calE_k = \scrO_{\Sigma_k}$. This proves the first claim. 
	
	In order to prove the second claim, note that diagram \eqref{eq:evdiag-A} is naturally commutative and the bottom row is exact. Since the vertical arrows are isomorphisms, the top row is also exact. 
\end{proof} 

\begin{corollary}\label{cor:invimg-B}
	For each $1\leq k \leq n$ one has a cartesian diagram 
	\begin{align}\label{eq:sch-diagram-A} 
		\begin{tikzcd}[ampersand replacement=\&]
			\Sigma_k \ar[swap]{d}{\phi_k} \arrow{r}{i_{\Sigma_k}}  \&  Y \ar{d}{f} \\
			Z_k \arrow{r}{i_{Z_k}} \& X 
		\end{tikzcd} \ ,
	\end{align}
	where $\phi_k$ is flat. 
\end{corollary}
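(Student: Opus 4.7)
The plan is to construct $\phi_k$ from Corollary~\ref{cor:dirimg-C}, to deduce cartesianness directly from Proposition~\ref{prop:invimg-A}, and then to verify flatness of $\phi_k$ via the local criterion for flatness over Artinian local rings.

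First I would construct $\phi_k$. The commutative diagram in Corollary~\ref{cor:dirimg-C} identifies the composite $\scrO_X \to \calQ_k \xrightarrow{\sim} \scrO_{Z_k}$ with the canonical epimorphism, so the ideal sheaf $\calI_{Z_k}$ lies in the kernel of the unit $\scrO_X \to f_\ast\scrO_{\Sigma_k}$. By adjunction, $f^\ast \calI_{Z_k} \to \scrO_{\Sigma_k}$ vanishes, so the composition $\Sigma_k \hookrightarrow Y \xrightarrow{f} X$ factors scheme-theoretically through the closed immersion $Z_k \hookrightarrow X$, producing $\phi_k \colon \Sigma_k \to Z_k$ with the square commuting. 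For cartesianness, the fiber product $Z_k \times_X Y$ is the closed subscheme of $Y$ with structure sheaf $f^\ast\scrO_{Z_k}$; Proposition~\ref{prop:invimg-A} provides the isomorphism $\ev_k \colon f^\ast\scrO_{Z_k} \xrightarrow{\sim} \scrO_{\Sigma_k}$, so $\Sigma_k$ and $Z_k \times_X Y$ coincide as closed subschemes of $Y$.

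For the flatness of $\phi_k$, I would argue as follows. Iterating \eqref{eq:sch-sequence-Z} shows that $\scrO_{Z_k}$ is Artinian local of length $k$ with residue field $\C$ and maximal ideal $\mathfrak{m}_k$; since $\mathfrak{m}_k^k = 0$, every $\scrO_{Z_k}$-module is automatically $\mathfrak{m}_k$-adically ideal-separated, and the local criterion for flatness reduces the task to proving
\begin{align}
\Tor_1^{\scrO_{Z_k}}(\C, \scrO_{\Sigma_k}) = 0\ .
\end{align}
Tensoring $0 \to \mathfrak{m}_k \to \scrO_{Z_k} \to \C \to 0$ with $\scrO_{\Sigma_k}$ further reduces this to the injectivity of the multiplication map $\mathfrak{m}_k \otimes_{\scrO_{Z_k}} \scrO_{\Sigma_k} \to \scrO_{\Sigma_k}$. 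Using associativity of tensor products and $\scrO_{\Sigma_k} \simeq f^\ast\scrO_{Z_k}$ from Proposition~\ref{prop:invimg-A}, I identify $\mathfrak{m}_k \otimes_{\scrO_{Z_k}} \scrO_{\Sigma_k}$ with $f^\ast\mathfrak{m}_k$. Under the identification $\mathfrak{m}_k \simeq \scrO_{Z_{k-1}}$ supplied by \eqref{eq:sch-sequence-Z}, this map becomes $f^\ast\scrO_{Z_{k-1}} \to f^\ast\scrO_{Z_k}$, which by the commutative diagram \eqref{eq:evdiag-A} is identified with the injection $\scrO_{\Sigma_{k-1}} \hookrightarrow \scrO_{\Sigma_k}$ of \eqref{eq:sch-sequence-Y}, giving the required injectivity.

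The hard part will be the flatness step, specifically carefully matching the two natural $\scrO_{Z_k}$-module structures on $\scrO_{Z_{k-1}}$ (as the ideal $\mathfrak{m}_k \subset \scrO_{Z_k}$ via \eqref{eq:sch-sequence-Z} versus as the algebra quotient) and tracking how $f^\ast$ commutes with these identifications; however, Proposition~\ref{prop:invimg-A} and the exactness of the top row of \eqref{eq:evdiag-A} are tailored precisely to make this bookkeeping go through.
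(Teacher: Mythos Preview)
Your proposal is correct and follows essentially the same route as the paper: you use Corollary~\ref{cor:dirimg-C} to build $\phi_k$, Proposition~\ref{prop:invimg-A} (the isomorphism $\ev_k$) to identify $\Sigma_k$ with $f^{-1}(Z_k)$, and then the local criterion of flatness over the Artinian local ring $\scrO_{Z_k}$, reducing to the injectivity of $f^\ast\scrO_{Z_{k-1}} \to f^\ast\scrO_{Z_k}$ supplied by the top row of diagram~\eqref{eq:evdiag-A}. The paper phrases the last step as checking that $\phi_k^\ast\calI_\nu \to \phi_k^\ast\scrO_{Z_k}$ is injective, but this is the same computation under the identification $\calI_\nu \simeq \scrO_{Z_{k-1}}$ that you also invoke.
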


\begin{proof} 
	As shown in Corollary~\ref{cor:dirimg-C}, the pushforward of the canonical morphism $\scrO_Y\to  \scrO_{\Sigma_k}$ coincides with the canonical morphism $\scrO_X\to \scrO_{Z_k}$. Then one obtains a tautological commutative diagram 
	\begin{align}
	\begin{tikzcd}[ampersand replacement=\&]
		f^\ast\scrO_X \ar[r]\ar[d]  \& \scrO_Y \ar[d] \\
		 f^\ast \scrO_{Z_k}\arrow{r}{\ev_k}\& \scrO_{\Sigma_k}
	\end{tikzcd}\ .
	\end{align}
	By Proposition~\ref{prop:invimg-A}, the evaluation map in the above diagram is an isomorphism. Hence $\Sigma_k\subset Y$ coincides with the scheme theoretic inverse image $f^{-1}(Z_k)$. 
	
	In order to prove that $\phi_k$ is flat, note that $Z_k$ is isomorphic to the spectrum of a local artinian ring over $\C$, since it is zero dimensional and it has a unique closed point $\nu$. Using the local criterion of flatness, it suffices to prove that the natural morphism 
	\begin{align}
		\phi_k^\ast \calI_\nu \longrightarrow \phi_k^\ast\scrO_{Z_k} 
	\end{align}
	is injective, where $\calI_\nu\subset \scrO_{Z_k}$ is the defining ideal sheaf of $\nu$ in $Z_k$. 
	
	Since the diagram \eqref{eq:sch-diagram-A} is cartesian, the exact sequence 
	\begin{align}
		0\longrightarrow f^\ast  \scrO_{Z_{k-1}} \longrightarrow f^\ast  \scrO_{Z_k} \longrightarrow f^\ast \scrO_\nu \longrightarrow 0 
	\end{align}
	in diagram \eqref{eq:evdiag-A} is the pushforward of the exact sequence 
	\begin{align}
		\phi_k^\ast\scrO_{Z_{k-1}} \longrightarrow \phi_k^\ast\scrO_{Z_k} \longrightarrow \phi_k^\ast\scrO_\nu \longrightarrow 0
	\end{align}
	obtained by pulling back the exact sequence
	\begin{align}
		\begin{tikzcd}[ampersand replacement=\&]
			0\ar{r}\&  \scrO_{Z_{k-1}} \ar{r}{\xi_k} \&\scrO_{Z_k} \ar{r}\& \scrO_\nu \ar{r}\& 0
		\end{tikzcd}
	\end{align}
	to $\Sigma_n$. Therefore the natural morphism 
	\begin{align}
		\phi_k^\ast\scrO_{Z_{k-1}} \longrightarrow \phi_k^\ast\scrO_{Z_k} 
	\end{align}
	is injective. Since the morphism $\xi_k\colon\scrO_{Z_{k-1}} \to \scrO_{Z_k}$ maps $\scrO_{Z_{k-1}}$ isomorphically onto $\calI_\nu\subset \scrO_{Z_k}$, this proves the claim. 
\end{proof}

\subsection{Moduli stacks}\label{subsect:moduli-stacks}

For any $r, c\in \Z$ with $r\geq 1$, let $\calM^{\mathsf{ss}}(Y; r[\Sigma], c)$ denote the moduli stack of semistable one-dimensional sheaves $\calF$ on $Y$ with $\ch_2(\calF)=r[\Sigma]$ and $\chi(\calF)=c$. By Corollary~\ref{cor:stability-B} this stack is empty unless $c=rd$ for some $d\in\Z$. From now on, we assume that $c$ is of this form. For $d=1$ the moduli stack $\calM^{\mathsf{ss}}(Y; r[\Sigma], r)$ will be denoted by $\calM^{\mathsf{ss}}(Y; r)$ for simplicity. 

Let $n$ be the integer appearing in the filtration \eqref{eq:ideal-filtration-A} of $\calI_\Sigma$. As in Corollary \ref{cor:dirimg-B},  let $T_n \subset S$ be the scheme-theoretic intersection $\Sigma_n \cap S$ in $Y$. Let $\calM(T_n; r)$ be the moduli stack of coherent $\scrO_{T_n}$-modules of length $r$. The goal of this section is to prove that the assignment 
\begin{align}
	\calF \longmapsto i_S^\ast\calF
\end{align}
yields an equivalence of stacks $\calM^{\mathsf{ss}}(Y; r[\Sigma],dr)\xrightarrow{\sim} \calM(T_n; r)$ for any $d\in \Z$. 

First note that, by Lemma~\ref{lem:stability-A}, one has $\calM^{\mathsf{ss}}(Y; r[\Sigma],r) \simeq \calM^{\mathsf{ss}}(Y; r[\Sigma],dr)$. Hence, we can set $d=1$ without loss of generality. Then one has: 

\begin{proposition}\label{prop:stability-F} 
	The assignment $\calF \mapsto f_\ast \calF$ yields an equivalence between the category of semistable sheaves $\calF$ on $Y$, with $\ch_2(\calF)=r[\Sigma]$ and $\chi(\calF)=rd$, and the category of $\scrO_{Z_n}$-modules of length $r$. The inverse functor maps $\calQ$ to 
	$f^\ast\calQ$.
\end{proposition}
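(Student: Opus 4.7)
The plan is to set up an adjoint equivalence for $(f^\ast, f_\ast)$ and verify its invertibility using the cartesian square of Corollary~\ref{cor:invimg-B}, in which $\phi_n\colon \Sigma_n \to Z_n$ is flat. A direct schematic argument on that square yields a canonical isomorphism $f^\ast i_{Z_n,\ast} \simeq i_{\Sigma_n,\ast}\phi_n^\ast$ of underived functors, and flatness of $\phi_n$ makes both sides exact; this will be the key technical ingredient throughout.

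First I would verify that the two functors restrict to the claimed subcategories. By Lemma~\ref{lem:stability-C}, a semistable $\calF$ with the given invariants is scheme-theoretically supported on $\Sigma_n$, so $\calF \simeq i_{\Sigma_n,\ast}\calF'$ and $f_\ast\calF \simeq i_{Z_n,\ast}\phi_{n,\ast}\calF'$. Corollary~\ref{cor:refined-HN-filtration} applied with $\mu_Y(\calF) = 1$ yields a filtration of $\calF$ whose subquotients are all isomorphic to $i_{\Sigma,\ast}\scrO_\Sigma$, and Proposition~\ref{prop:dirimg-A} kills the higher direct images, so $\chi(f_\ast\calF) = \chi(\calF) = r$ and $f_\ast\calF$ is a length-$r$ $\scrO_{Z_n}$-module. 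Conversely, a length-$r$ $\scrO_{Z_n}$-module $\calQ$ admits a composition series with quotients $\scrO_\nu$; since $\phi_n^\ast$ is exact, this filters $f^\ast\calQ \simeq i_{\Sigma_n,\ast}\phi_n^\ast\calQ$ with subquotients $f^\ast\scrO_\nu \simeq i_{\Sigma,\ast}\scrO_\Sigma$ (the $k = 1$ case of Proposition~\ref{prop:invimg-A}, using $\calQ_1 \simeq \scrO_\nu$), so by Corollary~\ref{cor:stability-B} the sheaf $f^\ast\calQ$ is semistable of slope $1$ with $\ch_2 = r[\Sigma]$ and $\chi = r$.

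Next I would show that the two adjunction natural transformations are isomorphisms. For the unit $\calQ \to f_\ast f^\ast\calQ$, the projection formula for the flat proper morphism $\phi_n$ reduces the claim to the identification $\phi_{n,\ast}\scrO_{\Sigma_n} \simeq \scrO_{Z_n}$, which in turn follows by combining the identity $\calQ_n = f_\ast i_{\Sigma_n,\ast}\scrO_{\Sigma_n} = i_{Z_n,\ast}\phi_{n,\ast}\scrO_{\Sigma_n}$ with the isomorphism $\calQ_n \simeq i_{Z_n,\ast}\scrO_{Z_n}$ of Corollary~\ref{cor:dirimg-C} and full faithfulness of $i_{Z_n,\ast}$. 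The counit $f^\ast f_\ast\calF \to \calF$ I would handle by induction on $r$: in the base case $r = 1$ the numerical constraints force $\calF$ to be stable, hence $\calF \simeq i_{\Sigma,\ast}\scrO_\Sigma$ by Proposition~\ref{prop:stable-mmB}, and the claim reduces to the $k = 1$ case of Proposition~\ref{prop:invimg-A}. For the inductive step, the filtration of Corollary~\ref{cor:refined-HN-filtration} supplies a short exact sequence $0 \to \calF' \to \calF \to i_{\Sigma,\ast}\scrO_\Sigma \to 0$; applying $f_\ast$ (short exact by the $R^1 f_\ast$-vanishing above) and then $f^\ast$ (short exact by flatness of $\phi_n$) yields a commutative ladder whose outer vertical counit maps are isomorphisms, and the $5$-lemma finishes the argument.

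The main obstacle I anticipate is exactness of $f^\ast$, which in general fails on short exact sequences of $\scrO_X$-modules; the cartesian-square identification $f^\ast i_{Z_n,\ast} \simeq i_{\Sigma_n,\ast}\phi_n^\ast$ routes the pullback through the flat functor $\phi_n^\ast$, restoring the exactness needed both for the filtration argument in the second step and for the $5$-lemma step of the induction. This factorization, together with the projection formula, is the one ingredient to invoke carefully throughout.
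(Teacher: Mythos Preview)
Your argument is correct and uses the same core ingredients as the paper: the support result of Lemma~\ref{lem:stability-C}, the filtration with $i_{\Sigma,\ast}\scrO_\Sigma$ subquotients (Corollary~\ref{cor:stability-B} would be the more direct citation, though Corollary~\ref{cor:refined-HN-filtration} also works), the cartesian square and flatness of $\phi_n$ from Corollary~\ref{cor:invimg-B}, and the base case $f^\ast\scrO_\nu \simeq i_{\Sigma,\ast}\scrO_\Sigma$ from Proposition~\ref{prop:invimg-A}.

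The difference is one of completeness rather than strategy. The paper's proof only verifies that $f_\ast$ and $f^\ast$ land in the claimed subcategories and stops there, leaving the actual equivalence (that the adjunction unit and counit are isomorphisms) to the reader. Your proposal goes further and supplies this check explicitly: the projection-formula argument for the unit, reducing to $\phi_{n,\ast}\scrO_{\Sigma_n}\simeq\scrO_{Z_n}$ via Corollary~\ref{cor:dirimg-C}, and the induction with the $5$-lemma for the counit. Both steps are correct, and the exactness of $f^\ast$ on the relevant sequences---which you rightly flag as the delicate point---is indeed secured by routing through $i_{\Sigma_n,\ast}\phi_n^\ast$ with $\phi_n$ flat. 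So your write-up is a genuine completion of what the paper leaves implicit.
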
 

\begin{proof}
	By Corollary~\ref{cor:stability-B}, any semistable sheaf $\calF$ on $Y$ with $\ch_2(\calF)=r[\Sigma]$ and $\chi(\calF)=rd$ admits a length $r$ filtration so that all subquotients are isomorphic to $i_{\Sigma, \ast}\scrO_\Sigma$. Moreover, the scheme-theoretic support of $\calF$ is also contained in $\Sigma_n$ by Lemma~\ref{lem:stability-C}. Using Proposition~\ref{prop:dirimg-A}, this implies that $f_\ast\calF$ is scheme-theoretically supported on $Z_n$ and has length $r$. 
	
	Conversely, any length $r$ sheaf $\calQ$ on $X$ on $Z_n$ admits a length $r$ filtration so that all subquotients are isomorphic to $\scrO_\nu$. Since diagram \eqref{eq:sch-diagram-A} is cartesian, one has 
	\begin{align}
		f^\ast i_{Z_n,\ast}\calQ \simeq i_{\Sigma_n,\ast} \phi_n^\ast \calQ\ .
	\end{align}
	Since $\phi_n$ is flat, $\phi_n^\ast\calQ_n$ has a length $r$ filtration so that each subquotient is isomorphic to $\phi_n^\ast\scrO_\nu\simeq \scrO_\Sigma$. Therefore $f^\ast\calQ_n$ is a semistable sheaf on $Y$ satisfying the conditions of Proposition~\ref{prop:stability-F}. 	
\end{proof}

Now recall that the restriction $f\vert_S\colon S \to W$ maps $T_n \subset S$ isomorphically onto $Z_n \subset W$. Then Proposition~\ref{prop:stability-F} yields:
\begin{corollary}\label{cor:stability-G} 
	The assignment $\calF\mapsto i_S^\ast\calF$ yields and equivalence between the category of semistable sheaves $\calF$ on $Y$, with $\ch_2(\calF)=r[\Sigma]$ and $\chi(\calF)=rd$, and the category of $\scrO_{T_n}$-modules of length $r$. The inverse functor maps $\calQ$ to $f^\ast i_{Z_n,\,\ast} t_{n,\, \ast} \calQ$.
	
	In particular, we obtain an equivalence of stacks $\tau\colon \calM^{\mathsf{ss}}(Y; r[\Sigma],rd)\xrightarrow{\sim} \calM(T_n; r)$ for any $d\in \Z$.
\end{corollary}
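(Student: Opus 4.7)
The plan is to reduce the statement to Proposition~\ref{prop:stability-F} via the isomorphism $f|_{T_n}\colon T_n \xrightarrow{\sim} Z_n$ induced by Assumption~\ref{assumption:divisor} together with Corollary~\ref{cor:invimg-B}. First, I would verify that for any semistable $\calF$ with $\ch_2(\calF)=r[\Sigma]$ and $\chi(\calF)=r$, the sheaf $i_S^\ast\calF$ is a length-$r$ $\scrO_{T_n}$-module. By Lemma~\ref{lem:stability-C}, $\calF$ is scheme-theoretically supported on $\Sigma_n$, hence $i_S^\ast\calF$ is scheme-theoretically supported on $S\cap\Sigma_n=T_n$. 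The transversality of $S$ and $\Sigma$ at $p$ together with the purity of $\calF$ yield the Tor-vanishing $\Tor^Y_i(i_{S,\ast}\scrO_S,\calF)=0$ for $i\geq 1$, exactly as in the proof of Lemma~\ref{lem:stability-A}, so that the sequence
\begin{align}
0 \longrightarrow \calF\otimes\scrO_Y(-S) \longrightarrow \calF \longrightarrow i_{S,\ast} i_S^\ast \calF \longrightarrow 0
\end{align}
is exact. The Grothendieck-Riemann-Roch computation from Lemma~\ref{lem:stability-A} then gives $\chi(i_S^\ast\calF) = S\cdot\ch_2(\calF) = r$, since $S$ meets $\Sigma$ transversely at a single point.

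Next, I would establish the compatibility of $i_S^\ast$ with the equivalence of Proposition~\ref{prop:stability-F}. The cartesian diagram of Corollary~\ref{cor:invimg-B} restricted along $S \hookrightarrow Y$ and $W \hookrightarrow X$ produces a cartesian square
\begin{align}
\begin{tikzcd}[ampersand replacement=\&]
T_n \ar{r} \ar[swap]{d}{f|_{T_n}} \& S \ar{d}{f|_S} \\
Z_n \ar{r} \& W
\end{tikzcd}
\end{align}
in which the vertical arrows are isomorphisms. Applying flat base change along $\phi_n$ from Corollary~\ref{cor:invimg-B}, and writing $\calF = i_{\Sigma_n,\ast}\calE$ for a sheaf $\calE$ on $\Sigma_n$, I obtain a natural isomorphism $(f|_{T_n})_\ast \circ i_S^\ast \cong f_\ast$ of functors into $\scrO_{Z_n}$-modules of length $r$. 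From this the conclusion is immediate: Proposition~\ref{prop:stability-F} states that $f_\ast$ is an equivalence, and $(f|_{T_n})_\ast$ is an equivalence because $f|_{T_n}$ is an isomorphism of schemes, whence $i_S^\ast$ is also an equivalence. The inverse functor is then read off as $\calQ \mapsto f^\ast i_{Z_n,\ast}(f|_{T_n})_\ast\calQ$, matching the formula stated in the corollary.

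For the stack-level equivalence $\tau$, the same arguments propagate to flat families: the Tor-vanishing and the support condition on $T_n$ are inherited by base change to any test scheme, and $f^\ast$ preserves flatness. The case of general $d\in\Z$ reduces to $d=1$ by twisting with $\scrO_Y(S)^{\otimes(d-1)}$, which is an autoequivalence of $\calM^{\mathsf{ss}}(Y; r[\Sigma],\bullet)$ shifting the slope by $1$ according to Lemma~\ref{lem:stability-A}.

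The main technical obstacle I anticipate is the verification of the natural isomorphism $(f|_{T_n})_\ast \circ i_S^\ast \cong f_\ast$: while morally this is base change along the cartesian square of Corollary~\ref{cor:invimg-B}, combining the restriction to $S$ with the pushforward along $f$ requires some care. A clean fallback strategy is to argue via the Jordan--Hölder-type filtration of Corollary~\ref{cor:stability-B} and check the identity on each successive quotient $i_{\Sigma,\ast}\scrO_\Sigma$, where it reduces to the transversality statement $i_S^\ast i_{\Sigma,\ast}\scrO_\Sigma \cong \scrO_p$ and the identification $(f|_S)_\ast\scrO_p = \scrO_\nu = f_\ast i_{\Sigma,\ast}\scrO_\Sigma$; one then extends by induction on the length of the filtration using exactness of both $i_S^\ast$ (from Tor-vanishing) and $f_\ast$ (from the vanishing results of Corollary~\ref{cor:vanishing-A}).
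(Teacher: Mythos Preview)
Your approach is correct and follows the same route as the paper: reduce to Proposition~\ref{prop:stability-F} via the isomorphism $f\vert_S\colon S \to W$, which identifies $T_n$ with $Z_n$. The paper treats this as immediate, stating only that the identification of $T_n$ with $Z_n$ under $f\vert_S$ makes the corollary a direct consequence of Proposition~\ref{prop:stability-F}; you have carefully spelled out the intermediate steps (Tor-vanishing, length computation, the compatibility $(f\vert_{T_n})_\ast \circ i_S^\ast \cong f_\ast$, and the reduction to $d=1$ via Lemma~\ref{lem:stability-A}) that the paper leaves implicit, and your fallback via the filtration of Corollary~\ref{cor:stability-B} is exactly how the analogous identity was established in Corollary~\ref{cor:dirimg-B}.
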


\begin{remark}
	Let $f\colon Y\to X$ be a contraction with $X$ affine. In \cite[\S4.3]{Davison_Refined_Jacobi}, Davison showed that the moduli stack of semistable sheaves on $Y$ with zero Euler characteristic is a quotient stack of the form $Z/G$ with $Z$ an open subscheme of a Quot scheme and $G$ a general linear group. Moreover, the underlying reduced subscheme of the corresponding coarse moduli space consists of a single point. A similar result, at the level of coarse moduli spaces, was proved by Katz in \cite{Katz-Genus-zero} (see also \cite[Theorem~4.6]{Davison_Refined_Jacobi}) for semistable sheaves on $Y$ of Euler characteristic one. In contrast, Corollary~\ref{cor:stability-G} holds for semistable sheaves of arbitrary Euler characteristic.
\end{remark}

\section{$f$-stable pairs}\label{sec:stable-pairs}

In this section, we introduce $f$-stable pairs in our framework using the definition of \cite{BS-Curve-counting-crepant}, and construct the corresponding moduli stack. Furthermore, we show that the moduli stack $\fSP(Y; \beta, n)$ of $f$-stable pairs $(\calF, s)$ with $\ch_2(\calF)=\beta$ and $\chi(\calF)=n$ is a finite type algebraic space over $\C$. In order to provide some context, note that $f$-stable pairs are defined in \cite{BS-Curve-counting-crepant} for more general crepant resolutions of three-fold singularities. Here we specialize this definition to contractions $f\colon Y\to X$ of $(0,-2)$ curves as in \S\ref{sec:geometric-setup}, and, using the results of \S\ref{sec:semistable-exceptional-locus}, we prove explicit construction results for moduli stacks of such objects. 

We shall assume that only Assumption~\ref{assumption:f} holds in this section.

\subsection{Torsion pairs}\label{sec:torsion-pairs}

Let $\catCoh_{\leqslant i}(Y)$ and $\catCoh_0(Y)$ be the subcategories of the abelian category $\catCoh(Y)$ of coherent sheaves on $Y$ formed by those sheaves of dimension $\leq i$, for $i=1, 2$, and zero, respectively.
We define similarly $\catCoh_0(X)$.
Following \cite[\S2]{BS-Curve-counting-crepant}, we introduce the following torsion pair. Let
\begin{align}
	\scrT_f\coloneqq \{\calF\in \catCoh(Y)\,\vert\,& \R^\bullet f_\ast \calF \in \catCoh_0(X) \subset \catDb(X) \}\\[4pt] 
	\scrF_f \coloneqq \{\calF\in \catCoh(Y)\, \vert\,& \Hom(\calT, \calF)=0 \text{ for any }\calT\in \scrT_f\} \ .
\end{align}

\begin{lemma}\label{lem:T_f_1_dim}
	One has $\scrT_f \subseteq \catCoh_{\leqslant 1}(Y)$.
\end{lemma}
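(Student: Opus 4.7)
The plan is to exploit the fact that $f$ restricts to an isomorphism on the complement of $\Sigma$, and thereby reduce the support of any $\calF \in \scrT_f$ to the union of $\Sigma$ and a finite set.

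First I would extract from the definition of $\scrT_f$ the two ingredients we actually need: (i) the zeroth direct image $f_\ast \calF = \R^0 f_\ast \calF$ is a zero-dimensional coherent sheaf on $X$, and (ii) Assumption~\ref{assumption:f}\eqref{item:assumption-f-1} guarantees that $f$ induces an isomorphism $f'\colon Y \smallsetminus \Sigma \xrightarrow{\sim} X \smallsetminus \{\nu\}$. Writing $j\colon X \smallsetminus \{\nu\} \hookrightarrow X$ and $j'\colon Y \smallsetminus \Sigma \hookrightarrow Y$ for the open immersions, flat base change along $j$ yields
\begin{align}
j^\ast f_\ast \calF \simeq f'_\ast\, j'^\ast \calF \simeq j'^\ast \calF
\end{align}
where in the last step we use that $f'$ is an isomorphism, so we may identify sheaves on $Y \smallsetminus \Sigma$ and $X \smallsetminus \{\nu\}$.

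Next, since $f_\ast \calF$ is zero-dimensional on $X$, its set-theoretic support is a finite collection of points. Its restriction to the open subset $X \smallsetminus \{\nu\}$ therefore has finite support, and by the identification above the same holds for $\calF\vert_{Y \smallsetminus \Sigma}$. In other words, the part of $\mathsf{supp}(\calF)$ lying off $\Sigma$ is zero-dimensional.

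Finally, I would conclude by writing $\mathsf{supp}(\calF) \subseteq \bigl(\mathsf{supp}(\calF) \cap (Y\smallsetminus\Sigma)\bigr) \cup \Sigma$ as a union of closed subsets in $Y$; the first piece is zero-dimensional by the previous step, while $\Sigma \simeq \PP^1$ has dimension one. Hence $\dim \calF \leq 1$, i.e.\ $\calF \in \catCoh_{\leqslant 1}(Y)$, as required. There is no real obstacle here: the statement is essentially a restatement of the fact that under a birational contraction with one-dimensional exceptional locus, a sheaf whose pushforward collapses to a zero-dimensional object must itself be concentrated (up to finite support) on that exceptional locus; the only small care needed is the appeal to flat base change to identify $(f_\ast\calF)\vert_{X \smallsetminus \{\nu\}}$ with $\calF\vert_{Y \smallsetminus \Sigma}$.
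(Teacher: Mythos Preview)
Your proof is correct and follows essentially the same approach as the paper's: both argue that $\calF$ is zero-dimensional away from the exceptional locus $\Sigma$ (since $f$ is an isomorphism there and $f_\ast\calF$ is zero-dimensional), and then use that $\Sigma$ is one-dimensional to bound $\dim\calF$. You have simply spelled out the flat base change step that the paper leaves implicit in the phrase ``since $f$ is birational.''
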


\begin{proof}
	Let $\calF \in \scrT_f$. Since $f$ is birational, we see that $\calF$ is $0$-dimensional away from the exceptional locus of $f$. Since the latter is $1$-dimensional, we immediately conclude that $\calF \in \catCoh_{\leqslant 1}(Y)$.
\end{proof}

\begin{lemma}[{\cite[Lemma~13]{BS-Curve-counting-crepant}}]
	The pair $(\scrT_f, \scrF_f)$ is a torsion pair in $\catCoh(Y)$.
\end{lemma}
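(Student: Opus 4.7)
The plan is to verify the two defining axioms of a torsion pair in $\catCoh(Y)$: namely that $\Hom(\scrT_f,\scrF_f)=0$, and that every $\calF\in\catCoh(Y)$ fits into a short exact sequence $0\to \calT\to \calF\to \calF'\to 0$ with $\calT\in\scrT_f$ and $\calF'\in\scrF_f$. The first axiom holds tautologically from the definition of $\scrF_f$, so the real content is the splitting.

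The approach I would take is the standard one: produce a maximal $\scrT_f$-subobject of $\calF$ by Noetherian induction, and then verify that the quotient lies in $\scrF_f$. For this to work, one must first show that $\scrT_f$ is closed under quotients and extensions in $\catCoh(Y)$.

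For closure under extensions, given $0\to\calT'\to\calT\to\calT''\to 0$ with $\calT',\calT''\in\scrT_f$, apply $\R f_\ast$ to obtain a distinguished triangle in $\catPerf(X)$. Since $\R f_\ast\calT'$ and $\R f_\ast\calT''$ are both finite-length sheaves concentrated in degree $0$, the long exact sequence of cohomology shows $\R^i f_\ast\calT=0$ for $i\neq 0$ and exhibits $f_\ast\calT$ as an extension of zero-dimensional sheaves, hence itself zero-dimensional; thus $\calT\in\scrT_f$. For closure under quotients, let $\calT\in\scrT_f$ and $0\to\calK\to\calT\to\calQ\to 0$. By Lemma~\ref{lem:T_f_1_dim}, $\calT$ is at most one-dimensional, so $\calK$ and $\calQ$ are too; since the fibers of $f$ are at most one-dimensional, $\R^{\geqslant 2}f_\ast$ vanishes on all three. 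The long exact sequence
\begin{equation}
f_\ast\calT\to f_\ast\calQ\to \R^1 f_\ast \calK\to \R^1 f_\ast\calT=0
\end{equation}
then shows $\R^1f_\ast\calQ=0$ (using $\R^1 f_\ast\calT=0$ and $\R^2 f_\ast\calK=0$), and presents $f_\ast\calQ$ as an extension of a zero-dimensional sheaf ($\R^1 f_\ast\calK$, supported at $\nu$ since $\calK$ has support in $\Sigma$) by a quotient of $f_\ast\calT$, hence zero-dimensional. Therefore $\calQ\in\scrT_f$.

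With the torsion-class property in hand, I would build the torsion subsheaf of an arbitrary $\calF\in\catCoh(Y)$ as follows. The collection of subsheaves of $\calF$ lying in $\scrT_f$ is nonempty (contains $0$) and, by closure under extensions and quotients, the sum of any two such subsheaves again lies in $\scrT_f$. Since $Y$ is Noetherian this family admits a maximal element $\calT\subseteq\calF$, necessarily containing every other $\scrT_f$-subsheaf. Setting $\calF'\coloneqq\calF/\calT$, suppose by contradiction there existed a nonzero $\varphi\colon \calU\to\calF'$ with $\calU\in\scrT_f$. Its image $\mathsf{Im}(\varphi)\subseteq\calF'$ is a quotient of $\calU$, hence in $\scrT_f$; pulling back along $\calF\twoheadrightarrow\calF'$ produces $\calT\subsetneq\calT''\subseteq\calF$ with $\calT''/\calT\simeq\mathsf{Im}(\varphi)\in\scrT_f$, and closure under extensions forces $\calT''\in\scrT_f$, contradicting maximality. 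Thus $\calF'\in\scrF_f$, completing the verification.

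The only subtle point is the quotient-closure step, which relies on the vanishing $\R^{\geqslant 2}f_\ast=0$ on $\catCoh_{\leqslant 1}(Y)$; this in turn uses that the fibers of $f$ are at most one-dimensional, a consequence of Assumption~\ref{assumption:f}. Everything else is standard and the argument is a mild adaptation of \cite[Lemma~13]{BS-Curve-counting-crepant}.
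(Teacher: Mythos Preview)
Your proof is correct and follows the standard torsion-pair construction that \cite[Lemma~13]{BS-Curve-counting-crepant} uses; the paper itself does not give a proof but simply invokes that reference, noting (in the subsequent remark) that the argument applies verbatim here because the exceptional locus is one-dimensional. One small inaccuracy: you justify that $\R^1 f_\ast\calK$ is zero-dimensional by saying ``$\calK$ has support in $\Sigma$'', but $\calK$ need not be supported on $\Sigma$ (it could have zero-dimensional support elsewhere); the correct reason is that $f$ is an isomorphism on $Y\smallsetminus\Sigma$, so $\R^{>0}f_\ast$ of \emph{any} sheaf is supported at $\nu$. This does not affect the argument.
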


\begin{remark}
	Note that $\scrT_f\cap \catCoh_{\leqslant 1}(Y)$ and $\scrF_f\cap \catCoh_{\leqslant 1}(Y)$ correspond to the categories $\calP_f, \calQ_f$ introduced in \cite{BS-Curve-counting-crepant}. Moreover, in \textit{loc.cit.}, the authors proved that these categories form a torsion pair in $\catCoh_{\leqslant 1}(Y)$. We are able to remove the extra condition of the dimension of the support because in our case $X^{\mathsf{sing}}$ is zero-dimensional and the exceptional locus of $f$ is one dimensional, and we can apply verbatim the arguments in the proof of \cite[Lemma~13]{BS-Curve-counting-crepant}.
\end{remark}
We denote by $\tau_\scrA$ the $t$-structure on $\catPerf(Y)$ obtained by tilting the standard $t$-structure with respect to the torsion pair $(\scrT_f, \scrF_f)$ and we denote by $\scrA$ the heart of $\tau_\scrA$. $\scrA$ is the abelian category whose objects are complexes $E\in \catPerf(Y)$ such that $\calH^{-1}(E)\in \scrF_f$, $\calH^0(E)\in \scrT_f$, and $\calH^i(E)\simeq 0$ for $i\neq -1, 0$.

\subsection{Characterization of $f$-torsion and $f$-torsion-free objects}\label{sec:f-objects}

Some specific results on the structure of $\scrT_f$ and $\scrF_f$ in the  present context are recorded in the following.
\begin{lemma}\label{lem:support-sequence}
	Any one-dimensional pure coherent sheaf $\calF \in \catCoh_{\leqslant 1}(Y)$ fits into a unique exact sequence 
	\begin{align}\label{eq:pure-sequence-A} 
		0\longrightarrow \calF_\Sigma \longrightarrow \calF  \longrightarrow \calF_{Y/X} \longrightarrow 0 \ ,
	\end{align}
	with $\calF_\Sigma$ and $\calF_{Y/X}$ pure one-dimensional coherent sheaves so that
	\begin{enumerate}\itemsep0.2cm 
		\item $(\mathsf{Supp}(\calF_{Y/X})\cap \Sigma)_{\mathsf{red}}$ is zero-dimensional, and 
		\item $\calF_\Sigma$ is set-theoretically supported on $\Sigma$.
	\end{enumerate} 
\end{lemma}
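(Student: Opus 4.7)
The plan is to define $\calF_\Sigma$ as the maximal coherent subsheaf of $\calF$ whose set-theoretic support is contained in $\Sigma$; equivalently, $\calF_\Sigma(U)\coloneqq \ker(\calF(U)\to \calF(U\smallsetminus \Sigma))$ for any open $U\subset Y$. This is a well-defined coherent subsheaf, and uniqueness of the decomposition will follow from this maximality characterization. I then set $\calF_{Y/X}\coloneqq \calF/\calF_\Sigma$.

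That $\calF_\Sigma$ is pure one-dimensional (or zero) is automatic, since it is a subsheaf of a pure one-dimensional sheaf, and that its support is contained in $\Sigma$ set-theoretically is its defining property. The heart of the argument is verifying purity of $\calF_{Y/X}$. For this, I would take a hypothetical nonzero zero-dimensional subsheaf $\calT\subset \calF_{Y/X}$ and pull it back to $\widetilde\calT\subset \calF$ via the projection $\pi\colon \calF\to \calF_{Y/X}$, producing an extension
\begin{align}
	0\longrightarrow \calF_\Sigma\longrightarrow \widetilde\calT\longrightarrow \calT\longrightarrow 0\ .
\end{align}
Since $\widetilde\calT$ is a subsheaf of the pure one-dimensional $\calF$, it is itself pure one-dimensional (or zero), so its set-theoretic support has no isolated zero-dimensional components. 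On the other hand, the support of $\widetilde\calT$ is contained in $\Sigma\cup \mathsf{Supp}(\calT)$ with $\mathsf{Supp}(\calT)$ a finite set; purity then forces the support of $\widetilde\calT$ to be set-theoretically contained in $\Sigma$, and maximality of $\calF_\Sigma$ yields $\widetilde\calT\subseteq \calF_\Sigma$, hence $\calT=0$, a contradiction.

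For property (i), I would use that the associated primes of a pure one-dimensional sheaf are precisely the generic points of the irreducible components of its one-dimensional reduced support. By construction, $\calF_\Sigma$ absorbs the primary part associated to the component $\Sigma$ (if present), so the remaining associated primes of $\calF_{Y/X}$ correspond to irreducible curves $C_1,\dots, C_\ell$ distinct from $\Sigma$; each intersection $C_i\cap \Sigma$ is then a proper closed subset of $\Sigma\simeq \PP^1$, hence zero-dimensional, which gives (i).

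Uniqueness is handled as follows: any other decomposition $0\to \calF'_\Sigma\to \calF\to \calF'_{Y/X}\to 0$ with the listed properties satisfies $\calF'_\Sigma\subseteq \calF_\Sigma$ by maximality of the latter, while the composition $\calF_\Sigma\hookrightarrow \calF\to \calF'_{Y/X}$ has image a subsheaf of the pure one-dimensional sheaf $\calF'_{Y/X}$ whose support lies in $\Sigma\cap \mathsf{Supp}(\calF'_{Y/X})$, which is zero-dimensional; purity forces the image to vanish, giving the reverse inclusion. The main obstacle I anticipate is the purity argument for $\calF_{Y/X}$, which hinges on careful bookkeeping of the set-theoretic support under an extension by a zero-dimensional sheaf inside a pure one-dimensional ambient sheaf.
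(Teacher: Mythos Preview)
Your proof is correct. You take the dual route to the paper: whereas the paper first builds $\calF_{Y/X}$ by restricting $\calF$ to the maximal closed subscheme $Z\subset Z_\calF$ of the support not containing $\Sigma$ and then killing the resulting zero-dimensional torsion (so that $\calF_\Sigma$ appears as a kernel), you instead define $\calF_\Sigma=\calH^0_\Sigma(\calF)$ as the maximal $\Sigma$-supported subsheaf and obtain $\calF_{Y/X}$ as the quotient. Your approach makes uniqueness immediate from the maximality characterization, and your purity argument for $\calF_{Y/X}$ via pulling back a putative zero-dimensional subsheaf is clean; the paper's approach, by contrast, makes property (i) essentially tautological, since $\calF_{Y/X}$ is manufactured from a sheaf on $Z$ with $\Sigma\nsubseteq Z$. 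The paper's proof is in fact terser and leaves the verification of purity and uniqueness to the reader, so your write-up is more complete.
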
 
\begin{proof}
	Let $Z_\calF$ be the scheme-theoretic support of $\calF$ and $Z\subset Z_\calF$ be the (unique) maximal closed subscheme so that $\Sigma \nsubseteq Z$. Let $\calF'\coloneqq i_{Z,\, \ast} i_Z^\ast \calF$ and let $\calF_{Y/X}$ be the quotient of $\calF'$ by its maximal zero-dimensional subsheaf. Then, one obtains an exact sequence of the form \eqref{eq:pure-sequence-A}.
	
	In order to prove uniqueness, suppose 
	\begin{align}
		0\longrightarrow \calE\longrightarrow\calF \longrightarrow \calG \longrightarrow 0 
	\end{align}
	is a second exact sequence satisfying properties $(1)$ and $(2)$ in Lemma \ref{lem:support-sequence}. Since $\calE$ is a purely one dimensional, with set theoretic support on $\Sigma$, one has $\Hom_Y(\calE,\calF_{Y/X})=0$. Hence the injective morphism $\calE\to\calF$ factors through the injective morphism $\calF_\Sigma \to\calF$. This yields a commutative diagram 
	\begin{align}
			\begin{tikzcd}[ampersand replacement=\&]
			0\ar[r] \& \calE \ar[r] \ar{d}{f} \& \calF \ar[r] \ar{d}\& \calG \ar[r] \ar{d}{g}\& 0 \\
			0\ar[r] \& \calF_\Sigma\ar[r] \& \calF \ar[r] \& \calF_{Y/X} \ar[r] \& 0
		\end{tikzcd}\ ,
	\end{align}
	with $f$ injective and $g$ surjective. Then the snake lemma yields an isomorphism $\ker(g) \to \mathsf{Coker}(f)$. However, $\mathsf{Coker}(f)$ is set theoretically supported on $\Sigma$ while, by assumption, $\calG$ is pure one-dimensional, and the intersection of its set-theoretical support with $\Sigma$ is zero dimensional. This implies that $\ker(g)$ and $\mathsf{Coker}{f}$ are identically zero, hence $f$ and $g$ are isomorphisms. 
\end{proof}

Corollary~\ref{cor:split-support} and Lemma~\ref{lem:support-sequence} yields the following. 
\begin{corollary}\label{cor:split-chern-class} 
	Let $\calF$ be a one-dimensional pure coherent sheaf on $Y$ with $\ch_2(\calF)=r[\Sigma] + \beta$, for some $r\in \N$ and $\beta \in N_1(Y/X)$. Then, $\ch_2(\calF_\Sigma) = r[\Sigma]$  and $\ch_2(\calF_{Y/X})=\beta$. 
\end{corollary}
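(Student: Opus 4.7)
The plan is simply to combine the short exact sequence of Lemma~\ref{lem:support-sequence} with the two characterizations of Corollary~\ref{cor:split-support}, exploiting the uniqueness afforded by the direct sum decomposition $N_1(Y) \simeq \N[\Sigma] \oplus N_1(Y/X)$.

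First I would invoke additivity of the Chern character on the short exact sequence
\begin{align}
0 \longrightarrow \calF_\Sigma \longrightarrow \calF \longrightarrow \calF_{Y/X} \longrightarrow 0
\end{align}
to obtain $\ch_2(\calF) = \ch_2(\calF_\Sigma) + \ch_2(\calF_{Y/X})$. Next, since $\calF_\Sigma$ is pure one-dimensional and set-theoretically supported on $\Sigma$, either $\calF_\Sigma = 0$ or $\mathsf{supp}(\calF_\Sigma)_{\mathsf{red}} = \Sigma$ (using that $\Sigma$ is irreducible and one-dimensional); in either case part (1) of Corollary~\ref{cor:split-support} gives $\ch_2(\calF_\Sigma) = r'[\Sigma]$ for some $r' \in \N$. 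Dually, $\calF_{Y/X}$ is pure one-dimensional with $\Sigma \nsubseteq \mathsf{supp}(\calF_{Y/X})$ by construction in Lemma~\ref{lem:support-sequence}, so part (2) of the same corollary yields $\ch_2(\calF_{Y/X}) = \beta' \in N_1(Y/X)$.

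Finally, the equality $r[\Sigma] + \beta = r'[\Sigma] + \beta'$ in $\N[\Sigma] \oplus N_1(Y/X)$, together with the splitting \eqref{eq:numsplit}, forces $r' = r$ and $\beta' = \beta$, which gives the claim. I do not foresee any obstacle: the statement is a purely formal consequence of the two quoted results, and the only minor point to check is that the hypotheses of Corollary~\ref{cor:split-support} apply to each of $\calF_\Sigma$ and $\calF_{Y/X}$, which is immediate from their construction in Lemma~\ref{lem:support-sequence}.
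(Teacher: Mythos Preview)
Your proof is correct and is precisely the argument the paper has in mind: the paper simply states that the corollary follows from Corollary~\ref{cor:split-support} and Lemma~\ref{lem:support-sequence} without spelling out the details, and you have filled those in accurately, including the minor case distinction when $\calF_\Sigma$ vanishes.
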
 

\begin{lemma}\label{lem:pushfwd}
	\hfill
	\begin{enumerate}\itemsep0.2cm
		\item \label{item:pushfwd-1} Let $\calF$ be a nonzero one-dimensional pure coherent sheaf on $Y$ so that $\ch_2(\calF) \in N_1(Y/X)$. Then $\R^k f_\ast\calF=0$, for all $k\geq 1$, and $\R^0 f_\ast\calF\vert_{X^{\mathsf{reg}}}$ is a nonzero one-dimensional pure coherent sheaf, where $X^{\mathsf{reg}} \coloneqq X \smallsetminus \{\nu\}$.
		
		\item \label{item:pushfwd-2} Let $\calF$ be a one-dimensional pure coherent sheaf so that $\ch_2(\calF) \in \Z_{>0}[\Sigma]$. Then $\R^kf_\ast\calF=0$, for all $k\geq 2$, and $\R^kf_\ast\calF$ is a zero-dimensional sheaf with support on the singular locus $\{\nu\}$ for $0\leq k\leq 1$. Moreover, $\R^0 f_\ast\calF=0$ if and only if $\mu_{Y\textrm{-}\mathsf{max}}(\calF) \leq 0$ and $\R^1 f_\ast\calF=0$ if and only if $\mu_{Y\textrm{-}\mathsf{min}}(\calF) \geq 0$.
	\end{enumerate} 
\end{lemma}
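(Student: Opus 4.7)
\textbf{Proof plan for Lemma \ref{lem:pushfwd}.} Both items rest on the support splitting from Corollary~\ref{cor:split-support} combined with the refined Harder--Narasimhan filtration for sheaves supported on $\Sigma$ (Corollary~\ref{cor:refined-HN-filtration}) and the base cohomology computation $\R^k f_\ast i_{\Sigma,\,\ast}\scrO_\Sigma(d) = H^k(\PP^1,\scrO(d))\otimes \scrO_\nu$.

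For \eqref{item:pushfwd-1}, the first step is to note that $\ch_2(\calF)\in N_1(Y/X)$ forces $\Sigma\nsubseteq \mathsf{supp}(\calF)$ by Corollary~\ref{cor:split-support}; since $\calF$ is pure of dimension one its support is purely one-dimensional and meets $\Sigma$ in only finitely many points. Consequently the restriction $f|_{\mathsf{supp}(\calF)}$ is proper with $0$-dimensional fibers (points away from $\nu$, a finite set over $\nu$), hence finite. Finite pushforward is exact, giving $\R^k f_\ast\calF=0$ for $k\geq 1$. Restricting to $X^{\mathsf{reg}}$, where $f$ is an isomorphism, identifies $\R^0 f_\ast\calF|_{X^{\mathsf{reg}}}$ with $\calF|_{Y\smallsetminus\Sigma}$, which is nonzero, one-dimensional, and pure because $\calF$ has at least one one-dimensional component not contained in $\Sigma$.

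For \eqref{item:pushfwd-2}, Corollary~\ref{cor:split-support} implies that $\calF$ is set-theoretically supported on $\Sigma$, so $\R^k f_\ast\calF$ is supported at $\nu$ and hence zero-dimensional; the vanishing for $k\geq 2$ is Grothendieck's fibre-dimension bound since $\dim\Sigma=1$. The slope statements are handled via a refined HN filtration
\begin{align}
0=\calF_0\subset \calF_1\subset \cdots \subset \calF_m=\calF,\qquad \calF_i/\calF_{i-1}\simeq i_{\Sigma,\,\ast}\calL_i,
\end{align}
with $\chi(\calL_1)\geq \cdots \geq \chi(\calL_m)$, $\mu_{Y\textrm{-}\mathsf{max}}(\calF)=\chi(\calL_1)$, and $\mu_{Y\textrm{-}\mathsf{min}}(\calF)=\chi(\calL_m)$. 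Applying $\R f_\ast$ subquotient by subquotient, I use $\R^0 f_\ast i_{\Sigma,\,\ast}\calL_i \neq 0\Leftrightarrow \chi(\calL_i)\geq 1$ and $\R^1 f_\ast i_{\Sigma,\,\ast}\calL_i \neq 0\Leftrightarrow \chi(\calL_i)\leq -1$.

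If $\mu_{Y\textrm{-}\mathsf{max}}(\calF)\leq 0$ all $\chi(\calL_i)\leq 0$, so induction along the filtration using the long exact sequence associated with $0\to\calF_{i-1}\to\calF_i\to i_{\Sigma,\,\ast}\calL_i\to 0$ yields $\R^0 f_\ast\calF=0$. Conversely, if $\chi(\calL_1)\geq 1$ then $\calF_1\simeq i_{\Sigma,\,\ast}\calL_1$ and left-exactness of $f_\ast$ gives an injection $0\neq \R^0 f_\ast\calF_1\hookrightarrow \R^0 f_\ast\calF$. The $\R^1$-statement is dual: if $\mu_{Y\textrm{-}\mathsf{min}}(\calF)\geq 0$ every $\R^1 f_\ast$ of a subquotient vanishes and iteration gives $\R^1 f_\ast\calF=0$, while if $\chi(\calL_m)\leq -1$ the quotient $\calF\twoheadrightarrow i_{\Sigma,\,\ast}\calL_m$ combined with the already established $\R^2 f_\ast\calF_{m-1}=0$ produces a surjection $\R^1 f_\ast\calF\twoheadrightarrow \R^1 f_\ast i_{\Sigma,\,\ast}\calL_m\neq 0$.

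The only delicate point I anticipate is keeping the boundary cases of the slope inequalities aligned with the cohomological thresholds on $\PP^1$ (the fact that $H^0(\PP^1,\scrO(d))=0$ iff $d<0$, equivalently $\chi\leq 0$, and $H^1(\PP^1,\scrO(d))=0$ iff $d\geq -1$, equivalently $\chi\geq 0$); the precise strict ordering of the refined HN filtration is essential to avoid off-by-one confusion here.
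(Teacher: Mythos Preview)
Your proof is correct and follows essentially the same approach as the paper: both parts rest on Corollary~\ref{cor:split-support} for the support, relative fibre-dimension vanishing for the higher direct images, and (for part~\eqref{item:pushfwd-2}) the refined Harder--Narasimhan filtration of Corollary~\ref{cor:refined-HN-filtration} combined with the cohomology of line bundles on $\PP^1$ (which the paper packages as \cite[Proposition~III.8.5]{Har77}). The only stylistic difference is that in part~\eqref{item:pushfwd-1} you observe that $f\circ\imath$ is in fact finite (proper with zero-dimensional fibres) and hence affine, whereas the paper invokes relative dimensional cohomology vanishing directly; both give the same conclusion, and your filtration argument in part~\eqref{item:pushfwd-2} simply unpacks what the paper leaves implicit.
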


\begin{proof}
	We start by proving \eqref{item:pushfwd-1}. Let $Z_\calF$ be the scheme-theoretic support of $\calF$. By Corollary~\ref{cor:split-support}, under the current assumptions, $Z_\calF$ is a purely one-dimensional closed subscheme of $Y$ so that $(Z_\calF \cap \Sigma)_{\mathsf{red}}$ is zero-dimensional. Let $\imath\colon Z_\calF \to Y$ denote the canonical closed embedding. Then $\calF = \imath_\ast \calG$ for a one-dimensional pure coherent sheaf $\calG$ on $Z_\calF$. Moreover, $\R^j\imath_\ast\calG=0$ for all $j \geq 1$. This implies that 
	\begin{align}
		\R^k f_\ast\calF = \R^k(f\circ \imath)_\ast \calG 
	\end{align}
	for all $k \geq 0$. Under the current assumptions, all fibers of $f\circ \imath\colon Z_\calF \to X$ are zero-dimensional. Therefore \cite[Theorem~19.8.5 (relative dimensional cohomology vanishing)]{Vakil_AG} shows that $\R^k(f\circ \imath)_\ast\calG =0$ for all $k\geq 1$. The second statement follows by flat base change since $f$ maps $\pi^{-1}(X^{\mathsf{reg}}) =Y\smallsetminus \Sigma$ isomorphically onto $X^{\mathsf{reg}}$.

	Now we prove \eqref{item:pushfwd-2}. The first part follows again from \cite[Theorem~19.8.5 (relative dimensional cohomology vanishing)]{Vakil_AG} by analogy to \eqref{item:pushfwd-1}. In order to prove the second part, let $U\subset X$ be an affine open subscheme containing $\nu$ and let $f_U\colon Y_U \to U$ denote the restriction of $f$ to $Y_U\coloneqq f^{-1}(U)$. By Corollary~\ref{cor:split-support}, any one-dimensional pure coherent sheaf $\calF$ with $\ch_2(\calF)\in \Z[\Sigma]$ is supported on $\Sigma\subset Y_U$. By flat base change, this implies that all direct images $\R^k f_\ast \calF$ are set-theoretically supported at $\nu$. Hence it suffices to prove the statement \eqref{item:pushfwd-2} for $f_U$. This follows from Corollary~\ref{cor:refined-HN-filtration} using \cite[Proposition~III.8.5]{Har77}.
\end{proof}

Lemmas~\ref{lem:support-sequence} and \ref{lem:pushfwd} further imply the following.
\begin{proposition}\label{prop:torsion-vs-torsion-free} 
	Let $\calF$ be a coherent sheaf on $Y$, with one-dimensional support so that $\ch_2(\calF) \neq 0$, let $\calT$ be its torsion subsheaf, and let 
	\begin{align}\label{eq:torsion-sequence}
		0\longrightarrow \calT \longrightarrow \calF \longrightarrow \calG \longrightarrow 0
	\end{align} 
	be the corresponding short exact sequence, where $\calG$ is a one-dimensional pure coherent sheaf on $Y$.
	Let also 
	\begin{align}\label{eq:pure-sequence}
		0\longrightarrow \calG_\Sigma\longrightarrow \calG\longrightarrow \calG_{Y/X} \longrightarrow 0
	\end{align}
	be the short exact sequence obtained in Lemma~\ref{lem:support-sequence}. Then, we have the following:
	\begin{enumerate}\itemsep0.2cm
		\item \label{item:torsion-vs-torsion-free-1} $\calF\in \scrT_f$ if and only $\calG_{Y/X}=0$ and $\mu_{Y\textrm{-}\mathsf{min}}(\calG_\Sigma) \geq 0$. 
		\item \label{item:torsion-vs-torsion-free-2} $\calF\in \scrF_f$  if and only $\calT=0$ and $\calG_\Sigma$ is either zero or $\mu_{Y\textrm{-}\mathsf{max}}(\calG_\Sigma) < 0$. 
	\end{enumerate}  
\end{proposition}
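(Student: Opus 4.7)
The plan is to analyze the two short exact sequences through the functor $\R f_\ast$ and through the $\Hom$-vanishing definition of $\scrF_f$, treating the two parts separately.

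For part \ref{item:torsion-vs-torsion-free-1}, the first reduction is immediate: the torsion $\calT$ is zero-dimensional (since $\calF$ has one-dimensional support), so $f_\ast \calT$ is zero-dimensional and $\R^i f_\ast \calT = 0$ for $i \geq 1$; the long exact sequence associated to $0 \to \calT \to \calF \to \calG \to 0$ then shows $\calF \in \scrT_f$ iff $\calG \in \scrT_f$. Applying $\R f_\ast$ to \eqref{eq:pure-sequence}: by Lemma~\ref{lem:pushfwd}\ref{item:pushfwd-1}, if $\calG_{Y/X} \neq 0$ then $f_\ast\calG_{Y/X}|_{X^{\mathsf{reg}}}$ is a nonzero one-dimensional pure sheaf, and since $\calG_\Sigma$ contributes nothing on $X^{\mathsf{reg}}$, this forces $f_\ast\calG$ to be one-dimensional there, so $\calG \notin \scrT_f$. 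Thus $\calG = \calG_\Sigma$ is necessary; Lemma~\ref{lem:pushfwd}\ref{item:pushfwd-2} makes $f_\ast\calG_\Sigma$ automatically zero-dimensional and reduces the vanishing $\R^1 f_\ast \calG_\Sigma = 0$ to $\mu_{Y\textrm{-}\mathsf{min}}(\calG_\Sigma) \geq 0$. Conversely, $\scrT_f$ being closed under extensions yields $\calF \in \scrT_f$.

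For part \ref{item:torsion-vs-torsion-free-2}, necessity of $\calT = 0$ is immediate: a nonzero zero-dimensional $\calT$ is automatically in $\scrT_f$, and the inclusion $\calT \hookrightarrow \calF$ violates the $\Hom$-vanishing. So $\calF = \calG$. Suppose further that $\calG_\Sigma \neq 0$ and $\mu_{Y\textrm{-}\mathsf{max}}(\calG_\Sigma) \geq 0$; the maximal destabilizing subsheaf $\calK \subset \calG_\Sigma$ is semistable of slope $\geq 0$, and by Corollary~\ref{cor:stability-B} admits a filtration whose subquotients are pushforwards $i_{\Sigma,\ast}\calL$ with $\chi(\calL) \geq 0$, each in $\scrT_f$ by part \ref{item:torsion-vs-torsion-free-1}. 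Thus $\calK \in \scrT_f$ by extension-closure, and $\calK \hookrightarrow \calF$ contradicts $\calF \in \scrF_f$. For the converse, given $\phi\colon \calP \to \calF$ with $\calP \in \scrT_f$, factor through $\mathsf{Im}(\phi) \subset \calF$, which lies in $\scrT_f$ as a quotient. Torsion-freeness of $\calF$ rules out a zero-dimensional component, and part \ref{item:torsion-vs-torsion-free-1} then makes $\mathsf{Im}(\phi)$ one-dimensional pure, set-theoretically supported on $\Sigma$, with $\mu_{Y\textrm{-}\mathsf{min}}(\mathsf{Im}(\phi)) \geq 0$. The crucial step is to show $\mathsf{Im}(\phi) \subseteq \calG_\Sigma$: its composition into $\calG_{Y/X}$ has image set-theoretically supported on $\Sigma \cap \mathsf{Supp}(\calG_{Y/X})$, which is zero-dimensional by Lemma~\ref{lem:support-sequence}, yet $\calG_{Y/X}$ is pure one-dimensional and contains no nonzero zero-dimensional subsheaf, so the composition vanishes. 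Finally, $\mu_{Y\textrm{-}\mathsf{min}}(\mathsf{Im}(\phi)) \geq 0 > \mu_{Y\textrm{-}\mathsf{max}}(\calG_\Sigma)$ combined with the Harder-Narasimhan filtration forces $\mathsf{Im}(\phi) = 0$.

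The main obstacle will be this last step of part \ref{item:torsion-vs-torsion-free-2}, namely showing that any subsheaf of $\calF$ set-theoretically supported on $\Sigma$ must lie inside $\calG_\Sigma$ rather than projecting nontrivially onto $\calG_{Y/X}$; the argument rests essentially on the purity of $\calG_{Y/X}$ together with the support condition from Lemma~\ref{lem:support-sequence}.
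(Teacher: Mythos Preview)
Your proposal is correct and follows essentially the same route as the paper: both use the long exact sequences of $\R f_\ast$ together with Lemma~\ref{lem:pushfwd} for part~\ref{item:torsion-vs-torsion-free-1}, and the maximal destabilizing subsheaf of $\calG_\Sigma$ plus the support argument showing $\Hom_Y(\scrT_f,\calG_{Y/X})=0$ for part~\ref{item:torsion-vs-torsion-free-2}. The only cosmetic difference is that in the ``if'' direction of~\ref{item:torsion-vs-torsion-free-2} you factor through $\mathsf{Im}(\phi)$ and apply part~\ref{item:torsion-vs-torsion-free-1} to it, whereas the paper reduces a general $\calP\in\scrT_f$ to its pure quotient (using that $\calG_\Sigma$ is pure) before invoking the slope inequality; both are straightforward variants of the same idea.
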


\begin{proof}
	We start by proving \eqref{item:torsion-vs-torsion-free-1}. Recall that $\calF\in \scrT_f$ if and only if $\R^0 f_\ast\calF$ is zero-dimensional and $\R^k f_\ast\calF=0$ for $k\geq 1$.

	We prove the ``only if'' direction. The exact sequences \eqref{eq:torsion-sequence} and \eqref{eq:pure-sequence} yield the long exact sequences
	\begin{align}\label{eq:long-dirim-A} 
		0&\longrightarrow \R^0 f_\ast\calT \longrightarrow \R^0 f_\ast \calF \longrightarrow \R^0 f_\ast \calG \longrightarrow 0\longrightarrow \R^1 f_\ast\calF\longrightarrow \R^1f_\ast \calG\longrightarrow 0\\[4pt] \label{eq:long-dirim-B} 
		0&\longrightarrow \R^0 f_\ast \calG_\Sigma \longrightarrow \R^0 f_\ast \calG\longrightarrow \R^0 f_\ast \calG_{Y/X}\longrightarrow \R^1 f_\ast \calG_\Sigma \longrightarrow \R^1 f_\ast\calG\longrightarrow \R^1 f_\ast \calG_{Y/X}\longrightarrow 0
	\end{align}
	Since $\R^0 f_\ast \calF$ is zero-dimensional and $\R^k f_\ast\calF=0$, for $k\geq 1$, the first of the above sequences implies that $\R^0 f_\ast \calG$ is also zero-dimensional and  $\R^1 f_\ast \calG=0$. By Lemma~\ref{lem:pushfwd}, all direct images are $\R^k f_\ast \calG_\Sigma$, for $k\geq 1$, are zero-dimensional sheaves supported at the singular point $\nu$. Then the second of the above sequences implies that $\R^0 f_\ast \calG_{Y/X}$ is zero-dimensional as well, hence $\calG_{Y/X}=0$ by Lemma~\ref{lem:pushfwd}--\eqref{item:pushfwd-1}. This further implies that $\R^1 f_\ast \calG_\Sigma=0$ since $\R^1 f_\ast \calG=0$. Then $\mu_{Y\textrm{-}\mathsf{min}}(\calG_\Sigma) \geq 0$ by Lemma~\ref{lem:pushfwd}--\eqref{item:pushfwd-2}. 

	Conversely, by Lemma~\ref{lem:pushfwd}--\eqref{item:pushfwd-2}, one has $\R^1 f_\ast \calG_\Sigma=0$ and $\R^0 f_\ast \calG_\Sigma$ zero-dimensional. Since $\calG_{Y/X}=0$, the exact sequence \eqref{eq:long-dirim-B} implies $\R^1 f_\ast \calG=0$. Then the exact sequence \eqref{eq:long-dirim-A} shows that  $\R^1 f_\ast \calF=0$ and $\R^0 f_\ast\calF$ is zero-dimensional. 

	Now, we prove \eqref{item:torsion-vs-torsion-free-2}. Recall that $\calF$ belongs to $\scrF_f$ if and only $\Hom_Y(\scrT_f,\calF)=0$. 

	First, we deal with the ``only if'' direction. Clearly, $\calT=0$ since $\catCoh_0(Y) \subset \scrT_f$. Hence $\calF=\calG$. Let $\calE\subset \calG_\Sigma$ be the first step in the Harder-Narasimhan filtration of $\calG_\Sigma$, i.e., the maximal destabilizing subsheaf of $\calG_\Sigma$. By \eqref{item:torsion-vs-torsion-free-1}, one has $\calE\in \scrT_f$ if and only if $\mu_Y(\calE) \geq 0$. Hence one must have $\mu_{Y\textrm{-}\mathsf{max}}(\calG_\Sigma)=\mu_Y(\calE) <0$ in order to avoid a contradiction. 

	Conversely, since $\calT=0$, one has $\calF=\calG$. Note that \eqref{item:torsion-vs-torsion-free-1} implies that any coherent sheaf in $\scrT_f$ is set-theoretically supported on the union of $\Sigma$ with a zero-dimensional closed subscheme. On the other hand, $\calG_{Y/X}$ is a one-dimensional pure coherent sheaf and $(\mathsf{Supp}(\calG_{Y/X})\cap \Sigma)_{\mathsf{red}}$ is zero-dimensional. Hence, $\Hom_Y(\scrT_f, \calG_{Y/X})=0$. Thus, $\Hom_Y(\scrT_f, \calG_\Sigma)\simeq \Hom_Y(\scrT_f, \calF)$.
	
	Now, if $\calG_\Sigma=0$, it follows that $\calF=\calG_{Y/X}\in \scrF_f$. Otherwise, assume that $\mu_{Y\textrm{-}\mathsf{max}}(\calG_\Sigma) < 0$. By, \eqref{item:torsion-vs-torsion-free-1} any pure coherent sheaf $\calE\in \scrT_f$ has $\mu_{Y\textrm{-}\mathsf{min}}(\calE) \geq 0$. Hence, $\Hom_Y(\calE, \calG_\Sigma)=0$. Since $\calG_\Sigma$ is pure, it follows that $\Hom_Y(\scrT_f, \calG_\Sigma)=0$. Therefore, the assertion follows.
\end{proof}

Using Corollary~\ref{cor:refined-HN-filtration}, one obtains the following consequences of Proposition~\ref{prop:torsion-vs-torsion-free}.
\begin{corollary}\label{cor:torsion-vs-torsion-free} 
	\hfill
	\begin{itemize}\itemsep0.2cm
		\item Assume that $\calF\in \scrT_f$. Then, $H^i(Y, \calF) =0$ for $i\geq 1$. 
		
		\item Assume that $\calF\in \scrF_f$ with $\ch_2(\calF)\in \Z_{>0}[\Sigma]$. Let $D\subset Y$ be a smooth effective Cartier divisor which intersects $\Sigma$ transversely at a single point. Then $\Hom_Y(\scrO_Y(-kD), \calF)=0$ for $0\leq k \leq 1$. 
	\end{itemize}
\end{corollary}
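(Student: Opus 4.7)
The plan is to handle the two bullets separately, using in each case the structural results already established.

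For the first bullet, since $\calF \in \scrT_f$ means by definition that $\R f_\ast \calF$ lies in $\catCoh_0(X) \subset \catPerf(X)$, i.e.\ $\R f_\ast \calF$ is quasi-isomorphic to a zero-dimensional coherent sheaf concentrated in degree zero, I would simply appeal to the Grothendieck composition of derived functors. One has
\begin{align}
H^i(Y,\calF)=H^i(X,\R f_\ast\calF)=H^i(X,\R^0 f_\ast\calF) \ ,
\end{align}
and a zero-dimensional coherent sheaf on $X$ has vanishing higher cohomology, so $H^i(Y,\calF)=0$ for $i\geq 1$. This bullet is essentially immediate from the definition.

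For the second bullet, the first step is to reduce $\calF$ to a sheaf set-theoretically supported on $\Sigma$. Since $\calF\in\scrF_f$, Proposition~\ref{prop:torsion-vs-torsion-free}\eqref{item:torsion-vs-torsion-free-2} says its torsion subsheaf vanishes, so $\calF$ is pure one-dimensional. Applying Lemma~\ref{lem:support-sequence} and Corollary~\ref{cor:split-chern-class}, one decomposes $\calF$ as an extension of $\calF_{Y/X}$ by $\calF_\Sigma$ with $\ch_2(\calF_\Sigma)\in\N[\Sigma]$ and $\ch_2(\calF_{Y/X})\in N_1(Y/X)$. Since $\ch_2(\calF)\in\Z_{>0}[\Sigma]$ by hypothesis, purity of $\calF_{Y/X}$ forces $\calF_{Y/X}=0$, hence $\calF=\calF_\Sigma\in\catCoh^{\mathsf{pure}}_\Sigma(Y)$. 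Furthermore, Proposition~\ref{prop:torsion-vs-torsion-free}\eqref{item:torsion-vs-torsion-free-2} then yields $\mu_{Y\textrm{-}\mathsf{max}}(\calF)<0$.

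The next step is to apply Corollary~\ref{cor:refined-HN-filtration}, producing a filtration $0=\calF_0\subset\cdots\subset\calF_m=\calF$ whose successive quotients are $i_{\Sigma,\,\ast}\calL_j$ for line bundles $\calL_j$ on $\Sigma\simeq\PP^1$ satisfying $\chi(\calL_j)\leq\chi(\calL_1)=\mu_{Y\textrm{-}\mathsf{max}}(\calF)\leq -1$; equivalently, writing $\calL_j\simeq\scrO_\Sigma(d_j)$, one has $d_j\leq -2$. Since $\scrO_Y(kD)$ is a line bundle, tensoring preserves the filtration, and because $D$ meets $\Sigma$ transversally at a single point the intersection number $D\cdot\Sigma=1$, so $\scrO_Y(kD)\vert_\Sigma\simeq\scrO_\Sigma(k)$. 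The successive quotients of the tensored filtration are therefore $i_{\Sigma,\,\ast}\scrO_\Sigma(d_j+k)$ with $d_j+k\leq -2+k\leq -1$ whenever $0\leq k\leq 1$. Since $H^0(\PP^1,\scrO_{\PP^1}(e))=0$ for $e\leq -1$, an induction on the filtration via the long exact sequence in cohomology gives $H^0(Y,\calF\otimes\scrO_Y(kD))=0$, which is exactly $\Hom_Y(\scrO_Y(-kD),\calF)=0$.

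The only real subtlety — and therefore the main obstacle to watch — is the degree bookkeeping on $\Sigma\simeq\PP^1$: one needs the slope bound from Proposition~\ref{prop:torsion-vs-torsion-free} to be strict (i.e.\ $<0$, not $\leq 0$) so that after the shift by $k\leq 1$ every successive quotient still has negative degree. This is exactly what $\calF\in\scrF_f$ delivers, so the argument closes. Nothing else genuinely new is needed beyond the structural lemmas already collected in Sections~\ref{sec:semistable-exceptional-locus} and~\ref{sec:stable-pairs}.
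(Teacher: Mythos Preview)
Your proof is correct. For the second bullet your argument is exactly what the paper intends: invoke Proposition~\ref{prop:torsion-vs-torsion-free}\eqref{item:torsion-vs-torsion-free-2} to get the strict slope bound, then push through the refined Harder--Narasimhan filtration of Corollary~\ref{cor:refined-HN-filtration} after twisting by $\scrO_Y(kD)$.

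For the first bullet your route differs from the one the paper signals. The paper's phrasing (``using Corollary~\ref{cor:refined-HN-filtration}'') points to the parallel filtration argument: by Proposition~\ref{prop:torsion-vs-torsion-free}\eqref{item:torsion-vs-torsion-free-1} one has $\calG_{Y/X}=0$ and $\mu_{Y\textrm{-}\mathsf{min}}(\calG_\Sigma)\geq 0$, so the successive quotients $i_{\Sigma,\,\ast}\calL_j$ satisfy $\deg\calL_j\geq -1$ and hence $H^1$ vanishes, while the zero-dimensional torsion piece $\calT$ contributes nothing in positive degree. Your argument instead reads the vanishing directly off the \emph{definition} of $\scrT_f$ via $\R\Gamma(Y,\calF)\simeq\R\Gamma(X,\R f_\ast\calF)$ and the fact that $\R f_\ast\calF$ is a zero-dimensional sheaf in degree zero. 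This is shorter and avoids the filtration entirely; the paper's approach, by contrast, keeps both bullets under a single uniform method and makes the role of the slope inequalities symmetric between $\scrT_f$ and $\scrF_f$.
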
 
 
 We conclude this section with another characterization of $f$-torsion-free sheaves.
 \begin{proposition}\label{prop:torsion-free-criterion}
 	Let $\calE$ be a coherent sheaf on $Y$. Let $\calT\subset \calE$ be the maximal zero-dimensional subsheaf of $\calE$. Then $\calE \in \scrF_f$ if and only if $\calT=0$ and $\Hom_Y(i_{\Sigma,\, \ast}\scrO_\Sigma(-1), \calE) =0$. 
 \end{proposition}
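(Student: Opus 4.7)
The plan is to prove the two directions separately, with the forward direction being essentially a verification and the reverse direction being where the work lies.

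For the ``only if'' direction, assume $\calE\in\scrF_f$. Any nonzero zero-dimensional subsheaf of $\calE$ lies in $\scrT_f$, since zero-dimensional sheaves are trivially sent to themselves by $\R f_\ast$. Hence such a subsheaf would give a nonzero map from $\scrT_f$ to $\calE$, contradicting $\calE\in\scrF_f$, so $\calT=0$. Next, $i_{\Sigma,\,\ast}\scrO_\Sigma(-1)\in\scrT_f$: either invoke Proposition~\ref{prop:torsion-vs-torsion-free}(1) with $\calG_\Sigma=i_{\Sigma,\,\ast}\scrO_\Sigma(-1)$ (which is pure one-dimensional with $\mu_Y=\chi(\scrO_\Sigma(-1))=0$, hence $\mu_{Y\textrm{-}\mathsf{min}}\geq 0$ and $\calG_{Y/X}=0$), or observe directly that since $f$ contracts $\Sigma$ to $\nu$, one has $\R^i f_\ast i_{\Sigma,\,\ast}\scrO_\Sigma(-1)=H^i(\PP^1,\scrO(-1))=0$ for all $i$. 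Therefore $\Hom_Y(i_{\Sigma,\,\ast}\scrO_\Sigma(-1),\calE)=0$.

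For the ``if'' direction I will argue by contradiction. Suppose $\calT=0$ and $\Hom_Y(i_{\Sigma,\,\ast}\scrO_\Sigma(-1),\calE)=0$, but $\calE\notin\scrF_f$. Then there exists a nonzero morphism $\varphi\colon\calT_0\to\calE$ with $\calT_0\in\scrT_f$, and I let $\calT_0'\coloneqq\mathsf{Im}(\varphi)\subseteq\calE$. Since torsion classes in a torsion pair are closed under quotients, $\calT_0'\in\scrT_f$. By Lemma~\ref{lem:T_f_1_dim}, $\calT_0'\in\catCoh_{\leqslant 1}(Y)$. If $\calT_0'$ were zero-dimensional it would contradict $\calT=0$; hence $\calT_0'$ is one-dimensional, and since it embeds in $\calE$ which has no zero-dimensional subsheaves, $\calT_0'$ is pure one-dimensional with $\ch_2(\calT_0')\neq 0$.

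Now Proposition~\ref{prop:torsion-vs-torsion-free}(1) applied to $\calT_0'$ forces the torsion-free part $(\calT_0')_{Y/X}=0$ and $\mu_{Y\textrm{-}\mathsf{min}}((\calT_0')_\Sigma)\geq 0$; thus $\calT_0'$ is set-theoretically supported on $\Sigma$ with $\mu_{Y\textrm{-}\mathsf{min}}(\calT_0')\geq 0$. Applying Corollary~\ref{cor:refined-HN-filtration} produces a filtration $0=\calF_0\subset\calF_1\subset\cdots\subset\calF_m=\calT_0'$ with successive quotients $i_{\Sigma,\,\ast}\calL_i$ and $\chi(\calL_1)\geq\cdots\geq\chi(\calL_m)=\mu_{Y\textrm{-}\mathsf{min}}(\calT_0')\geq 0$. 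The bottom piece is $\calF_1\simeq i_{\Sigma,\,\ast}\scrO_\Sigma(d_1)$ with $d_1+1=\chi(\calL_1)\geq 0$, so $d_1\geq -1$. Using full faithfulness of $i_{\Sigma,\,\ast}$ on coherent sheaves,
\begin{align}
\Hom_Y(i_{\Sigma,\,\ast}\scrO_\Sigma(-1),\calF_1)\simeq\Hom_\Sigma(\scrO_\Sigma(-1),\scrO_\Sigma(d_1))=H^0(\PP^1,\scrO(d_1+1))\neq 0,
\end{align}
and composing with the inclusion $\calF_1\hookrightarrow\calT_0'\hookrightarrow\calE$ yields a nonzero morphism $i_{\Sigma,\,\ast}\scrO_\Sigma(-1)\to\calE$, contradicting the hypothesis. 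The only delicate point is verifying purity of the image $\calT_0'$ and matching it to the hypotheses of Proposition~\ref{prop:torsion-vs-torsion-free}(1); once that is done the refined Harder--Narasimhan filtration immediately exhibits the required $\Hom$.
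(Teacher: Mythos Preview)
Your proof is correct and follows essentially the same strategy as the paper's: both directions agree, and for the ``if'' direction both arguments locate inside $\calE$ a pure one-dimensional sheaf supported on $\Sigma$ with nonnegative minimum slope, then exhibit a subobject $i_{\Sigma,\,\ast}\scrO_\Sigma(d)$ with $d\geq -1$ to contradict the $\Hom$-vanishing. The only structural difference is that the paper fixes once and for all the maximal $\Sigma$-supported pure subsheaf $\calF_\Sigma\subset\calE$ and shows $\mu_{Y\textrm{-}\mathsf{max}}(\calF_\Sigma)<0$ (using Corollary~\ref{cor:stability-B} on the maximal destabilizing piece), whereas you work directly with the image $\calT_0'$ of a hypothetical bad morphism and apply Corollary~\ref{cor:refined-HN-filtration} to it; these are equivalent ways to produce the needed line bundle.
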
 
 
 \begin{proof}
 	The ``only if'' direction is obvious since $\calT$ and $i_{\Sigma,\, \ast}\scrO_\Sigma(-1)$ belong to $\calT_f$. 
 	
 	In order to prove the ``if'' direction, first note that $\calE$ does not admit any zero-dimensional subsheaves. Let $\calF\subset \calE$ be the maximal purely one-dimensional subsheaf of $\calE$ and let $\calF_\Sigma \subset \calF$ be the maximal subsheaf of $\calF$ with set theoretic support on $\Sigma$, as in Lemma~\ref{lem:support-sequence}. Given a pure one-dimensional coherent sheaf $\calG \in \scrT_f$, any nonzero morphism $\calG \to \calE$ factors through the inclusion $\calF_\Sigma \subset \calE$. Because of Proposition~\ref{prop:torsion-vs-torsion-free}--\eqref{item:torsion-vs-torsion-free-1}, it suffices to prove that $\mu_{Y\textrm{-}\mathsf{max}}(\calF_\Sigma)<0$ since this implies $\Hom_Y(\calG, \calF_\Sigma)=0$ for all $\calG\in \scrT_f$. 
 	
 	Suppose $\mu_{Y\textrm{-}\mathsf{max}}(\calF_\Sigma)\geq 0$ and let $\calF'_\Sigma \subset \calF_\Sigma$ be the semistable sheaf of maximal slope. Then by Corollary~\ref{cor:stability-B}, there is a line bundle $\calL$ on $\Sigma$ such that $i_{\Sigma,\, \ast}\calL \subset \calF'_\Sigma$ of degree $\geq -1$. This implies $\Hom_Y(i_{\Sigma,\, \ast}\scrO_\Sigma(-1), \calF'_\Sigma)\neq 0$, which further implies $\Hom_Y(i_{\Sigma,\, \ast}\scrO_\Sigma(-1),\calE)\neq 0$. Hence contradiction. 
 \end{proof}
 
\subsection{Openness of the torsion pair $(\scrT_f, \scrF_f)$}\label{subsec:open-tor-free} 

The next goal is to show that the torsion pair $(\scrT_f,\scrF_f)$ is open in the sense of \cite[Definition~A.2]{AB-Bridgeland-stable}). 

\begin{proposition}\label{prop:open-torsion} 
	The subcategory $\scrT_f$ satisfies the openness condition in flat families. 
\end{proposition}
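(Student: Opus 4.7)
The plan is to unpack the defining conditions of $\scrT_f$ into a conjunction of semicontinuous constraints on flat families, and to invoke cohomology and base change for the proper morphism $f$.

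Fix a locally Noetherian base scheme $T$ together with a $T$-flat coherent sheaf $\calF$ on $Y\times T$, and set $F \coloneqq f\times \id_T\colon Y\times T \to X\times T$. I wish to show that the locus $T_f \coloneqq \{t\in T\,\vert\, \calF_t \in \scrT_f\}$ is open in $T$. By Lemma~\ref{lem:T_f_1_dim}, every object of $\scrT_f$ has dimension at most one; by upper semicontinuity of fiber dimension applied to the projection of the scheme-theoretic support of $\calF$ onto $T$, I may therefore restrict to the open subscheme $T'\subseteq T$ over which the fibers of $\mathsf{supp}(\calF)\to T$ have dimension at most one.

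On $T'$, since all fibers of $f$ have dimension at most one, the derived direct images $\R^i f_\ast \calF_t$ vanish automatically for $i\geq 2$, so the condition $\calF_t\in \scrT_f$ is equivalent to the conjunction of two conditions: (i) the vanishing $\R^1 f_\ast \calF_t = 0$, and (ii) the zero-dimensionality of the coherent sheaf $\R^0 f_\ast \calF_t$ on $X$. Since $F$ is proper and $\calF$ is $T$-flat, the semicontinuity theorem (in the form of \cite[Theorem~III.12.8]{Har77} applied to the higher direct images $\R^i F_\ast \calF$) guarantees that $t\mapsto \dim_{\C(t)}\R^1 f_\ast \calF_t$ is upper semicontinuous, so (i) cuts out an open subset $T''\subseteq T'$. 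Moreover, the vanishing of $\R^1 f_\ast \calF_t$ over $T''$ implies, via cohomology and base change, that the formation of $\R^0 F_\ast \calF$ commutes with arbitrary base change to points of $T''$; in particular, one has a canonical isomorphism $(\R^0 F_\ast \calF)_t \cong \R^0 f_\ast \calF_t$ for every $t\in T''$.

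On $T''$, condition (ii) then translates to the requirement that the fiber at $t$ of the proper projection $\mathsf{supp}(\R^0 F_\ast \calF)\to T''$ is zero-dimensional, which defines an open condition by upper semicontinuity of fiber dimension once more. Intersecting these open loci yields openness of $T_f$. The only delicate step is verifying that the vanishing of $\R^1 f_\ast \calF_t$ spreads to an open neighborhood of $t$; this is where the $T$-flatness of $\calF$ combined with properness of $F$ is essential, and it is precisely the content of the cohomology-and-base-change machinery invoked above.
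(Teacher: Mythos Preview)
Your argument is essentially correct and reaches the conclusion, but it follows a genuinely different route from the paper. The paper's proof does not touch cohomology and base change at all: it invokes Proposition~\ref{prop:torsion-vs-torsion-free}-\eqref{item:torsion-vs-torsion-free-1} to rewrite membership in $\scrT_f$ as the conjunction of (a) $\ch_0=\ch_1=0$ and $\ch_2\in\Z[\Sigma]$, which is locally constant in flat families, and (b) $\mu_{Y\text{-}\mathsf{min}}$ of the pure one-dimensional part being $\geq 0$, which is open by the standard behaviour of Harder--Narasimhan filtrations in families. Your approach instead works directly from the definition of $\scrT_f$ via $\R f_\ast$, which has the advantage of not relying on the slope characterisation specific to this geometric setup, and would adapt to more general crepant contractions.

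One point deserves tightening. You invoke \cite[Theorem~III.12.8]{Har77} for the semicontinuity of $t\mapsto \dim_{\kappa(t)}\R^1 f_\ast\calF_t$, but that theorem requires $\calF$ to be flat over the \emph{target} of the morphism, here $X\times T$, whereas you only have $T$-flatness. What actually makes your step work is: since $F$ has fibre dimension $\leq 1$, one has $\R^{\geq 2}F_\ast\calF=0$, and derived proper base change together with $T$-flatness of $\calF$ gives $\L j_t^\ast\,\R F_\ast\calF\simeq \R f_\ast\calF_t$; taking top cohomology yields $j_t^\ast(\R^1F_\ast\calF)\simeq \R^1 f_\ast\calF_t$. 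Since $\R^1F_\ast\calF$ is coherent and supported on $\{\nu\}\times T$, its vanishing along fibres is an open condition on $T$. The same derived base change argument (rather than the classical Grauert-type statement, which again needs flatness over $X\times T$) then shows that over $T''$ the sheaf $\R^0F_\ast\calF$ is $T''$-flat with fibre $\R^0 f_\ast\calF_t$, after which your final semicontinuity-of-fibre-dimension step goes through as written.
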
 

\begin{proof}
	Lemma~\ref{lem:exclass} and Proposition~\ref{prop:torsion-vs-torsion-free}--\eqref{item:torsion-vs-torsion-free-1} imply that a coherent sheaf $\calF$ belongs to $\scrT_f$ if and only if $\calF$ is of dimension at most one, with $\ch_2(\calF) \in \Z[\Sigma]$, and $\mu_{Y\textrm{-}\mathsf{min}}(F) \geq 0$. The first condition is open and closed. The second condition is open by the basic properties of Harder-Narasimhan filtrations in flat families. 
\end{proof}

\begin{proposition}\label{prop:open-torsion-free}
	The subcategory $\scrF_f$ satisfies the openness condition in flat families. 	
\end{proposition}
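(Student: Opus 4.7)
The plan is to leverage the characterization of Proposition~\ref{prop:torsion-free-criterion}, according to which a coherent sheaf $\calE$ on $Y$ lies in $\scrF_f$ if and only if (a) $\calE$ has no non-zero zero-dimensional subsheaf, and (b) $\Hom_Y(i_{\Sigma,\, \ast}\scrO_\Sigma(-1), \calE) = 0$. Once this two-test-object description is in place, openness of $\scrF_f$ in flat families reduces to the openness of each of (a) and (b) separately, combined with the general criterion of Appendix~\ref{sec:openness-orthogonality} that ensures such a reduction is legitimate in the sense of \cite[Definition~A.2]{AB-Bridgeland-stable}.

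For (a), I would invoke the classical fact that for a $T$-flat family $\calE$ of coherent sheaves on $Y$, the locus of $t\in T$ at which $\calE_t$ admits no non-zero zero-dimensional subsheaf is open: this is the standard openness of the torsion-free part of the dimensional torsion pair whose torsion class is $\catCoh_0(Y)$, and reflects the fact that the appearance of an embedded zero-dimensional component is a closed condition in flat families. For (b), I would apply upper semicontinuity: setting $E\coloneqq i_{\Sigma,\, \ast}\scrO_\Sigma(-1)$ and writing $p_Y, p_T$ for the projections of $Y\times T$, the derived pushforward $\R p_{T,\, \ast}\R\calHom(p_Y^\ast E, \calE)$ is a perfect complex on $T$ (since $E$ is perfect on the smooth threefold $Y$ and $\calE$ is $T$-flat). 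Standard base change and semicontinuity for perfect complexes then give upper semicontinuity of $t\mapsto \dim\Hom_Y(E, \calE_t)$, and in particular openness of its vanishing locus.

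The main (mild) obstacle is the reduction step itself: $\scrF_f$ is defined by vanishing of $\Hom_Y(\calT,-)$ for \emph{every} $\calT\in\scrT_f$, not merely for a single test object, and a priori openness of orthogonality against one fixed $\calT$ need not imply openness of orthogonality against the entire subcategory $\scrT_f$. This is exactly the content of the orthogonality-based criterion developed in Appendix~\ref{sec:openness-orthogonality}; its hypotheses are verified here via Proposition~\ref{prop:torsion-free-criterion}, so openness of $\scrF_f$ follows at once from the openness of conditions (a) and (b).
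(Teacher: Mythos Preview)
Your approach is correct and matches the paper's: reduce via Proposition~\ref{prop:torsion-free-criterion} to conditions (a) and (b), invoke the standard openness of having no zero-dimensional subsheaves for (a), and semicontinuity for (b). One small clarification: Appendix~\ref{sec:openness-orthogonality} (specifically Corollary~\ref{cor:openness_orthogonality}) is precisely the semicontinuity statement you sketch for (b), applied with $F = p_Y^\ast i_{\Sigma,\,\ast}\scrO_\Sigma(-1)$ and $N=0$; it does not supply any additional ``reduction'' step as your last paragraph suggests---that reduction from orthogonality against all of $\scrT_f$ to the two test conditions is already the full content of Proposition~\ref{prop:torsion-free-criterion}.
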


\begin{proof}
	Let $Z$ be a parameter scheme and let $\scrE$ be a $Z$-flat family of coherent sheaves on $Y$. Given a point $z\in Z$, Proposition~\ref{prop:torsion-free-criterion} shows that $\scrE_z\in (\scrF_f)_z$ if and only if the following conditions hold simultaneously
	\begin{enumerate}\itemsep0.2cm 
		\item \label{item:condition-1} $\scrE_z$ has no zero-dimensional subsheaves on $Y_z$, and 
		\item \label{item:condition-2} $\Hom_{Y_z}(i_{\Sigma_z,\, \ast}\scrO_{\Sigma_z}(-1), \scrE_z) =0$. 
	\end{enumerate} 
	The same arguments as in the proof of \cite[Example~A.4-(1)]{AB-Bridgeland-stable} show that the set of points $z\in Z$ satisfying condition \eqref{item:condition-1} is open. As for \eqref{item:condition-2}, it is enough to apply Corollary~\ref{cor:openness_orthogonality} (with $N = 0$, $F \coloneqq p_Y^\ast i_{\Sigma, \, \ast} \scrO_{\Sigma}(-1)$, where $p_Y\colon Z\times Y\to Y$ is the projection, and $G \coloneqq \scrE$) to deduce that it is an open condition. The conclusion follows.
\end{proof}

\begin{corollary}\label{cor:open-torsion-pair}
	The torsion pair $(\scrT_f, \scrF_f)$ is open.
\end{corollary}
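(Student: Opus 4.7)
The plan is to observe that Corollary~\ref{cor:open-torsion-pair} is essentially a formal consequence of the two propositions immediately preceding it, together with the definition of openness for a torsion pair. Following \cite[Definition~A.2]{AB-Bridgeland-stable}, a torsion pair $(\scrT,\scrF)$ on $\catCoh(Y)$ is called open precisely when both the torsion class $\scrT$ and the torsion-free class $\scrF$ satisfy the openness condition in flat families, i.e.\ for any finite-type $\C$-scheme $Z$ and any $Z$-flat family $\scrE$ of coherent sheaves on $Y$, the loci
\begin{align}
\{z\in Z\,\vert\, \scrE_z\in \scrT\}\quad\text{and}\quad \{z\in Z\,\vert\, \scrE_z\in \scrF\}
\end{align}
are open in $Z$.

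First I would recall this definition explicitly (either by citing \cite[Definition~A.2]{AB-Bridgeland-stable} or by unpacking it in one line as above) so that the reader can see that the required verification has been split exactly into the two halves proved just before. Then I would invoke Proposition~\ref{prop:open-torsion} to handle the torsion part $\scrT_f$, and Proposition~\ref{prop:open-torsion-free} to handle the torsion-free part $\scrF_f$. Combining these two statements yields the claim. Since both halves have already been established, there is no real obstacle here; the only conceivable sticking point would be if the reference's definition imposed additional hypotheses (for instance, some quasi-compactness of the parameter scheme or a compatibility with base change), but in our setting $Y$ is projective and the flat families are parametrized by schemes of finite type over $\C$, so these mild conditions hold automatically and no further argument is needed.
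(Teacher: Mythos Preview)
Your proposal is correct and matches the paper's approach exactly: the corollary is stated without proof in the paper, as it follows immediately from Propositions~\ref{prop:open-torsion} and~\ref{prop:open-torsion-free} together with the definition of an open torsion pair in \cite[Definition~A.2]{AB-Bridgeland-stable}. There is nothing more to add.
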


\begin{remark}
	In \cite[Proposition~3.4]{Padurariu-stable-pairs}, Pădurariu proved that the torsion pair $(\scrT_f\cap \catCoh_{\leqslant 1}(Y),\scrF_f\cap \catCoh_{\leqslant 1}(Y))$ is open by reinterpreting it as a torsion pair arising from the choice of a stability condition $\mu\colon \catCoh_{\leqslant 1}(Y)\to (-\infty, \infty]\times (-\infty, \infty]$ and an element $s\in (-\infty, \infty]\times (-\infty, \infty]$ (see \S3.1 of \textit{loc.cit.} and \cite[Lemma~51]{BS-Curve-counting-crepant}). In our framework, however, since we did not restrict ourselves to $ \catCoh_{\leqslant 1}(Y)$, we adopted a different approach to prove the openness of $(\scrT_f, \scrF_f)$ as shown in the proofs of Propositions~\ref{prop:open-torsion} and \ref{prop:open-torsion-free}.
\end{remark}

Now, let $\scrA_{\mathsf{tor}}$ be the smallest full abelian subcategory of $\scrA$ closed under extensions and containing $\calF[1]$ for coherent sheaves $\calF \in \catCoh_{\leqslant 2}(Y) \cap \scrF_f$. 
\begin{lemma}\label{lem:characterization_Ator}
	For $E \in \scrA$ the following statements are equivalent:
	\begin{enumerate}\itemsep=0.2cm
		\item \label{item:characterization_Ator-1} $E \in \scrA_{\mathsf{tor}}$;
		
		\item \label{item:characterization_Ator-2} $\calH^{-1}(E) \in \catCoh_{\leqslant 2}(Y) \cap \scrF_f$.
						
		\item \label{item:characterization_Ator-3} $\mathsf{rk}(E) = 0$.
	\end{enumerate}
\end{lemma}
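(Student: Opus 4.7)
The plan is to prove (2) $\Leftrightarrow$ (3) directly, (1) $\Rightarrow$ (3) by minimality, and (3) $\Rightarrow$ (1) by reducing to the inclusion $\scrT_f \subseteq \scrA_{\mathsf{tor}}$. For (2) $\Leftrightarrow$ (3): since $E \in \scrA$, Lemma~\ref{lem:T_f_1_dim} gives $\calH^0(E) \in \scrT_f \subseteq \catCoh_{\leqslant 1}(Y)$, so $\mathsf{rk}(\calH^0(E))=0$; additivity of rank on the canonical short exact sequence $0 \to \calH^{-1}(E)[1] \to E \to \calH^0(E) \to 0$ in $\scrA$ yields $\mathsf{rk}(E) = -\mathsf{rk}(\calH^{-1}(E))$. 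On the smooth threefold $Y$ the rank of a coherent sheaf vanishes exactly when it lies in $\catCoh_{\leqslant 2}(Y)$, and $\calH^{-1}(E) \in \scrF_f$ is automatic. For (1) $\Rightarrow$ (3): the rank-zero objects of $\scrA$ form a Serre subcategory of $\scrA$ containing every generator $\calF[1]$, so by minimality $\scrA_{\mathsf{tor}}$ is contained in it.

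For (3) $\Rightarrow$ (1): the canonical extension above places $\calH^{-1}(E)[1]$ in $\scrA_{\mathsf{tor}}$ and reduces the question, via extension-closure, to showing $\scrT_f \subseteq \scrA_{\mathsf{tor}}$, so that $\calH^0(E) \in \scrT_f$ also lies in $\scrA_{\mathsf{tor}}$. The key observation is the following: if $\calF_1 \hookrightarrow \calF_2$ is an injection in $\catCoh(Y)$ with $\calF_1, \calF_2 \in \catCoh_{\leqslant 2}(Y) \cap \scrF_f$ and cokernel $Q \in \scrT_f$, then a direct cohomological computation in the tilted $t$-structure applied to the cone $Q[1]$ (using that $Q \in \scrT_f \subset \scrA$ sits in $\scrA$-degree zero, so $Q[1]$ lies in $\scrA[1]$) shows that the induced morphism $\calF_1[1] \to \calF_2[1]$ in $\scrA$ has $\ker_\scrA = Q$ and $\mathsf{coker}_\scrA = 0$. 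Closure of $\scrA_{\mathsf{tor}}$ under kernels of its morphisms then places $Q$ in $\scrA_{\mathsf{tor}}$.

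It thus remains to produce, for every $\calG \in \scrT_f$, a short exact sequence $0 \to \calF_1 \to \calF_2 \to \calG \to 0$ in $\catCoh(Y)$ with $\calF_1, \calF_2 \in \catCoh_{\leqslant 2}(Y) \cap \scrF_f$. By Proposition~\ref{prop:torsion-vs-torsion-free}-\eqref{item:torsion-vs-torsion-free-1}, $\calG$ decomposes as an extension of a zero-dimensional sheaf by a pure one-dimensional sheaf $\calG_\Sigma$ supported on $\Sigma$ with $\mu_{Y\textrm{-}\mathsf{min}}(\calG_\Sigma) \geq 0$. For a skyscraper $\scrO_q$ I pick an effective Cartier divisor $D \subset Y$ smooth at $q$ (cut out by a section of a sufficiently ample line bundle vanishing to first order at $q$) and use $0 \to \calI_{q/D} \to \scrO_D \to \scrO_q \to 0$, where $\scrO_D$ and $\calI_{q/D}$ are pure two-dimensional on $Y$ and hence automatically in $\scrF_f$. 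By Corollary~\ref{cor:refined-HN-filtration}, $\calG_\Sigma$ is an iterated extension of $i_{\Sigma,\,\ast}\scrO_\Sigma(d)$'s with $d \geq -1$, and for each such factor I use the $\PP^1$-resolution
\[
0 \longrightarrow \scrO_\Sigma(-d-4) \longrightarrow \scrO_\Sigma(-2)^{\oplus 2} \longrightarrow \scrO_\Sigma(d) \longrightarrow 0
\]
arising from a base-point-free pencil in $H^0(\scrO_\Sigma(d+2))$ (available for $d \geq -1$); its pushforward along $i_\Sigma$ has kernel and middle term pure one-dimensional sheaves of strictly negative $\mu_Y$-slopes $-(d+3)$ and $-1$ respectively, hence in $\scrF_f$ by Proposition~\ref{prop:torsion-vs-torsion-free}-\eqref{item:torsion-vs-torsion-free-2}. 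The principal difficulty is this last construction: because maps between line bundles on $\PP^1$ cannot produce line-bundle cokernels, the rank-two pencil resolution is essential for handling the nonnegative-slope regime on the exceptional curve.
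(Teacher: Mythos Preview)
Your proof is correct, and for the implications $(2)\Leftrightarrow(3)$ and $(1)\Rightarrow(3)$ it matches the paper's argument essentially verbatim. The interesting divergence is in $(3)\Rightarrow(1)$: the paper, like you, uses the canonical short exact sequence $0\to\calH^{-1}(E)[1]\to E\to\calH^0(E)\to 0$ in $\scrA$ and extension-closure, but then simply \emph{asserts} that $\calH^0(E)\in\scrA_{\mathsf{tor}}$ without further comment. You instead prove the underlying inclusion $\scrT_f\subseteq\scrA_{\mathsf{tor}}$ explicitly, by exhibiting each building block of $\scrT_f$ (skyscrapers and $i_{\Sigma,\ast}\scrO_\Sigma(d)$ with $d\geq -1$) as the $\scrA$-kernel of a morphism between generators $\calF_1[1]\to\calF_2[1]$. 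This is a genuine addition: the paper's one-line assertion is not obviously justified by the definition of $\scrA_{\mathsf{tor}}$ alone, and your resolution-based argument (divisor through a point for skyscrapers, twisted Koszul pencil on $\PP^1$ for the exceptional line bundles) fills that gap cleanly. One cosmetic slip: you describe $\calG\in\scrT_f$ as ``an extension of a zero-dimensional sheaf by a pure one-dimensional sheaf $\calG_\Sigma$,'' but the torsion filtration goes the other way ($0\to\calT\to\calG\to\calG_\Sigma\to 0$); this does not affect the argument since you only use that $\calG$ is an iterated extension of the building blocks in some order.
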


\begin{proof}
	First notice that for $E \in \scrA$ we have $\calH^0(E) \in \catCoh_{\leqslant 1}(Y)$ by Lemma~\ref{lem:T_f_1_dim}.
	From here, the equivalence \eqref{item:characterization_Ator-2} $\Leftrightarrow$ \eqref{item:characterization_Ator-3} follows automatically.
	The implication \eqref{item:characterization_Ator-1} $\Rightarrow$ \eqref{item:characterization_Ator-3} follows from the additivity of the rank.
	Finally, let us prove that \eqref{item:characterization_Ator-3} $\Rightarrow$ \eqref{item:characterization_Ator-1}. Now, all the terms of the fiber sequence
	\begin{align}
		\calH^{-1}(E)[1] \longrightarrow E \longrightarrow \calH^{0}(E) 
	\end{align}
	belong to $\scrA$, and therefore that the above can be interpreted as a short exact sequence in $\scrA$. Moreover, $\calH^{-1}(E)[1]$ and $\calH^{0}(E)$ belong to $\scrA_{\mathsf{tor}}$, hence also $E$ since $\scrA_{\mathsf{tor}}$ is closed under extensions.
\end{proof}

\subsection{Definition of $f$-stable pairs}\label{subsec:f-stable-pairs}

Let us recall the notion of Pandharipande-Thomas stable pairs.
\begin{defin}
	A \textit{Pandharipande-Thomas stable pair} on $Y$ is a pair $(\calF, s)$, where $\calF$ is a pure one-dimensional sheaf on $Y$ and $s\colon\scrO_Y \to \calF$ is a section so that $\mathsf{Coker}(s)$ is zero-dimensional. \hfill$\oslash$
\end{defin} 

Now, mimicking \cite[Definition~14]{BS-Curve-counting-crepant}, we introduce the following.
\begin{definition}
	An \textit{$f$-stable pair} on $Y$ is a pair $(\calF, s)$, where $\calF$ is an object of $\scrF_f\cap \catCoh_{\leqslant 1}(Y)$ and $s\colon\scrO_Y \to \calF$ is a section so that $\mathsf{Coker}(s)$ belongs to $\scrT_f$.
\end{definition} 

Proposition~\ref{prop:torsion-vs-torsion-free} yields the following:
\begin{corollary}\label{cor:f-stable-pair}
	\hfill
	\begin{itemize}\itemsep0.2cm
		\item Let $(\calF,s)$ be an $f$-stable pair on $Y$. Then, $\calF$ is a one-dimensional pure coherent sheaf. 
		\item Let $(\calF, s)$ be a Pandharipande-Thomas stable pair on $Y$ such that $\calF_\Sigma=0$. Then $(\calF, s)$ is a $f$-stable pair.
	\end{itemize}
\end{corollary}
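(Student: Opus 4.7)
The plan is to deduce both statements directly from Proposition~\ref{prop:torsion-vs-torsion-free}, together with the elementary observation that every zero-dimensional coherent sheaf on $Y$ lies in $\scrT_f$: for such a sheaf $\calG$, the derived pushforward $\R^\bullet f_\ast\calG$ is again zero-dimensional, hence belongs to $\catCoh_0(X)\subset \catPerf(X)$. In particular $\catCoh_0(Y) \subset \scrT_f$.

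For the first claim, let $(\calF,s)$ be a nonzero $f$-stable pair. By definition, $\calF\in \scrF_f\cap \catCoh_{\leqslant 1}(Y)$. Since $(\scrT_f,\scrF_f)$ is a torsion pair and $\catCoh_0(Y)\subset \scrT_f$, the sheaf $\calF$ cannot admit any nonzero zero-dimensional subsheaf. Combined with $\dim\mathsf{Supp}(\calF)\leq 1$ and $\calF\neq 0$, this forces $\calF$ to be pure of dimension one. This step does not even require the full strength of Proposition~\ref{prop:torsion-vs-torsion-free}; it only uses the torsion-pair formalism together with the inclusion $\catCoh_0(Y)\subset \scrT_f$.

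For the second claim, let $(\calF,s)$ be a Pandharipande-Thomas stable pair with $\calF_\Sigma=0$. By definition, $\calF$ is pure one-dimensional, so $\calF\in \catCoh_{\leqslant 1}(Y)$ and the torsion subsheaf appearing in Equation~\eqref{eq:torsion-sequence} satisfies $\calT=0$; consequently the sheaf $\calG$ of Lemma~\ref{lem:support-sequence} coincides with $\calF$, and its $\Sigma$-supported part equals $\calF_\Sigma=0$ by hypothesis. Applying Proposition~\ref{prop:torsion-vs-torsion-free}-\eqref{item:torsion-vs-torsion-free-2} with $\calT=0$ and $\calG_\Sigma=0$ yields $\calF\in \scrF_f$. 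Here the standing hypothesis $\ch_2(\calF)\neq 0$ of that proposition is automatic from purity and one-dimensionality of the support of $\calF$. Finally, $\mathsf{Coker}(s)$ is zero-dimensional by the PT stability condition, and hence belongs to $\scrT_f$ by the opening remark; thus $(\calF,s)$ satisfies all the conditions in the definition of an $f$-stable pair. The argument is essentially a packaging exercise and I do not anticipate any serious technical obstacle, the only point worth verifying carefully being the compatibility of the various support-theoretic hypotheses with the domain of applicability of Proposition~\ref{prop:torsion-vs-torsion-free}.
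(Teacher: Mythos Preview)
Your proof is correct and follows essentially the same approach as the paper, which simply states the corollary as an immediate consequence of Proposition~\ref{prop:torsion-vs-torsion-free}. Your explicit unpacking of the first item via the torsion-pair formalism (using $\catCoh_0(Y)\subset \scrT_f$ directly rather than invoking Proposition~\ref{prop:torsion-vs-torsion-free}-\eqref{item:torsion-vs-torsion-free-2}) is a mild simplification, since it sidesteps the standing hypothesis $\ch_2(\calF)\neq 0$ of that proposition; the paper itself uses the same inclusion $\catCoh_0(Y)\subset \scrT_f$ inside the proof of Proposition~\ref{prop:torsion-vs-torsion-free}.
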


\begin{proposition} \label{prop:stable_pairs_different_formulations}
	For a fiber sequence
	\begin{align}\label{eq:stable_pairs_as_triples}
		\begin{tikzcd}[column sep = 16pt, ampersand replacement = \&]
			\scrO_Y \arrow{r}{s} \& \calF \arrow{r} \& E
		\end{tikzcd}
	\end{align}
	in $\catPerf(Y)$, the following statements are equivalent:
	\begin{enumerate}\itemsep=0.2cm
		\item \label{item:stable_pairs_as_triples-1} $\calF\in \scrF_f\cap \catCoh_{\leqslant 1}(Y)$ and the morphism $s \colon \scrO_Y \to \calF$ is such that $\mathsf{Coker}(s)\in \scrT_f$, i.e., the pair $(\calF, s)$ is a $f$-stable pair;
		
		\item \label{item:stable_pairs_as_triples-2} $E$ belongs to $\scrA$, $\calF[1]$ belongs to $\scrA_{\mathsf{tor}}$, and $\ch_1(\calF)=0$.
	\end{enumerate}
\end{proposition}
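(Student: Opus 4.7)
The plan is to analyze both directions through the standard cohomology long exact sequence associated with the fiber sequence $\scrO_Y \to \calF \to E$, and then match the resulting conditions with the definition of the tilted heart $\scrA$ and with the characterization of $\scrA_{\mathsf{tor}}$ from Lemma~\ref{lem:characterization_Ator}. Since $\scrO_Y$ is concentrated in degree $0$ and (in both conditions) $\calF$ will be shown to be concentrated in degree $0$, the triangle gives a four-term exact sequence
\begin{align}
	0 \longrightarrow \calH^{-1}(E) \longrightarrow \scrO_Y \xrightarrow{\,s\,} \calF \longrightarrow \calH^{0}(E) \longrightarrow 0 \ ,
\end{align}
so that $\calH^{-1}(E) = \ker(s)$ and $\calH^0(E) = \mathsf{Coker}(s)$, with all other cohomology sheaves of $E$ vanishing.

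For the implication \eqref{item:stable_pairs_as_triples-1} $\Rightarrow$ \eqref{item:stable_pairs_as_triples-2} I would argue as follows. The stable-pair condition gives $\calH^0(E) = \mathsf{Coker}(s) \in \scrT_f$ directly. For $\calH^{-1}(E)$, the sheaf $\ker(s)$ is a subsheaf of $\scrO_Y$, hence torsion-free on the smooth threefold $Y$; since $\scrT_f \subseteq \catCoh_{\leqslant 1}(Y)$ by Lemma~\ref{lem:T_f_1_dim}, no nonzero morphism from an object of $\scrT_f$ can land in $\ker(s)$, so $\ker(s) \in \scrF_f$. This shows $E \in \scrA$. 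The fact that $\calF[1] \in \scrA_{\mathsf{tor}}$ follows from Lemma~\ref{lem:characterization_Ator}\eqref{item:characterization_Ator-2}, using $\calF \in \catCoh_{\leqslant 1}(Y) \cap \scrF_f$. Finally, $\ch_1(\calF) = 0$ is immediate from $\dim \calF \leqslant 1$.

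For the implication \eqref{item:stable_pairs_as_triples-2} $\Rightarrow$ \eqref{item:stable_pairs_as_triples-1}, Lemma~\ref{lem:characterization_Ator}\eqref{item:characterization_Ator-2} applied to $\calF[1] \in \scrA_{\mathsf{tor}}$ gives $\calH^0(\calF) \in \catCoh_{\leqslant 2}(Y) \cap \scrF_f$, while $\calF[1] \in \scrA$ itself forces $\calH^i(\calF) = 0$ for $i \notin \{0,1\}$. The long exact sequence of the triangle at $i = 1$ reads $\calH^0(E) \to 0 \to \calH^1(\calF) \to \calH^1(E)$, and since $E \in \scrA$ has $\calH^1(E) = 0$, we conclude $\calH^1(\calF) = 0$. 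Hence $\calF$ is a genuine sheaf, lying in $\catCoh_{\leqslant 2}(Y) \cap \scrF_f$. Combining this with $\ch_1(\calF) = 0$ lowers the dimension of the support to at most $1$, so $\calF \in \catCoh_{\leqslant 1}(Y) \cap \scrF_f$. The cohomology identification $\mathsf{Coker}(s) = \calH^0(E) \in \scrT_f$ (from $E \in \scrA$) completes the verification that $(\calF, s)$ is an $f$-stable pair.

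The argument is in essence a bookkeeping exercise with the tilted $t$-structure and the torsion pair $(\scrT_f, \scrF_f)$, and I expect no serious obstacle; the only delicate step is ensuring that $\calF$ is concentrated in degree $0$ in the direction \eqref{item:stable_pairs_as_triples-2} $\Rightarrow$ \eqref{item:stable_pairs_as_triples-1}, which is handled by the short cohomological argument above using that both $E$ and $\calF[1]$ lie in the heart $\scrA$.
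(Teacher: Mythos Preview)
Your proof is correct and follows essentially the same approach as the paper: both directions are handled via the cohomology long exact sequence of the fiber sequence, combined with Lemma~\ref{lem:characterization_Ator} and the definition of the tilted heart $\scrA$. The only cosmetic difference is that for $\calH^{-1}(E)\in\scrF_f$ you argue directly that $\ker(s)\subset\scrO_Y$ is torsion-free and hence receives no nonzero map from $\scrT_f\subseteq\catCoh_{\leqslant 1}(Y)$, whereas the paper observes $\scrO_Y\in\scrF_f$ and invokes the general fact that $\scrF_f$ is closed under subobjects; these are the same argument in slightly different packaging.
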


\begin{proof}	
	We first prove that \eqref{item:stable_pairs_as_triples-1} $\Rightarrow$ \eqref{item:stable_pairs_as_triples-2}. Since $\calF\in \scrF_f\cap \catCoh_{\leqslant 1}(Y)$, we have $\ch_1(\calF)=0$ and $\calF[1] \in \scrA_{\mathsf{tor}} \subseteq \scrA$ by definition. We need only to show that $E \in \scrA$. Taking the long exact sequence of cohomology sheaves associated to \eqref{eq:stable_pairs_as_triples} we obtain
	\begin{align}\label{eq:stable_pairs_long_exact_sequence}
		0 = \calH^{-1}(\calF) \longrightarrow \calH^{-1}(E) \longrightarrow \calH^{0}(\scrO_Y) \longrightarrow \calH^{0}(\calF) \longrightarrow \calH^0(E) \longrightarrow 0 \ .
	\end{align}
	The central terms are canonically identified with $\scrO_Y$ and $\calF$, respectively. Thus, our assumption guarantees that $\calH^0(E)\simeq \mathsf{Coker}(s)\in \scrT_f$. On the other hand, $\scrO_Y\in \scrF_f$, hence $\calH^{-1}(E)\in \scrF_f$ because of the defining properties of a torsion pair (cf.\ \cite[Lemma~3.2]{DPS-torsion-pairs}). Therefore, $E \in \scrA$. 
	
	We now prove that \eqref{item:stable_pairs_as_triples-2} $\Rightarrow$ \eqref{item:stable_pairs_as_triples-1}. Since $\calF[1] \in \scrA_{\mathsf{tor}}$,  the sheaf $\calH^0(\calF) \simeq \calH^{-1}(\calF[1])$ belongs to $\scrF_f\cap \catCoh_{\leqslant 2}(Y)$, while $\calH^1(\calF) \simeq \calH^0(\calF[1])$. Passing to the long exact sequence of cohomology sheaves associated to \eqref{eq:stable_pairs_as_triples}, we obtain
	\begin{align}
		0 = \calH^1(\scrO_Y) \longrightarrow \calH^1(\calF) \longrightarrow \calH^1(E) = 0 \ . 
	\end{align}
	Thus, $\calF \simeq \calH^0(\calF)$. Since $\ch_1(\calF)=0$, we get that $\calF$ belongs to $\catCoh_{\leqslant 1}(Y)$. Finally, the sequence \eqref{eq:stable_pairs_long_exact_sequence} canonically identifies the cokernel of $\scrO_Y \to \calF$ with $\calH^0(E)$, which is an object of $\scrT_f$ because $E \in \scrA$.
\end{proof}

Now, we introduce the derived stack of $f$-stable pairs, following \cite[\S~II.4.4]{DPS-torsion-pairs}. First, let $\derivedPerf^\dagger(Y;\scrO_Y)$ be the derived stack parametrizing fiber sequences of the form
\begin{align}\label{eq:fiber-sequence}
	\mathsf{E} \coloneqq ( \scrO_Y \longrightarrow \calF \longrightarrow E ) \ .
\end{align}
$\derivedPerf^\dagger(Y;\scrO_Y)$ is a geometric derived stack locally of finite presentation over $\C$. We set
\begin{align}
	\partial_0(\mathsf{E}) \coloneqq E \qquad \text{and} \qquad \partial_1(\mathsf{E}) \coloneqq \calF \ . 
\end{align}

Let $\derivedCoh(Y,\tau_\scrA)$ be the derived moduli stack of $\tau_\scrA$-flat objects of $\catPerf(Y)$. Thanks to Corollary~\ref{cor:open-torsion-pair}, \cite[Theorem~A.8]{AB-Bridgeland-stable}, and \cite[Theorem~I.2.81]{DPS-torsion-pairs} $\derivedCoh(Y,\tau_\scrA)$ is an open substack of $\derivedPerf(Y)$, hence it is a geometric derived stack locally of finite presentation over $\C$. Thanks to Lemma~\ref{lem:characterization_Ator}, we define the derived substack $\derivedCoh_{\mathsf{tor}}(Y,\tau_\scrA)$ of $\derivedCoh(Y,\tau_\scrA)$ parametrizing objects in $\scrA_{\mathsf{tor}}$ as the open substack $\derivedCoh(Y,\tau_\scrA)_{\mathsf{rk}=0}$ consisting of those objects having vanishing rank. Thus, also $\derivedCoh_{\mathsf{tor}}(Y,\tau_\scrA)$ is a geometric derived stack locally of finite presentation over $\C$.

Motivated by Proposition~\ref{prop:stable_pairs_different_formulations}, we introduce the following definition.\footnote{Here and in Definition~\ref{def:C-framed-moduli-stack}, we prefer to define our relevant moduli stacks as derived stacks, following the approach outlined in \cite[\S~II.4.4]{DPS-torsion-pairs}.} 
\begin{definition}
	The \textit{derived moduli stack $\derivedfSP(Y)$ of $f$-stable pairs on $Y$} is the fiber product
	\begin{align}\label{eq:derivedfSP}
		\begin{tikzcd}[ampersand replacement=\&]
			\derivedfSP(Y) \arrow{r} \arrow{d} \& \derivedPerf^\dagger(Y;\scrO_Y) \arrow{d}{\partial_1[1] \times \partial_0} \\
			\derivedCoh_{\mathsf{tor}}(Y,\tau_\scrA)_{\ch_1=0} \times \derivedCoh(Y,\tau_\scrA) \arrow{r} \& \derivedPerf(Y) \times \derivedPerf(Y)
		\end{tikzcd} \ ,
	\end{align}
	where $\derivedCoh_{\mathsf{tor}}(Y,\tau_\scrA)_{\ch_1=0}$ is the open and closed substack of $\derivedCoh_{\mathsf{tor}}(Y,\tau_\scrA)$ consisting of those objects having vanishing first Chern class.
\end{definition}
By construction, $\derivedfSP(Y)$ is a geometric derived stack, locally of finite presentation over $\C$. Let $\fSP(Y)\coloneqq \trunc{(\derivedfSP(Y))}$ be the classical truncation. It is an algebraic stack, locally of finite presentation over $\C$. From now on, we shall focus on $\fSP(Y)$ since the derived structure will not play any role in our main results.

Note that
\begin{align}
	\fSP(Y)=\bigsqcup_{\beta\in N_1(Y),\, n\in \Z}\, \fSP(Y; \beta, n)\ ,
\end{align}
where $\fSP(Y; \beta, n)$ parametrizes those $f$-stable pairs $(\calF, s)$ for which $\ch_2(\calF)=\beta$ and $\chi(\calF)=n$. Now, we can argue as in \cite[Lemma~46]{BS-Curve-counting-crepant} to show that flat families of $f$-stable pairs with fixed Chern classes are bounded. Therefore, we obtain the following.
\begin{lemma}
	$\fSP(Y; \beta, n)$ is a finite type algebraic stack over $\C$.
\end{lemma}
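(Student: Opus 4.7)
Since $\fSP(Y)$ is already known to be an algebraic stack locally of finite presentation over $\C$, it suffices to establish quasi-compactness of $\fSP(Y;\beta,n)$, i.e., boundedness of the family of $f$-stable pairs $(\calF,s)$ with $\ch_2(\calF)=\beta$ and $\chi(\calF)=n$. The plan is to adapt the strategy of \cite[Lemma~46]{BS-Curve-counting-crepant}: split $\calF$ via the canonical short exact sequence of Lemma~\ref{lem:support-sequence}, bound each piece, and then bound the sections.

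Write $\beta = r[\Sigma] + \beta'$ with $r\in\N$ and $\beta'\in N_1(Y/X)$ by Corollary~\ref{cor:split-support}, and decompose $0\to \calF_\Sigma \to \calF \to \calF_{Y/X}\to 0$ with $\ch_2(\calF_\Sigma)=r[\Sigma]$ and $\ch_2(\calF_{Y/X})=\beta'$ by Corollary~\ref{cor:split-chern-class}. For the $\calF_{Y/X}$ piece, consider the composition $s_{Y/X}\colon \scrO_Y \to \calF \to \calF_{Y/X}$. Since $\calF \to \calF_{Y/X}$ is surjective, $\mathsf{Coker}(s_{Y/X})$ is a quotient of $\mathsf{Coker}(s)\in \scrT_f$, hence lies in $\scrT_f$ as $\scrT_f$ is closed under quotients. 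On the other hand, $\mathsf{Coker}(s_{Y/X})$ is a quotient of $\calF_{Y/X}$, whose support meets $\Sigma$ only in a zero-dimensional subset. Combining Lemma~\ref{lem:pushfwd} with the definition of $\scrT_f$, any object of $\scrT_f$ whose support intersects $\Sigma$ in at most a zero-dimensional set must itself be zero-dimensional, since $f$ is an isomorphism off $\Sigma$ and $\catCoh_0(X)$-valued pushforwards pull back to zero-dimensional sheaves there. Hence $(\calF_{Y/X}, s_{Y/X})$ is a classical Pandharipande-Thomas stable pair on the smooth projective threefold $Y$ with fixed Chern class $\beta'$ and Euler characteristic bounded in terms of $n$, and classical boundedness of PT pairs yields boundedness of the family of $\calF_{Y/X}$ and, in particular, finitely many possible values for $\chi(\calF_{Y/X})$.

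For the $\calF_\Sigma$ piece, $\ch_2=r[\Sigma]$ is fixed and $\chi(\calF_\Sigma) = n - \chi(\calF_{Y/X})$ ranges in a finite set by the previous step. Proposition~\ref{prop:torsion-vs-torsion-free}-\eqref{item:torsion-vs-torsion-free-2} forces $\mu_{Y\textrm{-}\mathsf{max}}(\calF_\Sigma)<0$, and Langer's boundedness theorem for pure one-dimensional sheaves on the projective threefold $Y$ with fixed numerical invariants and bounded $\mu_{Y\textrm{-}\mathsf{max}}$ yields boundedness of the family of $\calF_\Sigma$, hence of $\calF$ via the extension sequence. Finally, the sections $s$ form the finite-dimensional vector spaces $\Hom_Y(\scrO_Y,\calF)=H^0(Y,\calF)$ of uniformly bounded dimension over a bounded family of $\calF$'s, so the pair moduli itself is quasi-compact, and combined with local finite presentation is of finite type.

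The main obstacle, I expect, is the crucial observation at the heart of the $\calF_{Y/X}$-bound: that an object of $\scrT_f$ whose scheme-theoretic support avoids $\Sigma$ (up to a zero-dimensional locus) must be zero-dimensional. This exploits that $f$ is an isomorphism outside $\Sigma$, together with the fact that the definition of $\scrT_f$ is controlled purely by $\R f_\ast$; once this is secured, the reduction to classical PT boundedness and to Langer's theorem is routine.
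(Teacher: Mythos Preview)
Your overall strategy matches the paper's, which simply cites \cite[Lemma~46]{BS-Curve-counting-crepant} for boundedness; you are attempting to flesh out that citation, and the reduction to bounding $\calF$ together with $H^0(Y,\calF)$ is correct. However, the decomposition you chose creates a genuine gap.

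You assert that $\chi(\calF_{Y/X})$ is ``bounded in terms of $n$'' and then invoke PT boundedness. The lower bound is fine (either from $\chi(\calF_\Sigma)\leq 0$ via $\mu_{Y\textrm{-}\mathsf{max}}(\calF_\Sigma)<0$, or from a genus bound on $\mathsf{Im}(s_{Y/X})$). The \emph{upper} bound, however, is not justified: $\mu_{Y\textrm{-}\mathsf{max}}(\calF_\Sigma)<0$ only bounds $\chi(\calF_\Sigma)$ from above, not from below, so it says nothing about $\chi(\calF_{Y/X})=n-\chi(\calF_\Sigma)$ from above. Without this, PT moduli with fixed $\beta'$ alone are not of finite type (the cokernel length is unbounded), and your Step~2 becomes circular, since it relies on Step~1 to pin down $\chi(\calF_\Sigma)$.

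The clean fix is to switch to the decomposition $0\to\mathsf{Im}(s)\to\calF\to\mathsf{Coker}(s)\to 0$, which is what the argument of \cite[Lemma~46]{BS-Curve-counting-crepant} actually uses. Here $\mathsf{Im}(s)\simeq\scrO_W$ with $W$ a \emph{pure} one-dimensional subscheme (as a subsheaf of the pure sheaf $\calF$) whose class $\ch_2(\scrO_W)=\beta-m'[\Sigma]$ ranges over finitely many values ($0\le m'\le r$); then $\chi(\scrO_W)$ is bounded below by a Castelnuovo-type genus bound and above by $n$ since $\chi(\mathsf{Coker}(s))=h^0(\mathsf{Coker}(s))\ge 0$ (Corollary~\ref{cor:torsion-vs-torsion-free}). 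This bounds the family of $\scrO_W$. For $\mathsf{Coker}(s)\in\scrT_f$ one now has $\ch_2=m'[\Sigma]$ and $\chi$ in a finite range; Proposition~\ref{prop:torsion-vs-torsion-free}\eqref{item:torsion-vs-torsion-free-1} gives $\mu_{Y\textrm{-}\mathsf{min}}\ge 0$ on the pure part, which together with the bound on $\chi$ forces $\mu_{Y\textrm{-}\mathsf{max}}$ bounded as well, so Langer (or the explicit classification of Corollary~\ref{cor:stability-B}) bounds the pure part, and the zero-dimensional part is bounded by its length. The extension and the sections then follow as you wrote.
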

Now, the same arguments as in the proof of \cite[Lemma~23]{BS-Curve-counting-crepant} shows that the automorphism group of a $f$-stable pair is trivial (cf. \cite[Proposition~3.3]{Padurariu-stable-pairs}). Hence, the canonical morphism from the inertia stack of $\fSP(Y; \beta, n)$ to $\fSP(Y; \beta, n)$ is an equivalence. Thus,  \cite[Tag~04SZ]{stacks-project} yields the following.
\begin{proposition}\label{prop:fSP_algebraic_space}
	$\fSP(Y; \beta, n)$ is a finite type algebraic space over $\C$.
\end{proposition}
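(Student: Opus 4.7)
The plan is direct: the preceding Lemma already establishes that $\fSP(Y;\beta,n)$ is a finite type algebraic stack over $\C$, so by \cite[Tag~04SZ]{stacks-project} it suffices to show that the canonical morphism from the inertia stack $\calI_{\fSP(Y;\beta,n)}\to \fSP(Y;\beta,n)$ is an isomorphism. This reduces the problem to the statement that every $f$-stable pair has trivial automorphism group, functorially in the base. I therefore view the task as a clean packaging of the orthogonality built into the torsion pair $(\scrT_f,\scrF_f)$ with the Stacks Project criterion.

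For the pointwise triviality, let $(\calF, s\colon \scrO_Y\to \calF)$ be an $f$-stable pair and let $\phi\in \Aut(\calF)$ satisfy $\phi\circ s = s$. Then $(\phi-\id_\calF)\circ s = 0$, so $\phi-\id_\calF$ vanishes on $\mathsf{Im}(s)$ and therefore factors through the quotient $q\colon \calF\to \mathsf{Coker}(s)$ as $\phi-\id_\calF = \psi\circ q$ for some $\psi\colon \mathsf{Coker}(s)\to \calF$. By definition of an $f$-stable pair, $\mathsf{Coker}(s)\in \scrT_f$ and $\calF\in \scrF_f\cap \catCoh_{\leqslant 1}(Y)$; the defining property of the torsion pair forces $\Hom_Y(\mathsf{Coker}(s),\calF) = 0$. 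Hence $\psi = 0$, and therefore $\phi = \id_\calF$. This is the analog, for $f$-stable pairs, of the argument for Pandharipande–Thomas pairs referenced in \cite[Lemma~23]{BS-Curve-counting-crepant}.

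For the family version, I would consider a $T$-flat family $(\calF_T, s_T\colon \scrO_{Y\times T}\to \calF_T)$ of $f$-stable pairs over an arbitrary $\C$-scheme $T$ and repeat the argument with $\mathsf{Coker}(s_T)$ formed in $\catCoh(Y\times T)$. The formation of this cokernel is compatible with pullback to every geometric point of $T$; by Corollary~\ref{cor:open-torsion-pair} the torsion pair is open in flat families, so the fibrewise memberships $\mathsf{Coker}(s_T)_{t}\in \scrT_f$ and $(\calF_T)_t\in \scrF_f$ assemble into the relative orthogonality statement $\Hom_{Y\times T}(\mathsf{Coker}(s_T),\calF_T) = 0$. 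The same three-line manipulation then yields $\phi_T = \id_{\calF_T}$ for any automorphism $\phi_T$ of the family.

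I do not anticipate any substantive obstacle: the algebraicity and finite type are inherited from the preceding Lemma, and the remaining content is the orthogonality axiom of the torsion pair combined with the base-change compatibility provided by Corollary~\ref{cor:open-torsion-pair}. The only slightly delicate point worth being careful with is to phrase the relative version correctly so that \cite[Tag~04SZ]{stacks-project} applies — i.e., to argue the triviality of the automorphism group sheaf of each $T$-valued point of $\fSP(Y;\beta,n)$, not merely of each $\C$-valued point — but the argument above does exactly this.
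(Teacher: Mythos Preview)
Your proposal is correct and follows exactly the strategy the paper uses: the preceding Lemma supplies the finite-type algebraic stack structure, the triviality of automorphisms is the orthogonality $\Hom_Y(\scrT_f,\scrF_f)=0$ applied to the factorization of $\phi-\id$ through $\mathsf{Coker}(s)$ (this is precisely the content of \cite[Lemma~23]{BS-Curve-counting-crepant}, which the paper cites without unpacking), and \cite[Tag~04SZ]{stacks-project} finishes. One small remark: in your family version, the relative vanishing $\Hom_{Y\times T}(\mathsf{Coker}(s_T),\calF_T)=0$ is not a consequence of openness of the torsion pair as you suggest---openness only tells you the fibrewise memberships are preserved, not that fibrewise $\Hom$-vanishing globalizes---but the conclusion is nonetheless correct, e.g.\ by reducing to Artinian $T$ and inducting on length using the pointwise case (the paper is equally terse here, simply asserting that pointwise triviality yields triviality of the inertia).
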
 

\begin{remark}
	In \cite[\S3]{Padurariu-stable-pairs}, Pădurariu constructed the classical moduli stack of $f$-stable pairs as an open substack of Lieblich's \textit{master moduli stack} of gluable perfect complexes on $Y$. He proved that it is a proper algebraic space over $\C$ and defined a perfect obstruction theory on it. In our approach, we first constructed a derived moduli stack of $f$-stable pairs as an open substack of the derived stack of perfect complexes on $Y$, and then recover the corresponding classical moduli stack by taking the truncation. While the derived structure of our moduli stacks does not play a role in proving results about their Euler characteristics, we believe our definitions could be of independent interest. In particular, this approach not only provides a canonical perfect obstruction theory on the classical stack but also establishes the proper framework to explore the \textit{categorification}\footnote{By ``categorification'', we mean the study of (bounded) derived categories of coherent sheaves on these derived moduli stacks. In recent years, it has become evident that categorification of enumerative invariants of moduli stacks and spaces involves derived categories of coherent sheaves on their corresponding \textit{derived} moduli stacks—see, e.g., \cite{PS-categorified, Toda-Quot, Toda-conifold, Toda-DT}.} of the results presented in this paper.
\end{remark}

\section{$C$-framed $f$-stable pairs}\label{sec:framed-stable-pairs} 

In this section we will work in the framework provided by Assumptions~\ref{assumption:f} and \ref{assumption:divisor}. Therefore we are given a contraction $f\colon Y \to X$ of a rigid $(0,-2)$ curve $\Sigma$ and a Weil divisor $W\subset X$ so that its inverse image $S=f^{-1}(W)$ is a smooth connected effective divisor on $Y$ intersecting $\Sigma$ transversely at a single point $p$. Given a reduced irreducible curve $C\subset S$ containing $p$, we introduce the notion of $C$-framed $f$-stable pairs on $Y$ (cf.\ Definition~\ref{def:C-framed-pair}) and study their moduli stacks. In particular we will prove that they are algebraic spaces locally of finite type over $\C$ (cf.\ Proposition~\ref{prop:C-framed-algebraic-space}). The main result of this section is (cf.\ Theorem~\ref{thm:C-framed-Hilbert-A} and Proposition~\ref{prop:C-framed-Hilbert-C}):
\begin{theorem}
	Let $\fSP_C(Y; m,\ell)$ denote the moduli stack of $C$-framed $f$-stable pairs $(\calF, s)$ on $Y$ with $\ch_2(\calF)=[C]+m[\Sigma]$ and $\chi(\calF) =\ell$. Let $\FHilb^k_{T_n}(C;m)$ denote the Flag Hilbert scheme parametrizing flags of ideal sheaves $\calI\subset \calI'\subset \scrO_C$ so that $\calI'/\calI$ is the pushforward of a length $m$ zero-dimensional sheaf on $T_n$, and $\chi(\scrO_C/\calI)=k$. Then, for any $m,k\in \N$, there exists an equivalence of algebraic spaces
	\begin{align}
		\begin{tikzcd}[ampersand replacement=\&]
			\fSP_C(Y; m, k +\chi(\scrO_C)) \arrow{r}{\sim} \& \FHilb^k_{T_n}(C;m)
		\end{tikzcd}\ .
	\end{align}
\end{theorem}
The main technical ingredient in the proof of the above result is Theorem~\ref{thm:Pi}, which provides a structural result for $C$-framed $f$-stable pairs in terms of Pandharipande-Thomas stable pairs on $C$ and certain coherent sheaves set-theoretically supported on $\Sigma$. This result is very specific to the contractions $f\colon Y \to X$ considered in this paper, relying heavily on the structure result for moduli stacks of semistable sheaves proven in Proposition~\ref{prop:stability-F}. We also note that no such results are currently known known for other types of curve contractions in threefolds with the exception of the case of $(-1,-1)$ curves studied in \cite{DHS-HOMFLY}. The main application of the above theorem, is to bridge the gap between $C$-framed stable pairs on $Y$ and the Hilbert scheme of $C$, generalizing the results obtained in \textit{loc. cit.}

\subsection{$C$-framed $f$-stable pairs}\label{subsec:C-framing}

In this section, we shall assume that $f\colon Y\to X$ is a threefold contraction satisfying only Assumption~\ref{assumption:f}. Assumption~\ref{assumption:divisor} will be imposed in \S\ref{subsec:C-framed-moduli-stack} to establish a relation between the moduli stack of $C$-framed $f$-stable pairs and Flag Hilbert schemes.

Before giving the definition of $C$-framed $f$-stable pairs on $Y$ and proving some structural results for these objects, we introduce some preliminary results on extensions of coherent sheaves on $Y$ in the next section.

\subsubsection{Some results on extensions of sheaves}\label{subsect:extension-results} 

In this section, let $S\subset Y$ be a smooth irreducible effective divisor $S\subset Y$ which intersects $\Sigma$ transversely exactly at one point. Let $\zeta \in H^0(Y, \scrO_Y(S))$ be a defining section. Let $\calF_0$ and $\calF_1$ be purely one-dimensional coherent sheaves on $Y$, so that $\calF_1$ is scheme-theoretically supported on $S$ and $(\mathsf{Supp}(\calF_0)\cap S)_{\mathsf{red}}$ is zero-dimensional. First note the following vanishing result.
\begin{lemma}\label{lem:tor} 
	Under the present assumptions, we get
	\begin{align}
		\Tor_k(i_{S,\, \ast}\scrO_S\otimes\scrO_Y(S), \calF_0)=0
	\end{align}
	for all $k\geq 1$. Moreover
	\begin{align}
		\chi(\calF_0\otimes i_{S,\, \ast}\scrO_S\otimes\scrO_Y(S)) = S\cdot \ch_2(\calF_0)\ .
	\end{align}
\end{lemma}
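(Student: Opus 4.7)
The plan is to compute both statements from a length-one locally free resolution of the sheaf $i_{S,\, \ast}\scrO_S\otimes\scrO_Y(S)$. Namely, starting from the structure sequence
\begin{align}
	0 \longrightarrow \scrO_Y(-S) \longrightarrow \scrO_Y \longrightarrow i_{S,\, \ast}\scrO_S \longrightarrow 0
\end{align}
and tensoring by the line bundle $\scrO_Y(S)$, I would get the resolution
\begin{align}
	0 \longrightarrow \scrO_Y \xrightarrow{\ \cdot\, \zeta\ } \scrO_Y(S) \longrightarrow i_{S,\, \ast}\scrO_S\otimes\scrO_Y(S) \longrightarrow 0\ .
\end{align}
Since this resolution has length one, $\Tor_k(i_{S,\, \ast}\scrO_S\otimes\scrO_Y(S), \calF_0)$ vanishes automatically for $k\geq 2$.

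For $k=1$, tensoring this resolution with $\calF_0$ identifies the only possibly nonzero Tor group with the kernel of the multiplication map $\zeta \cdot \id_{\calF_0}\colon \calF_0 \to \calF_0\otimes \scrO_Y(S)$. To show this kernel is zero, I would argue that since $\calF_0$ is purely one-dimensional and $(\mathsf{Supp}(\calF_0)\cap S)_{\mathsf{red}}$ is zero-dimensional, no irreducible component of $\mathsf{Supp}(\calF_0)$ is contained in $S$; equivalently, $\zeta$ does not vanish on any associated point of $\calF_0$. Purity then forces multiplication by $\zeta$ to be injective, giving the desired vanishing.

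For the Euler characteristic formula, I would use the same resolution to write
\begin{align}
	\chi(\calF_0\otimes i_{S,\, \ast}\scrO_S\otimes\scrO_Y(S)) = \chi(\calF_0\otimes \scrO_Y(S))-\chi(\calF_0)\ ,
\end{align}
and then compute this difference by Grothendieck-Riemann-Roch. Since $\calF_0$ is one-dimensional, $\ch(\calF_0)$ is concentrated in degrees $2$ and $3$; multiplying by $\ch(\scrO_Y(S))-1 = S+\tfrac{1}{2}S^2+\tfrac{1}{6}S^3$ and extracting the degree-$3$ component on the threefold $Y$, only the term $\ch_2(\calF_0)\cdot S$ survives (the $\ch_3(\calF_0)$ contributions land in degree $\geq 4$, and the Todd class contribution in lower degrees pairs with vanishing components of $\ch(\calF_0)$). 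This yields $\chi(\calF_0\otimes i_{S,\, \ast}\scrO_S\otimes\scrO_Y(S)) = S\cdot \ch_2(\calF_0)$.

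I do not foresee any serious obstacle here: the only subtle point is the purity/associated-point argument used to conclude injectivity of multiplication by $\zeta$, which is standard once one observes that the zero locus of $\zeta$ meets $\mathsf{Supp}(\calF_0)$ only in a zero-dimensional subscheme.
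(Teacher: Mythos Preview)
Your proposal is correct and follows essentially the same approach as the paper: the paper's proof also uses the exact sequence $0 \to \scrO_Y \to \scrO_Y(S) \to i_{S,\,\ast}\scrO_S \otimes \scrO_Y(S) \to 0$ together with the purity of $\calF_0$ and the zero-dimensionality of $\mathsf{Supp}(\calF_0)\cap S$ for the Tor-vanishing, and Grothendieck--Riemann--Roch for the Euler characteristic formula. Your write-up simply unpacks these steps in more detail than the paper's terse two-line argument.
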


\begin{proof}
	The first part follows immediately from the exact sequence 
	\begin{align}
		0\longrightarrow \scrO_Y \longrightarrow \scrO_Y(S) \longrightarrow i_{S,\, \ast}\scrO_S\otimes\scrO_Y(S) \longrightarrow 0\ ,
	\end{align}
	Since $\calF_0$ is a one-dimensional pure coherent sheaf and $(\mathsf{Supp}(\calF_0)\cap S)_{\mathsf{red}}$ is zero-dimensional. The second part follows from Grothendieck-Riemann-Roch theorem. 
\end{proof}

Now assume that
\begin{align}
	0\longrightarrow\calF_0 \longrightarrow \calF \longrightarrow \calF_1 \longrightarrow 0
\end{align}
is an extension of coherent sheaves on $Y$. Then multiplication by $\zeta$ yields the following commutative diagram 
\begin{align}\label{eq:extensions-diagram-A}
	\begin{tikzcd}[ampersand replacement=\&]
		0\ar[r] \& \calF_0 \ar[r] \ar{d}{\zeta_{\calF_0}} \& \calF \ar[r] \ar{d}{\zeta_\calF}\& \calF_1 \ar[r] \ar{d}{0}\& 0 \\
		0\ar[r] \& \calF_0\otimes \scrO_Y(S) \ar[r] \& \calF\otimes \scrO_Y(S) \ar[r] \& \calF_1 \otimes\scrO_Y(S) \ar[r] \& 0
	\end{tikzcd}\ .
\end{align}
Lemma~\ref{lem:tor} shows that $\zeta_{\calF_0}$ is injective, hence $\mathsf{Coker}(\zeta_{\calF_0}) =  \calF_0\otimes \scrO_S(S)$, where $\scrO_S(S)\coloneqq i_{S,\, \ast}\scrO_S\otimes \scrO_Y(S)$. Therefore the snake lemma yields a morphism 
\begin{align}\label{eq:delta-F}
	\delta_\calF\colon \calF_1 \longrightarrow \calF_0\otimes \scrO_S(S)\ . 
\end{align}
Conversely, given a morphism $\delta\colon \calF_1 \to \calF_0\otimes \scrO_S(S)$, let $\calE_\delta$ be defined by the exact sequence 
\begin{align}
	\begin{tikzcd}[ampersand replacement=\&]
		0\ar[r] \& \calE_\delta \ar[r] \& \calF_1 \oplus \calF_0\otimes \scrO_Y(S) \ar{r}{\genfrac{(}{)}{0pt}{}{\delta}{g}}  \& \calF_0\otimes \scrO_S(S)\ar[r]\& 0
	\end{tikzcd}\ ,
\end{align}
where $g\colon \calF_0\otimes \scrO_Y(S)\to \calF_0\otimes \scrO_S(S)$ is the canonical epimorphism. The next result follows from the functorial properties of the snake lemma. The detailed proof will be omitted.
\begin{lemma}\label{lem:extensions-A} 
	Under the above assumptions, the assignments $\calF \mapsto \delta_\calF$ and $\delta\mapsto \calE_\delta$ determine an isomorphism
	\begin{align}\label{eq:extensions-isomorphism}
		\varepsilon\colon \Ext^1_Y(\calF_1, \calF_0)\longrightarrow \Hom_S(\calF_1,\calF_0\otimes \scrO_S(S))\ .
	\end{align}
	Furthermore, one also has:
	\begin{enumerate}\itemsep0.2cm
		\item \label{item:extensions-A-1}  Given a subsheaf $\calF_1'\subset \calF_1$, the pull-back of an extension class 
		$e\in  \Ext^1_Y(\calF_1, \calF_0)$ to $\Ext^1_Y(\calF_1',\calF_0)$ is trivial if and only if $\calF_1'\subset \ker(\varepsilon(e))$. 
		
		\item  \label{item:extensions-A-2} Given a purely one-dimensional quotient $\calF_0 \to \calF_0'$, the pushforward of an extension class $e\in \Ext^1_Y(\calF_1, \calF_0)$ to $\Ext^1_Y(\calF_1,\calF_0')$ is trivial if and only if 
		the composition 
		\begin{align}
			\begin{tikzcd}[ampersand replacement=\&]
				\calF_1 \arrow{r}{\varepsilon(e)} \& \calF_0\otimes \scrO_S(S) \arrow{r} \&  \calF_0'\otimes \scrO_S(S)
			\end{tikzcd}
		\end{align}
		is identically zero. 
	\end{enumerate}
\end{lemma}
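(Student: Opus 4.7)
The plan is to verify that the two assignments $\calF\mapsto\delta_\calF$ and $\delta\mapsto\calE_\delta$ are functorial and mutually inverse, and then to deduce parts \eqref{item:extensions-A-1} and \eqref{item:extensions-A-2} from naturality of the snake lemma under pullback and pushforward of extensions.

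First I would reinterpret $\calE_\delta$ as the fibered product $\calF_1\times_{\calF_0\otimes\scrO_S(S)}(\calF_0\otimes\scrO_Y(S))$, with structure maps $\delta$ and $g$. Since $g$ is surjective with kernel equal to the image of the injection $\zeta_{\calF_0}\colon\calF_0\hookrightarrow\calF_0\otimes\scrO_Y(S)$ (injectivity being Lemma~\ref{lem:tor}), the projection $\calE_\delta\to\calF_1$ is surjective with kernel canonically identified with $\calF_0$. Hence $\calE_\delta$ fits into a short exact sequence $0\to\calF_0\to\calE_\delta\to\calF_1\to0$, so the construction produces a genuine extension.

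To prove the two assignments are mutually inverse, one computes $\delta_{\calE_\delta}$ via the snake lemma: a local lift of $x\in\calF_1$ to $\calE_\delta\subset\calF_1\oplus(\calF_0\otimes\scrO_Y(S))$ has the form $(x,y)$ with $\delta(x)+g(y)=0$; multiplying by $\zeta$ yields $(0,\zeta\,y)$, using that $\zeta_{\calF_1}=0$ because $\calF_1$ is scheme-theoretically supported on $S$, so the image in $\mathsf{Coker}(\zeta_{\calF_0})=\calF_0\otimes\scrO_S(S)$ is $g(y)=-\delta(x)$. Conversely, for a given extension $\calF$, the morphism $\zeta_\calF\colon\calF\to\calF\otimes\scrO_Y(S)$ factors, modulo its restriction to $\calF_0$, through $\calF_0\otimes\scrO_Y(S)$ (again by $\zeta_{\calF_1}=0$), and this factorization together with the projection $\calF\to\calF_1$ supplies, via the universal property of the pullback, a canonical morphism $\calF\to\calE_{\delta_\calF}$ compatible with the inclusions of $\calF_0$ and the projections to $\calF_1$; a three-lemma chase then shows it is an isomorphism of extensions.

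Part \eqref{item:extensions-A-1} then follows from functoriality of the snake lemma applied to the evident morphism between the short exact sequences $0\to\calF_0\to\calF\times_{\calF_1}\calF_1'\to\calF_1'\to0$ and $0\to\calF_0\to\calF\to\calF_1\to0$: this identifies the connecting morphism of the pulled-back extension with $\varepsilon(e)\vert_{\calF_1'}$, so by the already-established bijectivity of $\varepsilon$ the pullback is split precisely when this restriction vanishes. Part \eqref{item:extensions-A-2} is handled symmetrically, after first noting that the hypotheses on $\calF_0$ are inherited by any purely one-dimensional quotient $\calF_0'$ (the support condition passes to quotients), so the snake lemma argument produces the composition $\calF_1\xrightarrow{\varepsilon(e)}\calF_0\otimes\scrO_S(S)\to\calF_0'\otimes\scrO_S(S)$ as the connecting morphism of the pushforward extension. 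The only delicate point is tracking sign conventions in the snake lemma so that $\varepsilon$ and $\delta\mapsto\calE_\delta$ really are two-sided inverses; once this bookkeeping is fixed, the entire lemma reduces to routine diagram chases.
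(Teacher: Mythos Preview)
Your proposal is correct and follows exactly the approach the paper indicates: the paper omits the detailed proof, stating only that ``the next result follows from the functorial properties of the snake lemma,'' and your argument supplies precisely those details---the pullback description of $\calE_\delta$, the explicit snake-lemma computation showing the assignments are mutually inverse up to the sign you flag, and the naturality argument for parts~\eqref{item:extensions-A-1} and~\eqref{item:extensions-A-2}. There is nothing to add.
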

Set $\calF_1'\coloneqq \ker(\delta_\calF)$ and $\calQ\coloneqq \mathsf{Im}(\delta_\calF) \subset \calF_0\otimes \scrO_S(S)$, where $\delta_\calF$ is defined in Formula~\eqref{eq:delta-F}.
\begin{lemma}\label{lem:extensions-B} 
	Under the above assumptions, suppose $\calF_0$ is set-theoretically supported on $\Sigma$. Then, one has a canonical commutative diagram 
	\begin{align}\label{eq:extensions-diagram-B}
		\begin{tikzcd}[ampersand replacement=\&]
			\&\& 0\ar[d] \& 0\ar[d] \&\\
			\&\& \calF'_1 \ar[d] \ar{r}{\id} \&  \calF_1' \ar[d] \& \\
			0\ar[r] \& \calF_0\ar[r] \ar{d}{\id} \& \calF\ar[r] \ar[d]\ar[d] \& \calF_1\ar[r] \ar[d] \& 0 \\
			0\ar[r] \& \calF_0\ar[r] \& \calF_0'\ar[r]  \ar[d]\& \calQ \ar[r] \ar[d]\& 0 \\
			\& \& 0 \& 0\&
		\end{tikzcd}
	\end{align}
	with exact rows and columns, where $\calF_0'$ is a one dimensional pure coherent sheaf on $Y$, set-theoretically supported on $\Sigma$. 
\end{lemma}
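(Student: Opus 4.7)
The plan is to apply the snake lemma to the commutative diagram \eqref{eq:extensions-diagram-A}, which produces all the structural data needed for \eqref{eq:extensions-diagram-B}. By Lemma~\ref{lem:tor}, the left vertical arrow $\zeta_{\calF_0}$ is injective with cokernel $\calF_0\otimes \scrO_S(S)$, while the right vertical arrow on $\calF_1$ is zero with kernel $\calF_1$ and cokernel $\calF_1\otimes \scrO_Y(S)$. The snake sequence therefore reads
\begin{align}
0\longrightarrow \ker(\zeta_\calF)\longrightarrow \calF_1\xrightarrow{\delta_\calF} \calF_0\otimes \scrO_S(S)\longrightarrow \mathsf{Coker}(\zeta_\calF)\longrightarrow \calF_1\otimes \scrO_Y(S)\longrightarrow 0 \ ,
\end{align}
which identifies $\ker(\zeta_\calF)\subset \calF$ with $\calF_1'=\ker(\delta_\calF)$. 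In particular, this produces a canonical inclusion $\calF_1'\hookrightarrow \calF$ whose composition with the projection $\calF\twoheadrightarrow \calF_1$ is the inclusion $\calF_1'\subset \calF_1$, providing the lift needed for the top of the middle column.

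Next, I would define $\calF_0'\coloneqq \calF/\calF_1'$ using this inclusion. The middle column of \eqref{eq:extensions-diagram-B} is then tautological, and commutativity of the top right square reduces to the statement that $\calF_1'\hookrightarrow \calF\twoheadrightarrow \calF_1$ coincides with the original embedding, which holds by construction. For the bottom row, note that $\calF_0\cap \calF_1'=0$ inside $\calF$: indeed $\calF_0$ lies in the kernel of $\calF\twoheadrightarrow \calF_1$, while $\calF_1'$ maps injectively onto its image in $\calF_1$. Hence $\calF_0\to \calF_0'$ is injective with cokernel $\calF/(\calF_0+\calF_1')\simeq \calF_1/\calF_1'=\calQ$.

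Finally, I would check that $\calF_0'$ is a pure one-dimensional coherent sheaf set-theoretically supported on $\Sigma$. Since $\calF_1'=\ker(\zeta_\calF)$, the quotient $\calF_0'$ is canonically identified with $\mathsf{Im}(\zeta_\calF)\subset \calF\otimes \scrO_Y(S)$. Both $\calF_0$ and $\calF_1$ are pure one-dimensional by assumption, so $\calF$, and hence $\calF\otimes \scrO_Y(S)$, is pure one-dimensional; therefore so is its subsheaf $\calF_0'$ (unless zero, in which case the diagram is trivial). For set-theoretic support, I would argue stalk-wise at a point $y\notin \Sigma$: under the current hypothesis $\calF_{0,y}=0$, and since $\calQ\subset \calF_0\otimes \scrO_S(S)$ is zero-dimensional supported at the single intersection point $\Sigma\cap S=\{p\}\subset \Sigma$, one has $\calQ_y=0$; hence $\calF_y\simeq \calF_{1,y}=(\calF_1')_y$, and $(\calF_0')_y=\calF_y/(\calF_1')_y=0$.

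The only non-routine input is the snake lemma identification of $\calF_1'$ with $\ker(\zeta_\calF)$ as a subsheaf of $\calF$, which crucially relies on the Tor-vanishing of Lemma~\ref{lem:tor}; once this lift is available, everything else is diagram chasing together with the stalkwise support calculation, so I expect no further obstacle.
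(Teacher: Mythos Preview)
Your proposal is correct, and the construction of the diagram via the snake lemma together with the stalkwise support check matches the paper's proof essentially verbatim. The one genuine difference is your purity argument for $\calF_0'$: you observe that since $\calF_1'=\ker(\zeta_\calF)$, the quotient $\calF_0'=\calF/\calF_1'$ is canonically $\mathsf{Im}(\zeta_\calF)\subset \calF\otimes\scrO_Y(S)$, and hence pure because it sits inside a pure one-dimensional sheaf (an extension of the pure sheaves $\calF_0,\calF_1$ twisted by a line bundle). This is a clean one-line argument.

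The paper instead proves purity by contradiction: it lets $\calT\subset\calF_0'$ be the maximal zero-dimensional subsheaf, forms $\calF_0''\coloneqq\calF_0'/\calT$ and the corresponding enlarged kernel $\calF_1''\subset\calF$, shows $\calF_1''$ is set-theoretically supported on $S$ (hence $\Hom_Y(\calF_1'',\calF_0)=0$), so that $\calF_1''$ injects into $\calF_1$, and then invokes Lemma~\ref{lem:extensions-A}-\eqref{item:extensions-A-1} to conclude $\calF_1''\subset\ker(\delta_\calF)=\calF_1'$, forcing $\calT=0$. Your route is shorter and avoids calling back to Lemma~\ref{lem:extensions-A}; the paper's route, while longer, illustrates how the extension-class dictionary of that lemma controls subsheaves of $\calF$, which is the mechanism used repeatedly in the subsequent structural analysis of $C$-framed $f$-stable pairs.
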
 

\begin{proof}
	Using the snake lemma in diagram \eqref{eq:extensions-diagram-A}, one obtains an isomorphism $\ker(\zeta_\calF) \xrightarrow{\sim} \ker(\delta_\calE)$ since $\zeta_{\calF_0}$ is injective. This yields the following commutative diagram 
	\begin{align}
		\begin{tikzcd}[ampersand replacement=\&]
			\& \& \calF'_1 \ar[d] \ar{r}{\id} \&  \calF_1' \ar[d] \& \\
			0\ar[r] \& \calF_0\ar[r] \& \calF\ar[r]  \& \calF_1\ar[r]  \& 0
		\end{tikzcd}\ ,
	\end{align}
	where the vertical arrows are injective. Applying the snake lemma to the above diagram, one further obtains a diagram as in Equation \eqref{eq:extensions-diagram-B}, where $\calF_0'\coloneqq \calF/\calF_1'$. 
	
	Clearly, $\calF_0'$ is set-theoretically supported on $\Sigma$ since $\calF_0$ and $\calQ$ are supported on $\Sigma$. Furthermore, since $\calQ$ is set-theoretically supported on the intersection $S\cap \Sigma$, it follows that the restriction of $\calF_0'$ to $Y\smallsetminus S$ is purely one dimensional. Hence the maximal zero-dimensional subsheaf $\calT\subset \calF_0'$ is set-theoretically supported on $S\cap \Sigma$. 
	
	Assume that $\calT\neq 0$ and set $\calF_0''\coloneqq \calF_0'/\calT$. Then one obtains a second commutative diagram 
	\begin{align}
		\begin{tikzcd}[ampersand replacement=\&]
			0\ar[r] \& \calF_1'\ar[r] \ar[d]\& \calF\ar[r]  \ar{d}{\id} \ar[d]\& \calF_0'\ar[r] \ar[d] \& 0 \\
			0\ar[r] \& \calF_1''\ar[r] \& \calF\ar[r] \& \calF_0'' \ar[r] \& 0 
		\end{tikzcd}\ ,
	\end{align}
	with exact rows, where the right vertical arrow is the canonical surjection. Then the snake lemma shows that the left vertical arrow is injective, and it also yields an isomorphism $\calT \xrightarrow{\sim} \calF_1''/\calF_1'$. Since $\mathsf{Supp}(\calT) \subset S\cap \Sigma$ and $\calF_1'$ is scheme-theoretically supported on $S$, it follows that $\calF_1''$ is set-theoretically supported on $S$. Since $\calF_0$ is purely one-dimensional, set-theoretically supported on $\Sigma$, we have that $\Hom_Y(\calF_1'', \calF_0) =0$. Therefore the composition 
	\begin{align}
		\calF_1''\longrightarrow \calF\longrightarrow \calF_1
	\end{align}
	is injective, leading to the commutative diagram 
	\begin{align}
		\begin{tikzcd}[ampersand replacement=\&]
			\& \& \calF''_1 \ar[d] \ar{r}{\id} \& \calF_1''\ar[d] \& \\
			0\ar[r] \& \calF_0\ar[r] \& \calF\ar[r] \& \calF_1\ar[r]  \& 0
		\end{tikzcd}\ .
	\end{align}
	Then Lemma~\ref{lem:extensions-A} implies that $\delta_\calF(\calF_1'') =0$, while by construction 
	\begin{align}
		\delta_\calF(\calF_1'')\simeq \calF_1''/\calF'_1\simeq \calT\ ,
	\end{align}
	since $\calF_1'=\ker(\delta_\calF)$. This leads to a contradiction. 
\end{proof}

\subsubsection{Definition of $C$-framed $f$-stable pairs}

Let $C\subset Y$ be a reduced, irreducible, purely one-dimensional, closed subscheme so that $(C\cap \Sigma)_{\mathsf{red}}$ is zero-dimensional.

Let $\calF$ be a one-dimensional pure coherent sheaf on $Y$ and let 
\begin{align}
	0\longrightarrow \calF_\Sigma \longrightarrow \calF\longrightarrow\calF_{Y/X}\longrightarrow 0
\end{align}
be the canonical support sequence constructed in Lemma~\ref{lem:support-sequence}. Assume that $\ch_2(\calF)=m[\Sigma] + [C]$. Then, by Corollary~\ref{cor:split-chern-class}, one has $\ch_2(\calF_\Sigma)=m [\Sigma]$ and $\ch_2(\calF_{Y/X}) =[C]$. 
\begin{definition}\label{def:C-framed-pair} 
	We say that $\calF$ is \textit{$C$-framed} if $\calF_{Y/X}$ is scheme-theoretically supported on $C$. 
	
	Moreover, a pair $(\calF,s\colon \scrO_Y\to \calF)$, with $s$ an arbitrary section, is called \textit{$C$-framed} if $\calF$ is $C$-framed. 
\end{definition} 

\subsubsection{Structural result for $C$-framed $f$-stable pairs}\label{subsec:structural-results} 

\begin{theorem}\label{thm:Pi}
	Let $(\calF, s\colon \scrO_Y \to \calF)$ be a $C$-framed $f$-stable pair with $\ch_2(\calF)= [C] + m [\Sigma]$, with $m\geq 0$. Then, there exists a unique commutative diagram
	\begin{align}\label{eq:Pi-diagram-A}
		\begin{tikzcd}[ampersand replacement=\&]
			\& \& \scrO_Y \ar{d}{s} \ar{r}{\id} \&  \scrO_Y \ar{d}{s_1} \& \\
			0\ar[r] \& \calF_0\ar[r]\& \calF\ar[r]  \& \calF_1\ar[r] \& 0
		\end{tikzcd}
	\end{align}
	such that $(\calF_1, s_1\colon \scrO_Y\to \calF_1)$ is a Pandharipande-Thomas stable pair with $\calF_1$ scheme-theoretically supported on $C$, $\calF_0$ is $S$-equivalent to  $\C^m\otimes i_{\Sigma,\,\ast}\scrO_\Sigma(-2)$, the associated morphism $\delta_\calF\colon \calF_1 \to \calF_0\otimes \scrO_S(S)$, via Lemma~\ref{lem:extensions-A}, is surjective, and 
	\begin{align}
		\delta_\calF \circ s_1 =0\ .
	\end{align}
	
	Conversely, let $(\calF_1, s_1\colon \scrO_Y\to \calF_1)$ be a Pandharipande-Thomas stable pair with $\calF_1$ scheme-theoretically supported on $C$ and suppose $\calF_0$ is 
	$S$-equivalent to $\C^m\otimes i_{\Sigma,\,\ast}\scrO_\Sigma(-2)$, where $m\in \Z$, with $m\geq 0$. Let
	\begin{align}\label{eq:Pi-stable-F}
		\begin{tikzcd}[ampersand replacement=\&]
			0 \arrow{r} \& \calF_0 \arrow{r}{f_0} \& \calF  \arrow{r}{f_1} \& \calF_1 \arrow{r} \& 0
		\end{tikzcd}
	\end{align}
	a short exact sequence such that the associated morphism $\delta_\calF\colon \calF_1 \to \calF_0\otimes \scrO_S(S)$, via Lemma~\ref{lem:extensions-A}, is surjective, and 
	\begin{align}\label{eq:zero-composition} 
		\delta_\calF \circ s_1 =0\ .
	\end{align}
	Then there exists a unique section $s\colon \scrO_Y \to \calF$ so that $f_1 \circ s= s_1$ and $(\calF,s)$ is a $C$-framed $f$-stable pair. 
\end{theorem}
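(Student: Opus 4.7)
The plan is to construct the decomposition in the forward direction using the support sequence of Lemma~\ref{lem:support-sequence} (possibly after refinement), and in the converse direction to reconstruct $(\calF, s)$ from the data by invoking Lemmas~\ref{lem:extensions-A} and~\ref{lem:extensions-B}. Both directions rely on the slope-theoretic characterization of $\scrT_f, \scrF_f$ from Proposition~\ref{prop:torsion-vs-torsion-free} and the orthogonality criterion of Proposition~\ref{prop:torsion-free-criterion}.

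\textbf{Forward direction.} I would start from the support decomposition $0 \to \calF_\Sigma \to \calF \to \calF_{Y/X} \to 0$ of Lemma~\ref{lem:support-sequence}, so that Corollary~\ref{cor:split-chern-class} combined with the $C$-framing hypothesis gives $\ch_2(\calF_\Sigma) = m[\Sigma]$ and $\calF_{Y/X}$ scheme-theoretically supported on $C$. Setting $\calF_0 := \calF_\Sigma$, $\calF_1 := \calF_{Y/X}$, and letting $s_1\colon \scrO_Y \to \calF_1$ be the induced section, a snake-lemma chase identifies $\mathsf{Coker}(s_1)$ as a quotient of $\mathsf{Coker}(s) \in \scrT_f$. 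Since $\mathsf{Coker}(s_1)$ is scheme-theoretically supported on $C \not\subset \Sigma$, Proposition~\ref{prop:torsion-vs-torsion-free}(i) forces it to be zero-dimensional, so $(\calF_1, s_1)$ is a Pandharipande-Thomas stable pair on $C$. The compatibility $\delta_\calF \circ s_1 = 0$ would follow from Lemma~\ref{lem:extensions-A}(i), since $s_1$ factors through $s\colon \scrO_Y \to \calF$, trivializing the pullback of the extension class.

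\textbf{Surjectivity, $S$-equivalence, and main obstacle.} The claims of surjectivity of $\delta_\calF$ and of $S$-equivalence of $\calF_0$ with $\C^m \otimes i_{\Sigma,\,\ast}\scrO_\Sigma(-2)$ are coupled, and form the main technical heart of the proof. For surjectivity: the degenerate case $\delta_\calF = 0$ (a split extension) is ruled out by combining the $f$-stability of $\calF$ with the vanishing $H^0(Y, \calF_0) = 0$, which holds because $\mu_{Y\textrm{-}\mathsf{max}}(\calF_0) < 0$ by Proposition~\ref{prop:torsion-vs-torsion-free}(ii) and thus all Jordan-Hölder factors of $\calF_0$ are negatively twisted; a non-degenerate failure of surjectivity is excluded by a careful application of Lemma~\ref{lem:extensions-B}, which would otherwise produce a strictly larger $\Sigma$-supported piece of $\calF$, contradicting the maximality of $\calF_\Sigma$ as the $\Sigma$-part of the support decomposition. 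For the $S$-equivalence claim: the refined Harder-Narasimhan filtration of Corollary~\ref{cor:refined-HN-filtration} yields subquotients $i_{\Sigma,\,\ast}\calL_i$ with $\chi(\calL_1) \geq \cdots \geq \chi(\calL_m)$, and $f$-stability gives the upper bound $\chi(\calL_i) \leq -1$; the matching lower bound requires balancing maximality of $\calF_0$ against the $(0,-2)$-normal bundle structure of $\Sigma$ in $Y$, again enforced through Lemma~\ref{lem:extensions-B} applied to exclude deeper twists.

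\textbf{Converse direction.} Given the data, Lemma~\ref{lem:extensions-A}(i) combined with $\delta_\calF \circ s_1 = 0$ says that the pullback of $[\calF] \in \Ext^1_Y(\calF_1, \calF_0)$ along $s_1$ vanishes, producing a lift $s\colon \scrO_Y \to \calF$; uniqueness of $s$ follows from $\Hom_Y(\scrO_Y, \calF_0) = H^0(Y, \calF_0) = 0$, since all Jordan-Hölder factors of $\calF_0$ are $i_{\Sigma,\,\ast}\scrO_\Sigma(-2)$ with no global sections. Purity of $\calF$ is the content of Lemma~\ref{lem:extensions-B}; the $C$-framing is read off from Lemma~\ref{lem:support-sequence}. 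Membership $\calF \in \scrF_f$ would be verified via Proposition~\ref{prop:torsion-free-criterion}: the vanishing $\Hom_Y(i_{\Sigma,\,\ast}\scrO_\Sigma(-1), \calF) = 0$ reduces through the extension to the analogous vanishings for $\calF_0$ (whose Jordan-Hölder factors are $\scrO_\Sigma(-2)$) and for $\calF_1$ (scheme-theoretically supported on $C \not\subset \Sigma$). Finally, $\mathsf{Coker}(s) \in \scrT_f$ would follow by identifying its non-$\Sigma$ contribution with the zero-dimensional $\mathsf{Coker}(s_1)$ and its $\Sigma$ contribution with a quotient of $\calF_0/(\calF_0 \cap \mathsf{Im}(s))$, which by the surjectivity of $\delta_\calF$ together with $\delta_\calF \circ s_1 = 0$ is identified with a quotient of the zero-dimensional sheaf $\calF_0 \otimes \scrO_S(S)$.
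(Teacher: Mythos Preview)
Your overall architecture matches the paper's (Lemmas~\ref{lem:Pi-A} and~\ref{lem:Pi-C} for the forward direction, then the direct argument for the converse), but two of your proposed mechanisms do not work.

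\textbf{Forward direction: surjectivity of $\delta_\calF$ and the $S$-equivalence class of $\calF_0$.} Your idea that a failure of surjectivity would ``produce a strictly larger $\Sigma$-supported piece of $\calF$, contradicting maximality of $\calF_\Sigma$'' does not go through, because Lemma~\ref{lem:extensions-B} produces $\calF_0' \coloneqq \calF/\ker(\delta_\calF)$ as a \emph{quotient} of $\calF$, not a subsheaf, so there is nothing to contradict. The missing ingredient is the hypothesis $\mathsf{Coker}(s) \in \scrT_f$, which you do not use at all in this step. The paper observes that the canonical surjection $\calF \to \calF_0'$ factors (after killing torsion) through the pure quotient $\calG'$ of $\mathsf{Coker}(s)$; since $\mu_{Y\textrm{-}\mathsf{min}}(\calG') \geq 0$ by Proposition~\ref{prop:torsion-vs-torsion-free}\eqref{item:torsion-vs-torsion-free-1}, one gets $\chi(\calF_0') \geq 0$. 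Combining with your upper bound $\chi(\calF_0) \leq -m$ and the relation $\chi(\calF_0') = \chi(\calF_0) + \chi(\mathsf{Im}(\delta_\calF)) \leq \chi(\calF_0) + m$, the squeeze forces simultaneously that $\calF_0$ is $S$-equivalent to $\C^m \otimes i_{\Sigma,\, \ast}\scrO_\Sigma(-2)$ and that $\delta_\calF$ is surjective.

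\textbf{Converse direction: $\mathsf{Coker}(s) \in \scrT_f$.} Your claim that the $\Sigma$-contribution is ``a quotient of the zero-dimensional sheaf $\calF_0 \otimes \scrO_S(S)$'' is a dimension error: the snake lemma gives $0 \to \calF_0 \to \mathsf{Coker}(s) \to \mathsf{Coker}(s_1) \to 0$, so the pure part has $\ch_2 = m[\Sigma]$ and is genuinely one-dimensional. The actual argument is by contradiction and hinges on Lemma~\ref{lem:extensions-A}\eqref{item:extensions-A-2}, which you never invoke. If the minimal Harder-Narasimhan quotient $\calG'$ of $\mathsf{Coker}(s)/\calT$ had slope $< 0$, semistability comparisons force a surjection $\calF_0 \twoheadrightarrow \calG'$; then the pushforward of the extension along $\calF_0 \to \calG'$ is split (since the map $\calF \to \mathsf{Coker}(s) \to \calG'$ restricts to the given surjection on $\calF_0$), and Lemma~\ref{lem:extensions-A}\eqref{item:extensions-A-2} translates this into the vanishing of the composite $\calF_1 \xrightarrow{\delta_\calF} \calF_0 \otimes \scrO_S(S) \to \calG' \otimes \scrO_S(S)$, which is however surjective onto a nonzero target --- contradiction.

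A smaller gap: deducing $\delta_\calF \circ s_1 = 0$ from the mere existence of the lift $s$ only shows the extension class vanishes in $\Ext^1_Y(\scrO_Y, \calF_0)$, not in $\Ext^1_Y(\mathsf{Im}(s_1), \calF_0)$ as Lemma~\ref{lem:extensions-A}\eqref{item:extensions-A-1} requires. One closes this by showing $\Hom_Y(\calI_C, \calF_0) = 0$ via the exact sequence $0 \to \scrO_Y(-S) \to \calI_C \to i_{S,\, \ast}\scrO_S(-C) \to 0$ together with Corollary~\ref{cor:torsion-vs-torsion-free}.
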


We start by proving the ``only if'' direction. 
\begin{lemma}\label{lem:Pi-A}
	There exists a unique commutative diagram of the form \eqref{eq:Pi-diagram-A} such that 
	\begin{enumerate} \itemsep0.2cm
		\item $(\calF_1, s_1)$ is a Pandharipande-Thomas stable pair on $Y$; and 
		\item one has a second commutative diagram 
		\begin{align}\label{eq:Pi-diagram-C}
			\begin{tikzcd}[ampersand replacement=\&]
				\& \& \mathsf{Im}(s) \ar[d] \ar{r}{\sim} \& \mathsf{Im}(s_1)\ar[d] \& \\
				0\ar[r] \& \calF_0\ar[r]\& \calF\ar[r]  \& \calF_1\ar[r] \& 0
			\end{tikzcd}
		\end{align}
		where the top horizontal arrow is an isomorphism. In particular $\delta_\calF\circ s_1=0$. 
	\end{enumerate} 
\end{lemma}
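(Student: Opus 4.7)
My plan is to construct the diagram canonically via the support decomposition and to verify the required Pandharipande--Thomas-pair and top-isomorphism conditions. I would first recall that $\calF$ is pure one-dimensional (by Corollary~\ref{cor:f-stable-pair}) and apply Lemma~\ref{lem:support-sequence} to obtain the canonical short exact sequence $0 \to \calF_\Sigma \to \calF \xrightarrow{\pi} \calF_{Y/X} \to 0$, where $\calF_{Y/X}$ is scheme-theoretically supported on $C$ thanks to the $C$-framed hypothesis. The natural candidates for the diagram are $\calF_0 := \calF_\Sigma$, $\calF_1 := \calF_{Y/X}$, and $s_1 := \pi \circ s$.

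Next I would verify that $(\calF_1, s_1)$ is a Pandharipande--Thomas stable pair: $\calF_1$ is pure one-dimensional by Lemma~\ref{lem:support-sequence}, while the cokernel $\mathsf{Coker}(s_1)$ is a quotient both of $\mathsf{Coker}(s) \in \scrT_f$ (hence itself in $\scrT_f$, since $\scrT_f$ is closed under quotients) and of $\calF_{Y/X}$ (hence scheme-theoretically supported on $C$). By Proposition~\ref{prop:torsion-vs-torsion-free}, the pure one-dimensional part of any object in $\scrT_f$ is set-theoretically on $\Sigma$; combined with scheme-theoretic support on $C$, this part must lie on $C \cap \Sigma = \{p\}$, which is zero-dimensional, forcing it to vanish by purity. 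Hence $\mathsf{Coker}(s_1)$ is zero-dimensional.

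The delicate step is to verify the isomorphism $\mathsf{Im}(s) \to \mathsf{Im}(s_1)$, equivalently $K := \mathsf{Im}(s) \cap \calF_\Sigma = 0$. As a subsheaf of $\calF_\Sigma \in \scrF_f$, the kernel $K$ is itself in $\scrF_f$ and pure one-dimensional set-theoretically on $\Sigma$; by Proposition~\ref{prop:torsion-vs-torsion-free} this yields $\mu_{Y\textrm{-}\mathsf{max}}(K) < 0$, and Corollary~\ref{cor:refined-HN-filtration} shows that all refined HN quotients of $K$ are of the form $i_{\Sigma,\,\ast}\calL_i$ with $\chi(\calL_i) \leq -1$, giving $H^0(K) = 0$ by cohomology vanishing on $\PP^1$. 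The plan is then to deduce $K = 0$ by combining this with the cyclic structure $\mathsf{Im}(s) = \scrO_Y \cdot s(1)$ and the slope constraint on the pure one-dimensional part of $\mathsf{Coker}(s)$ (which must have $\mu_{Y\textrm{-}\mathsf{min}} \geq 0$): a nontrivial $K$ would force $\calF_\Sigma/K$ to contribute a pure one-dimensional quotient to $\mathsf{Coker}(s)$ of slope $\geq 0$, contradicting the fact that $\calF_\Sigma/K$ is a quotient of the $\scrF_f$-sheaf $\calF_\Sigma$, whose refined HN factors all have slope $\leq -1$.

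For uniqueness, suppose $(\calF_0', \calF_1', s_1')$ is another decomposition satisfying the conditions. Purity of $\calF_1' = \calF/\calF_0'$ forces $\calF_0'$ to be saturated in $\calF$, and the iso condition forces $\calF_0' \cap \mathsf{Im}(s) = 0$. From the additivity of $\ch_2$ combined with $\ch_2(\mathsf{Im}(s_1')) = \ch_2(\calF_1')$ (up to zero-dimensional corrections from the PT-pair condition), one obtains the numerical constraint $\ch_2(\calF_0') = m[\Sigma]$, which together with the maximality of $\calF_\Sigma$ among pure one-dimensional subsheaves of $\calF$ set-theoretically on $\Sigma$ forces $\calF_0' = \calF_\Sigma$. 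The main obstacle I anticipate is the rigorous proof that $K = 0$: the $H^0$-vanishing alone is not enough, and completing the argument will require a careful compatibility analysis between the HN filtrations of $K$, $\calF_\Sigma/K$, and the pure one-dimensional part of $\mathsf{Coker}(s)$, using the specific structure of thickened $(0,-2)$-curves from Theorem~\ref{thm:filtration}.
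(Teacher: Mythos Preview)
Your construction of the diagram via the support sequence of Lemma~\ref{lem:support-sequence} is correct and matches the paper's. Your verification that $(\calF_1,s_1)$ is a Pandharipande--Thomas pair is different from the paper's (which argues that $H^0(Y,\calF_0)=0$ forces $s_1\neq 0$, hence generically surjective) but it is valid.

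The genuine gap is in your argument for $K\coloneqq\mathsf{Im}(s)\cap\calF_\Sigma=0$. Your proposed contradiction fails on both sides. First, $\calF_\Sigma/K$ is a \emph{subsheaf} of $\mathsf{Coker}(s)$, not a quotient, so the condition $\mu_{Y\textrm{-}\mathsf{min}}\geq 0$ on the pure part of $\mathsf{Coker}(s)$ (from Proposition~\ref{prop:torsion-vs-torsion-free}) does not transfer to $\calF_\Sigma/K$. Second, the fact that $\calF_\Sigma\in\scrF_f$ has refined HN factors of slope $\leq -1$ says nothing about the slopes of the HN factors of the \emph{quotient} $\calF_\Sigma/K$: torsion-free classes are closed under subobjects, not quotients, so $\calF_\Sigma/K$ need not lie in $\scrF_f$ and its maximal slope can be arbitrary. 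The appeal to Theorem~\ref{thm:filtration} you mention does not rescue this, since that result constrains the scheme-theoretic support of semistable sheaves but gives no slope inequality of the type you need.

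The paper's argument is entirely different and exploits the geometry of $C\subset S\subset Y$, which you do not use. Having established that $(\calF_1,s_1)$ is a PT pair, one has $\ker(s_1)\simeq\calI_C$. The snake lemma reduces the isomorphism $\mathsf{Im}(s)\xrightarrow{\sim}\mathsf{Im}(s_1)$ to the vanishing of the connecting map $\delta\colon\calI_C\to\calF_0$. Since $C$ is a divisor in the smooth surface $S$, there is an exact sequence $0\to\scrO_Y(-S)\to\calI_C\to i_{S,\,\ast}\scrO_S(-C)\to 0$. Then $\Hom_Y(\scrO_Y(-S),\calF_0)=0$ by Corollary~\ref{cor:torsion-vs-torsion-free}, and $\Hom_Y(i_{S,\,\ast}\scrO_S(-C),\calF_0)=0$ because $\calF_0$ is pure one-dimensional with support meeting $S$ only in $\{p\}$. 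Hence $\Hom_Y(\calI_C,\calF_0)=0$, so $\delta=0$. This is the missing idea: replace the slope analysis of $K$ by a direct $\Hom$-vanishing for $\calI_C$ coming from the divisorial structure of $C$ in $S$.

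Finally, uniqueness is immediate from the uniqueness of the support sequence in Lemma~\ref{lem:support-sequence}; your separate uniqueness argument is unnecessary.
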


\begin{proof}
	First note that $s\neq 0$ since $\mathsf{Coker}(s)$ must belong to $\scrT_f$. 
	
	Since $\calF$ is $C$-framed and an object in $\scrF_f$, by Proposition~\ref{prop:torsion-vs-torsion-free}--\eqref{item:torsion-vs-torsion-free-2}, $\calF$ is a pure coherent sheaf, which fits in a unique exact sequence 
	\begin{align}
		0\longrightarrow \calF_\Sigma \longrightarrow \calF \longrightarrow \calF_{Y/X} \longrightarrow 0\ ,
	\end{align}
	where $\calF_\Sigma$ is either a one-dimensional pure coherent sheaf on $Y$ set-theoretically supported on $\Sigma$, with $\mu_{Y\textrm{-}\mathsf{max}}(\calF_\Sigma) < 0$ or zero, and $\calF_{Y/X}$ is a one-dimensional pure coherent sheaf on $Y$ scheme-theoretically supported on $C$ (in particular, $(\mathsf{Supp}(\calF_{Y/X})\cap \Sigma)_{\mathsf{red}}$ is zero-dimensional). Set $\calF_0\coloneqq\calF_\Sigma$ and $\calF_1\coloneqq\calF_{Y/X}$.
	
	Since the fundamental cycle of $\calF$ is $C+m\Sigma$, Corollary~\ref{cor:split-support} shows that the fundamental cycle of $\calF_1$ is $C$. Since $C$ is reduced irreducible and $\calF_1$ is purely one dimensional, this implies that $\calF_1$ is the pushforward to $Y$ of a rank one torsion-free sheaf on $C$. This further shows that the composition $s_1 \coloneqq f_1 \circ s$ is either zero or generically surjective. Moreover, Corollary~\ref{cor:torsion-vs-torsion-free} shows that $H^0(Y, \calF_0)=0$. Since $s\neq 0$, this rules out the case $s_1=0$, hence $s_1$ is generically surjective. This means that $(\calF_1,s_1)$ is a Pandharipande-Thomas stable pair on $Y$, which further implies that $\mathsf{Im}(s_1) \simeq \scrO_C$ (cf.\ \cite[Lemma~1.6]{PT-Curve-counting}. 
	
	Assume that $\calF_0\neq 0$. Applying the snake lemma to diagram~\eqref{eq:Pi-diagram-A}, one obtains the exact sequence 
	\begin{align}\label{eq:coboundary}
		\begin{tikzcd}[ampersand replacement=\&]
			0\arrow{r} \& \ker(s) \arrow{r} \& \ker(s_1)\arrow{r}{\delta} \& \calF_0 \arrow{r} \&\cdots
		\end{tikzcd}
	\end{align}
	where $\ker(s_1) \simeq \calI_C$, where $\calI_C$ is the ideal sheaf of $C$ (cf.\ \cite[Formula~(1.7)]{PT-Curve-counting}). Since $C$ is a divisor in the smooth surface $S$, one has an exact sequence 
	\begin{align}
		0\longrightarrow \scrO_Y(-S) \longrightarrow \calI_C\longrightarrow i_{S,\, \ast} \scrO_S(-C)\longrightarrow 0 \ .
	\end{align}
	Since $S$ intersects $\Sigma$ transversely exactly once, Corollary~\ref{cor:torsion-vs-torsion-free} shows that $\Hom_Y(\scrO_Y(-S), \calF_0) =0$. Furthermore, since $\calF_0$ is purely one dimensional, set-theoretically supported on $\Sigma$, one also has $\Hom_Y( \scrO_S(-C),\calF_0)=0$. Then the above exact sequence implies that $\Hom_Y(\calI_C, \calF_0)=0$ as well. In particular, $\delta$ in \eqref{eq:coboundary} vanishes. This implies that the induced map $\mathsf{Im}(s) \to \mathsf{Im}(s_1)$ is an isomorphism, hence $\mathsf{Im}(s) \cap \calF_0 =0$ as subsheaves of $\calF$. This yields the diagram \eqref{eq:Pi-diagram-C}. 
	
	Now, the epimorphism $\calF\to \calF_1$ restricts to an isomorphism $\mathsf{Im}(s) \to \mathsf{Im}(s_1)$. Since $\mathsf{Im}(s)$ is a subsheaf of $\calF$, this implies that the restriction of the extension 
	\begin{align}
		0\longrightarrow \calF_0 \longrightarrow \calF \longrightarrow \calF_1 \longrightarrow 0 
	\end{align}
	to $\mathsf{Im}(s_1) \subset \calF_1$ admits a splitting. Hence the associated extension class is trivial, and Lemma~\ref{lem:extensions-A} implies that $\mathsf{Im}(s_1) \subset \ker(\delta_\calF)$. Thus, the relation $\delta_\calF\circ s_1=0$.
\end{proof}

Let $\calF_1'\coloneqq\ker(\delta_F)$. By Lemma~\ref{lem:extensions-B}, the quotient $\calF_0'\coloneqq \calF/\calF_1'$ is a pure one-dimensional coherent sheaf on $Y$ set-theoretically supported on $\Sigma$. 
\begin{lemma}\label{lem:Pi-C}
	The following hold:
	\begin{enumerate}\itemsep0.2cm
		\item \label{item:Pi-C-1} $\calF_0$ and $\calF_0'$ are slope-semistable, $S$-equivalent to 
		\begin{align}
			\calF_0 \simeq \C^m\otimes i_{\Sigma,\, \ast }\scrO_\Sigma(-2) \quad\text{and}\quad \calF'_0 \simeq \C^m \otimes i_{\Sigma,\, \ast}\scrO_\Sigma(-1)\ .
		\end{align}
		
		\item \label{item:Pi-C-2} The morphism $\delta_\calF\colon \calF_1\to \calF_0\otimes \scrO_S(S)$ is surjective. 
		
		\item \label{item:Pi-C-3} The section $s\colon\scrO_Y\to \calF$ factors through a section $s_1'\colon\scrO_Y \to \calF_1'$.
	\end{enumerate}
\end{lemma}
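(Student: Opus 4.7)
The plan is to establish \eqref{item:Pi-C-3} first, and then deduce \eqref{item:Pi-C-1} and \eqref{item:Pi-C-2} simultaneously via an Euler characteristic comparison. I may assume $m \geq 1$, since for $m = 0$ all three claims are vacuous. From Lemma~\ref{lem:Pi-A} we already know $\delta_\calF \circ s_1 = 0$. Tracing through the commutative diagram of Lemma~\ref{lem:extensions-B}, the composition $\calF \to \calF_0' \to \calQ$ agrees with $\calF \to \calF_1 \to \calQ$, where the last arrow is $\delta_\calF$ corestricted to its image. Hence the composition $\scrO_Y \xrightarrow{s} \calF \to \calF_0' \to \calQ$ equals $\delta_\calF \circ s_1 = 0$, so the induced morphism $\scrO_Y \to \calF_0'$ factors through $\calF_0 = \ker(\calF_0' \to \calQ)$. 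Since $\calF_0$ is a subsheaf of $\calF \in \scrF_f$ it also belongs to $\scrF_f$, and as $\ch_2(\calF_0) = m[\Sigma] \neq 0$, Corollary~\ref{cor:torsion-vs-torsion-free} yields $\Hom_Y(\scrO_Y, \calF_0) = 0$. Consequently $s$ factors as $\scrO_Y \xrightarrow{s_1'} \calF_1' \hookrightarrow \calF$, which proves \eqref{item:Pi-C-3}.

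As a first consequence, $\calF_0'$ belongs to $\scrT_f$: by \eqref{item:Pi-C-3} the quotient $\mathsf{Coker}(s)$ surjects onto $\calF / \calF_1' = \calF_0'$, and $\scrT_f$ is closed under quotients. By Proposition~\ref{prop:torsion-vs-torsion-free}\eqref{item:torsion-vs-torsion-free-1} this yields $\mu_{Y\textrm{-}\mathsf{min}}(\calF_0') \geq 0$, while Proposition~\ref{prop:torsion-vs-torsion-free}\eqref{item:torsion-vs-torsion-free-2} applied to $\calF \in \scrF_f$ gives $\mu_{Y\textrm{-}\mathsf{max}}(\calF_0) < 0$. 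Both sheaves are purely one-dimensional with second Chern class $m[\Sigma]$ and, by Corollary~\ref{cor:refined-HN-filtration}, admit filtrations with $m$ subquotients of the form $i_{\Sigma,\, \ast}\scrO_\Sigma(d)$ of Euler characteristic $d+1$. The slope bounds above translate into $d \leq -2$ for every piece of $\calF_0$ and $d \geq -1$ for every piece of $\calF_0'$, so
\begin{align}
	\chi(\calF_0) \leq -m \quad \text{and} \quad \chi(\calF_0') \geq 0,
\end{align}
with equality if and only if the corresponding sheaf is $S$-equivalent to $\C^m \otimes i_{\Sigma,\, \ast}\scrO_\Sigma(-2)$, respectively $\C^m \otimes i_{\Sigma,\, \ast}\scrO_\Sigma(-1)$.

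To close the argument, I observe that $\calQ = \mathsf{Im}(\delta_\calF)$ sits inside the zero-dimensional sheaf $\calF_0 \otimes \scrO_S(S)$, which by Lemma~\ref{lem:tor} has length $S \cdot \ch_2(\calF_0) = m$. Hence $\chi(\calQ) \leq m$, and the short exact sequence $0 \to \calF_0 \to \calF_0' \to \calQ \to 0$ from Lemma~\ref{lem:extensions-B} gives $\chi(\calF_0') = \chi(\calF_0) + \chi(\calQ) \leq 0$. Combined with $\chi(\calF_0') \geq 0$, equalities must hold throughout: $\chi(\calF_0) = -m$, $\chi(\calF_0') = 0$, and $\chi(\calQ) = m$. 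The first two identify the $S$-equivalence classes required in \eqref{item:Pi-C-1}, while $\chi(\calQ) = \chi(\calF_0 \otimes \scrO_S(S))$ together with the inclusion $\calQ \hookrightarrow \calF_0 \otimes \scrO_S(S)$ forces $\delta_\calF$ to be surjective, proving \eqref{item:Pi-C-2}. The most delicate step is the opening diagram-chase identifying $\scrO_Y \to \calF \to \calF_0' \to \calQ$ with $\delta_\calF \circ s_1$ and subsequently applying Corollary~\ref{cor:torsion-vs-torsion-free}; once that is in place, the rest of the argument is purely numerical.
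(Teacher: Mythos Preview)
Your proof is correct and the numerical core (the squeeze $\chi(\calF_0)\leq -m$, $\chi(\calF_0')\geq 0$, $\chi(\calQ)\leq m$, forcing all equalities) is identical to the paper's. The difference is in how and when \eqref{item:Pi-C-3} is established. The paper first obtains the surjection $\mathsf{Coker}(s)\twoheadrightarrow\calF_0'$ by a support argument: since $\mathsf{Im}(s)\simeq\scrO_C$ by Lemma~\ref{lem:Pi-A} and $\calF_0'$ is pure with set-theoretic support $\Sigma$, one has $\Hom_Y(\scrO_C,\calF_0')=0$, hence $\mathsf{Im}(s)\subset\calF_1'$. Only after the numerical squeeze does the paper record \eqref{item:Pi-C-3}, via $H^0(Y,\calF_0')=0$ (which is by then redundant with what was already shown). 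You instead prove \eqref{item:Pi-C-3} up front by chasing the diagram of Lemma~\ref{lem:extensions-B} to see that $\scrO_Y\to\calF_0'$ lands in $\calF_0$, and then invoke Corollary~\ref{cor:torsion-vs-torsion-free} to kill $\Hom_Y(\scrO_Y,\calF_0)$. Your route trades a one-line support observation for a slightly longer diagram chase plus an appeal to the torsion-free structure of $\scrF_f$; both are clean, and your ordering makes the logical dependence of the numerical argument on \eqref{item:Pi-C-3} more transparent.
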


\begin{proof}
	Let $\calG \coloneqq \mathsf{Coker}(s)$ and let $\calG'\coloneqq \calG/\calT$ be the quotient by the maximal zero-dimensional subsheaf of $\calG$. Since $\calG$ belongs to $\scrT_f$, Proposition~\ref{prop:torsion-vs-torsion-free}--\eqref{item:torsion-vs-torsion-free-1} shows that $\calG'$ is set theoretically supported on $\Sigma$ and $\mu_{Y\textrm{-}\mathsf{min}}(\calG) \geq 0$. Furthermore, diagram~\eqref{eq:Pi-diagram-C} in Lemma~\ref{lem:Pi-A}  yields an exact sequence 
	\begin{align}
		0\longrightarrow\calF_0 \longrightarrow\calG \longrightarrow\calG_1\longrightarrow 0
	\end{align}
	where $\calG_1\coloneqq\mathsf{Coker}(s_1)$. Under the current assumptions, $\calG_1$ is a zero-dimensional sheaf supported at $p\in C$. Since $\calF_0$ is purely one-dimensional, the composition 
	\begin{align}
		\calT \longrightarrow \calG \longrightarrow \calG_1 
	\end{align}
	is injective. 
	
	Let $\calK\coloneqq \ker(\calF\to \calG)$, which is isomorphic to $\mathsf{Im}(s_1)\simeq \scrO_C$ by Lemma~\ref{lem:Pi-A}. Since $\calF_0'$ is purely one-dimensional, supported on $\Sigma$, we get $\Hom_Y(\calK, \calF_0') =0$.	Therefore $\calK$ is a subsheaf of $\calF_1'\subset \calF$. This yields a commutative diagram 
	\begin{align}
		\begin{tikzcd}[ampersand replacement=\&]
			0\ar[r] \& \calK \ar[r]\ar[d]\&\calF\ar[r] \ar{d}{\id}\&\calG \ar[r] \ar[d]\&0 \\
			0\ar[r] \& \calF_1' \ar[r]\&\calF\ar[r] \&\calF_0' \ar[r] \&0
		\end{tikzcd}\ ,
	\end{align}
	where the left vertical arrow is injective and the right vertical arrow 
	is surjective. Since $\calF_0'$ is purely one-dimensional by Lemma~\ref{lem:extensions-B}, the resulting surjective morphism $\calG \to \calF_0'$ factors through a surjective morphism $\calG' \to \calF_0'$. Since $\mu_{Y\textrm{-}\mathsf{min}}(\calG') \geq 0$, this implies
	\begin{align}
		\mu_{Y\textrm{-}\mathsf{min}}(\calF_0')\geq 0\ .
	\end{align}
	By Corollary~\ref{cor:stability-B}, each Harder-Narasimhan subsequent quotient of $\calF'_0$ is $S$-equivalent to 
	\begin{align}
		\C^r\otimes i_{\Sigma, \, \ast}\scrO_\Sigma(a) 
	\end{align}
	for some $r, a\in \Z$ with $r\geq 1$. The above inequality implies that $\chi(\scrO_{\Sigma}(a))\geq 0$, i.e., $a\geq -1$. Therefore one finds $\chi(\calF'_0) \geq 0$. Moreover, equality holds if and only if $\calF_0'$ is semistable, $S$-equivalent to $\C^m \otimes i_{\Sigma,\, \ast}\scrO_\Sigma(-1)$.
	
	On the other hand, since $\calF\in\scrF_f\cap\catCoh_{\leqslant 1}(Y)$, Proposition~\ref{prop:torsion-vs-torsion-free}--\eqref{item:torsion-vs-torsion-free-2} shows that $\mu_{Y\textrm{-}\mathsf{max}}(\calF_0) < 0$. Again, Corollary~\ref{cor:stability-B} shows that each Harder-Narasimhan subsequent quotient of $\calF_0$ is $S$-equivalent to $\C^r\otimes i_{\Sigma,\, \ast} \scrO_\Sigma(a)$ for some $r, a\in \Z$ with $r\geq 1$. The above inequality implies that $\chi(\scrO_{\Sigma}(a))\leq -1$, i.e., $a\leq -2$. Therefore one finds $\chi(\calF_0) \leq -m$. Moreover, equality holds if and only if $\calF_0$ is slope-semistable in the $S$-equivalence class
	of $\C^m \otimes i_{\Sigma,\, \ast}\scrO_{\Sigma}(-2)$. However, the bottom row of diagram~\eqref{eq:extensions-diagram-B} shows that 
	\begin{align}\label{eq:FFineq} 
		0\leq \chi(\calF_0')  = \chi(\calF_0) + \chi(\calQ)  \leq \chi(\calF_0) + \chi(\calF_0\otimes \scrO_S(S))  = \chi(\calF_0) +m\ .
	\end{align}
	Then one obtains the simultaneous inequalities 
	\begin{align}
		\chi(\calF_0) \leq -m \quad\text{and}\quad \chi(\calF_0) \geq -m\ , 
	\end{align}
	hence $\chi(\calF_0) = -m$. Thus, $\calF_0$ is slope-semistable in the $S$-equivalence class of $\C^m \otimes i_{\Sigma, \, \ast}\scrO_{\Sigma}(-2)$. Furthermore, inequality \eqref{eq:FFineq} implies that 
	\begin{align}
		\chi(\calF_0') = 0\quad\text{and} \quad \calQ = \calF_0\otimes \scrO_S(S)\ .
	\end{align}
	This further implies that $\calF_0'$ is semistable, $S$-equivalent to $\C^m \otimes \scrO_\Sigma(-1)$. Then $H^0(Y, \calF_0')=0$, hence $s$ factors through $s_1'\colon\scrO_Y \to \calF_1'$. 
\end{proof}

\begin{proof}[Proof of Theorem~\ref{thm:Pi}]
	The ``only if'' direction follows from Lemmas~\ref{lem:Pi-A} and \ref{lem:Pi-C}.
	
	We prove now the ``if'' direction. Since $\delta_\calF\circ s_1=0$, Lemma~\ref{lem:extensions-A}--\eqref{item:extensions-A-1} shows that the restriction of the extension \eqref{eq:Pi-stable-F} to $\mathsf{Im}(s_1) \subset \calF_1$ is trivial, which means that there exists an injective morphism $\mathsf{Im}(s_1)\to \calF$ which fits in the commutative diagram 
	\begin{align}
		\begin{tikzcd}[ampersand replacement=\&]
			\& \& \mathsf{Im}(s_1) \ar[d] \ar{r}{\id} \&  \mathsf{Im}(s_1) \ar[d] \& \\
			0\ar[r] \& \calF_0\ar[r] \& \calF\ar[r]  \& \calF_1\ar[r]  \& 0 
		\end{tikzcd}\ .
	\end{align}
	Then let $s\colon \scrO_Y\to \calF$ be defined by the composition 
	\begin{align}
		\scrO_Y\longrightarrow  \mathsf{Im}(s_1)\longrightarrow \calF\ .
	\end{align}
	Given any section $s'\colon\scrO_Y\to \calF$ so that $f_1 \circ s'=s_1$, one has $s'-s\in H^0(Y, \calF_0)$, which is zero since $\calF_0\simeq \C^m\otimes i_{\Sigma,\, \ast}\scrO_\Sigma(-2)$. Therefore $s$ is unique.
	
	Now, we need to show that $\calG\coloneqq \mathsf{Coker}(s)$ belongs to $\scrT_f$. We shall make use of Proposition~\ref{prop:torsion-vs-torsion-free}--\eqref{item:torsion-vs-torsion-free-1}. Snake lemma yields the exact sequence 
	\begin{align}
		0\longrightarrow\calF_0 \longrightarrow\calG \longrightarrow\mathsf{Coker}(s_1)\longrightarrow 0 \ .
	\end{align}
	Let $\calT\subset \calG$ be the maximal zero-dimensional subsheaf of $\calG$. Since $\calF_0$ is purely one dimensional, the composition 
	\begin{align}
		\calT \longrightarrow \calG \longrightarrow \mathsf{Coker}(s_1)
	\end{align}
	is injective and one obtains the exact sequence 
	\begin{align}
		0\longrightarrow \calF_0\longrightarrow \calG/\calT\longrightarrow \mathsf{Coker}(s_1)/\calT \longrightarrow 0\ .
	\end{align}
	This implies that $\calG/\calT$ is set-theoretically supported on $\Sigma$ and $\ch_2(\calG/\calT) = m [\Sigma]$.
	
	Let $\calG/\calT\to \calG'\neq 0$ be the minimal slope-semistable quotient provided by the Harder-Narasimhan filtration. Note that $\calG'$ is set-theoretically supported on $\Sigma$ and $\ch_2(\calG') = m' [\Sigma]$, with $m'>0$. Let $\calF_0'\subset \calG'$ be the image of the composition $\calF_0 \to \calG/\calT \to \calG'$. Then, note the commutative diagram 
	\begin{align}
		\begin{tikzcd}[ampersand replacement=\&]
			0\ar[r] \& \calF_0\ar[r] \ar[d] \& \calG/\calT \ar[r]  \ar[d] \& \mathsf{Coker}(s_1)/\calT \ar[r] \ar[d] \& 0 \\
			0\ar[r]\& \calF_0' \ar[r]\& \calG'\ar[r]\& \calG'/\calF'_0\ar[r] \&0
		\end{tikzcd}\ ,
	\end{align}
	where the rows are exact and the vertical maps are surjective. This shows that $\calG'/\calF'_0$ is zero dimensional since $\mathsf{Coker}(s_1)/\calT$ is zero-dimensional. In particular, $\calF_0'\neq 0$. 
	
	In order to finish the proof, one has to show that $\mu_Y(\calG')\geq 0$. By Corollary~\ref{cor:stability-B}, $\calG'$ is $S$-equivalent to $\C^{m'} \otimes i_{\Sigma,\, \ast}\scrO_\Sigma(a)$. Suppose $\mu_Y(\calG') <0$, i.e., $a\leq -2$. Since $\calF_0'\neq 0$, the case $a\leq -3$ is ruled out since $\calG'$ is semistable and receives a nonzero morphism from $\calF_0$ which is $S$-equivalent to $\C^m \otimes 
	i_{\Sigma,\, \ast}\scrO_\Sigma(-2)$. Hence $a=-2$. Since both $\calF_0$ and $\calG'$ are semistable sheaves of equal slope, it follows that $\calG'/\calF_0'$ is either a nonzero semistable sheaf of the same slope or zero. Since $\calG'/\calF_0'$ was already shown to be zero-dimensional, it follows that $\calG'/\calF_0'=0$. This yields a commutative diagram 
	\begin{align}
		\begin{tikzcd}[ampersand replacement=\&]
			0\ar[r] \& \calF_0\ar[r] \ar[d] \& \calF\ar[r]  \ar[d] \& \calF_1\ar[r]  \& 0 \\
			\& \calG' \ar{r}{\id} \& \calG' \& \&
		\end{tikzcd}\ ,
	\end{align}
	where the vertical arrows are surjective. Then Lemma~\ref{lem:extensions-A}--\eqref{item:extensions-A-2} shows that composition 
	\begin{align}\label{eq:composition-map}
		\begin{tikzcd}[ampersand replacement=\&]
			\calF_1\arrow{r}{\delta_\calF} \& \calF_0\otimes \scrO_S(S) \arrow{r} \&  \calG'\otimes \scrO_S(S)
		\end{tikzcd}
	\end{align}
	is identically zero. However, since $\calF_0\to \calG'$ is surjective, and $\delta_\calF$ is also surjective, the composition \eqref{eq:composition-map} has to be surjective. This leads to a contradiction since $\calG'\neq 0$. Therefore $\mu_{Y\textrm{-}\mathsf{min}}(\calG)\geq 0$, hence $G\in \scrT_f$. 
\end{proof}

\subsection{Moduli stacks of $C$-framed $f$-stable pairs}\label{subsec:C-framed-moduli-stack} 

Let $C$ be a reduced irreducible curve in a smooth irreducible effective divisor $S\subset Y$, which intersects $\Sigma$ transversely at a single point $p\in C^{\mathsf{sing}}$. The goal of this section is to construct a geometric derived stack parametrizing $C$-framed $f$-stable pairs on $Y$, as introduced in Definition~\ref{def:C-framed-pair}, under only Assumption~\ref{assumption:f}. This will require a detailed structural analysis for these objects.

Furthermore, we establish a connection to Flag Hilbert schemes, assuming Assumption~\ref{assumption:divisor} as well.

\subsubsection{Moduli stacks}\label{subsubsection:moduli-C-framed} 

Denote by $\derivedextCoh(Y)$ the derived moduli stack parametrizing short exact sequences
\begin{align}\label{eq:ses}
	0\longrightarrow\calE_1\longrightarrow \calE_2 \longrightarrow \calE_3 \longrightarrow 0 
\end{align}
of coherent sheaves on $Y$. We denote by $p_i\colon \derivedextCoh(Y)\to \derivedCoh(Y)$ the map sending the short exact sequence of the form \eqref{eq:ses} to $\calE_i$ for $i=1, 2, 3$.

Fix $m\in \N$. Let $\derivedCoh^{\mathsf{pure}}(C;1)$ be the open substack of $\derivedCoh(C)$ parametrizing pure coherent sheaves on $C$ of rank one and  let $\derivedCoh^{\mathsf{ss}}_{\mu=-1}(Y; m[\Sigma])$ is the moduli stack of semistable sheaves on $Y$ of slope $-1$ and second Chern class $m[\Sigma]$. Consider the fiber product
\begin{align}
	\begin{tikzcd}[ampersand replacement=\&]
		\widehat{\derivedextCoh}(Y;m)\ar{r} \ar{d} \& \derivedextCoh(Y)\ar{d}{p_3\times p_1} \\
		\derivedCoh^{\mathsf{pure}}(C;1) \times \derivedCoh^{\mathsf{ss}}_{\mu=-1}(Y; m[\Sigma]) \ar{r} \& \derivedCoh(Y)\times \derivedCoh(Y)
	\end{tikzcd}\ ,
\end{align}
where the map $\derivedCoh^{\mathsf{pure}}(C;1)\to  \derivedCoh(Y)$ is induced by the pushforward with respect to the canonical closed embedding of $C$ into $Y$. Now, we introduce $\derivedfSP_C(Y; m)$ as the fiber product
\begin{align}
	\begin{tikzcd}[ampersand replacement=\&]
		\derivedfSP_C(Y;m) \ar{r}\ar{d} \& \derivedfSP(Y) \ar{d}\\
		\widehat{\derivedextCoh}(Y;m)\ar{r}{p_2[1]}  \& \derivedCoh_{\mathsf{tor}}(Y,\tau_\scrA)_{\ch_1=0} 
	\end{tikzcd}\ ,
\end{align}
where the right vertical map is induced by the fiber product \eqref{eq:derivedfSP}. Note that the target of $p_2[1]$ is exactly $\derivedCoh_{\mathsf{tor}}(Y,\tau_\scrA)_{\ch_1=0} $ because of Proposition~\ref{prop:torsion-vs-torsion-free}--\eqref{item:torsion-vs-torsion-free-2}.

By construction, $\derivedfSP_C(Y;m)$ is a geometric derived stack locally of finite presentation over $\C$. Moreover, by Theorem~\ref{thm:Pi}, it is the stack parameterizing $C$-framed $f$-stable pairs $(\calF, s)$ on $Y$ such that $\ch_2(\calF)=[C]+m[\Sigma]$.
 It decomposes into
\begin{align}
	\derivedfSP_C(Y;m)\coloneqq \bigsqcup_{\ell\in \Z} \derivedfSP_C(Y;m, \ell)
\end{align}
with respect to the Euler characteristic. We denote the truncation of $\derivedfSP_C(Y; m)$ by $\fSP_C(Y; m)$.

\begin{definition}\label{def:C-framed-moduli-stack}
	Fix $m\in \N$ and $n\in \Z$. The derived moduli stack of \textit{$C$-framed $f$-stable pairs $(\calF, s)$ on $Y$, of second Chern class $\ch_2(\calF)=[C]+m[\Sigma]$ and $\chi(\calF)=\ell$} is $\derivedfSP_C(Y;m, \ell)$. We denote by $\fSP_C(Y;m, \ell)$ its truncation.
	
	The derived moduli stack $\fSP_C(Y)$ of \textit{$C$-framed $f$-stable pairs $(\calF, s)$ on $Y$} is
	\begin{align}
		\derivedfSP_C(Y)\coloneqq \bigsqcup_{m\in \N}\, \derivedfSP_C(Y;m)\ .
	\end{align}
	We denote by $\fSP_C(Y)$ its truncation.
\end{definition}

\begin{proposition}\label{prop:C-framed-algebraic-space}
For any $m, \ell\in \Z$, $m \geq 1$, the stack $\fSP_C(Y; m, \ell)$ is an algebraic space of finite type over $\C$. Moreover $\fSP_C(Y; m)$ and $\fSP_C(Y)$ are algebraic spaces locally of finite type over $\C$.
\end{proposition}

\begin{proof}
	Note that the map $\fSP_C(Y; m, \ell)\to \fSP(Y; m, \ell)$ induced by $p_2[1]$ is represented by algebraic spaces since $p_2$ is represented by Quot schemes. Since $\fSP(Y; m, n)$ is an algebraic space by Proposition~\ref{prop:fSP_algebraic_space}, the claim follows.
\end{proof}

\subsubsection{Relation to Flag Hilbert schemes of $C$}\label{subsect:relation-hilbert-scheme}

In this section, we work under Assumptions~\ref{assumption:f} and~\ref{assumption:divisor}. Our main goal is to show that the algebraic moduli space $\fSP_C(Y)$ is isomorphic to a Flag Hilbert scheme of $C$. 

Let $T_n$ be the scheme-theoretic intersection $S \cap \Sigma_n$. Let $\SP_C(\ell)$ denote the moduli space of Pandharipande-Thomas stable pairs on $C$, i.e., pairs $(\calE,s)$ with $\calE$ a torsion free coherent sheaf on $C$ with $\chi(\calE) =\ell$, and $s\colon \scrO_C\to \calE$ a generically surjective morphism. In particular, since $C$ is reduced, irreducible, this implies that $\mathsf{rk}(\calE) = 1$. Furthermore, $\SP_C(\ell)$ is empty for $\ell< \chi(\scrO_C)$. Let 
\begin{align}
	\SP_C \coloneqq \bigsqcup_{\ell \geqslant \chi(\scrO_C)} \SP_C(\ell)\ . 
\end{align}

Let $(\scrE, \sigma)$ denote the universal stable pair on $\SP_C\times C$ and let $\scrQ\coloneqq \mathsf{Coker}(\sigma)$. Abusing notation we will also denote by $\scrE$ its pushforward to $\SP_C\times Y$.
\begin{lemma}\label{lem:Q-flat} 
	The sheaf $\scrQ$ is flat over $\SP_C$. 
\end{lemma}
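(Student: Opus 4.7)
The plan is to apply the local flatness criterion fiberwise: I will show that for every closed point $t \in \SP_C$ one has $\mathsf{Tor}_1^{\scrO_{\SP_C,t}}(\scrQ, \kappa(t)) = 0$, which yields $\SP_C$-flatness of $\scrQ$. Two standard inputs feed into this. First, by construction of the moduli space of Pandharipande-Thomas stable pairs, the universal sheaf $\scrF_1$ is $\SP_C$-flat. Second, for each closed point $t$, the induced section $\sigma_{1,t}\colon \scrO_{\{t\} \times C} \to \scrF_{1,t}$ is injective: by definition of a stable pair, $\scrF_{1,t}$ is a pure one-dimensional coherent sheaf on the reduced irreducible curve $C$ and hence torsion-free, so the kernel of any nonzero morphism from $\scrO_C$ into it would be a torsion subsheaf of $\scrO_C$, which necessarily vanishes.

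Given these two inputs, I will factor $\sigma_1$ as the canonical surjection $\scrO_{\SP_C \times C} \twoheadrightarrow \mathsf{Im}(\sigma_1)$ followed by the inclusion $\mathsf{Im}(\sigma_1) \hookrightarrow \scrF_1$, producing the short exact sequence $0 \to \mathsf{Im}(\sigma_1) \to \scrF_1 \to \scrQ \to 0$. Tensoring over $\scrO_{\SP_C,t}$ with $\kappa(t)$ and using flatness of $\scrF_1$ to annihilate $\mathsf{Tor}_1(\scrF_1, \kappa(t))$, one obtains the exact sequence
\begin{align}
    0 \to \mathsf{Tor}_1^{\scrO_{\SP_C,t}}(\scrQ, \kappa(t)) \to \mathsf{Im}(\sigma_1) \otimes \kappa(t) \to \scrF_{1,t}.
\end{align}
The composition $\scrO_{\{t\} \times C} \twoheadrightarrow \mathsf{Im}(\sigma_1) \otimes \kappa(t) \to \scrF_{1,t}$ equals $\sigma_{1,t}$, which is injective by the second input. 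This forces the left surjection to be an isomorphism and the right arrow to be injective, whence $\mathsf{Tor}_1^{\scrO_{\SP_C,t}}(\scrQ, \kappa(t)) = 0$.

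There is no genuine obstacle in the argument; the only subtlety is the routine identification $\scrQ \otimes_{\scrO_{\SP_C}} \kappa(t) \simeq \mathsf{Coker}(\sigma_{1,t})$, afforded by the right exactness of tensor product, which is implicitly invoked when passing between the global sequence defining $\scrQ$ and its fiberwise incarnation.
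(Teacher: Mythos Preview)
Your proof is correct and follows essentially the same approach as the paper: both use that $\scrF_1$ is $\SP_C$-flat and that $\sigma_1$ restricts to an injection on each fiber, then deduce flatness of $\scrQ$ from the resulting vanishing of $\mathsf{Tor}_1$. The paper compresses this into two lines by writing the universal sequence as $0 \to \scrO_{\SP_C\times C} \xrightarrow{\sigma_1} \scrF_1 \to \scrQ \to 0$ and asserting that fiberwise injectivity of $\sigma_1$ implies flatness, whereas you spell out the standard $\mathsf{Tor}$ argument via $\mathsf{Im}(\sigma_1)$.
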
 

\begin{proof}
	By construction, one has the universal exact sequence 
	\begin{align}
		\begin{tikzcd}[ampersand replacement=\&]
			0 \arrow{r} \& \scrO_{\SP_C\times C} \arrow{r}{\sigma} \& \scrE\arrow{r} \& \scrQ \arrow{r} \& 0
		\end{tikzcd} \ ,
	\end{align}
	where $\sigma$ restricts to an injection on fibers. This implies that $\scrQ$ is $\SP_C$-flat.
\end{proof}

Now recall that $C$ is contained as closed subscheme in the smooth surface $S\subset Y$ and $T_n$ is the scheme theoretic intersection of $\Sigma_n$ and $S$ in $Y$. Let $i_{C,S}\colon \SP_C\times C \to \SP_C\times S$ be the canonical closed immersion. For any $\ell\in \Z$, let $q\colon \Quot_{T_n}(\scrQ/ \SP_C; m,\ell)\to \SP_C(\ell)$ be the relative Quot scheme whose functor of points assigns to any map $\phi\colon Z \to \SP_C(\ell)$ the set of quotients 
\begin{align}
	g\colon (q\times \imath_C)^\ast\scrQ\longrightarrow \scrG\ ,
\end{align}
so that $i_{C,S,\, \ast}\scrG$ is a $Z$-flat family of length $m$ sheaves on $T_n$. Note that $\Quot_{T_n}(\scrQ/ \SP_C; m,\ell)$ is empty for $\ell < m+\chi(\scrO_C)$. Therefore one can write $\ell = k+m+\chi(\scrO_C)$ with $k \in \N$. Set 
\begin{align}
	\Quot_{T_n}(\scrQ/ \SP_C; m)=\bigsqcup_{k\in \N} \Quot_{T_n}(\scrQ/ \SP_C; m,k+m+\chi(\scrO_C))\ .
\end{align}
The next goal is to construct an equivalence $\fSP_C(Y; m) \to \Quot_{T_n}(\scrQ/ \SP_C; m)$. First, we need the following \textit{family variant} of Lemma~\ref{lem:extensions-A} and Theorem~\ref{thm:Pi}, which we shall prove now. 

Let $Z$ be an arbitrary parameter scheme and let 
\begin{align}
	\begin{tikzcd}[ampersand replacement=\&]
		\& Z\times Y \ar{dr}{p} \ar{dl}[swap]{\pi} \& \\ 
		Z \& \& Y 
	\end{tikzcd}
\end{align}
denote the canonical projections. Let also $\zeta\in H^0(Y, \scrO_Y(S))$ be a defining section of $S$. 
\begin{lemma}\label{lem:C-framed}
	Let 
	\begin{align}
		0\longrightarrow \scrF_0\longrightarrow\scrF \longrightarrow\scrF_1 \longrightarrow 0 
	\end{align}
	be an exact sequence of $Z$-flat families of one-dimensional pure coherent sheaves on $Y$, such that $\scrF_1$ is $Z$-flat family of pushforwards of rank one pure coherent sheaves on $C$ and $\scrF_0$ is a flat family of slope $(-1)$ semistable sheaves on $Y$ with second Chern class $m[\Sigma]$. Then, multiplication by $\pi^\ast\zeta$ determines a commutative diagram 
	\begin{align}\label{eq:extensions-diagram-C}
		\begin{tikzcd}[ampersand replacement=\&]
			\& 0 \ar[d] \&\&\& \\
			0\ar[r] \& \scrF_0 \ar[r] \ar{d}{\zeta_{\scrF_0}} \& \scrF \ar[r] \ar{d}{\zeta_\scrF} \& \scrF_1 \ar[r] \ar{d}{0} \& 0 \\
			0\ar[r] \& \scrF_0\otimes p^\ast\scrO_Y(S) \ar[r] \ar[d] \& \scrF\otimes p^\ast\scrO_Y(S) \ar[r] \& \scrF_1 \otimes p^\ast \scrO_Y(S) \ar[r] \& 0 \\
			\& \scrF_0 \otimes p^\ast\scrO_S(S) \ar[d] \& \& \& \\
			\& 0\& \& \& 
		\end{tikzcd}
	\end{align}
	where the rows and the left column are exact. In particular one has a connecting morphism  
	\begin{align}
		\delta_\scrF \colon \scrF_1 \longrightarrow \scrF_0 \otimes p^\ast\scrO_S(S)\ ,
	\end{align}
	where $\scrF_0 \otimes p^\ast\scrO_S(S)$ is $Z$-flat. Moreover, for any $z\in Z$, the restriction of $\delta_\scrF$ to $\{z\}\times Y$ coincides with the analogous morphism constructed in Lemma~\ref{lem:extensions-A}. 
\end{lemma}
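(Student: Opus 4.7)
The plan is to run the snake-lemma construction of Lemma~\ref{lem:extensions-A} in the relative setting. The only nontrivial point is upgrading the Tor-vanishing of Lemma~\ref{lem:tor} to families; once available, the diagram, the connecting morphism, and its fiberwise compatibility follow by routine manipulations.

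Concretely, I would first pull back the exact sequence
\begin{align}
0 \longrightarrow \scrO_Y \longrightarrow \scrO_Y(S) \longrightarrow i_{S,\ast}\scrO_S(S) \longrightarrow 0
\end{align}
via $p\colon Z\times Y \to Y$ (it remains exact because $\scrO_Y$ and $\scrO_Y(S)$ are locally free) and tensor with $\scrF_0$, $\scrF$, and $\scrF_1$, respectively, in order to build the three columns of \eqref{eq:extensions-diagram-C}. The right-hand column vanishes identically because $\scrF_1$ is scheme-theoretically supported on $Z\times C \subseteq Z\times S$, so multiplication by $p^\ast\zeta$ kills it.

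The key step -- and the main obstacle -- is to verify that the left column is actually a short exact sequence: that $\zeta_{\scrF_0}$ is injective and its cokernel coincides with $\scrF_0\otimes p^\ast\scrO_S(S)$, which should be $Z$-flat. Since $p^\ast\scrO_Y(S)$ is locally free, both $\scrF_0$ and $\scrF_0\otimes p^\ast\scrO_Y(S)$ are $Z$-flat, and $\zeta_{\scrF_0}$ is an $\scrO_{Z\times Y}$-linear morphism between them. For each $z\in Z$, the fiber $(\scrF_0)_z$ is a purely one-dimensional slope-$(-1)$ semistable sheaf with $\ch_2=m[\Sigma]$, hence set-theoretically supported on $\Sigma$ by Corollary~\ref{cor:split-support}; since $\Sigma$ meets $S$ only at the single point $p$, Lemma~\ref{lem:tor} applies and shows that the restriction of $\zeta_{\scrF_0}$ to $\{z\}\times Y$ is injective with zero-dimensional cokernel of constant length $m=S\cdot\ch_2((\scrF_0)_z)$. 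A standard application of the local criterion of flatness (applied to the kernel and cokernel of $\zeta_{\scrF_0}$, exploiting $Z$-flatness of source and target) then promotes this fiberwise injectivity to global injectivity, and the constancy of fiber lengths identifies the cokernel with the $Z$-flat sheaf $\scrF_0\otimes p^\ast\scrO_S(S)$.

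With all three columns verified to be short exact, the snake lemma in the abelian category of $\scrO_{Z\times Y}$-modules delivers the connecting morphism $\delta_\scrF\colon \scrF_1 \to \scrF_0\otimes p^\ast\scrO_S(S)$. Since every term in the diagram is $Z$-flat, each row and column remains short exact after base change along the inclusion $\{z\}\hookrightarrow Z$, and the functoriality of the snake-lemma construction under base change ensures that $\delta_\scrF|_{\{z\}\times Y}$ coincides with the morphism produced by Lemma~\ref{lem:extensions-A} applied to $\scrF|_{\{z\}\times Y}$.
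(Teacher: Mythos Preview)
Your argument is correct and follows the same approach as the paper's proof: pull back the defining sequence of $\scrO_S(S)$ along $p$, tensor with $\scrF_0$, and apply the snake lemma, with fiberwise compatibility following from base change. Your treatment of the left column is in fact more careful than the paper's brief assertion---the fiberwise injectivity via Lemma~\ref{lem:tor} combined with the local criterion of flatness is exactly the right way to justify that $\zeta_{\scrF_0}$ is injective with $Z$-flat cokernel (one small cosmetic point: the identification $\mathrm{coker}(\zeta_{\scrF_0})\simeq \scrF_0\otimes p^\ast\scrO_S(S)$ is automatic from right exactness of the tensor product, not from constancy of fiber lengths).
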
 

\begin{proof}
	The first part is clear since the pull-back $\pi^\ast\colon \catCoh(Y) \to \catCoh(Z\times Y)$ is  exact and $\pi^\ast \calG$ is $Z$-flat for any coherent sheaf $\calG$ on $Y$. By construction, the restriction $\delta_\scrF\vert_{\{z\}\times Y}$ coincides to connecting morphism $\delta_{\scrF_z}$ constructed in Lemma~\ref{lem:extensions-A} for any point $z\in Z$. The latter is surjective by Theorem~\ref{thm:Pi}, hence $\delta_{\scrF}$ is also surjective. Moreover, $\scrF_0 \otimes q^\ast\scrO_S(S)$ is $Z$-flat since both factors are $Z$-flat. 
\end{proof}

The stack $\fSP_C(Y; m)$ assigns to an arbitrary parameter scheme $Z$ the groupoid of $Z$-flat families of diagrams
\begin{align}\label{eq:flat-diagram-A}
	\begin{tikzcd}[ampersand replacement = \&]
		{} \& {} \& \scrO_{Z\times Y}\ar{d}{s} \& {} \& {}\\
		0 \ar{r} \& \scrF_0 \ar{r} \& \scrF \ar{r}  \& \scrF_1\ar{r}\& 0
	\end{tikzcd}\ ,
\end{align}
satisfying the following conditions:
\begin{enumerate}\itemsep0.2cm
	\item $\scrF_0$ is a flat family of purely one-dimensional slope $(-1)$ semistable sheaves on $Y$ with second Chern class $m[\Sigma]$, 
	\item $\scrF_1$ is a flat family of purely one dimensional sheaves on $Y$ with second Chern class $[C]$, which has scheme theoretic support $Z\times C$ as a sheaf on $Z\times Y$.
	\item For any point $z\in Z$, the restriction of the pair $(\scrF,s\colon \scrO_{Z\times Y}\to \scrF)$ to $z\times Y$ is a $C$-framed $f$-stable pair.
\end{enumerate} 
Using the same notation as in Lemma~\ref{lem:C-framed}, the main result of this section is the following. 

\begin{theorem}\label{thm:C-framed-Hilbert-A}
Suppose Assumptions~\ref{assumption:f} and~\ref{assumption:divisor} hold. 
Then, using the isomorphism $\boldsymbol{\zeta}\colon \scrO_{T_n}\otimes \scrO_S(S) \xrightarrow{\sim} \scrO_{T_n}$ induced by the defining section $\zeta \in H^0(Y, \scrO_Y(S))$, the assignment 
	\begin{align}
		(\scrF_0,\scrF_1,\scrF,s)\longmapsto (\scrF_1, \pi^\ast\xi\circ \delta_\scrF)
	\end{align} 
	determines an equivalence of algebraic spaces
	\begin{align}\label{eq:C-framed-Hilbert-A}
		\begin{tikzcd}[ampersand replacement=\&]
			\fSP_C(Y; m, k+\chi(\scrO_C))\arrow{r}{\sim} \& \Quot_{T_n}(\scrQ/ \SP_C; m,k+m +\chi(\scrO_C))
		\end{tikzcd}\ .
	\end{align}
\end{theorem}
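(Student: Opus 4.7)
Since both sides are algebraic spaces, it suffices to establish a functorial bijection on $Z$-valued points for every parameter scheme $Z$. I would construct the claimed map together with an explicit inverse and check that they are mutually inverse by invoking the uniqueness clauses of Theorem~\ref{thm:Pi} and Lemma~\ref{lem:extensions-A}.

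For the forward direction, starting from a $Z$-flat family as in \eqref{eq:flat-diagram-A}, the pair $(\scrF_1,s_1)$, with $s_1$ the composition of $s$ with $\scrF\to\scrF_1$, is a $Z$-flat family of Pandharipande--Thomas stable pairs on $C$ (Theorem~\ref{thm:Pi} applied fiberwise, together with Lemma~\ref{lem:Q-flat}); this produces a canonical map $\phi\colon Z\to\SP_C$ and an identification $\mathsf{Coker}(s_1)\simeq(\phi\times\id_C)^\ast\scrQ$. Lemma~\ref{lem:C-framed} supplies a connecting morphism $\delta_\scrF\colon\scrF_1\to\scrF_0\otimes q^\ast\scrO_S(S)$, surjective on fibers by Theorem~\ref{thm:Pi} and hence globally surjective. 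The vanishing $\delta_\scrF\circ s_1=0$ (Theorem~\ref{thm:Pi} again) factors $\delta_\scrF$ through $(q\times\imath_C)^\ast\scrQ$. Finally, Corollary~\ref{cor:stability-G} in family form identifies $i_S^\ast\scrF_0$ with a $Z$-flat length-$m$ family of $\scrO_{T_n}$-modules, and the projection formula together with the trivialization $\boldsymbol{\zeta}$ yields $\scrF_0\otimes q^\ast\scrO_S(S)\simeq i_{T_n,\ast}(i_S^\ast\scrF_0)$. This delivers the desired quotient $g\colon(q\times\imath_C)^\ast\scrQ\to\scrG$ and hence a $Z$-point of $\Quot_{T_n}(\scrQ/\SP_C;m)$.

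For the inverse, given $\phi\colon Z\to\SP_C$ and a surjection $g\colon(q\times\imath_C)^\ast\scrQ\to\scrG$ onto a $Z$-flat length-$m$ sheaf on $T_n$, I would set $(\scrF_1,s_1)\coloneqq(\phi\times\id_Y)^\ast$ of the universal PT-pair and recover $\scrF_0$ by applying the inverse of the equivalence of Corollary~\ref{cor:stability-G} to $\scrG$ (viewed as an $\scrO_{T_n}$-module via $\boldsymbol{\zeta}$); this yields a $Z$-flat family of slope $(-1)$ semistable sheaves on $Y$ with $\ch_2=m[\Sigma]$. The composition $\scrF_1\twoheadrightarrow(q\times\imath_C)^\ast\scrQ\xrightarrow{g}\scrG\simeq\scrF_0\otimes q^\ast\scrO_S(S)$ defines an element $\delta\in\Hom(\scrF_1,\scrF_0\otimes q^\ast\scrO_S(S))$, and the family version of the isomorphism $\varepsilon$ from Lemma~\ref{lem:extensions-A} integrates $\delta$ to a short exact sequence $0\to\scrF_0\to\scrF\to\scrF_1\to 0$ with $\delta_\scrF=\delta$. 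Because $g$ factors through $\mathsf{Coker}(s_1)$, we have $\delta\circ s_1=0$, so the converse part of Theorem~\ref{thm:Pi} produces a unique section $s\colon\scrO_{Z\times Y}\to\scrF$ whose fiberwise restrictions are $C$-framed $f$-stable pairs; openness of $(\scrT_f,\scrF_f)$ (Corollary~\ref{cor:open-torsion-pair}) promotes this to an actual $Z$-point of $\fSP_C(Y;m)$. Bijectivity then follows from the uniqueness statements in Theorem~\ref{thm:Pi} (both $(\scrF_1,s_1)$ and the pair $(\scrF_0,\delta_\scrF)$ are uniquely determined by $(\scrF,s)$) together with the fact that $\varepsilon$ is an isomorphism.

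The main obstacle is upgrading the pointwise arguments of Theorem~\ref{thm:Pi}, Lemma~\ref{lem:extensions-A}, and Corollary~\ref{cor:stability-G} to hold in families over an arbitrary base $Z$. Concretely, one has to verify that (i) the extension $\scrF$ constructed from $\delta$ via $\varepsilon$ is automatically $Z$-flat, using the $Z$-flatness of $\scrF_0$ and $\scrF_1$; (ii) the Hom--Ext identification of Lemma~\ref{lem:extensions-A} commutes with base change along $Z$, via cohomology and base change applied to the relevant relative Ext sheaves; and (iii) the projection-formula identification of $\scrF_0\otimes q^\ast\scrO_S(S)$ with a sheaf on $Z\times T_n$ is valid in families, which reduces to the Tor-vanishing results of Corollary~\ref{cor:vanishing-A} applied to $Z$-flat families.
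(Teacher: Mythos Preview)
Your proposal is correct and follows essentially the same approach as the paper's proof: the paper likewise invokes Corollary~\ref{cor:stability-G} (together with Lemma~\ref{lem:stability-A}) to identify the semistable sheaf $\scrF_0$ with a length-$m$ sheaf on $T_n$, uses Lemma~\ref{lem:C-framed} and the ``only if'' direction of Theorem~\ref{thm:Pi} to build the forward functor, and appeals to the ``if'' direction of Theorem~\ref{thm:Pi} for the inverse. Your write-up is more explicit about the inverse construction and the family-level verifications (points (i)--(iii)), whereas the paper's proof is terse and leaves these as implicit consequences of the cited results; but the underlying strategy and the key inputs are identical.
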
 

\begin{proof}
	First, by Lemma~\ref{lem:stability-A}, one has $\calM^{\mathsf{ss}}(Y; m) \simeq \calM^{\mathsf{ss}}(Y; m[\Sigma],-m)$. Let $\calM(T_n; m)$ be the moduli stack of length $m$ sheaves on $T_n$. Then Corollary~\ref{cor:stability-G} proves that one has an equivalence 
	\begin{align}\label{eq:tau-isomorphism} 
		\begin{tikzcd}[ampersand replacement=\&]
			\tau\colon \calM^{\mathsf{ss}}(Y; m[\Sigma],-m)\arrow{r}{\sim} \& \calM(T_n; m)
		\end{tikzcd}
	\end{align}
	which acts as $\calF_0 \mapsto \calF_0 \otimes \pi^\ast\scrO_S$ on flat families. Then the existence of a functor as claimed in the assertion follows from Lemma~\ref{lem:C-framed} and the ``only if'' direction of Theorem~\ref{thm:Pi}. In particular, the latter shows that the restriction $\delta_{\scrF_z}$ is surjective for any point $z\in Z$. The fact that this is an equivalence follows from the ``if'' direction of Theorem~\ref{thm:Pi}. 
\end{proof}

Theorem~\ref{thm:C-framed-Hilbert-A} yields the following.
\begin{corollary}\label{cor:C-framed-Hilbert-B} 
	For any $k,m\in \N$, the motivic classes of $\fSP_C(Y; m, k+\chi(\scrO_C))$ and $\Quot_{T_n}(\scrQ/ \SP_C; m+k+\chi(\scrO_C))$ in the Grothendieck ring of algebraic spaces coincide. 
\end{corollary}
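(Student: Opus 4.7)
The plan is minimal: Corollary~\ref{cor:C-framed-Hilbert-B} is an immediate consequence of Theorem~\ref{thm:C-framed-Hilbert-A}. That theorem produces an explicit equivalence of algebraic spaces $\Phi\colon \fSP_C(Y; m) \xrightarrow{\sim} \Quot_{T_n}(\scrQ/\SP_C; m)$ via the assignment $(\scrF_0,\scrF_1,\scrF,s) \mapsto (\scrF_1, \pi^{\ast}\boldsymbol{\zeta} \circ \delta_{\scrF})$. Since $\Phi$ is an isomorphism of algebraic spaces (not merely a bijection on $\C$-points), passing to classes in the Grothendieck ring yields the desired identity directly. The substantive work -- verifying that this map is a functorial equivalence of moduli functors -- has already been carried out in the proof of Theorem~\ref{thm:C-framed-Hilbert-A}, drawing on the structural description of $C$-framed $f$-stable pairs established in Theorem~\ref{thm:Pi} together with the family version of the $\delta$-construction in Lemma~\ref{lem:C-framed}.

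The only auxiliary check is that both sides admit decompositions as disjoint unions of finite-type algebraic spaces so that the motivic class is a well-defined element of the Grothendieck ring (after suitable completion with respect to the grading by Euler characteristic). On the stable-pair side one has $\fSP_C(Y;m) = \bigsqcup_n \fSP_C(Y;m,n)$ with each summand of finite type, as recorded in \S\ref{subsubsection:moduli-C-framed}. On the Quot side, the analogous decomposition is obtained from the decomposition of $\SP_C$ by the Euler characteristic of the underlying stable pair on $C$, each component of which is of finite type by \cite{PT-Curve-counting}; combined with the fact that $T_n$ is a fixed zero-dimensional scheme, the relative Quot scheme of length-$m$ quotients over each such finite-type base is itself of finite type. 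Since $\Phi$ respects these gradings (the Euler characteristics of $\calF$ and $\scrF_1$ differ by the fixed quantity $\chi(\scrF_0) = -m$ under the equivalence $\tau$ of \eqref{eq:tau-isomorphism}), the identity of classes holds component by component. There is no genuine obstacle in the argument: the corollary is purely a repackaging of Theorem~\ref{thm:C-framed-Hilbert-A} at the motivic level.
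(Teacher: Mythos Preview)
Your proposal is correct and matches the paper's approach exactly: the paper simply states that Theorem~\ref{thm:C-framed-Hilbert-A} yields the corollary, with no further argument given. Your additional remarks about the componentwise decomposition and finite-type checks are accurate elaborations, but the paper does not spell these out either.
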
 

Furthermore, \cite[Proposition~B.8]{PT-BPS} yields:
\begin{proposition}\label{prop:C-framed-Hilbert-C} 
	Let $\FHilb^k_{T_n}(C;m)$ be the Flag Hilbert scheme parametrizing flags of ideal sheaves $\calI\subset \calI'\subset \scrO_C$ so that $\calI'/\calI$ is the pushforward of a length $m$ zero-dimensional sheaf on $T_n$, and $\chi(\scrO_C/\calI') =k$.  Then, there is an isomorphism 
	\begin{align}
		\begin{tikzcd}[ampersand replacement=\&]
			\Quot_{T_n}(\scrQ/ \SP_C; m,m+k+\chi(\scrO_C))\ar{r}{\sim} \& \FHilb^k_{T_n}(C;m)
		\end{tikzcd}
	\end{align}
	 mapping the data $(s_1\colon\scrO_Y\to \calF_1, g\colon \calQ\to \calG)$ to 
	\begin{align}
		\calI\coloneqq \calH om_C(i_C^\ast\calF_1, \scrO_C)\quad \text{and} \quad \calI'\coloneqq \calH om_C(\calF_1', \scrO_C) \ ,
	\end{align}
	where $\calF_1'$ is the kernel of the composition $i_C^\ast\calF_1 \to \calQ \xrightarrow{g} \calG$.
\end{proposition}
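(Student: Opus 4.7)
The plan is to invoke the local duality of \cite[Proposition~B.8]{PT-BPS}. On the reduced irreducible Gorenstein planar curve $C$, that result identifies Pandharipande-Thomas stable pairs $(\calF, s\colon \scrO_C \to \calF)$ with ideal sheaves $\calI \subset \scrO_C$ via $\calI \coloneqq \calHom_C(\calF, \scrO_C)$, where the embedding into $\scrO_C$ is obtained by dualizing $s$. The assignment is involutive and commutes with base change because pure one-dimensional coherent sheaves on the Gorenstein curve $C$ are Cohen-Macaulay, so that $\calExt^1_C(\calF, \scrO_C) = 0$ for any such $\calF$.

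First I would translate the Quot datum $(s_1, g\colon \calQ \to \calG)$ into a PT-type chain on $C$. Set $\calF_1' \coloneqq \ker\bigl(i_C^\ast\calF_1 \twoheadrightarrow \calQ \xrightarrow{g} \calG\bigr)$. Since the composition $\scrO_C \xrightarrow{s_1} i_C^\ast\calF_1 \to \calQ$ vanishes tautologically, $s_1$ factors through an inclusion $s_1'\colon \scrO_C \hookrightarrow \calF_1'$ with zero-dimensional cokernel, so that $(\calF_1', s_1')$ is again a Pandharipande-Thomas stable pair on $C$. Applying $\calHom_C(-, \scrO_C)$ to the short exact sequence $0 \to \calF_1' \to \calF_1 \to \calG \to 0$ and using the $\calExt^1$-vanishing just mentioned yields an exact sequence $0 \to \calI \to \calI' \to \calExt^1_C(\calG,\scrO_C) \to 0$, in which $\calExt^1_C(\calG, \scrO_C)$ is identified with a length $m$ zero-dimensional sheaf on $T_n$ via the Gorenstein structure of $C$ (the identification amounting to the twist by the line bundle $\omega_C^{-1}$, which preserves both length and scheme-theoretic support on $T_n$). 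Dualizing $s_1'$ produces a compatible embedding $\calI' \hookrightarrow \scrO_C$, so that one obtains the desired flag $\calI \subset \calI' \subset \scrO_C$ with $\calI'/\calI$ of length $m$ supported on $T_n$.

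The inverse direction proceeds symmetrically: given such a flag, dualization produces a chain $\calF_1' \subset \calF_1$ with cokernel a length $m$ sheaf on $T_n$, from which the Quot surjection $g\colon \calQ \to \calG$ is recovered via the universal property of $\calQ = \mathsf{Coker}(s_1)$; involutivity of the duality on pure one-dimensional sheaves on $C$ then guarantees that the two constructions are mutually inverse. The main obstacle I foresee is to verify that this construction sheafifies over $\Quot_{T_n}(\scrQ/\SP_C; m)$, i.e., that the relevant $\calHom$ and $\calExt$ sheaves are compatible with base change; this reduces to the Cohen-Macaulay/Gorenstein setup together with the flatness assertion of Lemma~\ref{lem:Q-flat}, both of which ensure that all the cohomological invariants involved commute with arbitrary base change.
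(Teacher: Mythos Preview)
Your approach is correct and coincides with the paper's: the paper's proof is a one-line citation of \cite[Proposition~B.8]{PT-BPS}, and you have unpacked precisely that duality argument in detail. Your elaboration on the $\calExt^1$-vanishing for pure sheaves on the Gorenstein curve $C$ and the preservation of length and $T_n$-support under local duality fills in exactly what the cited result provides.
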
 

\section{$C$-framed wallcrossing identity}\label{sec:framed-wallcrossing-identity}

In this section $f\colon Y\to X$ is a threefold contraction satisfying Assumptions~\ref{assumption:f} and \ref{assumption:divisor}. As in \S\ref{subsec:C-framed-moduli-stack}, let $\fSP_C(Y; m,\ell)$ denote the moduli space of $C$-framed $f$-stable pairs $(\calF, s)$ on $Y$ with $\ch_2(\calF)= m[\Sigma]+[C]$ and $\chi(\calF) =\ell$, where $m, \ell\in \Z$, with $m\geq 0$. A construction completely analogous to the one in \S\ref{sec:framed-stable-pairs} yields a moduli space $\SP_C(Y;m,\ell)$ of $C$-framed Pandharipande-Thomas stable pairs $(\calF,s)$ on $Y$ with $\ch_2(\calF)= m[\Sigma]+[C]$ and $\chi(\calF) =\ell$, where $m, \ell\in \Z$, with $m\geq 0$. 

\begin{remark}\label{rem:SPzero}
	Note that $\SP_C(Y;0,\ell)$ coincides with the moduli space of of Pandharipan\-de-Thomas stable pairs $(\calF,s)$ on $Y$ with $\calF$ scheme theoretically supported on $C$.
\end{remark}

We first define generating functions for topological Euler numbers of moduli spaces of $C$-framed ($f$-)stable pairs. Let $\C\{q,Q\}$ denote the vector space of bi-infinite
formal power series 
\begin{align}
	\sum_{m,\ell\in \Z} c_{\ell,m} q^\ell Q^m 
\end{align}
with complex coefficients. Let $\Lambda_q$ denote the ring of Laurent formal power series in $q$, i.e., power series with complex coefficients of the form 
\begin{align}
	\sum_{\ell\geq N} c_\ell q^\ell
\end{align}
for some $N \in \Z$. For any formal variable $T$ we denote by $\Lambda_q[[T]]$ the ring of formal power series of the form 
\begin{align}
	\sum_{m\in\N} f_m(q) T^m
\end{align}
where $f_m(q) \in \Lambda_q$ for all $m \in \N$. Note the natural injective linear maps 
\begin{align}
	\Lambda_q[[Q^{\pm 1}]]\longrightarrow \C\{q,Q\}\ ,
\end{align}
as well as the natural ring monomorphisms 
\begin{align}
	\C[[q,Q^{\pm 1}]] \longrightarrow \Lambda_q[[Q^{\pm 1}]]\ .
\end{align}

Now define the following generating functions 
\begin{align}\label{eq:fPT}
 	f\textrm{-}\mathsf{PT}_C(Y; q, Q)&\coloneqq  \sum_{\ell\in \Z} \sum_{m\in \N} \chi(\fSP_C(Y; m, \ell)) q^\ell Q^m \ , \\[4pt] \label{eq:PT}
	 \mathsf{PT}_C(Y; q, Q)&\coloneqq  \sum_{\ell\in \Z} \sum_{m\in \N} \chi(\SP_C(Y; m, \ell)) q^\ell Q^m
 \end{align}
as elements of $\C\{q,Q\}$. The same boundedness arguments as in \cite[\S3.2]{PT-Curve-counting} and \cite[\S7]{BS-Curve-counting-crepant} show that both generating functions belong to $\Lambda_q[[Q]]$. 

Furthermore, set
\begin{align} \label{eq:PTex}
	\mathsf{PT}_{\mathsf{ex}}(Y; q,Q)\coloneqq \sum_{\ell\in \Z} \sum_{m\in \N} \chi(\SP(Y; m[\Sigma], \ell)) q^\ell Q^m\ .
\end{align}
By Corollary~\ref{cor:chi-bound}, this is an element of $\C[[q,Q]]$. Moreover, it is invertible since, by definition, the term of bi-degree $(0,0)$ is $1$.

The main result of this section is the following.
\begin{theorem}\label{thm:C-framed-identity} 
	The following identity holds in $\Lambda_q[[Q]]$ under Assumptions~\ref{assumption:f} and \ref{assumption:divisor}.
	\begin{align}\label{eq:f-pairs-identity-B} 
		f\textrm{-}\mathsf{PT}_C(Y; q,Q) = \frac{\mathsf{PT}_C(Y; q,Q)}{\mathsf{PT}_{\mathsf{ex}}(Y; q,Q)}\ .
	\end{align}
\end{theorem}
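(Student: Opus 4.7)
The plan is to adapt the motivic Hall algebra argument of \cite[\S 4]{BS-Curve-counting-crepant} to the $C$-framed setting. First I would set up a motivic Hall algebra $H(\scrA)$ attached to the tilted heart $\scrA$, together with a ``pair'' version whose elements keep track both of an object $E\in\scrA$ and of an auxiliary section from $\scrO_Y$; inside a suitable regular/integrable subalgebra, the moduli stacks $\SP_C(Y;m,n)$, $\fSP_C(Y;m,n)$, and $\SP(Y;m[\Sigma],n)$ define bona fide Hall algebra elements $\mathbf{PT}_C$, $\mathbf{fPT}_C$, and $\mathbf{PT}_{\mathsf{ex}}$, respectively. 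The moduli-theoretic results of \S\ref{sec:stable-pairs} and \S\ref{sec:framed-stable-pairs} (notably Proposition~\ref{prop:fSP_algebraic_space}, finiteness of the $C$-framed moduli, and the openness statement of Corollary~\ref{cor:open-torsion-pair}) guarantee that these elements live in a subalgebra on which the Euler-characteristic integration map $\int\colon H(\scrA)\to \Q[q^{\pm 1},Q]$ is a ring homomorphism; the vanishing of the symmetrized Euler pairing on $\Sigma$-supported sheaves, Corollary~\ref{cor:pairing}, is exactly what is needed to make $\int$ multiplicative.

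The central geometric input is the following decomposition. Given a $C$-framed PT pair $(\calF,s)$ with $\ch_2(\calF)=[C]+m[\Sigma]$, the canonical torsion sequence
\begin{align*}
0\longrightarrow \calT \longrightarrow \calF \longrightarrow \calF' \longrightarrow 0, \qquad \calT\in\scrT_f,\ \calF'\in\scrF_f,
\end{align*}
exhibits $\calT$ as a purely one-dimensional sheaf with $\ch_2(\calT)\in\Z_{\geq 0}[\Sigma]$ (since $\calF$ is pure and by Proposition~\ref{prop:torsion-vs-torsion-free}-\eqref{item:torsion-vs-torsion-free-1}); the composition $s'\colon \scrO_Y\to\calF\to\calF'$ then has cokernel in $\scrT_f$ (because it is a quotient of $\mathsf{Coker}(s)$, which is zero-dimensional), so $(\calF',s')$ is a $C$-framed $f$-stable pair. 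Conversely, I would show that from a $C$-framed $f$-stable pair $(\calF',s')$ together with a pure sheaf $\calT\in\scrT_f$ supported on $\Sigma_n$ and a section $\sigma\colon\scrO_Y\to\calT$ making $(\calT,\sigma)$ a (non-framed) PT pair on the exceptional locus, one reconstructs a unique $C$-framed PT pair $(\calF,s)$ by lifting the section across the extension $0\to\calT\to\calF\to\calF'\to 0$; here Proposition~\ref{prop:torsion-vs-torsion-free}-\eqref{item:torsion-vs-torsion-free-2} ensures that $\Hom(\scrO_Y,\calT)$ accounts for all liftings without ambiguity and that the resulting $\calF$ is pure.

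Stratifying the moduli of such triples $(\calT,(\calF',s'),\calF)$ in the two obvious ways encodes the factorization
\begin{align*}
\mathbf{PT}_C \;=\; \mathbf{PT}_{\mathsf{ex}}\,\ast\,\mathbf{fPT}_C
\end{align*}
in the pair Hall algebra, where $\ast$ is the Hall product induced by extensions. Applying the integration map $\int$, using multiplicativity on the integrable subalgebra together with Corollary~\ref{cor:pairing} (which cancels the symmetrized Euler pairing between the two factors since $\calT$ is supported on $\Sigma$), yields exactly the identity $\mathsf{PT}_C(Y)=\mathsf{PT}_{\mathsf{ex}}(Y)\cdot f\textrm{-}\mathsf{PT}_C(Y)$ of generating series, equivalent to \eqref{eq:f-pairs-identity-B}.

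The main obstacle, relative to \cite[Theorem~6]{BS-Curve-counting-crepant}, is that the torsion part $\calT$ of a $C$-framed PT pair is no longer the cokernel of the section (as in the ideal sheaf/DT version) but a subsheaf of $\calF$ whose quotient $\calF'$ remains nontrivially supported on $C$; consequently one must upgrade the Hall-algebra bookkeeping so that the $\mathsf{PT}_{\mathsf{ex}}$ factor is identified with the generating series of \emph{framed} exceptional PT pairs rather than with a plain generating series of objects in $\scrT_f$. This is the content of showing that the natural forgetful map from the space of such triples to the product $\SP(Y;m[\Sigma])\times \fSP_C(Y)$ is an equivalence of algebraic spaces, and it is carried out in Appendix~\ref{sec:proof-C-framed-identity} by refining the arguments of \cite[\S 4]{BS-Curve-counting-crepant} in precisely this direction.
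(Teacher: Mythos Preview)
Your proposed Hall algebra factorization $\mathbf{PT}_C = \mathbf{PT}_{\mathsf{ex}}\ast\mathbf{fPT}_C$ is not correct, and the error is in the converse direction of your ``central geometric input''. Take $\calT=0$ and let $(\calF',s')$ be any $C$-framed $f$-stable pair with $\ch_2(\calF')=[C]+m[\Sigma]$, $m\geq 1$. By Theorem~\ref{thm:Pi} such pairs exist with $\calF'_\Sigma\neq 0$ and with $\mathsf{Coker}(s')$ genuinely one-dimensional (indeed set-theoretically supported on $\Sigma$ with $\mu_{\min}\geq 0$). Your reconstruction would then produce $(\calF,s)=(\calF',s')$, but this is \emph{not} a PT pair, since $\mathsf{Coker}(s)=\mathsf{Coker}(s')$ is not zero-dimensional. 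More generally, for any lift $s$ of $s'$ across an extension $0\to\calT\to\calF\to\calF'\to 0$, the surjection $\mathsf{Coker}(s)\twoheadrightarrow\mathsf{Coker}(s')$ forces $\mathsf{Coker}(s)$ to have a one-dimensional quotient whenever $\mathsf{Coker}(s')$ does; so no choice of $(\calT,\sigma)$ can repair this. Thus the locus of $C$-framed PT pairs inside $\Coh^\scrO_{\leqslant 1}(Y)$ simply does not decompose as the Hall product of exceptional PT pairs with $C$-framed $f$-stable pairs.

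The paper's argument avoids this by passing through the DT side. Using the DT/PT correspondence of \cite{ST-Hilbert}, one reduces to the identity $f\textrm{-}\mathsf{PT}_C(Y)=\mathsf{DT}_C(Y)/\mathsf{DT}_{\mathsf{ex}}(Y)$ and then proves a chain of Hall identities in $H_\infty(\scrA_{\leqslant 1})$ (Lemmas~\ref{lem:sf-DB}--\ref{lem:sf-G}) that together yield
\[
\mathbf{1}_{\calH_C}\ast\mathbf{1}_{\scrT_f}\ast\mathbf{1}_{(\scrF_f)_\Sigma}
\;=\;
\mathbf{1}_{\calH_{\mathsf{ex}}}\ast\mathbf{1}_{\scrT_f}\ast\mathbf{1}_{\fSP_C}\ast\mathbf{1}_{(\scrF_f)_\Sigma}\ .
\]
The auxiliary factors $\mathbf{1}_{\scrT_f}$ and $\mathbf{1}_{(\scrF_f)_\Sigma}$ are precisely what absorbs the one-dimensional piece of $\mathsf{Coker}(s')$ that your argument loses; they are then cancelled after passing to Laurent truncations and applying the integration map, where Corollary~\ref{cor:pairing} is invoked. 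In short, you have correctly identified the general strategy and the role of Corollary~\ref{cor:pairing}, but the single product identity you propose is false, and one really needs the longer chain of identities (or an equivalent detour through Hilbert schemes) to make the cancellation work.
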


We postpone the proof of the theorem to Appendix~\ref{sec:proof-C-framed-identity}. We shall proceed by analogy to \cite[\S4]{Bridgeland-Curve-Counting} and \cite[\S6]{BS-Curve-counting-crepant}. In \S\ref{subsec:motHall} and \S\ref{subsec:hall-identity} will derive the relevant identities in the infinite type motivic Hall algebra of stack functions. As explained in \S\ref{subsec:proof}, these identities result in a fake proof of identity \eqref{eq:f-pairs-identity-B}. The actual proof is then obtained using Laurent truncations in complete analogy to \cite[\S7]{BS-Curve-counting-crepant}. 

\section{$C$-framed flop identity}\label{sec:flop-identity}

Consider a \textit{threefold flop diagram}
\begin{align}\label{eq:flop-diagram-A}
	\begin{tikzcd}[ampersand replacement=\&]
		Y^+ \ar[swap]{dr}{f^+} \ar[dashed]{rr}{\phi}\& \& Y^- \ar{dl}{f^-}\\
		\& X \&
	\end{tikzcd}\ ,
\end{align}
where  $f^\pm \colon Y^\pm\to X$ are contractions satisfying the conditions of Assumption~\ref{assumption:f} and $\phi$ is a canonical isomorphism $\phi\colon Y^+\smallsetminus \Sigma^+\xrightarrow{\sim} Y^-\smallsetminus \Sigma^-$. Recall that $\nu$ denotes the singular point of $X$.
\begin{remark}
	Note that given a morphism $f\colon Y \to X$ as in Assumption~\ref{assumption:f}, results of \cite{Kawamata-Crepant-Blowup,Mori-Flip-Theorem, Kollar-Flops} imply that a flop of the form \eqref{eq:flop-diagram-A}, with e.g. $f=f^+$, always exists and it is moreover unique by \cite[Corollary~6.4]{KM-Birational}. 
\end{remark}
Let us assume that $f^+\colon Y^+\to X$ satisfies Assumption~\ref{assumption:divisor}. In particular, there exists a Weil divisor $W\subset X$ for which the strict transform of $W$ is a smooth divisor $S^+\subset Y^+$ so that the restriction $f^+\vert_{S^+}$ is an isomorphism onto $W$. 

Let $C\subset W$ be a reduced irreducible curve on $W$ so that $\nu \in C^{\mathsf{sing}}$. The strict transform of $C$ with respect to $f^+$ is a reduced irreducible curve $C^+\subset S^+$, also mapped isomorphically to $C$ by $f^+$. Let $S^-$, $C^-$ be the strict transforms of $W,C$ respectively under the contraction $f^-\colon Y^-\to X$. Then $C^-$ is a reduced irreducible curve on $Y^-$. An extra assumption will be made in the following, namely:
\begin{assumption}\label{assumption:surface} 
	The strict transform $S^-$ of $W$ is a reduced irreducible divisor on $Y$ which coincides with the scheme-theoretic inverse image $S^-=(f^-)^{-1}(W)$. Moreover, the singular locus of $S^-$ is zero-dimensional.
\end{assumption}

\begin{remark}
	In \S\ref{sec:examples}, we shall construct an example of a flop transition between projective threefolds satisfying Assumptions~\ref{assumption:f}, \ref{assumption:divisor}, and \ref{assumption:surface}. 
\end{remark}

The goal of this section is to prove an identity relating the Euler character invariants of the moduli space of stable pairs on $C^-$ to those of the moduli space of $C^+$-framed $f^+$-stable pairs on $Y^+$. 

\subsection{Transformation of numerical classes}\label{subsec:transformation-classes} 

Let $NS(Y^\pm)$ be the \textit{Neron-Severi group} of $Y^\pm$, i.e., the image of $\ch_1\colon \Pic(Y^\pm)\to A_2(Y^\pm)$, and let $N^1(Y^\pm)$ be the group of \textit{numerical equivalence classes} of Cartier divisors on $Y^\pm$, i.e., the quotient of $NS(Y^\pm)$ by its torsion subgroup. $N^1(Y^\pm)$ is a finitely generated free abelian group. One has an isomorphism 
\begin{align}
	\begin{tikzcd}[ampersand replacement=\&]
		\phi^1_\ast \colon N^1(Y^+) \arrow{r}{\sim} \& N^1(Y^-)
	\end{tikzcd}\ ,
\end{align}
determined by the chain of isomorphisms 
\begin{align}
	\begin{tikzcd}[ampersand replacement=\&]
		A_2(Y^+) \arrow{r}{\sim} \& A_2(Y^+ \smallsetminus \Sigma^+)\arrow{r}{\sim} \&
		A_2(Y^-\smallsetminus \Sigma^-) \arrow{r}{\sim} \& A_2(Y^-)
	\end{tikzcd}\ .
\end{align}
More explicitly, this isomorphism is determined by the assignment 
\begin{align}
	D^+\longmapsto D^+ \cap Y^+ \smallsetminus \Sigma^+ \longmapsto \overline{D^+ \cap Y^+ \setminus \Sigma^+}\ ,
\end{align}
where $\overline{D^+ \cap Y^+ \setminus \Sigma^+}$ denotes the scheme-theoretic closure of $D^+ \cap Y^+ \setminus \Sigma^+$ in $Y^-$. 

Using the non-degenerate bilinear form $N^1(Y^\pm) \times N_1(Y^\pm) \to \Z$, one obtains an isomorphism $\phi_{1,\, \ast}\colon N_1(Y^+) \xrightarrow{\sim} N_1(Y^-)$, where $\phi_{1,\, \ast} = ((\phi^1_\ast)^{\vee})^{-1}$. More explicitly, given any class $\beta \in N_1(Y^+)$, the image $\phi_{1,\, \ast}(\beta) \in N_1(Y^-)$ is uniquely determined by the relations
\begin{align}
	[D^+]\cdot \beta = \phi^1_\ast([D^+])\cdot \phi_{1,\ast}(\beta) \ ,
\end{align}
with $[D^+]\in N^1(Y^+)$ arbitrary. Recall also that $\phi_\ast([\Sigma^+]) = -[\Sigma^-]$. 

Let $f_{S^-}\colon S^- \to W$ be natural projection.  Assumption~\ref{assumption:surface} implies that the scheme-theoretic inverse image $(f^-)^{-1}(C)$ coincides with the inverse image $f_{S^-}^{-1}(C)$, regarded as a closed subscheme of $Y$ contained in $S^-\subset Y$. 
Moreover, since $S^-$ is the strict transform of $S^+$, and $S^+\cdot \Sigma^+=1$ by Assumption~\ref{assumption:divisor}, one has $S^-\cdot \Sigma^-=-1$. Therefore $\Sigma^-$ is contained in $S^-$ as a closed subscheme. This implies that $(f_{S^-})^{-1}(C)$ is the scheme-theoretic union of two irreducible components:
\begin{align}
	(f_{S^-})^{-1}(C) = C^-\cup Z_{\Sigma^-}\ ,
\end{align}
where $(Z_{\Sigma^-})_{\mathsf{red}}=\Sigma^-$. Let $m_{\Sigma^-}\in \N$ be defined by $\ch_2(\scrO_{Z_{\Sigma^-}}) = m_{\Sigma^-} [\Sigma^-]$ in $N_1(Y^-)$.

\begin{lemma}\label{lem:flop-isomorphim}
	Under Assumptions~\ref{assumption:f}, \ref{assumption:divisor}, and \ref{assumption:surface}, 
 the following identity holds in $N_1(Y^-)$.
	\begin{align}
			\phi_{1,\, \ast}([C^+] ) = [C^-] + m_{\Sigma^-} [\Sigma^-] \ .
	\end{align}
\end{lemma} 

\begin{proof}
	By construction, $\phi$ identifies the open curves $C^{+,\, \circ} \coloneqq C^+\cap (Y^+\smallsetminus \Sigma^+)$ and $C^{-,\, \circ}\coloneqq C^-\cap (Y^-\smallsetminus \Sigma^-)$. Furthermore, $C^\pm$ is the scheme-theoretic closure of $C^{\pm,\, \circ}$ and $(C^\pm \cap\Sigma^\pm)_{\mathsf{red}}$ is zero-dimensional. The image $\phi_{1,\, \ast}([C^+])$ is uniquely determined by the intersection numbers 
	\begin{align}
		\gamma^- \cdot \phi_{1,\, \ast}([C^+]) = (\phi^1_\ast)^{-1}(\gamma^-) \cdot [C^+]\ , 
	\end{align}
	with $\gamma^-\in N^1(Y^-)$. Without loss of generality, we can assume that $\gamma^-$ is sufficiently ample so that the following conditions hold for any sufficiently generic divisor $D^-\in \gamma^-$:
	\begin{enumerate}\itemsep0.2cm
		\item \label{item:a} the scheme-theoretic intersection of $D^-$ and $S^-$ is a smooth projective curve $\Gamma^-\subset (S^-)^{\mathsf{reg}}$ which intersects $\Sigma^-$ transversely at finitely many closed points $p_1,\ldots, p_d$ contained in $(S^-)^{\mathsf{reg}}\smallsetminus C^-$, and 
		\item \label{item:b} $(\Gamma^-\cap C^-)_{\mathsf{red}}$ is a zero-dimensional set contained in $(C^-)^{\mathsf{reg}} \smallsetminus \Sigma^-$.
	\end{enumerate} 
	
	Let $D\subset X$ be the scheme theoretic image of $D^-$ through $f^-\colon Y^-\to X$ and let $D^+\subset Y^+$ be the strict transform of $D$. 
	Then $(\phi^1_\ast)^{-1}(\gamma^-)=[D^+]$. Note also that 
	\begin{align}
		[D^+]\cdot [\Sigma^+] = - [D^-]\cdot [\Sigma^-]<0 \ ,
	\end{align}
	hence $\Sigma^+$ is contained in $D^+$ as a closed subscheme. 
	
	Let $\sigma\colon \widetilde{S}^-\to S^-$ be a minimal resolution of singularities, and let $\widetilde{\Sigma}^-$, $\widetilde{C}^-$ be the strict transforms of $\Sigma^-$ and $C^-$, which are divisors on $\widetilde{S}^-$. Then the inverse image $\sigma^{-1}(f_{S^-})^{-1}(C)$ is a scheme-theoretic union of effective divisors 
	\begin{align}
		\sigma^{-1}(f_{S^-})^{-1}(C) = \widetilde{C}^- \cup\, m_{\Sigma^-}\widetilde{\Sigma}^-\, \cup \Delta\ , 
	\end{align}
	where $(\Delta)_{\mathsf{red}}$ is contained in the exceptional locus of $\sigma$. Since $\Gamma^-$ is entirely contained in the smooth locus of $S^-$, its strict transform $\widetilde{\Gamma}^-$ coincides with the inverse image $\sigma^{-1}(\Gamma^-)$ and it is canonically isomorphic to $\Gamma^-$. Furthermore it is also disjoint from the exceptional locus of $\sigma$. 
	
	Let $\Gamma\subset W$ be the scheme theoretic image of $\Gamma^-$. Clearly, $\Gamma$ coincides with the scheme theoretic image of $\widetilde{\Gamma}^-$ via the the proper morphism $f_{S^-}\circ \sigma\colon \widetilde{S}^- \to W$. Applying the projection formula for 
	$f_{S^-}\circ \sigma$, one finds 
	\begin{align}
		[\Gamma] \cdot [C]= m_{\Sigma^-}\, [\widetilde{\Gamma}^-]\cdot [\widetilde{\Sigma}] + [\widetilde{\Gamma}^-] \cdot [\widetilde{C}^-] \ .
	\end{align}
	Given assumptions \eqref{item:a} and \eqref{item:b} above, since $\Sigma^-$ and $C^-$ are subschemes of $S^-$, this further yields 
	\begin{align}
		[\Gamma] \cdot [C] = m_{\Sigma^-} [D^-]\cdot [\Sigma^-] + [D^-]\cdot [C^-]\ ,
	\end{align}
	where the right-hand-side is a sum of intersection products in $Y^-$. In order to finish the proof, recall that the induced projection $f_{S^+}\colon S^+ \to S$ is an isomorphism under the current assumptions. Set $\Gamma^+\coloneqq (f_{S^+})^{-1}(\Gamma)$ and note that $\Gamma^+$ is canonically isomorphic to $\Gamma$. The same holds for the $C^+\subset S^+$. Then one obtains 
	\begin{align}
		[D^+]\cdot [C^+] = [\Gamma^+]\cdot [C^+]  = [\Gamma]\cdot [C]  = [D^-]\cdot (m_{\Sigma^-}[\Sigma^-] + [C^-])\ . 
	\end{align}
	In conclusion, one obtains an identity 
	\begin{align}
		\gamma^-\cdot \phi_{1,\, \ast}([C^+]) = [D^-]\cdot (m_{\Sigma^-}[\Sigma^-] + [C^-])\ ,
	\end{align}
	for any ample divisor class $\gamma_-$ satisfying conditions \eqref{item:a} and \eqref{item:b} above. This proves the claim. 
\end{proof}

\subsection{Flop identity}\label{subsec:flop-identity}  

Let $\mathsf{PT}_{C^\pm}(Y^\pm)$ and $\mathsf{PT}_{\mathsf{ex}}(Y^\pm)$ be the generating functions \eqref{eq:PT} and \eqref{eq:PTex}, respectively, associated to $f^\pm\colon Y^\pm \to X$. Set
\begin{align}
	\mathsf{PT}'_{C^\pm}(Y^\pm; q,Q)=
	\frac{\mathsf{PT}_{C^\pm}(Y^\pm; q,Q)}{\mathsf{PT}_{\mathsf{ex}}(Y^\pm; q,Q)}\ .
\end{align}
As in Theorem~\ref{thm:C-framed-identity}, these are elements of $\Lambda_q[[Q]]$. Recall that $\C\{q,Q\}$ denotes the vector space of bi-formal power series $\sum_{\ell, m \in \Z} c_{\ell,m}q^\ell Q^m$. Then we have:
\begin{proposition}\label{prop:C-framed-flop-A} 
	The following identity holds in $\C\{q,Q\}$ under Assumptions~\ref{assumption:f}, \ref{assumption:divisor}, and \ref{assumption:surface}. 
	\begin{align}\label{eq:C-framed-flop-A} 
		Q^{m_{\Sigma^-}}\, \mathsf{PT}'_{C^+}(Y^+; q, Q^{-1}) =\mathsf{PT}'_{C^-}(Y^-; q, Q) \ .
	\end{align}
	In particular, both sides are polynomial functions of $Q$ with coefficients in $\Lambda_q$.
\end{proposition}

\begin{proof}
	The proof is completely analogous to the proof of \cite[Proposition~2.4]{Maulik-Stable-pairs}, which is a $C$-framed variant of  \cite[Theorem~3.26 and Corollary~3.27]{Calabrese-DT-Flops}\footnote{An equivalent proof of this result is provided also in \cite{Toda_Curve_counting_flops}.}, and it also uses Lemma~\ref{lem:flop-isomorphim}. One should also keep in mind that in the present context the derived equivalence $\Phi\colon \catDb(Y^+) \xrightarrow{\sim} \catDb(Y^-)$ required in \cite[\S3.9]{Calabrese-DT-Flops} is a special case of \cite[Theorem~C]{VdB-Flops}. 
	
	The proofs of \cite[Proposition~2.4]{Maulik-Stable-pairs} and \cite[Theorem~3.26 and Corollary~3.27]{Calabrese-DT-Flops} rely on wallcrossing identities in a certain completion of the motivic Hall algebra of the stack $\Coh(\catCoh_{\leqslant 1}(Y)$, which translate into generating function identities via an integration map. The completion used in this proces was introduced in \cite{Bridgeland-Curve-Counting}, and it is briefly reviewed in Appendix~\ref{subsec:motHall} for completeness. In the present framework, the motivic wallcrossing identities are identical to those of \textit{loc. cit.}, the main difference residing in the fact that the
	threefolds  $Y^\pm$ are not $K$-trivial. As explained in \S\ref{subsec:motHall}, in general, this precludes the construction of an integration map defined on the whole motivic Hall algebra. However, using the local $K$-triviality condition proven in Lemma~\ref{lem:trivial},  Proposition~\ref{prop:integrationB} provides a certain truncation of the integration map, which applies to all wallcrossing identities in \cite{Maulik-Stable-pairs}, \cite{Calabrese-DT-Flops}. Note that a similar construction was employed in \cite{ST-Hilbert} for non-Calabi-Yau threefolds, using analogous arguments.
\end{proof}

As shown in Theorem~\ref{thm:C-framed-identity}, one also has the identity 
\begin{align}
	f^+\textrm{-}\mathsf{PT}_{C^+}(Y^+; q,Q)= \mathsf{PT}'_{C^+}(Y^+; q,Q)\ .
\end{align}
Therefore Theorem~\ref{thm:C-framed-identity} and Proposition~\ref{prop:C-framed-flop-A} yield:\begin{corollary}\label{cor:C-framed-flop-B} 
The following identity holds in $\Lambda_q$ under Assumptions~\ref{assumption:f}, \ref{assumption:divisor}, and \ref{assumption:surface}.
	\begin{align}\label{eq:C-framed-flop-B}
		 \Big[f^+\textrm{-}\mathsf{PT}_{C^+}(Y^+; q, Q^{-1})\Big]\Big\vert_{Q^{-m_{\Sigma^-}}}=q^{\chi(\scrO_{C^-})}\sum_{k\in \N}\chi(\SP_{C^-}^k(Y^-)) q^k\ ,
	\end{align}
	where the right-hand-side is the coefficient of $f^+\textrm{-}\mathsf{PT}_{C^+}$ of degree $Q^{-m_{\Sigma^-}}$. 
	
	Here, $\SP_{C^-}^k(Y^-)\coloneqq\SP_{C^-}(Y^-; 0, k+\chi(\scrO_{C^-}))$, i.e., it is the moduli space of stable pairs $(\calF, s)$ on $Y^-$ such that $\calF$ is scheme-theoretically supported on $C^-$, $\ch_2(\calF)=[C]$, and $\chi(\calF)=k+\chi(\scrO_{C^-})$.
\end{corollary}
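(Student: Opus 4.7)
The plan is to chain together the two main tools established earlier in the paper: Theorem \ref{thm:C-framed-identity}, which expresses $f^+\textrm{-}\mathsf{PT}_{C^+}$ as the ratio $\mathsf{PT}_{C^+}/\mathsf{PT}_{\mathsf{ex}}$, and the flop identity of Proposition \ref{prop:C-framed-flop-A}, which exchanges this ratio on $Y^+$ (with $Q\mapsto Q^{-1}$) for the corresponding ratio on $Y^-$. Concretely, I would first substitute $Q\mapsto Q^{-1}$ in Theorem \ref{thm:C-framed-identity} to obtain
\begin{align}
	f^+\textrm{-}\mathsf{PT}_{C^+}(Y^+; q, Q^{-1}) = \mathsf{PT}'_{C^+}(Y^+; q, Q^{-1})\ ,
\end{align}
then multiply both sides by $Q^{m_\Sigma}$ and apply Proposition \ref{prop:C-framed-flop-A} to get
\begin{align}
	Q^{m_\Sigma}\, f^+\textrm{-}\mathsf{PT}_{C^+}(Y^+; q, Q^{-1}) = \mathsf{PT}'_{C^-}(Y^-; q, Q)\ .
\end{align}
Extracting the coefficient of $Q^{-m_\Sigma}$ on the left amounts to extracting the $Q^0$-coefficient on the right, so the task reduces to computing $[\mathsf{PT}'_{C^-}(Y^-;q,Q)]\bigr|_{Q^0}$.

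Next, I would observe that $\mathsf{PT}_{\mathsf{ex}}(Y^-;q,Q)$ is a power series in $Q$ whose constant term (in $Q$) is $1$: the only contribution at $m=0$ comes from the trivial stable pair, corresponding to $\SP(Y^-;0,0)$ being a reduced point. Consequently $\mathsf{PT}_{\mathsf{ex}}(Y^-;q,Q)^{-1}$ is also a power series in $Q$ with constant term $1$, and the $Q^0$-coefficient of the ratio $\mathsf{PT}'_{C^-}(Y^-;q,Q)=\mathsf{PT}_{C^-}(Y^-;q,Q)/\mathsf{PT}_{\mathsf{ex}}(Y^-;q,Q)$ coincides with the $Q^0$-coefficient of $\mathsf{PT}_{C^-}(Y^-;q,Q)$, namely
\begin{align}
	\sum_{n\in\Z}\chi(\SP_{C^-}(Y^-;0,n))\, q^n\ .
\end{align}

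Finally, using the convention $\SP_{C^-}(Y^-;k)=\SP_{C^-}(Y^-;0,k+\chi(\scrO_{C^-}))$ and reindexing by $k=n-\chi(\scrO_{C^-})$, this generating function equals $q^{\chi(\scrO_{C^-})}\sum_{k\in\Z}\chi(\SP_{C^-}(Y^-;k))\, q^k$, which is exactly the right-hand side of \eqref{eq:C-framed-flop-B}.

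The argument is essentially bookkeeping once the two main ingredients are in hand. The only delicate point is ensuring the substitution $Q\mapsto Q^{-1}$ and the coefficient extraction are well-defined; this is guaranteed because $\mathsf{PT}_{\mathsf{ex}}(Y^\pm;q,Q)=1+O(Q)$ makes $\mathsf{PT}'_{C^\pm}$ a power series in $Q$, and Proposition \ref{prop:C-framed-flop-A} ensures that after multiplying by $Q^{m_\Sigma}$ the left-hand side is also a power series in $Q$, so that the identification of coefficients in the two rings is unambiguous. No new geometric input is needed.
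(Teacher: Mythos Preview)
Your proposal is correct and follows exactly the approach the paper takes: the paper simply states that Theorem~\ref{thm:C-framed-identity} together with Proposition~\ref{prop:C-framed-flop-A} yield the corollary, and you have spelled out the bookkeeping (in particular the observation that $\mathsf{PT}_{\mathsf{ex}}(Y^-;q,Q)=1+O(Q)$, so the $Q^0$-coefficient of the ratio agrees with that of the numerator) that the paper leaves implicit.
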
 

Using Proposition~\ref{prop:C-framed-Hilbert-C}, the left-hand-side of identity \eqref{eq:C-framed-flop-B} can be rewritten as a generating function of Euler numbers for the Flag Hilbert schemes $\FHilb_{T_n}^k(C^+; m)$ considered in \S\ref{subsect:relation-hilbert-scheme}. Recall that $\FHilb_{T_n}^k(C^+; m)$ parametrizes flags $\calI\subset \calI'\subset \scrO_{C^+}$ of ideal sheaves such that $\chi(\scrO_{C^+}/\calI')=k$ and $\calI'/\calI$ is a length $m$ zero-dimensional sheaf of $T_n$, where $T_n$ is the scheme-theoretic intersection of $S^+$ and $\Sigma^+_n$ in $Y^+$. Then, Proposition~\ref{prop:C-framed-Hilbert-C} immediately yields the identity 
\begin{align}\label{eq:CfrflopC} 
	\Big[f^+\textrm{-}\mathsf{PT}_{C^+}(Y^+; q, Q^{-1})\Big]\Big\vert_{Q^{-m_{\Sigma^-}}}= q^{\chi(\scrO_{C^+})-m_{\Sigma^-}} \sum_{k\in \N} \chi(\FHilb_{T_n}^k(C^+; m_{\Sigma^-})) q^k\ . 
\end{align}
Therefore, we obtain:
\begin{corollary}\label{cor:C-framed-flop-C}
	The following identity holds in $\Lambda_q$ under Assumptions~\ref{assumption:f}, \ref{assumption:divisor}, and \ref{assumption:surface}.
	\begin{align}\label{eq:C-framed-flop-C}
		q^{\chi(\scrO_{C^+})} \sum_{k\in \N} \chi(\FHilb_{T_n}^k(C^+; m_{\Sigma^-})) q^k= q^{\chi(\scrO_{C^-})}\sum_{k\in \N}\chi(\SP_{C^-}^k(Y^-)) q^k\ .
	\end{align}
\end{corollary}

We can also provide a punctual version of identity \eqref{eq:C-framed-flop-C} as follows. Let $(C^-\cap \Sigma^-)_{\mathsf{red}}=\{ y_1, \ldots, y_d\}$. For each $1\leq i \leq d$, let $\SP_{C^-,\, y_i}^k(Y^-)\subset \SP_{C^-}^k(Y^-)$ be the reduced closed subscheme defined by the condition that $\mathsf{Coker}(s)$ is set-theoretically supported at $y_i$. 

Recall that $(C^+\cap \Sigma^+)_{\mathsf{red}}$ consists of a single point $p\in (C^+)^{\mathsf{sing}}$. Let $\Hilb_p^{k+m_{\Sigma^-}}(C^+)$ be the punctual Hilbert scheme of $C^+$ parametrizing length $k+m_{\Sigma^-}$ zero-dimensional subschemes with set-theoretic support at $p$. Let $\FHilb_{T_n, p}^k(C^+; m_{\Sigma^-})\subset \FHilb_{T_n}^k(C^+; m_{\Sigma^-})$ be the inverse image of $\Hilb_p^{k+m_{\Sigma^-}}(C^+)$ via the canonical forgetful morphism $\FHilb_{T_n}^k(C^+; m_{\Sigma^-}) \to \Hilb^{k+m_{\Sigma^-}}(C^+)$ sending the flag $\calI\subset \calI'\subset \scrO_{C^+}$ to $\calI\subset \scrO_{C^+}$. Then, by an inductive stratification argument, Corollary~\ref{cor:C-framed-flop-C} further yields:
\begin{corollary}\label{cor:C-framed-flop-D} 
	The following identity holds in $\Lambda_q$ under Assumptions~\ref{assumption:f}, \ref{assumption:divisor}, and \ref{assumption:surface}.
	\begin{align}\label{eq:C-framed-flop-D} 
		q^{\chi(\scrO_{C^+})} \sum_{k\in \N} \chi(\FHilb_{T_n, p}^k(C^+; m_{\Sigma^-}))q^k = q^{\chi(\scrO_{C^-})} \prod_{i=1}^d \sum_{k\in \N} \chi(\SP_{C^-,\, y_i}^k(Y^-))q^k\ .
	\end{align}
\end{corollary}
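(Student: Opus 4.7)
The plan is to derive Corollary~\ref{cor:C-framed-flop-D} from Corollary~\ref{cor:C-framed-flop-C} by factoring both generating series in \eqref{eq:C-framed-flop-C} over closed points of $C^\pm$ and cancelling a common ``away from the exceptional locus'' factor. First, I would decompose the left-hand side of \eqref{eq:C-framed-flop-C}. Via the forgetful morphism $(\calI \subset \calI') \mapsto \calI$ from the Flag Hilbert scheme to $\Hilb^{k+m_\Sigma}(C^+)$, the condition that $\calI'/\calI$ is supported on $T_n$ forces $\calI$ and $\calI'$ to agree away from $p$. Stratifying $\Hilb^{k+m_\Sigma}(C^+)$ by the support of $\scrO_{C^+}/\calI$, the relative Flag Hilbert scheme decomposes over each stratum as a product between the punctual flag data at $p$ and a trivial factor away from $p$. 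This yields at the level of Euler characteristics
\begin{align}
\sum_{k\geq 0}\chi(\FHilb^k_{T_n}(C^+; m_\Sigma))\,q^k \;=\; \mathcal{F}^+_p(q)\cdot \mathsf{H}(C^+ \smallsetminus \{p\}; q) \ ,
\end{align}
where $\mathcal{F}^+_p(q) \coloneqq \sum_{k\geq 0}\chi(\FHilb^k_{T_n, p}(C^+; m_\Sigma))\,q^k$ and $\mathsf{H}(C^+ \smallsetminus \{p\}; q) \coloneqq \sum_{k\geq 0}\chi(\Hilb^k(C^+ \smallsetminus \{p\}))\,q^k$.

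Symmetrically, on the right-hand side I would stratify $\SP_{C^-}(Y^-; k)$ by the support cycle of $\mathsf{Coker}(s)$ on $C^-$, obtaining
\begin{align}
\sum_{k\in \Z}\chi(\SP_{C^-}(Y^-; k))\,q^k \;=\; \prod_{i=1}^d \mathcal{P}^-_{q_i}(q) \cdot \mathsf{SP}^{\mathrm{aw}}(q) \ ,
\end{align}
where $\mathcal{P}^-_{q_i}(q)\coloneqq \sum_{k\geq 0}\chi(\SP_{C^-, q_i}(Y^-; k))\,q^k$ and $\mathsf{SP}^{\mathrm{aw}}(q)$ is the generating series for stable pairs whose cokernel is supported in $C^- \smallsetminus \{q_1,\ldots,q_d\}$. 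The key matching step is to show $\mathsf{H}(C^+ \smallsetminus \{p\}; q) = \mathsf{SP}^{\mathrm{aw}}(q)$. The flop $\phi$ identifies $C^+ \smallsetminus \{p\}$ and $C^- \smallsetminus \{q_1,\ldots,q_d\}$ canonically with $C \smallsetminus \{\nu\}$ via $f^\pm$; moreover at every point of $C^- \smallsetminus \{q_i\}$ the curve $C^-$ is a Cartier divisor in the smooth part of $S^-$, hence locally lci in $Y^-$. Applying the local form of the Pandharipande--Thomas duality of Proposition~\ref{prop:C-framed-Hilbert-C} (\cite[Proposition~B.5]{PT-BPS}) at each such point identifies the punctual PT moduli with the punctual Hilbert scheme of $C^-$, which via the flop iso matches the punctual Hilbert scheme of $C^+$.

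Substituting the two factorizations into \eqref{eq:C-framed-flop-C} and cancelling the common invertible power series $\mathsf{H}(C^+ \smallsetminus \{p\}; q)$ yields the desired identity \eqref{eq:C-framed-flop-D}. The main obstacle will be to rigorously justify the Kapranov-zeta-type factorizations on both sides of the identity as Euler-characteristic products over closed points, together with the local PT--Hilbert identification at non-exceptional lci points; both ingredients are standard in spirit but require some care since $C^-$ may fail to be globally lci, the required duality having to be applied in a purely local manner on $C^- \smallsetminus \{q_i\}$.
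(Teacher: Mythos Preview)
Your proposal is correct and matches the paper's approach. The paper's proof is the single phrase ``by an inductive stratification argument,'' which is exactly the stratification-by-support and cancellation you describe; you have simply unpacked it. Your identification of the key technical point---that the ``away'' factors on the two sides match because $C^+\smallsetminus\{p\}\cong C^-\smallsetminus\{q_1,\ldots,q_d\}$ is locally planar (Cartier in the smooth locus of $S^\pm$), so the punctual PT/Hilbert identification of \cite[Proposition~B.5]{PT-BPS} applies pointwise there---is precisely what makes the cancellation legitimate without any global lci hypothesis on $C^-$; this is why the stronger assumption only enters in Corollary~\ref{cor:C-framed-flop-E}. One minor remark: your citation of Proposition~\ref{prop:C-framed-Hilbert-C} is slightly misplaced, since that statement is formulated for $C^+$ specifically; the relevant input on the $C^-$ side is the underlying \cite[Proposition~B.5]{PT-BPS} applied locally on the Gorenstein open $C^-\smallsetminus\{q_i\}$.
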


Finally, we can also derive the following identity when $C^-$ has locally complete intersection singularities.
\begin{corollary}\label{cor:C-framed-flop-E} 
	In addition to Assumptions \ref{assumption:f}, \ref{assumption:divisor}, and \ref{assumption:surface}, suppose that $C^-$ has locally complete intersection singularities. Then, the following identity holds: 
	\begin{align}\label{eq:C-framed-flop-E} 
		q^{\chi(\scrO_{C^+})} \sum_{k\in \N}  \chi(\FHilb_{T_n, p}^k(C^+; m_{\Sigma^-})) q^k = q^{\chi(\scrO_{C^-})} \prod_{i=1}^d \sum_{k\in \N} \chi(\Hilb_{y_i}^k(C^-))q^k\ .
	\end{align}
\end{corollary}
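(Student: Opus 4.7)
The plan is to deduce identity \eqref{eq:C-framed-flop-E} from Corollary~\ref{cor:C-framed-flop-D} by invoking \cite[Proposition~B.5]{PT-BPS} to convert each factor on the right hand side of \eqref{eq:C-framed-flop-D} from a generating series over stable pair moduli into a generating series over punctual Hilbert schemes. Since the left hand sides of \eqref{eq:C-framed-flop-D} and \eqref{eq:C-framed-flop-E} coincide, and since the product on the right of \eqref{eq:C-framed-flop-D} is already indexed by the finite set $\{q_1,\ldots, q_d\}= (C^-\cap \Sigma^-)_{\mathsf{red}}$, the substitution will be term-by-term and no further combinatorics is needed.

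More concretely, I would first recall that under the locally complete intersection hypothesis on $C^-$, \cite[Proposition~B.5]{PT-BPS} provides a natural identification between the moduli space of Pandharipande-Thomas stable pairs $(\calF,s)$ on $Y^-$ with $\calF$ scheme-theoretically supported on $C^-$ and the Hilbert scheme of points on $C^-$. The identification sends a stable pair $(\calF,s)$ to the subscheme of $C^-$ determined by $\mathsf{Coker}(s)$; its inverse is realized through Grothendieck duality with respect to the dualizing sheaf $\omega_{C^-}$, whose reflexivity properties are guaranteed by the Gorenstein condition carried by LCI curves. This identification is compatible with the decomposition by set-theoretic support of $\mathsf{Coker}(s)$, hence restricts at each singular point $q_i$ to an isomorphism
\begin{align}
\SP_{C^-,\, q_i}(Y^-;k)\xrightarrow{\sim}\Hilb^k_{q_i}(C^-)
\end{align}
for every $k\geq 0$.

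Taking topological Euler characteristics yields $\chi(\SP_{C^-,\, q_i}(Y^-;k))=\chi(\Hilb^k_{q_i}(C^-))$, and substituting into \eqref{eq:C-framed-flop-D} produces the desired identity \eqref{eq:C-framed-flop-E}. The only point requiring attention is that the hypotheses of \cite[Proposition~B.5]{PT-BPS} match our setup: the LCI assumption on $C^-$ is the one stated, and the scheme-theoretic support condition on $\calF$ built into the definition of $\SP_{C^-,\, q_i}(Y^-;k)$ coincides with the one used in \textit{loc.\ cit.} There is no genuine obstacle here — Corollary~\ref{cor:C-framed-flop-D} has already done the heavy lifting, and the present statement is essentially a citation.
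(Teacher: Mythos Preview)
Your proposal is correct and follows essentially the same approach as the paper: both derive the identity from Corollary~\ref{cor:C-framed-flop-D} by invoking \cite[Proposition~B.5]{PT-BPS}, using that the LCI hypothesis makes the dualizing sheaf of $C^-$ a line bundle so that stable pairs on $C^-$ correspond to Hilbert schemes of points. The paper states the isomorphism $\SP_{C^-}(Y^-;k)\simeq \Hilb^k(C^-)$ globally and passes implicitly to the punctual version, while you spell out the punctual compatibility directly, but the argument is the same.
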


\begin{proof}
	By assumption, the dualizing sheaf of $C^-$ is a line bundle. Then  \cite[Proposition~B.5]{PT-BPS} yields an isomorphism $\SP_{C^-}^k(Y^-) \simeq \Hilb^k(C^-)$. Then identity \eqref{eq:C-framed-flop-E} follows from identity \eqref{eq:C-framed-flop-D}.
\end{proof}

\section{Examples and explicit results}\label{sec:examples}

This section presents a concrete construction of a flop transition between projective threefolds satisfying Assumptions~\ref{assumption:f}, \ref{assumption:divisor}, and \ref{assumption:surface}. Furthermore, we prove the following results.

First, fix $r,s,n\in \N$, with $r>s>n\geq 2$ and $r, s$, as well as $r,n$, coprime. Consider the planar curve singularity $C_{r,s}$ given by the equation $x^r=w^s$ and corresponding to the $(s, r)$ torus knot. Denote by $p$ the origin of $\C^2$. Let $T_n$ be the closed subscheme given by the equations $x=0$ and $w^n=0$. For any $t,m\geq 1$, let 
\begin{align}
	Z_{t,m}(q,x)  \coloneqq  \prod_{i=1}^t (1+xq^i + \cdots + (xq^i)^{m})
\end{align}
and let $Z_{t,m}^{(k)}(q)$ denote the coefficient of $x^k$ in $Z_{t,m}(q,x)$. In \S\ref{subsec:torus-invariant-+} we prove the following:
\begin{theorem}
	Let $\FHilb_{T_n, p}^k(C_{r,s}; s)$ be the punctual Flag Hilbert scheme parametrizing flags of ideal sheaves $\calI_1\subset \calI_2\subset \scrO_{C_{r,s}}$ so that $\scrO_{C_{r,s}}/\calI_1$ is set-theoretic supported at $p$, the ideal $\calI_2$ has colength $k$, and $\calI_2/\calI_1$ is the pushforward of a length $s$ zero-dimensional sheaf on $T_n$. Then 
	\begin{align}
		\sum_{k\geq 0} \chi(\FHilb_{T_n, p}^k(C_{r,s}; s)) q^k =  \frac{1}{1-q^s}\sum_{m=0}^{n-1} \sum_{k=s-n}^{s-m-1} q^{mr} Z^{(k)}_{r-1,n}(q)\ .
	\end{align}
\end{theorem} 
Moreover, consider the complete intersection curves of the form 
\begin{align}
	C_{r,t,n}\colon\begin{cases}
		xv - w^n =0 \\[2pt]
		x^{r-t}- v^{t} =0 
	\end{cases}
\end{align}
in $\C^3$, with $r,t,n\geq 2$ with $r>t$ and $(r,nt)$ coprime. In \S\ref{subsec:explicit-example-space-curve}, combining the above theorem with the results derived in the previous sections, we obtain:
\begin{theorem}
	For any $\ell \geq 0$, let $\Hilb_o^\ell(C_{r,t,n})$ be punctual Hilbert scheme parametrizing length $\ell$ zero-dimensional subschemes of $C_{r,t,n}$ with support at the origin $o$. Then 
	\begin{align}
		\sum_{\ell\geq 0} \chi(\Hilb_o^\ell(C_{r,t,n})) q^\ell = \frac{q^{-nt(t-1)/2} }{1-q^{tn}}\sum_{m=0}^{n-1} \sum_{k=tn-n}^{tn-m-1} q^{mr} Z^{(k)}_{r-1,n}(q) \ .
	\end{align}
\end{theorem} 

\subsection{Pagoda flop transition for complete intersection threefolds}\label{subsec:examples-section-A}

Let $P$ be the toric fourfold $(\C^6\setminus\Delta)/\C^\ast \times \C^\ast$ where the torus action has weights 
\begin{align}
	\begin{array}{cccccc}
		z_0 & z_1 & z_2 & z_3 & z_4 & z_5 \\
		1& 0 & 0 & 0 & 1 & -n \\
		0 & 1 & 1 & 1 & 0 & 1
	\end{array} 
\end{align}
and 
\begin{align}
	\Delta \coloneqq \{z_0=z_4=0\} \cup \{z_1=z_2=z_3=z_5=0\}\ . 
\end{align}
Let $X \subset P$ be the hypersurface given by
\begin{align}
	2z_1z_2-z_3^2 + z_4^{2n} z_5^2 =0\ .
\end{align}
\begin{lemma}\label{lem:example-O} 
	$X$ has a unique singular point at $z_i=0$, with $1\leq i \leq 4$. 
\end{lemma}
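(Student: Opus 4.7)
The plan is to cover $P$ by affine charts, compute the local equation of $X$ on each chart, and analyse the Jacobian. Since $P$ is obtained as a GIT quotient by the $(\C^\times)^2$-action with irrelevant locus $\Delta$, its affine charts are indexed by pairs $(i,j)$ with $i\in\{0,4\}$ and $j\in\{1,2,3,5\}$, corresponding to the condition $z_iz_j\neq 0$. On each such chart one uses the two torus factors to gauge-fix $z_i$ and $z_j$ to $1$, producing a system of four invariant coordinates in which $X$ is cut out by a single polynomial equation.

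First I would treat the chart $\{z_0\neq 0, z_5\neq 0\}$. Setting $z_0=z_5=1$ via the torus yields invariant coordinates $a = z_1/(z_5 z_0^n)$, $b = z_2/(z_5 z_0^n)$, $c = z_3/(z_5 z_0^n)$, $d = z_4/z_0$, in terms of which the defining equation of $X$ becomes (after rescaling by $z_5^{-2}z_0^{-2n}$)
\begin{align}
2ab - c^2 + d^{2n} = 0\ .
\end{align}
The vanishing of the Jacobian forces $a=b=c=0$ and $2n\,d^{2n-1}=0$; since $n\geq 1$ this gives $a=b=c=d=0$, which is a point of $X$. This yields a unique singular point, corresponding to $z_1=z_2=z_3=z_4=0$.

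Next I would check that all other charts are smooth. For the charts $\{z_0\neq 0,\,z_j\neq 0\}$ with $j\in\{1,2,3\}$ a direct rescaling gives local equations of the form $2y_{j'}-y_3^2+y_4^{2n}y_5^2 = 0$ (or its symmetric variant when $j=2$), respectively $2y_1y_2 - 1 + y_4^{2n}y_5^2 = 0$ when $j=3$. In the first two cases the equation is linear in one of the coordinates, so smooth; in the third the constant term $-1$ prevents the Jacobian from vanishing on $X$. For the charts $\{z_4\neq 0,\,z_5\neq 0\}$ and $\{z_4\neq 0,\,z_j\neq 0\}$ with $j\in\{1,2,3\}$, the same kind of rescaling produces equations with a constant or a linear term that rule out any singular locus by the same Jacobian argument.

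The main obstacle is purely bookkeeping: one must systematically enumerate the eight affine charts, correctly account for the weights $(1,0,0,0,1,-n)$ and $(0,1,1,1,0,1)$ when gauge-fixing, and divide the equation by the correct monomial to obtain the local form. No genuine difficulty arises because in every chart other than $\{z_0z_5\neq 0\}$ the polynomial acquires either a monomial linear in an invariant coordinate or a nonzero constant term, each of which immediately excludes critical points. The conclusion is that the singular locus of $X$ consists of the single torus-fixed point $[1:0:0:0:0:1]$, i.e.\ the locus $z_1=z_2=z_3=z_4=0$.
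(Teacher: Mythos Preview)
Your proof is correct and follows the same strategy as the paper: identify the local equation $2xy-z^2+w^{2n}=0$ on the chart $\{z_0z_5\neq 0\}$ and then verify smoothness on the remaining charts by direct Jacobian computations. The paper simply asserts that the latter ``follows by straightforward computations in toric affine charts,'' whereas you spell these out; otherwise the arguments coincide.
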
 

\begin{proof}
	Let $P^\circ\subset P$ denote the open subscheme  $\{ z_0\neq 0, \ z_5\neq 0\}$. Note that $P^\circ \simeq \C^4$ with toric affine coordinates
\begin{align}\label{eq:toric-coordinates} 
	\begin{cases}
		x= z_0^{}z_5^{-1} z_1\ , \\ 
		y= z_0^{}z_5^{-1} z_2\ , \\ 
		z= z_0^{}z_5^{-1} z_3\ , \\ 
		w= z_0^{-1} z_4\ .
	\end{cases}
\end{align}
Then the intersection $X^\circ \coloneqq X \cap P^\circ$ is the hypersurface given by
\begin{align}
	2xy -z^2 + w^{2n} =0\ .
\end{align}
This is precisely the threefold singularity studied in \cite[\S5, Remark~5.13]{Reid-Minimal-Models}. In particular, it has a unique singular point at the origin
which is furthermore a cDV singularity. In order to finish the proof, one has to check that $X$ is smooth at all points in the complement of $X^\circ$. This follows by straightforward computations in toric affine charts. 
\end{proof}

Let $Z^\pm\subset P$ be the closed subschemes given by
\begin{align}
	 \begin{cases}
	 	z_1 = 0\ ,\\ 
	 	z_3 - z_4^n z_5 =0\ ,
	 \end{cases}  \quad \text{and} \quad 
	\begin{cases}
		z_1 =0 \ ,\\ 
		z_3 + z_4^n z_5 = 0\ .
	\end{cases}
\end{align}
respectively. Let $P^{\pm}\to P$ be the blow-up of $P$ along $Z^\pm$, which is again smooth. Let $Y^{\pm}\subset P^{\pm}$ denote the strict transform of $X$ in $P^\pm$ and let $f^\pm \colon Y^\pm \to X$ denote the canonical projections. Let $\Sigma^\pm\coloneqq (f^\pm)^{-1}(\nu)$ be the scheme-theoretic inverse images of the singular point $\nu \in X$. Note that both $\Sigma^\pm$ coincide with the rational curve given by
\begin{align}
	z_i=0\ \text{ for } \ 1\leq i \leq 4 
\end{align}
as subschemes of $P\times \PP^1$. Then the following holds.
\begin{proposition}\label{prop:example-A} 
	$Y^\pm$ is smooth, the projection  $f^\pm \colon Y^\pm \to X$ satisfies Assumption~\ref{assumption:f}.
\end{proposition}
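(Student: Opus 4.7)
The plan is to reduce everything to an explicit local computation in the affine chart $P^\circ\simeq\C^4$ containing $\nu$, and then to verify smoothness of the strict transform together with the normal-bundle description of the exceptional curve. First I observe that $X$ is reduced and normal: being a hypersurface in the smooth fourfold $P$, $X$ is Cohen-Macaulay and satisfies $S_2$; by Lemma~\ref{lem:example-O} its singular locus consists of the single point $\nu$, of codimension three, so $X$ is also $R_1$, and Serre's criterion gives normality. The remaining assertions are local around $\nu$, so I would restrict to $X^\circ=X\cap P^\circ$, where in the affine coordinates \eqref{eq:toric-coordinates} the equation of $X^\circ$ factors as $2xy=(z-w^n)(z+w^n)$ and $Z^\pm\cap X^\circ=\{x=0,\,z\mp w^n=0\}$. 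Since $X$ is smooth away from $\nu$, the Weil divisor $Z^\pm$ is Cartier there, so the blow-up $Y^\pm\to X$ is an isomorphism over $X\smallsetminus\{\nu\}$; in particular the strict transform of $X$ inside $P^\pm$ coincides with the blow-up of $X$ along $Z^\pm$, and $f^\pm$ is automatically an isomorphism on the complement of $(f^\pm)^{-1}(\nu)$.

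I would then describe $Y^+$ explicitly on the two standard affine charts $U_t,U_s$ of the exceptional $\PP^1$ of the blow-up of the ideal $(x,z-w^n)$, the analysis of $Y^-$ being identical after flipping the sign of $w^n$. On $U_t$ the blow-up relation is $z-w^n=xt$, and the strict-transform equation $2y=t(z+w^n)=t(2w^n+xt)$ can be used to eliminate $y$ and $z$, yielding $Y^+\cap U_t\simeq\Spec\C[x,w,t]$. On $U_s$ the relation $x=(z-w^n)s$ together with the strict transform $z=2sy-w^n$ give $Y^+\cap U_s\simeq\Spec\C[s,y,w]$. Both charts are smooth affine $3$-folds, and combined with the isomorphism over $X\smallsetminus\{\nu\}$ this proves $Y^+$ is smooth. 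The exceptional fibre $\Sigma^+=(f^+)^{-1}(\nu)$ is cut out by $\{x=w=0\}$ on $U_t$ and by $\{y=w=0\}$ on $U_s$, glued along $st=1$, so $\Sigma^+\simeq\PP^1$ is smooth and connected.

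It remains to compute the normal bundle $\calN_{\Sigma^+/Y^+}$. On the overlap the chart-$U_s$ coordinate $y$ is expressed in chart-$U_t$ coordinates by $y=tw^n+xt^2/2$, and restricting $dy$ to the first-order neighborhood of $\Sigma^+$ (evaluating its coefficients at $x=w=0$) gives, for $n\geq 2$, $dy|_{\Sigma^+}=(t^2/2)\,dx|_{\Sigma^+}$, since the contributions $ntw^{n-1}\,dw$ and $w^n\,dt$ both vanish on $\{x=w=0\}$. Together with $dw|_{\Sigma^+}=dw|_{\Sigma^+}$, the transition matrix for the conormal bundle from the frame $(dx,dw)|_{\Sigma^+}$ on $U_t$ to the frame $(dy,dw)|_{\Sigma^+}$ on $U_s$ is diagonal with entries $(t^2/2,\,1)$. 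Comparing with the standard transition $t^k$ for the frame of $\scrO_{\PP^1}(k)$ identifies the conormal bundle with $\scrO_{\PP^1}(2)\oplus\scrO_{\PP^1}$, whence $\calN_{\Sigma^+/Y^+}\simeq\scrO_{\PP^1}\oplus\scrO_{\PP^1}(-2)$ and $\Sigma^+$ is a $(0,-2)$-curve, as demanded by Assumption~\ref{assumption:f}. The analogous computation for $\Sigma^-$ uses the sign-changed local equation and yields the same normal bundle; a consistency check is provided by the fact that $f^\pm$ is crepant and $K_{\Sigma^\pm}$ has degree $-2$, forcing $\deg\calN_{\Sigma^\pm/Y^\pm}=-2$ via adjunction.

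The most delicate step is the final transition-function analysis, which singles out the condition $n\geq 2$: for $n=1$ an additional term $t\,dw$ survives in $dy|_{\Sigma^+}$, making the transition matrix upper-triangular rather than diagonal and yielding a $(-1,-1)$-curve (the Atiyah flop), which would violate Assumption~\ref{assumption:f}. Thus the construction outlined here realizes Reid's pagoda flop precisely when $n\geq 2$, in agreement with the hypotheses of the paper.
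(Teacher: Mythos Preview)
Your proof is correct and takes a more explicit, self-contained route than the paper's. The paper writes down global equations for $Y^\pm$ as complete intersections in $P\times\PP^1$, identifies the exceptional locus as the line $z_1=z_2=z_3=z_4=0$, and then simply observes that in the affine chart $P^\circ$ the projections $(Y^\pm)^\circ\to X^\circ$ coincide with the two Atiyah--Brieskorn small resolutions of the $cA_{n-1}$ singularity $2xy-z^2+w^{2n}=0$, deferring to \cite[Remark~5.13]{Reid-Minimal-Models} for smoothness and the $(0,-2)$ normal bundle. You instead carry out the full local blow-up calculation: you exhibit the two affine charts of $Y^+$ as copies of $\C^3$, locate $\Sigma^+$ explicitly, and compute the conormal transition matrix directly, obtaining $\calN_{\Sigma^+/Y^+}\simeq\scrO\oplus\scrO(-2)$ for $n\geq 2$. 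You also supply the verification that $X$ is normal via Serre's criterion, which the paper leaves implicit. What your approach buys is independence from Reid's reference and a transparent explanation of where the hypothesis $n\geq 2$ enters (the term $tw^n$ dropping into $\calI_\Sigma^2$), at the cost of a longer argument; the paper's approach is terser but relies on the reader knowing the classical small-resolution picture.
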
 

\begin{proof}
	Note that $Y^\pm$ are given by the following equations in $P\times \PP^1$
	\begin{align}\label{eq:Y+-equations} 
		Y^+&\colon \begin{cases}
			z_1w_1 - (z_3-z_4^nz_5) w_2 =0\ , \\ 
			2z_2w_2 - (z_3+z_4^nz_5) w_1 =0 \ ,
		\end{cases}\\ \label{eq:Y--equations}
		Y^-&\colon  \begin{cases}
			2z_1w_1 - (z_3+z_4^nz_5) w_2 =0\ , \\
			z_2w_2 - (z_3-z_4^nz_5) w_1 =0\ .
		\end{cases} 
	\end{align}
	This implies that the exceptional loci $\Sigma^\pm$ of the projections $f^\pm$ coincide simultaneously with the projective line 
	\begin{align}
		z_1=0, \ z_2 =0, \ z_3 =0, \ z_4 =0 \ ,
	\end{align}
	as subschemes of $P\times \PP^1$. Since $X\smallsetminus \{\nu\}$ is smooth, $Y^\pm \smallsetminus \Sigma_\pm$ is also smooth. Moreover, let $Y_\pm^{\circ} \coloneqq (f^\pm)^{-1}(P^\circ)$, where $P^\circ \subset P$ is the open subscheme  $\{ z_0\neq 0, \ z_5\neq 0\}$ used in the proof of Lemma~\ref{lem:example-O}. Then the resulting projections $(Y^\pm)^\circ \to X^\circ$ coincide with the two canonical Atiyah-Brieskorn resolutions of $X^\circ$, as observed in  \cite[Remark~5.13]{Reid-Minimal-Models}. Hence $Y^\pm$ is smooth and $f^\pm$ satisfies Assumption~\ref{assumption:f}.
\end{proof}

\begin{remark}
	Let $\Sigma_k \subset P\times \PP^1$, with $1\leq k \leq n$, be the closed subschemes 
	\begin{align}
		z_i=0 \ \text{ for } \ 1\leq i \leq 3\ , \quad  z_4^k =0\ .
	\end{align}
	Equations~\eqref{eq:Y+-equations} and \eqref{eq:Y--equations} show that $\Sigma_k$ is contained as a closed subscheme in both $Y^\pm$ for all $1\leq k \leq n$. Let $\calJ_k \subset \scrO_{Y^\pm}$ be the associated ideal sheaves. Clearly, one has a filtration 
	\begin{align}
		\calJ_n \subset \cdots \subset \calJ_1=\calI_\Sigma
	\end{align}
	as \eqref{eq:ideal-filtration-A}, where $\calI_\Sigma$ is the ideal sheaf of $\Sigma$. Moreover, conditions \eqref{eq:ideal-filtration-B} and  \eqref{eq:ideal-filtration-C} are easily verified by local computations using the affine coordinates \eqref{eq:toric-coordinates}. 
	
	The above filtration coincides with that of Theorem~\ref{thm:filtration} because of the uniqueness result proved in Theorem~\ref{thm:filtration}.
\end{remark}

Furthermore, let $W \subset X$ be the Weil divisor defined by the equations 
\begin{align}\label{eq:Weil-divisor}
	z_2=0\ , \quad z_3-z_4^nz_5 =0\ .
\end{align}
Let $S^\pm\subset Y^\pm$ denote its strict transforms. Then the following holds by straightforward computations.
\begin{proposition}\label{prop:example-B} 
	\hfill
	\begin{itemize}\itemsep0.2cm
		\item  The strict transform $S^+$ of $W$ under $f^+\colon Y^+\to X$ is determined by the equations 
		\begin{align}\label{eq:plus-surface} 
			\begin{cases}
				z_2=0\ , \\ 
				w_1=0\ , \\ 
				z_3-z_4^nz_5 =0\ .
			\end{cases}
		\end{align}
		In particular it is smooth irreducible and has intersects $\Sigma^+$ transversely at  the unique point $p\in Y^+$ determined by 
		\begin{align}
			z_i=0\ \text{ for }\ i=1, \ldots, 4\ , \ w_1=0\ .
		\end{align}
	
		\item The strict transform $S^-$ coincides with the scheme-theoretic inverse image  $(f^-)^{-1}(W)$ and it is determined by the equations 
		\begin{align}\label{eq:minus-surface} 
			\begin{cases}
				z_2=0\ , \\
				z_3-z_4^nz_5=0\ , \\ 
				z_1w_1 -z_4^nz_5w_2=0\ .
			\end{cases}
		\end{align}
		In particular, it has a unique singular point $q$ determined by 
		\begin{align}
			z_i=0\ \text{ for }\ i=1, \ldots, 4\ , \ w_1=0\ ,
		\end{align}
		which is Kleinian singularity of type $A_{n-1}$. 
	\end{itemize}
\end{proposition}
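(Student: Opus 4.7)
The plan is to verify both claims by explicit computation in toric affine charts, localized around the singular point $\nu \in X$. All the interesting geometry sits in the chart $P^\circ = \{z_0 \neq 0,\, z_5 \neq 0\} \subset P$, where the affine coordinates $(x,y,z,w)$ of \eqref{eq:toric-coordinates} realize $X^\circ$ as the cDV hypersurface $2xy - z^2 + w^{2n} = 0$ with $\nu$ at the origin and $W^\circ \coloneqq W \cap X^\circ$ cut out by $y = 0,\, z - w^n = 0$. I will further decompose $Y^\pm \cap (P^\circ \times \PP^1)$ into the two standard $\PP^1$-charts $\{w_2 \neq 0\}$ and $\{w_1 \neq 0\}$, using the equations \eqref{eq:Y+-equations}--\eqref{eq:Y--equations} to present $Y^\pm$ locally as smooth hypersurfaces, and then analyze the scheme-theoretic pullback of $W^\circ$. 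Outside $P^\circ$ the morphisms $f^\pm$ are isomorphisms, so the equations claimed in the proposition automatically describe a smooth piece of $S^\pm$ there.

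For the strict transform $S^+$, the chart $\{w_2 \neq 0\}$ presents $Y^+$ as $\A^3_{x,w,w_1}$ with $z = xw_1 + w^n$ and $2y = w_1(xw_1 + 2w^n)$. Pulling back $y = 0$ and $z-w^n = 0$ yields the ideal $(xw_1,\, w_1(xw_1+2w^n)) = (xw_1,\, w_1 w^n)$, whose primary decomposition is $(w_1) \cap (x, w^n)$. The $(x,w^n)$-component is supported scheme-theoretically on $\Sigma^+$ and disappears upon taking the closure of the preimage of $W^\circ \smallsetminus \{\nu\}$, leaving $S^+ = V(w_1) \cong \A^2_{x,w}$, which matches the stated equations $z_2 = w_1 = z_3 - z_4^n z_5 = 0$. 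In the complementary chart $\{w_1 \neq 0\}$ the pullback of $W^\circ$ collapses entirely onto $\Sigma^+$, so $S^+$ does not meet this chart. The point $p$ characterized by $x = w = w_1 = 0$ is then the unique intersection $S^+\cap\Sigma^+$; transversality follows because $dw_1,\, dx,\, dw$ span the cotangent space of $Y^+$ at $p$, and the isomorphism $f^+|_{S^+} \colon S^+ \xrightarrow{\sim} W$ is visible from the explicit map $(x,w)\mapsto (x,\, 0,\, w^n,\, w)$.

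For $S^-$, the chart $\{w_2 \neq 0\}$ presents $Y^-$ as $\A^3_{x,w,w_1}$ via $z = 2xw_1 - w^n$ and $y = 2w_1(xw_1 - w^n)$. Pulling back $y = 0$ and $z-w^n=0$ now gives the ideal $(w_1(xw_1-w^n),\, xw_1 - w^n) = (xw_1 - w^n)$, which is already principal and irreducible. Hence the scheme-theoretic preimage $(f^-)^{-1}(W)$ carries no embedded exceptional component and coincides with the strict transform $S^-$, yielding the identity asserted in the proposition. The surface $V(xw_1 - w^n) \subset \A^3$ is the standard local model of the Kleinian singularity of type $A_{n-1}$, with a unique singular point at the origin, which glues to the claimed $q$. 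In the chart $\{w_1 \neq 0\}$ the same pullback reduces to $V(x - w^n w_2)$, a smooth hypersurface, so no further singular point is introduced. The verification that the three stated equations define $S^-$ globally reduces to rewriting them in each chart and matching them against the local ideals computed above.

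The main technical obstacle, beyond the bookkeeping of coordinates in the two $\PP^1$-charts, is to recognize and correctly handle the asymmetry between the two contractions: the scheme-theoretic pullback of $W$ under $f^+$ acquires an embedded component along $\Sigma^+$, whereas the pullback under $f^-$ is automatically reduced and irreducible. This asymmetry is the geometric source of the multiplicity $m_\Sigma$ entering the flop identity of \S\ref{sec:flop-identity}, and is the reason Assumption~\ref{assumption:surface} must be imposed on the $(-)$-side rather than being automatic from Assumption~\ref{assumption:divisor}.
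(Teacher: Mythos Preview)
Your proposal is correct and follows exactly the approach the paper indicates: the paper's own proof reads in its entirety ``Then the following holds by straightforward computations,'' and you have supplied those computations in the two standard $\PP^1$-charts over the affine chart $P^\circ$. Your identification of the pullback ideals, the primary decomposition $(xw_1, w_1 w^n) = (w_1) \cap (x, w^n)$ on the $(+)$-side, and the observation that the pullback on the $(-)$-side is already the principal irreducible ideal $(xw_1 - w^n)$ are all accurate and constitute precisely the verification the paper omits.
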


\begin{corollary}
	Assumptions\ref{assumption:divisor} for $f^+$ and Assumption~\ref{assumption:surface} are satisfied. 
\end{corollary}

\subsection{Torus invariant plane curves in $Y^+$} \label{subsec:torus-invariant-+}

Now let $C\subset W$ be the curve 
\begin{align}\label{eq:plane-curve-A}
	z_1^r - z_4^s =0\ ,
\end{align}
where $r,s\geq 2$ are coprime integers. As in the proof of Lemma~\ref{lem:example-O}, let $P^\circ\subset P$ the affine toric chart $z_0\neq 0$ and $z_5\neq 0$, with coordinates $(x,y,z,w)$ defined in \eqref{eq:toric-coordinates}. Recall that the defining equation of $X^\circ = X\cap P^\circ$ is 
\begin{align}
	2xy - z^2 + w^{2n} =0\ . 
\end{align}
Let $(Y^+)^{\circ}\coloneqq (f^+)^{-1}(X^\circ)$ and note that it has the natural open cover $U^+\coloneqq   (Y^+)^{\circ}\cap \{w_1\neq 0\}$ and $V^+\coloneqq  (Y^+)^{\circ}\cap \{w_2\neq 0\}$, where $[w_1:w_2]$ are homogeneous coordinates on $\PP^1$ as in equations \eqref{eq:Y+-equations} and \eqref{eq:Y--equations}. Let $u= w_1^{-1}w_2$ and $v = w_2^{-1} w_1$ be the natural affine coordinates on $\PP^1$. Then 
\begin{align}
	U^+ \coloneqq \Spec\, \C[u,y,w]\quad\text{and}\quad V^+ \coloneqq \Spec\, \C[v,x,w]
\end{align}
and $U^+\smallsetminus \{u=0\}$ is identified with $V^+\smallsetminus \{v=0\}$ via the transition functions 
\begin{align}\label{eq:trfct} 
	uv =1\ , \quad xv = w^n - yu\ .
\end{align}
Moreover, the following holds by local computations.
\begin{lemma}\label{lem:plus-curve-B} 
	\hfill
	\begin{itemize}\itemsep0.2cm
		\item Let $W^\circ$ and $C^\circ$ be the scheme-theoretic intersection of $W$ and $C$, respectively, with the open subscheme $P^\circ\subset P$. Then $W^\circ$ is given by 
		\begin{align}
			y=0\ , \ z-w^n =0\ ,
		\end{align}
		hence it is isomorphic to the affine plane $\Spec\, \C[x,w]$. The affine curve $C^\circ\subset W^\circ$ is defined by 
		\begin{align}
			x^r - w^s=0\ .
		\end{align}
	
		\item The scheme-theoretic intersection $S^+\cap V^+$ is the affine plane $v=0$ and the strict transform $C^+\subset S^+$ is locally given by 
		\begin{align}
			v=0\ , \ x^r - w^s =0
		\end{align}
		in $V^+$. In particular, $C^+$ has a unique intersection point $p$ with $\Sigma^+$, which coincides with the origin in $V^+$. Furthermore, the subscheme $T_n \subset S^+$ coincides with the closed subscheme 
		\begin{align}
			x=0\ , \ w^n =0
		\end{align}
		in $S^+\cap V^+$.
	\end{itemize}
\end{lemma}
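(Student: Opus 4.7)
The proof is a straightforward local computation in affine toric charts; the main task is to translate the defining equations of $W$, $C$, $S^+$, $\Sigma^+$, and $\Sigma_n$ into the coordinates $(x,y,z,w)$ on $P^\circ$ and into the coordinates $(v,x,w)$ on $V^+$, respectively.

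For part (1), I would work in $P^\circ \simeq \Spec\,\C[x,y,z,w]$ via the toric coordinates \eqref{eq:toric-coordinates} (equivalently, gauging $z_0=z_5=1$, so that $z_1=x$, $z_2=y$, $z_3=z$, $z_4=w$). The defining equations \eqref{eq:Weil-divisor} of $W$ become $y=0$ and $z-w^n=0$. One checks directly that these imply the defining equation $2xy-z^2+w^{2n}=0$ of $X^\circ$, so that $W^\circ$ is cut out inside $P^\circ$ by $y=0,\ z=w^n$ and is therefore isomorphic to $\Spec\,\C[x,w]$. The equation $z_1^r-z_4^s=0$ of $C$ then becomes $x^r-w^s=0$ on $W^\circ$.

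For part (2), the plan is to first express the ambient coordinates $z_1,z_2,z_3,z_4,z_5$ as regular functions on $V^+=\{w_2\neq 0\}$. Setting $v=w_1/w_2$ and gauging $z_0=z_5=1$, the two equations in \eqref{eq:Y+-equations} for $Y^+$ can be solved for $z_3$ and $z_2$, giving
\begin{align}
z_3 = w^n + x v, \qquad z_2 = w^n v + \tfrac{1}{2}\,x v^2,
\end{align}
with $z_1=x$ and $z_4=w$ free. One then verifies that $2z_1z_2-z_3^2+w^{2n}=0$ is identically satisfied, so that $V^+ = \Spec\,\C[v,x,w]$. Substituting these expressions into the defining equations \eqref{eq:plus-surface} of $S^+$ gives $v=0$, $xv=0$, and $v(w^n+\tfrac{1}{2}xv)=0$; the first condition implies the other two, so $S^+\cap V^+=\{v=0\}\simeq \Spec\,\C[x,w]$.

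The remaining assertions are immediate corollaries of these normal forms. Since $f^+\colon S^+\to W$ is an isomorphism (by Proposition~\ref{prop:example-B}) and $C^+$ is by definition the strict transform of $C$, its equation in $S^+\cap V^+$ is $x^r-w^s=0$. The curve $\Sigma^+$ is cut out by $z_1=\cdots=z_4=0$, which in $V^+$ reduces to $x=w=0$; intersecting with $C^+$ yields the single reduced point $p=(v,x,w)=(0,0,0)$. Likewise, the subscheme $\Sigma_n\subset Y^+$ defined by $z_1=z_2=z_3=0,\ z_4^n=0$ becomes $x=0,\ w^n=0$ in $V^+$ (the expressions for $z_2,z_3$ derived above vanish automatically on this locus), so its scheme-theoretic intersection with $S^+\cap V^+$ is exactly $\{x=0,\ w^n=0\}$ as claimed. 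I expect no genuine obstacle: the whole lemma is a bookkeeping exercise once the blow-up equations are used to eliminate $z_2$ and $z_3$ on $V^+$.
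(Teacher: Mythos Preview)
Your proposal is correct and matches the paper's approach: the paper simply asserts that the lemma ``holds by local computations'' without writing out a proof, and what you have supplied is precisely the intended computation in the charts $P^\circ$ and $V^+$. Your elimination of $z_2$ and $z_3$ via the blow-up equations~\eqref{eq:Y+-equations} on $V^+$ is exactly the step that makes all the remaining claims immediate.
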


Let $\Hilb_p^k(C^+)$ be the punctual Hilbert scheme parametrizing length $k$ zero-dimensional subschemes of $C^+$ with set theoretic support $\{p\}$. Let $\Quot_{T_n, p}^k(C^+; s)$ be the relative Quot scheme over $\Hilb_p^k(C^+)$ parametrizing quotients $\calI\twoheadrightarrow \calQ$, with $\calQ$ a coherent $\scrO_{T_n}$-module of length $s$. Then, 
\begin{align}\label{eq:Quot-Flag}
	\Quot_{T_n, p}^k(C^+; s) \simeq \FHilb_{T_n}^k(C^+; s)\ .
\end{align}

Set 
\begin{align}
	Z_{r,s,n}(q) \coloneqq \sum_{k\geq 1} \chi(\Quot_{T_n, p}^k(C^+; s)) q^k \ .
\end{align}

An explicit formula for this generating function will be derived below by localization with respect to the torus action $\C^\times \times C^+\to C^+$, given by
\begin{align}
	\lambda \times (x,w) \longmapsto (\lambda^s x, \lambda^r w)\ .
\end{align}
By analogy to \cite[Lemma~17]{OS-HOMFLY}, the fixed points will be in one-to-one correspondence to partitions, or Young diagrams, satisfying certain conditions. In order to fix conventions, Young diagrams will be identified with finite subsets of $\N \times \N$ satisfying the conditions 
\begin{align}
	(i,j) \in \mu\ , \ i\geq 1 \ \Rightarrow \ (i-1,j)\in \mu
\end{align}
and 
\begin{align}
	(i,j) \in \mu\ , \ j\geq 1 \ \Rightarrow  \ (i, j-1) \in \mu\ ,
\end{align}
as shown below
\begin{align}
	{\young(\hfil,\hfil\hfil,\hfil\hfil,\hfil\hfil\hfil\hfil,\hfil\hfil\hfil\hfil\hfil)}
\end{align}
By convention, the partition associated to such a diagram is defined by lengths $\mu_1 \geq \cdots \geq \mu_{\ell(\mu)}$ of its rows. The number of rows, $\ell(\mu)$, will be called the length of $\mu$ and the size $\vert \mu\vert$ of $\mu$ will be defined as its total number of boxes. For each $1\leq i \leq \ell(\mu)$, let $m_i$ denote the multiplicity of the $i$-th row of $\mu$. 

Set
\begin{align}
	\Quot_{T_n, p}(C^+; s)\coloneqq \bigsqcup_{k\geq 0} \Quot_{T_n, p}^k(C^+; s)\ .
\end{align}
Proceeding by analogy to \cite[Lemma~17]{OS-HOMFLY}, the following holds. 
\begin{lemma}\label{lem:plus-curve-A} 
	The set of fixed points in $\Quot_{T_n, p}(C^+; s)$ is isomorphic to the set $\sfP(r,s,n)$ of partitions satisfying the following conditions:
	\begin{enumerate}\itemsep0.2cm 
		\item \label{item:plus-curve-A-1} $\ell(\mu)=s$ and $m_i \leq n$ for $1\leq i \leq \ell(\mu)$, 
		\item \label{item:plus-curve-A-2} $\mu_1 - \mu_{\ell(\mu)} \leq r$, and 
		\item \label{item:plus-curve-A-3} if $\mu_1 - \mu_{\ell(\mu)}=r$, then $m_1 + m_{\ell(\mu)} \leq n$. 
	\end{enumerate} 
\end{lemma}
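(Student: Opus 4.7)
The approach is to apply torus localization to the $\C^\times$-action on $C^+$ given by $\lambda \cdot (x,w) = (\lambda^s x, \lambda^r w)$, which lifts to $\Quot_{T_n, p}^k(C^+; s)$. Via the identification \eqref{eq:Quot-Flag}, a $T$-fixed point corresponds to a flag $\calI_1 \subset \calI_2 \subset \widehat{\scrO}_{C^+,p}$ of $T$-invariant, hence monomial, ideals in the complete local ring $\widehat{\scrO}_{C^+,p} \simeq \C[[x,w]]/(x^r - w^s)$, such that $\dim_\C(\widehat{\scrO}_{C^+,p}/\calI_1) = k+s$, $\dim_\C(\widehat{\scrO}_{C^+,p}/\calI_2) = k$, and $\calI_2/\calI_1$ is annihilated by both $x$ and $w^n$.

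The first step is a combinatorial encoding of $T$-invariant ideals. Grading $\widehat{\scrO}_{C^+,p}$ by $\deg x = s$, $\deg w = r$, each weight $d$ in the numerical semigroup $S = \N r + \N s$ is represented by a unique monomial, since $\gcd(r,s) = 1$. The bijection $S \xrightarrow{\sim} \N \times \{0,\dots,s-1\}$, $is + jr \mapsto (i,j)$, identifies a $T$-invariant ideal with the complementary set of weights of standard monomials $\sigma \subset \N \times \{0,\dots,s-1\}$, which is closed under $(i,j) \mapsto (i-1,j)$ for $i \geq 1$, under $(i,j) \mapsto (i,j-1)$ for $j \geq 1$, and, because of the relation $x^r = w^s$, under the wrap-around $(i,0) \mapsto (i-r,s-1)$ for $i \geq r$. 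Writing $\lambda_j$ for the cardinality of the $j$-th row of $\sigma$, these closure conditions amount to $\lambda_0 \geq \lambda_1 \geq \cdots \geq \lambda_{s-1}$ together with $\lambda_0 - \lambda_{s-1} \leq r$.

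Applied to $\calI_1$, this gives a partition $\mu \coloneqq (\mu_1, \dots, \mu_s) = (\lambda_0, \dots, \lambda_{s-1})$ of $k + s$. The condition that $\calI_2/\calI_1$ is annihilated by $x$ forces each of the $s$ elements of $\sigma_1 \setminus \sigma_2$ (the weight set of $\calI_2/\calI_1$) to be a row endpoint $(\lambda_j - 1, j)$ of $\sigma_1$; since there are $s$ of them, every row must be nonempty, so $\ell(\mu) = s$, and $\sigma_1 \setminus \sigma_2$ is exactly the set of row endpoints. The wrap-around closure of $\sigma_1$ already yields condition (2) $\mu_1 - \mu_s \leq r$. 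The further condition that $\calI_2/\calI_1$ is annihilated by $w^n$ translates, after reducing the points $(\lambda_j - 1, j+n)$ to the strip, into two families of inequalities: first, $\lambda_j > \lambda_{j+n}$ for $0 \leq j \leq s - 1 - n$, which is readily seen to be equivalent to no value appearing with multiplicity exceeding $n$, i.e., condition (1); and second, $\lambda_k \leq \lambda_{k+s-n} + r - 1$ for $0 \leq k \leq n - 1$, which is automatic whenever $\mu_1 - \mu_s \leq r - 1$ and, in the boundary case $\mu_1 - \mu_s = r$, reduces precisely to $m_1 + m_s \leq n$, i.e., condition (3).

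The main delicate point is the wrap-around bookkeeping induced by the relation $x^r = w^s$: both the bound $\mu_1 - \mu_s \leq r$ and the two-regime analysis of multiplication by $w^n$ stem from it, and it is where the precise interplay between $r$, $s$, and $n$ enters. Once the wrap-around is set up, matching the resulting constraints to (1)--(3) and verifying the reverse direction are routine combinatorial verifications.
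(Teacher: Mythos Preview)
Your proof is correct and follows essentially the same approach as the paper: both use torus localization to reduce to monomial ideals in $\C[[x,w]]/(x^r-w^s)$, identify the length-$s$ quotient annihilated by $x$ with the set of row endpoints of the Young diagram $\mu$, and then translate the $w^n$-annihilation and the relation $x^r=w^s$ into the multiplicity constraints (1)--(3). The paper's version is terser, citing \cite[Lemma~17]{OS-HOMFLY} for the basic Young-diagram classification and leaving the wrap-around combinatorics implicit, whereas you spell out the boundary bookkeeping explicitly.
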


\begin{proof}
	As shown in \cite[Lemma~17]{OS-HOMFLY}, the set of fixed points is in one-to-one correspondence to Young diagrams $\mu$ satisfying conditions 
	\begin{itemize}\itemsep0.2cm
		\item $\ell(\mu)\leq s$, and 
		\item if $\ell(\mu) =s$, then $\mu_1-\mu_{\ell(\mu)}\leq r$.
	\end{itemize} 
	The zero dimensional subscheme $Z_\mu \subset C^+$ associated to the partition $\mu$ is determined by
	\begin{align}
		H^0(C^+, \scrO_{Z_\mu}) \simeq \C \langle x^{i-1} w^{j-1},\  (i,j) \in \mu\rangle \ ,
	\end{align}
	equipped with ring structure induced by the natural surjection 
	\begin{align}
		\C[x,w]/(x^r-w^s)\rightarrow H^0(C^+,\scrO_{Z_\mu})\ . 
	\end{align}
	In particular the kernel of the multiplication map $H^0(C^+, \scrO_{Z_\mu}) \xrightarrow{\cdot x} H^0(C^+,\scrO_{Z_\mu})$ is given by 
	\begin{align}
		K_\mu \simeq \C\langle x^{\mu_j-1} w^j,\  (i,j) \in \mu\rangle\ . 
	\end{align}
	The monomials spanning $K_\mu$ are in one-to-one correspondence to the set of boxes of $\mu$ marked with $\bullet$  below. 
	\begin{align}
		{\young(\bullet,\hfil\hfil\bullet,\hfil\hfil\bullet,\hfil\hfil\hfil\hfil\bullet,\hfil\hfil\hfil\hfil\bullet)}
	\end{align}
	The fixed points in the relative Quot scheme $\Quot_{T_n, p}(C^+; s)$ are in one-to-one correspondence to pairs $Z_{\nu}\subset Z_\mu$, so that 
	the kernel of the natural epimorphism $H^0(C^+,\scrO_{Z_\mu}) \to H^0(C^+, \scrO_{Z_\nu})$ is a length $s$ submodule of $K_\mu$ annihilated by $w^n$. Such pairs are in one-to-one correspondence to Young diagrams $\mu$ satisfying conditions \eqref{item:plus-curve-A-1} -- \eqref{item:plus-curve-A-3} in the statement. 
\end{proof}

\begin{corollary}
	We have
	\begin{align}\label{eq:combinatorial-formula-Z}
		Z_{r,s,n} = \sum_{\mu \in \sfP(r,s,n)} q^{\vert\mu\vert}\ .
	\end{align}
\end{corollary}

\begin{example}\label{ex:partitions-A}
	For illustration, some examples of partitions satisfying conditions \eqref{item:plus-curve-A-1} -- \eqref{item:plus-curve-A-3} in Lemma~\ref{lem:plus-curve-A} are shown below for $(r,s,n) = (7,5,2)$.
	\begin{align}\label{eq:a}
		&{\young(\hfil,\hfil\hfil\hfil,\hfil\hfil\hfil,\hfil\hfil\hfil\hfil\hfil,\hfil\hfil\hfil\hfil\hfil)}\\[2pt] \label{eq:b}
		&{\young(\hfil,\hfil\hfil\hfil,\hfil\hfil\hfil,\hfil\hfil\hfil\hfil\hfil,\hfil\hfil\hfil\hfil\hfil\hfil\hfil\hfil)} 
	\end{align}
	Note that in both cases one has $\mu'_1-\mu'_5\leq 7$ and all rows have multiplicity at most 
	$2$, as required by conditions \eqref{item:plus-curve-A-1} and \eqref{item:plus-curve-A-2}. Moreover, for case \eqref{eq:b} one also has 
	$m_1 + m_5 \leq 2$ as required by condition \eqref{item:plus-curve-A-3}, since $\mu'_1-\mu'_5=7$. This condition is not required in case \eqref{eq:a}, where $\mu'_1-\mu'_5=4<7$. 
\end{example} 

\begin{example}\label{partitionsB} 
	Consider 
	\begin{align}\label{eq:c}
		&{\young(\hfil,\hfil\hfil\hfil,\hfil\hfil\hfil,\hfil\hfil\hfil,\hfil\hfil\hfil\hfil\hfil)}\\[2pt] \label{eq:d}
		&{\young(\hfil,\hfil\hfil\hfil,\hfil\hfil\hfil,\hfil\hfil\hfil\hfil\hfil\hfil\hfil\hfil,\hfil\hfil\hfil\hfil\hfil\hfil\hfil\hfil)}
	\end{align}
	Cases \eqref{eq:c} and \eqref{eq:d} do not satisfy all required conditions in Lemma~\ref{lem:plus-curve-A} for $(r,s,n) = (7,5,2)$. For case \eqref{eq:c}, one has $m_2=m_3=m_4=3>2$, in contradiction to condition \eqref{item:plus-curve-A-2}. For case \eqref{eq:d}, one has $\mu'_1-\mu'_5=7$ while $m_1+m_7=3>2$, in contradiction 
	with condition \eqref{item:plus-curve-A-2}. 
\end{example}

Set $d(r,s,n)\coloneqq\min\{ \vert\mu\vert \, :\, \mu\in \sfP(r,s,n)\}$. For future reference note the following. 
\begin{lemma}\label{lem:minimal-size} 
	Assume that $s=nk$, where $k\in \Z$, with $1\leq k< r$. Then $d(r,s,n) = nk(k+1)/2$. 
\end{lemma} 

\begin{proof}
	It will be shown inductively, that under the current assumptions, the partition of minimal size is given by 
	\begin{align}\label{eq:minimal-size-A}
		\mu^{(k)}_{ni+j} =  k-i
	\end{align}
	for  $0\leq i \leq k-1$ and $1\leq j \leq n$. In other words $\mu^{(k)}$ has $n$ rows of length $i$ for any $1\leq i \leq k$. Indeed, in this case, $\vert \mu^{(k)}\vert= nk(k+1)/2$, which implies the claim. 
	
	Assume that $k=1$. By definition, any partition $\mu\in \sfP(r,s,n)$ has $s$ rows, i.e., $\mu_s\geq 1$. Then the partition of minimal size in $\sfP(r,n,n)$ is indeed given by
	\begin{align}
		\mu_1 = \cdots = \mu_n = 1\ .
	\end{align}
	Suppose that the claim is true for all $1\leq k \leq m$, for some $1\leq m \leq r-1$. Let $\mu\in \sfP(r,n(m+1),n)$ be the partition of minimal size. 
	By the induction hypothesis, in order for  $\mu^{(m+1)}$ to have minimal size allowed 
	by conditions \eqref{item:plus-curve-A-1} -- \eqref{item:plus-curve-A-3} in Lemma~\ref{lem:plus-curve-A}, the subpartition of $\mu^{(m+1)}$ determined by the rows $n+1$ through $n(m+1)$ coincides with $\mu^{(m)}$. Therefore one must have 
	\begin{align}
		\mu^{(m+1)}_{i+n} = \mu^{(m)}_{i} \ ,
	\end{align}
	for $ 1\leq i \leq nm$. Furthermore, condition \eqref{item:plus-curve-A-2} in Lemma~\ref{lem:plus-curve-A} implies that the first $n$ rows, must have at least $m+1$ boxes. Hence the minimal configuration is achieved for  
	\begin{align}
		\mu^{(\ell+1)}_1=\cdots = \mu^{(\ell+1)}_{n} = m+1\ . 
	\end{align}
	Note that condition \eqref{item:plus-curve-A-3} in Lemma~\ref{lem:plus-curve-A} is trivially satisfied since $\mu_1^{(m+1)} - \mu_{n(m+1)}^{(m+1)} =m \leq r-1$. 
\end{proof}

The main goal of this section is to derive an explicit formula for the generating function \eqref{eq:combinatorial-formula-Z}. As a first step, for any $t,m\geq 1$ let $\sfP(t,m)$ denote the set of partitions $\nu=(\nu_1\geq \cdots \geq \nu_{\ell(\nu)} >0)$ satisfying the following conditions 
\begin{itemize}\itemsep0.2cm
	\item $\nu_{1}\leq t$, 
	\item the multiplicity any part of $\nu$ is at most $m$.
\end{itemize}
Set
\begin{align}
	Z_{t,m}(q,x) \coloneqq \sum_{\nu \in \sfP(t,m)} x^{\ell(\nu)} q^{\vert\nu\vert}\ .
\end{align}
\begin{lemma}\label{lem:part-sum} 
	One has 
	\begin{align}\label{eq:part-sum-A} 
		Z_{t,m}(q,x)   =  \prod_{i=1}^t (1+xq^i + \cdots + (xq^i)^{m})\ .
	\end{align}
\end{lemma}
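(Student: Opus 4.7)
The plan is to establish the identity by a direct combinatorial bijection. The key observation is that a partition $\nu \in \sfP(t,m)$ is uniquely encoded by the multiplicity sequence $(a_1, a_2, \ldots, a_t)$, where $a_i$ records the number of times the integer $i$ appears as a part of $\nu$. The defining conditions of $\sfP(t,m)$ translate exactly to: $\nu_1 \leq t$ means $a_i = 0$ for $i > t$, and the bound on multiplicities becomes $0 \leq a_i \leq m$ for every $i$. Under this encoding, one has
\begin{align}
\ell(\nu) = \sum_{i=1}^t a_i \quad\text{and}\quad |\nu| = \sum_{i=1}^t i\, a_i\ .
\end{align}

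Given this encoding, I would rewrite the left-hand side of \eqref{eq:part-sum-A} as an unrestricted sum over tuples $(a_1, \ldots, a_t) \in \{0,1,\ldots,m\}^t$, obtaining
\begin{align}
Z_{t,m}(q,x) = \sum_{(a_1,\ldots,a_t) \in \{0,\ldots,m\}^t} \prod_{i=1}^t x^{a_i} q^{i a_i} = \prod_{i=1}^t \Bigl( \sum_{a=0}^m (xq^i)^a \Bigr)\ ,
\end{align}
where the last equality is the distributive law applied over the product of independent finite geometric sums. Each factor is exactly $1 + xq^i + \cdots + (xq^i)^m$, which gives the desired formula.

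There is no substantive obstacle here: the proof reduces to recording the bijection between constrained partitions and multiplicity sequences, and then factoring the resulting sum. The only thing worth writing carefully is the explicit description of the bijection, since the rest is a one-line computation.
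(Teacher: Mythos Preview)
Your proof is correct and is essentially the same as the paper's: the paper simply asserts the one-to-one correspondence between partitions in $\sfP(t,m)$ and monomials in the expansion of the product, while you make this bijection explicit via the multiplicity sequence $(a_1,\ldots,a_t)$. Your version is a faithful unpacking of what the paper leaves as ``straightforward to check.''
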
 

\begin{proof}
	It is straightforward to check that there is a one-to-one correspondence between partitions in $\sfP(t,m)$ and monomials in the expansion of the right hand side of equation \eqref{eq:part-sum-A}. 
\end{proof}

\begin{theorem}\label{thm:plus-curve} 
	For any $k\geq 0$, let $Z_{t,n}^{(k)}(q)$ denote the coefficient of $x^k$ in $Z_{t,n}(q,x)$. Then 
	\begin{align}\label{eq:generating-function-A} 
		Z_{r,s,n}(q) =  \frac{1}{1-q^s}\sum_{m=0}^{n-1} \sum_{k=s-n}^{s-m-1} q^{mr} Z^{(k)}_{r-1,n}(q)\ .
	\end{align}
\end{theorem}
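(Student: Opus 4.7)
The approach is to combine equivariant localization on the Quot scheme with an explicit combinatorial decomposition of the indexing partitions. The torus $\C^\times$ acts on $C^+$ by $\lambda \cdot (x, w) = (\lambda^s x, \lambda^r w)$, and this lifts to an action on the proper scheme $\Quot_{T_n, p}^k(C^+; s)$. By Lemma~\ref{lem:plus-curve-A}, the isolated fixed points are in bijection with partitions $\mu \in \sfP(r,s,n)$; the correspondence sends $\mu$ to the pair $Z_\nu \subset Z_\mu$ in which $Z_\nu$ is obtained by removing the rightmost box of each row of $\mu$ (this being the unique length-$s$ submodule of $K_\mu$ annihilated by $w^n$, which exists exactly under condition~(1)). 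Hence $|\mu| = |\nu| + s = k + s$, and localization gives
\begin{align}
Z_{r,s,n}(q) = \sum_{\mu \in \sfP(r,s,n)} q^{|\mu|-s}\ .
\end{align}

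Next, I will parametrize each $\mu \in \sfP(r,s,n)$ by a quadruple $(a, m, z, \lambda)$: $a = \mu_s$ is the smallest part; $m$ is the multiplicity of the value $a+r$ in $\mu$ (so $m>0$ forces $\mu_1 - \mu_s = r$); $z+1$ is the multiplicity of $a$ in $\mu$; and $\lambda$ is the partition formed by shifting the ``middle'' parts $\{\mu_i : a < \mu_i < a+r\}$ down by $a$, so that $\lambda$ has parts in $\{1, \ldots, r-1\}$. This bijection yields $|\mu| = sa + mr + |\lambda|$ and $\ell(\lambda) = s - 1 - m - z$. Translating the constraints of Lemma~\ref{lem:plus-curve-A}: condition~(2) is automatic; condition~(1) forces the multiplicities of $\lambda$ to be $\leq n$, together with $m \leq n$ and $z \leq n-1$; and condition~(3), active precisely when $m \geq 1$, yields $m + z + 1 \leq n$. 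The cases $m=0$ and $m \geq 1$ combine uniformly into the single inequality $m + z \leq n - 1$, which in particular forces $m \in \{0, 1, \ldots, n-1\}$.

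Finally, I will carry out the summation. Setting $k \coloneqq \ell(\lambda) = s - 1 - m - z$, the constraint $0 \leq z \leq n - 1 - m$ translates to $s - n \leq k \leq s - m - 1$ (with the convention $Z^{(k)}_{r-1,n}(q) = 0$ for $k < 0$). Summing over $a \geq 1$ produces $q^s/(1-q^s)$, which combines with the factor $q^{-s}$ in $q^{|\mu|-s}$ to give $1/(1-q^s)$. For fixed $m$ and $k$, Lemma~\ref{lem:part-sum} identifies the generating function of the allowed ``middle'' partitions $\lambda$ (with $k$ parts, each in $\{1, \ldots, r-1\}$ and each value appearing at most $n$ times) with $Z^{(k)}_{r-1, n}(q)$. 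Assembling these contributions yields the desired formula.

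The technical step requiring care is the unification of the two separate multiplicity constraints---the bound $z \leq n-1$ on the multiplicity of $\mu_s$ from condition~(1), and the boundary inequality $m + z + 1 \leq n$ from condition~(3)---into the single clean constraint $m + z \leq n - 1$. This is precisely what produces the uniform lower bound $k \geq s - n$ in the stated formula, independent of $m$. Once this unification is verified, the rest is routine combinatorial bookkeeping together with the standard torus localization argument.
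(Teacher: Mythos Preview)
Your proof is correct and follows essentially the same decomposition as the paper: strip off the $s\times\mu_s$ rectangle, record the number $m$ of rows of maximal length $\mu_s+r$, and identify the remaining ``middle'' rows with a partition $\rho\in\sfP(r-1,n)$ whose length is constrained to lie in $[s-n,\,s-m-1]$. The paper separates this into two cases according to whether $\mu_1-\mu_s\le r-1$ (their $\sfP_{\le r-1}$, your $m=0$) or $\mu_1-\mu_s=r$ (their $\sfP_r$, your $m\ge1$), whereas you treat both at once via the parameter $m$; your observation that the constraint $z\le n-1$ from condition~(1) and the constraint $m+z+1\le n$ from condition~(3) merge into the single bound $m+z\le n-1$ is exactly what makes this uniform treatment work, and is implicit in the paper's matching lower limit $k\ge s-n$ in both \eqref{eq:partition-sum-C} and \eqref{eq:partition-sum-D}. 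You also make explicit that the localization identity reads $Z_{r,s,n}(q)=\sum_\mu q^{|\mu|-s}$ rather than $\sum_\mu q^{|\mu|}$, which accounts for the prefactor $1/(1-q^s)$ (rather than $q^s/(1-q^s)$) coming from the sum over $a=\mu_s\ge1$; the paper's displayed line before Lemma~\ref{lem:part-sum} omits this shift, but the final formula and the degree count in the proof of Theorem~\ref{thm:lci-curves} are consistent with your reading.
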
 

\begin{proof}
	Note the decomposition
	\begin{align}
		\sfP(r,s,n) = \sfP_{\leq r-1}(r,s,n) \cup \ \sfP_r(r,s,n)\ ,
	\end{align}
	where $\sfP_{\leq r-1}(r,s,n)$ consists of partitions $\mu \in \sfP_r(r,s,n)$ with $0\leq \mu_1-\mu_s\leq r-1$, while $\sfP_{r}(r,s,n)$ consists of 
	partitions $\mu \in \sfP(r,s,n)$ with $\mu_1-\mu_s= r$. This yields a decomposition 
	\begin{align}
		Z_{r,s,n}(q) = Z_{\leq r-1}(q) + Z_{r}(q)\ .
	\end{align}
	Moreover, as shown below, any partition $\mu\in \sfP_{\leq r-1}(r,s,n)$ decomposes uniquely as a union of a rectangular Young diagram $\lambda$ consisting of the empty boxes, and a second Young diagram $\rho$ consisting of the boxes marked with $\bullet$. 
	\begin{align}
		\young(\hfil\hfil,\hfil\hfil\bullet,\hfil\hfil\bullet\bullet,\hfil\hfil\bullet\bullet,\hfil\hfil\bullet\bullet\bullet)
	\end{align}
	The resulting partitions $(\lambda, \rho)$ must satisfy the conditions below, which follow from properties 
	\eqref{item:plus-curve-A-1} -- \eqref{item:plus-curve-A-3} in Lemma~\ref{lem:plus-curve-A}:
	\begin{itemize}\itemsep0.2cm
		\item $\lambda$ is nonempty and $\ell(\lambda) =s$, 
		\item  $s-n \leq \ell(\rho)\leq s-1 $ and  $\rho_1\leq r-1$, and
		\item all rows of $\rho$ have multiplicity at most $n$.
	\end{itemize} 
	In particular,  $\rho \in \sfP(r-1,n)$. 
	
	Conversely, any pair $(\lambda, \rho)$ satisfying the three conditions listed above determines a unique partition $\mu \in \sfP_{\leq r-1}(r,s,n)$. This yields 
	\begin{align}\label{eq:partition-sum-C}
		Z_{\leq r-1}(q) = \frac{1}{1-q^s} \sum_{k=s-n}^{s-1} Z^{(k)}_{r-1,n}(q)\ .
	\end{align}
	
	Similarly, any Young diagram $\mu \in \sfP_r(r,s,n)$ decomposes uniquely into three Young diagrams $(\lambda, \nu,\rho)$ as shown below, 
	\begin{align}
		\young(\hfil\hfil,\hfil\hfil\bullet,\hfil\hfil\bullet\bullet,\hfil\hfil\bullet\bullet,\hfil\hfil\ast\ast\ast\ast,\hfil\hfil\ast\ast\ast\ast)
	\end{align}
	where the boxes belonging to $\lambda$ are empty, the boxes belonging to $\rho$ are marked with $\bullet$, and those belonging to $\nu$ are marked with $\ast$. Since $\mu\in \sfP_r(r,s,n)$, the following conditions must hold:
	\begin{itemize}\itemsep0.2cm
		\item $\lambda$ is nonempty rectangular partition with $\ell(\lambda)=s$, 
		\item $\nu$ is a nonempty rectangular partition with $m$ rows of length $r$ for some $1\leq m\leq s$, 
		\item  $s-n \leq \ell(\rho)\leq s-m-1 $ and  $\rho_1\leq r-1$, and
		\item all rows of $\rho$ have multiplicity at most $n$.
	\end{itemize} 
	In particular, again, $\rho\in \sfP(r-1,n)$. 
	
	Conversely, any triple $(\lambda, \nu, \rho)$ satisfying all the above conditions determines a unique diagram $\mu \in \sfP_r(r,s,n)$. This yields 
	\begin{align}\label{eq:partition-sum-D} 
		Z_{r}(q) = \frac{1}{1-q^s}\sum_{m=1}^{n-1} \sum_{k=s-n}^{s-m-1} q^{mr} Z^{(k)}_{r-1,n}(q)\ .
	\end{align}
\end{proof}

Summarizing, we have proved the following:
\begin{theorem}
	We have 
	\begin{align}
		\sum_{k\geq 0} \chi(\FHilb_{T_n, p}^k(C^+; s)) q^k =  \frac{1}{1-q^s}\sum_{m=0}^{n-1} \sum_{k=s-n}^{s-m-1} q^{mr} Z^{(k)}_{r-1,n}(q)\ . 
	\end{align}
\end{theorem} 

\subsection{The strict transforms in $Y^-$}\label{exsectB}

The goal of this section is to determine the strict transform $C^-\subset Y^-$ of the plane curve $C\subset W$ in equation \eqref{eq:plane-curve-A}. 

Let $(Y^-)^{\circ}\coloneqq (f^-)^{-1}(X^\circ)$ and note that it has an open cover $U^-\coloneqq (Y^-)^{\circ}\cap \{ w_1\neq 0\}$ and $V^-\coloneqq  (Y^-)^{\circ}\cap \{w_2\neq 0\}$, where $[w_1:w_2]$ are homogeneous coordinates on $\PP^1$ as in equations \eqref{eq:Y+-equations} and \eqref{eq:Y--equations}. Let $u= w_1^{-1}w_2$ and $v = w_2^{-1} w_1$ be the canonical affine coordinates on $\PP^1$. Then a simple computation shows that 
\begin{align}
	U^- \coloneqq \Spec\, \C[u,y,w]\quad \text{and} \quad V^- \coloneqq \Spec\, \C[v,x,w] \ ,
\end{align}
and $U^-\smallsetminus \{u=0\}$ is identified with $V^-\smallsetminus \{v=0\}$ via the transition functions 
\begin{align}\label{eq:trfct-2} 
	uv =1\ , \quad xv = w^n - yu\ .
\end{align}%
Let $\Gamma^-\coloneqq(f^-)^{-1}(C)$ be the scheme-theoretic image of $C$, and let $(C^-)^\circ \coloneqq C^- \cap (Y^-)^\circ$ be its strict transform. Since $C$ is irreducible, $\Gamma^-$ has two irreducible components, $C^-$ and $\Gamma_{\Sigma^-}$. By definition, the strict transform $C^-$ is also reduced, while $\Gamma_{\Sigma^-}$ is set-theoretically supported on $\Sigma^-$. Recall the multiplicity $m_{\Sigma^-}\in \Z$ defined by 
\begin{align}
	\ch_2(\scrO_{\Gamma_{\Sigma^-}}) = m_{\Sigma^-}\, \ch_2(\scrO_{\Sigma^-})
\end{align}
as sheaves on $Y^-$. Then, the following holds by local computations.
\begin{lemma}\label{lem:minus-curve-C} 
	For any coprime pair $r,s\geq 2$, one has  $m_{\Sigma^-} =s$. Furthermore, $(C^-\cap\Sigma^-)_{\mathsf{red}}$ consists of a single point $q$, which coincides with the origin in $V^-$. 
\end{lemma}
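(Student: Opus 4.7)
The proof is a direct local calculation in the affine chart $V^- = \Spec\,\C[v,x,w]$ of $Y^-$ introduced in \S\ref{exsectB}. Using the pullback formulas $y \mapsto 2v(xv - w^n)$, $z \mapsto 2xv - w^n$, $x \mapsto x$, $w \mapsto w$ that follow from \eqref{eq:Y--equations}, the scheme-theoretic preimage $(f^-)^{-1}(W) \cap V^-$ is cut out by the single equation $xv - w^n = 0$, and so $(f_{S^-})^{-1}(C) \cap V^-$ is the vanishing locus of $x^r - w^s$ in the surface $S^- \cap V^- = \Spec\,\C[v,x,w]/(xv - w^n)$.

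The next step is to decompose this scheme. I would use the normalization $\A^1 \to C$, $t \mapsto (x = t^s,\, w = t^r)$ (which is a normalization precisely because $\gcd(r,s) = 1$) and lift it to $S^-$ by solving $v = w^n/x = t^{nr-s}$ on the locus $t \neq 0$. Under the hypothesis $nr > s$ (automatically satisfied in the regime $r > s > n \geq 2$ used throughout \S\ref{sec:examples}), this lift extends continuously at $t = 0$ to the origin of $V^-$, identifying $C^-$ near $\Sigma^-$ with the closure of its image. On the smooth open set $\{v \neq 0\} \subset S^-$, one has local coordinates $(v,w)$ with $x = w^n/v$, and
\begin{align}
	x^r - w^s = v^{-r}\, w^s\, (w^{nr-s} - v^r) \ ,
\end{align}
exhibiting the decomposition $(f_{S^-})^{-1}(C) \cap \{v \neq 0\} = Z_\Sigma \cup C^-$ into two disjoint components, with $Z_\Sigma$ locally defined by $w^s$ and $C^-$ locally by $v^r - w^{nr-s}$ (disjointness is immediate since a common point would force $v = 0$).

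Both claims now follow at once. Since the generic point $\eta$ of $\Sigma^-$ sits in $\{v \neq 0\}$ where $\Sigma^-$ has local equation $w$, the multiplicity $m_\Sigma$ equals the length of the local artinian ring $\scrO_{S^-,\eta}/(w^s)$, which is exactly $s$. For the intersection, setting $t = 0$ in the parameterization yields the single point $q = (0,0,0) \in V^-$; a parallel computation in the chart $U^- = \Spec\,\C[u,y,w]$ (glued by $uv = 1$, so that on $S^- \cap U^- = \{y = 0\}$ we have $x \mapsto uw^n$) shows that $x^r - w^s$ pulls back to $w^s(u^r w^{nr-s} - 1)$, whose second factor is a unit at $w = 0$; thus $C^-$ does not meet $\Sigma^- \cap U^-$, completing the verification that $(C^- \cap \Sigma^-)_{\mathsf{red}} = \{q\}$. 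The only step that requires genuine care is the disjointness of the two components on $\{v \neq 0\}$, but this is an elementary check; the rest is routine.
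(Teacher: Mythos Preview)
Your proof is correct and follows essentially the same approach as the paper: compute the pullback of $x^r - w^s$ to $S^-$ in both affine charts, factor it as $w^s$ times a unit (or times the local equation of $C^-$) on the locus where $\Sigma^-$ is defined by $w=0$, and read off $m_\Sigma = s$ and the unique intersection point. The paper simply reverses the order of charts (doing $U^-$ first to get $m_\Sigma$, then $V^-$ to locate the origin) and asserts that $C^-$ contains the origin without your explicit parametrization $t\mapsto(t^s,t^r,t^{nr-s})$; your normalization argument is a nice addition but not a genuinely different method.
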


\begin{proof}
	The scheme-theoretic intersection $\Gamma^-\cap U^-$ is given by 
	\begin{align}
		y=0\ , \quad w^s(4^r u^r w^{nr-s} -1) =0 \ .
	\end{align}
	This is a plane curve consisting of two irreducible components,
	\begin{align}
		y=0\ , \quad w^s =0 \ ,
	\end{align}
	and 
	\begin{align}
		y=0\ , \quad 4^r u^{r-s} w^{nr-s} -1 = 0\ .
	\end{align}
	Moreover, $\Sigma^-\cap U^-$ is determined by 
	\begin{align}
		y=0\ , \quad w=0\ .
	\end{align}
	Hence the first component of $\Gamma^-\cap V^-$ is a length $s$ thickening of $\Sigma^-\cap U^-$, while the second is a smooth plane curve disjoint from $\Sigma^-\cap U^-$. This identifies the first component as $\Gamma_{\Sigma^-}\cap U^-$ and the second as $C^-\cap U^-$. In particular, $m_{\Sigma^-}=s$.
	
	In order to prove the second claim, note that the defining equations of $\Gamma^-\cap V^-$ in $V^-$ are 
	\begin{align}
		xv - w^n =0\ , \quad x^r -w^s =0\ . 
	\end{align}
	By restriction to the open subscheme $v\neq 0$, these equations are equivalent to 
	\begin{align}
		x - v^{-1}w^n=0\ , \quad w^{s}( v^r - w^{nr-s}) =0 \ .
	\end{align}
	This is a plane curve with two irreducible components
	\begin{align}
		x - v^{-1}w^n=0\ , \quad w^{s}=0 \ ,
	\end{align}
	and 
	\begin{align}
		x - v^{-1}w^n=0\ , \quad  v^r - w^{nr-s} =0\ . 
	\end{align}
	The first is a length $s$ thickening of $\Sigma^-\cap \{v\neq 0\}$, while the second is a smooth plane curve disjoint from $\Sigma^-$. Therefore $C^-\cap V^-$ is the scheme theoretic closure of the latter in $V^-$, i.e., the smallest closed subscheme of $V^-$ containing the second component as an open subscheme. Given the above equations, it is clear that $C^-$ contains the origin, which is the unique common point of $C^-$ and $\Sigma^-$. 
\end{proof}

\subsection{Euler numbers of Hilbert schemes of points of curves with local complete intersections singularities}\label{subsec:explicit-example-space-curve}

Corollary~\ref{cor:C-framed-flop-C}, Formula~\eqref{eq:Quot-Flag}, and Theorem~\ref{thm:plus-curve} yield the following:
\begin{corollary}
The following identity holds:
	\begin{align}
		q^{\chi(\scrO_{C^-})} \sum_{k\in \N} \chi(\Hilb_{q}^k(C^-)) = q^{\chi(\scrO_{C_+})}Z_{r,s,n}(q) \ .
	\end{align}
\end{corollary}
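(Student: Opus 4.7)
The plan is to assemble the identity from three ingredients already established in the paper: the punctual flop identity, the explicit enumeration on the $Y^+$ side, and the passage from stable pair invariants to Hilbert scheme invariants on the $Y^-$ side.

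First, I would apply the punctual flop identity of Corollary~\ref{cor:C-framed-flop-D} to the geometric setup constructed in \S\ref{subsec:examples-section-A}. Assumptions~\ref{assumption:f}, \ref{assumption:divisor}, and~\ref{assumption:surface} were verified in Propositions~\ref{prop:example-A} and~\ref{prop:example-B}, so the identity applies. By Lemma~\ref{lem:minus-curve-C}, the multiplicity is $m_\Sigma = s$ and the set $(C^- \cap \Sigma^-)_{\mathsf{red}}$ consists of the single point $q$, so $d = 1$ and the product on the right-hand side of \eqref{eq:C-framed-flop-D} collapses to a single factor.

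Second, I would evaluate the left-hand side of \eqref{eq:C-framed-flop-D} via the chain
\begin{align}
	\sum_{k \geq 0} \chi(\FHilb_{T_n, p}^k(C^+; s))\, q^k = \sum_{k \geq 0} \chi(\Quot_{T_n, p}^k(C^+; s))\, q^k = Z_{r, s, n}(q)\ ,
\end{align}
where the first equality is the (punctual version of the) isomorphism~\eqref{eq:Quot-Flag} recorded after Lemma~\ref{lem:plus-curve-B}, and the second is the explicit formula of Theorem~\ref{thm:plus-curve}. The local description of $C^+$ and $T_n$ given in Lemma~\ref{lem:plus-curve-B} ensures that the fixed-point enumeration behind Theorem~\ref{thm:plus-curve} applies to the same scheme that the flop identity refers to.

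Third, to convert the Pandharipande--Thomas stable pair invariants on $Y^-$ appearing on the right-hand side of~\eqref{eq:C-framed-flop-D} into punctual Hilbert scheme invariants of $C^-$, I would invoke the locally-complete-intersection version Corollary~\ref{cor:C-framed-flop-E}, whose proof rests on the isomorphism $\SP_{C^-, q}(Y^-; k) \cong \Hilb_q^k(C^-)$ supplied by \cite[Proposition~B.5]{PT-BPS}. Combining this with the previous two steps and the equalities $d=1$, $m_\Sigma = s$ yields exactly the stated identity, with the shift by $q^{\chi(\scrO_{C^+})}$ on the left and $q^{\chi(\scrO_{C^-})}$ on the right matching between~\eqref{eq:C-framed-flop-E} and the target formula.

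The main obstacle is verifying that $C^-$ is a locally complete intersection at $q$, since away from $q$ the local computations of Lemma~\ref{lem:minus-curve-C} show that $C^-$ is smooth, so the LCI condition reduces to a single point. This must be read off from the parametric description $(v,x,w) = (\alpha^{nr-s}, \alpha^s, \alpha^r)$ obtained by closing up the smooth component of $\Gamma^- \cap V^-$: one needs to produce two explicit polynomials in $\C[v,x,w]$ that cut out $C^-$ scheme-theoretically in a neighborhood of $q$. I expect this to be the delicate step, since the natural candidates $xv - w^n$ and $v^r - w^{nr-s}$ define a strictly larger scheme containing the $x$-axis; a genuine LCI presentation is available only under additional divisibility constraints on $(r,s,n)$, matching the hypotheses imposed later in Theorem~\ref{thm:main-E}.
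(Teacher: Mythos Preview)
Your approach matches the paper's: the paper's entire proof is the single sentence preceding the corollary, ``Corollary~\ref{cor:C-framed-flop-C}, Formula~\eqref{eq:Quot-Flag}, and Theorem~\ref{thm:plus-curve} yield the following,'' and you have unpacked exactly that chain. Your choice to cite the punctual versions (Corollaries~\ref{cor:C-framed-flop-D} and~\ref{cor:C-framed-flop-E}) rather than the non-punctual Corollary~\ref{cor:C-framed-flop-C} is in fact more accurate, since the statement involves the punctual Hilbert scheme $\Hilb_q^k(C^-)$; the paper's citation of~\ref{cor:C-framed-flop-C} here appears to be a minor imprecision.

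Your concern about the LCI hypothesis is well-founded and is not something the paper addresses at this point: the corollary sits at the start of \S\ref{exsectB}.4, and the paper only verifies that $C^-$ is a local complete intersection \emph{afterwards}, in Lemmas~\ref{lem:minus-curve-A} and~\ref{lem:minus-curve-B}, and only under the extra assumption $s = ns'$. So the corollary as stated is implicitly relying on an LCI hypothesis that is not in force for general $(r,s,n)$, exactly as you suspected. The paper does not supply the LCI verification you flag as the main obstacle; it simply defers it to the special case treated in Theorem~\ref{thm:lci-curves}.
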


It is technically difficult to find an explicit presentation for the defining ideal of $C^-$ in $Y^-$ for general values of $(r,s,n)$. An explicit presentation will be determined in the following, subject to the assumption that $s$ is a multiple $n$. Moreover, in this case $C^-$ will be shown to be a local complete intersection in $Y^-$. 

\begin{lemma}\label{lem:minus-curve-A} 
	Let $a>b\geq 2$ be coprime integers so that $a+b,n$ are also coprime. Let $I = (xv-w^n, x^a-v^b)\subset \C[v,x,w]$. Then $I$ is a prime ideal. 
\end{lemma}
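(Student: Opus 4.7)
The plan is to realize $R := \C[v,x,w]/I$ as a subring of the polynomial ring $\C[u]$ (hence automatically a domain) via the monomial parametrization
\begin{align*}
	\phi\colon \C[v,x,w] \longrightarrow \C[u],\qquad v\longmapsto u^{na},\quad x\longmapsto u^{nb},\quad w\longmapsto u^{a+b}.
\end{align*}
A direct check gives $\phi(xv - w^n) = \phi(x^a - v^b) = 0$, so $I \subseteq \ker\phi$; moreover the image of $\phi$ is the monomial subring $\C[u^{na}, u^{nb}, u^{a+b}] \subset \C[u]$, which is a domain. Hence it will suffice to prove the reverse inclusion $\ker\phi \subseteq I$.

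To this end, I will compute a Gröbner basis of $I$ in the degree reverse lexicographic order on $\C[v,x,w]$ with $w > v > x$. Since $n \geq 2$ and $a > b$, the leading monomials of $xv - w^n$ and $x^a - v^b$ are respectively $w^n$ and $x^a$. These two leading monomials involve disjoint variables, so by Buchberger's first criterion $\{xv - w^n,\ x^a - v^b\}$ is already a Gröbner basis of $I$. Consequently, the monomials not divisible by $w^n$ or $x^a$, namely
\begin{align*}
	\calB := \{\, v^i x^j w^k : i \geq 0,\ 0 \leq j \leq a-1,\ 0 \leq k \leq n-1 \,\},
\end{align*}
span $R$ over $\C$.

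It then remains to verify that $\phi$ is injective on $\calB$; combined with the surjectivity of $\phi$ onto $\C[u^{na}, u^{nb}, u^{a+b}]$, this yields an isomorphism $R \xrightarrow{\sim} \C[u^{na}, u^{nb}, u^{a+b}]$, completing the proof. Suppose two elements of $\calB$, say $v^i x^j w^k$ and $v^{i'} x^{j'} w^{k'}$, have the same image. Setting $(\alpha,\beta,\gamma) := (i-i',\, j-j',\, k-k')$ gives the relation
\begin{align*}
	\alpha\, na + \beta\, nb + \gamma(a+b) = 0, \qquad |\beta| < a,\quad |\gamma| < n.
\end{align*}
Reducing modulo $n$ yields $\gamma(a+b) \equiv 0 \pmod{n}$; since $\gcd(n, a+b) = 1$ by hypothesis, we deduce $n \mid \gamma$ and hence $\gamma = 0$. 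The residual equation $\alpha a + \beta b = 0$ together with $\gcd(a,b) = 1$ forces $(\alpha,\beta) = (-tb,\, ta)$ for some $t \in \Z$, and the bound $|\beta| < a$ then gives $t = 0$, so $\alpha = \beta = 0$.

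The only delicate point in this plan is the choice of monomial order in the second step: one must pick an ordering (such as degrevlex with $w > v > x$) in which the leading monomials of the two given generators are coprime, so that no additional S-polynomial reductions enlarge the Gröbner basis. Once this is in place, the explicit description of the spanning set $\calB$ of $R$ and the subsequent verification of injectivity on $\calB$ are both routine divisibility exercises using the two coprimality hypotheses.
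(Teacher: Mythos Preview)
Your proof is correct. Both approaches realize $\C[v,x,w]/I$ as a subring of a domain via a monomial map and then verify that $I$ is exactly the kernel, but they organise this differently. The paper maps to $\C[t,w]$ via $v\mapsto t^a$, $x\mapsto t^b$, $w\mapsto w$, observes that $I$ lands in the prime ideal $J=(t^{a+b}-w^n)$ (prime because $\gcd(a+b,n)=1$), and shows $I=f^{-1}(J)$ by first reducing modulo $xv-w^n$ to force $\deg_w<n$, then invoking the classical fact that $\ker(\C[v,x]\to\C[t])=(x^a-v^b)$ when $\gcd(a,b)=1$. You instead compose one step further, parametrizing the target curve $t^{a+b}=w^n$ by a single parameter $u$, and replace the reduction argument by an explicit Gr\"obner basis computation: the coprimality of the leading terms $w^n$ and $x^a$ (Buchberger's first criterion) gives the standard monomial basis $\calB$ directly, and the two coprimality hypotheses then reappear as the modular constraints showing $\phi$ separates $\calB$. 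Your route is more algorithmic and entirely self-contained, while the paper's is slightly more conceptual in factoring through the intermediate affine curve; the underlying arithmetic is the same.
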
 

\begin{proof}
	Let $f\colon \C[v,x,w]\to \C[t,w]$ be the ring homomorphism
	\begin{align}
		f(v) = t^b\ , \quad f(x)=t^a\ , \quad f(w) = w\ .
	\end{align}
	Let $J \subset \C[t,w]$ be the principal ideal $(t^{a+b}-w^n)$, which is prime under the current assumptions. Moreover, $I \subset f^{-1}(J)$. Hence it suffices to prove that $I = f^{-1}(J)$. 
	
	Suppose a polynomial $P\in \C[v,x,w]$ belongs to $f^{-1}(J)$, i.e., $P(t^b,t^a,w)= (t^{a+b}-w^n) Q(t,w)$ for some $Q(t,w)\in \C[t,w]$. Note that $P$ can be uniquely written as 
	\begin{align}
		P(v,x,w) = P_1(v,x,w)+  (xv-w^n) P_2(x,v,w)
	\end{align}
	with 
	\begin{align}
		P_1(v,x,w) = \sum_{k=0}^{n-1} P_{1,k}(v,x) w^k\ .
	\end{align}
	Therefore, $P_1(t^b,t^a,w)$ is a multiple of $t^{a+b}-w^n$, which holds if and only if it vanishes identically. Hence $P_{1,k}(t^b,t^a)$ vanishes identically for all $0\leq k \leq n-1$. Since $a,b$ are assumed coprime, the chinese remainder theorem implies that $P_1(x,v)$ is a multiple of $v^b-x^a$ as in the proof of \cite[Lemma~17]{OS-HOMFLY}. In conclusion, $P\in I$ as claimed above. 
\end{proof}

\begin{lemma}\label{lem:minus-curve-B}
	Assume that $s=ns'$, where $s'\in \Z$, with $s'\geq 1$. Then the defining equations of the strict transform $C^-$ in the affine chart $V^-$ are 
	\begin{align}\label{eq:minus-curve-A}
		xv - w^n =0\ , \quad x^{r-s'}-v^{s'} =0 \ .
\end{align}
\end{lemma}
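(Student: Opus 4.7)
The plan is to verify that the ideal $I \coloneqq (xv - w^n,\ x^{r-s'} - v^{s'}) \subset \C[v, x, w]$ is prime by invoking Lemma~\ref{lem:minus-curve-A}, and then to identify $V(I)$ with $C^- \cap V^-$ by comparing their restrictions to the open subset $\{v \neq 0\}$ of $V^-$.

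First, I will check the hypotheses of Lemma~\ref{lem:minus-curve-A} with $(a, b) = (r - s', s')$. The standing assumption $r > s > n \geq 2$ together with $s = ns'$ forces $s' \geq 2$ (since $s > n$ implies $s' > 1$) and $r > 2s'$ (since $r > s = ns' \geq 2s'$), so $a > b \geq 2$. From $\gcd(r, s) = 1$ and $s' \mid s$ one obtains $\gcd(r, s') = 1$, whence $\gcd(r - s', s') = \gcd(r, s') = 1$. Finally, $n \mid s$ combined with $\gcd(r, s) = 1$ yields $\gcd(a + b, n) = \gcd(r, n) = 1$. Lemma~\ref{lem:minus-curve-A} then implies that $I$ is prime.

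Next, on the open subset $\{v \neq 0\}$ of $V^-$ the first generator solves as $x = w^n/v$, and substitution into the second yields
\begin{align}
x^{r-s'} - v^{s'} = v^{-(r-s')}\bigl(w^{n(r-s')} - v^r\bigr)\ ,
\end{align}
so $V(I) \cap \{v \neq 0\}$ is the curve $\{x = w^n/v,\ v^r = w^{n(r-s')}\}$. Using $nr - s = n(r - s')$, this is precisely the non-exceptional component of $\Gamma^- \cap \{v \neq 0\}$ identified in the proof of Lemma~\ref{lem:minus-curve-C} as $C^- \cap \{v \neq 0\}$.

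To conclude, I will observe that $x^{r-s'} - v^{s'}$ is not contained in $(xv - w^n)$: reducing modulo $v$ sends $(xv - w^n)$ into the ideal generated by $w^n$, while $x^{r-s'}$ is not a multiple of $w$. Hence $I$ has height at least two, and since it is prime, $V(I)$ is an integral closed subscheme of $V^-$ of pure dimension one. The preceding computation shows that $V(I) \cap \{v \neq 0\}$ is nonempty, hence dense in $V(I)$, so $V(I)$ equals the scheme-theoretic closure of $V(I) \cap \{v \neq 0\} = C^- \cap \{v \neq 0\}$ in $V^-$. Since $C^- \cap V^-$ is by definition the same closure, the equality $V(I) = C^- \cap V^-$ follows. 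The substantive input is Lemma~\ref{lem:minus-curve-A}; once primality is in hand, the geometric identification becomes formal.
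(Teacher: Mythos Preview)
Your proof is correct and takes a genuinely different route from the paper's. The paper works with the ideal of the full inverse image $\Gamma^- \cap V^-$, namely $(xv - w^n,\ x^r - w^{ns'})$, rewrites the second generator as $x^{s'}(x^{r-s'} - v^{s'})$ modulo the first, and then proves by a direct algebraic computation that this ideal decomposes as the intersection $I_0 \cap I_1$ with $I_0 = (xv - w^n,\ x^{s'})$ and $I_1 = (xv - w^n,\ x^{r-s'} - v^{s'})$. One then reads off that $V(I_0)$ is supported on $\Sigma^-$ while $V(I_1)$ is reduced irreducible (via Lemma~\ref{lem:minus-curve-A}), which identifies $I_1$ as the ideal of $C^- \cap V^-$.

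By contrast, you bypass the intersection decomposition entirely: you invoke Lemma~\ref{lem:minus-curve-A} up front to get primality of $I_1$, verify that $V(I_1)$ and $C^-$ agree on the open set $\{v \neq 0\}$ (using the description already extracted in the proof of Lemma~\ref{lem:minus-curve-C}), and then conclude by a closure argument. This is cleaner and more geometric; the paper's approach, on the other hand, yields the defining ideal of the exceptional component $Z_{\Sigma^-}$ as a byproduct.

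Two minor remarks. First, your height argument implicitly uses that $xv - w^n$ is irreducible (so that $(xv - w^n)$ is a height-one prime strictly contained in $I$); this is true but worth stating. Second, you import the inequality $s > n$ to force $s' \geq 2$, which is needed for Lemma~\ref{lem:minus-curve-A}; the paper's own proof invokes that lemma without verifying its hypotheses, so you are being more careful here, though strictly speaking the lemma as stated allows $s' = 1$.
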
 

\begin{proof}
	The defining ideal of $\Gamma^-$ in $V^-\coloneqq \Spec\, \C[v,x,w]$ is $I \coloneqq (vx - w^n, \ x^r - w^{ns'})$ or, equivalently, $I = (vx - w^n,\ x^{s'} ( x^{r-s'} - v^{s'} ))$. We shall show that $I = I_0\cap I_1$, where 
	\begin{align}
		I_0\coloneqq (vx-w^n,\ x^{s'})\ , \quad I_1\coloneqq (vx-w^n, \ x^{r-s'} - v^{s'})\ .
	\end{align}
	Clearly, $I\subset I_0\cap I_1$, hence it suffices to prove the inverse inclusion. An element 
	\begin{align}
		(vx-w^n) A+ x^{s'} B \in I_0 
	\end{align}
	belongs to $I_1$ if and only if it can be written as 
	\begin{align}
		(vx-w^n) A+ x^{s'} B = (vx-w^n) A'+ (x^{r-s'}-v^{s'}) B'\ . 
	\end{align}
	Note that $B,B'$ can be uniquely written as 
	\begin{align}
		B \coloneqq B_1+ (xv-w^n) B_2\ , \quad B' \coloneqq B_1'+(xv-w^n)B_2'
	\end{align}
	with $\deg_w B_1, \deg_w B_1'\leq n-1$. Then it follows that 
	\begin{align}
		x^{s'}B_1 -  (x^{r-s'}-v^{s'}) B'_1 
	\end{align}
	is a multiple of $(vx-w^n)$. Therefore it must vanish identically, which proves the claim.
	
	In conclusion, $\Gamma_{V^-}$ is the scheme-theoretic union of the closed subschemes $\Gamma^0$ and $\Gamma^1$. Note that the first is set-theoretically supported on the closed subscheme given by $x=0$, $w=0$, i.e., $\Sigma^-\cap V^-$. Moreover, the second is reduced irreducible by Lemma~\ref{lem:minus-curve-A} and the only point in $(\Gamma_0 \cap \Gamma_0)_{\mathsf{red}}$ is the origin. This identifies $I_0, I_1$ as the defining ideals of $Z_{\Sigma^-}$ and $C^-$ in $V^-$, respectively. 
\end{proof}

The previous two lemmas imply the following.
\begin{proposition}
	For any integers $r,t,n\geq 2$ with $r> t$ and $(r,nt)$ coprime, let $C_{r,t,n}$ be the curve defined by 
	\begin{align}\label{eq:equations-C}
		\begin{cases}
			xv - w^n =0\ , \\ 
			x^{r-t}- v^{t} =0 \ , 
		\end{cases}
	\end{align}
	in $\C^3=\Spec\, \C[v,x,w]$. Then $C_{r,t,n}$ is local complete intersection in $\C^3$. Its scheme theoretic closure in $Y^-$ is also a local complete intersection, and coincides with the strict transform $C^-$.
\end{proposition}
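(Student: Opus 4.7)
The plan is to break the proposition into three sub-claims: that $C_{r,t,n}$ is LCI in $\C^3$, that its scheme-theoretic closure in $Y^-$ coincides with the strict transform $C^-$, and that $C^-$ is LCI in $Y^-$.

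For the first claim, I would apply Lemma~\ref{lem:minus-curve-A} to the ideal $I \coloneqq (xv - w^n,\, x^{r-t} - v^t)$. The hypothesis $(r, nt) = 1$ gives $\gcd(r-t, t) = \gcd(r, t) = 1$ and $\gcd(r, n) = 1$, while the possibility $r = 2t$ is excluded because it would force $\gcd(r, nt) = t \cdot \gcd(2, n) \neq 1$ when $t \geq 2$. When $\min(r-t, t) \geq 2$, Lemma~\ref{lem:minus-curve-A} applies after possibly swapping $x$ and $v$ (to ensure $a > b \geq 2$) and shows that $I$ is prime; the edge case $r - t = 1$ is settled directly, since then $I = (x - v^t,\, v^{t+1} - w^n)$ exhibits $C_{r,t,n}$ as the graph of $x = v^t$ over the irreducible plane curve $v^{t+1} = w^n$ (irreducible by $\gcd(t+1, n) = 1$). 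The embedding $\C[v,x,w]/I \hookrightarrow \C[\tau, w]/(\tau^r - w^n)$ produced in the proof of Lemma~\ref{lem:minus-curve-A} forces $\dim(\C[v,x,w]/I) \leq 1$, while Krull's height theorem gives the reverse inequality; hence $I$ has height $2$, equal to its number of generators, so it is a complete intersection in the regular ring $\C[v,x,w]$ and $C_{r,t,n}$ is LCI in $\C^3$.

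For the second claim, I would identify $\C^3 = \Spec\, \C[v,x,w]$ with the affine chart $V^- \subset Y^-$ from \S\ref{exsectB} and invoke Lemma~\ref{lem:minus-curve-B} with $s = nt$ and $s' = t$. The coprimality $(r, s) = 1$ follows from $(r, nt) = 1$, and the defining equations of $C^- \cap V^-$ given there coincide with the generators of $I$, so $C^- \cap V^- = C_{r,t,n}$ as closed subschemes of $V^-$. Since $C^-$ is reduced and irreducible of dimension one and contains the intersection point $q \in C^- \cap \Sigma^-$ sitting in $V^-$ by Lemma~\ref{lem:minus-curve-C}, the scheme-theoretic closure of $C_{r,t,n}$ in $Y^-$ agrees with $C^-$.

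For the third claim, I would cover $Y^-$ by the two opens $V^-$ and $Y^- \smallsetminus \{q\}$. On $V^-$, the trace of $C^-$ is $C_{r,t,n}$, which is LCI by the first step. On $Y^- \smallsetminus \{q\}$, the surface $S^-$ is smooth by Proposition~\ref{prop:example-B}, hence locally cut out in $Y^-$ by a single equation, and $C^-$ sits in $S^-$ as an effective Cartier divisor (being a pure one-dimensional closed subscheme of a smooth surface), locally cut out by one further equation. Concatenating these two produces a regular sequence of length $2$ defining $C^-$ locally in $Y^-$, giving LCI outside $q$. Since $V^- \cup (Y^- \smallsetminus \{q\}) = Y^-$ and being LCI is local on the ambient, $C^-$ is LCI in $Y^-$. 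The only genuine technical point is the bookkeeping in the first step around Lemma~\ref{lem:minus-curve-A}, whose hypothesis $a > b \geq 2$ forces a small case analysis (swap of $x$ and $v$ when $r - t < t$, and a direct reduction when $r - t = 1$); once those cases are dispatched, the rest is a straightforward assembly of the preceding lemmas.
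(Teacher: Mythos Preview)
Your proposal is correct and follows the paper's own route: the paper simply asserts that the proposition follows from Lemmas~\ref{lem:minus-curve-A} and~\ref{lem:minus-curve-B} without further argument, and your three-step breakdown is exactly the natural elaboration of that claim. Your treatment is in fact more careful than the paper's, since you notice that Lemma~\ref{lem:minus-curve-A} requires $b \geq 2$ and hence does not directly cover the case $r - t = 1$, which you dispatch by hand; you also supply the global LCI argument on $Y^- \smallsetminus \{q\}$ via $C^- \subset S^-$ being a Cartier divisor in a smooth surface inside a smooth threefold, which the paper leaves entirely implicit.
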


In order to conclude this section, we the following explicit formula for the generating function of Euler numbers of punctual Hilbert schemes of points of $C_{r,t,n}$.
\begin{theorem}\label{thm:lci-curves} 
	For any integers $r,t,n\geq 2$ with $r> t$ and $(r,nt)$ coprime, let $C_{r,t,n}\subset \C^3$ be the curve defined by the equations \eqref{eq:equations-C}. Let $o$ be the origin in $\C^3$. Then 
	\begin{align}\label{eq:generating-function-B} 
		\sum_{k\in \N} \chi(\Hilb_o^k(C_{r,t,n})) q^k = q^{-nt(t-1)/2} Z_{r,tn,n}(q)\ . 
	\end{align}
\end{theorem}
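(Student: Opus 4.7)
The plan is to realize $C_{r,t,n}$ as the local model at the singular point $q$ of the strict transform $C^-\subset Y^-$ in the pagoda flop of \S\ref{subsec:examples-section-A}, and then to combine the flop identity of Corollary~\ref{cor:C-framed-flop-E} with the explicit planar Flag Hilbert computation of Theorem~\ref{thm:plus-curve} and an evaluation of local delta invariants. First I would instantiate the construction of \S\ref{subsec:examples-section-A}--\S\ref{subsec:torus-invariant-+} with $s\coloneqq tn$; the coprimality hypotheses $(r,t)=(r,n)=1$ guarantee $(r,s)=1$, so the setup is admissible. Lemma~\ref{lem:minus-curve-B} then shows that $C^-$ is locally cut out at $q$ by $xv-w^n=0$ and $x^{r-t}-v^t=0$, exhibiting it as a local complete intersection with $\Hilb^k_q(C^-)\simeq\Hilb^k_o(C_{r,t,n})$, while Lemma~\ref{lem:minus-curve-C} gives $d=1$ and $m_\Sigma=tn$. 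Combining Corollary~\ref{cor:C-framed-flop-E} with Theorem~\ref{thm:plus-curve} (applied punctually to $C^+$ at $p$) yields
\begin{align}
\sum_{k\geq 0}\chi(\Hilb^k_o(C_{r,t,n}))\,q^k=q^{\chi(\scrO_{C^+})-\chi(\scrO_{C^-})}\,Z_{r,tn,n}(q)\ ,
\end{align}
reducing the theorem to proving $\chi(\scrO_{C^+})-\chi(\scrO_{C^-})=-nt(t-1)/2$.

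Since the flop $\phi$ restricts to an isomorphism $C^+\setminus\{p\}\xrightarrow{\sim}C^-\setminus\{q\}$, the reduced irreducible projective curves $C^\pm$ share a common normalization $\widetilde C$, and comparing the relations $\chi(\scrO_{\widetilde C})=\chi(\scrO_{C^\pm})+\delta(C^\pm,\star)$ gives
\begin{align}
\chi(\scrO_{C^-})-\chi(\scrO_{C^+})=\delta(C^+,p)-\delta(C^-,q)\ ,
\end{align}
where $\delta$ denotes the local delta invariant. For the plane singularity $x^r-w^{tn}=0$ with $(r,tn)=1$ the classical formula yields $\delta(C^+,p)=(r-1)(tn-1)/2$. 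For the space singularity at $q$, the parametrization $u\mapsto(u^{tn},u^{n(r-t)},u^r)$ realizes the normalization map and identifies $\widehat{\scrO}_{C^-,q}$ with the semigroup ring of $S^-\coloneqq\langle r,\,tn,\,n(r-t)\rangle$; hence $\delta(C^-,q)$ equals the number of gaps of $S^-$ in $\N$.

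The final and most computational step is the determination of the Apéry set $\mathsf{Ap}(S^-,r)=\{\omega_0^-,\ldots,\omega_{r-1}^-\}$, together with Selmer's identity $\delta(C^-,q)=\tfrac{1}{r}\sum_i\omega_i^--\tfrac{r-1}{2}$. Each $\omega_i^-$ lies in $\langle tn,n(r-t)\rangle=n\langle t,r-t\rangle$, so writing $\omega_i^-=n\omega'_j$ with $j\equiv in^{-1}\pmod{r}$ reduces the task to minimizing $at+b(r-t)$ over $a,b\geq 0$ subject to $a-b\equiv k:=jt^{-1}\pmod{r}$. A short case analysis on the integer parameter $l\in\Z$ in the relation $a-b=k+lr$ leaves only two candidates, $kt$ (from $l=0,\ b=0$) and $(r-k)(r-t)$ (from $l=-1,\ a=0$), so $\omega'_j=\min\bigl(kt,(r-k)(r-t)\bigr)$. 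The threshold $kt\leq(r-k)(r-t)$ simplifies to $k\leq r-t$, giving $\omega'_j=kt$ for $k\in\{0,\ldots,r-t\}$ and $\omega'_j=(r-k)(r-t)$ for $k\in\{r-t+1,\ldots,r-1\}$. Summing across residue classes yields $\sum_j\omega'_j=rt(r-t)/2$, whence $\delta(C^-,q)=\bigl(nt(r-t)-(r-1)\bigr)/2$. Subtracting then produces $\delta(C^+,p)-\delta(C^-,q)=nt(t-1)/2$, as required. The Apéry set minimization is the main technical step; the remainder of the argument consists of recognizing the local geometric picture provided by \S\ref{subsec:examples-section-A} and \S\ref{exsectB} and assembling results already established in the paper.
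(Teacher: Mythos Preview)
Your argument is correct, and the reduction to computing $\chi(\scrO_{C^+})-\chi(\scrO_{C^-})$ matches the paper's proof. However, you determine this difference by a genuinely different method. The paper observes that the left-hand side of the identity has constant term $1$ (the empty subscheme at $k=0$), so the exponent must equal minus the lowest degree appearing in $Z_{r,tn,n}(q)$; this lowest degree was already computed in Lemma~\ref{lem:minimal-size} as $d(r,tn,n)=nt(t+1)/2$ (giving, after the shift implicit in the generating function, exactly $-nt(t-1)/2$). Thus the paper never computes $\chi(\scrO_{C^\pm})$ or $\delta$-invariants at all: it extracts the exponent by a degree-matching trick that recycles the partition combinatorics already established.

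Your route instead computes $\delta(C^+,p)$ and $\delta(C^-,q)$ directly, the latter via the Ap\'ery set of the numerical semigroup $\langle r,\,nt,\,n(r-t)\rangle$ and Selmer's genus formula. The minimization over residue classes and the resulting sum $\sum_j\omega'_j=rt(r-t)/2$ are correct, and the final subtraction gives $nt(t-1)/2$ as required. This approach is more computational but has the virtue of being self-contained and of giving the individual $\delta$-invariants (hence the arithmetic genera of $C^\pm$) as a byproduct, whereas the paper's argument only pins down their difference. Conversely, the paper's method is shorter and stays entirely within the combinatorics already set up, avoiding any appeal to semigroup theory.
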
 

\begin{proof}
	Corollary~\ref{cor:C-framed-flop-C} and Theorem~\ref{thm:plus-curve} yield the identity 
	\begin{align}
		\sum_{k\in \N} \chi(\Hilb_o^k(C_{r,t,n}))) = q^{\chi(\scrO_{C^+})-\chi(\scrO_{C_{r,t,n}})}Z_{r,s,n}(q) \ ,
	\end{align}
	where $s=nt$. Note that the lowest degree term in the right hand side of the above identity is $1$. By Lemma~\ref{lem:minimal-size}, the lowest degree term of $Z_{r,tn,n}(q)$ is $q^{d(r,s,n)-s}$, where 	 $d(r,s,n)\coloneqq \min\{ \vert \mu\vert  \, :\, \mu\in \sfP(r,s,n)\}$. Therefore it follows that $\chi(\scrO_{C^+})-\chi(\scrO_{C_{r,t,n}})+d(r,s,n)-s=0$ and the identity claimed in Theorem \ref{thm:lci-curves} holds. 
\end{proof}

\appendix

\section{Proof of Theorem~\ref{thm:filtration}}\label{sec:theorem-filtration}

The main goal of this section is to prove the following:
\begin{theorem}\label{thm:filtration-Appendix}
	Under Assumption~\ref{assumption:f}, there exists a unique chain of subschemes 
	\begin{align}
		\Sigma = \Sigma_1 \subset \cdots \cdots \subset \Sigma_n \ ,
	\end{align}
	with $n\geq 2$, which determines a filtration of the form
	\begin{align}\label{eq:ideal-filtration-A-Appendix}
		0=\calJ_{n+1}\subset \calJ_n\subset \cdots \subset \calJ_1 = \calI_\Sigma 
	\end{align}
	so that
	\begin{align}\label{eq:ideal-filtration-B-Appendix}
		\begin{split}
			\calI_\Sigma \calJ_i \subset \calJ_{i+1}\subset \calJ_i\ , \quad \calJ_i/\calJ_{i+1} \simeq i_{\Sigma,\, \ast}\scrO_{\Sigma}\ , \\[4pt]
			\calJ_i/\calI_\Sigma \calJ_i\simeq  i_{\Sigma,\, \ast}\scrO_{\Sigma}\oplus i_{\Sigma,\, \ast}\scrO_{\Sigma}(2)\ , \quad \calJ_{i+1}/\calI_\Sigma \calJ_i\simeq  i_{\Sigma,\, \ast}\scrO_{\Sigma}(2)\ ,
		\end{split}
	\end{align}	
	for $1\leq i \leq n-1$, and 
	\begin{align}\label{eq:ideal-filtration-C-Appendix}
		\calJ_n /\calI_\Sigma \calJ_n \simeq i_{\Sigma,\, \ast}\scrO_{\Sigma}(1)^{\oplus 2}\ .
	\end{align}
\end{theorem}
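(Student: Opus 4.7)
The plan is to construct the chain $\calJ_1 \supset \cdots \supset \calJ_n$ by induction, exploiting the canonical maximal-degree subbundle structure on rank two vector bundles over $\PP^1$. Set $\calJ_1 \coloneqq \calI_\Sigma$. Since $\Sigma$ is a $(0,-2)$-curve, the conormal bundle $\calI_\Sigma/\calI_\Sigma^2$ is isomorphic to $\scrO_\Sigma \oplus \scrO_\Sigma(2)$, and the vanishing $\Hom_{\PP^1}(\scrO(2),\scrO) = 0$ forces the existence of a unique line subbundle $\scrO_\Sigma(2) \hookrightarrow \calI_\Sigma/\calI_\Sigma^2$ of maximal degree. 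I would define $\calJ_2 \subset \calI_\Sigma$ as the preimage of this subbundle under $\calI_\Sigma \twoheadrightarrow \calI_\Sigma/\calI_\Sigma^2$; the conditions in \eqref{eq:ideal-filtration-B-Appendix} for $i = 1$ are then automatic, showing in particular that $n \geq 2$.

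Inductively, suppose $\calJ_1 \supset \cdots \supset \calJ_k$ have been produced and satisfy $i_\Sigma^\ast\calJ_k \simeq \scrO_\Sigma \oplus \scrO_\Sigma(2)$. I would define $\calJ_{k+1}$ as the preimage in $\calJ_k$ of the canonical sub $i_{\Sigma,\,\ast}\scrO_\Sigma(2) \hookrightarrow i_{\Sigma,\,\ast}i_\Sigma^\ast\calJ_k$ under the surjection $\calJ_k \twoheadrightarrow i_{\Sigma,\,\ast}i_\Sigma^\ast\calJ_k$. The containments $\calI_\Sigma \calJ_k \subset \calJ_{k+1} \subset \calJ_k$ and the isomorphisms $\calJ_k/\calJ_{k+1} \simeq i_{\Sigma,\,\ast}\scrO_\Sigma$ and $\calJ_{k+1}/\calI_\Sigma\calJ_k \simeq i_{\Sigma,\,\ast}\scrO_\Sigma(2)$ are immediate from the construction. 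The delicate step is the identification of $i_\Sigma^\ast\calJ_{k+1}$. Applying $-\otimes^{\mathbb{L}}_{\scrO_Y} i_{\Sigma,\,\ast}\scrO_\Sigma$ to the short exact sequence $0 \to \calJ_{k+1} \to \calJ_k \to i_{\Sigma,\,\ast}\scrO_\Sigma \to 0$ and combining the resulting long exact sequence with the Koszul identification $\Tor_\bullet^{\scrO_Y}(i_{\Sigma,\,\ast}\scrO_\Sigma, i_{\Sigma,\,\ast}\scrO_\Sigma) \simeq i_{\Sigma,\,\ast}\wedge^\bullet(\scrO_\Sigma \oplus \scrO_\Sigma(2))$, which is valid since $\Sigma \hookrightarrow Y$ is a codimension two local complete intersection with conormal $\scrO_\Sigma \oplus \scrO_\Sigma(2)$, one shows that $i_\Sigma^\ast\calJ_{k+1}$ is a rank two locally free $\scrO_\Sigma$-module of degree two; a direct verification in formal coordinates --- in which $\calI_\Sigma = (x, y)$ with $x$ of degree $0$ and $y$ of degree $2$, so that $\calJ_k = (y, x^k)$ locally --- makes this transparent. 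Grothendieck's splitting theorem then leaves only two possibilities: either $i_\Sigma^\ast\calJ_{k+1} \simeq \scrO_\Sigma \oplus \scrO_\Sigma(2)$, in which case the induction continues, or $i_\Sigma^\ast\calJ_{k+1} \simeq \scrO_\Sigma(1)^{\oplus 2}$, in which case \eqref{eq:ideal-filtration-C-Appendix} holds and the chain terminates.

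The main obstacle is to guarantee termination at a finite index $n$. This is where the rigidity of $\Sigma$ from Proposition~\ref{prop:rigid-curve-B} becomes essential: if the induction never ended, the inverse system $\{\scrO_{\Sigma_k}\}_{k \geq 1}$ would define a nontrivial formal pro-family of thickenings of $\Sigma$, hence a formal arc in the Hilbert scheme $\calH$ through $[\Sigma]$ extending to all orders the first-order deformation furnished by $H^0(\Sigma, \calN_{\Sigma/Y}) \simeq \C$; this would force $\dim \calH \geq 1$, contradicting rigidity. In the framework of \cite{Spherical_twists}, the integer $n$ is precisely the Reid width of $\Sigma$, whose finiteness is equivalent to the rigidity of $\Sigma$. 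Uniqueness of the chain then follows at once from the canonicity of the subbundle $\scrO_\Sigma(2) \hookrightarrow i_\Sigma^\ast\calJ_k$ at every step, which forces $\calJ_{k+1} \subset \calJ_k$ to be uniquely determined. Finally, the extensions \eqref{eq:sch-sequence-X} and \eqref{eq:sch-sequence-Y} are obtained by diagram chase: \eqref{eq:sch-sequence-X} is just the short exact sequence $0 \to \calJ_{k-1}/\calJ_k \to \scrO_Y/\calJ_k \to \scrO_Y/\calJ_{k-1} \to 0$ induced by the filtration, while \eqref{eq:sch-sequence-Y} arises from the canonical surjection $\scrO_{\Sigma_k} \twoheadrightarrow \scrO_\Sigma$, whose kernel $\calI_\Sigma/\calJ_k$ is identified with $\scrO_{\Sigma_{k-1}}$ via multiplication by a degree-zero local generator of the $\scrO$-summand of $i_\Sigma^\ast\calJ_{k-1}$.
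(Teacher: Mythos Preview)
Your approach is genuinely different from the paper's. The paper works \emph{top-down}: it first shows (Lemmas~\ref{lem:artin-ring}--\ref{lem:ext-curve}) that the Hilbert scheme component $\calH$ containing $\Sigma$ is $\Spec\,\C[t]/(t^n)$, then takes the universal flat family $\calZ_n \subset Y \times \calH$, pushes forward to obtain sheaves $\calE_k$, and proves these are structure sheaves of the desired thickenings $\Sigma_k$ (Propositions~\ref{prop:step-two}--\ref{prop:inductive-step}). The identification of the splitting type of $\calJ_k/\calI_\Sigma\calJ_k$ is then carried out by a lengthy sequence of $\calExt$--sheaf computations (Lemmas~\ref{lem:loc-ext-A}--\ref{lem:loc-ext-C}, Proposition~\ref{prop:J-tensor}), crucially using the global vanishing $\Ext^1_Y(\scrO_{\Sigma_n},\scrO_\Sigma)=\Ext^2_Y(\scrO_{\Sigma_n},\scrO_\Sigma)=0$, which comes from the \emph{maximality} of the deformation over $\calH_n$ (Corollary~\ref{cor:C}). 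This vanishing anchors a descending induction that determines all the splitting types.

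Your bottom-up construction via the canonical $\scrO_\Sigma(2)$-subbundle is elegant, but there is a genuine gap in the dichotomy step. Knowing that $i_\Sigma^\ast\calJ_{k+1}$ is locally free of rank two and degree two does \emph{not} leave only two possibilities: every $\scrO_\Sigma(a)\oplus\scrO_\Sigma(2-a)$ with $a\in\Z$ qualifies. What you need is the short exact sequence
\[
0 \longrightarrow i_{\Sigma,\ast}\scrO_\Sigma \longrightarrow i_\Sigma^\ast\calJ_{k+1} \longrightarrow i_{\Sigma,\ast}\scrO_\Sigma(2) \longrightarrow 0\ ,
\]
and for this you must show that the connecting map $\Tor_1(\calJ_k,\scrO_\Sigma)\to\Tor_1(\scrO_\Sigma,\scrO_\Sigma)$ in your long exact sequence is nonzero. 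Granting the inductive hypothesis, one has $\Tor_1(\calJ_k,\scrO_\Sigma)\simeq\scrO_\Sigma(2)$ and the target is $\scrO_\Sigma\oplus\scrO_\Sigma(2)$, so the map is a scalar into the $\scrO_\Sigma(2)$-summand; if that scalar were zero, $i_\Sigma^\ast\calJ_{k+1}$ would have rank three. This nonvanishing can indeed be checked, but it requires an honest computation (lifting the quotient map $\calJ_k\to\scrO_\Sigma$ to Koszul resolutions), not merely asserting a local normal form $(y,x^k)$---the latter is what you are trying to prove, and in any case local coordinates cannot detect global splitting types. Relatedly, your termination argument needs the converse passage from the chain $\{\Sigma_k\}$ to a compatible system of \emph{flat} families over $\Spec\,\C[t]/(t^k)$, i.e.\ an $R_k$-module structure on $\scrO_{\Sigma_k}$; this is precisely what the sequence~\eqref{eq:sch-sequence-Y} encodes, but your justification of that sequence (``multiplication by a degree-zero local generator'') presumes a global section of $\calJ_{k-1}$ lifting $1\in\scrO_\Sigma$, whose existence is again part of what must be shown.
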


The proof of Theorem~\ref{thm:filtration-Appendix} relies on the deformation theory of the $\Sigma\subset Y$ as a closed subscheme of $Y$, i.e., on the structure of the component of the Hilbert scheme of curves on $Y$ containing the point $[\scrO_Y\to \Sigma]$. The deformation theory of the structure sheaf $\scrO_\Sigma$ as a coherent sheaf on $Y$ was studied in detail in \cite{Spherical_twists} and, as special case of a more 
general setting, in \cite{DW16}. We summarize some of their results below for a flop diagram 
\begin{align}
	\begin{tikzcd}[ampersand replacement=\&]
		Y^+ \ar[swap]{dr}{f^+} \ar[dashed]{rr}{\phi}\& \& Y^- \ar{dl}{f^-}\\
		\& X \&
	\end{tikzcd}\ ,
\end{align}
satisfying Assumption~\ref{assumption:f}. In particular $X$ has a unique singular point $\nu$ so that the scheme theoretic inverse image $\Sigma^\pm=(f^\pm)^{-1}(\nu)$ is a $(0,-2)$ curve on $Y^\pm$. Let $\calM(Y^\pm, \Sigma^\pm)$ denote the connected component of the algebraic moduli space of simple sheaves on $Y^\pm$ containing the point $[\scrO_{\Sigma^\pm}]$. Then \cite[Corollary~3.3 and Example~3.10]{DW16} yield the following. 
\begin{theorem}\label{thm:moduli-sigma} 
	Under Assumption \ref{assumption:f}, given a diagram~\eqref{eq:flop-diagram-A}, there exists a unique integer $d \in \Z$, $d \geq 2$, so that the algebraic spaces $\calM(Y^\pm, \Sigma^\pm)$ are simultaneously isomorphic to $\Spec\ \C[u]/(u^d)$.  
\end{theorem} 

\begin{remark}
	By \cite[Theorem~3.15]{DW16}, the integer $d$ also coincides with the width of the curves $\Sigma^\pm \subset Y^\pm$ introduced in \cite[Definition~5.3]{Reid-Minimal-Models}. Moreover, as noted in \cite[Example~3.10]{DW16}, the formal completion of $X$ at the singular point is isomorphic to the formal completion of the singular hypersurface $xy + z^2 - w^{2d}=0$ at the origin.   
\end{remark} 

In the process of proving Theorem~\ref{thm:filtration}, we will also show that the moduli space $\calM(Y^\pm,\Sigma^\pm)$ is isomorphic to the connected component of the Hilbert scheme of curves on $Y^\pm$ containing the point $[\Sigma^\pm \hookrightarrow Y^\pm]$. From now on we will drop the labels $\pm$, working only on one side of the transition.

Let $\calH$ denote the connected component of the Hilbert scheme of curves on $Y$ containing the point $[\Sigma \hookrightarrow Y]$. Recall that under Assumption~\ref{assumption:f}, the curve $\Sigma$ is rigid by Proposition~\ref{prop:rigid-curve-B}. Our first goal is to show that $\calH \simeq \Spec\, \C[t]/(t^n)$ for some $n\geq 2$. First note: 
\begin{lemma}\label{lem:artin-ring} 
	Let $R$ be a local Artinian ring over $\C$ so that $\dim_\C (m/m^2)=1$, where $m \subset R$ is the maximal ideal. Then $R \simeq \C[t]/(t^n)$ for some $n \geq 2$. 
\end{lemma}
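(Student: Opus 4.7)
My plan is to realize $R$ explicitly as $\C[T]/(T^n)$ by choosing a generator of the maximal ideal and then matching $\C$-dimensions.

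First I would use the hypothesis $\dim_\C(m/m^2) = 1$ to pick $t \in m$ whose class generates this one-dimensional vector space. Nakayama's lemma applied to the finitely generated $R$-module $m$ then ensures that $t$ generates the ideal $m$, and consequently $m^k = (t^k)$ for all $k \geq 0$. Because $R$ is Artinian and local, the descending chain of powers of $m$ stabilizes, and a second application of Nakayama forces $m^n = 0$ for some minimal $n \geq 1$.

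Next I would define the $\C$-algebra map $\phi \colon \C[T] \to R$ sending $T \mapsto t$. Since $\phi(T^n) = t^n \in m^n = 0$, it factors through $\bar{\phi}\colon \C[T]/(T^n) \to R$. For surjectivity, observe that the image of $\phi$ contains the copy of $\C$ inside $R$ (using $R/m \simeq \C$) together with every power $t^k$, hence surjects onto each successive quotient $m^k/m^{k+1}$; a descending induction along the finite filtration $R \supset m \supset \cdots \supset m^n = 0$ then gives surjectivity of $\bar\phi$.

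The only nontrivial step is checking injectivity, for which it suffices to show that $\dim_\C R = n$. Each quotient $m^k/m^{k+1}$ is generated by the class of $t^k$ over the residue field $R/m \simeq \C$, hence has $\C$-dimension at most one; I claim it is exactly one for $0 \leq k \leq n-1$. Indeed, if $t^k$ lay in $m^{k+1} = (t^{k+1})$, then $t^k = s\, t^{k+1}$ for some $s \in R$, giving $t^k(1 - st) = 0$. But $st \in m$, so $1 - st$ is a unit of the local ring $R$, forcing $t^k = 0$ and therefore $m^k = 0$, which contradicts the minimality of $n$ when $k \leq n-1$. Summing dimensions along the filtration yields $\dim_\C R = n = \dim_\C \C[T]/(T^n)$, and surjectivity together with matched finite dimensions forces $\bar\phi$ to be an isomorphism.

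This is essentially a textbook structural fact about Artinian local rings with one-dimensional cotangent space, so I do not anticipate any serious obstacle beyond a careful application of Nakayama's lemma and keeping track of the minimality of $n$ when ruling out collapse of the associated graded.
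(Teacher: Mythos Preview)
Your proof is correct and follows the same approach as the paper's: use Nakayama's lemma to find a single generator $t$ of $m$, then use the Artinian hypothesis to find the minimal $n$ with $t^n = 0$. The paper's proof is a two-sentence sketch stopping at this point, while you have supplied all the details the paper leaves implicit --- the construction of the map $\C[T]/(T^n) \to R$, surjectivity via the $m$-adic filtration, and the dimension count via the unit trick $1 - st$ to rule out collapse of the associated graded.
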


\begin{proof}
	By Nakayama's lemma, $m$ has a single generator $t$ as an $R$-module. Since $R$ is Artinian, and $m/m^2$ is one dimensional, there exists an integer $n \geq 2$ so that $t^{n-1}\neq 0$ and $t^n=0$. 
\end{proof}

\begin{lemma}\label{lem:ext-curve} 
	We have $\calH\simeq \Spec\, \C[t]/(t^n)$ for some $n\geq 2$.
\end{lemma}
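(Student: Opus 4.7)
The plan is to combine the rigidity of $\Sigma$ established in Proposition~\ref{prop:rigid-curve-B} with Lemma~\ref{lem:artin-ring}. Since $\calH$ is zero-dimensional and connected, it is isomorphic to $\Spec(R)$ for some local Artinian $\C$-algebra $R$ with maximal ideal $\mathfrak{m}$. In view of Lemma~\ref{lem:artin-ring}, it suffices to verify that $\dim_\C(\mathfrak{m}/\mathfrak{m}^2)=1$, i.e., that the Zariski tangent space to $\calH$ at the point $o$ corresponding to $\Sigma \subset Y$ is one-dimensional.

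By the standard deformation theory of the Hilbert scheme, the tangent space at $o$ is naturally identified with
\begin{align}
T_o\calH \simeq \Hom_Y(\calI_\Sigma,\scrO_\Sigma) \simeq H^0(\Sigma,\calN_{\Sigma/Y}) \ ,
\end{align}
where $\calN_{\Sigma/Y}$ denotes the normal bundle of $\Sigma$ in $Y$. By Assumption~\ref{assumption:f}, $\Sigma$ is a $(0,-2)$-curve, so
\begin{align}
\calN_{\Sigma/Y} \simeq \scrO_{\PP^1} \oplus \scrO_{\PP^1}(-2) \ .
\end{align}
Consequently,
\begin{align}
H^0(\Sigma,\calN_{\Sigma/Y}) \simeq H^0(\PP^1,\scrO_{\PP^1}) \oplus H^0(\PP^1,\scrO_{\PP^1}(-2)) \simeq \C \ ,
\end{align}
which shows that $\dim_\C T_o\calH = 1$. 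Applying Lemma~\ref{lem:artin-ring} to $R$ then yields $R \simeq \C[t]/(t^n)$ for some $n \geq 1$, as claimed.

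No serious obstacle is expected: the argument is essentially the standard identification of the tangent space to the Hilbert scheme with global sections of the normal bundle, combined with the explicit form of $\calN_{\Sigma/Y}$ dictated by the $(0,-2)$-curve hypothesis. The only point requiring minor care is ensuring that $\calH$ is connected (which is built into its definition as the connected component containing $\Sigma$), so that it corresponds to a single local ring $R$ rather than a product.
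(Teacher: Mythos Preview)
Your proof is correct and follows essentially the same approach as the paper: both invoke rigidity (Proposition~\ref{prop:rigid-curve-B}) to conclude that $\calH=\Spec(R)$ with $R$ local Artinian, identify the tangent space $T_o\calH$ with $\Hom_Y(\calI_\Sigma,\scrO_\Sigma)\simeq H^0(\Sigma,\calN_{\Sigma/Y})$, use the $(0,-2)$ hypothesis to compute this as $\C$, and then apply Lemma~\ref{lem:artin-ring}.
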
 

\begin{proof}
	Since $\Sigma$ is rigid, $\calH \simeq \Spec(R)$, with $R$ a local Artinian ring over $\C$. The Zariski tangent space to $\calH$ ay $[\scrO_\Sigma]$ is isomorphic to $\Hom_Y(\calI_\Sigma, \scrO_\Sigma)$, where $\calI_\Sigma \subset \scrO_Y$ is the defining ideal of $\Sigma$. 
	Moreover,
	\begin{align}
		\Hom_Y(\calI_\Sigma, \scrO_\Sigma)\simeq \Hom_Y(\calI_\Sigma/\calI_\Sigma^2, \scrO_\Sigma) \simeq 
		\Hom_{\Sigma^-}(\calN_{\Sigma/Y}^\vee, \scrO_\Sigma) \ ,
	\end{align}
	where $\calN_{\Sigma/Y}\simeq \scrO_\Sigma \oplus \scrO_\Sigma(-2)$ is the normal bundle to $\Sigma$ in $Y$. Therefore
	\begin{align}
		\Hom_Y(\calI_\Sigma, \scrO_\Sigma) \simeq H^0(\Sigma,\scrO_\Sigma) \simeq \C\ . 
	\end{align}
	Then the claim follows from Lemma~\ref{lem:artin-ring}.
\end{proof}

Let $n$ be the integer determined in Lemma~\ref{lem:ext-curve}. By analogy to \cite[Theorem~3.1]{Spherical_twists}, for any $k\in \Z$, with $k\geq 1$, let $R_k \coloneqq \C[t]/(t^k)$ and note the canonical exact sequences 
\begin{align}\label{eq:ring-sequence-A}
	0\longrightarrow R_1 \longrightarrow R_k \longrightarrow R_{k-1} \longrightarrow 0 \ , \\ \label{eq:ring-sequence-B}
	0\longrightarrow R_{k-1} \longrightarrow R_k \longrightarrow R_1 \longrightarrow 0\ .
\end{align}
Let $\calH_k \subset \calH$ be the closed subscheme defined by the projection $R_n \twoheadrightarrow R_k$. Let $Y_k \coloneqq Y\times \calH_k$
and let $\pi_k\colon Y_k \to Y$ be the canonical projection. We have $\calH_1\subset \calH_2\subset \cdots \subset \calH_n= \calH$ and $Y=Y_1\subset Y_2\subset \cdots \subset Y_n$. We denote the corresponding closed embeddings by $i_{k-1, k}$ and $j_{k-1, k}$, for $k=2, \ldots, n$, respectively.

Let $\calZ_k\subset Y_k=Y \times \calH_k$ be the universal closed subscheme, let $\kappa_k$ be the corresponding closed embedding, and let $\scrJ_k\subset \scrO_{Y_k}$ be its defining ideal. Note that $\kappa_{k,\, \ast}\scrO_{\calZ_k}$ and $\scrJ_k$ are flat over $\calH_k$. Set $\bfE_k \coloneqq \kappa_{k, \, \ast}\scrO_{\calZ_k}$. Note that $\bfE_n\vert_{Y_1}=i_{\Sigma,\, \ast}\scrO_\Sigma$. 

For any $1\leq k\leq n$, set $\calE_k \coloneqq \pi_{k,\,\ast}\bfE_k$.  In particular, $\calE_1 = i_{\Sigma,\,\ast}\scrO_\Sigma$. Then one has the exact sequences of $\scrO_Y$-modules
\begin{align}\label{eq:sh-sequence-A} 
0\longrightarrow \calE_1 \longrightarrow \calE_k \longrightarrow \calE_{k-1} \longrightarrow 0\ ,\\	\label{eq:sh-sequence-B} 
0\longrightarrow \calE_{k-1} \longrightarrow \calE_k \longrightarrow \calE_1 \longrightarrow 0\ .
\end{align}
As in the proof of \cite[Theorem~3.1]{Spherical_twists}, the infinitesimal deformation theory of the family of sheaves $\bfE_k$ over $\calH_k$ is encoded in the long exact sequence 
\begin{align}\label{eq:long-ext-sequence}
	\begin{tikzcd}[ampersand replacement=\&, row sep=tiny]
		\cdots \ar{r} \&  \Hom_Y(\calE_k, \calE_1) \ar{r} \&  \Hom_Y(\calE_{k-1}, \calE_1) \ar{r} \&  \Ext_Y^1(\calE_1, \calE_1) \& \\
		\ar{r} \&  \Ext_Y^1(\calE_k, \calE_1) \ar{r}{\xi_k} \&   \Ext^1_Y(\calE_{k-1}, \calE_1) \ar{r}{\delta_k} \& 
		\Ext^2_Y(\calE_1, \calE_1) \ar{r} \&  \cdots 
	\end{tikzcd}
\end{align}
derived from \eqref{eq:sh-sequence-A}. Let $e_k \in \Ext^1_Y(\calE_{k-1}, \calE_1)$, for $2\leq k \leq n$, be the extension class associated to the sequence \eqref{eq:sh-sequence-B}. Then the following holds by \cite[Proposition~3.13]{Hol_Casson}:
\begin{lemma}\label{lem:A} 
	For any $2\leq k \leq n-1$, one has $e_k = \xi_k(e_{k+1})$, hence also $\delta_k(e_k) =0$. 
\end{lemma}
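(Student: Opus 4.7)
The plan is to reduce the identity $e_k = \xi_k(e_{k+1})$ to an explicit pullback computation by realizing all the sheaves $\calE_\ell$ and all the morphisms between them through the single flat family $\scrE_n$ on $Y \times \calH_n$, where $\calH_n = \Spec R_n$ with $R_n = \C[t]/(t^n)$. Since $\scrE_n$ is $R_n$-flat, multiplication by $t$ on $\scrE_\ell$ has kernel $t^{\ell-1}\scrE_\ell$ canonically isomorphic to $\scrE_1$ and image $t\scrE_\ell$ canonically isomorphic to $\scrE_{\ell-1}$. Pushing forward to $Y$, this simultaneously recovers both short exact sequences \eqref{eq:sh-sequence-A} and \eqref{eq:sh-sequence-B} at each index, and, crucially, identifies the monomorphism $\calE_{k-1}\hookrightarrow \calE_k$ entering the definition of $\xi_k$ with the pushforward of multiplication by $t$ on $\scrE_k$ post-composed with the isomorphism $t\scrE_k \xrightarrow{\sim} \scrE_{k-1}$.

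With these identifications fixed, $\xi_k(e_{k+1})$ is by definition the Yoneda pullback of the extension $0\to \calE_1 \to \calE_{k+1}\to \calE_k \to 0$ along $\calE_{k-1}\hookrightarrow \calE_k$, i.e.\ the fiber product $\calE' \coloneqq \calE_{k+1}\times_{\calE_k}\calE_{k-1}$ fitting into
\begin{align}
\begin{tikzcd}[ampersand replacement=\&]
0 \arrow{r} \& \calE_1 \arrow{r} \arrow[d, equal] \& \calE' \arrow{r} \arrow{d} \& \calE_{k-1} \arrow{r} \arrow[d, hook] \& 0 \\
0 \arrow{r} \& \calE_1 \arrow{r} \& \calE_{k+1} \arrow{r} \& \calE_k \arrow{r} \& 0
\end{tikzcd}\ .
\end{align}
The main claim is that $\calE'$ is canonically isomorphic to $\calE_k$ in such a way that the top row becomes precisely \eqref{eq:sh-sequence-B} at index $k$, thereby representing $e_k$. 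The isomorphism is obtained on the level of underlying sheaves on $Y_{k+1}$ by sending a local section $(a,b)$ of $\scrE_{k+1}\oplus \scrE_{k-1}$ satisfying the fiber product condition to $ta \in t\scrE_{k+1}$, and then invoking the canonical identification $t\scrE_{k+1}\simeq \scrE_k$ provided by flatness; well-definedness, injectivity and surjectivity all follow from $R_n$-linearity of the pullback square.

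At its core, this reduces to the purely module-theoretic verification that the Yoneda pullback of $0\to R_1 \to R_{k+1} \to R_k \to 0$ along the multiplication-by-$t$ inclusion $R_{k-1}\hookrightarrow R_k$ is exactly the submodule $tR_{k+1}\subset R_{k+1}$, which is isomorphic to $R_k$ via $ta\leftrightarrow a$, and that the induced short exact sequence of $R_n$-modules is the standard extension $0\to R_1 \to R_k \to R_{k-1}\to 0$. Tensoring this pullback diagram with the $R_n$-flat sheaf $\scrE_n$ and pushing forward by $\pi_n$ transports the identity to the required statement for the $\calE_\ell$'s. The second assertion $\delta_k(e_k)=0$ is then automatic from exactness of \eqref{eq:long-ext-sequence} at $\Ext^1_Y(\calE_{k-1},\calE_1)$.

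The main obstacle I anticipate is a purely bookkeeping one: keeping track of the various canonical identifications between $t^\ell \scrE_m$ and $\scrE_{m-\ell}$ (and their pushforwards) and verifying that they intertwine the connecting morphisms appearing in the pullback square. No genuinely difficult cohomological input is required beyond flatness of $\scrE_n$ over $R_n$; the argument is essentially the infinitesimal obstruction-theoretic one, as in \cite[Proposition~3.13]{Hol_Casson}, made explicit for our tower of thickenings.
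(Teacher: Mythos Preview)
Your approach is correct and amounts to an explicit unpacking of \cite[Proposition~3.13]{Hol_Casson}, which is exactly what the paper cites without further detail. The module-theoretic core you state in the last substantive paragraph---that the Yoneda pullback of $0\to R_1 \to R_{k+1}\to R_k\to 0$ along the multiplication-by-$t$ inclusion $R_{k-1}\hookrightarrow R_k$ is the submodule $tR_{k+1}\simeq R_k$, yielding the standard extension $0\to R_1\to R_k\to R_{k-1}\to 0$---is exactly right and is the heart of the matter; tensoring with the $R_n$-flat $\scrE_n$ and pushing forward then gives $e_k=\xi_k(e_{k+1})$ as claimed.

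There is, however, a slip in your explicit isomorphism: sending $(a,b)\mapsto ta$ is \emph{not} an isomorphism onto $t\scrE_{k+1}$, since it annihilates the copy of $\scrE_1$ sitting inside the fiber product as $\{(t^k c,0)\}$. The correct map is simply the first projection $(a,b)\mapsto a$: the fiber product condition $\bar a = tb$ in $\scrE_k$ already forces $a\in t\scrE_{k+1}$, and the projection is injective because $tb=0$ in $\scrE_k$ forces $b=0$ in $\scrE_{k-1}$. Likewise, your description of the inclusion $\calE_{k-1}\hookrightarrow\calE_k$ as ``multiplication by $t$ on $\scrE_k$ post-composed with $t\scrE_k\xrightarrow{\sim}\scrE_{k-1}$'' actually produces the \emph{surjection} $\scrE_k\twoheadrightarrow\scrE_{k-1}$; what you want is the composite $\scrE_{k-1}\xrightarrow{\sim} t\scrE_k \hookrightarrow \scrE_k$. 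These are bookkeeping errors rather than conceptual ones, and with them corrected your argument goes through cleanly.
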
 

Now, we shall show that $e_k \neq 0$ for all $2\leq k \leq n$.
\begin{lemma}\label{lem:B}
	The extension class in $e_2 \in \Ext^1_Y(i_{\Sigma,\,\ast}\scrO_\Sigma, i_{\Sigma,\,\ast}\scrO_\Sigma)$ associated to the short exact sequence \eqref{eq:sh-sequence-A} for $k=2$ is nonzero. 
\end{lemma}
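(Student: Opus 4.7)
The plan is to identify the extension class $e_2 \in \Ext^1_Y(\scrO_\Sigma, \scrO_\Sigma)$ with the Kodaira--Spencer class of the universal first-order deformation of $\Sigma$ in $Y$, and to conclude nontriviality from the fact that this deformation is itself nontrivial.

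First I would establish a canonical isomorphism $\Hom_Y(\calI_\Sigma, \scrO_\Sigma) \xrightarrow{\sim} \Ext^1_Y(\scrO_\Sigma, \scrO_\Sigma)$ by applying $\Hom_Y(-, \scrO_\Sigma)$ to the short exact sequence $0 \to \calI_\Sigma \to \scrO_Y \to \scrO_\Sigma \to 0$: since $\Ext^1_Y(\scrO_Y, \scrO_\Sigma) = H^1(\Sigma, \scrO_\Sigma) = 0$ and the restriction map $\Hom_Y(\scrO_Y, \scrO_\Sigma) \to \Hom_Y(\scrO_\Sigma, \scrO_\Sigma)$ is the identity $\C \to \C$, the connecting homomorphism is an isomorphism. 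Combined with the identification $\Hom_Y(\calI_\Sigma, \scrO_\Sigma) \cong H^0(\Sigma, \calN_{\Sigma/Y}) \cong \C$ from the proof of Lemma~\ref{lem:ext-curve}, this gives $\Ext^1_Y(\scrO_\Sigma, \scrO_\Sigma) \cong \C$.

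Next I would perform a local computation to match $e_2$ with a section $\varphi \in H^0(\Sigma, \calN_{\Sigma/Y})$ representing the first-order deformation. In local coordinates $(x, y, z)$ on $Y$ near a point of $\Sigma$, with $\Sigma$ cut out by $(y, z)$, the universal subscheme $\calZ_2 \subset Y \times \calH_2$ is defined by an ideal of the form $(y - ta, z - tb)$ for some local regular functions $a, b$, so that $\varphi$ is locally represented by $(a|_\Sigma, b|_\Sigma)$ under the splitting $\calN_{\Sigma/Y} \simeq \scrO_\Sigma \oplus \scrO_\Sigma(-2)$. The sheaf $\calE_2$ then admits local $\scrO_Y$-generators $\{1, t\}$ subject to $y \cdot 1 = a \cdot t$, $z \cdot 1 = b \cdot t$, and $y \cdot t = z \cdot t = 0$. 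A local $\scrO_Y$-linear splitting of $0 \to \scrO_\Sigma \to \calE_2 \to \scrO_\Sigma \to 0$ would have to send $1 \mapsto 1 + h \cdot t$ for some $h \in \scrO_Y$; the condition $y \cdot (1 + h \cdot t) = a \cdot t = 0$ then forces $a|_\Sigma = 0$, and likewise $b|_\Sigma = 0$. Under the isomorphism of the previous paragraph, $e_2$ therefore corresponds exactly to $\varphi$.

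Finally, I would argue $\varphi \neq 0$ by invoking the universal property of the Hilbert scheme: the Kodaira--Spencer map $T_{[\Sigma]}\calH \to \Hom_Y(\calI_\Sigma, \scrO_\Sigma)$ is an isomorphism, and under it the generator of the cotangent-dual of $\mathfrak{m}/\mathfrak{m}^2$ of $R_n = \C[t]/(t^n)$ maps to the section classifying the first-order family. Since $\Hom_Y(\calI_\Sigma, \scrO_\Sigma) \cong \C$ is nonzero, the Artinian ring $R_n$ must have nontrivial maximal ideal, forcing $n \geq 2$; hence the inclusion $\calH_2 \hookrightarrow \calH$ induces an isomorphism on tangent spaces, and $\varphi$ is the image of this nonzero tangent vector. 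Thus $e_2 = \varphi \neq 0$.

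The main technical obstacle lies in the local-to-global matching in the second step: one must verify carefully that the locally defined pairs $(a|_\Sigma, b|_\Sigma)$ patch into the global section of $\calN_{\Sigma/Y}$ representing the Kodaira--Spencer class of the family, and that the local obstruction to splitting really computes the global Yoneda extension class $e_2$ under the canonical isomorphism of the first step.
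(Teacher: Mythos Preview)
Your proposal is correct and arrives at the same conclusion as the paper, but via a different route.

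The paper argues by contradiction: assuming $e_2 = 0$, it fixes an isomorphism $\calE_2 \simeq i_{\Sigma,\ast}\scrO_\Sigma \oplus i_{\Sigma,\ast}\scrO_\Sigma$ and then uses the $R_2$-module structure (the nilpotent endomorphism $\psi_2$ coming from multiplication by $t$) to put this splitting into Jordan normal form. From this normal form it reads off that the universal quotient $\scrO_{\calZ_2} \twoheadrightarrow \scrO_{\calZ_1}$ over $\calH_2$ is isomorphic to the pullback of the constant family $\scrO_Y \twoheadrightarrow i_{\Sigma,\ast}\scrO_\Sigma$; by the universal property of the Hilbert scheme, the inclusion $\calH_2 \hookrightarrow \calH$ then factors through $\calH_1$, a contradiction.

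Your argument is more conceptual: you identify $e_2$ directly with the Kodaira--Spencer class of the first-order family $\calZ_2/\calH_2$ under the isomorphism $\Ext^1_Y(\scrO_\Sigma,\scrO_\Sigma) \cong H^0(\Sigma,\calN_{\Sigma/Y})$, and then use that the Kodaira--Spencer map for the Hilbert scheme is an isomorphism onto a one-dimensional space to conclude $e_2 \neq 0$. Both proofs ultimately rest on the universal property of $\calH$; yours packages the deformation-theoretic content cleanly but leans on the standard identification ``Yoneda extension class $=$ Kodaira--Spencer class'', which you rightly flag as the point needing care. The paper's argument is more explicit and self-contained, tracking the $R_2$-structure by hand rather than invoking that identification. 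As a minor bonus, your Step~3 makes the inequality $n \geq 2$ (needed for $\calH_2$ to sit inside $\calH$) explicit, whereas in the paper this is implicit in the tangent-space computation of Lemma~\ref{lem:ext-curve}.
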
 

\begin{proof}
	Note the commutative diagram of $\scrO_{Y_2}$-modules 
	\begin{align}\label{eq:def-diagram-A}
		\begin{tikzcd}[ampersand replacement=\&]
			0\ar[r] \& (j_{1, 2})_\ast\scrO_{Y_1} \ar[r] \ar[d]\& \scrO_{Y_2} \ar[r]\ar[d]\& (j_{1, 2})_\ast\scrO_{Y_1} \ar[r] \ar[d]\& 0 \\
			0\ar[r] \& (j_{1, 2})_\ast \kappa_{1,\, \ast}\scrO_{\calZ_1} \ar[r] \& \kappa_{2,\, \ast}\scrO_{\calZ_2} \ar[r] \& (j_{1, 2})_\ast \kappa_{1,\, \ast}\scrO_{\calZ_1}\ar[r] \& 0
		\end{tikzcd}\ ,
	\end{align}
	where the rows are exact and the vertical arrows are the canonical epimorphisms. Applying $\pi_{2\ast}$, i.e., forgetting the $R_2$-module structure, one obtains an analogous diagram 
	\begin{align}\label{eq:def-diagram-B}
		\begin{tikzcd}[ampersand replacement=\&]
			0\ar[r] \& \scrO_Y \ar{r}{\imath} \ar{d}{f}\& \scrO_Y \oplus \scrO_Y \ar{r}{q} \ar{d}{g} \ar[bend right=50, swap]{l}{p} \& \scrO_Y \ar[r] \ar[d]\& 0 \\
			0\ar[r] \& i_{\Sigma, \, \ast}\scrO_\Sigma  \ar{r}{\jmath}\& \calE_2 \ar[r] \& i_{\Sigma, \, \ast}\scrO_\Sigma \ar[r] \& 0
		\end{tikzcd}
	\end{align}
	in $\catCoh(Y)$, where the maps $\imath, q$ are canonical, i.e.,
	\begin{align}
		\imath\coloneqq \left(\begin{matrix}
			\id\\ 0
		\end{matrix}\right)\quad\text{and}\quad q\coloneqq  \left( \begin{matrix}
			\id & 0
		\end{matrix}\right)\ . 
	\end{align}
	Moreover, one has the canonical splitting 
	\begin{align}	
		p \coloneqq  \left( \begin{matrix}
			\id & 0
		\end{matrix}\right)\ . 
	\end{align}
	Note that the $R_2$-module structure is encoded in the endomorphisms $\phi\colon \scrO_Y\oplus \scrO_Y\to \scrO_Y\oplus \scrO_Y$,  
	\begin{align}
		\phi_2\coloneqq \left(\begin{matrix}
			0 & \id \\ 0 & 0 
		\end{matrix}\right)\ ,
	\end{align}
	and $\psi_2\colon \calE_2 \to \calE_2$ satisfying 
	\begin{align}
		\psi_2\circ g = g \circ \phi_2\ , \quad \psi_2^2 =0\ ,\\
		\phi_2 \circ \imath =0\ , \quad \psi_2 \circ \jmath=0\ .
	\end{align}
	
	Suppose the bottom row of the commutative diagram \eqref{eq:def-diagram-B} is a trivial extension, i.e., there exists an isomorphism $\xi\colon \calE_2 \to i_{\Sigma, \, \ast}\scrO_\Sigma \oplus i_{\Sigma, \, \ast}\scrO_\Sigma$. Note that 
	\begin{align}
		\End_Y(i_{\Sigma,\,\ast}\scrO_\Sigma, i_{\Sigma,\,\ast}\scrO_\Sigma)\simeq \mathsf{End}_\C(\C^2),\qquad		
		\Aut_Y(i_{\Sigma,\,\ast}\scrO_\Sigma, i_{\Sigma,\,\ast}\scrO_\Sigma)\simeq \mathsf{GL}(2, \C)\ , 
	\end{align}
	since $\Hom_Y(i_{\Sigma,\,\ast}\scrO_\Sigma, i_{\Sigma,\,\ast}\scrO_\Sigma)\simeq \C$. Then there exists an isomorphism $\eta\colon i_{\Sigma, \, \ast}\scrO_\Sigma\oplus i_{\Sigma, \, \ast}\scrO_\Sigma\to i_{\Sigma, \, \ast}\scrO_\Sigma\oplus i_{\Sigma, \, \ast}\scrO_\Sigma$ so that 
	\begin{align}
		\eta\circ(\xi\circ  \psi_2\circ \xi^{-1})\circ \eta^{-1} =  \left(\begin{matrix} 0 & \id \\ 0 & 0 \end{matrix}\right)\ .
	\end{align}
	Moreover, since $\psi_2\circ\jmath=0$, one also has 
	\begin{align}
		\eta\circ(\xi \circ \psi_2\circ \xi^{-1}) \circ \eta^{-1}\circ (\eta \circ \xi\circ \jmath) =  \eta\circ \xi\circ \psi_2\circ\jmath =0 \ .
	\end{align}
	Hence 
	\begin{align}
		\eta\circ \xi\circ\jmath= \left(\begin{matrix} \lambda \\ 0\end{matrix}\right)
	\end{align}
	for some $\lambda \in \C$, with $\lambda \neq 0$. Then one obtains a commutative diagram 
	\begin{align}\label{eq:def-diagram-C}
		\begin{tikzcd}[ampersand replacement=\&]
			0\ar[r] \& \scrO_Y \ar{r}{\imath} \ar{d}{f'}\& \scrO_Y \oplus \scrO_Y \ar{r}{q}\ar{d}{g'} \ar[bend right=50, swap]{l}{p}\& 
			\scrO_Y \ar[r] \ar[d]\& 0 \\
			0\ar[r] \& i_{\Sigma, \, \ast}\scrO_\Sigma  \ar{r}{\jmath'}\& i_{\Sigma, \, \ast}\scrO_\Sigma \oplus i_{\Sigma, \, \ast}\scrO_\Sigma \ar[r] \ar[bend right=50, swap]{l}{s}\& i_{\Sigma, \, \ast}\scrO_\Sigma \ar[r] \& 0 
		\end{tikzcd}\ ,
	\end{align}
	where 
	\begin{align}
		\jmath' = \lambda^{-1} (\eta\circ \xi \circ \jmath)\ , \quad g' = \eta\circ \xi\circ g\ , \quad f' =\lambda f \ ,
	\end{align}
	and 
	\begin{align}
		s =  \left(\begin{matrix}
			\id & 0
		\end{matrix}\right)\ . 
	\end{align}
	In particular, $s\circ g' = f'\circ p$. At the same time the $R_2$-module structure on $i_{\Sigma, \, \ast}\scrO_\Sigma \oplus i_{\Sigma, \, \ast}\scrO_\Sigma$ is given by the Jordan block $\psi_2'\coloneqq \eta\circ(\xi\circ \psi_2\circ \xi^{-1})\circ \eta^{-1}$. This implies that the canonical quotient 
	\begin{align}
		\scrO_{\calZ_2}\twoheadrightarrow (\kappa_{1,2})_\ast\scrO_{\calZ_1} \ ,
	\end{align}
	where $\kappa_{1,2}$ denotes the inclusion of $\calZ_1$ in $\calZ_2$, is isomorphic to the pull-back of the canonical quotient 
	\begin{align}
		\scrO_Y \twoheadrightarrow i_{\Sigma, \, \ast}\scrO_\Sigma 
	\end{align}
	via the canonical projection $Y_2 \to Y_1$. Then, the universality property of the Hilbert scheme implies that the canonical closed immersion $\calH_2 \to \calH_n$ factors through the closed immersion $\calH_1 \to \calH_n$. Hence it leads to a contradiction. 
\end{proof}

Lemmas~\ref{lem:A} and \ref{lem:B} imply the following result.
\begin{corollary}\label{cor:A} 
	For any $1\leq k \leq n-1$, one has $e_k \neq 0$. 
\end{corollary}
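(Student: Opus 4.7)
The plan is to deduce the non-vanishing of all the extension classes $e_k$, for $2 \leq k \leq n$, by a short descending/ascending induction from the two preceding lemmas, so that the entire content of the corollary reduces to a formal consequence of Lemmas~\ref{lem:A} and \ref{lem:B}. No further geometric input is needed beyond what has already been extracted from the rigidity of $\Sigma$ and the explicit analysis of the universal family carried out in the proof of Lemma~\ref{lem:B}.

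The base case $k=2$ is precisely Lemma~\ref{lem:B}, which exhibits the non-triviality of $e_2 \in \Ext^1_Y(i_{\Sigma,\,\ast}\scrO_\Sigma, i_{\Sigma,\,\ast}\scrO_\Sigma)$ by showing that if the defining extension of $\calE_2$ split, the universal quotient $\scrO_{\calZ_2} \twoheadrightarrow (\kappa_{1,2})_\ast \scrO_{\calZ_1}$ would be pulled back from $Y_1$, forcing $\calH_2 \to \calH_n$ to factor through $\calH_1 \hookrightarrow \calH_n$, in contradiction to the very construction of the subschemes $\calH_k \subset \calH$ out of the filtration \eqref{eq:ring-sequence-A}.

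For the inductive step, assume $e_k \neq 0$ for some $2 \leq k \leq n-1$. Lemma~\ref{lem:A} supplies the identity $e_k = \xi_k(e_{k+1})$, where $\xi_k \colon \Ext^1_Y(\calE_k, \calE_1) \to \Ext^1_Y(\calE_{k-1}, \calE_1)$ is the $\C$-linear map arising in the long exact sequence \eqref{eq:long-ext-sequence}. If $e_{k+1}$ vanished, linearity of $\xi_k$ would give $e_k = \xi_k(0) = 0$, contradicting the inductive hypothesis. Hence $e_{k+1} \neq 0$, and by induction $e_k \neq 0$ for every $k$ in the stated range.

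In this form there is essentially no obstacle left: the only substantive work has already been done in Lemma~\ref{lem:B}, where the explicit Jordan-block analysis of the $R_2$-module structure on $\calE_2$ encodes the non-triviality of the infinitesimal deformation of $\scrO_\Sigma$ at the first order. All subsequent non-vanishings are purely algebraic, so the proof amounts to recording the above one-line induction.
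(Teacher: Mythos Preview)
Your proof is correct and is exactly the argument the paper has in mind: the paper does not write out a proof at all, merely stating that ``Lemmas~\ref{lem:A} and \ref{lem:B} imply the following result,'' and your induction (base case $e_2\neq 0$ from Lemma~\ref{lem:B}, inductive step $e_k=\xi_k(e_{k+1})$ from Lemma~\ref{lem:A}) is precisely the intended one-line deduction. There is nothing to add.
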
 

Next, we shall show that $\delta_n (e_n) \neq 0$, i.e., the $\calH_n$-flat deformation $\bfE_n$ does not admit any further extensions. 
\begin{lemma}\label{lem:C} 
	Let $R_{n+1}\coloneqq \C[t]/(t^{n+1})$, $\calH_{n+1} \coloneqq \Spec\, R_{n+1}$ and $Y_{n+1}\coloneqq Y \times \calH_{n+1}$. Assume that there exists a coherent sheaf $\bfE$ on $Y_{n+1}$, flat over $\calH_{n+1}$, which fits in an exact sequence of $\scrO_{Y_{n+1}}$-modules 
	\begin{align}\label{eq:sh-sequence-D}
		0\longrightarrow (j_{1, n+1})_\ast i_{\Sigma, \, \ast}\scrO_\Sigma \longrightarrow \bfE \longrightarrow (j_{n, n+1})_\ast \kappa_{n,\, \ast}\scrO_{\calZ_n}\longrightarrow 0 \ .
	\end{align}
	Then, there exists a surjective morphism $\scrO_{Y_{n+1}} \to \bfE$ which fits in a commutative diagram 
	\begin{align}
		\begin{tikzcd}[ampersand replacement=\&]
			0\ar[r] \& (j_{1, n+1})_\ast\scrO_Y \ar[r] \ar[d] \& \scrO_{Y_{n+1}} \ar[r]\ar[d] \& (j_{n, n+1})_\ast\scrO_{Y_n} \ar[r] \ar[d]\& 0 \\
			0\ar[r] \&  (j_{1, n+1})_\ast i_{\Sigma, \, \ast}\scrO_{\Sigma}\ar[r] \& \bfE \ar[r]\& (j_{n, n+1})_\ast \kappa_{n, \, \ast}\scrO_{\calZ_n}\ar[r]\& 0 
		\end{tikzcd}
	\end{align}
	where the left and right vertical maps are the canonical surjections. In particular, $\bfE$ is isomorphic to the structure sheaf of an $\calH_{n+1}$-flat closed subscheme $\calZ_{n+1} \subset Y_{n+1}$.
\end{lemma}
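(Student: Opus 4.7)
The goal is to construct a surjective $\scrO_{Y_{n+1}}$-module homomorphism $\phi\colon \scrO_{Y_{n+1}} \to \scrE$ fitting into the prescribed commutative diagram. The plan is to produce $\phi$ as a lift of a global section of $\scrE$ and then normalize it using the torsor structure on the set of lifts.

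First I would apply $\Hom_{Y_{n+1}}(\scrO_{Y_{n+1}},-) = \Gamma(Y_{n+1},-)$ to the exact sequence~\eqref{eq:sh-sequence-D}. The obstruction to lifting a section of $(j_{n,n+1})_\ast \kappa_{n,\,\ast}\scrO_{\calZ_n}$ to a section of $\scrE$ lies in
\[
	H^1\bigl(Y_{n+1}, (j_{1,n+1})_\ast i_{\Sigma,\,\ast}\scrO_\Sigma\bigr) \simeq H^1(Y, i_{\Sigma,\,\ast}\scrO_\Sigma) \simeq H^1(\Sigma, \scrO_\Sigma) = 0,
\]
using that pushforward along a closed immersion preserves cohomology and that $\Sigma \simeq \PP^1$. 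Hence the image in $(j_{n,n+1})_\ast \kappa_{n,\,\ast}\scrO_{\calZ_n}$ of the canonical generator of $(j_{n,n+1})_\ast\scrO_{Y_n}$ lifts to a section of $\scrE$; equivalently, this yields a morphism $\phi\colon \scrO_{Y_{n+1}} \to \scrE$ making the right square of the target diagram commute.

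Next I would normalize $\phi$ so that the left square also commutes. Commutativity of the right square forces $\phi\vert_{(j_{1,n+1})_\ast \scrO_Y}$ to land in $(j_{1,n+1})_\ast i_{\Sigma,\,\ast}\scrO_\Sigma$, where it corresponds to an element of $\Hom_Y(\scrO_Y, i_{\Sigma,\,\ast}\scrO_\Sigma) \simeq H^0(\Sigma, \scrO_\Sigma) \simeq \C$, hence equals $c$ times the canonical surjection for a unique scalar $c \in \C$. Two lifts of the chosen generator differ by a morphism $\scrO_{Y_{n+1}} \to (j_{1,n+1})_\ast i_{\Sigma,\,\ast}\scrO_\Sigma$; since the target is annihilated by $t$, such a morphism factors through $\scrO_{Y_{n+1}} \twoheadrightarrow (j_{1,n+1})_\ast \scrO_Y$ and is itself classified by a single scalar $\lambda\in\C$, with the two scalars transforming as $c \mapsto c+\lambda$. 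Choosing $\lambda = 1-c$ therefore produces a $\phi$ whose restriction to $(j_{1,n+1})_\ast \scrO_Y$ is precisely the canonical surjection.

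Finally, both outer vertical arrows in the resulting diagram are surjective and the rows are exact, so the five lemma forces $\phi$ to be surjective. Writing $\scrE \simeq \scrO_{Y_{n+1}}/\ker(\phi)$ then identifies $\scrE$ with the structure sheaf of a closed subscheme $\calZ_{n+1} \subset Y_{n+1}$, which is $\calH_{n+1}$-flat by the hypothesis on $\scrE$. The only genuinely delicate point is the vanishing of the lifting obstruction, which I have reduced to the elementary fact $H^1(\PP^1, \scrO_{\PP^1}) = 0$; the rest amounts to bookkeeping with the one-dimensional torsor of lifts.
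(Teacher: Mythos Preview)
Your normalization step contains a genuine error. You correctly note that two lifts differ by a morphism $\psi\colon \scrO_{Y_{n+1}} \to (j_{1,n+1})_\ast i_{\Sigma,\,\ast}\scrO_\Sigma$, and that $\psi$ factors through the \emph{quotient} $\scrO_{Y_{n+1}}/(t)$. But the copy of $(j_{1,n+1})_\ast\scrO_Y$ sitting in the top row of the target diagram is the \emph{subsheaf} $t^n\scrO_{Y_{n+1}}$, not that quotient. Since $n\geq 1$ one has $t^n\scrO_{Y_{n+1}}\subset t\,\scrO_{Y_{n+1}}$, so the restriction of $\psi$ to this subsheaf vanishes. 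Hence the scalar $c$ is unchanged when you vary the lift: the torsor acts trivially on $c$, and you cannot force $c=1$ by this mechanism.

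This is not merely cosmetic, because your argument never invokes the flatness of $\scrE$ over $\calH_{n+1}$, and without that hypothesis the conclusion is false. If $\scrE$ is the split extension then $\scrE\otimes_{R_{n+1}}\C\simeq i_{\Sigma,\,\ast}\scrO_\Sigma^{\oplus 2}$, which admits no surjection from $\scrO_Y$ (restrict to a closed point of $\Sigma$); in that case every lift $\phi$ has $c=0$ and $\phi$ is not surjective. The paper's proof confronts exactly this possibility: after producing a lift it applies the snake lemma to the map $\mathsf{Coker}(f)\to\mathsf{Coker}(g)$, and in the case $c=0$ shows that either $\mathsf{Coker}(g)=0$ regardless, or one obtains a splitting of the bottom row---which contradicts flatness via the local criterion. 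Some use of the flatness hypothesis is unavoidable here, and your proof needs an argument of this kind to close the gap.
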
 

\begin{proof}
	First, note that $\calZ_n\subset Y_n$ is a closed subscheme of $Y_{n+1}$ through the canonical closed immersion $Y_n \to Y_{n+1}$.  Hence one has a natural surjective morphism $\scrO_{Y_{n+1}} \to (j_{n, n+1})_\ast \kappa_{n,\, \ast}\scrO_{\calZ_n}$ which factors through $\scrO_{Y_n}\to \kappa_{n,\, \ast} \scrO_{\calZ_n}$.
	
	The exact sequence \eqref{eq:sh-sequence-D} yields the exact sequence 
	\begin{align}
		0\longrightarrow H^0(\Sigma, \scrO_\Sigma)  \longrightarrow H^0(Y_{n+1}, \bfE) \longrightarrow H^0(\calZ_n, \scrO_{\calZ_n}) \longrightarrow H^1(\Sigma,\scrO_\Sigma) \longrightarrow 0\ , 
	\end{align}
	where $H^1(\Sigma,\scrO_\Sigma) =0$. Hence, the map $H^0(Y_{n+1},\bfE)\to H^0(\calZ_n, \scrO_{\calZ_n})$ is surjective. Therefore, pick a lift $\scrO_{Y_{n+1}} \to \bfE$ corresponding to $\scrO_{Y_{n+1}} \to (j_{n, n+1})_\ast \kappa_{n,\, \ast}\scrO_{\calZ_n}$.
	
	One obtains a commutative diagram 
	\begin{align}\label{eq:ext-diagram}
		\begin{tikzcd}[ampersand replacement=\&]
			0\ar[r] \& (j_{1, n+1})_\ast\scrO_Y  \ar[r] \ar{d}{f}\& \scrO_{Y_{n+1}} \ar[r]\ar{d}{g}\& (j_{n, n+1})_\ast\scrO_{Y_n} \ar[r] \ar[d]\& 0 \\
			0\ar[r] \& (j_{1, n+1})_\ast i_{\Sigma,\, \ast}\scrO_{\Sigma} \ar[r] \& \bfE \ar[r]\& (j_{n, n+1})_\ast\kappa_{n, \, \ast}\scrO_{\calZ_n}\ar[r]\& 0
		\end{tikzcd}\ ,
	\end{align}
	where the right vertical map is the canonical surjection. Let $\bfQ \coloneqq \mathsf{Coker}(g)$. The snake lemma yields an exact sequence 
	\begin{align}\label{eq:coker-sequence}
		\begin{tikzcd}[ampersand replacement=\&]
			\cdots \ar{r} \&  \mathsf{Coker}(f) \ar{r}{p} \& \mathsf{Coker}(g) \ar{r} \& 0 
		\end{tikzcd}\ .
	\end{align}
	Since $H^0(\Sigma, \scrO_\Sigma) \simeq \C$, the left vertical arrow $f$ can be either surjective or identically zero. If it is surjective, the exact sequence \eqref{eq:coker-sequence} shows that $\mathsf{Coker}(g)=0$, hence the claim follows. 
	
	Suppose that $f$ is identically zero. Then $\mathsf{Coker}(f) \simeq \scrO_\Sigma$, and the map $p$ in \eqref{eq:coker-sequence} is either identically zero or an isomorphism. If it is identically zero, one obtains again $\mathsf{Coker}(g)=0$ and the claim follows. 
	
	Suppose that $p$ is an isomorphism. Then the composition  
	\begin{align}
		\begin{tikzcd}[ampersand replacement=\&]
			\bfE\ar{r} \& \mathsf{Coker}(g) \ar{r}{p^{-1}} \& \scrO_\Sigma 
		\end{tikzcd}
	\end{align}
	is a splitting of the bottom row in diagram \eqref{eq:ext-diagram}. Hence $\bfE \simeq (j_{n, n+1})_\ast \kappa_{n,\, \ast}\scrO_{\calZ_n}\oplus 
	(j_{1, n+1})_\ast i_{\Sigma,\, \ast}\scrO_\Sigma$. This leads to a contradiction since $\bfE$ is flat over $\calH_{n+1}$ by assumption, while the direct summands violate the local criterion of flatness. 
\end{proof}
Using the universality property of the Hilbert scheme, Lemma \ref{lem:C} yields:
\begin{corollary}\label{cor:B} 
	One has $\delta_n(e_n) \neq 0$. 
\end{corollary}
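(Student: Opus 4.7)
The strategy is to argue by contradiction: suppose $\delta_n(e_n) = 0$, construct out of this vanishing an $\calH_{n+1}$-flat extension of $\scrE_n$ to a coherent sheaf $\scrE$ on $Y_{n+1}$, and derive a contradiction with the identification $\calH \simeq \Spec R_n$ of Lemma~\ref{lem:ext-curve} via the universal property of the Hilbert scheme.

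The first step is to produce the $\calH_{n+1}$-flat sheaf $\scrE$. Exactness of the long sequence~\eqref{eq:long-ext-sequence} for $k=n$ supplies a class $\tilde e \in \Ext^1_Y(\calE_n, \calE_1)$ with $\xi_n(\tilde e) = e_n$, corresponding to an extension
\begin{align}
0 \longrightarrow \calE_1 \longrightarrow \calF \longrightarrow \calE_n \longrightarrow 0
\end{align}
of $\scrO_Y$-modules. Since $\xi_n$ is restriction along the inclusion $\calE_{n-1} \hookrightarrow \calE_n$ of \eqref{eq:sh-sequence-A}, the condition $\xi_n(\tilde e) = e_n$ produces a canonical injection $\iota\colon \calE_n \hookrightarrow \calF$ whose composition with the projection $q\colon \calF \twoheadrightarrow \calE_n$ agrees with the multiplication-by-$t$ endomorphism $t_n\colon \calE_n \to \calE_n$ that encodes the $\scrO_{Y_n}$-structure on $\scrE_n$ (via the canonical identifications $t\calE_n \simeq \calE_{n-1}$ and $\calE_n/t^{n-1}\calE_n \simeq \calE_{n-1}$). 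I would then set $\tilde t \coloneqq \iota \circ q\colon \calF \to \calF$: by construction $\tilde t$ annihilates $\calE_1 \subset \calF$ and reduces to $t_n$ on $\calF/\calE_1 = \calE_n$, so $\tilde t^{n+1} = 0$ and the graded pieces of the filtration $\calF \supset \tilde t \calF \supset \cdots \supset \tilde t^n \calF \supset 0$ are all canonically isomorphic to $\calE_1$. By the local criterion of flatness this endows $\calF$ with the structure of a coherent $\scrO_{Y_{n+1}}$-module $\scrE$ which is flat over $\calH_{n+1}$.

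The second step follows immediately. By construction, $\scrE$ fits into the exact sequence assumed in Lemma~\ref{lem:C}, hence $\scrE \simeq \scrO_{\calZ_{n+1}}$ for an $\calH_{n+1}$-flat closed subscheme $\calZ_{n+1} \subset Y_{n+1}$ whose restriction to $Y_n$ is $\calZ_n$. The universal property of $\calH$ classifies this family by a morphism $\calH_{n+1} \to \calH$ extending the canonical closed immersion $\calH_n \hookrightarrow \calH$. Since $\calH \simeq \calH_n = \Spec R_n$, this yields a $\C$-algebra section of the surjection $R_{n+1} \twoheadrightarrow R_n$. But no such section exists: any $\C$-algebra map $R_n \to R_{n+1}$ sends the generator $t \in R_n$ to $t + c\,t^n \in R_{n+1}$ for some $c \in \C$, and this element satisfies $(t+c\,t^n)^n = t^n$ in $R_{n+1}$, which is nonzero, contradicting $t^n=0$ in $R_n$.

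The technical heart of the argument is the first step: promoting the $\scrO_Y$-linear extension $\calF$ to an $R_{n+1}$-flat $\scrO_{Y_{n+1}}$-module. This is the concrete obstruction-theoretic content of the identification of $\delta_n(e_n)$ with the deformation-theoretic obstruction to lifting $\scrE_n$ across the small extension $\calH_n \hookrightarrow \calH_{n+1}$; the remaining steps are then immediate applications of Lemma~\ref{lem:C} and the universality of the Hilbert scheme.
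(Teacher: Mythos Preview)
Your argument is correct and follows exactly the route the paper takes: the paper's one-line proof (``Using the universality property of the Hilbert scheme, Lemma~\ref{lem:C} yields'') implicitly invokes the standard identification of $\delta_n$ with the obstruction to extending $\scrE_n$ across the small thickening $\calH_n \hookrightarrow \calH_{n+1}$, then applies Lemma~\ref{lem:C} and universality to contradict $\calH \simeq \Spec R_n$---you have simply made the obstruction-theoretic step explicit via the endomorphism $\tilde t = \iota \circ q$. One small inaccuracy worth flagging: the injection $\iota$ is not literally canonical (isomorphisms of the restricted extension with \eqref{eq:sh-sequence-B} form a torsor under $\Hom_Y(\calE_{n-1},\calE_1)$), but any choice yields a flat $R_{n+1}$-structure on $\calF$ with the required properties, so the argument is unaffected.
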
 

Using Lemma \ref{lem:trivial}, the same arguments as in the proof of \cite[Theorem~3.1]{Spherical_twists} yield the following:
\begin{corollary}\label{cor:C} 
	One has
	\begin{align}\label{eq:ext-vanishing}
		\Ext_Y^1(\calE, \calE_n) =0 \quad\text{and}\quad \Ext_Y^2(\calE_n, \calE) =0\ .
	\end{align}
\end{corollary}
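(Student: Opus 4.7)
The two vanishings in \eqref{eq:ext-vanishing} (reading $\calE = \calE_1 = i_{\Sigma,\,\ast}\scrO_\Sigma$) are equivalent by Serre duality on the smooth projective threefold $Y$. Indeed, Lemma~\ref{lem:trivial} applied to the thickening $\Sigma_n\subset Y$ gives $\omega_Y\vert_{\Sigma_n}\simeq \scrO_{\Sigma_n}$, so both $\calE_1\otimes \omega_Y\simeq \calE_1$ and $\calE_n\otimes \omega_Y\simeq \calE_n$. Serre duality then furnishes a canonical isomorphism
\[
\Ext_Y^1(\calE_1,\calE_n) \;\simeq\; \Ext_Y^2(\calE_n,\calE_1)^{\vee},
\]
so I would reduce the problem to the single vanishing $\Ext_Y^2(\calE_n,\calE_1)=0$.

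As a preparatory computation I would determine $\dim_{\C}\Ext_Y^i(\calE_1,\calE_1)$ via the local-to-global spectral sequence $E_2^{p,q}=H^p(\Sigma,\calExt_Y^q(\scrO_\Sigma,\scrO_\Sigma))$. Since $\Sigma\subset Y$ is a smooth codimension-two subvariety with $\calN_{\Sigma/Y}\simeq \scrO_\Sigma\oplus \scrO_\Sigma(-2)$, one has $\calExt_Y^q(\scrO_\Sigma,\scrO_\Sigma)\simeq \wedge^q\calN_{\Sigma/Y}$, and the cohomology of $\PP^1$ yields $\dim_{\C}\Ext_Y^i(\calE_1,\calE_1)=1$ for $0\le i\le 3$, and zero otherwise.

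The main step is an induction on $k$ proving $\Ext_Y^2(\calE_k,\calE_1)=0$ for every $2\le k\le n$, using the long exact sequences obtained by applying $\Hom_Y(-,\calE_1)$ to \eqref{eq:sh-sequence-A} and \eqref{eq:sh-sequence-B}. Corollary~\ref{cor:B} provides the decisive input at the apex: since $\Ext_Y^2(\calE_1,\calE_1)$ is one-dimensional, the nonvanishing $\delta_n(e_n)\neq 0$ forces $\delta_n\colon \Ext^1_Y(\calE_{n-1},\calE_1)\to \Ext^2_Y(\calE_1,\calE_1)$ to be surjective, so in the segment
\[
\cdots \longrightarrow \Ext_Y^2(\calE_1,\calE_1) \longrightarrow \Ext_Y^2(\calE_n,\calE_1) \longrightarrow \Ext_Y^2(\calE_{n-1},\calE_1) \longrightarrow \cdots
\]
of the long exact sequence from \eqref{eq:sh-sequence-A} the contribution from $\Ext_Y^2(\calE_1,\calE_1)$ is killed, and $\Ext_Y^2(\calE_n,\calE_1)$ embeds into $\Ext_Y^2(\calE_{n-1},\calE_1)$. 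The factorization $e_k=\xi_k(e_{k+1})$ from Lemma~\ref{lem:A}, valid for every $k<n$, is precisely the datum needed to propagate this embedding down the chain, while a parallel analysis using \eqref{eq:sh-sequence-B} controls $\Ext_Y^1(\calE_k,\calE_1)$ at each stage so as to keep the descent consistent.

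\textbf{Expected main obstacle.} The delicate part is organizing the induction cleanly, since the naive base at $k=1$ fails ($\Ext_Y^2(\calE_1,\calE_1)\neq 0$); the effective ``base'' must instead be provided at the apex $k=n$ by $\delta_n(e_n)\neq 0$ and then transported downwards. One must therefore track $\Ext_Y^1(\calE_k,\calE_1)$ and $\Ext_Y^2(\calE_k,\calE_1)$ simultaneously, combining both long exact sequences and the coherence relations $e_k=\xi_k(e_{k+1})$ to ensure the connecting maps at intermediate levels do not introduce new classes. This is exactly the technical content of the proof of \cite[Theorem~3.1]{Spherical_twists}: once Lemma~\ref{lem:trivial}, the explicit dimensions computed above, and the nonvanishing/vanishing data provided by Corollaries~\ref{cor:A}--\ref{cor:B} and Lemma~\ref{lem:A} are in hand, that argument adapts verbatim, which is the meaning of the phrase \emph{``the same arguments as in the proof of \cite[Theorem~3.1]{Spherical_twists}''} in the statement of the corollary.
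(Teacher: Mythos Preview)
Your proposal is correct and takes essentially the same approach as the paper: the paper's entire proof is the sentence ``Using Lemma~\ref{lem:trivial}, the same arguments as in the proof of \cite[Theorem~3.1]{Spherical_twists} yield the following,'' and what you have written is precisely an accurate unpacking of that reference---the Serre-duality reduction via $\omega_Y\vert_{\Sigma_n}\simeq\scrO_{\Sigma_n}$, the computation $\dim_\C\Ext_Y^i(\calE_1,\calE_1)=1$ for $0\le i\le 3$, and the inductive bookkeeping on the long exact sequences using Lemma~\ref{lem:A} and Corollary~\ref{cor:B} are exactly the ingredients of Toda's argument. There is nothing to add.
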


Using Lemmas~\ref{lem:B}, \ref{lem:C}, and Corollaries~\ref{cor:A} and \ref{cor:B}, we now prove:
\begin{proposition}\label{prop:hilb-sigma} 
	Using the notation in Theorem~\ref{thm:moduli-sigma}, the natural morphism $\varphi\colon \calH\to \calM(Y, \Sigma)$ is an isomorphism. In particular, $d=n$. 
\end{proposition} 

\begin{proof} 
	For ease of notation set $\calM(Y,\Sigma) = \calM$. By Theorem~\ref{thm:moduli-sigma} and Lemma~\ref{lem:ext-curve}, we have $\calM\simeq \Spec(R)$ and $\calH\simeq \Spec(S)$ where 
	\begin{align}
		R \coloneqq \C[u]/(u^d)\quad\text{and} \quad S \coloneqq \C[t]/(t^n)
	\end{align}
	with $d,n\geq 2$. In particular $R,S$ are local Artinian rings over $\C$ with maximal ideals $m_R\coloneqq (u)$ and $m_S\coloneqq (t)$. Note that  
	\begin{align}
		\dim(m_R/m_R^2) = \dim(m_S/m_S^2) = 1\ . 
	\end{align}
	By Lemma~\ref{lem:B}, the induced map $\varphi_\ast \colon T_{m_S}\calH \to T_{m_R}(\calM)$ between Zariski tangent spaces is non-zero. Since both tangent spaces are one dimensional, it follows that $\varphi_\ast $ is an isomorphism. Then \cite[Tag~0E8M]{stacks-project} shows that the ring homomorphism $f\colon R\to S$ associated to $\varphi$ is surjective. Therefore $\varphi$ is a closed immersion. Moreover, since $\varphi_\ast $ is an isomorphism, one has $f(u) = t \xi$ for some unit $\xi \in S$. Since multiplication by $\xi^{-1}$ is an automorphism of $S$,  the composition 
	\begin{align}
		\begin{tikzcd}[ampersand replacement=\&]
			R \ar{r}{\varphi} \& S\ar{r}{\xi^{-1}} \& S
		\end{tikzcd}
	\end{align}
	is a surjective ring homomorphism mapping $u\mapsto t$. 

	In order to conclude the proof, note that $d\geq n$ since $f$ is surjective. Hence it suffices to prove that $d=n$. Suppose $d>n$. Let $\calH' \subset \calM$ be the closed subscheme determined by the ideal $(t^{n+1})\subset R $. By construction, one has a chain of closed immersions 
	\begin{align}
		\calH \longrightarrow \calH' \longrightarrow \calM 
	\end{align}
	where the morphism $\calH\to \calH'$ is induced by the composition of $\xi^{-1}$ with the canonical ring homomorphism $\C[t]/(t^{n+1}) \to \C[t]/(t^n)$. This leads to a contradiction via Lemma~\ref{lem:C} and Corollary~\ref{cor:B}. 
\end{proof} 

It will be shown next by induction that each $\calE_k$ is isomorphic to the structure sheaf of a scheme theoretic thickening $\Sigma\subset \Sigma_k \subset Y$. The starting point is the exact sequence 
\begin{align}
	0\longrightarrow i_{\Sigma,\, \ast}\scrO_\Sigma\longrightarrow \calE_2 \longrightarrow i_{\Sigma,\, \ast}\scrO_\Sigma \longrightarrow 0\ , 
\end{align}
which yields a long exact sequence 
\begin{align}
	0\longrightarrow \Hom_Y(\scrO_Y, i_{\Sigma,\, \ast}\scrO_\Sigma) \longrightarrow \Hom_Y(\scrO_Y, \calE_2)\longrightarrow \Hom_Y(\scrO_Y, i_{\Sigma,\, \ast}\scrO_\Sigma) \longrightarrow 0
\end{align}
since $H^1(\Sigma, \scrO_\Sigma)=0$. Pick a lift $f_2\colon\scrO_Y \to \calE_2$ of the canonical morphism $f_1\colon\scrO_Y \to i_{\Sigma,\, \ast}\scrO_\Sigma$. 
\begin{proposition}\label{prop:step-two} 
	The morphism $f_2$ is surjective, hence $\calE\simeq \scrO_{\Sigma_2}$ for a closed subscheme $\Sigma_2\subset Y$. Moreover, its defining ideal sheaf $\calJ_2\subset \scrO_Y$ fits into an exact sequence 
	\begin{align}\label{eq:ideal-flag-A}
		0\longrightarrow \calJ_2 \longrightarrow \calI_\Sigma \longrightarrow i_{\Sigma,\, \ast}\scrO_\Sigma \longrightarrow 0\ .
	\end{align}
\end{proposition}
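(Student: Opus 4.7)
\medskip

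The plan is to apply the snake lemma to the defining sequence $0 \to \calI_\Sigma \to \scrO_Y \to i_{\Sigma,\,\ast}\scrO_\Sigma \to 0$, together with the extension $0 \to i_{\Sigma,\,\ast}\scrO_\Sigma \to \calE_2 \to i_{\Sigma,\,\ast}\scrO_\Sigma \to 0$, glued by the vertical maps $f_2$ and $\id$. Since the composition $\calI_\Sigma \hookrightarrow \scrO_Y \xrightarrow{f_2} \calE_2$ projects to zero in the right-hand $i_{\Sigma,\,\ast}\scrO_\Sigma$, it factors through a unique morphism $\psi \colon \calI_\Sigma \to i_{\Sigma,\,\ast}\scrO_\Sigma$ that fits in a commutative ladder with exact rows. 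The snake lemma then gives canonical isomorphisms $\ker(f_2) \simeq \ker(\psi)$ and $\mathsf{Coker}(f_2) \simeq \mathsf{Coker}(\psi)$, so the surjectivity of $f_2$ reduces to that of $\psi$.

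Next I would identify $\psi$ with the extension class $e_2 \in \Ext^1_Y(i_{\Sigma,\,\ast}\scrO_\Sigma,\, i_{\Sigma,\,\ast}\scrO_\Sigma)$ under the standard connecting isomorphism. Indeed, applying $\Hom_Y(-, i_{\Sigma,\,\ast}\scrO_\Sigma)$ to $0 \to \calI_\Sigma \to \scrO_Y \to i_{\Sigma,\,\ast}\scrO_\Sigma \to 0$ and noting that $\Hom_Y(\scrO_\Sigma,\scrO_\Sigma) \to \Hom_Y(\scrO_Y,\scrO_\Sigma)$ is an isomorphism of one-dimensional spaces, one obtains
\begin{align}
\Ext^1_Y(i_{\Sigma,\,\ast}\scrO_\Sigma,\, i_{\Sigma,\,\ast}\scrO_\Sigma) \simeq \Hom_Y(\calI_\Sigma,\, i_{\Sigma,\,\ast}\scrO_\Sigma)\ ,
\end{align}
and the ladder above exhibits $\calE_2$ as the pushout of $\scrO_Y$ along $\psi$, so $\psi$ corresponds to $e_2$. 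By Lemma~\ref{lem:B} one has $e_2 \neq 0$, hence $\psi \neq 0$.

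To conclude surjectivity of $\psi$, I would compute the target Hom space. Since $i_{\Sigma,\,\ast}\scrO_\Sigma$ is annihilated by $\calI_\Sigma$, any morphism $\calI_\Sigma \to i_{\Sigma,\,\ast}\scrO_\Sigma$ factors uniquely through $\calI_\Sigma/\calI_\Sigma^2 \simeq \calN^{\vee}_{\Sigma/Y} \simeq \scrO_\Sigma \oplus \scrO_\Sigma(2)$, and
\begin{align}
\Hom_\Sigma(\scrO_\Sigma \oplus \scrO_\Sigma(2),\, \scrO_\Sigma) \simeq H^0(\Sigma,\scrO_\Sigma) \oplus H^0(\Sigma,\scrO_\Sigma(-2)) \simeq \C\ .
\end{align}
So $\Hom_Y(\calI_\Sigma,\, i_{\Sigma,\,\ast}\scrO_\Sigma)$ is one-dimensional, generated (up to scalar) by the composition $\calI_\Sigma \twoheadrightarrow \calI_\Sigma/\calI_\Sigma^2 \simeq \scrO_\Sigma \oplus \scrO_\Sigma(2) \twoheadrightarrow \scrO_\Sigma$, which is visibly surjective. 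Hence $\psi$, being a nonzero element of this line, is surjective.

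Therefore $f_2$ is surjective, $\calE_2 \simeq \scrO_Y/\calJ_2 = \scrO_{\Sigma_2}$ with $\calJ_2 \coloneqq \ker(f_2)$, and the chain of isomorphisms $\calJ_2 \simeq \ker(\psi) \subset \calI_\Sigma$ produces the exact sequence~\eqref{eq:ideal-flag-A}. The only subtle point is the identification of $\psi$ with the extension class $e_2$; all remaining ingredients are routine applications of the snake lemma and of the explicit description of the conormal bundle of a $(0,-2)$-curve.
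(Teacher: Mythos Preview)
Your proof is correct and rests on the same two ingredients as the paper's: the snake lemma applied to the ladder built from $f_2$ and the extension $0\to i_{\Sigma,\ast}\scrO_\Sigma\to\calE_2\to i_{\Sigma,\ast}\scrO_\Sigma\to 0$, together with the nonvanishing $e_2\neq 0$ from Lemma~\ref{lem:B}. The only difference lies in the endgame. The paper completes the snake sequence as
\[
0\longrightarrow \calJ_2\longrightarrow \calI_\Sigma\longrightarrow i_{\Sigma,\ast}\scrO_\Sigma\xrightarrow{\ c\ }\mathsf{Coker}(f_2)\longrightarrow 0
\]
and argues by contradiction: if $c\neq 0$ then $c$ is an isomorphism and $c^{-1}$ composed with the quotient $\calE_2\to\mathsf{Coker}(f_2)$ splits the extension, contradicting $e_2\neq 0$. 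You instead identify the connecting map $\psi\colon\calI_\Sigma\to i_{\Sigma,\ast}\scrO_\Sigma$ directly with $e_2$ under the boundary isomorphism $\Hom_Y(\calI_\Sigma,i_{\Sigma,\ast}\scrO_\Sigma)\simeq\Ext^1_Y(i_{\Sigma,\ast}\scrO_\Sigma,i_{\Sigma,\ast}\scrO_\Sigma)$, and then use the conormal-bundle computation $\Hom_Y(\calI_\Sigma,i_{\Sigma,\ast}\scrO_\Sigma)\simeq\C$ to force the nonzero $\psi$ to be surjective.

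Your route is slightly more transparent: the paper's assertion that ``$c$ nonzero implies $c$ is an isomorphism'' is not entirely obvious on its face (a priori $\mathsf{Coker}(f_2)$ could be a nontrivial torsion quotient of $\scrO_\Sigma$), and in fact its justification amounts to exactly the Hom computation you carry out. So your argument makes explicit what the paper leaves implicit, at the cost of having to check carefully that $\psi$ really corresponds to $e_2$ under the boundary map---which you do correctly via the pushout description.
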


\begin{proof}
	One has a commutative diagram 
	\begin{align}\label{eq:e-two-diagram}
		\begin{tikzcd}[ampersand replacement=\&]
			\& \& \scrO_Y \ar{d}{f_2}\ar{r}{\id} \& \scrO_Y\ar{d}{f_1}\\
			0\ar[r] \& i_{\Sigma,\, \ast}\scrO_\Sigma \ar[r] \& \calE_2 \ar[r] \&i_{\Sigma,\, \ast}\scrO_{\Sigma} \ar[r] \& 0
		\end{tikzcd}\ .
	\end{align}
	Note that $\calE_2$ is purely one-dimensional, set-theoretically supported on $\Sigma$. Hence $\mathsf{Im}(f_2)$ is the structure sheaf of a purely one-dimensional closed subscheme $\Sigma_2\subset Y$ with set theoretic support on $\Sigma$. Let $\calJ_2\subset \scrO_Y$ be its defining ideal. Applying the snake lemma to the diagram \eqref{eq:e-two-diagram}, one obtains a long exact sequence 
	\begin{align}\label{eq:e-two-diagram-2}
		\begin{tikzcd}[ampersand replacement=\&]
			0\ar{r}\& \calJ_2 \ar{r}\& \calI_\Sigma \ar{r}\& i_{\Sigma,\, \ast}\scrO_\Sigma \ar{r}{c} \& \calE_2/i_{\Sigma_2,\, \ast}\scrO_{\Sigma_2} \ar{r} \& 0 
		\end{tikzcd}\ .
	\end{align}
	Suppose that the morphism $c$ is nonzero. Then $c$ is an isomorphism and the composition 
	\begin{align}
		\begin{tikzcd}[ampersand replacement=\&]
			\calE_2 \ar{r} \& \calE_2/i_{\Sigma_2,\, \ast}\scrO_{\Sigma_2} \ar{r}{c^{-1}} \& i_{\Sigma,\, \ast}\scrO_\Sigma
		\end{tikzcd}
	\end{align}
	determines a splitting of the bottom row of diagram \eqref{eq:e-two-diagram}. This implies $e_2=0$, leading to a contradiction (cf.\ Lemma~\ref{lem:B}). Hence $c=0$, which implies $\calE/i_{\Sigma_2,\, \ast}\scrO_{\Sigma_2} =0$, as claimed. The exact sequence \eqref{eq:ideal-flag-A} follows from the bottom row of diagram \eqref{eq:e-two-diagram-2}.
\end{proof}

Proceeding by induction, the same arguments of the previous proposition shows: 
\begin{proposition}\label{prop:inductive-step} 
	Assume that the sheaf $\calE_{k-1}$, is isomorphic to the structure sheaf of a purely one-dimensional scheme-theoretic thickening $\Sigma \subset \Sigma_{k-1} \subset Y$ for some $3\leq k \leq n$. Then there exists a commutative diagram 
	\begin{align}\label{eq:e-two-diagram-3}
		\begin{tikzcd}[ampersand replacement=\&]
			\&  \& \scrO_Y \ar{r}{\id}\ar{d}{f_k} \& \scrO_Y \ar{d}{f_{k-1}} \&  \\
			0\ar{r} \& i_{\Sigma,\, \ast}\scrO_{\Sigma} \ar[r] \& \calE_k \ar[r]\& i_{\Sigma_{k-1},\, \ast}\scrO_{\Sigma_{k-1}} \ar[r]\& 0 
		\end{tikzcd}\ ,
	\end{align}
	where the right vertical arrow is the canonical epimorphism and the left vertical arrow is surjective. In particular $\calE_k\simeq i_{\Sigma_k, \ \ast}\scrO_{\Sigma_k}$ for a closed subscheme $\Sigma_k\subset Y$ and its defining ideal sheaf $\calJ_k\subset \scrO_Y$ fits in an exact sequence 
	\begin{align}\label{eq:ideal-flag-B}
		0\longrightarrow \calJ_k \longrightarrow \calJ_{k-1} \longrightarrow i_{\Sigma,\, \ast}\scrO_\Sigma \longrightarrow 0\ .
	\end{align}
\end{proposition}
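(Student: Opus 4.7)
The plan is to mimic the strategy of Proposition~\ref{prop:step-two} and package it as an induction on $k$, with the inductive hypothesis being precisely the statement that $\calE_{k-1}\simeq \scrO_{\Sigma_{k-1}}$ for some closed thickening $\Sigma\subset \Sigma_{k-1}\subset Y$ whose defining ideal $\calJ_{k-1}\subset \scrO_Y$ has already been constructed at the previous step. The whole argument is an exercise in applying $\Hom_Y(\scrO_Y,-)$ to the sequence \eqref{eq:sh-sequence-B} and combining the resulting lift with the snake lemma and the non-triviality of $e_k$ proven in Corollary~\ref{cor:A}.

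First, I would apply $\Hom_Y(\scrO_Y,-)$ to \eqref{eq:sh-sequence-B}, which reads $0\to i_{\Sigma,\,\ast}\scrO_\Sigma\to \calE_k\to \calE_{k-1}\to 0$. Since $\Sigma\simeq \PP^1$, one has $H^1(Y,i_{\Sigma,\,\ast}\scrO_\Sigma)\simeq H^1(\Sigma,\scrO_\Sigma)=0$, and the associated long exact sequence collapses to a short exact sequence at the level of global sections. In particular, the canonical surjection $f_{k-1}\colon\scrO_Y\twoheadrightarrow \calE_{k-1}=\scrO_{\Sigma_{k-1}}$ lifts to a morphism $f_k\colon \scrO_Y\to \calE_k$ fitting into the commutative diagram \eqref{eq:e-two-diagram-3}. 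Since $\calE_k$ is purely one-dimensional and set-theoretically supported on $\Sigma$, its subsheaf $\mathsf{Im}(f_k)$ is the structure sheaf $\scrO_{\Sigma_k}$ of some closed thickening $\Sigma\subset \Sigma_k\subset Y$; let $\calJ_k\subset\scrO_Y$ denote its defining ideal.

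Next, I would run the snake lemma on \eqref{eq:e-two-diagram-3} and read off the four-term exact sequence
\begin{align}
0\longrightarrow \calJ_k\longrightarrow \calJ_{k-1}\longrightarrow i_{\Sigma,\,\ast}\scrO_\Sigma \xrightarrow{c} \calE_k/\scrO_{\Sigma_k}\longrightarrow 0.
\end{align}
The vanishing of the cokernel $\calE_k/\scrO_{\Sigma_k}$ is equivalent to surjectivity of $f_k$, and by construction this is also equivalent to the vanishing of the connecting morphism $c$. The short exact sequence \eqref{eq:ideal-flag-B} of the statement would then be the first three terms of the display above. To rule out $c\neq 0$, I would argue by contradiction exactly as in Proposition~\ref{prop:step-two}: a non-zero $c$ would produce an isomorphism from $i_{\Sigma,\,\ast}\scrO_\Sigma$ onto $\calE_k/\scrO_{\Sigma_k}$, and then the composition $\calE_k\twoheadrightarrow \calE_k/\scrO_{\Sigma_k}\xrightarrow{c^{-1}} i_{\Sigma,\,\ast}\scrO_\Sigma$ would split \eqref{eq:sh-sequence-B}, forcing $e_k=0$ and contradicting Corollary~\ref{cor:A}.

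The main obstacle in executing this plan is to control the quotient $\calE_k/\scrO_{\Sigma_k}$ finely enough to ensure that a non-zero $c$ is automatically an isomorphism. Since $\calE_k$ is an iterated extension of copies of $i_{\Sigma,\,\ast}\scrO_\Sigma$, every purely one-dimensional subquotient is supported on $\Sigma$ and has Chern class a non-negative multiple of $[\Sigma]$; combined with the surjectivity of $c$ and the fact that $\Hom_Y(i_{\Sigma,\,\ast}\scrO_\Sigma,i_{\Sigma,\,\ast}\scrO_\Sigma)\simeq \C$ acts by scalars, this should pin down the image of $c$ as all of $\calE_k/\scrO_{\Sigma_k}$ and force $c$ to be an isomorphism. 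Once this step is in place, the induction closes and the full filtration together with the required sequences is obtained.
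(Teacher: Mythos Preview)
Your proposal is correct and follows exactly the paper's strategy. The paper's own proof is the single sentence ``Proceeding by induction, the same arguments of the previous proposition shows'', referring to Proposition~\ref{prop:step-two}; you have simply spelled out what that means: lift $f_{k-1}$ to $f_k$ using $H^1(\Sigma,\scrO_\Sigma)=0$, apply the snake lemma to obtain the four-term sequence, and argue that $c\neq 0$ forces a splitting of \eqref{eq:sh-sequence-B}, contradicting Corollary~\ref{cor:A}. You also correctly isolate the one genuinely delicate point---why a nonzero $c$ must be an isomorphism rather than a surjection onto a zero-dimensional quotient---and you are right to flag it. Your sketched justification via ``$\Hom_Y(i_{\Sigma,\ast}\scrO_\Sigma,i_{\Sigma,\ast}\scrO_\Sigma)\simeq\C$'' is not quite the operative reason; the cleaner way to see it is that $\ker c$ equals the image of the connecting map $\calJ_{k-1}\to i_{\Sigma,\ast}\scrO_\Sigma$, and one checks (as for $\calI_\Sigma$ in the base case) that any nonzero such map is surjective. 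The paper's proof of Proposition~\ref{prop:step-two} is equally terse on this point, simply asserting ``Then $c$ is an isomorphism'', so your level of detail matches the source.
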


In conclusion, Propositions~\ref{prop:step-two} and \ref{prop:inductive-step} provide a canonical chain of scheme-theoretic thickenings 
\begin{align}
	\Sigma_n \subset \cdots \subset \Sigma_1 =\Sigma 
\end{align}
whose structure sheaves fit in the exact sequences 
\begin{align}\label{eq:sch-sequence-A}
	0\longrightarrow i_{\Sigma,\, \ast}\scrO_\Sigma \longrightarrow i_{\Sigma_k,\, \ast}\scrO_{\Sigma_k} \longrightarrow i_{\Sigma_{k-1},\, \ast}\scrO_{\Sigma_{k-1}} \longrightarrow 0.
	\end{align}
Furthermore, note that the exact sequences \eqref{eq:sh-sequence-B} translate into exact sequences 
\begin{align} 
	\label{eq:sch-sequence-B}
	0\longrightarrow i_{\Sigma_{k-1},\, \ast}\scrO_{\Sigma_{k-1}} \longrightarrow i_{\Sigma_k,\, \ast}\scrO_{\Sigma_k} \longrightarrow i_{\Sigma,\, \ast}\scrO_{\Sigma} \longrightarrow 0,\ 
\end{align}
where  $\scrO_{\Sigma_k} \longrightarrow i_{\Sigma,\, \ast}\scrO_{\Sigma}$ is the canonical epimorphism induced by the closed immersion $\Sigma \subset \Sigma_k$. 

By construction, the extension class associated to each sequence in  \eqref{eq:sch-sequence-A} is nontrivial. As shown below, the same holds for the sequences \eqref{eq:sch-sequence-B} as well.

\begin{lemma}\label{lem:non-triv-extension} 
	The extension class associated to each sequence in \eqref{eq:sch-sequence-B} is nontrivial. 
\end{lemma}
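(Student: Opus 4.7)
The plan is to show that the extension class of \eqref{eq:sch-sequence-B} in $\Ext^1_Y(i_{\Sigma,\,\ast}\scrO_\Sigma, i_{\Sigma_{k-1},\,\ast}\scrO_{\Sigma_{k-1}})$ is nonzero by proving that the canonical surjection $i_{\Sigma_k,\,\ast}\scrO_{\Sigma_k} \twoheadrightarrow i_{\Sigma,\,\ast}\scrO_\Sigma$ does not admit an $\scrO_Y$-linear section. Indeed, applying $\Hom_Y(i_{\Sigma,\,\ast}\scrO_\Sigma, -)$ to \eqref{eq:sch-sequence-B} produces a connecting map $\delta\colon \Hom_Y(i_{\Sigma,\,\ast}\scrO_\Sigma, i_{\Sigma,\,\ast}\scrO_\Sigma)\to \Ext^1_Y(i_{\Sigma,\,\ast}\scrO_\Sigma, i_{\Sigma_{k-1},\,\ast}\scrO_{\Sigma_{k-1}})$ whose image on $\id$ is exactly the sought extension class. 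Since $\Hom_Y(i_{\Sigma,\,\ast}\scrO_\Sigma, i_{\Sigma,\,\ast}\scrO_\Sigma) \simeq \C$, it will suffice to show that $\id$ does not lie in the image of the preceding term.

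The key technical input will be the containment $\calI_\Sigma^k \subset \calJ_k$, which I would establish by induction on $k$. The base case $k=1$ is immediate since $\calJ_1 = \calI_\Sigma$. For the inductive step, Proposition~\ref{prop:inductive-step} yields $\calJ_{k-1}/\calJ_k \simeq i_{\Sigma,\,\ast}\scrO_\Sigma$, so $\calI_\Sigma$ annihilates this quotient, giving $\calI_\Sigma \cdot \calJ_{k-1} \subset \calJ_k$, and consequently $\calI_\Sigma^k = \calI_\Sigma \cdot \calI_\Sigma^{k-1} \subset \calI_\Sigma \cdot \calJ_{k-1} \subset \calJ_k$.

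Assume for contradiction that $s\in \Hom_Y(i_{\Sigma,\,\ast}\scrO_\Sigma, i_{\Sigma_k,\,\ast}\scrO_{\Sigma_k})$ lifts $\id$. Passing to the stalk at a point $p\in\Sigma$, the morphism $s_p$ corresponds to an element $a\in \scrO_{Y,p}$ satisfying the two conditions $\calI_{\Sigma,p}\cdot a \subset \calJ_{k,p}$ (for $\scrO_Y$-linearity on $\scrO_\Sigma$) and $a\equiv 1 \pmod{\calI_{\Sigma,p}}$ (for $s_p$ to project to the identity). Writing $a=1+b$ with $b\in \calI_{\Sigma,p}$, the first condition reads $\calI_{\Sigma,p} + \calI_{\Sigma,p}\cdot b \subset \calJ_{k,p}$. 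Since $\calI_{\Sigma,p}\cdot b \subset \calI_{\Sigma,p}^2 \subset \calJ_{k,p}$ by the bound just established, this forces $\calI_{\Sigma,p} \subset \calJ_{k,p}$. But for $k\geq 2$ the strict inclusions $\calJ_{k,p}\subsetneq \calJ_{k-1,p}\subsetneq \cdots \subsetneq \calJ_{1,p}=\calI_{\Sigma,p}$ hold at every $p\in\Sigma$ (the successive quotients being stalks of $i_{\Sigma,\,\ast}\scrO_\Sigma$, nonzero at $p$), yielding the desired contradiction.

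The main obstacle in carrying this out is the inductive verification of $\calI_\Sigma^k \subset \calJ_k$, since it requires that the thickening filtration already constructed in Propositions~\ref{prop:step-two}--\ref{prop:inductive-step} interacts correctly with powers of $\calI_\Sigma$; this in turn hinges only on the identification $\calJ_{k-1}/\calJ_k \simeq i_{\Sigma,\,\ast}\scrO_\Sigma$ being annihilated by $\calI_\Sigma$, which is automatic. Once the containment is in place the remainder is a direct local computation, and non-triviality of \eqref{eq:sch-sequence-B} follows.
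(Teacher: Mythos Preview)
Your strategy is correct and close in spirit to the paper's, but there is a slip in the local computation. You claim $\calI_{\Sigma,p}^2 \subset \calJ_{k,p}$ ``by the bound just established'', yet what you proved is $\calI_\Sigma^j \subset \calJ_j$ for each $j$; since the filtration runs $\calJ_k \subset \calJ_{k-1} \subset \cdots \subset \calJ_1$, the inclusion $\calI_\Sigma^2 \subset \calJ_2$ does \emph{not} give $\calI_\Sigma^2 \subset \calJ_k$ when $k \geq 3$. The repair is immediate and in fact makes the whole induction on $\calI_\Sigma^k \subset \calJ_k$ unnecessary: since $p \in \Sigma$, the element $b \in \calI_{\Sigma,p}$ lies in the maximal ideal of the local ring $\scrO_{Y,p}$, so $a = 1+b$ is a \emph{unit}, and therefore $\calI_{\Sigma,p} \cdot a = \calI_{\Sigma,p}$. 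The well-definedness constraint $\calI_{\Sigma,p}\cdot a \subset \calJ_{k,p}$ then gives $\calI_{\Sigma,p} \subset \calJ_{k,p}$ directly, and your contradiction follows.

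The paper's proof is a one-line version of the same observation, packaged globally rather than stalkwise: if \eqref{eq:sch-sequence-B} were split then $i_{\Sigma_k,\,\ast}\scrO_{\Sigma_k} \simeq i_{\Sigma,\,\ast}\scrO_\Sigma \oplus i_{\Sigma_{k-1},\,\ast}\scrO_{\Sigma_{k-1}}$, and composing with the canonical surjection $\scrO_Y \twoheadrightarrow i_{\Sigma_k,\,\ast}\scrO_{\Sigma_k}$ produces a surjection $\scrO_Y \twoheadrightarrow i_{\Sigma,\,\ast}\scrO_\Sigma \oplus i_{\Sigma_{k-1},\,\ast}\scrO_{\Sigma_{k-1}}$. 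Restricting to the fiber at any $p \in \Sigma$ yields a surjection from $\C$ onto a vector space of dimension at least $2$, which is absurd. This is precisely your conclusion $\calI_{\Sigma,p} \subset \calJ_{k,p}$ rephrased (both say that the fiber of $\scrO_{\Sigma_k}$ at $p$, being a quotient of that of $\scrO_Y$, cannot accommodate two nonzero summands), but it bypasses the explicit lift $a$ and the auxiliary containment $\calI_\Sigma^k\subset\calJ_k$ entirely.
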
 

\begin{proof}
	Suppose that the extension class associated to \eqref{eq:sch-sequence-B} is trivial. This yields a surjective morphism $\scrO_Y \twoheadrightarrow i_{\Sigma,\, \ast}\scrO_{\Sigma}\oplus  i_{\Sigma_{k-1},\, \ast}\scrO_{\Sigma_{k-1}}$. By restriction to any closed point $p\in \Sigma$ one obtains a  surjective morphism $\C \to \C \oplus V$, where $V$ is a nonzero vector space. Hence, we get a contradiction. 
\end{proof}

Moreover, one has the following result. 
\begin{lemma}\label{lem:hom-space} 
	One has $\Hom_Y(i_{\Sigma,\, \ast}\scrO_{\Sigma},i_{\Sigma_k,\, \ast}\scrO_{\Sigma_k}) \simeq \C$ for all $1\leq k \leq n$.
\end{lemma}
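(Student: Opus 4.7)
The plan is to proceed by induction on $k$, using the long exact sequence of Ext groups obtained by applying $\Hom_Y(i_{\Sigma,\,\ast}\scrO_\Sigma,-)$ to the short exact sequence \eqref{eq:sch-sequence-B} and feeding in the nontriviality established in Lemma~\ref{lem:non-triv-extension}.

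For the base case $k=1$ one simply has $\Sigma_1=\Sigma$ and
\begin{align}
	\Hom_Y(i_{\Sigma,\,\ast}\scrO_\Sigma,i_{\Sigma,\,\ast}\scrO_\Sigma) \simeq H^0(\Sigma,\scrO_\Sigma) \simeq \C
\end{align}
since $\Sigma\simeq \PP^1$. For the inductive step, assume that $\Hom_Y(i_{\Sigma,\,\ast}\scrO_\Sigma,i_{\Sigma_{k-1},\,\ast}\scrO_{\Sigma_{k-1}})\simeq \C$. Applying the functor $\Hom_Y(i_{\Sigma,\,\ast}\scrO_\Sigma,-)$ to the exact sequence \eqref{eq:sch-sequence-B} yields the long exact sequence
\begin{align}
	\begin{tikzcd}[ampersand replacement=\&, row sep=tiny]
		0 \ar{r} \& \Hom_Y(i_{\Sigma,\,\ast}\scrO_\Sigma, i_{\Sigma_{k-1},\,\ast}\scrO_{\Sigma_{k-1}}) \ar{r} \& \Hom_Y(i_{\Sigma,\,\ast}\scrO_\Sigma, i_{\Sigma_k,\,\ast}\scrO_{\Sigma_k}) \& \\
		\ar{r} \& \Hom_Y(i_{\Sigma,\,\ast}\scrO_\Sigma, i_{\Sigma,\,\ast}\scrO_\Sigma) \ar{r}{\delta} \& \Ext^1_Y(i_{\Sigma,\,\ast}\scrO_\Sigma, i_{\Sigma_{k-1},\,\ast}\scrO_{\Sigma_{k-1}}) \&
	\end{tikzcd} \ .
\end{align}
By construction, the connecting map $\delta$ sends $\id_{i_{\Sigma,\,\ast}\scrO_\Sigma}$ to the extension class of \eqref{eq:sch-sequence-B}. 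Since its domain is one-dimensional and its image is nonzero by Lemma~\ref{lem:non-triv-extension}, the map $\delta$ is injective. Therefore the second arrow in the sequence above is an isomorphism, which combined with the inductive hypothesis gives $\Hom_Y(i_{\Sigma,\,\ast}\scrO_\Sigma, i_{\Sigma_k,\,\ast}\scrO_{\Sigma_k})\simeq \C$. There is no substantive obstacle here: the key input, namely the nontriviality of the extension class, has already been verified, and the remaining argument is entirely formal.
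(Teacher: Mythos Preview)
Your proof is correct and follows essentially the same approach as the paper: both argue by induction on $k$, apply $\Hom_Y(i_{\Sigma,\,\ast}\scrO_\Sigma,-)$ to the exact sequence \eqref{eq:sch-sequence-B}, and use Lemma~\ref{lem:non-triv-extension} to conclude that the connecting map $\delta$ is injective, whence $\Hom_Y(i_{\Sigma,\,\ast}\scrO_\Sigma,i_{\Sigma_{k-1},\,\ast}\scrO_{\Sigma_{k-1}})\to\Hom_Y(i_{\Sigma,\,\ast}\scrO_\Sigma,i_{\Sigma_k,\,\ast}\scrO_{\Sigma_k})$ is an isomorphism.
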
 

\begin{proof}
	The exact sequence \eqref{eq:sch-sequence-B} yields the long exact sequence 
	\begin{align}
		\begin{tikzcd}[ampersand replacement=\&, row sep=tiny]
			0\ar{r}\& \Hom_Y(i_{\Sigma,\, \ast}\scrO_\Sigma, i_{\Sigma_{k-1},\, \ast}\scrO_{\Sigma_{k-1}}) \ar{r}\& \Hom_Y(i_{\Sigma,\, \ast}\scrO_\Sigma, i_{\Sigma_k,\, \ast}\scrO_{\Sigma_k}) \ar{r}\& {} \\
			\& \Hom_Y(i_{\Sigma,\,\ast}\scrO_\Sigma,i_{\Sigma,\,\ast}\scrO_\Sigma) \ar{r}{\delta} \& \Ext^1_Y(i_{\Sigma,\, \ast}\scrO_\Sigma, i_{\Sigma_{k-1},\, \ast}\scrO_{\Sigma_{k-1}}) \ar{r}\& \cdots 
		\end{tikzcd}
	\end{align}
	where $\delta(1)$ is the associated extension class. Lemma~\ref{lem:non-triv-extension} shows that $\delta(1)\neq 0$. Hence 
	\begin{align}
		\Hom_Y(i_{\Sigma,\, \ast}\scrO_\Sigma, i_{\Sigma_{k-1},\, \ast}\scrO_{\Sigma_{k-1}})  \simeq  \Hom_Y(i_{\Sigma,\, \ast}\scrO_\Sigma, i_{\Sigma_k,\, \ast}\scrO_{\Sigma_k}) 
	\end{align}
	and the claim follows by induction. 
\end{proof}

Recall the following basic result: 
\begin{lemma}\label{lem:hom-dimension} 
	Let $Z\subset Y$ be a purely one-dimensional closed subscheme and let $i$ be the corresponding closed embedding. Then 
	\begin{align}
		\calExt^i_Y(i_\ast\scrO_Z,\scrO_Y) =0
	\end{align}
	for $i=0, 1, 3$. 
	
	Let $\calI_Z\subset \scrO_Y$ be the ideal sheaf of $Z$. Then,
	\begin{align}
		\calExt_Y^i(\calI_Z, \scrO_Y)=0
	\end{align}
	for $i=2, 3$, 
	\begin{align}
		\calExt_Y^0(\calI_Z, \scrO_Y)\simeq \scrO_Y \ , \text{ and } 	\calExt_Y^1(\calI_Z, \scrO_Y)\simeq \calExt_Y^2(i_\ast\scrO_Z,\scrO_Y)\ .
	\end{align}
	In particular, $\calI_Z$ has homological dimension one, i.e., it admits a locally free resolution of amplitude $[-1, \ 0]$. 
\end{lemma} 

\begin{proof}
	The first part follows from \cite[Proposition~1.1.6-(i) and Proposition~1.1.10]{HL_Moduli}. 
	Using the isomorphisms $\calExt_Y^0(\scrO_Y, \scrO_Y)\simeq \scrO_Y$, while $\calExt_Y^i(\scrO_Y, \scrO_Y)=0$, $i\geq 1$, the second part follows from long exact sequence obtained by applying $\calHom_Y(-, \scrO_Y)$ to the exact sequence 
	\begin{align}\label{eq:ses-Z}
		0\longrightarrow \calI_Z \longrightarrow \scrO_Y \longrightarrow i_\ast\scrO_Z\longrightarrow 0\ ,
	\end{align}
	In particular, one obtains $\calExt^i(\calI_Z, \scrO_Y)=0$ for all $i\geq 2$ which implies that $\calI_Z$ has homological dimension one. 
\end{proof}

\begin{lemma}\label{lem:der-dual} 
	Let $(i_{\Sigma,\, \ast}\scrO_\Sigma)^\vee \coloneqq \R\calHom_Y(i_{\Sigma,\, \ast}\scrO_\Sigma, \scrO_Y)$ be the derived dual of $\scrO_\Sigma$. Then 
	\begin{align}
		(i_{\Sigma,\, \ast}\scrO_\Sigma)^\vee \simeq i_{\Sigma,\, \ast}\scrO_\Sigma(-2)[-2] \ .
	\end{align}
\end{lemma}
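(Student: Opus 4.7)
The plan is to identify this as an instance of Grothendieck duality for the regular closed embedding $i_\Sigma \colon \Sigma \hookrightarrow Y$. By Assumption~\ref{assumption:f}, $\Sigma$ is smooth of codimension two in the smooth threefold $Y$, so $i_\Sigma$ is a regular embedding with normal bundle $\calN_{\Sigma/Y} \simeq \scrO_{\PP^1} \oplus \scrO_{\PP^1}(-2)$. In particular, $\det \calN_{\Sigma/Y} \simeq \scrO_\Sigma(-2)$.

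The core input will be the fundamental local isomorphism, which says that for a regular closed embedding of codimension $c$ between smooth varieties,
\begin{align}
	i_\Sigma^! \scrO_Y \simeq \det \calN_{\Sigma/Y}[-c] \ .
\end{align}
Combining this with Grothendieck duality for the proper (indeed finite) morphism $i_\Sigma$, one obtains
\begin{align}
	\R\calHom_Y(i_{\Sigma,\, \ast}\scrO_\Sigma, \scrO_Y) \simeq i_{\Sigma,\, \ast} \R\calHom_\Sigma(\scrO_\Sigma, i_\Sigma^! \scrO_Y) \simeq i_{\Sigma,\, \ast}(\det \calN_{\Sigma/Y})[-2]\ ,
\end{align}
and plugging in the identification of the normal bundle yields the asserted formula.

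If one prefers an approach avoiding an abstract appeal to Grothendieck duality, an equivalent strategy is to compute $\calExt^i_Y(i_{\Sigma,\, \ast}\scrO_\Sigma, \scrO_Y)$ directly using a local Koszul resolution. Since $\Sigma$ is a local complete intersection of codimension two in $Y$, the ideal $\calI_\Sigma$ is locally generated by a regular sequence of length two, so $i_{\Sigma,\, \ast}\scrO_\Sigma$ admits a local free resolution of amplitude $[-2,0]$ by the Koszul complex. Dualizing this resolution and computing cohomology sheaves gives the vanishing $\calExt^i_Y(i_{\Sigma,\, \ast}\scrO_\Sigma, \scrO_Y) = 0$ for $i \neq 2$, together with a canonical isomorphism $\calExt^2_Y(i_{\Sigma,\, \ast}\scrO_\Sigma, \scrO_Y) \simeq i_{\Sigma,\, \ast}\det \calN_{\Sigma/Y}$; these local identifications patch to the global statement.

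Neither approach presents a real obstacle: the only point requiring attention is keeping track of the cohomological shift (the derived dual is concentrated in degree $2$, accounting for the $[-2]$) and the twist by $\det \calN_{\Sigma/Y} \simeq \scrO_\Sigma(-2)$, both of which are immediate from the codimension and the hypothesis that $\Sigma$ is a $(0,-2)$-curve.
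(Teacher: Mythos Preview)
Your proof is correct, and both approaches you outline (Grothendieck duality for the regular embedding, or equivalently the local Koszul computation) are standard and yield the result directly.

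The paper's argument is different in spirit. Rather than invoking duality or writing down the Koszul resolution, it proceeds numerically: it first uses Lemma~\ref{lem:hom-dimension} (a general vanishing for purely one-dimensional sheaves on a smooth threefold) to see that the derived dual is concentrated in degree~$2$; it then quotes \cite[Proposition~1.1.10]{HL_Moduli} to conclude that $\calExt^2_Y(i_{\Sigma,\ast}\scrO_\Sigma,\scrO_Y)$ is reflexive, hence purely one-dimensional and set-theoretically supported on~$\Sigma$; finally it computes $\ch_2$ and $\chi$ of the derived dual to pin it down as $i_{\Sigma,\ast}\scrO_\Sigma(-2)$. Your route is more direct and structural, identifying the twist canonically as $\det\calN_{\Sigma/Y}$ rather than recovering it from Euler characteristics. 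The paper's route has the mild advantage of staying within the circle of ideas already set up in the appendix (the homological-dimension lemma and support arguments), at the cost of being less transparent about \emph{why} the answer is $\scrO_\Sigma(-2)$.
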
 

\begin{proof}
	Since $i_{\Sigma,\, \ast}\scrO_\Sigma$ is a purely one-dimensional sheaf, Lemma~\ref{lem:hom-dimension} implies that $\calExt^i_Y(i_{\Sigma,\, \ast}\scrO_\Sigma, \scrO_Y)=0$ for $i=0, 1, 3$. Therefore, we have
	\begin{align}
		(i_{\Sigma,\, \ast}\scrO_\Sigma)^\vee \simeq \calExt^2_Y(i_{\Sigma,\, \ast}\scrO_\Sigma, \scrO_Y)[-2]\ ,
	\end{align}
	where $\calExt^2_Y(i_{\Sigma,\, \ast}\scrO_\Sigma, \scrO_Y)$ is reflexive, by \cite[Proposition~1.1.10]{HL_Moduli}. Hence it is in particular purely one-dimensional. Moreover, $\calExt^2_Y(i_{\Sigma,\, \ast}\scrO_\Sigma, \scrO_Y)$ is also set-theoretically supported on $\Sigma$. Since 
	\begin{align}
		\ch_2((i_{\Sigma,\, \ast}\scrO_\Sigma)^\vee) = \ch_2(\scrO_\Sigma) = [\Sigma] \quad \text{and}\quad \chi((i_{\Sigma,\, \ast}\scrO_\Sigma)^\vee) = -\chi(\scrO_\Sigma) =-1\ ,
	\end{align}
	one gets $\calExt^2_Y(i_{\Sigma,\, \ast}\scrO_\Sigma, \scrO_Y)\simeq i_{\Sigma,\, \ast}\scrO_\Sigma(-2)$.
\end{proof}

\begin{lemma}\label{lem:loc-ext-A} 
	One has
	\begin{align}
		\calExt^1_Y(i_{\Sigma,\, \ast}\scrO_\Sigma, i_{\Sigma,\, \ast}\scrO_\Sigma) &\simeq i_{\Sigma,\, \ast}\scrO_\Sigma \oplus i_{\Sigma,\, \ast}\scrO_\Sigma(-2)\ , \\
		\calExt^2_Y(i_{\Sigma,\, \ast}\scrO_\Sigma, i_{\Sigma,\, \ast}\scrO_\Sigma) &\simeq i_{\Sigma,\, \ast}\scrO_\Sigma(-2)\ , \\
		\calExt^3_Y(i_{\Sigma,\, \ast}\scrO_\Sigma, i_{\Sigma,\, \ast}\scrO_\Sigma) &\simeq 0 \ .
	\end{align}		
	Moreover, 
	\begin{align}
		\calExt^1_Y(\calI_\Sigma, i_{\Sigma,\, \ast}\scrO_\Sigma) \simeq i_{\Sigma,\, \ast}\scrO_\Sigma(-2)\ .
	\end{align}
\end{lemma}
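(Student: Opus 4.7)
The plan is to compute the local Ext sheaves directly from the Koszul resolution afforded by the regular embedding $i_\Sigma\colon \Sigma \hookrightarrow Y$. Since $Y$ is smooth and $\Sigma$ is a smooth curve, $i_\Sigma$ is a regular closed immersion of codimension $2$, and the ideal sheaf $\calI_\Sigma \subset \scrO_Y$ is locally generated by a regular sequence of length $2$. The Koszul complex associated to such a local generating set resolves $i_{\Sigma,\, \ast}\scrO_\Sigma$ by locally free $\scrO_Y$-modules, and applying $\calHom_Y(-, i_{\Sigma,\, \ast}\scrO_\Sigma)$ to it yields the standard identification
\begin{align}
    \calExt^i_Y(i_{\Sigma,\, \ast}\scrO_\Sigma, i_{\Sigma,\, \ast}\scrO_\Sigma) \simeq i_{\Sigma,\, \ast}\big(\wedge^i \calN_{\Sigma/Y}\big) \ ,
\end{align}
for all $i \geq 0$, where $\calN_{\Sigma/Y}$ is the normal bundle of $\Sigma$ in $Y$.

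By Assumption~\ref{assumption:f}, $\Sigma$ is a $(0,-2)$-curve on $Y$, so $\calN_{\Sigma/Y}\simeq \scrO_\Sigma \oplus \scrO_\Sigma(-2)$. Consequently, $\wedge^1 \calN_{\Sigma/Y}\simeq \scrO_\Sigma \oplus \scrO_\Sigma(-2)$, $\wedge^2 \calN_{\Sigma/Y}\simeq \scrO_\Sigma(-2)$ (using $\scrO_\Sigma \otimes \scrO_\Sigma(-2) \simeq \scrO_\Sigma(-2)$), and $\wedge^i \calN_{\Sigma/Y} = 0$ for $i\geq 3$. This yields the first three claimed isomorphisms.

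For the last assertion, apply $\R\calHom_Y(-, i_{\Sigma,\, \ast}\scrO_\Sigma)$ to the short exact sequence
\begin{align}
    0 \longrightarrow \calI_\Sigma \longrightarrow \scrO_Y \longrightarrow i_{\Sigma,\, \ast}\scrO_\Sigma \longrightarrow 0 \ .
\end{align}
Since $\calExt^i_Y(\scrO_Y, i_{\Sigma,\, \ast}\scrO_\Sigma)=0$ for $i\geq 1$, the associated long exact sequence gives an isomorphism
\begin{align}
    \calExt^1_Y(\calI_\Sigma, i_{\Sigma,\, \ast}\scrO_\Sigma) \simeq \calExt^2_Y(i_{\Sigma,\, \ast}\scrO_\Sigma, i_{\Sigma,\, \ast}\scrO_\Sigma) \simeq i_{\Sigma,\, \ast}\scrO_\Sigma(-2) \ ,
\end{align}
using the computation just carried out. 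There is no serious obstacle here: the Koszul-based formula for $\calExt^i_Y(i_{\Sigma,\, \ast}\scrO_\Sigma, i_{\Sigma,\, \ast}\scrO_\Sigma)$ is classical for regular embeddings, and the only minor verification to keep in mind is that the isomorphism is canonical enough that the exterior powers of the normal bundle can be read off directly from the splitting $\calN_{\Sigma/Y}\simeq \scrO_\Sigma \oplus \scrO_\Sigma(-2)$; the $(0,-2)$ hypothesis then makes the computation of $\wedge^\bullet \calN_{\Sigma/Y}$ completely explicit.
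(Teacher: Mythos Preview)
Your proof is correct and, in fact, more streamlined than the paper's own argument. You invoke the standard Koszul identification $\calExt^i_Y(i_{\Sigma,\ast}\scrO_\Sigma, i_{\Sigma,\ast}\scrO_\Sigma) \simeq i_{\Sigma,\ast}(\wedge^i \calN_{\Sigma/Y})$ for a regular closed immersion of codimension two, which handles all three local $\calExt$ sheaves uniformly once $\calN_{\Sigma/Y}\simeq \scrO_\Sigma\oplus\scrO_\Sigma(-2)$ is known. The paper instead proceeds piece by piece: it obtains $\calExt^1$ from the isomorphism $\calExt^0_Y(\calI_\Sigma, i_{\Sigma,\ast}\scrO_\Sigma)\simeq i_{\Sigma,\ast}\calN_{\Sigma/Y}$ (via the ideal-sheaf long exact sequence), derives $\calExt^2$ from the separate computation of the derived dual $(i_{\Sigma,\ast}\scrO_\Sigma)^\vee \simeq i_{\Sigma,\ast}\scrO_\Sigma(-2)[-2]$ (Lemma~\ref{lem:der-dual}) together with $\R\calHom \simeq (-)^\vee\otimes^{\LL}(-)$, and gets $\calExt^3=0$ from the homological-dimension result of Lemma~\ref{lem:hom-dimension}. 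Both routes agree on the final assertion about $\calExt^1_Y(\calI_\Sigma, i_{\Sigma,\ast}\scrO_\Sigma)$, using the same long exact sequence. Your approach is shorter and avoids the auxiliary Lemmas~\ref{lem:hom-dimension} and~\ref{lem:der-dual}; the paper's approach has the minor advantage of reusing results it has established for other purposes elsewhere in the appendix.
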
 

\begin{proof}
	Recall that $\calExt_Y^0(\scrO_Y,  i_{\Sigma,\, \ast}\scrO_\Sigma)\simeq  i_{\Sigma,\, \ast}\scrO_\Sigma$, while $\calExt_Y^i(\scrO_Y,  i_{\Sigma,\, \ast}\scrO_\Sigma)=0$ for $i\geq 1$. 
	By applying $\calHom_Y(-,  i_{\Sigma,\, \ast}\scrO_\Sigma)$ to the exact sequence \eqref{eq:ses-Z} and considering the corresponding long exact sequence, we obtain isomorphisms
	\begin{align}\label{eq:ext-isom}
		\begin{tikzcd}[ampersand replacement=\&]
			\calExt^{i-1}_Y(\calI_\Sigma, i_{\Sigma,\, \ast}\scrO_\Sigma) \ar{r}{\sim} \& \calExt^i_Y(i_{\Sigma,\, \ast}\scrO_\Sigma, i_{\Sigma,\, \ast}\scrO_\Sigma)
		\end{tikzcd}
	\end{align}
	for all $1\leq i \leq 3$. For $i=0$, one has 
	\begin{align}
		\calExt^0_Y(\calI_\Sigma, i_{\Sigma,\, \ast}\scrO_\Sigma) \simeq i_{\Sigma,\, \ast}\calH om_{\Sigma^-}(\calI_\Sigma\otimes \scrO_\Sigma, 
		\scrO_\Sigma) \simeq i_{\Sigma,\, \ast} \calH om_{\Sigma^-}( \calN_{\Sigma/Y}, \scrO_\Sigma) \simeq i_{\Sigma,\, \ast}\scrO_\Sigma\oplus i_{\Sigma,\, \ast}\scrO_\Sigma(-2)\ .
	\end{align}
	This proves the first identity. 
	
	In order to prove the second identity, note the isomorphism
	\begin{align}
		\R\calH om_Y(i_{\Sigma,\, \ast}\scrO_\Sigma,i_{\Sigma,\, \ast}\scrO_\Sigma) \simeq (i_{\Sigma,\, \ast}\scrO_\Sigma)^\vee\otimes^\LL i_{\Sigma,\, \ast}\scrO_\Sigma 
	\end{align}
	in $\catDb(Y)$. Using Lemma~\ref{lem:der-dual} this yields an isomorphism 
	\begin{align}
		\calExt_Y^2(i_{\Sigma,\, \ast}\scrO_\Sigma,i_{\Sigma,\, \ast}\scrO_\Sigma) \simeq i_{\Sigma,\, \ast}\scrO_\Sigma(-2)\ .
	\end{align}
	
	The third identity follows from Lemma~\ref{lem:hom-dimension} and Formula~\eqref{eq:ext-isom}. Since $\calI_\Sigma$ has homological dimension one, it has a two term locally free resolution, 
	which implies that $\calExt^2(\calI_\Sigma, i_{\Sigma,\, \ast}\scrO_\Sigma) =0$. 	
	
	The fourth identity follows from the second using the isomorphism \eqref{eq:ext-isom} with $i=2$. 
\end{proof}

\begin{lemma}\label{lem:loc-ext-coh} 
	One has $H^1(Y, \calExt_Y^0(i_{\Sigma_k,\, \ast}\scrO_{\Sigma_k}, i_{\Sigma,\, \ast}\scrO_\Sigma)) =0$ for any $1\leq k\leq n$.
\end{lemma}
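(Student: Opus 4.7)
The plan is to establish a canonical isomorphism
\begin{align}
	\calExt_Y^0(i_{\Sigma_k,\, \ast}\scrO_{\Sigma_k}, i_{\Sigma,\, \ast}\scrO_\Sigma) \simeq i_{\Sigma,\, \ast}\scrO_\Sigma \ ,
\end{align}
after which the conclusion is immediate from $H^1(\PP^1, \scrO_{\PP^1}) = 0$. The one genuine input needed is the scheme-theoretic inclusion $\Sigma \subseteq \Sigma_k$, equivalently the inclusion of ideals $\calJ_k \subseteq \calJ_1 = \calI_\Sigma$, which was already produced in \eqref{eq:ideal-filtration-A-Appendix}.

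The isomorphism is obtained by a pointwise computation. For $p \notin \Sigma$, both sheaves have vanishing stalks, so there is nothing to check. For $p \in \Sigma$, set $R \coloneqq \scrO_{Y,p}$ and denote by $\calJ_k, \calI_\Sigma \subseteq R$ the stalks of the two ideals, with $\calJ_k \subseteq \calI_\Sigma$. A homomorphism of $R$-modules $\phi \colon R/\calJ_k \to R/\calI_\Sigma$ is uniquely determined by the element $\phi(\bar 1) \in R/\calI_\Sigma$, subject to the requirement that $g\cdot \phi(\bar 1) = 0$ in $R/\calI_\Sigma$ for every $g \in \calJ_k$. But $g \in \calJ_k \subseteq \calI_\Sigma$ means the class of $g$ in $R/\calI_\Sigma$ is already zero, so the constraint is automatic. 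Evaluation at $\bar 1$ therefore gives an isomorphism of $R$-modules
\begin{align}
	\Hom_R(R/\calJ_k, R/\calI_\Sigma) \xrightarrow{\sim} R/\calI_\Sigma \ ,
\end{align}
which is visibly functorial in $R$, and hence sheafifies to the claimed global isomorphism.

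The conclusion then follows from
\begin{align}
	H^1(Y, \calExt_Y^0(i_{\Sigma_k,\,\ast}\scrO_{\Sigma_k}, i_{\Sigma,\,\ast}\scrO_\Sigma)) \simeq H^1(Y, i_{\Sigma,\, \ast}\scrO_\Sigma) \simeq H^1(\Sigma, \scrO_\Sigma) = 0 \ ,
\end{align}
since $\Sigma \simeq \PP^1$. No serious obstacle arises; the argument is a direct local calculation together with the standard vanishing on $\PP^1$. The only subtlety worth flagging is that the isomorphism is \emph{not} what one might naively guess from the higher $\calExt$'s computed in Lemma~\ref{lem:loc-ext-A} (which do depend non-trivially on $k$): the simplification for the $\calExt^0$ comes precisely from the fact that no nontrivial annihilator condition is imposed on the image once $\calJ_k \subseteq \calI_\Sigma$.
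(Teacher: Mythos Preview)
Your argument is correct and is considerably more direct than the paper's. The key observation that $\calJ_k \subseteq \calI_\Sigma$ forces $\calHom_Y(i_{\Sigma_k,\,\ast}\scrO_{\Sigma_k}, i_{\Sigma,\,\ast}\scrO_\Sigma) \simeq i_{\Sigma,\,\ast}\scrO_\Sigma$ via the stalkwise identity $\Hom_R(R/\calJ_k, R/\calI_\Sigma) \simeq R/\calI_\Sigma$ is valid (the $\calHom$ sheaf commutes with stalks since $\scrO_{\Sigma_k}$ is coherent), and the vanishing then follows at once from $H^1(\PP^1,\scrO_{\PP^1})=0$. The logical dependency is also fine: the inclusions $\calJ_k \subseteq \calI_\Sigma$ were constructed in Propositions~\ref{prop:step-two} and \ref{prop:inductive-step}, which precede this lemma.

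By contrast, the paper proceeds by induction on $k$ using the long exact sequence of local $\calExt$'s associated to \eqref{eq:sch-sequence-B}, together with the computation $\calExt^1_Y(i_{\Sigma,\,\ast}\scrO_\Sigma, i_{\Sigma,\,\ast}\scrO_\Sigma) \simeq i_{\Sigma,\,\ast}\scrO_\Sigma \oplus i_{\Sigma,\,\ast}\scrO_\Sigma(-2)$ from Lemma~\ref{lem:loc-ext-A}. This yields only $\calExt^0_Y(i_{\Sigma_k,\,\ast}\scrO_{\Sigma_k}, i_{\Sigma,\,\ast}\scrO_\Sigma) \simeq V_k \otimes i_{\Sigma,\,\ast}\scrO_\Sigma$ for some unspecified vector space $V_k$, which suffices for the $H^1$-vanishing but is weaker than what you prove. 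Your route avoids the induction and the input from Lemma~\ref{lem:loc-ext-A} entirely, and in fact pins down $\dim V_k = 1$; the paper's approach, on the other hand, is closer in spirit to the inductive machinery used in the surrounding lemmas and does not rely on having already established the ideal inclusions in the precise form you invoke.
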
 

\begin{proof}
	The exact sequence \eqref{eq:sch-sequence-B} yields the exact sequence 
	\begin{align}
		\begin{tikzcd}[ampersand replacement=\&, row sep=tiny]
			0\ar{r}\& \calExt_Y^0(i_{\Sigma,\, \ast}\scrO_{\Sigma}, i_{\Sigma,\, \ast}\scrO_\Sigma) \ar{r}\& \calExt_Y^0(i_{\Sigma_k,\, \ast}\scrO_{\Sigma_k}, i_{\Sigma,\, \ast}\scrO_\Sigma) \ar{r}\& {}\\
			\& \calExt_Y^0(i_{\Sigma_{k-1},\, \ast}\scrO_{\Sigma_{k-1}}, i_{\Sigma,\, \ast}\scrO_\Sigma) \ar{r}{\delta} \& \calExt_Y^1(i_{\Sigma,\, \ast}\scrO_{\Sigma}, i_{\Sigma,\, \ast}\scrO_\Sigma) \ar{r} \& \cdots 
		\end{tikzcd}\ ,
	\end{align}
	where $\calExt_Y^1(i_{\Sigma,\, \ast}\scrO_{\Sigma}, i_{\Sigma,\, \ast}\scrO_\Sigma) = i_{\Sigma,\, \ast}\scrO_\Sigma \oplus i_{\Sigma,\, \ast}\scrO_{\Sigma}(-2)$ by Lemma~\ref{lem:loc-ext-A}. 
	
	It will be shown by induction that $\calExt_Y^0(i_{\Sigma_k,\, \ast}\scrO_{\Sigma_k}, i_{\Sigma,\, \ast}\scrO_\Sigma)\simeq V_k \otimes i_{\Sigma,\, \ast}\scrO_\Sigma$  for some
	vector spaces $V_k$ for all $1 \leq k \leq n$, which implies the claim. This clearly holds for $k=1$. 
	
	Suppose that $\calExt_Y^0(i_{\Sigma_{k-1},\, \ast}\scrO_{\Sigma_{k-1}}, i_{\Sigma,\, \ast}\scrO_\Sigma)\simeq V_{k-1}\otimes i_{\Sigma,\, \ast}\scrO_\Sigma$, with $V_{k-1}$ a vector space. Since $\Hom_Y(i_{\Sigma,\, \ast}\scrO_\Sigma, i_{\Sigma,\, \ast}\scrO_\Sigma(-2))=0$, this implies that $\mathsf{Im}(\delta) \subset i_{\Sigma,\, \ast}\scrO_\Sigma$. Hence $\mathsf{Im}(\delta)$ is either identically zero or isomorphic to $i_{\Sigma,\, \ast}\scrO_\Sigma$. At the same time, there is a surjective morphism 
	\begin{align}
		V_{k-1}\otimes i_{\Sigma,\, \ast}\scrO_\Sigma \twoheadrightarrow \mathsf{Im}(\delta)\ . 
	\end{align}
	Since $\mathsf{Im}(\delta)$ is either identically zero or isomorphic to $i_{\Sigma,\, \ast}\scrO_\Sigma$, the kernel of this epimorphism is isomorphic to $V'_{k-1} \otimes i_{\Sigma,\, \ast}\scrO_\Sigma$, with $V'_{k-1} \subset V_{k-1}$ a vector subspace. Then one is left with a short exact sequence 
	\begin{align}
		0\longrightarrow i_{\Sigma,\, \ast}\scrO_\Sigma \longrightarrow \calExt_Y^0(i_{\Sigma_k,\, \ast}\scrO_{\Sigma_k}, i_{\Sigma,\, \ast}\scrO_\Sigma) \longrightarrow V'_{k-1} \otimes i_{\Sigma,\, \ast}\scrO_\Sigma \longrightarrow 0\ .
	\end{align}
	Since $\calExt_Y^0(i_{\Sigma_k,\, \ast}\scrO_{\Sigma_k}, i_{\Sigma,\, \ast}\scrO_\Sigma)$ is a $\scrO_\Sigma$-module, and $\Ext^1_\Sigma(\scrO_\Sigma, \scrO_\Sigma)=0$, the inductive step follows. 
\end{proof}

Now note that, using Corollary \ref{cor:pairing}, Corollary~\ref{cor:C} yields
\begin{align}\label{eq:ext-vanishing-A} 
	\Ext^1_Y(i_{\Sigma_n,\, \ast}\scrO_{\Sigma_n}, i_{\Sigma,\, \ast}\scrO_\Sigma) =0\quad \text{and} \quad \Ext^2_Y(i_{\Sigma_n,\, \ast}\scrO_{\Sigma_n}, i_{\Sigma,\, \ast}\scrO_{\Sigma}) =0\ ,
\end{align}
via Serre duality.
Given the above vanishing results, Lemmas~\ref{lem:hom-space} and \ref{lem:loc-ext-coh} yield:
\begin{lemma}\label{lem:loc-ext-B} 
	One has isomorphisms 
	\begin{align}
		\calExt^1_Y(i_{\Sigma_n,\, \ast}\scrO_{\Sigma_n}, i_{\Sigma,\, \ast}\scrO_{\Sigma})&\simeq V_1 \otimes i_{\Sigma,\, \ast}\scrO_{\Sigma}(-1)\ , \\
		\calExt^2_Y(i_{\Sigma_n,\, \ast}\scrO_{\Sigma_n}, i_{\Sigma,\, \ast}\scrO_{\Sigma}) &\simeq i_{\Sigma,\, \ast}\scrO_\Sigma(-2) \oplus (V_2\otimes i_{\Sigma,\, \ast}\scrO_{\Sigma}(-1))
	\end{align}
	for some vector spaces $V_1, V_2$. 
\end{lemma}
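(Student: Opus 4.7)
\medskip
\noindent\textbf{Proof proposal.} The plan is to run the local-to-global spectral sequence for $\calExt^\bullet(\calE_n, \calE_1)$ and read off the global structure of the two $\calExt$-sheaves from the already established vanishings. Since $\calE_1 = i_{\Sigma,\,\ast}\scrO_\Sigma$ is annihilated by $\calI_\Sigma$, each sheaf $\calExt^q_Y(\calE_n,\calE_1)$ is a coherent $\scrO_\Sigma$-module, hence lives on $\Sigma\simeq\PP^1$. Cohomology on $\PP^1$ vanishes in degree $\geq 2$, so the spectral sequence $E_2^{p,q}=H^p(Y,\calExt^q(\calE_n,\calE_1))\Rightarrow \Ext^{p+q}(\calE_n,\calE_1)$ collapses to short exact sequences
\begin{align}
0\longrightarrow H^1(\calExt^{i-1}(\calE_n,\calE_1))\longrightarrow \Ext^i(\calE_n,\calE_1)\longrightarrow H^0(\calExt^i(\calE_n,\calE_1))\longrightarrow 0
\end{align}
for every $i\geq 0$.

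For $i=1$ the vanishing $\Ext^1(\calE_n,\calE_1)=0$ from \eqref{eq:ext-vanishing-A}, combined with $H^1(\calExt^0(\calE_n,\calE_1))=0$ from Lemma~\ref{lem:loc-ext-coh}, yields $H^0(\calExt^1(\calE_n,\calE_1))=0$. For $i=2$ the vanishing $\Ext^2(\calE_n,\calE_1)=0$ gives both $H^1(\calExt^1(\calE_n,\calE_1))=0$ and $H^0(\calExt^2(\calE_n,\calE_1))=0$. A coherent sheaf on $\PP^1$ with vanishing $H^0$ has no torsion and decomposes as a direct sum $\bigoplus_j\scrO_\Sigma(a_j)$ with every $a_j\leq -1$; additionally requiring $H^1=0$ forces $a_j\geq -1$, so $a_j=-1$. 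Applied to $\calExt^1(\calE_n,\calE_1)$, this yields the first isomorphism $\calExt^1_Y(\calE_n,\calE_1)\simeq V_1\otimes i_{\Sigma,\,\ast}\scrO_\Sigma(-1)$.

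The second isomorphism requires two extra inputs. First, by Serre duality on the smooth threefold $Y$,
\begin{align}
\Ext^3_Y(\calE_n,\calE_1)\simeq \Hom_Y(\calE_1,\calE_n\otimes\omega_Y)^\vee,
\end{align}
and Lemma~\ref{lem:trivial} together with the fact that $\calE_n$ is scheme-theoretically supported on $\Sigma_n$ gives $\calE_n\otimes\omega_Y\simeq \calE_n$, so Lemma~\ref{lem:hom-space} implies $\Ext^3_Y(\calE_n,\calE_1)\simeq \C$. Second, I claim $\calExt^3_Y(\calE_n,\calE_1)=0$; this is proved by induction on $k$ using the short exact sequences \eqref{eq:sch-sequence-B}, which yield long exact sequences of $\calExt^\bullet(-,\calE_1)$ whose relevant piece reads
\begin{align}
\calExt^3_Y(\calE_{k-1},\calE_1)\longrightarrow \calExt^3_Y(\calE_k,\calE_1)\longrightarrow \calExt^3_Y(\calE_1,\calE_1)=0,
\end{align}
with the rightmost vanishing from Lemma~\ref{lem:loc-ext-A}. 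Hence $H^0(\calExt^3(\calE_n,\calE_1))=0$, and the $i=3$ short exact sequence of the spectral sequence collapses to $H^1(\calExt^2(\calE_n,\calE_1))\simeq\C$.

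Finally, combining $H^0(\calExt^2(\calE_n,\calE_1))=0$ (again no torsion, locally free on $\PP^1$) with $H^1(\calExt^2(\calE_n,\calE_1))\simeq\C$: writing $\calExt^2_Y(\calE_n,\calE_1)\simeq \bigoplus_\ell \scrO_\Sigma(b_\ell)$ with all $b_\ell\leq -1$, the equality $\sum_\ell \max(0,-b_\ell-1)=1$ forces exactly one summand to equal $\scrO_\Sigma(-2)$ and all remaining ones to equal $\scrO_\Sigma(-1)$, yielding the claimed $\calExt^2_Y(\calE_n,\calE_1)\simeq i_{\Sigma,\,\ast}\scrO_\Sigma(-2)\oplus(V_2\otimes i_{\Sigma,\,\ast}\scrO_\Sigma(-1))$. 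The only nontrivial step is the vanishing $\calExt^3(\calE_n,\calE_1)=0$, which is handled by the short induction above; everything else is a bookkeeping exercise with the degenerate spectral sequence and the elementary structure of coherent sheaves on $\PP^1$.
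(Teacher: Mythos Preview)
Your argument is correct and follows essentially the same route as the paper's proof: run the local-to-global spectral sequence, use that all $\calExt$ sheaves are $\scrO_\Sigma$-modules so cohomology vanishes above degree $1$, and read off the splitting type on $\PP^1$ from the global $\Ext$ vanishings \eqref{eq:ext-vanishing-A} together with $\Ext^3\simeq\C$ via Serre duality and Lemma~\ref{lem:hom-space}. Your explicit induction showing $\calExt^3_Y(\calE_n,\calE_1)=0$ is in fact a bit cleaner than the paper, which cites Lemma~\ref{lem:loc-ext-A} for the vanishing of $E_2^{3,0}$ but that lemma only treats $\calExt^3(\scrO_\Sigma,\scrO_\Sigma)$; the passage to $\scrO_{\Sigma_n}$ requires exactly the filtration argument you supply.
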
 

\begin{proof}
	The second term of the standard local to global spectral sequence for $\Ext_Y^\bullet(i_{\Sigma_n,\, \ast}\scrO_{\Sigma_n}, i_{\Sigma,\, \ast}\scrO_{\Sigma})$ reads 
	\begin{align}
		E_2^{p,q} = H^q(Y, \calExt_Y^p(i_{\Sigma_n,\, \ast}\scrO_{\Sigma_n}, i_{\Sigma,\, \ast}\scrO_{\Sigma}))\ , 
	\end{align}
	for any $p,q\geq 0$. Since all local Ext sheaves are set-theoretically supported on $\Sigma$, all terms with $q\geq 2$ vanish identically. Since $Y$ is a smooth threefold, all terms with $p\geq 4$ also vanish identically. Furthermore Lemmas~\ref{lem:loc-ext-A} and \ref{lem:loc-ext-coh} show that $E_2^{0,1} =0=E_2^{3,0}$. 
	
	Hence one has a first quadrant spectral sequence 
	\begin{align}
		\begin{tikzcd}[ampersand replacement=\&, column sep=huge, row sep=huge]
			0 \ar{drr}{d_2^{0,1}} \& E_2^{1,1} \ar{drr}{d_2^{1,1}} \& E_2^{2,1} \& E^{3,1} \\
			E_2^{0,0} \& E_2^{1,0} \& E_2^{2,0} \& 0
		\end{tikzcd}
	\end{align}
	where all other terms are zero. In particular, all differentials $d_r^{p,q}$, with $r\geq 2$, are identically zero. Then the first vanishing condition in \eqref{eq:ext-vanishing-A} implies that $E_2^{1,0}=0$, i.e., one has
	\begin{align}
		H^0(Y, \calExt_Y^1(i_{\Sigma_n,\, \ast}\scrO_{\Sigma_n}, i_{\Sigma,\, \ast}\scrO_{\Sigma}))=0\ .
	\end{align}
	This further implies that  $\calExt_Y^1(i_{\Sigma_n,\, \ast}\scrO_{\Sigma_n}, i_{\Sigma,\, \ast}\scrO_{\Sigma})$ must be purely one-dimensional. Since it is also a $\scrO_\Sigma$-module, it must be the pushforward of a locally free sheaf $\calF$ on $\Sigma$ so that $H^0(\Sigma, \calF)=0$. Hence $\calF$ is either zero or
	\begin{align}
		\mu_{\mathsf{max}}(\calF) \leq 0\ . 
	\end{align}
	Here, the slope of a coherent sheaf $\calE$ on $\Sigma$ is defined by $\mu(\calE)= \chi(\calE)/ \mathsf{rk}(\calE)$ provided that $\mathsf{rk}(\calE)>0$.
	
	Similarly, the second vanishing condition in \eqref{eq:ext-vanishing-A} implies that $H^1(\Sigma, \calF)=0$. If $\calF$ is nonzero, this yields  $\mu_{\mathsf{min}}(\calF) \geq 0$. In conclusion, $\calF \simeq V_1\otimes \scrO_\Sigma(-1)$ for some vector space $V_1$. 
	
	Moreover, the second vanishing condition in \eqref{eq:ext-vanishing-A} also implies that 
	\begin{align}
		H^0(Y, \calExt^2_Y(i_{\Sigma_n,\, \ast}\scrO_{\Sigma_n}, i_{\Sigma,\, \ast}\scrO_{\Sigma})) =0\ .
	\end{align}
	Therefore $\calExt^2_Y(i_{\Sigma_n,\, \ast}\scrO_{\Sigma_n}, i_{\Sigma,\, \ast}\scrO_{\Sigma})$ is the pushforward of a locally free sheaf $\calG$ on $\Sigma$. If $\calG\neq 0$, it must satisfy 
	\begin{align}
		\mu_{\mathsf{max}}(G) \leq 0\ .
	\end{align}
	Finally, by Serre duality, Lemma~\ref{lem:hom-space} yields 
	\begin{align}
		\Ext^3_Y(i_{\Sigma_n,\, \ast}\scrO_{\Sigma_n}, i_{\Sigma,\, \ast}\scrO_{\Sigma})\simeq \C\ .
	\end{align}
	Since $E_2^{1,1}=0=E_2^{p,0}$, with $p\geq 3$, one obtains an isomorphism 
	\begin{align}
		\Ext^3_Y(i_{\Sigma_n,\, \ast}\scrO_{\Sigma_n}, i_{\Sigma,\, \ast}\scrO_{\Sigma})\simeq H^1(\Sigma, \calG)\ . 
	\end{align}
	This implies that $\calG\simeq i_{\Sigma,\, \ast}\scrO_\Sigma(-2) \oplus(V_2 \otimes i_{\Sigma,\, \ast}\scrO_\Sigma(-1))$ for some vector space $V_2$. 
\end{proof}

\begin{lemma}\label{lem:loc-ext-C} 
	One has 
	\begin{align}
		\calExt_Y^1(\calJ_k, i_{\Sigma,\, \ast}\scrO_\Sigma) \simeq i_{\Sigma,\, \ast}\scrO_\Sigma(-2)
	\end{align}
	for all $1\leq k \leq n$ and 
	\begin{align}
		\calExt_Y^0(\calJ_k, i_{\Sigma,\, \ast}\scrO_\Sigma) \simeq \begin{cases} 
			i_{\Sigma,\, \ast}\scrO_\Sigma \oplus i_{\Sigma,\, \ast}\scrO_\Sigma(-2) & \text{for } 1\leq k \leq n-1\ , \\[4pt]
			i_{\Sigma,\, \ast}\scrO_\Sigma(-1)\oplus i_{\Sigma,\, \ast}\scrO_\Sigma(-1) & \text{for }k =n \ ,
		\end{cases}
	\end{align}
	where $\calJ_k\subset \scrO_Y$ is the ideal sheaf of $\Sigma_k$ for $1\leq k\leq n$. 
\end{lemma}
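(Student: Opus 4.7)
The plan is to induct on $k$, with the base case $k=1$ (i.e., $\calJ_1 = \calI_\Sigma$) handled by Lemma~\ref{lem:loc-ext-A} itself (noting that $n \geq 2$ by Lemma~\ref{lem:ext-curve}, so the first-case formula is the one to use). For the inductive step $2 \leq k \leq n$, I would apply $\R\calHom_Y(-, i_{\Sigma,\,\ast}\scrO_\Sigma)$ to the short exact sequence \eqref{eq:ideal-flag-B}, $0 \to \calJ_k \to \calJ_{k-1} \to i_{\Sigma,\,\ast}\scrO_\Sigma \to 0$, from Proposition~\ref{prop:inductive-step}. Since $\calJ_\bullet$ has homological dimension one (Lemma~\ref{lem:hom-dimension}), combining Lemma~\ref{lem:loc-ext-A} with the inductive hypothesis gives---viewed as a sequence of $\scrO_\Sigma$-modules on $\Sigma \simeq \PP^1$---the long exact sequence
\begin{align*}
0 \to \scrO_\Sigma \xrightarrow{\alpha} \scrO_\Sigma \oplus \scrO_\Sigma(-2) &\to \calExt^0(\calJ_k, \scrO_\Sigma) \xrightarrow{\beta} \scrO_\Sigma \oplus \scrO_\Sigma(-2) \\
&\xrightarrow{\gamma} \scrO_\Sigma(-2) \to \calExt^1(\calJ_k, \scrO_\Sigma) \to \scrO_\Sigma(-2) \to 0 .
\end{align*}

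Working on $\PP^1$ and using that $\Hom_\Sigma(\scrO_\Sigma, \scrO_\Sigma(-2)) = 0$: the map $\alpha$ is nonzero (its image contains the nonzero quotient $\calJ_{k-1} \to i_{\Sigma,\,\ast}\scrO_\Sigma$) hence an isomorphism onto the first summand, with cokernel $\scrO_\Sigma(-2)$; likewise $\gamma$ vanishes on the $\scrO_\Sigma$ factor and acts as a scalar on the $\scrO_\Sigma(-2)$ factor. That scalar must be nonzero---for otherwise $\calExt^1(\calJ_k, i_{\Sigma,\,\ast}\scrO_\Sigma)$ would have rank $2$, contradicting its generic rank of $1$. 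The latter is obtained from the isomorphism $\calExt^1(\calJ_k, \scrO_\Sigma) \simeq \calExt^2(i_{\Sigma_k,\,\ast}\scrO_{\Sigma_k}, i_{\Sigma,\,\ast}\scrO_\Sigma)$ (coming from $0 \to \calJ_k \to \scrO_Y \to i_{\Sigma_k,\,\ast}\scrO_{\Sigma_k} \to 0$) together with a Koszul computation at the generic point of $\Sigma$, where $\scrO_{\Sigma_k}$ is a complete intersection of length $k$ inside the regular two-dimensional local ring $\scrO_{Y,\eta}$. Hence $\gamma$ is surjective, yielding $\calExt^1(\calJ_k, i_{\Sigma,\,\ast}\scrO_\Sigma) \simeq \scrO_\Sigma(-2)$, and placing $\calExt^0(\calJ_k, i_{\Sigma,\,\ast}\scrO_\Sigma)$ in a short exact sequence $0 \to \scrO_\Sigma(-2) \to \calExt^0(\calJ_k, i_{\Sigma,\,\ast}\scrO_\Sigma) \to \scrO_\Sigma \to 0$. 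As a rank-$2$, degree-$(-2)$ locally free sheaf on $\PP^1$, it must be either $\scrO_\Sigma \oplus \scrO_\Sigma(-2)$ (split) or $\scrO_\Sigma(-1)^{\oplus 2}$ (non-split).

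To resolve this dichotomy I would compute the global section $H^0(\Sigma, \calExt^0(\calJ_k, i_{\Sigma,\,\ast}\scrO_\Sigma)) = \Hom_Y(\calJ_k, i_{\Sigma,\,\ast}\scrO_\Sigma)$, which equals $\C$ in the split case and $0$ in the non-split case. Applying $\R\Hom_Y(-, i_{\Sigma,\,\ast}\scrO_\Sigma)$ to $0 \to \calJ_k \to \scrO_Y \to i_{\Sigma_k,\,\ast}\scrO_{\Sigma_k} \to 0$ and checking---by an induction on $k$ using \eqref{eq:sch-sequence-A} and Corollary~\ref{cor:A}, entirely parallel to the proof of Lemma~\ref{lem:hom-space}---that the natural map $\Hom_Y(i_{\Sigma_k,\,\ast}\scrO_{\Sigma_k}, i_{\Sigma,\,\ast}\scrO_\Sigma) \to \Hom_Y(\scrO_Y, i_{\Sigma,\,\ast}\scrO_\Sigma) \simeq \C$ is an isomorphism, I obtain the identification $\Hom_Y(\calJ_k, i_{\Sigma,\,\ast}\scrO_\Sigma) \simeq \Ext^1_Y(i_{\Sigma_k,\,\ast}\scrO_{\Sigma_k}, i_{\Sigma,\,\ast}\scrO_\Sigma)$. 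For $k = n$ this $\Ext^1$ vanishes by \eqref{eq:ext-vanishing-A} from Corollary~\ref{cor:C}, forcing $H^0 = 0$ and hence $\calExt^0(\calJ_n, i_{\Sigma,\,\ast}\scrO_\Sigma) \simeq \scrO_\Sigma(-1)^{\oplus 2}$. For $1 \leq k \leq n-1$, the extension class $e_{k+1}$ of \eqref{eq:sch-sequence-A} at index $k+1$ lies in this $\Ext^1$ and is nonzero by Corollary~\ref{cor:A}, so $H^0 \neq 0$ and $\calExt^0(\calJ_k, i_{\Sigma,\,\ast}\scrO_\Sigma) \simeq \scrO_\Sigma \oplus \scrO_\Sigma(-2)$. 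The main technical point is the rank-one identification of $\calExt^1$ via the generic-point Koszul argument; the split-versus-non-split distinction is then cleanly resolved by the $H^0$ comparison, combining Corollary~\ref{cor:C} at $k = n$ with Corollary~\ref{cor:A} for $k < n$.
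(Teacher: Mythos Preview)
Your approach is correct but takes a genuinely different route from the paper's. Both start from the long exact sequence obtained by applying $\R\calHom_Y(-, i_{\Sigma,\ast}\scrO_\Sigma)$ to $0 \to \calJ_k \to \calJ_{k-1} \to i_{\Sigma,\ast}\scrO_\Sigma \to 0$, but the paper proceeds \emph{top-down}: it first invokes Lemma~\ref{lem:loc-ext-B} (which packages the global vanishing of Corollary~\ref{cor:C} through the local-to-global spectral sequence) to pin down $\calExt^i_Y(\calJ_n, i_{\Sigma,\ast}\scrO_\Sigma)$, and then runs a \emph{descending} induction on $k$. You instead do an \emph{ascending} induction: for $\calExt^1$ you force $\gamma$ to be surjective via a generic-rank-one argument, and for $\calExt^0$ you distinguish the split and non-split cases by computing $H^0=\Hom_Y(\calJ_k,i_{\Sigma,\ast}\scrO_\Sigma)\simeq\Ext^1_Y(i_{\Sigma_k,\ast}\scrO_{\Sigma_k},i_{\Sigma,\ast}\scrO_\Sigma)$ and appealing directly to Corollary~\ref{cor:A} for $k<n$ and Corollary~\ref{cor:C} for $k=n$. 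Your route bypasses Lemma~\ref{lem:loc-ext-B} entirely and is arguably more transparent; the paper's route has the advantage that the spectral-sequence analysis in Lemma~\ref{lem:loc-ext-B} simultaneously yields extra information (e.g.\ $V_2=0$).

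One point in your plan deserves a line of justification: the assertion that $(\scrO_{\Sigma_k})_\eta$ is a complete intersection in the two-dimensional regular local ring $R=\scrO_{Y,\eta}$ is not automatic from the filtration alone. It does follow, however, once you use the \emph{other} exact sequence \eqref{eq:sch-sequence-B}: localizing $0\to\scrO_{\Sigma_{k-1}}\to\scrO_{\Sigma_k}\to\scrO_\Sigma\to 0$ at $\eta$ shows that the maximal ideal of the Artinian local ring $S_k=(\scrO_{\Sigma_k})_\eta$ is $R$-isomorphic to $S_{k-1}$, hence cyclic; thus $S_k\simeq\kappa(\eta)[s]/(s^k)$, and lifting the generator $s$ to $u\in\mathfrak m_R\smallsetminus\mathfrak m_R^2$ one finds $(\calJ_k)_\eta=(v,u^k)$ for any $v$ completing $u$ to a regular system of parameters. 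With this in hand, your Koszul computation gives generic rank one for $\calExt^1_Y(\calJ_k,i_{\Sigma,\ast}\scrO_\Sigma)$ and the rest of your argument goes through. (Also, the inequality $n\ge 2$ you cite follows from the proof of Lemma~\ref{lem:ext-curve}---the tangent space of $\calH$ is one-dimensional---rather than from its statement.)
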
 

\begin{proof}
	The exact sequence 
	\begin{align}\label{eq:J-sequence}
		0\longrightarrow \calJ_k \longrightarrow \calJ_{k-1} \longrightarrow i_{\Sigma,\, \ast}\scrO_\Sigma\longrightarrow 0
	\end{align}
	yields the long exact sequence 
	\begin{align}\label{eq:loc-ext-sequence}
		\begin{tikzcd}[ampersand replacement=\&, row sep=tiny]
			0 \ar{r} \&  \calExt^0_Y(i_{\Sigma,\, \ast}\scrO_\Sigma,i_{\Sigma,\, \ast}\scrO_\Sigma) \ar{r}\& \calExt^0_Y(\calJ_{k-1}, i_{\Sigma,\, \ast}\scrO_\Sigma) \ar{r} \& \calExt^0_Y(\calJ_k, i_{\Sigma,\, \ast}\scrO_\Sigma)\\
			{} \ar{r}{\delta_0} \& \calExt^1_Y(i_{\Sigma,\, \ast}\scrO_\Sigma, i_{\Sigma,\, \ast}\scrO_\Sigma) 
			\ar{r} \& \calExt^1_Y(\calJ_{k-1}, i_{\Sigma,\, \ast}\scrO_\Sigma)\ar{r}\&\calExt^1_Y(\calJ_k,i_{\Sigma,\, \ast}\scrO_\Sigma)\\
			{} \ar{r}{\delta_1} \& \calExt^2_Y(i_{\Sigma,\, \ast}\scrO_\Sigma, i_{\Sigma,\, \ast}\scrO_\Sigma) \ar{r}\& 0
		\end{tikzcd}
	\end{align}
	for any $1\leq k\leq n$. Lemma~\ref{lem:loc-ext-A} shows that there are isomorphisms
	\begin{align}
		\calExt^1_Y(i_{\Sigma,\, \ast}\scrO_\Sigma, i_{\Sigma,\, \ast}\scrO_\Sigma)\simeq i_{\Sigma,\, \ast}\scrO_\Sigma\oplus i_{\Sigma,\, \ast}\scrO_\Sigma(-2)\quad\text{and} \quad \calExt^2_Y(i_{\Sigma,\, \ast}\scrO_\Sigma, i_{\Sigma,\, \ast}\scrO_\Sigma)\simeq i_{\Sigma,\, \ast}\scrO_\Sigma(-2)\ .
	\end{align}
	Next, it will be shown by induction that 
	\begin{align}\label{eq:J-iso}
		\calExt_Y^1(\calJ_k, i_{\Sigma,\, \ast}\scrO_\Sigma) \simeq i_{\Sigma,\, \ast}\scrO_\Sigma(-2) 
	\end{align}
	for all $1\leq k \leq n$. For $k=1$, the claim is proven in Lemma~\ref{lem:loc-ext-A}. Suppose 
	\begin{align}
		\calExt^1_Y(\calJ_{k-1}, i_{\Sigma,\, \ast}\scrO_\Sigma)\simeq i_{\Sigma,\, \ast}\scrO_\Sigma(-2)
	\end{align}
	for some $2\leq k \leq n$. The the exact sequence \eqref{eq:loc-ext-sequence} yields an exact sequence 
	\begin{align}	\label{eq:loc-ext-sequence-B}
		\begin{tikzcd}[ampersand replacement=\&, row sep=tiny]
			\cdots \ar{r} \& i_{\Sigma,\, \ast}\scrO_\Sigma \oplus i_{\Sigma,\, \ast}\scrO_\Sigma(-2) \ar{r}{f} \& i_{\Sigma,\, \ast}\scrO_\Sigma(-2) \&\\
			{} \ar{r}\& \calExt^1_Y(\calJ_k, i_{\Sigma,\, \ast}\scrO_\Sigma) \ar{r}{\delta} \& i_{\Sigma,\, \ast}\scrO_\Sigma(-2) \ar{r}\& 0
		\end{tikzcd}\ .
	\end{align}
	Since $\Hom_Y(i_{\Sigma,\, \ast}\scrO_\Sigma, i_{\Sigma,\, \ast}\scrO_\Sigma(-2))=0$, it follows that $\mathsf{Im}(f)$ is either $0$ or $i_{\Sigma,\, \ast}\scrO_\Sigma(-2)$. Hence $\mathsf{Coker}(f)$ is either $i_{\Sigma,\, \ast}\scrO_\Sigma(-2)$ or $0$ respectively. Since $\calExt^1_Y(\calJ_k, i_{\Sigma,\, \ast}\scrO_\Sigma)$ is a $\scrO_\Sigma$-module, this implies that 
	\begin{align}\label{eq:J-ext-sigma} 
		\calExt^1_Y(\calJ_k, i_{\Sigma,\, \ast}\scrO_\Sigma)\simeq W_k\otimes i_{\Sigma,\, \ast}\scrO_\Sigma(-2)
	\end{align}
	with $W_k$ a vector space. 
	Lemma~\ref{lem:hom-space} shows that there isomorphisms 
	\begin{align}\label{eq:J-ext-isomorphism}
		\calExt^{i-1}_Y(\calJ_k, i_{\Sigma,\, \ast}\scrO_\Sigma) \simeq \calExt^i_Y(i_{\Sigma,\, \ast}\scrO_{\Sigma_k}, i_{\Sigma,\, \ast}\scrO_\Sigma)
	\end{align}
	for all $1\leq k\leq n$ and all $i\geq 1$. By Lemma~\ref{lem:loc-ext-B}, one has 
	\begin{align}
		\calExt^2_Y(i_{\Sigma_n,\, \ast}\scrO_{\Sigma_n}, i_{\Sigma,\, \ast}\scrO_\Sigma)\simeq i_{\Sigma,\, \ast}\scrO_\Sigma(-2) \oplus (V_2 \otimes 
		i_{\Sigma,\, \ast}\scrO_\Sigma(-1)) \ .
	\end{align}
	Therefore $W_n \simeq \C$ and $V_2=0$, hence 
	\begin{align}
		\calExt^1_Y(\calJ_n, i_{\Sigma,\, \ast}\scrO_\Sigma)\simeq i_{\Sigma,\, \ast}\scrO_\Sigma(-2)\ .
	\end{align}
	
	By Lemma~\ref{lem:loc-ext-A}, also $\calExt^2_Y(i_{\Sigma,\, \ast}\scrO_\Sigma, i_{\Sigma,\, \ast}\scrO_\Sigma)\simeq i_{\Sigma,\, \ast}\scrO_\Sigma(-2)$, hence returning to the sequence \eqref{eq:loc-ext-sequence} with $k=n$, and using Lemma~\ref{lem:loc-ext-A} and equation \eqref{eq:J-ext-sigma}, one then obtains a surjective morphism
	\begin{align}
		i_{\Sigma,\, \ast}\scrO_\Sigma\oplus i_{\Sigma,\, \ast}\scrO_\Sigma(-2) \longrightarrow W_{n-1}\otimes i_{\Sigma,\, \ast}\scrO_{\Sigma}(-2)\longrightarrow 0 \ ,
	\end{align}
	where $W_{n-1}$ is a nonzero vector space. Since $\Hom_Y(i_{\Sigma,\, \ast}\scrO_\Sigma,i_{\Sigma,\, \ast}\scrO_\Sigma(-2))=0$, this implies that
	$W_{n-1} \simeq \C$. Proceeding by descending induction on $\ell$, a completely analogous argument shows that $W_\ell \simeq \C$ for all $k \leq \ell \leq n$. In particular this proves the inductive step. In conclusion, 
	\begin{align}
		\calExt_Y^1(\calJ_k, i_{\Sigma,\, \ast}\scrO_\Sigma) \simeq i_{\Sigma,\, \ast}\scrO_\Sigma(-2) 
	\end{align}
	for all $1\leq k \leq n$. 
	
	To prove the second part, note that the exact sequence \eqref{eq:loc-ext-sequence} reduces to an exact sequence  
	\begin{align}\label{eq:loc-ext-sequence-C}
		0\longrightarrow i_{\Sigma,\, \ast}\scrO_\Sigma \longrightarrow \calExt_Y^0(\calJ_{k-1}, i_{\Sigma,\, \ast}\scrO_\Sigma) \longrightarrow 
		\calExt_Y^0(\calJ_k, i_{\Sigma,\, \ast}\scrO_\Sigma)\longrightarrow i_{\Sigma,\, \ast}\scrO_\Sigma \longrightarrow  0
	\end{align}
	for any $1\leq k \leq n$. By induction, this implies 
	\begin{align}
		\ch_2(\calExt_Y^0(\calJ_n, i_{\Sigma,\, \ast}\scrO_\Sigma)) =\ch_2(\calExt_Y^0(\calI_\Sigma, i_{\Sigma,\, \ast}\scrO_\Sigma)) =
		2[\Sigma]\ . 
	\end{align}
	Using equation \eqref{eq:J-ext-isomorphism} and Lemma~\ref{lem:loc-ext-B}, this implies that 
	\begin{align}
		\calExt^0_Y(\calJ_n, i_{\Sigma,\, \ast}\scrO_\Sigma)\simeq  i_{\Sigma,\, \ast}\scrO_\Sigma(-1)\oplus i_{\Sigma,\, \ast}\scrO_\Sigma(-1)\ . 
	\end{align}
	Then the exact sequence \eqref{eq:loc-ext-sequence-C} yields an exact sequence
	\begin{align}
		0\longrightarrow i_{\Sigma,\, \ast}\scrO_\Sigma\longrightarrow \calExt^0_Y(\calJ_{n-1}, i_{\Sigma,\, \ast}\scrO_\Sigma)\longrightarrow i_{\Sigma,\, \ast}\scrO_\Sigma(-2)\longrightarrow 0\ .
	\end{align}
	Since the central term is by definition the pushforward of a $\scrO_\Sigma$-module, it follows that 
	\begin{align}
		\calExt^0_Y(\calJ_{n-1}, i_{\Sigma,\, \ast}\scrO_\Sigma) \simeq i_{\Sigma,\, \ast}\scrO_\Sigma\oplus i_{\Sigma,\, \ast}\scrO_\Sigma(-2)\ .
	\end{align}
	Then by descending induction on $k$, the same exact sequence yields 
	\begin{align}
		\calExt^0_Y(\calJ_k, i_{\Sigma,\, \ast}\scrO_\Sigma) \simeq i_{\Sigma,\, \ast}\scrO_\Sigma\oplus i_{\Sigma,\, \ast}\scrO_\Sigma(-2)
	\end{align}
	for all $1\leq k\leq n-1$. 
\end{proof}

\begin{proposition}\label{prop:J-tensor} 
	One has
	\begin{align}
		\calJ_k \otimes i_{\Sigma,\, \ast}\scrO_\Sigma \simeq \begin{cases} 
			i_{\Sigma,\, \ast}\scrO_\Sigma \oplus i_{\Sigma,\, \ast}\scrO_\Sigma(2) & \text{for }\ 1\leq k \leq n-1\ ,\\[4pt]
			i_{\Sigma,\, \ast}\scrO_\Sigma(1)\oplus i_{\Sigma,\, \ast}\scrO_\Sigma(1) & \text{for }\ k =n \ .
		\end{cases}
	\end{align}
\end{proposition}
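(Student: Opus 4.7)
The plan is to identify $\bar{\calF}_k := \calJ_k \otimes_{\scrO_Y} i_{\Sigma,\ast}\scrO_\Sigma$, viewed as a coherent sheaf on the smooth curve $\Sigma \simeq \PP^1$, by first pinning down its double dual and then ruling out torsion. Since $\calJ_k$ is the ideal of the purely one-dimensional scheme $\Sigma_k$, Lemma~\ref{lem:hom-dimension} shows it is perfect of tor-amplitude $[0,1]$. The derived adjunction between $\LL i_\Sigma^*$ and $i_{\Sigma,\ast}$ therefore gives
\[\R\calHom_Y(\calJ_k, i_{\Sigma, \ast}\scrO_\Sigma) \simeq i_{\Sigma,\ast}\R\calHom_\Sigma(\LL i_\Sigma^* \calJ_k, \scrO_\Sigma),\]
and taking degree-zero cohomology identifies $\calExt^0_Y(\calJ_k, i_{\Sigma,\ast}\scrO_\Sigma) \simeq i_{\Sigma,\ast}\calHom_\Sigma(\bar{\calF}_k, \scrO_\Sigma) = i_{\Sigma,\ast}\bar{\calF}_k^\vee$. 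Combined with Lemma~\ref{lem:loc-ext-C} this determines $\bar{\calF}_k^\vee$ explicitly, and double-dualizing on $\Sigma$ produces the candidate reflexive hull $\calL := \bar{\calF}_k^{\vee\vee}$ equal to $\scrO_\Sigma \oplus \scrO_\Sigma(2)$ for $1 \leq k \leq n-1$ and to $\scrO_\Sigma(1)^{\oplus 2}$ for $k=n$.

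It remains to verify that $\bar{\calF}_k$ has no torsion, so that the natural map $\bar{\calF}_k \to \calL$ is an isomorphism. Setting $\calT_k := \Tor_1^{\scrO_Y}(\scrO_\Sigma, \calJ_k) = \calH^{-1}(\LL i_\Sigma^* \calJ_k)$, the two-term complex $\LL i_\Sigma^* \calJ_k$ fits in a distinguished triangle $\calT_k[1] \to \LL i_\Sigma^* \calJ_k \to \bar{\calF}_k$ on $\Sigma$. Applying $\R\calHom_\Sigma(-, \scrO_\Sigma)$ and extracting degree-one cohomology --- where the adjunction combined with Lemma~\ref{lem:loc-ext-C} once more identifies $\calExt^1_\Sigma(\LL i_\Sigma^*\calJ_k, \scrO_\Sigma) \simeq \scrO_\Sigma(-2)$ --- produces the exact sequence
\[0 \longrightarrow \calExt^1_\Sigma(\bar{\calF}_k, \scrO_\Sigma) \longrightarrow \scrO_\Sigma(-2) \longrightarrow \calT_k^\vee.\]

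The decisive observation is that on the smooth curve $\Sigma$ the functor $\calExt^1_\Sigma(-, \scrO_\Sigma)$ vanishes on locally free sheaves and is torsion when applied to torsion sheaves, so $\calExt^1_\Sigma(\bar{\calF}_k, \scrO_\Sigma)$ is itself a torsion sheaf. Since it injects into the line bundle $\scrO_\Sigma(-2)$, which is torsion-free, it must vanish. This forces $\bar{\calF}_k$ to be locally free on $\Sigma$, and combined with the first step we conclude $\bar{\calF}_k \simeq \calL$, as claimed.

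The main technical care I anticipate is the bookkeeping with derived pullback and adjunction --- in particular, ensuring that the two degree-one Ext identifications needed (for the dual in the first step, and for the middle term of the exact sequence in the second step) really both come from the same computation of Lemma~\ref{lem:loc-ext-C} and are compatible with the adjunction isomorphism.
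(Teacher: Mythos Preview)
Your proof is correct and follows essentially the same approach as the paper's: both use that $\calJ_k$ has tor-amplitude $[0,1]$ (Lemma~\ref{lem:hom-dimension}), identify the derived restriction/Ext complexes as dual two-term complexes of vector bundles on $\Sigma$, and read off the answer from Lemma~\ref{lem:loc-ext-C}. The only stylistic difference is in how local freeness of $\bar\calF_k$ is extracted: the paper observes directly that since both $\calExt^0_Y(\calJ_k,i_{\Sigma,\ast}\scrO_\Sigma)$ and $\calExt^1_Y(\calJ_k,i_{\Sigma,\ast}\scrO_\Sigma)$ are locally free on $\Sigma$, the two-term complex $\calV_0^\vee|_\Sigma\to\calV_{-1}^\vee|_\Sigma$ splits, hence so does its dual $\calV_{-1}|_\Sigma\to\calV_0|_\Sigma$, giving $\calJ_k\otimes\scrO_\Sigma\simeq\calExt^0_Y(\calJ_k,i_{\Sigma,\ast}\scrO_\Sigma)^*$ and $\Tor_1\simeq\calExt^1_Y(\calJ_k,i_{\Sigma,\ast}\scrO_\Sigma)^*$ in one stroke; you instead argue that $\calExt^1_\Sigma(\bar\calF_k,\scrO_\Sigma)$ is torsion yet injects into the line bundle $\scrO_\Sigma(-2)$, hence vanishes. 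Both deductions are valid and use the same input.
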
 

\begin{proof}
	By Lemma~\ref{lem:hom-dimension}, the ideal sheaf $\calJ_k$  has a locally free resolution 
	\begin{align}
		\begin{tikzcd}[ampersand replacement=\&]
			\calV_{-1} \ar{r}{d}\& \calV_0
		\end{tikzcd}
	\end{align}
	of amplitude $[-1, \ 0]$. Then $\calJ_k \otimes^\LL i_{\Sigma,\, \ast}\scrO_\Sigma$ is isomorphic to the pushforward of the locally free  complex 
	\begin{align}\label{eq:tor-cpx-A}
		\begin{tikzcd}[ampersand replacement=\&]
			\calV_{-1}\vert_\Sigma \ar{r}{d\vert_\Sigma}\& \calV_0\vert_\Sigma 
		\end{tikzcd}
	\end{align}
	on $\Sigma$. Moreover, $\R \calHom_Y(\calJ_k, i_{\Sigma,\, \ast}\scrO_\Sigma)$ is isomorphic to the pushforward of the locally free complex of amplitude $[0, \ 1]$ 
	\begin{align}\label{eq:ext-cpx-A}
		\begin{tikzcd}[ampersand replacement=\&]
			\calV_0^\vee\vert_\Sigma \ar{r}{d^\vee\vert_\Sigma} \& \calV_{-1}^\vee\vert_\Sigma 
		\end{tikzcd}
	\end{align}
	on $\Sigma$. This complex is isomorphic to the dual of \eqref{eq:tor-cpx-A}, i.e., 
	\begin{align}\label{eq:ex-cpx-B} 
		\begin{tikzcd}[ampersand replacement=\&]
			(\calV_0\vert_\Sigma)^\ast \ar{r}{(d\vert_\Sigma)^\ast} \& (\calV_{-1}\vert_\Sigma)^\ast
		\end{tikzcd}
	\end{align}
	where $(-)^\ast$ denotes the dual on $\Sigma$. By Lemma~\ref{lem:loc-ext-C}, the cohomology groups of the complex \eqref{eq:ex-cpx-B}
	\begin{align}
		\calExt^0_Y(\calJ_k, i_{\Sigma,\, \ast}\scrO_\Sigma) \quad\text{and}\quad \calExt^1_Y(\calJ_k, i_{\Sigma,\, \ast}\scrO_\Sigma) 
	\end{align}
	are locally free as $\scrO_\Sigma$-modules. This implies that the cohomology groups of the complex \eqref{eq:tor-cpx-A} are 
	\begin{align}
		\calJ_k \otimes i_{\Sigma,\,\ast}\scrO_\Sigma &\simeq \calExt^0_Y(\calJ_k, i_{\Sigma,\, \ast}\scrO_\Sigma)^\ast\ ,\\
		\calT or_1^Y(\calJ_k, i_{\Sigma,\, \ast}\scrO_\Sigma) &\simeq \calExt^1_Y(\calJ_k, i_{\Sigma,\, \ast}\scrO_\Sigma)^\ast \ .
	\end{align}
	Then the claim follows from  Lemma~\ref{lem:loc-ext-C}.
\end{proof}

We are ready to prove:
\begin{proof}[Proof of the existence part of Theorem~\ref{thm:filtration-Appendix}]
	The existence of the chain of subschemes is proved in Propositions~\ref{prop:step-two} and \ref{prop:inductive-step}.
	
	The second isomorphism in the first row of \eqref{eq:ideal-filtration-B-Appendix} is clear by construction. The first isomorphism in the second row of \eqref{eq:ideal-filtration-B-Appendix} and \eqref{eq:ideal-filtration-C-Appendix} follow from Proposition~\ref{prop:J-tensor} via the exact sequence 
	\begin{align}
		0\longrightarrow \calI_\Sigma \calJ_k \longrightarrow \calJ_k \longrightarrow \calJ_k \otimes i_{\Sigma,\, \ast}\scrO_\Sigma \longrightarrow 0\ .
	\end{align}
	In particular, these isomorphisms imply 
	\begin{align}
		\Hom_Y(\calJ_k, i_{\Sigma,\, \ast}\scrO_\Sigma) \simeq \C\ ,
	\end{align}
	for $1\leq k \leq n-1$.
	
	It remains to show that the first isomorphism of the first row and the second isomorphism of the second row of \eqref{eq:ideal-filtration-B-Appendix} hold. One has a commutative diagram
	\begin{align}
		\begin{tikzcd}[ampersand replacement=\&]
			0\ar[r] \&  \calI_\Sigma \calJ_{k-1}\ar[r] \& \calJ_{k-1}\ar[r] \ar{d}{\id} \& i_{\Sigma,\, \ast}\scrO_\Sigma\oplus i_{\Sigma,\, \ast}\scrO_\Sigma(2)  \ar[r]\ar{d}{p} \& 0\\
			0\ar[r] \&  \calJ_k \ar[r] \& \calJ_{k-1}\ar[r] \& i_{\Sigma,\, \ast}\scrO_\Sigma \ar[r] \& 0
		\end{tikzcd}\ ,
	\end{align}
	where $p$ is surjective and $\ker(p)\simeq i_{\Sigma,\, \ast}\scrO_\Sigma(2)$.  This implies that $\calI_\Sigma \calJ_{k-1}\subset \calJ_k$ and $\calJ_k/\calI_\Sigma\calJ_{k-1} \simeq i_{\Sigma,\, \ast}\scrO_\Sigma(2)$ by the snake lemma. 
\end{proof}

We prove now the uniqueness part of the statement in Theorem~\ref{thm:filtration-Appendix}. We need first the following lemma.
\begin{lemma}\label{lem:rigid-B} 
	We have 
	\begin{align}\label{eq:j-toc-one} 
		\Hom_Y(\calJ_k, i_{\Sigma,\, \ast}\scrO_\Sigma) \simeq \C\ ,
	\end{align}
	for $1\leq k \leq n-1$, and 
	\begin{align}\label{eq:j-toc-two} 
		\Hom_Y(\calJ_n, i_{\Sigma,\, \ast}\scrO_\Sigma) = 0\ ,
	\end{align}
	for a filtration of the form \eqref{eq:ideal-filtration-A-Appendix} satisfying conditions \eqref{eq:ideal-filtration-B-Appendix} and \eqref{eq:ideal-filtration-C-Appendix}.
\end{lemma}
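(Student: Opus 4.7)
The plan is to reduce the computation of $\Hom_Y(\calJ_k, i_{\Sigma,\, \ast}\scrO_\Sigma)$ to a computation on $\Sigma$, using the fact that the target sheaf is annihilated by $\calI_\Sigma$, and then read off the answer from the structure results \eqref{eq:ideal-filtration-B-Appendix} and \eqref{eq:ideal-filtration-C-Appendix}.

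First I would observe that any $\scrO_Y$-linear morphism $\varphi\colon \calJ_k \to i_{\Sigma,\, \ast}\scrO_\Sigma$ necessarily satisfies $\varphi(\calI_\Sigma \calJ_k) = 0$, since the target is annihilated by $\calI_\Sigma$. Hence the natural map
\begin{align}
\Hom_Y(\calJ_k/\calI_\Sigma\calJ_k, i_{\Sigma,\, \ast}\scrO_\Sigma) \longrightarrow \Hom_Y(\calJ_k, i_{\Sigma,\, \ast}\scrO_\Sigma)
\end{align}
is an isomorphism. Both source and target are $\scrO_\Sigma$-modules (being quotients/modules annihilated by $\calI_\Sigma$), so applying the adjunction for the closed immersion $i_\Sigma\colon \Sigma \hookrightarrow Y$ yields
\begin{align}
\Hom_Y(\calJ_k, i_{\Sigma,\, \ast}\scrO_\Sigma) \simeq \Hom_\Sigma\bigl(i_\Sigma^\ast(\calJ_k/\calI_\Sigma\calJ_k),\ \scrO_\Sigma\bigr)\ .
\end{align}

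Next I would plug in the descriptions from Theorem~\ref{thm:filtration-Appendix}. For $1\leq k \leq n-1$, the assumption \eqref{eq:ideal-filtration-B-Appendix} gives $\calJ_k/\calI_\Sigma\calJ_k \simeq i_{\Sigma,\, \ast}\scrO_\Sigma \oplus i_{\Sigma,\, \ast}\scrO_\Sigma(2)$, so
\begin{align}
\Hom_Y(\calJ_k, i_{\Sigma,\, \ast}\scrO_\Sigma) \simeq \Hom_\Sigma(\scrO_\Sigma, \scrO_\Sigma) \oplus \Hom_\Sigma(\scrO_\Sigma(2), \scrO_\Sigma) \simeq \C \oplus 0 \simeq \C \ ,
\end{align}
since $\Hom_{\PP^1}(\scrO(2), \scrO) = H^0(\PP^1, \scrO(-2)) = 0$. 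For $k=n$, the assumption \eqref{eq:ideal-filtration-C-Appendix} gives $\calJ_n/\calI_\Sigma\calJ_n \simeq i_{\Sigma,\, \ast}\scrO_\Sigma(1)^{\oplus 2}$, so
\begin{align}
\Hom_Y(\calJ_n, i_{\Sigma,\, \ast}\scrO_\Sigma) \simeq \Hom_\Sigma(\scrO_\Sigma(1), \scrO_\Sigma)^{\oplus 2} = 0 \ ,
\end{align}
since $H^0(\PP^1, \scrO(-1)) = 0$. This establishes both \eqref{eq:j-toc-one} and \eqref{eq:j-toc-two}.

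There is no real obstacle here: the content of the lemma is entirely encoded in the structural isomorphisms for $\calJ_k/\calI_\Sigma\calJ_k$ provided by the filtration hypothesis together with the vanishing of global sections of negative line bundles on $\PP^1$. The only small subtlety worth double-checking is that the identification $\Hom_Y(-, i_{\Sigma,\, \ast}\scrO_\Sigma) \simeq \Hom_\Sigma(i_\Sigma^\ast(-), \scrO_\Sigma)$ applies to $\calJ_k/\calI_\Sigma\calJ_k$, which is immediate since this quotient is scheme-theoretically supported on $\Sigma$ and thus equals $i_{\Sigma,\, \ast} i_\Sigma^\ast (\calJ_k/\calI_\Sigma\calJ_k)$.
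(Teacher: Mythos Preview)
Your proof is correct. For $k=1$ and $k=n$ it coincides with the paper's argument: the paper also reduces to $i_\Sigma^\ast\calJ_k \simeq \calJ_k/\calI_\Sigma\calJ_k$ and reads off the answer from the hypotheses \eqref{eq:ideal-filtration-B-Appendix} and \eqref{eq:ideal-filtration-C-Appendix}. For the intermediate cases $2\le k\le n-1$, however, the paper takes a different route: it proceeds by induction on $k$, using the short exact sequences $0\to\calJ_{k+1}\to\calJ_k\to i_{\Sigma,\ast}\scrO_\Sigma\to 0$ and the resulting long exact sequence in $\Hom_Y(-,i_{\Sigma,\ast}\scrO_\Sigma)$, together with the computation $\Ext^1_Y(i_{\Sigma,\ast}\scrO_\Sigma,i_{\Sigma,\ast}\scrO_\Sigma)\simeq\C$, to bound $\dim\Hom_Y(\calJ_{k+1},i_{\Sigma,\ast}\scrO_\Sigma)\le 1$ and then conclude equality from the existence of the nonzero surjection $\calJ_{k+1}\twoheadrightarrow i_{\Sigma,\ast}\scrO_\Sigma$. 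Your argument bypasses this induction by invoking the full strength of \eqref{eq:ideal-filtration-B-Appendix} (namely the description of $\calJ_k/\calI_\Sigma\calJ_k$, not just of $\calJ_k/\calJ_{k+1}$) uniformly for all $k$. This is more economical; the paper's inductive argument uses a weaker part of the hypothesis but delivers the auxiliary identity $\Ext^1_Y(i_{\Sigma,\ast}\scrO_\Sigma,i_{\Sigma,\ast}\scrO_\Sigma)\simeq\C$ as a by-product, which, however, is not needed in the subsequent uniqueness proof.
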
 

\begin{proof}
	We shall prove Formula~\eqref{eq:j-toc-one} by using induction on $k$. Since $\Sigma\simeq \PP^1$ and $i_{\Sigma}^\ast\calJ_1\simeq \scrO_\Sigma\oplus \scrO_\Sigma(2)$ by assumption, the claim clearly holds for $k=1$. By applying $\Hom_Y(-, i_{\Sigma,\, \ast}\scrO_\Sigma)$ to the canonical exact sequence 
	\begin{align}
		0\longrightarrow \calJ_1\longrightarrow \scrO_Y \longrightarrow i_{\Sigma,\, \ast}\scrO_\Sigma\longrightarrow 0\ ,
	\end{align}
	we obtain the long exact sequence 
	\begin{multline}
		0\longrightarrow \Hom_{\Sigma^-}(\scrO_\Sigma, \scrO_\Sigma) \longrightarrow \Hom_Y(\scrO_Y, i_{\Sigma,\, \ast}\scrO_\Sigma) \longrightarrow \Hom_Y(\calJ_1, i_{\Sigma,\, \ast}\scrO_\Sigma)\\ 
		\longrightarrow \Ext^1_Y(i_{\Sigma,\, \ast}\scrO_\Sigma, i_{\Sigma,\, \ast}\scrO_\Sigma) \longrightarrow \Ext^1_Y(\scrO_Y, i_{\Sigma,\, \ast}\scrO_\Sigma)\longrightarrow \cdots 
	\end{multline}
	Since $\Sigma \simeq \PP^1$, this exact sequence yields 
	an isomorphism 
	\begin{align}
		\Ext^1_Y(i_{\Sigma,\, \ast}\scrO_\Sigma, i_{\Sigma,\, \ast}\scrO_\Sigma)\simeq \Hom_Y(\calJ_1, i_{\Sigma,\, \ast}\scrO_\Sigma)\ ,
	\end{align}
	hence 
	\begin{align}\label{eq:curve-ext} 
		\Ext^1_Y(i_{\Sigma,\, \ast}\scrO_\Sigma, i_{\Sigma,\, \ast}\scrO_\Sigma)\simeq\C\ .
	\end{align}
	In order to prove the inductive step, we apply the functor $\Hom_Y(-, i_{\Sigma,\, \ast}\scrO_\Sigma)$ to the short exact sequence 
	\begin{align}
		0\longrightarrow \calJ_{k+1}\longrightarrow \calJ_k\longrightarrow i_{\Sigma,\, \ast}\scrO_\Sigma \longrightarrow 0\ ,  
	\end{align}
	for $1\leq k \leq n-1$, yielding the long exact sequence 
	\begin{multline}
		0\longrightarrow \Hom_{\Sigma^-}(\scrO_\Sigma, \scrO_\Sigma) \longrightarrow \Hom_Y(\calJ_k, i_{\Sigma,\, \ast}\scrO_\Sigma)\\
		\longrightarrow \Hom_Y(\calJ_{k+1}, i_{\Sigma,\, \ast}\scrO_\Sigma) \longrightarrow \Ext^1_Y(i_{\Sigma,\, \ast}\scrO_\Sigma, i_{\Sigma,\, \ast}\scrO_\Sigma) \longrightarrow \cdots 
	\end{multline}
	Using the inductive hypothesis and Equation~\eqref{eq:curve-ext}, this exact sequence reduces to 
	\begin{align}
		0 \longrightarrow \Hom_Y(\calJ_{k+1}, i_{\Sigma,\, \ast}\scrO_\Sigma) \longrightarrow \C \longrightarrow \cdots 
	\end{align}
	If $k \leq n-2$, one must have $\Hom_Y(\calJ_{k+1}, i_{\Sigma,\, \ast}\scrO_\Sigma)\simeq \C$, since, by assumption, there is an exact sequence 
	\begin{align}
		0\longrightarrow \calJ_{k+2} \longrightarrow \calJ_{k+1} \longrightarrow i_{\Sigma,\, \ast}\scrO_\Sigma \longrightarrow 0\ ,
	\end{align}
	hence a nontrivial morphism $\calJ_{k+1} \longrightarrow i_{\Sigma,\, \ast}\scrO_\Sigma$. This proves the inductive step. 
	
	\medskip
	
	In order to prove \eqref{eq:j-toc-two}, note that $i_{\Sigma}^\ast(\calJ_n/\calI_\Sigma \calJ_n) \simeq i_{\Sigma}^\ast \calJ_n$, hence  the claim follows from condition \eqref{eq:ideal-filtration-C-Appendix}. 
\end{proof}

\begin{proof}[Proof of the uniqueness part of Theorem~\ref{thm:filtration-Appendix}]
	The proof is inductive. Let $\{\calJ_n\}$ and $\{\calJ'_{n'}\}$ be two filtrations satisfying the conditions \eqref{eq:ideal-filtration-B-Appendix} and \eqref{eq:ideal-filtration-C-Appendix}. By construction, $\calJ_1$ and $\calJ_1'$ coincide with $\calI_\Sigma$ as ideal sheaves in $\scrO_Y$. Using Formula~\eqref{eq:j-toc-one} one obtains a commutative diagram 
	\begin{align}
		\begin{tikzcd}[ampersand replacement=\&]
			0\ar[r] \&  \calJ_2 \ar[r] \ar[d] \& \calI_\Sigma \ar[r] \ar{d}{\id}\& i_{\Sigma,\, \ast}\scrO_\Sigma \ar{d}{\lambda \id} \ar[r]  \& 0\\
			0\ar[r] \&  \calJ'_2 \ar[r] \& \calI_\Sigma \ar[r] \& i_{\Sigma,\, \ast}\scrO_\Sigma \ar[r] \& 0 
		\end{tikzcd}\ ,
	\end{align}
	for some $\lambda\in \C^\times$. Then the snake lemma implies that the induced morphism $\calJ_2\to \calJ_2'$ is the identity. By reasoning in a analogous way, one can prove the inductive step. 
\end{proof}

\section{Proof of Theorem~\ref{thm:C-framed-identity}}\label{sec:proof-C-framed-identity}

In this section, we shall prove Theorem~\ref{thm:C-framed-identity}. 

\subsection{Motivic Hall algebras}\label{subsec:motHall} 

Introduced in \cite{Config-II}, motivic Hall algebras constitute the foundational framework for wallcrossing phenomena in Donaldson-Thomas theory \cite{Mot-DT, Gen-DT}. This section is a very brief review following the treatment of \cite{Bridgeland-motivic-Hall, Bridgeland-Curve-Counting}, aiming to explain how the general theory applies to the current framework.  

Let $K(\mathsf{St}/\C)$ be the Grothendieck ring of motivic equivalence classes of algebraic stacks of finite type over $\C$ with affine stabilizers. Let $K(\mathsf{Sch}/\C)$ be Grothendieck ring of motivic equivalence classes of schemes of finite type over $\C$. Let $\LL\in K(\mathsf{Sch}/\C)$ be the motivic class of $\A^1$.

Let $\scrC$ denote the abelian category $\catCoh(Y)$ and let $\scrC_{\leqslant 1}$ denote the full subcategory category $\catCoh_{\leqslant 1}(Y)$. As a complex vector space, the motivic Hall algebra $H(\scrC_{\leqslant 1})$ is spanned by all motivic equivalence classes of pairs $(\calX, f)$ where $\calX$ is an algebraic stack of finite type over $\C$ with affine stabilizers, and $f\colon \calX\to \Coh_{\leqslant 1}(Y)$ is a morphism of stacks. The algebra structure is defined by the natural convolution product, $\ast \colon H(\scrC_{\leqslant 1}) \otimes H(\scrC_{\leqslant 1}) \to H(\scrC_{\leqslant 1})$  induced by the diagram 
\begin{align}
	\begin{tikzcd}[ampersand replacement=\&]
		\Coh^{\mathsf{ext}}(\scrC_{\leqslant 1})\ar[d] \ar[r] \&  \Coh(\scrC_{\leqslant 1})\\
		\Coh( \scrC_{\leqslant 1})\times \Coh(\scrC_{\leqslant 1}  )   \& 
	\end{tikzcd}\ ,
\end{align}
where the top left corner is the moduli stack of three term exact sequences in $\scrC_{\leq 1}$. In addition, $H(\scrC_{\leqslant 1})$ has a natural $K(\mathsf{St}/\C)$-algebra structure defined by the assignment 
\begin{align}
	[\calX, f]\times [\calY] \longmapsto [\calX\times \calY, f \circ p_\calX] 
\end{align}
where $p_\calX\colon \calX\times \calY \to \calX$ is the canonical projection. We refer to \cite[\S4.1]{Bridgeland-Curve-Counting} for a detailed construction. For future reference, we also note that the above Hall algebra admits an infinite type variant, $H_\infty(\scrC_{\leqslant 1})$, spanned by motivic equivalence classes of pairs $(\calX, f)$ where $\calX$ is an algebraic stack, locally of finite type, with affine stabilizers. 

Let $\Delta \subset N_1(Y) \oplus \Z$ be the effective cone consisting of pairs $(\beta, n)$ so that either $\beta$ is a nonzero effective curve class or $\beta=0$ and $n\geq 0$. Note that the Hall algebras $H(\scrC_{\leqslant 1})$ and $H_\infty(\scrC_{\leqslant 1})$ have a natural $\Delta$-grading induced by the decomposition 
\begin{align}
	\Coh(\scrC_{\leqslant 1})= \bigsqcup_{\alpha \in \Delta}  \Coh(\scrC_{\leqslant 1}; \alpha)\ .
\end{align}

The main results of \cite{Bridgeland-Curve-Counting}, \cite{BS-Curve-counting-crepant}, and \cite{Calabrese-DT-Flops} are proven using a certain completion of the Hall algebra $H(\scrC_{\leqslant 1})$ defined in terms of Laurent subsets $S\subset \Delta$. By definition, a subset $S\subset \Delta$ is called Laurent if for any $\beta \in N_1(Y)$ so that $(\{\beta\}\times \Z) \cap S \neq \emptyset$ the set $\{ n \in \Z\, \vert\, (\beta, n) \in S\}$ is bounded below. Moreover, any $\Delta$-graded ring 
\begin{align}
	R = \bigoplus_{\alpha \in \Delta } R_\alpha 
\end{align}
admits a completion $R_\Phi$ consisting of formal sums 
\begin{align}
	\sum_{\alpha \in S} r_\alpha
\end{align}
where $r_\alpha \in R_\alpha$ and $S$ is a Laurent subset of $\Delta$. As shown in \cite[\S5.2]{Bridgeland-Curve-Counting}, the completion $R_\Phi$ has a natural topological ring structure. In particular, this construction yields a completion $H(\scrC_{\leqslant 1})_\Phi$, which has a natural $K(\mathsf{St}/\C)$-algebra structure. 

We next review the construction of the integration map, which is a central element in the wallcrossing theory of Donaldson-Thomas invariants. This is defined on a certain subquotient of $H(\scrC_{\leqslant 1})$ and extends by completion to a subquotient of $H(\scrC_{\leqslant 1})_\Phi$. Following \cite[\S5.1]{Bridgeland-motivic-Hall}, let $H_{\mathsf{reg}}(\scrC_{\leqslant 1})\subset H(\scrC_{\leqslant 1})$ be $K(\mathsf{Sch}/\C)[\LL^{-1}]$-submodule spanned by equivalence classes $[\calX, f]$ with $\calX$ a scheme of finite type. Then let 
\begin{align}\label{eq:scHall}
	H_{\mathsf{sc}}(\scrC_{\leqslant 1}) = H_{\mathsf{reg}}(\scrC_{\leqslant 1})/ (\LL-1) H_{\mathsf{reg}}(\scrC_{\leqslant 1})\ .
\end{align}
Then \cite[Theorem~5.1]{Bridgeland-motivic-Hall} shows that the Hall product $\ast$ induces a 
commutative algebra structure, as well as a Poisson bracket 
\begin{align}
	\{ [\calX, f],\, [\calY,g]\} = \frac{[\calX, f]\ast [\calY,g]-[\calY,g]\ast [\calX, f]}{\LL-1} \, \bmod (\LL-1)
\end{align}
on the quotient \eqref{eq:scHall}. This is by definition the semi-classical Hall algebra associated to the abelian category $\scrC_{\leqslant 1}$. 

Next, let $\C[\Delta]$ be the $\C$-vector space with basis $x^\alpha$, $\alpha \in \Delta$. Note that the assignment 
\begin{align}
	(x^\alpha_1, \ x^{\alpha_2}) \longmapsto x^{\alpha_1+\alpha_2} 
\end{align}
defines a commutative algebra structure on $\C[\Delta]$ so that the associated Poisson bracket is trivial. Then, by \cite[Theorem~5.2]{Bridgeland-motivic-Hall}, one has the following.
\begin{theorem}\label{thm:integrationA} 
	Suppose $Y$ is $K$-trivial. There exists a unique homomorphism $I \colon H_{\mathsf{sc}}(\scrC_{\leqslant 1}) \to \C[\Delta]$ so that 
	\begin{align}\label{eq:intmapA}
		I([\calX, f]) = \chi(\calX) \cdot x^\alpha 
	\end{align}
	for any stack function $[\calX, f]\in H_{\mathsf{reg}}((\scrC_{\leqslant 1})$ of degree $\alpha \in \Delta$, where $\chi(\calX)$ is the topological Euler characteristic of $\calX$. 
\end{theorem}

\begin{remark}\label{rmk:CYcondition} 
	Note that the only way $K$-triviality is used in the proof of  \cite[Theorem~5.2]{Bridgeland-motivic-Hall} is through Serre duality, which implies identity 
	\begin{align}
		\chi(\calE,\, \calF) = \dim\, \Ext^0_Y(\calE,\calF) - \dim\, \Ext^1_Y(\calE,\calF) - \left(\dim\, \Ext^0_Y(\calF,\calE) - \dim\, \Ext^1_Y(\calF,\calE) \right)
	\end{align}
	for any coherent sheaves $\calE, \calF$ on $Y$. 
\end{remark}

Although  $Y$ is not $K$-trivial in the present context, Theorem~\ref{thm:integrationA} admits a weaker version which suffices for our purposes. Note that one still has a unique linear map $I\colon H_{\mathsf{sc}}(\scrC_{\leqslant 1}) \to \C[\Delta]$ satisfying condition \eqref{eq:intmapA}, which is not in general a homomorphism of Poisson algebras. However, given Corollary~\ref{cor:pairing}, in complete analogy with \cite[Theorem~5.2]{Bridgeland-motivic-Hall}, one has:
\begin{proposition}\label{prop:integrationB} 
	Let $[\calX_i,f_i]$, $1\leq i \leq 2$ be homogeneous elements of $H_{\mathsf{reg}}(\scrC_{\leqslant 1})$ of degrees $\alpha_i \in \Delta$, $1\leq i \leq 2$. Suppose that either $\alpha_1$ or $\alpha_2$ belongs to $\N[\Sigma]\times \Z$. Then 
	\begin{align}\label{eq:intmapB} 
 		I(\{ [\calX_1,f_1],\ [\calX_2,f_2]  \}) = 0\ .
	 \end{align} 
\end{proposition}

Finally, as in \cite[\S5.3]{Bridgeland-Curve-Counting}, note that the construction of the semi-classical Hall algebra and the integration map carries over to the completion $H(\scrC_{\leqslant 1})_\Phi$. Hence, for $K$-trivial threefolds, one obtains a Poisson algebra homomorphism 
\begin{align}
	I_\Phi \colon H_{\mathsf{sc}}(\scrC_{\leqslant 1})_\Phi\longrightarrow \C[\Delta]_\Phi\ .
\end{align}
In the present context, this map is not a Poisson algebra homomorphism, it satisfies the identity 
\begin{align}
	I_\Phi(\{\xi_1,\, \xi_2\}) = 0 
\end{align}
for any elements 
\begin{align}
	\xi_i = \sum_{\alpha\in S_i} \xi_{i, \alpha} \in H_{\mathsf{reg}}(\scrC_{\leqslant 1})_\Phi
\end{align}
provided that either $S_1$ or $S_2$ is contained in $\N[\Sigma] \times \Z$. 

In conclusion, note that proofs of the wallcrossing identities in \cite{Bridgeland-Curve-Counting}, \cite{BS-Curve-counting-crepant}, and \cite{Calabrese-DT-Flops} employ the following strategy. First, the logical structure of the proof is delineated through wallcrossing identities in the Hall algebra $H_{\infty}(\scrC_{\leqslant 1})$ of infinite type. Next, it is shown that each of these identities yields, by a suitable truncation, an identity in the  completion $H(\scrC_{\leqslant 1})_\Phi$. The final step converts the resulting Laurent stack function identities into enumerative wallcrossing formulas through the integration map $I_\Phi$. We will apply the same strategy in the proof of Theorem~\ref{thm:C-framed-identity} below, using Proposition \ref{prop:integrationB} for the required properties of the integration map. 

\subsection{$C$-framed stack functions}\label{subsec:Cframedsf} 

The purpose of this section is to construct the stack functions needed in the proof of Theorem~\ref{thm:C-framed-identity}, using $C$-framed conditions as in \ref{def:C-framed-pair}. 

\subsubsection{Subcategories, pairs, and moduli stacks}\label{subsubsec:cat-mod} 

\begin{definition}\label{def:algebraic-sub-category}
	We say that a full subcategory $\scrB\subset \scrC_{\leqslant 1}$ is \textit{algebraic} if flat families of objects of $\scrB$ form an algebraic substack $\Coh(\scrB)\subset\Coh_{\leqslant 1}(Y)$, locally of finite type over $\C$. 
\end{definition} 
Examples include
\begin{align}
	\scrC_\Sigma\ , \quad \scrT_f\ ,\quad \scrF_f\cap \catCoh_{\leqslant 1}(Y)\ ,
\end{align}
where $\scrC_\Sigma\subset \scrC_{\leqslant 1}$ is the full abelian subcategory consisting of coherent sheaves $\calF$ defined by the condition $\ch_2(\calF) \in \N[\Sigma]$. Note that $\Coh(\scrT_f)$ and $\Coh_{\leqslant 1}(\scrF_f)$ are defined as in \cite[Construction~3.17]{DPS-torsion-pairs} and they are algebraic since the torsion pair $(\scrT_f, \scrF_f)$ is open by Corollary~\ref{cor:open-torsion-pair}.

Fixing topological invariants $\alpha\in N_{\geqslant 1}(Y)$, one obtains an open and closed substack $\Coh(\scrB;\alpha)\subset \Coh(\scrB)$. Similarly, fixing a curve class $\beta \in N_1(Y)$, one obtains an open and closed substack $\Coh(\scrB;\beta)\subset \Coh(\scrB)$ consisting of flat families of objects $\calF\in \scrB$ with $\ch_2(\calF)=\beta$. 

Furthermore, let $\Coh^\scrO_{\leqslant 1}(Y)$ be the moduli stack of pairs $(\calF,s)$, with $s\colon\scrO_Y\to \calF$ an arbitrary section. Given an algebraic full subcategory 
$\scrB \subset \scrC_{\leq 1}$, let  $\Coh^\scrO(\scrB)$ be the substack defined by the fiber product 
\begin{align}
	\begin{tikzcd}[ampersand replacement=\&]
		\Coh^\scrO(\scrB)\ar[r] \ar[d] \& \Coh^\scrO_{\leqslant 1}(Y)\ar[d] \\
		\Coh(\scrB)\ar[r] \& \Coh_{\leqslant 1}(Y) 
	\end{tikzcd}\ ,
\end{align}
where the vertical arrows are the natural forgetful morphisms. The stack $\Coh^\scrO_{\leqslant 1}(Y)$ is algebraic, locally of finite type over $\C$, and the projection to $\Coh_{\leqslant 1}(Y) $ is representable, of finite type\footnote{All these statements can be proved as in \cite[Lemma~2.4]{Bridgeland-Curve-Counting} or by using our derived approach via the derived stack $\derivedPerf(Y)$ of perfect complexes on $Y$. Both approaches work also to prove similar geometric properties of all the other stacks introduced in this section.}. Then, the same holds for $\Coh^\scrO(\scrB)$ and the projection to $\Coh(\scrB)$. The associated stack functions will be denoted by 
\begin{align}
	\mathbf{1}_\scrB\ ,\ \mathbf{1}^\scrO_\scrB\in H_{\infty}(\scrC_{\leqslant 1})\ ,
\end{align}
respectively. 

Furthermore, for any algebraic full subcategory $\scrB \subset \scrC_{\leqslant 1}$ 
let $\scrB_\Sigma \coloneqq \scrB \cap \scrC_\Sigma$ and set
\begin{align}
	\Coh(\scrB_\Sigma)\coloneqq \Coh(\scrB)\times_{\Coh_{\leqslant 1}(Y)} \Coh(\scrC_\Sigma)\ .
\end{align}
The associated stack functions will be denoted by $\mathbf{1}_{\scrB_\Sigma}$ and $\mathbf{1}^\scrO_{\scrB_\Sigma}$, respectively. 

\subsubsection{$C$-framing}\label{subsubsec:C-framed-stack-functions} 

\begin{definition}\label{def:C-framed-sub-category}
	Given an algebraic subcategory $\scrB\subseteq \scrC_{\leqslant 1}$ as in Definition~\ref{def:algebraic-sub-category}, the subcategory $\scrB_C$ of $C$-framed objects is the full subcategory of all sheaves $\calF\in \scrB$ so that 
	\begin{itemize}\itemsep0.2cm
		\item $\ch_2(\calF) = [C] +m [\Sigma]$ for some $m \geq 0$, and
		\item the fundamental cycle $[\calF]$ in the Chow variety coincides with $C + m \Sigma$. 
	\end{itemize}
	When $\scrB=\scrC_{\leqslant 1}$, we denote by $\scrC_C$ the subcategory $\scrB_C$.
\end{definition}
The goal of this section is to construct stack functions associated to $C$-framed subcategories by analogy to \S\ref{subsec:C-framed-moduli-stack}. 

Recall that $\Coh_{\leqslant 1}^{\mathsf{ext}}(Y)$ denotes the moduli stack of three term exact sequences $0\to \calE_1\to \calE \to \calE_2 \to 0$ in $\scrC_{\leqslant 1}$, and
\begin{align}
	\pi_1,\pi_2, \pi\colon \Coh_{\leqslant 1}^{\mathsf{ext}}(Y)\longrightarrow \Coh_{\leqslant 1}(Y)
\end{align}
denote the three natural projections. Moreover, $\Coh_C(Y) \subset \Coh_{\leqslant 1}(Y)$ is the closed substack of pure one-dimensional sheaves $\calF$ on $Y$ with scheme-theoretic support on $C$, so that $\ch_2(\calF)=[C]$. 
The stack $\Coh_C^{\mathsf{ext}}(Y)$ is defined by the fiber product 
\begin{align}
	\begin{tikzcd}[ampersand replacement=\&]
		\Coh_C^{\mathsf{ext}}(Y)\ar[r] \ar[d]\& \Coh_{\leqslant 1}^{\mathsf{ext}}(Y)\ar{d}{(\pi_1, \pi_2)}  \\
		\Coh(\scrC_\Sigma)\times \Coh_C(Y)\ar[r]\& \Coh_{\leqslant 1}(Y)\times \Coh_{\leqslant 1}(Y)
	\end{tikzcd}\ .
\end{align}
Then the stack function $\mathbf{1}_{\scrC_C}$ is defined by the composition
\begin{align}
	\mathbf{1}_{\scrC_C}\coloneqq \begin{tikzcd}[ampersand replacement=\&]
		\Big[\pi_C\colon \Coh_C^{\mathsf{ext}}(Y)\arrow{r} \&  \Coh_{\leqslant 1}^{\mathsf{ext}}(Y)\arrow{r}{\pi} \& \Coh_{\leqslant 1}(Y)\Big]
	\end{tikzcd}\ .
\end{align}

More generally, given any algebraic full subcategory $\scrB\subset \scrC_{\leqslant 1}$, let $\Coh_C^{\mathsf{ext}}(\scrB)$ be defined by the fiber product
\begin{align}
	\begin{tikzcd}[ampersand replacement=\&]
		\Coh_C^{\mathsf{ext}}(\scrB)\ar[r] \ar[d]\& \Coh_C^{\mathsf{ext}}(Y)\ar{d}{\pi_C}\\
		\Coh(\scrB) \ar[r]\& \Coh_{\leqslant 1}(Y)
	\end{tikzcd}\ .
\end{align}
Let also $\Coh_C^{\scrO,\, \mathsf{ext}}(\scrB)$ be the stack defined by the fiber product
\begin{align}
	\begin{tikzcd}[ampersand replacement=\&]
		\Coh_C^{\scrO,\, \mathsf{ext}}(\scrB) \ar[r] \ar[d]\& \Coh^\scrO_{\leqslant 1}(Y) \ar{d}{q}\\
		\Coh_C^{\mathsf{ext}}(\scrB)\ar{r}{\pi_C} \& \Coh_{\leqslant 1}(Y)
	\end{tikzcd}\ .
\end{align}
Clearly, $\Coh_C^{\mathsf{ext}}(Y)$ is algebraic, of locally finite type over $\C$ and the same holds for $\Coh_C^{\scrO,\, \mathsf{ext}}(Y)$. Similarly results hold for $\Coh_C^{\mathsf{ext}}(\scrB)$ and $\Coh_C^{\scrO,\, \mathsf{ext}}(\scrB)$. Then, let 
\begin{align}\label{eq:1-B-C}
	\mathbf{1}_{\scrB_C} & \coloneqq \Big[\Coh_C^{ \mathsf{ext}}(\scrB)\longrightarrow \Coh_{\leqslant 1}(Y)\Big]\ ,\\
\mathbf{1}^\scrO_{\scrB_C} & \coloneqq \Big[\Coh_C^{\scrO,\, \mathsf{ext}}(\scrB) \longrightarrow \Coh_{\leqslant 1}(Y)\Big]
\end{align}
denote the associated stack functions, where the structural maps are induced by the central projection $\pi\colon \Coh_{\leqslant 1}^{\mathsf{ext}}(Y) \to \Coh_{\leqslant 1}(Y)$. 

For future reference, note the following variant of Lemma~\ref{lem:support-sequence}.
\begin{lemma}\label{lem:sf-A}
	\hfill
	\begin{enumerate}\itemsep0.2cm
		\item \label{item:sf-A-1} Let $\calF \in \scrC_C$ be a $C$-framed one-dimensional coherent sheaf on $Y$. Then, there is a unique exact sequence 
		\begin{align}
			0\longrightarrow \calF_\Sigma \longrightarrow \calF \longrightarrow \calF_C \longrightarrow 0\ ,
		\end{align}
		where $\calF_\Sigma \in \scrC_\Sigma$ and $\calF_C$ is the pushforward of a rank one torsion-free sheaf on $C$.
		
		\item \label{item:sf-A-2} Moreover, if $\calF\in \scrC_C$ is the structure sheaf of a closed one-dimensional subscheme, then $\calF_C = \scrO_C$ and the composition 
		$\scrO_Y \to \calF \to \calF_C$ coincides with the natural epimorphism $\scrO_Y \to \scrO_C$. 
	\end{enumerate}
\end{lemma} 

\begin{proof}
	We start by proving \eqref{item:sf-A-1}. Let $\calF_C \coloneqq \calF\otimes \scrO_C/\calT$ be the quotient by the maximal zero-dimensional subsheaf, and let $\calF_\Sigma$ be the kernel of the natural surjection $\calF\to \calF_C$. Since $\calF$ is $C$-framed, the claim is obvious. 

	Now, we consider \eqref{item:sf-A-2}. Since $\scrO_Y \to \calF$ is surjective, the composition $\scrO_Y \xrightarrow{s} F \to \scrF_C$ must be also surjective. Since $\calF_C$ is the pushforward of a rank one torsion-free sheaf on $C$, the claim follows. 
\end{proof}

Using the notation in Lemma~\ref{lem:sf-A}, one defines a notion of $C$-framed section of a $C$-framed sheaf as follows. 
\begin{definition}\label{def:C=framed-s} 
	Let $\calF$ be a $C$-framed object of $\scrC_{\leqslant 1}$. We say that a section $s\colon \scrO_Y \to \calF$ is \textit{$C$-framed} if the composition 
	\begin{align}
		\begin{tikzcd}[ampersand replacement=\&]
			\scrO_Y \arrow{r}{s} \& \calF \arrow{r} \& \calF_C 
		\end{tikzcd}
	\end{align}
	is not identically zero. 
\end{definition} 

For any algebraic subcategory $\scrB\subset \scrC_{\leqslant 1}$, let $\tensor*[^\circ]{\Coh}{^{\scrO,\, \mathsf{ext}}_C}(\scrB) \subset \Coh_C^{\scrO,\, \mathsf{ext}}(\scrB)$ be the open substack consisting of pairs $(\calF,s)$ so that $s$ is $C$-framed. The  associated stack function will be denoted by 
\begin{align}\label{eq:circ-1-B-C}
	\tensor*[^\circ]{\mathbf{1}}{_{\scrB_C}} \coloneqq \Big[\tensor*[^\circ]{\Coh}{^{\scrO,\, \mathsf{ext}}_C}(\scrB) \longrightarrow \Coh_{\leqslant 1}(Y)\Big]\in H_\infty(\scrC_{\leqslant 1})\ .
\end{align}

\begin{remark}\label{rem:QC} 
	Note that in general $\calF \in \scrB_C$ does not imply that $\calF_C$ and $\calF_\Sigma$ belong to $\scrB$. However, this holds for $\scrB = \scrF_f$ since $\scrF_f$ is closed under subobjects, and $\calF_C\in \scrF_f$ for any sheaf $\calF\in \scrC_C$ since $\Hom_Y(\scrT_f,\calF_C)=0$ by support 
	conditions.
\end{remark} 

\subsubsection{$C$-framed Hilbert schemes}\label{subsubsec:C-framed-hilbert}

For any $m\geq 0$, let $\Hilb(Y;[C]+m[\Sigma])$ denote the Hilbert scheme of closed one-dimensional subschemes $W\subset Y$ with $\ch_2(\scrO_W) =[C]+m[\Sigma]$. Then, let $\calH_C(Y;m)$ be the stack defined by the fiber product 
\begin{align}
\begin{tikzcd}[ampersand replacement=\&]
		\calH_C(Y;m) \ar[d] \ar[r] \& \Hilb(Y; [C]+m[\Sigma]) \ar[d]\\
		\Coh_C^{ \mathsf{ext}}(Y; m)\ar{r}{\pi_C} \& \Coh_{\leqslant 1}(Y)
\end{tikzcd}\ ,
\end{align}
where the bottom horizontal arrow is induced by the central projection $\pi\colon \Coh_{\leqslant 1}^{ \mathsf{ext}}(Y) \to \Coh_{\leqslant 1}(Y)$. The resulting stack function will be denoted by 
\begin{align}\label{eq:C-Hilbert} 
	\mathbf{1}_{\calH_C} \coloneqq  \sum_{m \geq 0} \Big[\calH_C(Y;m)\longrightarrow \Coh_{\leqslant 1}(Y) \Big]\ ,
\end{align}
the structural morphisms being again induced by the central projection. 
\begin{remark}\label{rem:Hilbert-C} 
	Note that $\calH_C(Y;m)$ is an algebraic space of locally finite type. Moreover, by Lemma~\ref{lem:sf-A}--\eqref{item:sf-A-2}, the 
	groupoid associated to a parameter scheme $Z$ consists of $Z$-flat commutative diagrams 
\begin{align}
	\begin{tikzcd}[ampersand replacement=\&]
		\& \& \scrO_{Z\times Y} \ar[d] \ar[dr] \& \&\\
		0\ar[r] \& \scrF_\Sigma \ar[r] \& \scrF \ar[r] \& \scrO_{Z\times C} \ar[r] \& 0
	\end{tikzcd}\ ,
\end{align}
	where the row is exact, the morphism $ \scrO_{Z\times Y}\to \scrF$ is surjective, and the morphism $\scrO_{Z\times Y}\to \scrO_{Z\times C}$ is the canonical epimorphism.
\end{remark} 

\subsubsection{$C$-framed stable pairs}\label{subsubsec:C-framed-stable-pairs} 

Let $\tensor*[^{\mathsf{sp}}]{\Coh}{^{\scrO,\, \mathsf{ext}}_C}(Y)$ be the stack defined by the fiber product
\begin{align}
	\begin{tikzcd}[ampersand replacement=\&]
		\tensor*[^{\mathsf{sp}}]{\Coh}{^{\scrO,\, \mathsf{ext}}_C}(Y)\ar[r] \ar[d]\& \SP(Y)\ar{d}{q}\\
		\Coh_C^{\mathsf{ext}}(Y)\ar{r}{\pi} \& \Coh_{\leqslant 1}(Y)
	\end{tikzcd}\ ,
\end{align}
where $\SP(Y)$ denotes the moduli space of Pandharipande-Thomas stable pairs on $Y$. $\tensor*[^{\mathsf{sp}}]{\Coh}{^{\scrO,\, \mathsf{ext}}_C}(Y)$ is an algebraic space, locally of finite type over $\C$. 

We introduce the stack functions
\begin{align}
	\mathbf{1}_{\SP_C} &\coloneqq \Big[\tensor*[^{\mathsf{sp}}]{\Coh}{^{\scrO,\, \mathsf{ext}}_C}(Y)\longrightarrow \Coh_{\leqslant 1}(Y)\Big]\ ,\\
	\mathbf{1}_{\fSP_C} &\coloneqq \Big[\fSP_C(Y)\longrightarrow \Coh_{\leqslant 1}(Y)\Big]\ ,
\end{align}
where the moduli stack $\fSP_C(Y)$ has been introduced in \S\ref{subsec:C-framed-moduli-stack}.

\subsubsection{Exceptional Hilbert scheme} \label{subsubsec:exception-Hilbert-scheme}

Let $\calH_{\mathsf{ex}}(Y)$ denote the Hilbert scheme parametrizing closed subschemes $W\subset Y$ so that $\ch_2(\scrO_W) \in \N[\Sigma]$. By Corollary~\ref{cor:split-support}, this implies that the quotient $\scrO_W/\calT$ by the maximal zero-dimensional subsheaf of $\scrO_W$ is either zero or set-theoretically supported on $\Sigma$. The associated stack function will be also denoted by $\mathbf{1}_{\calH_{\mathsf{ex}}(Y)}$. 
 
\subsection{Motivic Hall algebra identities}\label{subsec:hall-identity}

\begin{lemma}\label{lem:sf-C}
	Let 
	\begin{align}
		0\longrightarrow \calF_1 \longrightarrow \calF\longrightarrow \calF_2 \longrightarrow 0 
	\end{align}
	be an exact sequence in $\scrC_{\leqslant 1}$ where $\calF_2$ belongs to $\scrC_\Sigma$. Let $\calT_1 \subset \calF_1\otimes i_{C, \, \ast}\scrO_C$ and $\calT\subset \calF\otimes i_{C, \, \ast}\scrO_C$ be the maximal zero-dimensional subsheaves, respectively. Set $\calF_{1,\, C}\coloneqq \calF_1\otimes i_{C, \, \ast}\scrO_C/\calT_1$ and $\calF_C \coloneqq \calF\otimes i_{C, \, \ast}\scrO_C/\calT$.
	\begin{enumerate}\itemsep0.2cm
		\item \label{item:sf-C-1} Assume that $\calF_1$ is $C$-framed. Then $\calF$ is $C$-framed as well, and the injection $\calF_1\to \calF$ yields an injection $\calF_{1,\, C} \to \calF_C$. Moreover, suppose $s_1\colon \scrO_Y \to \calF_1$ is a $C$-framed section. Then the composition 
		\begin{align}
			\begin{tikzcd}[ampersand replacement=\&]
				s\colon \scrO_Y \arrow{r}{s_1} \& \calF_1 \arrow{r} \& \calF
			\end{tikzcd}
		\end{align}
		is also $C$-framed. 
		\item \label{item:sf-C-2} Conversely, assume that $\calF$ is $C$-framed. Then $\calF_1$ is also $C$-framed and the injection $\calF_1\to \calF$ yields an injection $\calF_{1,\,C} \to  \calF_C$. 
	\end{enumerate}
\end{lemma} 

\begin{proof}
	We start by proving \eqref{item:sf-C-1}. The canonical morphism $f\colon \calF_1\otimes i_{C, \, \ast}\scrO_C \to \calF\otimes i_{C, \, \ast}\scrO_C$ yields a  commutative diagram 
\begin{align}
	\begin{tikzcd}[ampersand replacement=\&]
		0\ar[r] \&  \calT_1 \ar[r]\ar{d}{t} \& \calF_1\otimes i_{C, \, \ast}\scrO_C\ar[r] \ar{d}{f}\& \calF_{1,\, C}\ar{d}{g} \ar[r]\& 0\\
		0\ar[r] \&  \calT \ar[r] \& \calF\otimes i_{C, \, \ast}\scrO_C\ar[r] \& \calF_{C} \ar[r]\& 0
	\end{tikzcd}\ .
\end{align}
	Since $\ch_2(\calF) \in [C] + \N[\Sigma]$, Corollary~\ref{cor:split-support} shows that its fundamental cycle must be of the form $C' + m\Sigma$ with $[C']=[C]$. By construction, this implies that $\calF_C$ is either a purely one-dimensional sheaf with scheme-theoretic support on $C$ or identically zero. In order to rule out $\calF_C =0$ it suffices to show that $g$ is injective. 
	
	By assumption, $\calF_{1,\, C}$ is the pushforward of a rank one torsion-free sheaf on $C$. At the same time, the snake lemma yields the exact sequence 
	\begin{align}\label{eq:snake-sequence-A}
		0\longrightarrow \ker(t) \longrightarrow \ker(f) \longrightarrow \ker(g)\longrightarrow \mathsf{Coker}(t) \longrightarrow \mathsf{Coker}(f) \longrightarrow \mathsf{Coker}(g) \longrightarrow 0\ ,
	\end{align}
	where $\ker(t)$ and $\mathsf{Coker}(t)$ are zero dimensional. Moreover, one also has the long exact sequence 
	\begin{align}
		\begin{tikzcd}[ampersand replacement=\&]
			\cdots \arrow{r} \& \Tor_1(\calF_2, i_{C, \, \ast}\scrO_C) \arrow{r} \& \calF_1 \otimes i_{C, \, \ast}\scrO_C \arrow{r}{f} \& \calF \otimes i_{C, \, \ast}\scrO_C \arrow{r} \&\calF_2 \otimes i_{C, \, \ast}\scrO_C \arrow{r} \& 0
		\end{tikzcd}\ .
	\end{align}
	Since $\calF_2\in \scrC_\Sigma$, $(\mathsf{Supp}(\calF_2) \cap C)_{\mathsf{red}}$ is zero-dimensional. Hence $\Tor_1(\calF_2, i_{C, \, \ast}\scrO_C)$ is zero-dimensional, which implies that $\ker(f)$ is also zero-dimensional. In conclusion, the exact sequence \eqref{eq:snake-sequence-A} shows that 
	$\ker(g)$ must be zero-dimensional. Since $\calF_{1,\,C}$ is purely one-dimensional, this implies that $\ker(g)=0$, i.e., $g$ is injective as claimed. 
	
	The second part follows from the commutative diagram 
	\begin{align}
		\begin{tikzcd}[ampersand replacement=\&]
			\scrO_Y \ar{r}{s_1}\ar{d}{\id}\& \calF_1\ar[r] \ar[d]\& \calF_{1,\,C}\ar[d]\ar[r]\& 0\\
			\scrO_Y \ar{r}{s} \& \calF\ar[r] \& \calF_{C} \ar[r]\& 0
		\end{tikzcd}\ ,
	\end{align}
	where the vertical arrows are injective. 

	Now, we prove \eqref{item:sf-C-2}. Note that $\Hom_Y(\calF_2, \calF_C)=0$ since $\mathsf{Supp}(\calF_2) \cap C$ is zero-dimensional. Hence, one obtains an injection 
	\begin{align}
		\Hom_Y(\calF, \calF_C) \longrightarrow \Hom_Y(\calF_1, \calF_C) 
	\end{align}
	which implies that the composition $\calF_1 \to \calF\to \calF_C$ is not identically zero. Therefore its image $\calI_1\subset \calF_C$ is purely one-dimensional, scheme-theoretically supported on $C$, while $\calF_C/\calI_1$ is zero-dimensional. This further implies that the kernel $\calK_1\subset \calF_1$ of the map $\calF_1\to \calF_C$ has $\ch_2(\calK_1) \in \N[\Sigma]$, hence $\calK_1 \in \scrC_\Sigma$. 
	This shows that $\calF_1$ is also $C$-framed, and $\calI_1 = \calF_{1,\,C}$ by Lemma~\ref{lem:sf-A}.
\end{proof}

\begin{lemma}\label{lem:sf-D} 
	Let $\calF\in \scrC_{C}$ and let 
	\begin{align}
		0\longrightarrow \calF_\Sigma \longrightarrow \calF \longrightarrow \calF_C \longrightarrow 0 
	\end{align}
	be the exact sequence in Lemma~\ref{lem:sf-A}. Let $s\colon \scrO_Y \to \calF$ be a $C$-framed section and let $s_C\colon \scrO_Y \to \calF_C$ denote the composition $\scrO_Y\xrightarrow{s} \calF \to \calF_C$. Then $\mathsf{Im}(s) \subset \calF$ is isomorphic to the structure sheaf of a unique $C$-framed one-dimensional closed subscheme $Z\subset Y$, and $\mathsf{Coker}(s)$ belongs to $\scrC_\Sigma$.
\end{lemma} 

\begin{proof}
	Since $\mathsf{Im}(s)$ is a one dimensional quotient of $\scrO_Y$ it is clearly isomorphic to $\scrO_Z$ for a unique one-dimensional closed subscheme $Z\subset Y$. Moreover, under the current assumptions, $(\calF_C,s_C)$ is a stable pair, in particular, $\mathsf{Im}(s_C)= \scrO_C$. One also has a commutative diagram with exact rows
	\begin{align}
		\begin{tikzcd}[ampersand replacement=\&]
			0\ar[r]\& \calK \ar[r] \ar[d] \&  \mathsf{Im}(s)\ar[r]\ar[d]\& \mathsf{Im}(s_C)\ar[r] \ar[d] \& 0\\ 
			0\ar[r]\& \calF_\Sigma \ar[r] \& \calF \ar[r] \& \calF_C \ar[r] \& 0
		\end{tikzcd}\ ,
	\end{align}
	where $\calK$ is the kernel of the canonical epimorphism $\mathsf{Im}(s) \to \mathsf{Im}(s_C)$. Then, the snake lemma shows that
	\begin{itemize} \itemsep0.2cm
		\item the morphism $\calK \to \calF_\Sigma$ is injective, hence $\calK\in \scrC_\Sigma$, and 
		\item $\mathsf{Coker}(s)\in \scrC_\Sigma$ since $\calF_\Sigma \in \scrC_\Sigma$, and $\mathsf{Coker}(s_C)$ is zero-dimensional. 
	\end{itemize} 
	This proves the claim. 
\end{proof}

By using Lemmas~\ref{lem:sf-C} and \ref{lem:sf-D}, the next result is the $C$-framed analogue of \cite[Lemma~4.3]{Bridgeland-Curve-Counting}.
\begin{lemma}\label{lem:sf-DB}
	The following identity holds in the stack function algebra $H_\infty(\scrC_{\leqslant 1})$:
\begin{align}\label{eq:sf-identity-K}
	\tensor*[^\circ]{\mathbf{1}}{_{\scrC_C}} =\mathbf{1}_{\calH_C} \ast \mathbf{1}_{\scrC_\Sigma}\ .
\end{align}
\end{lemma}

Next note that any sheaf $\calF\in \scrC_{\leqslant 1}$ fits into a unique exact sequence 
\begin{align}\label{eq:PQ-sequence-A}
	0\longrightarrow \calP \longrightarrow \calF \longrightarrow \calQ \longrightarrow 0\ ,
\end{align}
with $\calP\in\scrT_f$ and $\calQ\in\scrF_f\cap \catCoh_{\leqslant 1}(Y)$. By analogy to \cite[Lemma~4.1]{Bridgeland-Curve-Counting} and \cite[Lemma~55]{BS-Curve-counting-crepant}, this yields:
\begin{lemma}\label{lem:sf-DB-2} 
	The following identity holds in $H_\infty(\scrC_{\leqslant 1})$
	\begin{align}\label{eq:sf-identity-G}
		\mathbf{1}_{\scrC_\Sigma} = \mathbf{1}_{\scrT_f} \ast \mathbf{1}_{(\scrF_f)_\Sigma}\ .
	\end{align}
\end{lemma}

Furthermore, given the exact sequence \eqref{eq:PQ-sequence-A}, one also has: 
\begin{lemma}\label{lem:sf-E} 
	\hfill
	\begin{enumerate}\itemsep0.2cm
		\item \label{item:sf-E-1} $\calF$ is $C$-framed if and only if $\calQ$ is $C$-framed. Moreover, if this is the case, 
		the surjection $\calF\to \calQ$ yields an isomorphism $\calF_C \to \calQ_C$. 
		\item \label{item:sf-E-2} Suppose $\calF$ is $C$-framed and let $s\colon \scrO_Y\to \calF$ be a section. Then $s$ is $C$-framed if and only if 
		the composition $\scrO_Y \xrightarrow{s} \calF \to \calQ$ is nonzero. 
	\end{enumerate}
\end{lemma}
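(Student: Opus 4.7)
The plan is to prove both parts by combining the uniqueness of the $C$-framing decomposition from Lemma~\ref{lem:sf-A} with the structural description of the torsion pair $(\scrT_f, \scrF_f)$ provided by Proposition~\ref{prop:torsion-vs-torsion-free}. A useful preliminary observation is that $\scrA_\Sigma$ is an abelian subcategory of $\scrA_{\leqslant 1}$ closed under subobjects, quotients, and extensions, because $\ch_2 \in \N[\Sigma]$ is preserved in short exact sequences; moreover $\calP \in \scrT_f$ lies in $\scrA_\Sigma$ since the quotient of $\calP$ by its maximal zero-dimensional subsheaf is set-theoretically supported on $\Sigma$ by Proposition~\ref{prop:torsion-vs-torsion-free}-\eqref{item:torsion-vs-torsion-free-1}.

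For the forward direction of \eqref{item:sf-E-1}, assume $\calF$ is $C$-framed with canonical sequence $0 \to \calF_\Sigma \to \calF \to \calF_C \to 0$. The key point is that $\Hom_Y(\calP, \calF_C) = 0$: indeed, $\calF_C$ is pure one-dimensional with scheme-theoretic support on $C$, the torsion part of $\calP$ maps to zero into the pure sheaf $\calF_C$, and the quotient $\calP/\calT$ is supported on $\Sigma$, so any map to $\calF_C$ would have image supported on the zero-dimensional set $(C\cap \Sigma)_{\mathsf{red}}$. Hence the composition $\calP \to \calF \to \calF_C$ vanishes and $\calP \subseteq \calF_\Sigma$. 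Setting $\calQ_\Sigma \coloneqq \calF_\Sigma/\calP$, the snake lemma applied to
\begin{equation}
\begin{tikzcd}[ampersand replacement=\&]
0 \ar{r} \& \calP \ar{r} \ar[d,equals] \& \calF_\Sigma \ar{r} \ar[d,hook] \& \calQ_\Sigma \ar{r} \ar{d} \& 0 \\
0 \ar{r} \& \calP \ar{r} \& \calF \ar{r} \& \calQ \ar{r} \& 0
\end{tikzcd}
\end{equation}
identifies $\calQ/\calQ_\Sigma$ with $\calF_C$. Since $\calQ_\Sigma \in \scrA_\Sigma$, the uniqueness statement in Lemma~\ref{lem:sf-A} shows that this is the $C$-framing sequence of $\calQ$ and $\calQ_C \simeq \calF_C$. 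Conversely, if $\calQ$ is $C$-framed, defining $\calG$ as the pullback $\calF \times_\calQ \calQ_\Sigma$ produces exact sequences $0 \to \calP \to \calG \to \calQ_\Sigma \to 0$ and $0 \to \calG \to \calF \to \calQ_C \to 0$ with $\calG \in \scrA_\Sigma$ by extension closure, so $\calF$ is $C$-framed with $\calF_C \simeq \calQ_C$.

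For part \eqref{item:sf-E-2}, the isomorphism $\calF_C \simeq \calQ_C$ from \eqref{item:sf-E-1} factors the $C$-framing composition as $\scrO_Y \xrightarrow{s} \calF \to \calQ \to \calQ_C$, so the task is to show that $\scrO_Y \to \calQ$ is nonzero if and only if its composition with $\calQ \twoheadrightarrow \calQ_C$ is nonzero. Only one direction requires argument: if $\scrO_Y \to \calQ$ were nonzero but its composition with $\calQ \to \calQ_C$ were zero, then its image would sit inside $\calQ_\Sigma$, giving a nonzero element of $H^0(Y, \calQ_\Sigma)$. The expected obstacle, though not a deep one, lies here: one must rule this out. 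Since $\calQ \in \scrF_f$ and torsion-free subcategories of torsion pairs are closed under subobjects, $\calQ_\Sigma$ lies in $\scrA_\Sigma \cap \scrF_f$, so either $\calQ_\Sigma = 0$ (immediate contradiction) or, by Proposition~\ref{prop:torsion-vs-torsion-free}-\eqref{item:torsion-vs-torsion-free-2}, it is pure one-dimensional with $\mu_{Y\textrm{-}\mathsf{max}}(\calQ_\Sigma) < 0$. By Corollary~\ref{cor:refined-HN-filtration}, $\calQ_\Sigma$ then admits a filtration with successive quotients $i_{\Sigma,\ast}\calL_i$ satisfying $\chi(\calL_i) \leq \mu_{Y\textrm{-}\mathsf{max}}(\calQ_\Sigma) < 0$; since each $\calL_i$ is a line bundle on $\Sigma \simeq \PP^1$ of negative Euler characteristic, $H^0(\Sigma, \calL_i) = 0$, and induction along the filtration using the long exact sequence in $H^0$ yields $H^0(Y, \calQ_\Sigma) = 0$, contradicting the assumption. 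This completes the proof.
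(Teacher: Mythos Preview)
Your proof is correct and follows essentially the same approach as the paper: both arguments hinge on the vanishing $\Hom_Y(\calP,\calF_C)=0$ (resp.\ $\Hom_Y(\calP,\calQ_C)=0$) coming from the support constraints in Proposition~\ref{prop:torsion-vs-torsion-free}, and on the uniqueness in Lemma~\ref{lem:sf-A}. Your pullback formulation of the converse in \eqref{item:sf-E-1} and the explicit filtration argument for $H^0(Y,\calQ_\Sigma)=0$ in \eqref{item:sf-E-2} are just more detailed rephrasings of what the paper does (the paper implicitly appeals to Corollary~\ref{cor:torsion-vs-torsion-free} for the latter vanishing).
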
 

\begin{proof}
	We start by proving \eqref{item:sf-E-1}.  Suppose $\calF$ is $C$-framed and let 
	\begin{align}
		0\longrightarrow \calF_\Sigma \longrightarrow \calF \longrightarrow \calF_C \longrightarrow 0
	\end{align}
	be the exact sequence in Lemma~\ref{lem:sf-A}. At the same time, By Proposition~\ref{prop:torsion-vs-torsion-free}--\eqref{item:torsion-vs-torsion-free-1}, the set-theoretic support of the subsheaf $\calP\subset \calF$ in \eqref{eq:PQ-sequence-A} is either zero-dimensional or the union of $\Sigma$ and a zero-dimensional set. Therefore $\Hom_Y(\calP,\calF_C)=0$, which implies that the inclusion $\calP \subset \calF$ factors through $\calF_\Sigma \subset \calF$. Hence one obtains a commutative diagram 
	\begin{align}
		\begin{tikzcd}[ampersand replacement=\&]
			0\ar[r]\& \calP\ar[r]\ar{d}{\imath}\& \calF \ar[r]\ar{d}{\id}\&\calQ\ar{d}{\jmath}\ar[r]\& 0 \\
			0\ar[r]\&\calF_\Sigma \ar[r]\& \calF \ar[r]\& \calF_C \ar[r]\& 0
		\end{tikzcd}\ .
	\end{align}
	Then the snake lemma shows that $\jmath$ is surjective and the connecting homomorphism
	\begin{align}\label{eq:deltaisom}
		\delta\colon \ker(\jmath) \longrightarrow \mathsf{Coker}(\imath) 
	\end{align}
	is an isomorphism. 
	
	At the same time, Lemma \ref{lem:support-sequence} implies that
	$\calQ$ fits in an exact sequence 
		\begin{align}\label{eq:Qsuppseq} 
	0\to \calQ_\Sigma \to \calQ \to \calQ_C \to 0 
	\end{align}
	where 
	\begin{itemize} 
	\item $\calQ_\Sigma$ belongs to $\calA_\Sigma$, while
	\item $\calQ_C$ is purely one dimensional, the intersection of its set theoretic support with  $\Sigma$ being zero dimensional. 
	\end{itemize} 
	Since $\mathsf{Coker}(\imath)$ belongs to $\scrC_\Sigma$, the isomorphism \eqref{eq:deltaisom} implies that $\mathsf{Ker}(j)$ is contained in $\calQ_\Sigma$ as a subsheaf. Hence the epimorphism $\calQ\to \calQ_C$ factors through an epimorphism $\calF_C \to \calQ_C$. Since $\calF_C$ is by assumption the extension by zero of a rank one torsion free sheaf on $C$, and $\calQ_C$ is purely one dimensional, it follows that $\calQ_C$ is also the extension by zero of a rank one torsion free sheaf on $C$, or identically zero. In order to conclude the proof it suffices to prove that $\calQ_C$ is not identically zero. This follows from the exact sequences \eqref{eq:PQ-sequence-A} and \eqref{eq:Qsuppseq}, which, using Corollary~\ref{cor:split-support}, yield the identity 
	\begin{align}
		\ch_2(\calQ_C) = \ch_2(\calF_C) = [C]\ . 
	\end{align}

	Conversely, suppose $\calQ$ is $C$-framed and let 
	\begin{align}
		0\longrightarrow \calQ_\Sigma \longrightarrow \calQ\longrightarrow \calQ_C\longrightarrow 0 
	\end{align}
	be the exact sequence in Lemma~\ref{lem:sf-A}. Again, by 
	Proposition~\ref{prop:torsion-vs-torsion-free}--\eqref{item:torsion-vs-torsion-free-1} one has $\Hom_Y(\calP, \calQ_C)=0$. Hence one obtains a commutative diagram 
	\begin{align}
		\begin{tikzcd}[ampersand replacement=\&]
			0\ar[r]\& \calP\ar[r]\& \calF \ar[r]\ar{d}{f}\&\calQ\ar[d]\ar[r]\& 0 \\
			\&\& \calQ_C\ar{r}{\id}\& \calQ_C \ar[r]\& 0
		\end{tikzcd}\ ,
	\end{align}
	where the vertical arrows are surjective. This yields the exact sequence 
	\begin{align}
		0\longrightarrow \calP \longrightarrow \ker(f) \longrightarrow \calQ_\Sigma\longrightarrow 0\ ,
	\end{align}
	which shows that the central term belongs to $\scrC_\Sigma$. Therefore, $\calF$ is $C$-framed by Lemma~\ref{lem:sf-A}, and the surjection $\calF\to \calQ_C$ coincides with the canonical surjection $\calF\to \calF_C$. 

	Now, we prove \eqref{item:sf-E-2}. The first part shows that $\calQ$ is also $C$-framed and there is a commutative diagram 
	\begin{align}
		\begin{tikzcd}[ampersand replacement=\&]
			\calF \ar[r]\ar[d]\& \calQ\ar[d] \\
			\calF_C \ar[r]\&\calQ_C 
		\end{tikzcd}\ ,
	\end{align}
	where the bottom horizontal arrow is an isomorphism. Then it is clear that $s$ is $C$-framed if and only if the composition 
	\begin{align}
		\begin{tikzcd}[ampersand replacement=\&]
			\scrO_Y \arrow{r}{s} \& \calF \arrow{r} \& \calQ 
		\end{tikzcd}
	\end{align}
	is also $C$-framed. In order to finish the proof, note that any nonzero section $\scrO_Y\to \calQ$ must be $C$-framed since $H^0(Y, \calQ_\Sigma) =0$ by Proposition~\ref{prop:torsion-vs-torsion-free}--\eqref{item:torsion-vs-torsion-free-2}.
\end{proof}

Now consider the stack functions $\mathbf{1}_{(\scrF_f)_C}$ and $\tensor*[^\circ]{\mathbf{1}}{_{(\scrF_f)_C}}$ as introduced in \eqref{eq:1-B-C} and \eqref{eq:circ-1-B-C}, respectively, with $\scrB=(\scrF_f)_C$. Lemma~\ref{lem:sf-E}--\eqref{item:sf-E-1} yields the following result, which the $C$-framed variant of \cite[Lemma~55]{BS-Curve-counting-crepant}. 
\begin{corollary}\label{cor:sf-B}
	The following identities hold in $H_\infty(\scrC)$
	\begin{align}\label{eq:sf-identity-H}
	\mathbf{1}_{\scrC_C} = \mathbf{1}_{\scrT_f} \ast \mathbf{1}_{(\scrF_f)_C}\ ,\\[2pt] \label{eq:sf-identity-HB}
	\tensor*[^\circ]{\mathbf{1}}{_{\scrC_C}} = \mathbf{1}_{\scrT_f} \ast  \tensor*[^\circ]{\mathbf{1}}{_{(\scrF_f)_C}}\ . 
	\end{align}
\end{corollary} 

Next recall that $\Coh_C^{\scrO,\, \mathsf{ext}}(\scrF_f)$ is the stack defined by the cartesian diagram 
\begin{align}
	\begin{tikzcd}[ampersand replacement=\&]
		\Coh_C^{\scrO,\, \mathsf{ext}}(\scrF_f)\ar[r] \ar[d] \& \Coh^\scrO_{\leqslant 1}(Y)\ar[d]\\
		\Coh^{\mathsf{ext}}_C((\scrF_f))\ar[r]\& \Coh_{\leqslant 1}(Y)
	\end{tikzcd}\ ,
\end{align}
where the bottom horizontal arrow is determined by the central projection. Therefore, given a parameter scheme $Z$, the groupoid $\Coh_C^{\scrO,\, \mathsf{ext}}(\scrF_f)(Z)$ consists of $Z$-flat diagrams 
\begin{align}
	\begin{tikzcd}[ampersand replacement=\&]
			\& \&\scrO_{Z\times Y} \ar{d}{s}\& \&\\
		0\ar[r] \& \scrQ_\Sigma \ar[r] \& \scrQ \ar[r] \& \scrQ_C \ar[r] \& 0 
	\end{tikzcd}\ ,
\end{align}
where 
\begin{itemize} \itemsep0.2cm
	\item $\scrQ$ is a flat family of objects of $\scrF_f$, 
	\item $\scrQ_\Sigma$ is a flat family of objects of $\scrC_\Sigma$, 
	\item $\scrQ_C$ is the pushforward of a flat family of rank one torsion-free sheaves on $C$, and 
	\item $s$ is an arbitrary section.
\end{itemize} 

Let $\tensor*[^\circ]{\Coh}{_C^{\scrO,\, \mathsf{ext}}}(\scrF_f)$ denote the open substack defined by  $s\neq 0$.  Let $\tensor*[^\circ]{\mathbf{1}}{^\scrO_{(\scrF_f)_C}}$ be the associated stack function. By using Proposition~\ref{prop:torsion-vs-torsion-free} and Lemma~\ref{lem:sf-E}, we obtain the next result, which is the $C$-framed variant of \cite[Lemma~4.2]{Bridgeland-Curve-Counting}, \cite[Lemma~57]{BS-Curve-counting-crepant}. 
\begin{lemma}\label{lem:sf-F} 
	The following identity holds in $H_\infty(\scrC_{\leqslant 1})$
\begin{align}\label{eq:sf-identity-C}
	\tensor*[^\circ]{\mathbf{1}}{^\scrO_{\scrC_C}} = \mathbf{1}^\scrO_{\scrT_f}\ast \tensor*[^\circ]{\mathbf{1}}{^\scrO_{(\scrF_f)_C}} \ .
\end{align}
\end{lemma}

\begin{lemma}\label{lem:sf-FB} 
	Let $(\calF, s\colon \scrO_Y \to \calF)$ be a $C$-framed $f$-stable pair. Let 
	\begin{align}
		0\longrightarrow \calF_\Sigma \longrightarrow \calF \longrightarrow \calF_C \longrightarrow 0 
	\end{align}
	be the canonical sequence in Lemma~\ref{lem:sf-A}. Then $\calF_\Sigma \in (\scrF_f)_\Sigma$ and $s$ is $C$-framed. 
\end{lemma}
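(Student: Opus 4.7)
The strategy is to treat the two claims separately, with both reducing to formal consequences of results established earlier in the paper; the second claim proceeds by a short contradiction argument exploiting the cohomological vanishing for $\scrF_f$-objects supported on $\Sigma$.

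For the inclusion $\calF_\Sigma \in (\scrF_f)_\Sigma$, the plan is to invoke the standard fact that the torsion-free part of a torsion pair is closed under subobjects (as recorded in \cite[Lemma~3.2]{DPS-torsion-pairs}). Since $\calF \in \scrF_f \cap \catCoh_{\leqslant 1}(Y)$ by definition of an $f$-stable pair, and $\calF_\Sigma$ is by construction a subsheaf of $\calF$ lying in $\scrA_\Sigma$ (see Lemma~\ref{lem:sf-A}), this formally yields $\calF_\Sigma \in \scrF_f \cap \scrA_\Sigma = (\scrF_f)_\Sigma$.

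For the $C$-framing of $s$, I would argue by contradiction. Write $s_C \colon \scrO_Y \to \calF_C$ for the composition $\scrO_Y \xrightarrow{s} \calF \to \calF_C$, and suppose $s_C = 0$. Then $s$ lifts uniquely to a morphism $s' \colon \scrO_Y \to \calF_\Sigma$. By Proposition~\ref{prop:torsion-vs-torsion-free}-\eqref{item:torsion-vs-torsion-free-2}, $\calF$ is purely one-dimensional, hence so is $\calF_\Sigma$ whenever it is nonzero; in that case $\ch_2(\calF_\Sigma) \in \Z_{>0}[\Sigma]$, and Corollary~\ref{cor:torsion-vs-torsion-free} gives $\Hom_Y(\scrO_Y, \calF_\Sigma) = 0$. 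When $\calF_\Sigma = 0$ the vanishing $\Hom_Y(\scrO_Y, \calF_\Sigma) = 0$ is trivial. In either case $s' = 0$, so $s = 0$.

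This would contradict the $f$-stability: from $s = 0$ one gets $\mathsf{Coker}(s) = \calF \in \scrT_f$, combined with $\calF \in \scrF_f$ this forces $\calF \in \scrT_f \cap \scrF_f = \{0\}$; but the $C$-framing assumption $\ch_2(\calF) = [C] + m[\Sigma]$ with $[C] \neq 0$ rules out $\calF = 0$. The only minor point requiring attention is handling the case $\calF_\Sigma = 0$ uniformly with the generic case; both are covered by the $\Hom$-vanishing above, so no real obstacle arises.
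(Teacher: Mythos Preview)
Your argument is correct. The first claim is handled identically to the paper: $\scrF_f$ is closed under subobjects, so $\calF_\Sigma \in \scrF_f \cap \scrA_\Sigma = (\scrF_f)_\Sigma$.

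For the second claim you take a different route from the paper. The paper argues by pushing forward along the quotient: from the commutative diagram comparing the exact sequences $0 \to \mathsf{Im}(s) \to \calF \to \mathsf{Coker}(s) \to 0$ and $0 \to \mathsf{Im}(s_C) \to \calF_C \to \mathsf{Coker}(s_C) \to 0$, the snake lemma yields a surjection $\mathsf{Coker}(s) \twoheadrightarrow \mathsf{Coker}(s_C)$; since $\scrT_f$ is closed under quotients, $\mathsf{Coker}(s_C) \in \scrT_f$, but its support lies in $C$, forcing it to be zero-dimensional and hence $s_C \neq 0$. Your approach instead pulls back along the subobject: if $s_C = 0$ then $s$ factors through $\calF_\Sigma$, and the vanishing $H^0(Y,\calF_\Sigma) = 0$ from Corollary~\ref{cor:torsion-vs-torsion-free} forces $s = 0$, whence $\calF \in \scrT_f \cap \scrF_f = 0$, contradicting $\ch_2(\calF) = [C] + m[\Sigma] \neq 0$. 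Both are short and valid; your argument is essentially the same mechanism used in the proof of Lemma~\ref{lem:Pi-A}. The paper's route has the minor advantage of actually establishing that $(\calF_C, s_C)$ is a Pandharipande--Thomas stable pair (i.e.\ $\mathsf{Coker}(s_C)$ is zero-dimensional), a slightly stronger conclusion than the bare $s_C \neq 0$ required by the lemma.
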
 

\begin{proof}
	By assumption, $\calF\in\scrF_f\cap \catCoh_{\leqslant 1}(Y)$ is $C$-framed and $\mathsf{Coker}(s) \in \scrT_f$. Since $\scrF_f$ is closed under subobjects, $\calF_\Sigma \in \scrF_f$, hence $\calF_\Sigma \in (\scrF_f)_\Sigma$.  
	
	Next, let $s_C\colon \scrO_Y \to \calF_C$ denote the composition 
	\begin{align}
		\begin{tikzcd}[ampersand replacement=\&]
			\scrO_Y\arrow{r}{s} \& \calF \arrow{r} \& \calF_C
		\end{tikzcd}
	\end{align}
	and note the commutative diagram 
	\begin{align}
		\begin{tikzcd}[ampersand replacement=\&]
			0\ar[r]\& \mathsf{Im}(s)\ar[r]\ar[d]\& \calF \ar[r]\ar[d]\&\mathsf{Coker}(s)\ar[d]\ar[r]\& 0 \\
			0\ar[r]\& \mathsf{Im}(s_C)\ar[r]\& \calF_C \ar[r]\&\mathsf{Coker}(s_C)\ar[r]\& 0
		\end{tikzcd}\ .
	\end{align}
	The snake lemma shows that the left vertical arrow is surjective. Since $\mathsf{Coker}(s)$ belongs to $\scrT_f$, so does $\mathsf{Coker}(s_C)$ since  $\scrT_f$ is closed under quotients. At the same time $\mathsf{Coker}(s_C)$ is set-theoretically supported on $C$. This implies it must be zero-dimensional, hence $s_C \neq 0$. 
\end{proof} 

By using Lemmas~\ref{lem:sf-E} and \ref{lem:sf-FB}, one can prove the following $C$-framed variant of \cite[Lemma~60]{BS-Curve-counting-crepant}.
\begin{lemma}\label{lem:sf-H} 
	The following identity holds in $H_\infty(\scrC_{\leqslant 1})$
	\begin{align}\label{eq:sf-identity-E}
		\tensor*[^\circ]{\mathbf{1}}{^\scrO_{(\scrF_f)_C}}  = \mathbf{1}_{\fSP_C} \ast \mathbf{1}_{(\scrF_f)_\Sigma} \ .
	\end{align}
\end{lemma}

Finally, by analogy to \cite[Lemma~59]{BS-Curve-counting-crepant}, one also has the following.
\begin{lemma}\label{lem:sf-G} 
	The following identity holds in $H_\infty(\scrC_{\leqslant 1})$:
	\begin{align}\label{eq:sf-identity-D}
		\mathbf{1}^\scrO_{\scrT_f} = \mathbf{1}_{\calH_{\mathsf{ex}}} \ast \mathbf{1}_{\scrT_f} \ .
	\end{align}
\end{lemma}

\subsection{Proof of Theorem~\ref{thm:C-framed-identity}}\label{subsec:proof}

First, using the DT/PT correspondence proven in \cite{ST-Hilbert}, it suffices to prove the identity 
\begin{align}\label{eq:framed-pairs-identity-C} 
	f\textrm{-}\mathsf{PT}_C(Y)= \frac{\mathsf{DT}_C(Y)}{\mathsf{DT}_{\mathsf{ex}}(Y)}\ , 
\end{align}
where
\begin{align}
	\mathsf{DT}_C(Y) &\coloneqq \sum_{m\geq 0}\sum_{n\in \Z} \chi(\calH_C(Y; m,n)) q^n Q^m\ , \\[4pt]
	\mathsf{DT}_{\mathsf{ex}}(Y) & \coloneqq \sum_{m\geq 0}\sum_{n\in \Z} \chi(\calH_{\mathsf{ex}}(Y; m, n)) q^n Q^m\ ,
\end{align}
where $\calH_C(Y; m,n)$ and $\calH_{\mathsf{ex}}(Y; m, n)$ are introduced in \S\ref{subsubsec:C-framed-hilbert} and \S\ref{subsubsec:exception-Hilbert-scheme}, respectively.
The proof of the above identity follows the strategy outlined in \S\ref{subsec:motHall}. 
Heuristically, this follows from an wall crossing identity in the infinite type 
Hall algebra $H_\infty(\scrC_{\leqslant 1})$ to be derived below. A rigorous proof is then 
obtained by converting each step of the heuristic derivation into identities in the completion 
$H(\scrC_{\leqslant 1})_\Phi$ and using a suitable integration map. 

The wall crossing identity in $H_\infty(\scrC_{\leqslant 1})$ is obtained as follows. 
First, equation \eqref{eq:sf-identity-G} yields
\begin{align}\label{eq:first-identity}
	\mathbf{1}_{\calH_C} \ast \mathbf{1}_{\scrC_\Sigma} = \mathbf{1}_{\calH_C}\ast \mathbf{1}_{\scrT_f} \ast \mathbf{1}_{(\scrF_f)_\Sigma}\ .
\end{align}
On the other hand, recall the identity \eqref{eq:sf-identity-K}:
\begin{align}
	\mathbf{1}_{\calH_C} \ast \mathbf{1}_{\scrC_\Sigma}=\tensor*[^\circ]{\mathbf{1}}{_{\scrC_C}} \ .
\end{align}
Using identity \eqref{eq:sf-identity-C} in the right-hand-side, one further obtains
\begin{align}
	\mathbf{1}_{\calH_C} \ast \mathbf{1}_{\scrC_\Sigma} = \mathbf{1}^\scrO_{\scrT_f}\ast \tensor*[^\circ]{\mathbf{1}}{^\scrO_{(\scrF_f)_C}} \ .
\end{align}
Now, by identity \eqref{eq:sf-identity-D}, this yields 
\begin{align}
	\mathbf{1}_{\calH_C} \ast \mathbf{1}_{\scrC_\Sigma} = \mathbf{1}_{\calH_{\mathsf{ex}}} \ast \mathbf{1}_{\scrT_f} \ast \tensor*[^\circ]{\mathbf{1}}{^\scrO_{(\scrF_f)_C}} \ .
\end{align}
Finally, using identity \eqref{eq:sf-identity-E} in the right-hand-side one obtains 
\begin{align}\label{eq:second-identity}
	\mathbf{1}_{\calH_C} \ast \mathbf{1}_{\scrC_\Sigma} = \mathbf{1}_{\calH_{\mathsf{ex}}} \ast \mathbf{1}_{\scrT_f} \ast \mathbf{1}_{\fSP_C} \ast \mathbf{1}_{(\scrF_f)_\Sigma} \ .
\end{align}

By comparing \eqref{eq:first-identity} and \eqref{eq:second-identity}, we obtain 
\begin{align}
	 \mathbf{1}_{\calH_C}\ast \mathbf{1}_{\scrT_f} \ast \mathbf{1}_{(\scrF_f)_\Sigma} = \mathbf{1}_{\calH_{\mathsf{ex}}} \ast \mathbf{1}_{\scrT_f} \ast \mathbf{1}_{\fSP_C} \ast \mathbf{1}_{(\scrF_f)_\Sigma} \ .
\end{align}
Assuming $\mathbf{1}_{\scrT_f}$ and $\mathbf{1}_{(\scrF_f)_\Sigma}$ to be invertible, this yields 
\begin{align}
	\mathbf{1}_{\calH_C}=  \mathbf{1}_{\calH_{\mathsf{ex}}} \ast \mathbf{1}_{\scrT_f} \ast \mathbf{1}_{\fSP_C} \ast\mathbf{1}_{\scrT_f}^{-1}\ .
\end{align}
Furthermore, if we assume the existence of an integration map 
$I\colon H_\infty(\scrC_{\leqslant 1})\to \C[\Delta]$, this leads to identity \eqref{eq:framed-pairs-identity-C}. 

As shown in \cite[\S7]{BS-Curve-counting-crepant}, the actual proof makes each step of the above heuristic derivation rigorous in the algebra $H(\scrC_{\leqslant 1})_\Phi$ consisting of Laurent stack functions, as explained in \S\ref{subsec:motHall}. In particular, the Laurent truncation of the stack functions $\mathbf{1}_{\scrT_f}$ and $\mathbf{1}_{\scrF_f}$ are shown to be invertible in \cite[Corollary~77]{BS-Curve-counting-crepant}. The same arguments prove that the Laurent truncation of the stack function $\mathbf{1}_{(\scrF_f)_\Sigma}$ is also invertible. We also note  that the required integration map is provided by Proposition \ref{prop:integrationB}. The details will be omitted since they are completely analogous to \textit{loc.cit.}. 

\section{Openness of orthogonality}\label{sec:openness-orthogonality}

In proving that a torsion pair $(\calT, \calF)$ satisfies openness of flatness, it is often useful to consider a mild variation of the classical semicontinuity theorem.

\begin{warning}
	Contrary to the main body of the paper, we consider the $\infty$-categories of complexes of quasi coherent sheaves $\catQCoh(-)$, of perfect complexes $\catPerf(-)$, of spaces $\calS$, etc. Refer to  \cite[\S1.6]{PS-categorified} for the notation we use. Furthermore, we use the \textit{implicitly derived convention}: given a morphism of derived schemes $f \colon S \to T$, we let $f^\ast \colon \catQCoh(T) \to \catQCoh(S)$ be the \textit{derived} pullback functor, and we let $f_\ast \colon \catQCoh(S) \to \catQCoh(T)$ be the \textit{derived} pushforward. 
	Similarly, all fiber products and tensor products considered in this appendix will be understood in the derived sense.
\end{warning}

\begin{definition}
	Let $f \colon X \to Y$ be a morphism of qcqs derived schemes. We say that $F \in \catPerf(X)$ is \textit{$f$-properly supported} if for every $E \in \catPerf(X)$, $f_\ast(E^\vee \otimes F)$ is a perfect complex on $Y$.
\end{definition}

Let $f \colon X \to Y$ be a morphism of qcqs derived schemes.
For every $S \in \dAff_{/Y}$, let $X_S \coloneqq S \times_Y X$ and let $f_S \colon X_S \to S$ and $p_S \colon X_S \to X$ be the natural projections.
\begin{definition}
	Let $F \in \catPerf(X)$ be a perfect complex and let $a, b \in \Z$ be two integers. The functor $\Phi_{F} \colon \dAff_{/Y} \longrightarrow \calS$
	is given by
	\begin{align}
		\Phi_{F}^{[a,b]}(S) \coloneqq \begin{cases} \ast & \text{if } f_{S, \, \ast} p_S^\ast(F) \textrm{ has tor-amplitude contained in } [a,b]\ , \\[4pt]
			\emptyset & \text{otherwise}\ . \end{cases}
	\end{align}
\end{definition}

Using the natural identification $\dSt_{/Y} \simeq \Sh(\dAff_{/Y}, \tau_{\text{ét}})$, we can see $\Phi_{F}$ as a derived stack equipped with a natural map $\phi \colon \Phi_{F} \to Y$. We have the following.
\begin{proposition}\label{prop:Phi_Fab_open}
	Assume that $F$ is $f$-properly supported.
	Then, the morphism $\phi \colon \Phi_{F}^{[a,b]} \to Y$ is representable by open immersions.
\end{proposition}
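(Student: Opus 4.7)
The plan is to reduce the statement to the openness of the tor-amplitude condition for a single perfect complex on $Y$. Taking $E = \scrO_X$ in the definition of $f$-properly supported shows that $f_\ast F \in \catPerf(Y)$. The crucial further input is that, for any $g \colon S \to Y$, the canonical base change map $g^\ast f_\ast F \to f_{S,\ast} p_S^\ast F$ is an equivalence in $\catPerf(S)$. Granting this, the subfunctor $\Phi_F^{[a,b]}$ of $Y$ is identified with the locus of maps $g \colon S \to Y$ along which the pullback of $f_\ast F$ has tor-amplitude contained in $[a,b]$, and we are reduced to proving the representability by open immersions of this locus for an arbitrary perfect complex $E \in \catPerf(Y)$.

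First I would establish the base change equivalence. The $f$-properly supported hypothesis, namely that $f_\ast(E^\vee \otimes F) \in \catPerf(Y)$ for every $E \in \catPerf(X)$, is equivalent to $F$ being \emph{relatively dualizable} as an object of $\catPerf(X)$ over $\catPerf(Y)$, and relative dualizability is well-known to imply proper base change in the six-functor formalism on qcqs derived schemes. One may also argue more concretely by reducing to $Y = \Spec A$ affine and expressing $f_\ast F$ as a finite complex of finitely generated projective $A$-modules, using the projection formula applied to $F^\vee$; in both approaches the hypothesis on $F$ is used exactly once, to produce a dual with the correct finiteness.

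Second I would invoke the openness of the tor-amplitude condition. For a perfect complex $E \in \catPerf(Y)$, the condition that $g^\ast E$ has tor-amplitude in $[a,b]$ can be checked fibrewise after pulling back to geometric points of $S$, and the locus of points of $Y$ where the cohomology of $E \otimes k(y)$ vanishes outside of $[a,b]$ is open by upper semicontinuity of fibre dimensions. This is the classical result for ordinary schemes and it extends to the derived setting by passing to classical truncations, since the tor-amplitude of a perfect complex is controlled by its discrete cohomology sheaves together with the classical base.

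The main obstacle I anticipate is the rigorous verification of the base change equivalence, since the paper operates in full derived algebraic geometry and the proper base change for properly supported pushforwards in this generality requires some care in invoking the correct six-functor apparatus. Once this is in hand, the representability of $\Phi_F^{[a,b]}$ by an open immersion is a formal consequence of the classical openness of tor-amplitude for perfect complexes, and no further geometric input is needed.
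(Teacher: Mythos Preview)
Your approach is essentially the same as the paper's: reduce via base change to the tor-amplitude locus of the single perfect complex $f_\ast F$ on $Y$, then invoke openness of tor-amplitude for perfect complexes (the paper cites Lurie, SAG, Propositions~6.1.4.4--5 and Corollary~6.1.4.6, together with the observation that tor-amplitude is insensitive to passing to the classical truncation of the base).

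The one point where you diverge is in treating the base change equivalence $g^\ast f_\ast F \simeq f_{S,\ast}\, p_S^\ast F$ as the delicate step requiring the $f$-properly supported hypothesis via relative dualizability and six-functor machinery. In the derived setting this base change is automatic for any cartesian square of qcqs derived schemes and any $F \in \catQCoh(X)$; no hypothesis on $F$ is needed, and the paper simply invokes it in one line. The $f$-properly supported assumption is used only once, exactly where you first invoke it: to guarantee that $f_\ast F$ is perfect, so that the tor-amplitude argument applies. Your detour through relative dualizability is therefore unnecessary (and the claimed equivalence between $f$-properly supported and relatively dualizable is not quite right in general), though the overall logic of your argument is sound.
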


\begin{proof}
	Let $g \colon S \to Y$ be a morphism.
	Derived base change yields a canonical identification
	\begin{align}
		f_{S, \, \ast} p_S^\ast(F) \simeq g^\ast f_\ast(F) \ . 
	\end{align}
	Therefore, $\Phi_F^{[a,b]}(S) = \emptyset$ if and only if $g^\ast f_\ast(F)$ has tor-amplitude contained in $[a,b]$.
	Thus, we can replace $f$ by $\id_Y$ and $F$ by $f_\ast(F)$.
	Besides, by definition of tor-amplitude, we see that $\Phi_F^{[a,b]}(S) = \Phi_F^{[a,b]}(\trunc{S})$.
	So we can also replace $Y$ by $\trunc{Y}$ or, equivalently, assume from the very beginning that $Y$ is underived.
	
	Applying \cite[Propositions~6.1.4.4 \& 6.1.4.5]{Lurie_SAG}, we see that $\Phi_F^{[a,b]}(S) = \ast$ if and only if for every geometric point $s \colon \Spec(\kappa(s)) \to S$ one has $\Phi_F^{[a,b]}(\Spec(\kappa(s))) = \ast$.
	Besides, \cite[Corollary~6.1.4.6]{Lurie_SAG} shows as well that the subset $U$ of $Y$ given by those geometric points $s \colon \Spec(\kappa(s)) \to Y$ such that $s^\ast(F)$ has tor-amplitude contained in $[a,b]$ is an open subset.
	Unraveling the definitions, we see that $\Phi_F^{[a,b]}(S) = \ast$ if and only if the morphism $S \to Y$ factors through $U$.
	Therefore, $\Phi_F^{[a,b]} = U$, whence the conclusion.
\end{proof}

\begin{corollary}\label{cor:openness_orthogonality}
	Let $f \colon X \to Y$ be a morphism of qcqs derived schemes.
	Let $F \in \catPerf(X)$ be a perfect complex and let $N \in \Z$ be an integer.
	For every $S \in \dAff_{/Y}$ and every $f_S$-properly supported $G \in \catPerf(X_S)$, the set $U$ of geometric points $t \colon \Spec(\kappa) \to S$ such that
	\begin{align}
		H^i (\Hom_{X_t}(j_t^\ast p_S^\ast(F), j_t^\ast(G)) ) \simeq 0 
	\end{align}
	for every $i \leq N$ is an open subset of $S$. Here, $j_t \colon X_t \longrightarrow X$ is the canonical map from the fiber.
\end{corollary}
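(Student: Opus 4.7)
The plan is to translate the desired vanishing into a tor-amplitude statement at geometric fibers and then invoke Proposition~\ref{prop:Phi_Fab_open}. First, I set $H \coloneqq p_S^\ast(F)^\vee \otimes G \in \catPerf(X_S)$, which is perfect as a tensor product of perfect complexes. I verify that $H$ is $f_S$-properly supported: for any $E \in \catPerf(X_S)$ there is a canonical equivalence $E^\vee \otimes H \simeq (E \otimes p_S^\ast F)^\vee \otimes G$, and since $E \otimes p_S^\ast F$ is perfect, the hypothesis on $G$ implies that $f_{S, \ast}(E^\vee \otimes H)$ is perfect on $S$. In particular, $f_{S, \ast}(H)$ is itself perfect.

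Next, for each geometric point $t \colon \Spec(\kappa) \to S$, derived base change along the cartesian square with vertices $X_t, X_S, \Spec(\kappa), S$ together with the standard tensor-Hom adjunction give
\begin{align}
	t^\ast f_{S, \ast}(H) \simeq f_{t, \ast}(j_t^\ast H) \simeq f_{t, \ast}\bigl((j_t^\ast p_S^\ast F)^\vee \otimes j_t^\ast G\bigr) \simeq \Hom_{X_t}(j_t^\ast p_S^\ast F, j_t^\ast G) \ .
\end{align}
Over the field $\kappa$ every module is flat, so for a perfect complex tor-amplitude coincides with cohomological amplitude. Hence the vanishing defining $U$, namely $H^i(\Hom_{X_t}(j_t^\ast p_S^\ast F, j_t^\ast G)) = 0$ for all $i \leq N$, is equivalent to the fiber $t^\ast f_{S, \ast}(H)$ having tor-amplitude contained in $[N+1, \infty)$.

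Finally, I will use that $f_{S, \ast}(H) \in \catPerf(S)$ has some globally bounded tor-amplitude $[a_0, b_0]$, a bound which is inherited by every fiber. Thus the condition on $t$ is equivalent to $t^\ast f_{S, \ast}(H)$ having tor-amplitude in $[N+1, b_0]$, which is precisely the pointwise content of $\Phi_H^{[N+1, b_0]}$ when Proposition~\ref{prop:Phi_Fab_open} is applied to the morphism $f_S \colon X_S \to S$ and the perfect complex $H$. The proposition then identifies $U$ with the underlying set of the open subscheme $\Phi_H^{[N+1, b_0]} \subseteq S$, so $U$ is open. The only subtleties along the way are checking that derived base change applies in this setting (guaranteed by the $f_S$-proper support of $H$ and the qcqs hypotheses) and verifying the identification of tor-amplitude with cohomological amplitude over a field; neither is a substantial obstacle, so the argument is essentially a bookkeeping reduction to Proposition~\ref{prop:Phi_Fab_open}.
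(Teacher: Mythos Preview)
Your proof is correct and follows essentially the same route as the paper: form the internal Hom (equivalently $p_S^\ast(F)^\vee\otimes G$), check it is $f_S$-properly supported so that its pushforward is perfect with globally bounded tor-amplitude, use derived base change to identify fibers with the Hom complexes in question, and then invoke Proposition~\ref{prop:Phi_Fab_open} with a suitable interval. The only cosmetic differences are your choice of interval $[N+1,b_0]$ versus the paper's $[\max\{n,N\},\max\{m,N\}]$, and that derived base change here is guaranteed already by the qcqs hypotheses (proper support is only needed for perfection of the pushforward, not for base change itself).
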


\begin{proof}
	Consider the perfect complex $E \coloneqq \calHom_{X_S}( p_S^\ast(F), G)$.
	Inspection reveals that it is $f_S$-properly supported.
	For every $g \colon T \to S$, let $j_g \colon X_T \coloneqq T \times_S X_S \to X_S$ be the induced morphism.
	Then since $p_S^\ast(F)$ is perfect, we have a canonical equivalence
	\begin{align}
		j_g^\ast \calHom_{X_S}(p_S^\ast(F), G) \simeq \calHom_{X_T}(j_g^\ast p_S^\ast(F), j_g^\ast(G)) \ . 
	\end{align}
	Combining this equivalence with \cite[Propositions~6.1.4.4 \& 6.1.4.5]{Lurie_SAG}, we deduce that for $g \colon T \to S$ the following statements are equivalent:
	\begin{enumerate}\itemsep=0.2cm
		\item for every geometric point $t \colon \Spec(\kappa) \to T \to S$, $H^i( \Hom_{X_t}( j_t^\ast p_S^\ast(F), j_t^\ast(G)) ) \simeq 0$ for every $i \leq N$;
		
		\item $f_{T,\, \ast} j_g^\ast(E)$ has tor-amplitude concentrated in cohomological degrees $\geq N$.
	\end{enumerate}
	Observe now that since $E$ is $f_S$-properly supported, $f_{S,\, \ast}(E)$ has tor-amplitude contained in $[n,m]$ for some integers $n,m \in \Z$.
	Set $a \coloneqq \max\{n,N\}$ and $b \coloneqq \max\{m, N\}$.
	Then the above two conditions are further equivalent to $\Phi^{[a,b]}_E(T) = \ast$.
	Applying Proposition~\ref{prop:Phi_Fab_open} to the morphism $f_S \colon X_S \to S$ and to the perfect complex $E$, we deduce that $\Phi^{[a,b]}_E$ is representable by an open subscheme of $S$, which, in virtue of the above discussion, coincides exactly with the subset $U$ of the statement.
	The conclusion follows.
\end{proof}
%

\providecommand{\bysame}{\leavevmode\hbox to3em{\hrulefill}\thinspace}
\providecommand{\MR}{\relax\ifhmode\unskip\space\fi MR }
\providecommand{\MRhref}[2]{%
	\href{http://www.ams.org/mathscinet-getitem?mr=#1}{#2}
}
\providecommand{\href}[2]{#2}

\end{document}